\colorlet{shadecolor}{blue!15}
\newtheorem{theorem}{Theorem}[section]
\newtheorem{corollary}[theorem]{Corollary}
\newtheorem{lemma}[theorem]{Lemma}
\newtheorem{proposition}[theorem]{Proposition}
\newtheorem{definition}[theorem]{Definition}
\newtheorem{remark}[theorem]{Remark}
\newenvironment{proof}[1][\relax]
  {\paragraph{Proof\ifx#1\relax\else~of #1\fi}}%
  {~\hfill$\square$\par\bigskip}
\newcommand{\calA}{\mathcal{A}}
\newcommand{\calB}{\mathcal{B}}
\newcommand{\calC}{\mathcal{C}}
\newcommand{\calD}{\mathcal{D}}
\newcommand{\calE}{\mathcal{E}}
\newcommand{\calG}{\mathcal{G}}
\newcommand{\calH}{\mathcal{H}}
\newcommand{\calI}{\mathcal{I}}
\newcommand{\calL}{\mathcal{L}}
\newcommand{\calO}{\mathcal{O}}
\newcommand{\calS}{\mathcal{S}}
\newcommand{\calU}{\mathcal{U}}
\newcommand{\calV}{\mathcal{V}}
\newcommand{\calW}{\mathcal{W}}
\newcommand{\calX}{\mathcal{X}}
\newcommand{\calY}{\mathcal{Y}}
\newcommand{\bbE}{\mathbb{E}}
\newcommand{\bbG}{\mathbb{G}}
\newcommand{\bbI}{\mathbb{I}}
\newcommand{\bbN}{\mathbb{N}}
\newcommand{\bbP}{\mathbb{P}}
\newcommand{\bbQ}{\mathbb{Q}}
\newcommand{\bbR}{\mathbb{R}}
\newcommand{\bbT}{\mathbb{T}}
\newcommand{\bbV}{\mathbb{V}}
\newcommand{\bbZ}{\mathbb{Z}}
\newcommand{\sfP}{{\sf P}}
\newcommand{\sfE}{{\sf E}}
\definecolor{speccol}{rgb}{0.7,0.1,0.5}
\definecolor{speccol2}{rgb}{0.237,0.145,0.033}
\newcommand{\sign}{\mathrm{sign}}
\newcommand{\Var}{\mathrm{Var}}
\newcommand{\Ber}{\mathsf{Ber}}
\newcommand{\Zodd}{\bbZ_{\mathrm{odd}}}
\newcommand{\heightfcns}{\mathcal{H}}
\newcommand{\Strip}{{\rm Strip}}
\newcommand{\mylabel}[3]{#3\def\@currentlabel{#2}\label{#1}}
\title{Logarithmic delocalization of random Lipschitz functions on honeycomb and other lattices}
\author{
Alex M. Karrila\thanks{
\AA bo Akademi Matematik, Henriksgatan 2, 20500 \AA bo, Finland. E-mail: \texttt{alex.karrila@abo.fi; alex.karrila@gmail.com}
}
}
\date{}
\begin{document}

\maketitle

\begin{abstract}
We study random one-Lipschitz integer functions $f$ on the vertices of a finite connected graph, sampled according to the weight $W(f) = \prod_{\langle v, w \rangle \in E} \mathbf{c}^{ \mathbb{I} \{ f(v) = f(w) \} }$ where $\mathbf{c} \geq 1$, and restricted by a boundary condition. For planar graphs, this is arguably the simplest ``2D random walk model'', and proving the convergence of such models to the Gaussian free field (GFF) is a major open question.
Our main result is that for subgraphs of the honeycomb lattice (and some other cubic planar lattices), with flat boundary conditions and $1 \leq \mathbf{ c } \leq 2$, such functions exhibit logarithmic variations. This is in line with the GFF prediction and improves a non-quantitative delocalization result by P.~Lammers. The proof goes via level-set percolation arguments, including a renormalization inequality and a dichotomy theorem for level-set loops. In another direction, we show that random Lipschitz functions have bounded variance whenever the wired FK-Ising model with $p=1-1/\mathbf{c}$ percolates on the same lattice (corresponding to $\mathbf{c} > 2 + \sqrt{3}$ on the honeycomb lattice). Via a simple coupling, this also implies, perhaps surprisingly, that random homomorphisms are localized on the rhombille lattice.
\end{abstract}

\tableofcontents

\section{Introduction}

\paragraph{Background and main results}

\begin{figure}
\begin{center}
\includegraphics[width=0.5\textwidth]{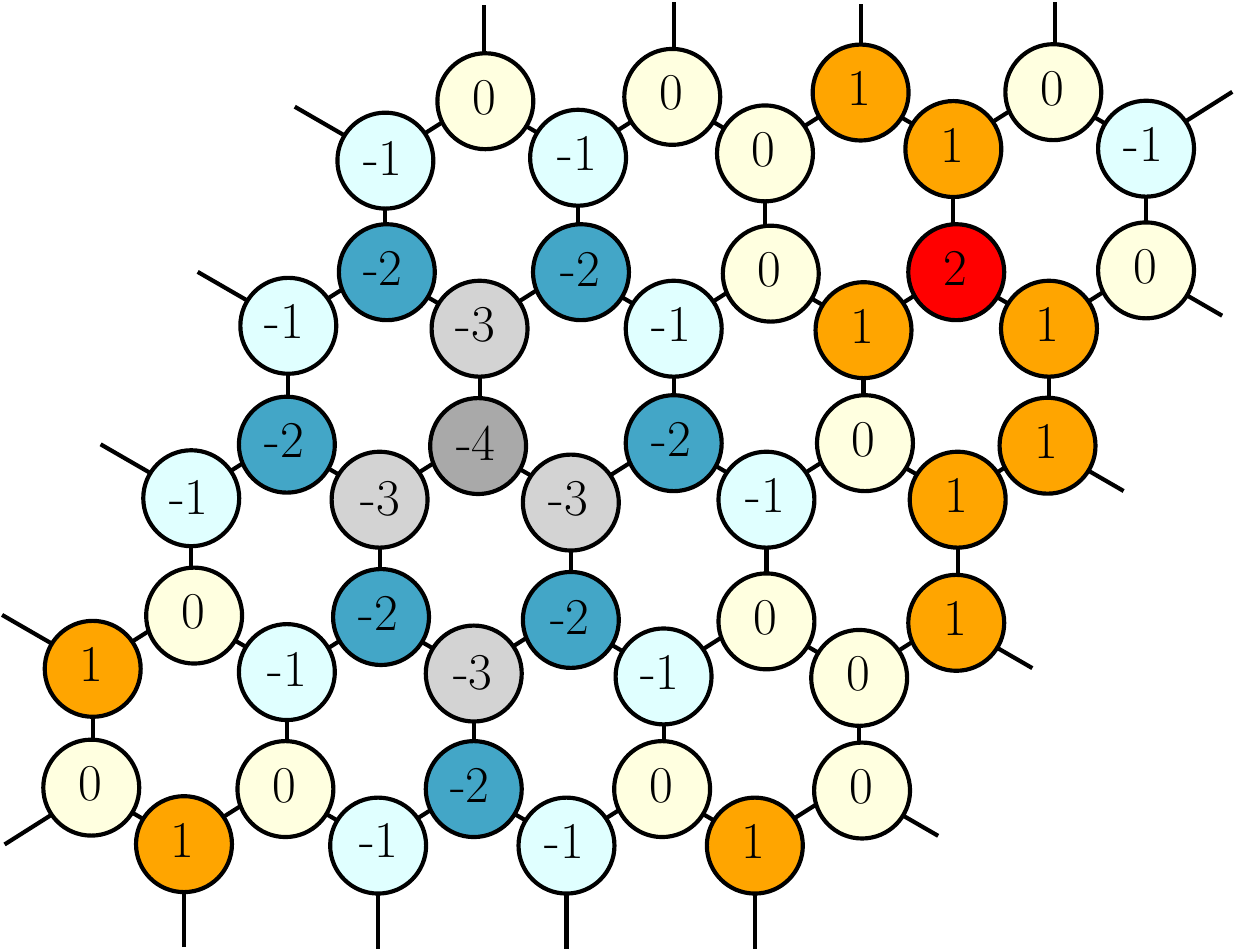}
\end{center}
\caption{
\label{fig:simple Lip fcn}
An example of a $1$-Lipschitz integer-valued function on the vertices of the honeycomb lattice, pinned to zero outside of the depicted vertices.
}
\end{figure}

Random fields on lattices and their scaling limits play a central role in statistical mechanics.
On the one-dimensional lattice $\bbZ$, such scaling limits are rather universally established to be Gaussian processes such as the Brownian motion or the Brownian bridge, by Donsker's theorem and various generalizations.
In two-dimensions, the situation is starkly different: first off, many parametric models then (conjecturally) exhibit a phase transition between a \textit{localized} (or \textit{smooth}) and a \textit{delocalized} (or \textit{rough}) phase.
The delocalized models are then often expected to converge to an appropriate Gaussian random field, but proving rigorously such convergence remains out of reach but for very few special models.
The localized phase, in turn, can be interpreted as an analogue of long-range order in the Ising model, and should exhibit a completely different (and less rich) limit behaviour. It is thus of interest to identify the phase of a given two-dimensional model and, if delocalized, prove indications of its relation to a Gaussian (free) field.

This paper studies random Lipschitz functions, arguably the simplest random field model. A function $f: \bbV \to \bbZ$ on the vertices $\bbV$ of a connected graph $\bbG = (\bbV, \bbE)$ is \textit{(one-)Lipschitz} if
\begin{align*}
(f(v) - f(w)) \in \{ -1, 0, 1\} \qquad \text{for all adjacent vertices } v, w,
\end{align*}
and to such functions we associate a weight
\begin{align*}
W(f) = \prod_{e= \langle v, w \rangle \in \bbE} \mathbf{c}^{ \mathbb{I} \{ f(v) = f(w) \} },
\end{align*}
where $\mathbf{c} > 0$ is the model parameter. For a finite vertex set $D\subsetneq \bbV$, let $\sfP_D$ then denote the probability measure supported on Lipschitz functions $f: \bbV \to \bbZ$ with $f_{|D^c} \equiv 0$ and probabilities proportional to $W(f)$ (Figure~\ref{fig:simple Lip fcn}).\footnote{Note that on the linear graph $\{0, 1, \ldots, T \}$ with the boundary condition $f(0)=0$, this is an elementary random walk model --- even simpler than the simple random walk which exhibits a parity effect.} Our main result is the following phase characterization.

\begin{theorem}[Corollary~\ref{cor:localized regimes} and Theorem~\ref{thm:main thm 2nd}; special cases]
\label{thm:main thm intro statement}
Let $\bbG$ be the infinite two-dimensional hexagonal lattice and $d_\bbG$ its graph distance. For $1 \leq \mathbf{c} \leq 2$, the random Lipschitz functions on $\bbG$ are \emph{logarithmically delocalized}, in the sense that there exist $C=C(\mathbf{c}), c=c(\mathbf{c}) > 0$ such that for any finite $D \subset \bbV$ and $x \in D$ with $d_\bbG(x, D^c) \geq 2$,
\begin{align}
\label{eq:log deloc intro}
c  \log d_\bbG (x, D^c) \leq \Var_D (f(x)) \leq C \log d_\bbG (x, D^c),
\end{align}
where $\Var_D$ denotes variance under $\sfP_D$.
For $\mathbf{c} > 2 + \sqrt{3}$, the same model \emph{localized} in the sense that there exists $C=C(\mathbf{c}) > 0$ such that for any finite $D \subset \bbV$ and $x \in D$
\begin{align}
\label{eq:loc intro}
\Var_D (f(x))  \leq C.
\end{align}
\end{theorem}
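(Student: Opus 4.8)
The plan is to treat the two regimes separately, in both cases through the \emph{level sets} of $f$ and the FK-type representation of $\sfP_D$ one gets by writing $W(f)=\prod_{e=\langle v,w\rangle}\bigl(1+(\mathbf c-1)\ind\{f(v)=f(w)\}\bigr)$ and expanding over subsets $\omega\subseteq\bbE$ of ``agreement'' edges. This produces a coupling of $f$ with a random-cluster-type configuration $\omega$, with edge weight $(\mathbf c-1)^{|\omega|}$ and cluster weight $Z_D(\omega)$ equal to the number of one-Lipschitz functions that are constant on the $\omega$-clusters and vanish on the clusters meeting $D^c$, under which $f$ is constant on each $\omega$-cluster and equals $0$ on every cluster reaching $D^c$. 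Throughout one uses the symmetry $f\mapsto-f$ --- giving $\sfE_{\sfP_D}f(x)=0$ and $\Var_D(f(x))=\sfE_{\sfP_D}[f(x)^2]$ --- and the FKG property of the increasing events $\{f\ge k\}$, valid for $\mathbf c\ge1$. Because $\bbG$ is planar and cubic, for each integer $k$ the set of disagreement edges separating $\{f\le k\}$ from $\{f\ge k+1\}$ is dual to a disjoint union of simple loops on the triangular lattice; these level loops are pairwise non-crossing, the ones enclosing a fixed $x$ are linearly ordered by enclosure, and $f$ changes by exactly $\pm1$ each time a path crosses one. Hence $|f(x)|\le N$, with $N$ the number of level loops enclosing $x$ in $D$, and in fact $f(x)$ is a signed $\pm1$-sum over those loops; the first step is to recast $\Var_D(f(x))$ as a statement about this loop ensemble.

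\emph{Logarithmic delocalisation, $1\le\mathbf c\le2$.} The engine is a \emph{renormalisation inequality} for the crossing probabilities $p_n$ (the probability that the level-set interface contains a loop around the annulus $B(x,n)\setminus B(x,2n)$), which I expect to force a \emph{dichotomy}: either $p_n$ decays polynomially in $n$, or $p_n$ stays bounded away from $0$ and $1$ uniformly in the scale, and then one obtains the full RSW package --- uniform box-crossing estimates and quasi-independence of the level loops across dyadic scales. For $1\le\mathbf c\le2$ one rules out the polynomially decaying branch; this is the quantitative version of Lammers' delocalisation criterion for cubic planar graphs, and it is here that the value $2$ enters, through an entropy-versus-energy estimate at a degree-$3$ vertex that is in force precisely when $\mathbf c\le2$. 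In the delocalised branch both bounds in \eqref{eq:log deloc intro} follow from the dyadic martingale decomposition of $f(x)$ along the filtration that successively reveals $f$ on $D\setminus B(x,2^{-j}d_\bbG(x,D^c))$: there are of order $\log d_\bbG(x,D^c)$ scales, the increments are orthogonal, and each has squared expectation at most $C$ (the level loops inside a single annulus number $O(1)$ with exponential tails, so revealing that annulus moves the conditional mean of $f(x)$ by only $O(1)$, by RSW) and at least $c>0$ (there is a uniformly positive chance of an extra level loop around $x$ in the annulus, and its orientation is balanced up to a constant by a reflection argument --- legitimate here because, conditionally on $\omega$, the law of $f$ is uniform and does not weigh the values of $f$). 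Summing over the scales yields $c\log d_\bbG(x,D^c)\le\Var_D(f(x))\le C\log d_\bbG(x,D^c)$.

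\emph{Localisation, $\mathbf c>2+\sqrt3$.} In the FK picture the edge weight is $(\mathbf c-1)^{|\omega|}=(p/(1-p))^{|\omega|}$ with $p=1-1/\mathbf c$, and $Z_D(\omega)\ge 2^{\#\{\omega\text{-clusters not meeting }D^c\}}$ because assigning each such cluster a value in $\{0,1\}$ always yields a valid one-Lipschitz function; a Holley-type comparison then gives that the law of $\omega$ stochastically dominates the wired FK-Ising ($q=2$) measure at $p=1-1/\mathbf c$. The threshold $\mathbf c>2+\sqrt3$ is exactly $p>p_c$ for FK-Ising on the hexagonal lattice, whose critical point satisfies $p_c/(1-p_c)=1+\sqrt3$; hence $\omega$ dominates a supercritical random-cluster measure, and by standard supercritical estimates --- uniformly in $D$ --- the probability that $x$ is enclosed by at least $m$ nested finite $\omega$-clusters, equivalently that every path from $x$ to $D^c$ crosses at least $m$ circuits of disagreement edges around $x$, is at most $Ce^{-cm}$. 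Since $f$ changes by at most $1$ across each such circuit and $f|_{D^c}\equiv0$, this bounds $|f(x)|$ by the number of crossed circuits, so $\sfP_D(|f(x)|\ge m)\le Ce^{-cm}$ uniformly in $D$, which gives \eqref{eq:loc intro}.

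\emph{Where I expect the difficulty.} The hard part is the renormalisation inequality and, through it, excluding localisation across the whole range $1\le\mathbf c\le2$: the dichotomy mechanism is robust, but making it sharp all the way up to $\mathbf c=2$ --- rather than only near $\mathbf c=1$, where $\sfP_D$ is essentially uniform --- is delicate, and this is exactly what turns Lammers' qualitative delocalisation into the logarithmic rate. A secondary difficulty is the variance \emph{upper} bound, which requires genuine cancellation: the crude inequality $\Var_D(f(x))\le\sfE_{\sfP_D}[N^2]$ only gives a bound of order $(\log d_\bbG(x,D^c))^2$, so one must work with the orthogonal martingale increments together with the sharper RSW input that a single dyadic annulus influences $f(x)$ by only $O(1)$; relatedly, the balanced-orientation input for the lower bound is where passing to the FK representation is genuinely needed, since reflecting $f$ directly would alter the weight $W$ when $\mathbf c>1$. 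By contrast, the localisation half should be comparatively soft once the FK-Ising comparison and the identity $\mathbf c_c=2+\sqrt3$ are in hand.
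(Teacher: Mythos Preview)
Your localization argument contains a genuine gap: the claimed Holley comparison between your direct FK expansion and wired FK--Ising is false. In your representation the measure on $\omega$ is proportional to $(\mathbf c-1)^{|\omega|}Z_D(\omega)$ with $Z_D(\omega)$ the number of Lipschitz functions constant on $\omega$-clusters and zero on the boundary cluster. Take a three-vertex path $\partial$--$A$--$B$ with $\partial\in D^c$, no $\omega$-edges: then $Z_D=9$ (three choices each for $f_A\in\{-1,0,1\}$ and $f_B\in\{f_A-1,f_A,f_A+1\}$), while opening the edge $\partial A$ forces $f_A=0$ and gives $Z_D=3$. The conditional odds of that edge are $(\mathbf c-1)/3$, strictly below the FK--Ising odds $(\mathbf c-1)/2$, so your $\omega$ does \emph{not} stochastically dominate FK--Ising. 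The paper circumvents this by a different construction: it passes to $h=2f+1$, observes that given the absolute value $|h|$ the \emph{sign} of $h$ is a genuine ferromagnetic Ising model (any edge with $\max(|h(u)|,|h(v)|)\ge3$ forces equal signs), and lets $\omega$ be the Edwards--Sokal FK of \emph{that} Ising model. The key identity is that, conditionally on $|h|=H$, this $\omega$ has law $\sfP^{D,+}_{\mathrm{FK}}[\,\cdot\mid E_{\mathsf{fix}}(H)\subset\omega]$, which by random-cluster FKG dominates the unconditioned wired FK--Ising \emph{uniformly in $H$}. Since $h\equiv+1$ on the boundary $\omega$-cluster, one gets $h(x)\ge 1-2d_D(x,\omega_\partial)$, and then supercritical FK--Ising percolation on the hexagonal lattice (threshold $p_c=\sqrt3-1$, i.e.\ $\mathbf c=2+\sqrt3$) plus log-concavity of the law of $h(x)$ yields the uniform variance bound.

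Your delocalization outline is correct at the level of architecture (renormalization inequality $\Rightarrow$ loop-probability dichotomy $\Rightarrow$ Lammers' theorem selects the delocalized branch $\Rightarrow$ logarithmic variance), but two points differ from the paper and would need care if you pursued your version. First, the percolation used throughout the paper is again the sign-FK $\omega$ of $h=2f+1$, not your direct expansion; this matters because the workhorse is absolute-value FKG for $|h|$ and for the pair $(|h|,\omega)$, which \emph{fails} for $|f|$ directly (the paper gives a three-vertex counterexample). Second, the variance bounds are not obtained from an orthogonal martingale decomposition but from single-scale recursions: for the lower bound one shows $v_{n_{k+1}}\ge v_{n_k}+c$ by conditioning on a height-$\pm5$ loop around $L_{n_k}$ (probability $\ge c$ by the dichotomy), which by SMP and a height shift contributes $+36$ to the second moment, while on the complement absolute-value FKG still gives $\ge v_{n_k}$; the upper bound is proved on a torus via $w_{2n}\le w_n+C$, using the RSW/pushing machinery to find with uniformly positive probability a low-height path from $\partial D$ into $[-n,n]^2$. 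Your martingale route would need both orthogonality and uniform second-moment bounds on the increments, neither of which is immediate here; the paper's recursion sidesteps orthogonality entirely.
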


We obtain similar variance bounds on some other lattices, most interestingly the analogue of Equation~\eqref{eq:log deloc intro} on the square-octagon lattice for $1 \leq \mathbf{c} \leq 2$ and of the analogue of~\eqref{eq:loc intro}, as well as an exponential decay (Proposition~\ref{prop:exp decay in loc phase}), on the square lattice for $\mathbf{c} > 1 + \sqrt{2}$.

With the zero boundary condition above, the random Lipschitz functions should converge to the Gaussian free field (GFF). This conjecture is supported by the here proven logarithmic variance, as well as the scale-invariance arguments that are central in the proof (e.g., Proposition~\ref{thm:renorm}). A full GFF limit identification would, however, following priorly known techniques require exact solvability that is not (expected to be) present in the model. The connection to the GFF is illustrated more by the following corollary, stating informally speaking that the gradients of random Lipschitz functions converge to a log-correlated infinite-volume limit field. (For several definitions, see Section~\ref{subsec:grad Gibbs meas}.)

\begin{corollary}[Corollary~\ref{cor:log var for inf vol lim} and Proposition~\ref{prop:infinite vol limit}; special case]
\label{thm:conseq of main thm intro statement}
Let $\bbG$ be the hexagonal lattice and $1 \leq \mathbf{c} \leq 2$. The discrete gradients of a function under $\sfP_D$ converge weakly (in the topology of local convergence), as $D$ grows to $\bbG$, to a random Lipschitz gradient Gibbs measure $\sfP$. The measure $\sfP$ is invariant under the symmetries and translations of the hexagonal lattice and there exist $C, c > 0$ such that for any $x, y \in \bbV$,
\begin{align*}
\sfE[f(y)-f(x)] = 0 \quad
\text{and} \quad
c  \log d_\bbG (x, y) \leq \sfE[(f(y)-f(x))^2] \leq C \log d_\bbG (x, y),
\end{align*}
where $(f(y)-f(x))$ denotes the sum of the gradients on a(ny) path from $x$ to $y$.
\end{corollary}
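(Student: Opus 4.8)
The plan is to decompose the statement into three parts: (a) existence of a limiting gradient Gibbs measure $\sfP$, together with its invariance under the symmetries and translations of the hexagonal lattice; (b) the estimates $\sfE[f(y)-f(x)]=0$ and $c\log d_\bbG(x,y)\le\sfE[(f(y)-f(x))^2]\le C\log d_\bbG(x,y)$; and (c) upgrading ``some subsequential limit'' to ``limit of the whole family $(\sfP_D)_D$''. Given Theorem~\ref{thm:main thm intro statement} and the renormalization/level-set machinery behind it, parts (a) and (b) are comparatively soft; part (c) is where the work is.

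For (a): a discrete gradient of a Lipschitz function takes values in $\{-1,0,1\}$, so the gradient configurations --- antisymmetric $\{-1,0,1\}$-valued edge functions that are closed (zero sum around every cycle) --- form a closed, hence compact, subset of a product of finite sets. Thus, for any exhaustion $D_1\subset D_2\subset\cdots$ of $\bbV$ by finite sets, the laws of $\nabla f$ under $\sfP_{D_n}$ are automatically tight; take $\sfP$ to be a subsequential weak limit in the topology of local convergence. Closedness of gradients is preserved under the limit, so $\sfP$ is supported on Lipschitz gradients, and the spatial Markov property of the $\sfP_D$ (conditioned on the gradients outside a finite $\Lambda\subset D$, the law inside $\Lambda$ is the finite-volume Lipschitz measure with those boundary gradients) passes to the limit, so $\sfP$ obeys the gradient DLR equations. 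Invariance of $\sfP$ under the point symmetries of the hexagonal lattice follows by choosing each $D_n$ symmetric and using that $W$ and the zero boundary condition are symmetric; translation invariance is obtained by averaging $\sfP_{D_n}$ over translations along a F{\o}lner sequence before passing to the limit, which disturbs neither DLR nor the estimates below.

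For (b): the reflection $f\mapsto-f$ preserves $W$ and the zero boundary data, so $\sfE_D[f(y)-f(x)]=0$ for every finite $D$, hence $\sfE[f(y)-f(x)]=0$. The increment bounds are obtained by the same renormalization and level-set percolation arguments that prove the logarithmic bounds~\eqref{eq:log deloc intro} in Theorem~\ref{thm:main thm intro statement}, now applied to $f(y)-f(x)$ along the $\asymp\log d_\bbG(x,y)$ dyadic scales separating $x$ and $y$, and run directly for the translation-invariant measure $\sfP$ using the DLR equations. For the upper bound one conditions successively on the level-set configuration in nested annuli around a geodesic from $x$ to $y$, each step contributing $O(1)$ to the second moment by the renormalization inequality (Proposition~\ref{thm:renorm}). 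For the lower bound one uses a martingale decomposition of $f(y)-f(x)$ over the same scales and bounds each conditional variance from below by a constant, via the crossing estimates for level lines that underlie the delocalization half of Theorem~\ref{thm:main thm intro statement}. If one prefers to argue in finite volume, these steps can be carried out uniformly in $D_n$, the relevant events being local once the annuli are fixed, and then passed to $\sfP$ by bounded convergence, since $f(y)-f(x)$ is a bounded local function of $\nabla f$.

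The main obstacle is (c): the corollary asserts convergence of the whole family $(\sfP_D)_D$, which requires that the subsequential limit constructed above be the \emph{unique} possible one. I would establish this through a monotonicity/positive-association property of the measures $\sfP_D$ coming from the FK-type representation underlying the paper's level-set arguments: if, for instance, the level-set events $\{f\ge k\}$ ($k\ge 1$) are stochastically increasing in the domain $D$, then the exhaustion converges monotonically, the limit is canonical, and one checks separately that it is already translation invariant (so the F{\o}lner averaging is not actually needed). A softer alternative is an ergodic-theoretic uniqueness statement for zero-tilt Lipschitz gradient Gibbs measures on the hexagonal lattice; the monotonicity route is, however, the one in keeping with the percolation methods already developed. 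The remaining points --- the exact constants, the bookkeeping of invariance, and the verification that $f(y)-f(x)$ is a bounded local observable --- are routine.
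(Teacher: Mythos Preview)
Your treatment of parts (a) and (b) is broadly in line with the paper, though the paper routes the variance bounds through the torus (Proposition~\ref{prop:var upper bd on torus}) before passing to the limit rather than arguing directly under the limiting DLR measure. The real divergence, and the real problem, is in part (c).

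Your preferred route---monotonicity of level-set events $\{f\ge k\}$ in $D$---does hold in this model (this is essentially Corollary~\ref{cor:+-1 SMP ineq for abs val} for $|h|$), but it is useless here precisely \emph{because} we are in the delocalized regime: $|h|$ is stochastically increasing in $D$ and \emph{diverges}, so monotone convergence of level sets yields no limiting height-function law and says nothing about convergence of gradients. Your F{\o}lner averaging also does not prove the statement as written: averaging over translates changes the sequence of measures, so you would establish convergence of the averaged family, not of the original $\sfP_D$ along an arbitrary exhaustion.

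The ``softer alternative'' you mention---uniqueness of the zero-slope translation-invariant gradient Gibbs measure---is in fact what the paper uses (citing~\cite{She05}), but by itself it is not enough: one must first show that \emph{every} subsequential limit of $\sfP_{D_n}$ is translation invariant, and this does not follow from compactness and DLR alone when the $D_n$ are arbitrary finite domains. The paper supplies the missing ingredient via a \emph{mixing lemma} (Corollary~\ref{cor:mixing}): using the loop-dichotomy input~\eqref{eq:loop-deloc}, one shows that for any bounded boundary condition $\xi$ and any $\epsilon>0$, there is an aspect ratio $\rho$ such that $\bbP_D^\xi$ and $\bbP_D^{m\pm 1}$ can be coupled to agree on $L_n$ with probability $\ge 1-\epsilon$ whenever $L_{\rho n}\subset D$. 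This is proven by an exploration/coupling argument in the percolated pair $(|h|,\omega)$, and immediately yields that $\bbP_D^{\pm 1}$ and its one-step translate agree locally with high probability (Corollary~\ref{cor:transl inv}), hence any subsequential limit is translation invariant and equals the unique $\sfP$. Your proposal lacks any mechanism of this kind.
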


Another consequence of our main results are phase identifications in other random models coupled to random Lipschitz functions; see Appendix~\ref{app:Other models}. Let us highlight the following result, which appears to be a new observation, is at least to the author somewhat surprising, and whose statement and proof (Proposition~\ref{prop:star-triangle coupling} and Corollary~\ref{cor:localized regimes} or~\cite{GM-loc}) are both strikingly simple. (See~\cite{CPT18} for several equivalent definitions of \textit{localization} in this model.)

\begin{theorem}
Uniform random homomorphisms on the rhombille lattice are localized.
\end{theorem}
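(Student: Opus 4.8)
The plan is to reduce, by the star--triangle transformation, the uniform homomorphism model on the rhombille lattice $R$ to a rigid Lipschitz-type model lying in the localized phase of Corollary~\ref{cor:localized regimes}, and then to read off bounded variance.

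First I would set up the geometry. The lattice $R$ is bipartite: one colour class, the degree-six vertices, forms a triangular lattice $\bbT$, the other, the degree-three vertices, a hexagonal lattice $\bbH$; every rhombus face of $R$ has exactly one diagonal in $\bbT$ and one in $\bbH$, and each vertex of $\bbH$ lies at the centre of a triangular face of $\bbT$. A homomorphism $h\colon V(R)\to\bbZ$ is constant modulo~$2$ on each colour class; say $h$ is odd on $\bbT$. Reading $h$ around a single rhombus shows that any two $\bbT$-neighbours take $h$-values differing by $0$ or $\pm 2$, so $\tilde g:=\tfrac12(h|_{\bbT}-1)$ is a one-Lipschitz function on $\bbT$, carrying a boundary condition inherited (flat up to one step) from a flat boundary condition on $R$.

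The coupling is Proposition~\ref{prop:star-triangle coupling}. Summing out the degree-three vertices of $R$ one at a time is precisely the star--triangle move: each such vertex is joined to the three corners of a triangular face of $\bbT$, and the homomorphism constraint forces its value to be an odd integer within distance $1$ of all three neighbouring $h$-values, which has two solutions when those three coincide, exactly one when they take two (distinct, necessarily consecutive even) values, and none otherwise. Hence the uniform measure on homomorphisms of $R$ becomes the effective law on $\tilde g$ proportional to $\prod_{T}2^{\ind\{\tilde g\ \text{constant on}\ T\}}$ over the triangular faces $T$, supported on one-Lipschitz functions on $\bbT$. The point of Proposition~\ref{prop:star-triangle coupling} is that this monochromatic-face-reweighted Lipschitz model is covered by the localization results --- equivalently, that the associated wired FK--Ising measure is supercritical --- since integrating out each star injects genuine rigidity (no weight boost for a merely partially flat face, and large jumps across a face forbidden outright), which drives the effective parameter past the percolation threshold, into the $\mathbf{c}>2+\sqrt{3}$-type regime of Corollary~\ref{cor:localized regimes}.

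Granting this, Corollary~\ref{cor:localized regimes} (or \cite{GM-loc}) gives $\Var_D(\tilde g(x))\le C$ uniformly over finite $D$ and $x\in D$, hence $\Var_D(h(u))\le C'$ for $u\in V(R)\cap\bbT$; and since every vertex of $R$ lies within graph distance $1$ of $\bbT$ while $h$ changes by $\pm1$ along each edge, $\Var_D(h(v))\le 4C'+2$ for all $v$ --- which is localization in (any of) the equivalent senses of \cite{CPT18}. I expect the crux to be Proposition~\ref{prop:star-triangle coupling} itself: carrying out the star--triangle summation on a finite truncation of $R$ with its boundary condition and parity constraints, and, above all, checking that the reweighted Lipschitz model genuinely lands in the localized phase --- that is, pinning down the effective FK--Ising parameter and verifying supercriticality (equivalently, the $\mathbf{c}>2+\sqrt{3}$-analogue). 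The rest (parities, boundary conditions, transferring variance bounds from $\bbT$ to all of $R$) is routine.
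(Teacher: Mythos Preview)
Your route is the paper's route: star--triangle coupling from homomorphisms on the rhombille lattice to a Lipschitz model on the triangular lattice, then Corollary~\ref{cor:localized regimes}. But the step you flag as ``the crux'' and leave open --- identifying the effective parameter and checking it is supercritical --- is exactly where your argument has a gap, and your guess that it lands ``in the $\mathbf{c}>2+\sqrt{3}$-type regime'' points to the wrong threshold: $2+\sqrt{3}$ is the hexagonal value, while the relevant triangular-lattice threshold in Corollary~\ref{cor:localized regimes}(iii) is $\sqrt{3}$. Your face-weight model $\prod_T 2^{\ind\{\tilde g\text{ const on }T\}}$ is in fact \emph{exactly} the edge-weight Lipschitz model on $\bbT$ with $\mathbf{c}=2$: in any triangle an odd $2$-Lipschitz function has either three flat edges or one, so $\sum_{e\in\partial T}\ind\{h_u=h_v\}=2\,\ind\{T\text{ flat}\}+1$; summing over faces and using that each edge lies in two faces gives $\sum_T\ind\{T\text{ flat}\}=\sum_e\ind\{h_u=h_v\}-F/2$, whence the face weight is $2^{-F/2}\prod_e 2^{\ind\{h_u=h_v\}}$. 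Since $2>\sqrt{3}$, Corollary~\ref{cor:localized regimes}(iii) applies.

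The paper arrives at the same $\mathbf{c}=2$ without this computation by recognizing $R$ as the $\nabla$--$Y$ transform of the triangular lattice \emph{with doubled edges}: the triangulation in Proposition~\ref{prop:star-triangle coupling} assigns one triangle per face of $\bbT$, each using its own copy of every edge. That proposition then gives $\mathbf{c}=\sqrt{2}$ on the doubled graph, which is $(\sqrt{2})^2=2$ on the simple triangular lattice. Note also that Proposition~\ref{prop:star-triangle coupling} is purely a coupling --- it says nothing about localization, contrary to what your wording (``the point of Proposition~\ref{prop:star-triangle coupling} is that this \ldots model is covered by the localization results'') suggests; the localization input is entirely Corollary~\ref{cor:localized regimes}(iii).
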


\paragraph{Related literature}

The closest prior work and the starting point of the present paper is~\cite{Piet-deloc}, which proves (as a special case) delocalization in the sense of non-existence of certain natural Gibbs measures for a wide range of lattice models, including random Lipschitz functions with $1 \leq \mathbf{c} \leq 2$, on any bi-periodic planar lattice of maximum degree $3$. This fairly soon implies divergence of variances (Corollary~\ref{cor:non-quant deloc}), while our main result on the logarithmic, scale-invariant and GFF-like behaviour takes quite a bit more. The improvement from divergent to logarithmic variance can  be contrasted, informally speaking, to proving absence of long-range order vs. finding the decay rate of correlations in a spin model.

As regards analogous but not directly related results, random Lipschitz functions have mostly been studied on the triangular lattice, logarithmic delocalization being proven in the uniform ($\mathbf{c}=1$) case~\cite{GM}, for $\mathbf{c} = \sqrt{2}$ (\cite{DGPS}, special case), and extended during the writing of this work to $1 \leq \mathbf{c} \leq \sqrt{2}$, in~\cite{GL-Lip}. Localization of the same model has been proven for $\mathbf{c} > \sqrt{3} - \epsilon$ (\cite{GM-loc}, special case).\footnote{Corollary~\ref{cor:localized regimes} of the present paper provides an alternative proof of localization for $\mathbf{c} > \sqrt{3} $ on the triangular lattice.} These results rely crucially on the coupling of triangular-lattice random Lipschitz functions to the loop $O(n) $ model while, in contrast, the present work studies the random Lipschitz functions directly and over a (limited) range of different lattices. Random Lipschitz functions on other than planar lattices have been recently studied at least in~\cite{PSY-Lip_on_tree, PSY-Lip_on_expander, Peled-highD-Lip}.

Among other planar random field models where logarithmic delocalization is proven, we name the dimer height functions in the liquid phase~\cite{KOS}, the random homomorphism model on the honeycomb lattice~(\cite{DGPS} and Appendix~\ref{app:Other models}) and the square lattice~\cite{Hom}, various other parameter regimes of the six-vertex model~\cite{KOS, FerSpo06, GP19, DCKMO, GL-Lip}, and a family of models interpolating between the solid-on-solid and integer Gaussian field models~\cite{Piet-dich-th}. 
Delocalization results with a divergent, but not necessarily logarithmic, variance for various models can be found at least in~\cite{CPT18, Lis, PO21}.%
\footnote{
Additionally, combining~\cite[Theorem~2.7]{Piet-deloc} and~\cite[Proof of Theorem~1.1]{CPT18} implies the result for the random homomorphism model on a variety of lattices, and~\cite{CGHP20} proves an analogous result on the loop $O(n)$ model.
}
Localization results, in turn, can be found at least in~\cite{DGHMT, RS19, GP19} for various random field models; see also Appendix~\ref{app:Other models}. We conclude this literature overview by remarking that a full proof of a Gaussian limit only appears to be known for uniform dimers~\cite{Ken01, Ken08} and interacting dimers~\cite{GMT17} (and, trivially, the discrete Gaussian free field).

\paragraph{Some core ideas and novelties}

On a very large perspective, the proof of our main result follows the renormalization strategy introduced in~\cite{DST} in the context of FK-percolation and extended (along with many other percolation techniques) for a random field model in~\cite{Hom}; also many of the above mentioned references employ similar arguments. The core of the proof is the so-called renormalization inequality (Proposition~\ref{thm:renorm}), comparing level-line loop probabilities in different length scales. This induces a dichotomy in the behaviour of these probabilities: either they remain uniformly positive over different length scales, or decay rapidly in the scale (Theorem~\ref{thm:loop dichotomy}). These two behaviour modes are then connected to logarithmic and bounded variance, respectively, and~\cite{Piet-deloc} excludes the latter, thus concluding the proof. 

The first novelty needed for this strategy is the implementation of basic random field tools for the (non-$O(n)$) random Lipschitz model. In particular, in contrast to many other earlier treated models, random Lipschitz functions $f$ \textit{do not} satisfy absolute-value FKG; however, $2f+1$ do, and the loss of information in this affine map (the often-studied minimal absolute-value $|2f(x) + 1|=1$ only reveals $f(x) \in \{ 0, -1 \}$) is compensated for by coupling to the absolute values a sign Ising and sign FK-Ising model (Sections~\ref{subsubsec: Ising tools}--\ref{subsec:sign FK model}, Proposition~\ref{prop:pos assoc of percolated height fcns}).
Percolation models coupled to other random fields date at least back to Edwards and Sokal~\cite{Edw-Sok}, and their use has flourished recently~\cite{GM, Lis, DCKMO, Piet-deloc, Piet-dich-th, GL-Lip}, and ours can be interpreted via the general cluster swapping arguments in~\cite{She05}. The direct connection to the FK model hopefully makes our percolated model especially intuitive and illustrative of these ideas.

As the renormalization argument originates in percolation theory, core steps towards its proof are a Russo--Seymour--Welsh (RSW) type theorem (Theorem~\ref{thm:RSW}) and a ``boundary pushing theorem'' (Theorem~\ref{prop:push}) in terms of level sets of random Lipschitz functions. Perhaps of interest for other models is the idea that, essentially due to the weakness of our absolute-value FKG, the proof of the RSW theorem \textit{uses} the boundary pushing theorem: one first crucially uses a shift-invariant strip geometry in the RSW, and then pushes the boundary to compare to a finite domain.

The renormalization inequality (Proposition~\ref{thm:renorm}) in stated and proven in a two-alternative form which is logically equivalent to the usual inequality (Remark~\ref{rem:equiv renorm ineqs}). This allows one to only study level sets of only one given height of the proof (of the non-trivial alternative), making the proof very similar to percolation models and simplifying it compared to, e.g., the analogous proof in~\cite{Hom}.

The proof of Corollary~\ref{thm:conseq of main thm intro statement}/Proposition~\ref{prop:infinite vol limit} contains a mixing lemma (Corollary~\ref{cor:mixing}) which may be of independent interest.

\paragraph{Acknowledgements} 
An unpublished proof of the analogue of Proposition~\ref{prop:infinite vol limit} in the context of the six-vertex model was produced by the author and Piet Lammers in 2020. The author thanks Hugo Duminil-Copin and Piet Lammers for discussions and correspondence during the present project. The Academy of Finland (grant \#339515) is gratefully acknowledged for financial support.

\part{General combinatorial aspects}

In this part of the work, we study random Lipschitz functions on a general (not necessarily planar) graphs.

\section{The random models and their couplings}

We define the random models of interest; some conventions in the rest of the article will differ from those in the Introduction.

\subsection{The random Lipschitz model}

Let $D=(V, E)$ be a finite connected graph. A function $h: V \to \bbZ$ is \textit{$L$-Lipschitz} if 
\begin{align*}
|h(v) - h(w)| \leq L \qquad \text{for all adjacent } v, w \in V.
\end{align*}
It will turn more convenient to study $1$-Lipschitz functions $f$ via $h:= 2f + 1$. We denote $2\bbZ + 1 =: \Zodd$ and call odd $2$-Lipschitz functions $h: V \to \Zodd$ \textit{height functions} (on $D$). The set of all height functions on $D$ is denoted by $\heightfcns_D$. 

Let $\Delta \subset V$, $\Delta \neq \emptyset$ be a specified set of vertices. We call a function $\xi: \Delta \to \Zodd$ \textit{a(n admissible) boundary condition (on $\Delta$)} if it can be extended to a height function on $D$. A necessary and sufficient criterion for this is that
\begin{align*}
| \xi(v) - \xi (w) | \leq 2 d_D (v, w) \qquad \text{for all } v, w \in \Delta,
\end{align*}
where $d_D$ is the graph distance. The necessity is clear, while the sufficiency is proven by observing that
\begin{align}
\label{eq:max and min height fcns}
\underline{h}(x) = \max_{v \in \Delta} \left( \xi(v) - 2d_D(v, x) \right)
\qquad \text{and} \qquad
\overline{h}(x) = \min_{v \in \Delta} \left( \xi(v) + 2d_D(v, x) \right)
\end{align}
are the minimal and maximal height functions extending $\xi$, respectively. For $\Delta' \supset \Delta$, $\underline{h}$ and $\overline{h}$ also define the \textit{minimal} and \textit{maximal (admissible) boundary conditions extending} $\xi$ to $\Delta'$. 

We will also crucially need \textit{(admissible) set-valued boundary conditions}, i.e., maps $\xi$ from $\Delta$ to finite subsets of $\Zodd$, such that there exists $\chi: \Delta \to \Zodd$ with $\chi(x) \in \xi(x)$ for all $x \in \Delta$ which is an admissible boundary condition. Note that the pointwise minimum (resp. maximum) of such $\chi$ is also admissible, defining the minimal single-valued $\chi \in \xi$, whose corresponding $\underline{h}$ in turn yields the minimal admissible extension of $\xi$ to $\Delta' \supset \Delta$.



Allowing a bit different setting than in the introduction, we impose weights $\mathbf{c}_e \geq 1$ that may depend on the edge $e \in E$, and define the \textit{weight} of a height function $h$ as
\begin{align*}
W(h) = \prod_{e= \langle v, w \rangle \in E} \mathbf{c}_e^{ \mathbb{I} \{ h(v) = h(w) \} },
\end{align*}
i.e., the weight favours ``flat'' functions $h$. 
Then, with an admissible  (possibly set-valued) boundary condition $\xi$, the \textit{random Lipschitz} or \textit{random height function probability measure} is
\begin{align*}
\bbP^\xi_D [\{ h \} ] = \mathbb{I} \{ h(x) \in \xi(x) \text{ for all } x \in \Delta \} W(h)/Z^{\xi}_D,
\end{align*}
where $Z^\xi_D$ is the appropriate normalizing factor. We will omit the subscripts $D$  from $\bbP^\xi_D$ if there is no ambiguity.

\subsection{The sign Ising model}
\label{subsubsec: Ising tools}

We define the \textit{(ferromagnetic) Ising model} on a finite graph $\calD =(\calV, E)$ with weights $\mathbf{c}_e \geq 1$ by setting, for any $\sigma\in\{\pm1\}^{ \calV}$,
\begin{align*}
W_{\mathrm{Ising}}^{\calD} (\sigma) &:= \prod_{e = \langle u, v \rangle \in E} \mathbf{c}_e^{\bbI \{ \sigma(u) = \sigma(v) \} }, \\
\sfP_{\mathrm{Ising}}^{\calD} [ \sigma ] &:= \tfrac{1}{Z} W_{\mathrm{Ising}}^{\calD} (\sigma).
\end{align*}
For $S \subset \calV$, we define the Ising model with \textit{$+$ boundary conditions} on $S$ via
\begin{align*}
\sfP_{\mathrm{Ising}}^{\calD,+} [ \sigma ] &:= \tfrac{1}{Z^+} \mathbbm{1}\{ \sigma_{|S} = +1\} W_{\mathrm{Ising}}^{\calD} (\sigma).
\end{align*}

Let now $D=(V, E)$ be a finite connected graph and $H \in \heightfcns_D$ a \textit{positive} height function on $D$ (equivalently, $H=|h|$ for some $h \in \heightfcns_D$). The \textit{fixed-sign edges} $E_{\mathsf{fix}} =E_{\mathsf{fix}}(H)$ of $H$ are those $e=\langle v, w \rangle \in E$ for which $\max \{ H(v), H(w) \} \geq 3$; indeed $\sign (h)$ is then fixed along such edges whenever $|h|=H$. Define the graph $\calD = \calD(H)= (\calV, \calE \cong E) = G/\sim$, where $\sim$ is the equivalence relation on $V$ of being $E_{\mathsf{fix}}$-connected, $\calV = V / \sim$, and we directly identify $\calE$ with $E$. Encode the sign of $h$ with $|h|=H$ in the function a function $\sigma_{h} : \calV \to \{\pm1\}$. The mapping $h \mapsto \sigma_h$ is then a bijection between height functions $h $ with $|h|=H$ and sign functions $\sigma : \calV \to \{\pm1\}$. The reader may now easily verify the following.

\begin{lemma}
\label{cor:signs are Ising}
Let $D=(V, E)$ be a finite connected graph, $H \in \heightfcns_D$ a \emph{positive} height function on $D$ and $\calD = \calD(H)$. There exists $K=K(D,H) > 0$ such that
\begin{align*}
W (h) = K \times W_{ \mathrm{Ising}}^{\calD} (\sigma_{h})
\qquad \text{for all $h \in \heightfcns_D$ with $|h|=H$.}
\end{align*}
In particular, if $h$ is drawn randomly among height functions with $|h|=H$, with probabilities proportional to $W(h)$, then $\sigma_h \sim \sfP_{\mathrm{Ising}}^{\calD } $. If it is additionally known that $\sign(h)=+1$ on $S \subset \calV$, then $\sigma_h \sim \sfP_{\mathrm{Ising}}^{\calD,+} $.
\end{lemma}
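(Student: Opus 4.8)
The plan is to prove the identity $W(h)=K\cdot W_{\mathrm{Ising}}^{\calD}(\sigma_h)$ by a factor-by-factor comparison of the two weights over the edge set, using the identification $\calE\cong E$. For $v\in V$ write $[v]\in\calV$ for its $\sim$-class; then the factor of $W_{\mathrm{Ising}}^{\calD}(\sigma_h)$ attached to $e=\langle v,w\rangle\in E$ is $\mathbf{c}_e^{\mathbb{I}\{\sigma_h([v])=\sigma_h([w])\}}$, and I want to compare it with the factor $\mathbf{c}_e^{\mathbb{I}\{h(v)=h(w)\}}$ of $W(h)$. I would show that for each $e$ the ratio of these two factors is a positive constant not depending on which $h$ with $|h|=H$ is chosen; since $E$ is finite and every $\mathbf{c}_e\geq 1>0$, the product $K$ of these ratios is then a well-defined element of $(0,\infty)$.

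Split $E$ into non-fixed-sign and fixed-sign edges. If $e=\langle v,w\rangle\notin E_{\mathsf{fix}}$, then $\max\{H(v),H(w)\}\leq 1$, and since $H$ is positive and $\Zodd$-valued this forces $H(v)=H(w)=1$, hence $h(v),h(w)\in\{\pm1\}$; therefore $h(v)=h(w)$ exactly when $h$ carries the same sign at $v$ and $w$, i.e.\ $\mathbb{I}\{h(v)=h(w)\}=\mathbb{I}\{\sigma_h([v])=\sigma_h([w])\}$, and the two factors are literally equal (ratio $1$). If $e=\langle v,w\rangle\in E_{\mathsf{fix}}$, then $[v]=[w]$, so the Ising factor is just $\mathbf{c}_e$; on the other side $\max\{|h(v)|,|h(w)|\}=\max\{H(v),H(w)\}\geq 3$, and combined with $|h(v)-h(w)|\leq 2$ and the fact that $h$ never vanishes (odd values), this forces $h(v)$ and $h(w)$ to have the same sign, so $h(v)=h(w)$ iff $H(v)=H(w)$ --- a condition on $H$ alone --- and the $W(h)$ factor equals the constant $\mathbf{c}_e^{\mathbb{I}\{H(v)=H(w)\}}$. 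Collecting the two cases gives
\begin{align*}
W(h)=\Big(\prod_{e=\langle v,w\rangle\in E_{\mathsf{fix}}}\mathbf{c}_e^{\,\mathbb{I}\{H(v)=H(w)\}-1}\Big)\,W_{\mathrm{Ising}}^{\calD}(\sigma_h),
\end{align*}
so one may take $K:=\prod_{e=\langle v,w\rangle\in E_{\mathsf{fix}}}\mathbf{c}_e^{\,\mathbb{I}\{H(v)=H(w)\}-1}\in(0,\infty)$.

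The two distributional claims then follow immediately. The map $h\mapsto\sigma_h$ is (as recorded just above the lemma) a bijection from $\{h\in\heightfcns_D:|h|=H\}$ onto $\{\pm1\}^{\calV}$, and along it $W(h)=K\cdot W_{\mathrm{Ising}}^{\calD}(\sigma_h)$ with $K$ independent of $h$; hence the pushforward under this bijection of the probability measure with weights proportional to $W(h)$ is the measure with weights proportional to $W_{\mathrm{Ising}}^{\calD}$, that is $\sfP_{\mathrm{Ising}}^{\calD}$. Conditioning additionally on $\sign(h)\equiv+1$ on $S\subset\calV$ restricts both weighted collections to $\{\sigma:\sigma_{|S}=+1\}$ and therefore yields $\sfP_{\mathrm{Ising}}^{\calD,+}$.

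There is no genuine obstacle here; the only step requiring a moment's care --- and the very one that makes $\sigma_h$ well defined --- is the sign-rigidity along fixed-sign edges: two adjacent vertices, one of which carries an $h$-value of absolute value $\geq 3$, must carry $h$-values of the same sign, since opposite nonzero signs would make their difference at least $4$ in absolute value, contradicting $2$-Lipschitzness. Thus $\sign(h)$ is constant on each $E_{\mathsf{fix}}$-connected component, $\sigma_h:\calV\to\{\pm1\}$ is well defined, and the fixed-sign edges contribute only the $h$-independent factor computed above.
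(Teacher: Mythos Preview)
Your proof is correct and is exactly the straightforward edge-by-edge verification the paper leaves to the reader (the paper gives no explicit proof, simply saying ``The reader may now easily verify the following''). The key observation you make --- that on fixed-sign edges the factor $\mathbf{c}_e^{\mathbb{I}\{h(v)=h(w)\}}$ depends only on $H$, while on non-fixed-sign edges it coincides with the Ising factor --- is precisely the content the paper intends.
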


\subsection{The sign FK-Ising model}
\label{subsec:sign FK model}

Recall that the \textit{FK-Ising model} on a finite graph $\calD = (\calV, E)$ with edge parameters $p_e \in [0,1]$ is a random subset $\varpi \subset E$ of edges, drawn with the probabilities
\begin{align*}
\sfP^{\calD}_{\mathrm{FK}} [\varpi] = \frac{1}{Z} 2^{c(\varpi ) } \prod_{e \in E }  \left( (p_e)^{\mathbbm{1}\{ e \in \varpi \}} (1-p_e)^{\mathbbm{1}\{ e \not \in \varpi \}} \right),
\end{align*}
where $c(\varpi)$ is the number of connected components in the graph $(\calV, \varpi) $ and $Z$ is the appropriate normalization factor. For a given $S \subset \calV$, we define the FK-Ising model $\sfP^{\calD,+}_{\mathrm{FK}}$ with \textit{wired boundary conditions} on $S$ as the above model on the graph where the vertices on $S$ are identified. We assume familiarity the basic properties of the FK-Ising model such as the Spatial Markov property, monotonicity in $p_e$, positive association (FKG), and comparison of boundary conditions; see, e.g., \cite{Dum17}. 

For notational simplicity, we will liberally identify random edge sets with their indicator functions $E \to \{ 0,1 \}$. Let us also define the \textit{Bernoulli $p_e$ edge percolation} as the random $B \subset E$, with $B_e$ being independent Bernoulli $p_e$ variables (i.e., each edge $e$ is present with probability $p_e$ and absent with probability $1-p_e$, independently of the others). For the rest of this paper, we set
\begin{align*}
p_e = 1-1/\mathbf{c}_e \in [0,1],
\end{align*}
motivated by the standard coupling of the Ising and FK-Ising models in our conventions:

\begin{lemma}
\label{lem:FK coupling}
Let  $\calD = (\calV, E)$ be a finite graph, and $\mathbf{c}_e \geq 1$ and $p_e = 1 - 1/\mathbf{c}_e$ for every $e\in E$. There is a coupling of $\sigma \sim \sfP_{\mathrm{Ising}}^\calD$ (resp. $\sigma \sim \sfP^{\calD,+}_{\mathrm{Ising}}$) and  $\varpi \sim \sfP^{\calD}_{\mathrm{FK}}$ (resp. $\varpi \sim \sfP^{\calD,+}_{\mathrm{FK}}$) satisfying, and determined by, the following conditional laws: \\
1) given $\sigma$, the conditional law of $\varpi$ is 
\begin{align*}
\varpi_{\langle v, w \rangle} = 
\begin{cases}
& 0, \qquad \sigma_v \neq \sigma_w \\
& B_{\langle v, w \rangle}, \qquad \sigma_v = \sigma_w,
\end{cases}
\end{align*}
where $B$ is the Bernoulli $p_e$ edge percolation; and \\
2) given $\varpi $, the conditional law of $\sigma$ is constant on each connected component of $\varpi$, and sampled by independent fair coin flips on the different components (resp. except set to $+1$ on those intersecting $S$).
\end{lemma}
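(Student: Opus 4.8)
The plan is to realize the two laws as the two marginals of a single explicit measure on pairs --- the Edwards--Sokal coupling --- and then simply read off everything by summing over one coordinate. On $\{\pm1\}^{\calV}\times\{0,1\}^{E}$ define the weight
\[
W(\sigma,\varpi)=\prod_{e=\langle v,w\rangle\in E}(1-p_e)^{\mathbbm{1}\{\varpi_e=0\}}\,p_e^{\mathbbm{1}\{\varpi_e=1\}}\,\mathbbm{1}\{\varpi_e=0\ \text{or}\ \sigma_v=\sigma_w\},
\]
and let $\mathbf{P}$ be the probability measure proportional to $W$; for the wired variant, replace $W$ by $W\cdot\mathbbm{1}\{\sigma_{|S}=+1\}$. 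Note $p_e\in[0,1]$ precisely because $\mathbf{c}_e\geq1$, so $W$ is a genuine nonnegative weight.

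First I would compute the $\sigma$-marginal. Fixing $\sigma$ and summing $\varpi_e$ over $\{0,1\}$ edge by edge contributes $(1-p_e)+p_e=1$ on every edge with $\sigma_v=\sigma_w$ and $(1-p_e)=1/\mathbf{c}_e$ on every edge with $\sigma_v\neq\sigma_w$; hence $\sum_{\varpi}W(\sigma,\varpi)=\big(\prod_e\mathbf{c}_e^{-1}\big)\,W_{\mathrm{Ising}}^{\calD}(\sigma)$, a constant times the Ising weight, so the $\sigma$-marginal of $\mathbf{P}$ is $\sfP_{\mathrm{Ising}}^{\calD}$ (resp.\ $\sfP_{\mathrm{Ising}}^{\calD,+}$ once the indicator $\mathbbm{1}\{\sigma_{|S}=+1\}$ is inserted). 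Dually, fixing $\varpi$ and summing over $\sigma$: the constraint in $W$ forces $\sigma$ to be constant on each connected component of $(\calV,\varpi)$, so $\sum_{\sigma}W(\sigma,\varpi)=2^{c(\varpi)}\prod_{e\in\varpi}p_e\prod_{e\notin\varpi}(1-p_e)$, which up to a constant is the FK-Ising weight, giving $\varpi$-marginal $\sfP^{\calD}_{\mathrm{FK}}$. In the wired case the extra indicator replaces $2^{c(\varpi)}$ by $2^{c_S(\varpi)-1}$, where $c_S(\varpi)$ is the number of connected components of $\varpi$ after identifying $S$ to a single vertex (every cluster meeting $S$ is pinned to $+1$, the remaining $c_S(\varpi)-1$ clusters are free), which up to a constant is exactly the FK-Ising weight on the graph with $S$ identified, i.e.\ $\sfP^{\calD,+}_{\mathrm{FK}}$.

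Then the conditional laws fall out of the same formula for $W$. Given $\sigma$, the weight $W(\sigma,\cdot)$ factorizes over edges: on an edge with $\sigma_v\neq\sigma_w$ only $\varpi_e=0$ survives, while on an edge with $\sigma_v=\sigma_w$ the value $\varpi_e$ is $1$ with probability proportional to $p_e$ and $0$ with probability proportional to $1-p_e$, independently over edges --- this is item~1). Given $\varpi$, $W(\cdot,\varpi)$ is the uniform measure over spin configurations constant on each $\varpi$-cluster (with the clusters meeting $S$ pinned to $+1$ in the wired case), which is item~2). The uniqueness assertion (``determined by'') is then immediate in this finite setting: any coupling of the Ising and FK-Ising laws satisfying~1) has joint law equal to its $\sigma$-marginal times the conditional law~1), hence equals $\mathbf{P}$; conversely~2) and the FK-marginal then hold automatically. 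I do not anticipate a real obstacle here --- the only points needing care are bookkeeping the multiplicative constants $\prod_e\mathbf{c}_e^{-1}$ and the factors of $2$ (all absorbed in normalization), and, in the wired case, checking that the cluster count $c_S(\varpi)$ after identifying $S$ agrees with the definition of $\sfP^{\calD,+}_{\mathrm{FK}}$ in all degenerate configurations (e.g.\ when $S$ meets several clusters or lies inside one).
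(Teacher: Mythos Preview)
Your argument is correct and is precisely the standard Edwards--Sokal construction. The paper does not actually prove this lemma; it introduces it as ``the standard coupling of the Ising and FK-Ising models in our conventions'' and states it without proof, referring implicitly to the literature (e.g.\ \cite{Dum17}). Your write-up supplies exactly the proof one would expect, with the marginal computations and the wired cluster-count bookkeeping handled correctly.
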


Due to our choice of $\calD$ in the previous subsection, let us also explicate the contraction property of the FK-Ising model. See Appendix~\ref{app:roskis} for the (easy) proof. 

\begin{lemma}
\label{lem:FK cluster conditioning}
Let $D=(V, E)$ be a finite graph, $E_{\mathsf{fix}} \subset E$ and $\calD = D/\sim = (\calV, E)$, where $\sim$ is the equivalence relation on $V$ of being $E_{\mathsf{fix}}$-connected and $\calV = V / \sim$. There is a coupling of $ \omega \sim \sfP^{D}_{\mathrm{FK}} [ \;\cdot \; | \; E_{\mathsf{fix}} \subset \omega]$ (resp. $ \omega \sim \sfP^{D,+}_{\mathrm{FK}} [ \;\cdot \; | \; E_{\mathsf{fix}} \subset \omega]$ with given $S \subset V$) and $\varpi \sim \sfP^{\calD}_{\mathrm{FK}} $ (resp. $\varpi \sim \sfP^{\calD,+}_{\mathrm{FK}} $ with the same $S$ interpreted on $\calV$),
satisfying, and determined by, the following conditional laws:
1) given $\omega$, the conditional law of $\varpi$ is
\begin{align*}
\varpi_e =
\begin{cases}
& \omega_e, \qquad e \not \in E_{\mathsf{fix}}\\
& B_e, \qquad e \in E_{\mathsf{fix}},
\end{cases}
\end{align*}
where $B$ is the Bernoulli $p_e$ edge percolation; and
2) $\omega = \varpi \cup E_{\mathsf{fix}}$.
\end{lemma}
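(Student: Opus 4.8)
The plan is to verify the coupling by a direct Radon--Nikodym computation, exploiting that $\calD$ has the same edge set $E$ as $D$, with the edges of $E_{\mathsf{fix}}$ turned into loops in $\calD$.

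First I would encode the whole coupling through a single marginal. Sample $\omega \sim \sfP^{D}_{\mathrm{FK}}[\,\cdot \mid E_{\mathsf{fix}} \subset \omega\,]$, then set $\varpi_e = \omega_e$ for $e \notin E_{\mathsf{fix}}$ and let $(\varpi_e)_{e \in E_{\mathsf{fix}}}$ be fresh independent Bernoulli $p_e$ variables; this is exactly the prescription in the statement, and by construction $\omega = \varpi \cup E_{\mathsf{fix}}$ is a deterministic function of $\varpi$. Hence the joint law is determined by the marginal law $\mu$ of $\varpi$, and it suffices to prove $\mu = \sfP^{\calD}_{\mathrm{FK}}$ (in the wired case, $\mu = \sfP^{\calD,+}_{\mathrm{FK}}$); conditional law 1) and point 2) then hold by the very definition of the recipe. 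Here one assumes $p_e > 0$ for $e \in E_{\mathsf{fix}}$ so that the conditioning is well-posed, the statement being otherwise vacuous.

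Second, I would compute $\mu$ explicitly. Writing $\omega = \varpi \cup E_{\mathsf{fix}}$,
\begin{align*}
\mu(\varpi) = \frac{\sfP^{D}_{\mathrm{FK}}[\omega]}{\sfP^{D}_{\mathrm{FK}}[E_{\mathsf{fix}} \subset \omega]} \prod_{e \in E_{\mathsf{fix}}} \big( p_e^{\varpi_e} (1-p_e)^{1-\varpi_e} \big).
\end{align*}
Expanding $\sfP^{D}_{\mathrm{FK}}[\omega] = Z_D^{-1} 2^{c(\omega)} \prod_{e \in E}\big(p_e^{\omega_e}(1-p_e)^{1-\omega_e}\big)$ and using $\omega_e = 1$ on $E_{\mathsf{fix}}$ and $\omega_e = \varpi_e$ off $E_{\mathsf{fix}}$, the edge-weight product from $\omega$ (over $e \notin E_{\mathsf{fix}}$) merges with the fresh-Bernoulli product (over $e \in E_{\mathsf{fix}}$) into $\prod_{e \in E}\big(p_e^{\varpi_e}(1-p_e)^{1-\varpi_e}\big)$, and the leftover factor $\prod_{e \in E_{\mathsf{fix}}} p_e$, together with $Z_D$ and $\sfP^{D}_{\mathrm{FK}}[E_{\mathsf{fix}} \subset \omega]$, is a constant independent of $\varpi$.

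The one step that needs an argument rather than bookkeeping is the identity $c(\omega) = c_{\calD}(\varpi)$ between the number of connected components of $(V, \varpi \cup E_{\mathsf{fix}})$ and of $(\calV, \varpi)$: each $\sim$-class is precisely a connected component of $(V, E_{\mathsf{fix}})$, hence internally $E_{\mathsf{fix}}$-connected, so contracting the $E_{\mathsf{fix}}$-edges leaves reachability — and therefore the component count — unchanged, while the images of those edges in $\calD$ are loops and do not affect components either. With this, $\mu(\varpi) \propto 2^{c_{\calD}(\varpi)} \prod_{e \in E}\big(p_e^{\varpi_e}(1-p_e)^{1-\varpi_e}\big)$, which is $\sfP^{\calD}_{\mathrm{FK}}(\varpi)$ up to normalization; as both are probability measures, the constants coincide and $\mu = \sfP^{\calD}_{\mathrm{FK}}$. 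The wired case with $S \subset V$ is identical once one notes that wiring $S$ and contracting $E_{\mathsf{fix}}$ commute, so the same component count and the same cancellations apply. I do not anticipate a genuine obstacle: the only real content is the component-count identity, everything else being manipulation of explicit densities.
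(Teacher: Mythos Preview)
Your proof is correct and follows essentially the same approach as the paper's: both compute the marginal of $\varpi$ directly and identify the key step as the component-count identity $c_D(\varpi \cup E_{\mathsf{fix}}) = c_{\calD}(\varpi)$, which the paper phrases as $c_G(\omega) = c_{\calG}(\omega) = c_{\calG}(\varpi)$ (contracting open edges preserves components, and the resulting loop edges in $\calD$ are irrelevant). Your write-up is slightly more explicit about the bookkeeping, but the argument is the same.
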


Note that by the FKG, for instance, $\sfP^{D}_{\mathrm{FK}} [ \;\cdot \; | \; E_{\mathsf{fix}} \subset \omega]$ dominates stochastically $\sfP^{D}_{\mathrm{FK}} [ \;\cdot \; ]$, which will be very useful. We now define the main random object in this paper.

\begin{definition}[The triplet $(h, \omega, B)$]
Given $D=(V, E)$ a finite connected graph, $\xi$ an admissible (possibly set-valued) boundary condition on $\Delta \neq \emptyset$, and $h \sim \bbP^\xi$, we always couple to $h$ an independent Bernoulli $p_e$ percolation $B$ and another edge percolation $\omega$ defined by
\begin{align*}
\omega_{\langle v, w \rangle}
=
\begin{cases}
&1, \qquad \langle v, w \rangle \in E_{\mathsf{fix}}(|h|), \text{ i.e., } \max \{ |h(v)|, |h(w)| \} \geq 3\\
&B_e, \qquad  h(v)= h(w) = \pm 1 \\
&0, \qquad  \{ h(v), h(w) \} = \{ \pm 1 \}.
\end{cases}
\end{align*}
With a slight abuse of notation, this coupling is also denoted as $(h, \omega, B) \sim \bbP^\xi$.
\end{definition}%

\begin{corollary}[Coupling of $|h|$ and $\omega$]
\label{cor:FK cond law given abs height}
Let $H$ be a positive height function on $D=(V, E)$. Let $h$ be drawn randomly among height functions with $|h|=H$, with probabilities proportional to $W(h)$ (resp. with the additional condition that $\sign(h)=+1$ on $S \subset V$).\footnote{This is clearly a set-valued boundary condition, so to it is associated a triplet $(h, \omega, B)$.} Then, \\
1) the marginal law of $\omega $ is $ \sfP^{D}_{\mathrm{FK}} [\cdot \; | \; E_{\mathsf{fix}}(H) \subset \omega]$ (resp. $ \sfP^{D,+}_{\mathrm{FK}} [\cdot \; | \; E_{\mathsf{fix}}(H) \subset \omega]$); and \\
2) given ($H$ and) $\omega$, the conditional law of $\sign(h)$ (which determines $h$) is constant on each connected component of $\omega$, and given by independent fair coin flips for the different components (resp. except set to $+1$ on those intersecting $S$).
\end{corollary}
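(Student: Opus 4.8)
The plan is to read off the law of $(|h|\!=\!H,\omega,\sign h)$ by chaining the three lemmas of this subsection — Lemma~\ref{cor:signs are Ising} (the signs of $h$ form an Ising model on the contracted graph $\calD(H)$), Lemma~\ref{lem:FK coupling} (the Edwards--Sokal coupling of that Ising model with an FK-Ising model on $\calD(H)$), and Lemma~\ref{lem:FK cluster conditioning} (FK on $\calD(H)$ versus FK on $D$ conditioned to contain $E_{\mathsf{fix}}(H)$) — and then checking that the edge configuration produced in this way is \emph{literally} the $\omega$ of the Definition, driven by the same independent Bernoulli $p_e$ percolation $B$.

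Concretely, I would first invoke Lemma~\ref{cor:signs are Ising}: drawing $h$ among height functions with $|h|=H$ with weight proportional to $W(h)$ makes $\sigma_h$ an Ising sample $\sfP^{\calD}_{\mathrm{Ising}}$ on $\calD=\calD(H)$ (resp.\ $\sfP^{\calD,+}_{\mathrm{Ising}}$, with $S$ read off on $\calV$ via $V\to\calV$). Next I would apply the coupling of Lemma~\ref{lem:FK coupling} to produce $\varpi\sim\sfP^{\calD}_{\mathrm{FK}}$ (resp.\ wired) jointly with $\sigma_h$, determined by: given $\sigma_h$, $\varpi_{\langle v,w\rangle}=B_{\langle v,w\rangle}$ if $\sigma_v=\sigma_w$ and $0$ otherwise, with $B$ a $p_e$-percolation independent of $\sigma_h$; and, given $\varpi$, $\sigma_h$ is constant on $\varpi$-clusters with independent fair signs, except pinned to $+1$ on clusters meeting $S$. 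Finally I would set $\omega:=\varpi\cup E_{\mathsf{fix}}(H)$; Lemma~\ref{lem:FK cluster conditioning} says precisely that this $\omega$ has law $\sfP^{D}_{\mathrm{FK}}[\,\cdot\mid E_{\mathsf{fix}}(H)\subset\omega]$ (resp.\ $\sfP^{D,+}_{\mathrm{FK}}[\,\cdot\mid E_{\mathsf{fix}}(H)\subset\omega]$), so claim~1) follows \emph{once} this $\omega$ is identified with the one in the Definition.

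That identification is the only genuine content, and the step I would carry out most carefully. On $e\in E_{\mathsf{fix}}(H)$ both definitions give $\omega_e=1$. On $e=\langle v,w\rangle\notin E_{\mathsf{fix}}(H)$ one has $\max\{H(v),H(w)\}=1$, hence $h(v),h(w)\in\{\pm1\}$; moreover $\sign h$ is constant on each $E_{\mathsf{fix}}$-equivalence class (an edge one of whose endpoints has absolute value $\ge 3$ cannot flip the sign of an odd $2$-Lipschitz function), so $\sigma_h$ is a well-defined function on $\calV$ and the relation ``$\sigma_v=\sigma_w$'' on $\calV$ is equivalent to ``$h(v)=h(w)$'', whether or not $v\sim w$. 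Feeding this into the rule of Lemma~\ref{lem:FK coupling} (noting that $E_{\mathsf{fix}}$-edges are self-loops in $\calD$ and so play no role in the FK cluster structure, which makes $\omega=\varpi\cup E_{\mathsf{fix}}$ compatible with the standard FK coupling on $\calD$) gives $\omega_e=\varpi_e=B_e$ when $h(v)=h(w)=\pm1$ and $\omega_e=\varpi_e=0$ when $\{h(v),h(w)\}=\{\pm1\}$ — exactly the Definition.

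Claim~2) is then immediate. Since $E_{\mathsf{fix}}(H)\subset\omega$, the clusters of $\omega$ on $D$ are in bijection with the clusters of $\varpi$ on $\calV$, and $\sign h$ on $V$ is the pullback of $\sigma_h$ along $V\to\calV$; hence the conditional law of $\sigma_h$ given $\varpi$ supplied by Lemma~\ref{lem:FK coupling} translates verbatim into: given $(H,\omega)$, $\sign h$ is constant on each $\omega$-cluster, with independent fair signs across clusters, except forced to $+1$ on clusters intersecting $S$. All the couplings above were specified by their conditional laws and $B$ was shared throughout, so no further compatibility check is needed, and I do not anticipate any real obstacle — the work is entirely the case analysis of the previous paragraph and keeping straight the three graphs $D$, $\calD(H)$ and their common edge set.
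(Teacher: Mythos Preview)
Your proposal is correct and follows essentially the same route as the paper: chain Lemma~\ref{cor:signs are Ising}, Lemma~\ref{lem:FK coupling}, and Lemma~\ref{lem:FK cluster conditioning}, then identify $\omega=\varpi\cup E_{\mathsf{fix}}(H)$ with the $\omega$ of the Definition and read off the cluster bijection for part~2). If anything, you are more explicit than the paper in carrying out the edge-by-edge identification; the paper simply writes down the formula for $\varpi$ in terms of $\sign(h)$ and $B$, observes it matches the Definition off $E_{\mathsf{fix}}(H)$, and concludes.
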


\begin{proof}
1) Let $\calD = \calD(H)$ as defined in Section~\ref{subsubsec: Ising tools}. By Lemma~\ref{cor:signs are Ising}, $\sign(h) \sim \sfP_{\mathrm{Ising}}^\calD $ (resp. $\sign(h) \sim \sfP^{\calD, +}_{\mathrm{Ising}} $). Thus, the conditional law of Lemma~\ref{lem:FK coupling} implies that
\begin{align*}
\varpi_{\langle v, w \rangle} = 
\begin{cases}
& 0, \qquad \sign(h(v)) \neq \sign(h(w)) \Leftrightarrow \{ h(v), h(w) \} = \{ \pm 1 \} \\
& B_{\langle v, w \rangle}, \qquad \text{otherwise}
\end{cases}
\end{align*}
is coupled to $\sign(h) $ in the coupling of that Lemma, and in particular $\varpi \sim \sfP^{\calD}_{\mathrm{FK}}$ (resp. $\varpi \sim \sfP^{\calD, +}_{\mathrm{FK}}$).
Comparing to the definition of $\omega$, we observe that $\omega = \varpi \cup  E_{\mathsf{fix}}(H)$, whose law is by Lemma~\ref{lem:FK cluster conditioning}  $ \sfP^{D}_{\mathrm{FK}} [\cdot \; | \; E_{\mathsf{fix}}(H)  \subset \omega]$ (resp.  $ \sfP^{D, +}_{\mathrm{FK}} [\cdot \; | \; E_{\mathsf{fix}}(H)  \subset \omega]$).

2) We continue in the coupling above. Given 
$\varpi$,
the conditional law of $\sigma = \sign (h)$ is by Lemma~\ref{lem:FK coupling} given by independent fair coin flips on the $\calG$-clusters of $\varpi$. But since $\omega = \varpi \cup  E_{\mathsf{fix}}(H)$, these $\calG$-clusters are exactly the $G$-clusters of $\omega$.
\end{proof}

We conclude this section with noticing that $\omega$ increases if either $H$ or the graph $D$ grows.

\begin{lemma}
\label{lem:FK coupling 2}
Let $D = (V, E)\subset D'=(V', E')$ be finite connected graphs, $H \in \heightfcns_{D}$ and $H' \in \heightfcns_{D'}$ positive height functions  $ H \preceq H'_{|D}$,  let $V \supset S \subset S' \subset V'$ (possibly empty), and $\omega \subset E$ (resp. $\omega' \subset E'$) be as given by the above lemma based on $D$, $H$ and $S$ (resp. on $D'$, $H'$ and $S'$). There is a coupling of $\omega $ and $\omega'$ such that $\omega \subset \omega'_{|D}$.
\end{lemma}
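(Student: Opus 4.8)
The plan is to deduce the coupling from Strassen's theorem, so the whole task reduces to the stochastic domination $\mathrm{Law}(\omega)\preceq\mathrm{Law}(\omega'_{|D})$ of probability measures on $\{0,1\}^{E}$. By Corollary~\ref{cor:FK cond law given abs height}(1), the marginal law of $\omega$ is $\sfP^{D,+}_{\mathrm{FK}}[\,\cdot\mid E_{\mathsf{fix}}(H)\subset\omega]$ (wired on $S$), and that of $\omega'$ is $\sfP^{D',+}_{\mathrm{FK}}[\,\cdot\mid E_{\mathsf{fix}}(H')\subset\omega']$ (wired on $S'$); so it suffices to compare these two random-cluster measures after restricting the second one to the edges $E\subset E'$.

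The comparison is a chain of standard monotonicity properties of the random-cluster model with $q=2$ (see~\cite{Dum17}). First, since $H\preceq H'_{|D}$ pointwise, the definition of fixed-sign edges through $\max\{H(v),H(w)\}\ge 3$ gives $E_{\mathsf{fix}}(H)\subseteq E_{\mathsf{fix}}(H')\cap E$; and conditioning a random-cluster measure on a larger set of open edges yields a stochastically larger measure, because conditioning on $\{F\subset\omega\}$ amounts to contracting the edges of $F$ (again a random-cluster measure, hence positively associated), after which further conditioning on the increasing event that the extra edges of $E_{\mathsf{fix}}(H')\cap E$ are open only increases it. Next, passing from the graph $D$ to $D'$ with the same wiring $S$ and the same conditioning increases the measure on the common edge set $E$ (monotonicity under edge addition), and then enlarging the wired set from $S$ to $S'$ increases it again (comparison of boundary conditions). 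Finally, conditioning in addition on the increasing event that the remaining fixed-sign edges $E_{\mathsf{fix}}(H')\setminus E$ (which lie outside $E$) are open increases the $E$-marginal as well. Composing these inequalities yields $\mathrm{Law}(\omega)\preceq\sfP^{D',+}_{\mathrm{FK}}[\,\cdot\mid E_{\mathsf{fix}}(H')\subset\omega']|_{E}=\mathrm{Law}(\omega'_{|D})$, and Strassen's theorem supplies the monotone coupling.

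There is no serious obstacle here; the only point needing care is that \emph{every} intermediate measure in the chain must remain positively associated, so that ``conditioning on an increasing event can only increase the measure'' stays legitimate at each step. This is automatic, since conditioning a random-cluster measure on the state of any set of edges produces a random-cluster measure on the contracted/deleted graph, and subgraph restrictions and boundary modifications likewise stay inside the random-cluster class; all the required monotonicities are textbook facts~\cite{Dum17}. (Alternatively, one could bypass the conditioning manipulations by invoking Lemma~\ref{lem:FK cluster conditioning} to write $\omega=\varpi\cup E_{\mathsf{fix}}(H)$ with $\varpi\sim\sfP^{\calD(H),+}_{\mathrm{FK}}$ and $\omega'=\varpi'\cup E_{\mathsf{fix}}(H')$ with $\varpi'\sim\sfP^{\calD(H'),+}_{\mathrm{FK}}$, and then comparing the quotient graphs $\calD(H)$ and $\calD(H')$, which are related by a further contraction together with an edge/vertex addition.)
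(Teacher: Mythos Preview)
Your proof is correct and follows essentially the same approach as the paper: both reduce to standard FK monotonicity after identifying the laws of $\omega$ and $\omega'$ via Corollary~\ref{cor:FK cond law given abs height}. The paper packages the chain of inequalities slightly differently---it introduces an auxiliary configuration $\tilde\omega$ on $D'$ conditioned to be closed on $E'\setminus E$ and open on $E_{\mathsf{fix}}(H)$, observes that $\tilde\omega_{|D}\stackrel{d}{=}\omega$, and then compares $\tilde\omega$ to $\omega'$ in one step on $D'$---but this is just a reorganisation of the same monotonicity facts you invoke.
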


\begin{proof}
Let us explicate the case $S=S'=0$; the full generality is just notationally tougher. Let $\tilde{\omega}$ be an FK configuration on $D'$, conditional on the absence of all edges in $D' \setminus D$ and the presence of the edges of $E_{\mathsf{fix}}(H) \subset E \subset E'$. From the definition of the FK-Ising model, one obtains that $\tilde{\omega}_{|D} $ and $\omega$ are equally distributed. Note that $E_{\mathsf{fix}}(H) \subset E'_{\mathsf{fix}}(H')$. It is then standard that $\tilde{\omega}$ can be coupled to $\omega' \sim \sfP^{D}_{\mathrm{FK}} [\cdot \; | \; E'_{\mathsf{fix}}(H')  \subset \omega']$ so that $\tilde{\omega} \subset \omega'$.
\end{proof}

\section{Localization: from FK to random Lipschitz}

In this section, we are interested in random Lipschitz functions as the graph $D$ grows towards an infinite lattice.
Let thus $\bbG = (\bbV, \bbE)$ be a countably infinite, locally finite, connected graph with edge weights $e \mapsto \mathbf{c}_e \geq 1$. Equip the space of integer-valued functions on $\bbV$ (resp. on $\bbE$) with the (metric) topology of convergence over finite subsets. For a finite connected subgraph $D= (V,E)$ of $\bbG$, we denote by $ \partial D$  for the vertices of $D$ that are adjacent to $\bbE \setminus E$ in $\bbG$. Throughout this paper, constant (possibly set-valued)
boundary conditions on $\partial D$ are directly annotated as a superscript, e.g., $\bbP^{1}_D$ or $\bbP^{\pm 1}_D$.

Recall that the FK-Ising measures $\sfP^{D,+}_{\mathrm{FK}} $ wired on $S=\partial D$ (where still $p_e = 1-1/\mathbf{c}_e$) have an infinite-volume limit (e.g., weakly as functions on $\bbE$) as $D \uparrow \bbG$, which is independent of the sequence of domains $D$; this limit is denoted by $\sfP^{\bbG,+}_{\mathrm{FK}} $. 

\begin{theorem}
Fix the graph $\bbG$, weights $\mathbf{c}_e$, and $x \in \bbV$.
If the full-plane FK-Ising model $\sfP^{\bbG,+}_{\mathrm{FK}} $ almost surely has an infinite cluster, then there exists $C>0$ such that for all subgraphs $D = (V,E) \subset \bbG$ such that $x \in V$, we have
\begin{align*}
\Var^{1}_{D} (h(x)) \leq C,
\end{align*}
where $\Var^{1}_{D}$ denotes variance under $\bbP^{1}_D$.
\end{theorem}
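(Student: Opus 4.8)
\emph{Proof proposal.} The plan is to reduce the uniform variance bound to a single-scale estimate by a self-similar recursion, and then to prove that estimate from the supercriticality of $\sfP^{\bbG,+}_{\mathrm{FK}}$ using the coupling $(h,\omega,B)$ and a positive-association input.

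\emph{Step 1: symmetry and reduction to one scale.} The involution $h\mapsto 2-h$ preserves the Lipschitz property, the weight $W$, and the boundary condition $1$ on $\partial D$, hence preserves $\bbP^1_D$; so $\bbE^1_D[h(x)]=1$ and $\Var^1_D(h(x))=\bbE^1_D[(h(x)-1)^2]$. Set $\rho:=\sup_{D'\ni x}\bbP^1_{D'}(|h(x)|\ge 3)$, the supremum over finite connected $D'\subset\bbG$ containing $x$. I would show $\bbP^1_D(|h(x)|\ge 2k+1)\le 2\rho^{k}$ for all $k\ge 1$: on $\{h(x)\ge 3\}$ the outer boundary $\gamma$ of the connected component of $\{h\ge 3\}$ containing $x$ is a circuit around $x$ along whose inner side $h\equiv 3$; conditioning on $h$ outside the region $D_\gamma$ enclosed by $\gamma$ and using the spatial Markov property of the random Lipschitz model, the law of $h|_{D_\gamma}$ is $\bbP^{3}_{D_\gamma}$, so $h|_{D_\gamma}-2\sim\bbP^1_{D_\gamma}$ and the event $\{h(x)\ge 2k+1\}$ becomes $\{|h(x)|\ge 2(k-1)+1\}$ for the shifted function; induction on $k$ and summation over $\gamma$ give $\bbP^1_D(h(x)\ge 2k+1)\le\rho\cdot\rho^{k-1}$, and the bound for $h(x)\le-(2k+1)$ follows from the $h\mapsto 2-h$ symmetry. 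Then $\Var^1_D(h(x))=\bbE^1_D[(h(x)-1)^2]\le C\sum_{j\ge 1} j\,\bbP^1_D(|h(x)|\ge 2j-1)\le C'\sum_{j\ge 1} j\rho^{j-1}$, finite and uniform in $D$ provided $\rho<1$. It remains to prove $\rho<1$.

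\emph{Step 2: $\rho<1$ from FK percolation.} Fix $D\ni x$ and $(h,\omega,B)\sim\bbP^1_D$. If $x$ is $\omega$-connected to $\partial D$, then by Corollary~\ref{cor:FK cond law given abs height} the sign of $h$ is $+$ on that cluster, so $h(x)=|h(x)|\ge 1$; hence $\{h(x)\le -1\}\subseteq A^c$, where $A:=\{x\leftrightarrow\partial D\text{ in }\omega\}$. Applying the same to the triplet attached to the reflected height function $\phi(h):=2-h$ (which has the same law as $h$, with its own percolation $\tilde\omega$), connectivity of $x$ to $\partial D$ in $\tilde\omega$ forces $\phi(h)(x)=|\phi(h)(x)|\ge 1$, i.e.\ $h(x)\le 1$; hence $\{h(x)\ge 3\}\subseteq B^c$ with $B:=\{x\leftrightarrow\partial D\text{ in }\tilde\omega\}$. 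Therefore $\{|h(x)|\ge 3\}\subseteq A^c\cup B^c$, so $\bbP^1_D(|h(x)|\ge 3)\le 1-\bbP^1_D(A\cap B)$. By the positive-association properties of the percolated height function (Proposition~\ref{prop:pos assoc of percolated height fcns}) the increasing events $A$ and $B$ are positively correlated, so $\bbP^1_D(A\cap B)\ge\bbP^1_D(A)\,\bbP^1_D(B)=\bbP^1_D(A)^2$ (equality by the $\phi$-symmetry). Finally $\{x\leftrightarrow\partial D\}$ is increasing in $\omega$, and by FKG together with Lemma~\ref{lem:FK coupling 2} the $\omega$-marginal dominates $\sfP^{D,+}_{\mathrm{FK}}$ (conditioning on $E_{\mathsf{fix}}(|h|)\subseteq\omega$ only adds edges), which dominates $\sfP^{\bbG,+}_{\mathrm{FK}}$ on $D$; following an infinite open path of $\sfP^{\bbG,+}_{\mathrm{FK}}$ out through $\partial D$ gives $\bbP^1_D(A)\ge\sfP^{\bbG,+}_{\mathrm{FK}}(x\leftrightarrow\infty)=:\theta(x)$, uniformly in $D$. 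Since $\sfP^{\bbG,+}_{\mathrm{FK}}$ a.s.\ has an infinite cluster, $\theta(x)>0$ (FKG and finite energy), so $\rho\le 1-\theta(x)^2<1$.

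\emph{Main obstacle.} The essential input is the positive-association step: one must know that the connectivity events $A$ (in $\omega$) and $B$ (in $\tilde\omega$), which constrain $h(x)$ in opposite directions ($h(x)\ge 1$ versus $h(x)\le 1$), are nonetheless positively correlated under the joint law generated by $(h,B,\tilde B)$. This is precisely what the cluster-swapping / percolated-FKG machinery behind Proposition~\ref{prop:pos assoc of percolated height fcns} is meant to provide, but invoking it demands a careful choice of the monotone observables and of the ordered configuration space underlying both $\omega$ and $\tilde\omega$. The remaining ingredients are more routine: verifying the spatial Markov property used in Step~1, and making the domination chain $\omega\succeq\sfP^{D,+}_{\mathrm{FK}}\succeq\sfP^{\bbG,+}_{\mathrm{FK}}$ together with the ``escape through $\partial D$'' argument precise enough to yield the stated $D$-uniform lower bound on $\bbP^1_D(A)$.
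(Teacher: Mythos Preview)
Your peeling strategy is a genuinely different route from the paper's. The paper couples $(h,\omega)$ to the full-plane FK configuration $\nu\sim\sfP^{\bbG,+}_{\mathrm{FK}}$ so that $\nu|_D\le\omega$, then uses the one-line Lipschitz estimate $h(x)\ge 1-2d_D(x,\omega_\partial)\ge 1-2d_\bbG(x,\nu_\infty)$ (with $\omega_\partial$ the $\omega$-cluster of $\partial D$ and $\nu_\infty$ the infinite $\nu$-cluster) to get a $D$-uniform tail bound on $h(x)$, and finally appeals to Sheffield's log-concavity to convert tightness into bounded variance. Your approach, if it worked, would bypass that log-concavity black box and produce exponential tails by a self-contained recursion.

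There are, however, two genuine gaps. In Step~1, the region $D_\gamma$ on which SMP cleanly applies is the connected component of $D\setminus U_1$ containing $x$, where $U_1$ is the $\{h\le 1\}$-cluster of $\partial D$; the factor you collect when summing over $\gamma$ is therefore $\bbP^1_D(x\notin U_1)$, not $\bbP^1_D(|h(x)|\ge 3)$ --- these differ, since $x$ can sit in a low-height pocket enclosed by an $\{h\ge 3\}$ wall. So your recursion actually proves $\bbP^1_D(h(x)\ge 2k+1)\le\tilde\rho^{\,k}$ with $\tilde\rho:=\sup_D\bbP^1_D(x\notin U_1)$, not $\rho^k$. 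In Step~2, the positive correlation $\bbP(A\cap B)\ge\bbP(A)\bbP(B)$ is \emph{false}: on the two-vertex graph $x\sim y=\partial D$ with constant weight $\mathbf c$ and independent Bernoulli layers $B,\tilde B$, one computes $\bbP(A)=\bbP(B)=\mathbf c/(\mathbf c+2)$ but $\bbP(A\cap B)=\tfrac{\mathbf c}{\mathbf c+2}(1-1/\mathbf c)^2$, and $(1-1/\mathbf c)^2<\mathbf c/(\mathbf c+2)$ for every $\mathbf c>1$. Proposition~\ref{prop:pos assoc of percolated height fcns} concerns a single percolation $(|h|,\omega)$ and says nothing about the joint law of $\omega$ and the reflected $\tilde\omega$. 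The good news is that no correlation is needed once you target $\tilde\rho$: on your event $B$, the $\tilde\omega$-cluster of $\partial D$ carries $\tilde h\ge 1$, i.e.\ $h\le 1$, hence lies inside $U_1$; thus $B\subseteq\{x\in U_1\}$ and $\tilde\rho\le 1-\bbP^1_D(B)=1-\bbP^1_D(A)\le 1-\theta(x)$ uniformly in $D$, directly from your FK-domination chain.
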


\begin{proof}
%
The proof is based on a coupling, which we first explain for a fixed $D$. 
Let $\nu' \sim \sfP^{\bbG,+}_{\mathrm{FK}}$ and let $H$ be a random height function following the law of \textit{absolute}-height under $\bbP^{1}_{D}$, independent of $\nu'$. Given $H$ and $\nu'$, let the conditional law of $(\pi, \omega) \in \{ 0,1 \}^E \times \{ 0,1 \}^E$ be two FK models on $D$ in the standard increasing FK coupling of the following boundary conditions: $\pi$ with the external boundary condition $\nu'_{| D^c}$ and $ \omega \sim \sfP^{D,+}_{\mathrm{FK}} [\cdot \; | \; E_{\mathsf{fix}}(H) \subset \omega]$, so $\pi \subset \omega$. Then, let $h \in \heightfcns_D$ be determined by its the conditional law given $H$ and $\omega$, given by adding signs to $H$ by independent fair coin flips on each component of $\omega$, except a deterministic sign $+1$ given to the components intersecting $\partial D$. Finally, let $\nu \in \{ 0, 1 \}^\bbE$ be given by $\nu_{| D^c} = \nu'_{| D^c}$, $\nu_{| D} = \pi$.
Hence, by Corollary~\ref{cor:FK cond law given abs height}, $(h, \omega) \sim \bbP^{1}_{D}$, by the well-known Gibbs property of $\sfP^{\bbG,+}_{\mathrm{FK}}$, $\nu \sim \sfP^{\bbG,+}_{\mathrm{FK}}$, and by construction, $\nu_{|D} \leq \omega$.

Let $\omega_\partial$ denote the clusters of $\omega$ adjacent to $\partial D$, and let $\nu_\infty$ denote the infinite clusters of $\nu$ (which exist a.s. by assumption). Then, in the coupling above, we have
\begin{align*}
h(x) \geq 1 - 2 d_D(x, \omega_\partial) \geq 1 - 2 d_D(x, \nu_\infty \cup \partial D).
\end{align*}
Observe that the shortest $\bbG$-path from $x$ to $\nu_\infty \cup \partial D$ must either only use edges of $E \subset \bbE$, or enter $\bbE \setminus E$ via $\partial D$; hence, we have
\begin{align*}
d_D(x, \nu_\infty \cup \partial D ) \leq d_\bbG (x, \nu_\infty \cup \partial D ) \leq d_\bbG (x, \nu_\infty ),
\end{align*}
where the second step is trivial.
Hence for any $m \in \bbN$
\begin{align}
\label{eq:FK localization argument}
\bbP_D^1 [ h(x) \leq 1-2m ] 
\leq 
\sfP^{\bbG,+}_{\mathrm{FK}} [d_\bbG (x, \nu_\infty ) \geq m],
\end{align}
and by the symmetry of $h \sim \bbP_D^1$ around $1$, the same bound holds for $ \bbP_D^1 [ h(x) \geq 1+2m ]$.

We are now ready conclude. It suffices to show that for any sequence of domains $D_n$, there exists $C$ such that $\Var^{1}_{D_n} (h(x)) \leq C$ for all $n$. Suppose for a contradiction that there existed a (sub)sequence so that $\Var^{1}_{D_n} (h(x))$ was unbounded. The above probability bounds are uniform in $D$ and tend to zero as $m \uparrow \infty$; hence the collection of laws of $h(x)$ under different $\bbP_{D_n}^1$ is tight. Thus, by picking a further subsequence, we may assume that $h(x)$ under $\bbP_{D_n}^1$ converge weakly to some law $\mu$, i.e., the point masses $\bbP^1_{D_n} [h(x)=k]$ converge to $\mu[\{ k\}]$ for all $k \in \bbN$. It is known~\cite[Lemma~8.2.4]{She05} that the point mass function of $h(x)$ on $\Zodd$, under any $\bbP^1_D$, is log-concave. Also, it is clearly symmetric around $1$ and hence also attains its maximum at $1$. Consequently, this is the case for the point masses of $\mu$, too. Let $M \in \bbN$ then be any point where $\mu[\{ 2M+3\}]/ \mu[\{ 2M + 1 \}] \leq 1- \epsilon $ for some $\epsilon >0$. Then, for all $n$ large enough, $n>n_0$, $\bbP^1_{D_n} [h(x)=2M + 3]/\bbP^1_{D_n} [h(x)=2M+1] \leq 1- \epsilon/2 $, and by the log-concavity of $\bbP^1_{D_n}$, $\bbP^1_{D_n} [h(x)=2m + 3]/\bbP^1_{D_n} [h(x)=2m+1] \leq 1- \epsilon/2 $  for all $m \geq M $, and all $n> n_0$. This exponential decay bounds $\Var^{1}_{D_n} (h(x))$ uniformly over $n>n_0$, and in particular, the sequence $\Var^{1}_{D_n} (h(x))$ cannot diverge, a contradiction.
\end{proof}

\begin{corollary}
\label{cor:localized regimes}
Let $\mathbf{c}_e = \mathbf{c}$ be constant over the edges of $\bbG$ where
\begin{itemize}[noitemsep, topsep=0pt]
\item[i)] $\bbG$ is the planar square lattice $\bbZ^2$ and $\mathbf{c} > 1 + \sqrt{2}$; or
\item[ii)] $\bbG$ is the planar hexagonal (honeycomb) lattice and $\mathbf{c} > 2 + \sqrt{3}$; or
\item[iii)] $\bbG$ is the planar triangular lattice and $\mathbf{c} > \sqrt{3}$.
\end{itemize}
There exists $C>0$ such that for all finite connected subgraphs $D = (V,E) \subset \bbG$ and all $x \in V$,
\begin{align*}
\Var^{1}_{D} (h(x)) \leq C.
\end{align*}
\end{corollary}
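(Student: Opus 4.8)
The plan is to reduce each of the cases (i)--(iii) to the preceding Theorem: it suffices to check that, in each regime, the full-plane wired FK-Ising measure $\sfP^{\bbG,+}_{\mathrm{FK}}$ (with $q=2$ and $p_e = 1-1/\mathbf{c}$) almost surely has an infinite cluster, and to note that the constant $C$ provided by the Theorem, a priori depending on the marked vertex $x$, may be taken uniform over $x \in \bbV$: each of the three lattices is vertex-transitive, so $\Var^1_{\phi(D)}(h(\phi(x))) = \Var^1_D(h(x))$ for any lattice automorphism $\phi$, and the Theorem's bound transports along the single vertex orbit. (Alternatively, the constant built in the proof of the Theorem depends only on the law of $d_\bbG(x,\nu_\infty)$, which is $x$-independent by translation invariance of $\sfP^{\bbG,+}_{\mathrm{FK}}$.)

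To verify percolation of $\sfP^{\bbG,+}_{\mathrm{FK}}$ I would use the Edwards--Sokal correspondence already recorded in Lemma~\ref{lem:FK coupling}: writing $\mathbf{c} = e^{2\beta}$ (so that $p_e = 1 - e^{-2\beta}$ is the standard FK-Ising edge probability), passing to the infinite-volume limit $D \uparrow \bbG$ in that coupling with $+$ boundary conditions identifies $\sfP^{\bbG,+}_{\mathrm{FK}}[\,x \leftrightarrow \infty\,]$ with the spontaneous magnetization $m^*(\beta)$ of the ferromagnetic Ising model on $\bbG$. Classically $m^*(\beta) > 0$ exactly for $\beta > \beta_c(\bbG)$, i.e.\ for $\mathbf{c} > \mathbf{c}_c(\bbG) := e^{2\beta_c(\bbG)}$, and in that case translation invariance and ergodicity of $\sfP^{\bbG,+}_{\mathrm{FK}}$ promote positive percolation probability at $x$ to the almost sure existence of an infinite cluster (unique, by Burton--Keane); see, e.g., \cite{Dum17}.

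It remains only to match thresholds. The planar Ising critical points are classical: $\sinh(2\beta_c) = 1$ on $\bbZ^2$, $\sinh(2\beta_c) = 1/\sqrt 3$ on the triangular lattice, and $\sinh(2\beta_c) = \sqrt 3$ on the hexagonal lattice (the latter two following from the square-lattice value via the star--triangle transformation together with the planar duality between the triangular and hexagonal lattices). Since $\sinh(2\beta_c) = \tfrac{1}{2}(\mathbf{c}_c - \mathbf{c}_c^{-1})$, the root $\mathbf{c}_c > 1$ is $1+\sqrt 2$ on $\bbZ^2$, $\sqrt 3$ on the triangular lattice, and $2+\sqrt 3$ on the hexagonal lattice, which are exactly the thresholds appearing in (i), (iii) and (ii). Hence in each case $\sfP^{\bbG,+}_{\mathrm{FK}}$ percolates and the Theorem applies. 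I expect no genuine obstacle here beyond careful bookkeeping of conventions --- confirming that the FK parameter $p_e = 1-1/\mathbf{c}$ corresponds under $\mathbf{c} = e^{2\beta}$ to the Ising critical point, and that the tabulated values $1+\sqrt2$, $2+\sqrt3$, $\sqrt3$ are indeed $e^{2\beta_c}$ for the square, hexagonal and triangular lattices --- all of which the elementary computation above settles; no new probabilistic input beyond the cited Theorem and classical Ising/FK facts is needed.
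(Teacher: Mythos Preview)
Your proposal is correct and follows essentially the same approach as the paper: both apply the preceding Theorem by verifying that the wired full-plane FK-Ising measure percolates above the stated thresholds, and then use lattice symmetry to make $C$ uniform in $x$. The only cosmetic difference is in how the thresholds are identified---the paper cites the FK critical probabilities $p_c$ directly from~\cite{BefDum12} (together with~\cite{DLT} for supercritical percolation), whereas you pass through the Edwards--Sokal coupling to the Ising magnetization and compute $\mathbf{c}_c = e^{2\beta_c}$ from the classical values of $\sinh(2\beta_c)$; these are of course equivalent via $p_e = 1-1/\mathbf{c}$.
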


\begin{proof}
The  FK-Ising model with constant edge parameters $p_e = p$ is known to be critical at~\cite[Theorems~1 and~4]{BefDum12}
\begin{itemize}[noitemsep, topsep=0pt]
\item[i)] $p=p_c = \tfrac{\sqrt{2}}{ 1 + \sqrt{2} }$ on the square lattice;
\item[ii)] $p = p_c = \sqrt{3}-1$ 
on the hexagonal lattice; and
\item[iii)] $p = p_c =  1 - \tfrac{ 1 }{ \sqrt{3} }$ on the triangular lattice,
\end{itemize}
and for $p>p_c$, the related full-plane measures are known to almost surely exhibit an infinite cluster, by~\cite[Corollary~1.3]{DLT} and ergodicity. The result then follows from the previous corollary, where, due to the self-similarity of the lattice, $C$ can be taken independent of $x$.
\end{proof}

\section{Basic properties of the models}
\label{sec:basics}

In this section, we are again working on a fixed finite connected graph $D=(V,E)$.

\subsection{Height functions}
\label{subsec:SMP and FKG}

This subsection reviews well-known results. We start with the Spatial Markov property (SMP), whose proof we leave for the reader. We say that a subgraph $D=(V, E) \subset D' = (V', E')$ is \textit{generated by} $V$ if $E = \{ e \in E': e$ traverses between two vertices of $ V \}$.

\begin{lemma}
\label{lem:SMP}
Let $D' = (V', E')$ be a finite connected graph and $D=(V,E)$ its connected subgraph generated by $V$; let $\xi : \Delta \to \Zodd$ be a boundary condition for $\heightfcns_{D'} $ with $\Delta \cap V = \partial D$, and let $h \sim \bbP^{\xi}_{D'}$. For $A \subset V'$ with $A \cap V = \partial D $ and a $\bbP^{\xi}_{D'}$-possible height configuration $ h_{|A} = \alpha$ on $A$, the conditional law of $h_{|D}$ given $ h_{|A} = \alpha$ is $\bbP^{\alpha_{| \partial D}}_D $.
\end{lemma}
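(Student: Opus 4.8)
The plan is the standard domain-Markov argument, which here reduces to bookkeeping of the edge-factorized weight $W$. I would fix a height function $g\in\heightfcns_D$ with $g_{|\partial D}=\alpha_{|\partial D}$ and compute $\bbP^{\xi}_{D'}[\,h_{|D}=g \mid h_{|A}=\alpha\,]$ straight from the definition of $\bbP^{\xi}_{D'}$; for $g$ with $g_{|\partial D}\neq\alpha_{|\partial D}$ both this quantity and $\bbP^{\alpha_{|\partial D}}_D[\{g\}]$ vanish (the former since $A\supseteq\partial D$ forces $\{h_{|D}=g\}\cap\{h_{|A}=\alpha\}=\emptyset$), so only the matching case needs work.

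The one structural input is that, since $D$ is generated by $V$, the edge set splits as $E'=E\sqcup(E'\setminus E)$ with $E$ the set of edges of $E'$ having both endpoints in $V$; hence every edge of $E'\setminus E$ meets $V$ in at most one vertex, and such a vertex necessarily lies in $\partial D$. Consequently, for every $h\in\heightfcns_{D'}$ with $h_{|V}=g$,
\[
W(h)=\Big(\prod_{e=\langle v,w\rangle\in E}\mathbf{c}_e^{\bbI\{g(v)=g(w)\}}\Big)\cdot\prod_{e=\langle v,w\rangle\in E'\setminus E}\mathbf{c}_e^{\bbI\{h(v)=h(w)\}}=:W_D(g)\cdot R(h_{|V'\setminus V}),
\]
where in the second factor $h$ is evaluated only on $(V'\setminus V)\cup\partial D$ and the values on $\partial D$ equal the fixed $\alpha_{|\partial D}$, making it a function $R$ of $h_{|V'\setminus V}$ alone. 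Moreover, since $g$ is already $2$-Lipschitz along $E$, an extension $h=(g,h'')$ by some $h''\colon V'\setminus V\to\Zodd$ is a legitimate height function on $D'$ precisely when it is $2$-Lipschitz along $E'\setminus E$, a condition on $h''$ and $\alpha_{|\partial D}$ only.

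Summing $W(h)/Z^{\xi}_{D'}$ over all $h$ in the event $\{h_{|D}=g\}\cap\{h_{|A}=\alpha\}=\{h_{|V}=g\}\cap\{h_{|A\setminus V}=\alpha_{|A\setminus V}\}$ respecting the boundary condition $\xi$ on $\Delta\setminus V$ then gives $\bbP^{\xi}_{D'}[h_{|D}=g,\,h_{|A}=\alpha]=W_D(g)\cdot C$, where $C=C(\alpha)$ is the sum of $R(h'')/Z^{\xi}_{D'}$ over the admissible $h''\colon V'\setminus V\to\Zodd$ with $h''_{|A\setminus V}=\alpha_{|A\setminus V}$ and $h''_{|\Delta\setminus V}=\xi_{|\Delta\setminus V}$; crucially $C$ does not depend on $g$, only on the fixed data $\alpha_{|\partial D}$, $\alpha_{|A\setminus V}$, $\xi_{|\Delta\setminus V}$. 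Summing over all $g\in\heightfcns_D$ with $g_{|\partial D}=\alpha_{|\partial D}$ yields $\bbP^{\xi}_{D'}[h_{|A}=\alpha]=C\sum_g W_D(g)=C\cdot Z^{\alpha_{|\partial D}}_D$, which is positive because $\alpha$ is $\bbP^{\xi}_{D'}$-possible, so $C>0$; dividing, $\bbP^{\xi}_{D'}[h_{|D}=g\mid h_{|A}=\alpha]=W_D(g)/Z^{\alpha_{|\partial D}}_D=\bbP^{\alpha_{|\partial D}}_D[\{g\}]$, which is the claim.

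I do not expect a genuine obstacle: the entire content is the edge-product form of $W$ together with the observation that $E'\setminus E$ touches $D$ only along $\partial D$ — the one point that must not be glossed over — so that the inside and the outside of $D$ interact exclusively through the boundary values $\alpha_{|\partial D}$. This is exactly what the hypothesis that $D$ is generated by $V$ provides.
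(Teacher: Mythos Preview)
Your argument is correct and is exactly the standard factorization the paper has in mind; indeed the paper states this lemma and then writes ``whose proof we leave for the reader,'' so there is no alternative approach to compare against. The only substantive point is the one you flag yourself---that $D$ being generated by $V$ forces every edge of $E'\setminus E$ to meet $V$ only in $\partial D$---and you handle it cleanly.
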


We now move on to the FKG inequality. Let us define the \emph{partial order relation} $\preceq$ on $\heightfcns_D$ by setting $h \preceq h'$ for $h, h' \in \heightfcns_D$ if and only if $h(x) \leq h'(x)$ for all $x \in V$. 
A function $F: \heightfcns_D \to \bbR$ is  \emph{increasing} if
$h \preceq h' $ implies that $ F(h) \leq F(h')$, and an event $A$ is {\em increasing} if its indicator function $\mathbbm{1}_A$ is an increasing function. 

A similar partial order is naturally defined for single-valued boundary conditions on a fixed boundary set $\Delta$. More generally, we say that an admissible boundary condition $\xi$ on $\Delta$ is \textit{interval-valued}, if it is set-valued of the form $\xi(x)=[a(x), b(x)]$, where $a, b: \Delta \to \Zodd$ satisfy $a(x) \leq b(x)$ for all $x \in \Delta$. The partial order above is extended to interval-valued boundary conditions $\xi, \xi'$ on $\Delta$, by defining that $\xi \preceq \xi'$ if $a(x) \leq a'(x)$ and $b(x) \leq b'(x)$ for all $x \in \Delta$.

The results below are stated in terms of expectations of increasing functions, but we will mostly apply them to probabilities of increasing events or their complements.

\begin{proposition}\label{prop: CBC and FKG}\label{prop:FKG}
  Let $D$ be a finite connected graph, $\xi \preceq \xi'$ two interval (or single) valued boundary conditions on $\Delta$, and $F,G: \heightfcns_D \to \bbR$ increasing functions; then, we have
    \begin{align}
    	\bbE^{\xi} [F(h)G(h)] &\geq \bbE^{\xi} [F(h)] \bbE^{\xi} [G(h)], \tag{FKG}\label{eq:FKG-h}\\
    	\bbE^{\xi'} [F (h)] &\geq \bbE^{\xi} [F (h)].\tag{CBC}\label{eq:CBC-h}    
    \end{align}
\end{proposition}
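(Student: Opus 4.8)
The plan is to prove the FKG inequality for height functions by reducing it to a standard lattice-theoretic fact (the Holley/FKG inequality, or the Ahlswede--Daykin/four functions theorem), and then to deduce the comparison-of-boundary-conditions inequality (CBC) from (FKG) by the usual monotone-coupling argument. Throughout we work on the fixed finite connected graph $D$, with $\xi$ an interval-valued admissible boundary condition on $\Delta$.

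\textbf{Step 1: the state space is a distributive lattice.} First I would observe that the set $\heightfcns_D^\xi$ of height functions $h$ with $h(x) \in \xi(x)$ for all $x \in \Delta$ is closed under the pointwise operations $h \wedge h' := \min(h, h')$ and $h \vee h' := \max(h, h')$. Indeed, the $2$-Lipschitz and parity constraints are preserved by pointwise min and max (if $|h(v) - h(w)| \le 2$ and $|h'(v) - h'(w)| \le 2$ then $|\min(h,h')(v) - \min(h,h')(w)| \le 2$, and similarly for max), and the interval constraint $h(x) \in [a(x), b(x)]$ on $\Delta$ is likewise preserved. Hence $(\heightfcns_D^\xi, \preceq)$ is a finite distributive lattice. (This is where interval-valued, as opposed to arbitrary set-valued, boundary conditions matter: a union of intervals is not an interval, but a ``box'' of intervals is a sublattice of $\Zodd^V$.)

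\textbf{Step 2: the weight is log-supermodular (the FKG lattice condition).} Next I would check that $W$ satisfies $W(h \vee h') \, W(h \wedge h') \ge W(h) \, W(h')$. Since $W(h) = \prod_{e = \langle v,w\rangle} \mathbf{c}_e^{\bbI\{h(v) = h(w)\}}$ with $\mathbf{c}_e \ge 1$, it suffices to verify, edge by edge, that
\begin{align*}
\bbI\{h\vee h'(v) = h\vee h'(w)\} + \bbI\{h\wedge h'(v) = h\wedge h'(w)\} \ge \bbI\{h(v)=h(w)\} + \bbI\{h'(v)=h'(w)\}.
\end{align*}
This is a short finite case check on the four values $h(v), h(w), h'(v), h'(w)$: whenever both $h(v) = h(w)$ and $h'(v) = h'(w)$, the left side is clearly $2$; and if exactly one equality holds, the left side is at least $1$ (the relevant common value reappears as either the min or the max at both endpoints). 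By Holley's inequality / the FKG theorem for distributive lattices (see, e.g., \cite{She05} or standard references), a strictly positive log-supermodular density on a finite distributive lattice is positively associated, giving \eqref{eq:FKG-h}.

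\textbf{Step 3: CBC via monotone coupling.} For \eqref{eq:CBC-h}, the cleanest route is Holley's theorem (the monotonicity criterion): given $\xi \preceq \xi'$, I would show that the two measures $\bbP^\xi_D$ and $\bbP^{\xi'}_D$, viewed on the common ambient lattice $\Zodd^V$ (assigning weight $0$ to configurations violating the respective constraints), satisfy the Holley domination criterion $\mu'(h \vee h') \, \mu(h \wedge h') \ge \mu(h)\, \mu'(h')$; the log-supermodularity from Step 2, together with the fact that $h \in \heightfcns^{\xi}_D$ and $h' \in \heightfcns^{\xi'}_D$ imply $h \wedge h' \in \heightfcns^\xi_D$ and $h \vee h' \in \heightfcns^{\xi'}_D$ (again using that $\xi \preceq \xi'$ are interval-valued, so the lower endpoints dominate on the min side and upper endpoints on the max side), yields this. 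Holley's theorem then produces a coupling of $h \sim \bbP^\xi_D$ and $h' \sim \bbP^{\xi'}_D$ with $h \preceq h'$ almost surely, from which $\bbE^{\xi'}[F(h)] \ge \bbE^\xi[F(h)]$ for increasing $F$ is immediate. Alternatively one can cite the analogous statement already recorded in \cite{She05} for this model.

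\textbf{Main obstacle.} The genuinely model-specific content, and the step I would be most careful with, is \emph{Step 1}: verifying that the admissible configuration set is a sublattice — i.e., that min and max of two height functions are again height functions and still respect the (interval) boundary condition. Once that is in place, Steps 2 and 3 are the textbook FKG machinery. The role of the hypothesis that $\xi$ is interval-valued (rather than general set-valued) is precisely to make the boundary constraint define a sublattice; this is worth flagging explicitly.
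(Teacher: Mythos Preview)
The paper does not give its own proof of this proposition; it is simply recorded as a ``well-known result'' at the opening of Section~\ref{subsec:SMP and FKG}. Your argument is correct and follows the standard route via the FKG lattice condition and Holley's theorem.

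One small point worth tightening in Step~2: the edge-wise inequality you assert is \emph{not} a universal inequality on four arbitrary integers --- e.g.\ $h(v)=h(w)=3$, $h'(v)=1$, $h'(w)=5$ would give right-hand side $1$ and left-hand side $0$. What saves you is precisely the $2$-Lipschitz constraint on $h'$ together with odd-integer values: if $h'(v)\neq h'(w)$ then $|h'(v)-h'(w)|=2$, leaving no odd integer strictly between them, so the common value $a=h(v)=h(w)$ must lie weakly on one side and reappear as the min or the max at both endpoints. You implicitly use this, and it would be cleaner to say so rather than presenting it as a context-free case check.

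For comparison, when the paper does carry out the analogous (and harder) verification for the absolute-value version $|h|$ in Appendix~\ref{app:basics}, it works with the single-site form of Holley's criterion (Lemma~\ref{lem:Holley}) rather than the global lattice condition you use; the two routes are equivalent here, and yours is arguably the more transparent one for the signed height function.
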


\subsection{Absolute heights}
\label{subsec: SMP and FKG for abs val}

The proofs for this section are given in Appendix~\ref{app:basics} for the fluency of reading.

We say that an admissible set-valued boundary condition $\xi$ on $\Delta$ is an \textit{absolute-value boundary condition} if it is on some $S \subset \Delta$ (possibly empty) interval-valued $[a(x), b(x)]$ where $-1 \leq a(x) \leq b(x)$ and $a(x)+ b(x) \geq 2$, and on $\Delta \setminus S$ of the form $\pm [a(x), b(x)]$ where $1 \leq a(x) \leq b(x)$.
For two absolute-value boundary conditions $\xi, \xi'$ on $\Delta$, we denote $\xi \preceq_{abs} \xi'$ if $S \subset S'$ and 
\begin{align*}
\begin{cases}
a(x) \leq a'(x) \quad \text{and} \quad b(x) \leq b'(x) \qquad \text{for all } x \in S \text{ and } x \in \Delta \setminus S' \\
b(x) \leq |a'(x)| \qquad \text{for all } x \in S' \setminus S.
\end{cases}
\end{align*}
(The reader may verify that $\preceq_{abs}$ indeed is a partial order on the set of absolute-value boundary conditions.) Note that an interval-valued boundary condition $\xi \succeq \{ \pm 1 \}$ is also an absolute-value boundary condition, and for two such conditions, $\xi \preceq \xi'$ and $\xi \preceq_{abs} \xi'$ are equivalent.

\begin{proposition}\label{prop:|h|-monotonicity}
	Let $D$ be a finite connected graph, $\xi, \xi'$ two absolute-value boundary conditions on $\Delta$ with $\xi \preceq_{abs} \xi' $, and $F,G: \heightfcns_D \to \bbR$ increasing functions; then we have
	\begin{align} 
			 \bbE^{\xi}_D \big[F( \vert h \vert) G( \vert h \vert)\big] 
		&\geq \bbE^{\xi }_D \big[F( \vert h \vert)\big]  \bbE^{\xi}_D \big[ G( \vert h \vert)\big],
		 \tag{FKG-|h|} \label{eq:FKG-|h|}\\
		\bbE^{\xi'}_D \big[F( \vert h \vert) \big]
		&\geq 
		\bbE^{\xi}_D \big[F( \vert h \vert) \big].
		\tag{CBC-|h|} \label{eq:CBC-|h|}
	\end{align}
\end{proposition}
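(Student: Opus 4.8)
My plan is to deduce both inequalities from Holley's inequality applied to the pushforward measures $\mu^{\xi}:=(|\cdot|)_{\ast}\bbP^{\xi}_{D}$, regarded as measures on the set $\heightfcns^{+}_{D}$ of \emph{positive} height functions on $D$ with the pointwise partial order. Since pointwise maximum and minimum preserve positivity, oddness and the $2$-Lipschitz property, $\heightfcns^{+}_{D}$ is a finite distributive lattice (finiteness uses admissibility of $\xi$ on $\Delta\neq\emptyset$). By Holley's inequality it then suffices to prove, for all $H_{1},H_{2}\in\heightfcns^{+}_{D}$,
\begin{align*}
\mu^{\xi'}(H_{1}\vee H_{2})\,\mu^{\xi}(H_{1}\wedge H_{2})\ \geq\ \mu^{\xi'}(H_{1})\,\mu^{\xi}(H_{2})
\end{align*}
(with $\xi=\xi'$ in the case of \eqref{eq:FKG-|h|}), together with the supporting fact that $H_{1}\in\supp\mu^{\xi'}$ and $H_{2}\in\supp\mu^{\xi}$ imply $H_{1}\vee H_{2}\in\supp\mu^{\xi'}$ and $H_{1}\wedge H_{2}\in\supp\mu^{\xi}$.

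The support fact is routine but fiddly: at each $x\in\Delta$ the set of admissible values $\{\,|h(x)|:h\sim\xi\,\}$ is an interval of $\Zodd$, and the definition of $\preceq_{abs}$ is precisely what is needed so that, for $\xi$ against $\xi'$, the respective intervals have ordered lower and upper endpoints and the compulsory signs on $S$ are all ``$+$'' — a short case analysis using the Lipschitz property then closes it.

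For the main inequality I would use Lemma~\ref{cor:signs are Ising} to factor $\mu^{\xi}(H)\propto P(H)\cdot S^{\xi}(H)$, where $P(H)=\prod_{e=\langle v,w\rangle:\ \max\{H(v),H(w)\}\geq 3}\mathbf{c}_{e}^{\bbI\{H(v)=H(w)\}}$ is the boundary-condition-independent weight of the fixed-sign edges and $S^{\xi}(H)$ is the partition function of the sign Ising model on $\calD(H)$ with the ``$+$''/free boundary conditions induced by $\xi$. The inequality then splits into two pieces. The first, $P(H_{1}\vee H_{2})P(H_{1}\wedge H_{2})\geq P(H_{1})P(H_{2})$, I would reduce edge-by-edge to the supermodularity of $(a,b)\mapsto\bbI\{a=b\geq 3\}$ over $2$-Lipschitz pairs of positive odd integers — a finite check, the only nontrivial case being an edge across which $H_{1}-H_{2}$ changes sign (where $P$ strictly gains, and where the Lipschitz restriction is essential). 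The second piece is the log-supermodularity of the sign partition function, $S^{\xi'}(H_{1}\vee H_{2})\,S^{\xi}(H_{1}\wedge H_{2})\geq S^{\xi'}(H_{1})\,S^{\xi}(H_{2})$, and this I expect to be the main obstacle.

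The difficulty is that this last inequality compares Ising (equivalently FK-Ising) partition functions on the four \emph{distinct} contracted graphs $\calD(H_{1}\vee H_{2})$, $\calD(H_{1}\wedge H_{2})$, $\calD(H_{1})$, $\calD(H_{2})$, whose quotient structure gets finer as the absolute height grows, and moreover with different boundary conditions when $\xi\neq\xi'$. I would obtain it from the cluster-swapping technique of~\cite{She05}: couple the two sign systems and swap signs over the connected components on which one absolute height strictly exceeds the other, exploiting that on any such component the sign is forced to be \emph{constant} (because $|h|\geq 3$ there, by the Lipschitz property). An alternative is to bootstrap it from FKG for the sign Ising/FK-Ising model together with the monotonicity of $\omega$ in $H$ and in the graph (Lemma~\ref{lem:FK coupling 2}). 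In either approach, transitivity of $\preceq_{abs}$ lets one reduce to elementary increments of the boundary condition, which keeps the bookkeeping of induced Ising boundary conditions under control.
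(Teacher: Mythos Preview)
Your route via the full lattice condition differs from the paper's, which verifies only the \emph{single-site} Holley criterion (Lemma~\ref{lem:Holley}): for each vertex $x$ and each pair $\chi\preceq\chi'$ on $D\setminus\{x\}$, the conditional law of $|h(x)|$ given $\chi$ is stochastically below that given $\chi'$. A case analysis on $m:=\min_{N_x}\chi$ reduces this to elementary weight comparisons when $m\geq 3$; the only nontrivial case is $m=1$ with $\chi'$ also hitting $1$ on $N_x$. There the paper shows, via a one-vertex injection, that
\[
\frac{\mu[|h(x)|=3\mid\chi]}{\mu[|h(x)|=1\mid\chi]}
=(\text{monotone prefactor})\times\bbP^{\xi}\bigl[\mathrm{sign}(h)\text{ constant on }N_x\cup\{x\}\ \big|\ |h|=H_1\bigr],
\]
and likewise for $\mu'$. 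The prefactors are ordered because $\chi\preceq\chi'$, and the sign-probability comparison follows directly from the FK-Ising coupling together with the monotone coupling of $\omega$ in $H$ and in the boundary data (Corollary~\ref{cor:FK cond law given abs height} and Lemma~\ref{lem:FK coupling 2}).

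Your $P$-part is fine (the edge-by-edge supermodularity check goes through). The gap is in the $S$-part. Your swap uses that on a component of $\{H_1>H_2\}$ the sign of $h_1$ is forced constant; true, but this does not yield a valid $\sigma^{\vee}$. In the $2$-Lipschitz odd setting the sets $\{H_1>H_2\}$ and $\{H_2>H_1\}$ can be \emph{adjacent} --- take $H_1=(3,1)$ and $H_2=(1,3)$ on a single edge $e$. Then $e\in E_{\mathsf{fix}}(H_1)\cap E_{\mathsf{fix}}(H_2)=E_{\mathsf{fix}}(H_1\vee H_2)$, so $\sigma^{\vee}$ must be constant across $e$; yet your rule assigns $\sigma_1$ on one endpoint and $\sigma_2$ on the other, and there is no reason these agree. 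The naive swap is therefore not even well-defined as a map into compatible sign pairs, let alone weight-preserving. Your alternative route via Lemma~\ref{lem:FK coupling 2} gives stochastic domination of $\omega$, which is a statement about expectations of increasing functionals, not a partition-function product inequality of the global form $S^{\xi'}(H_1\vee H_2)\,S^{\xi}(H_1\wedge H_2)\geq S^{\xi'}(H_1)\,S^{\xi}(H_2)$; how to extract the latter from the former is not indicated. The single-site approach is precisely what localizes this obstruction to a one-vertex comparison, where the FK monotonicity is directly sufficient.
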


The proof of the above statement is a verification (although as such nontrivial) of the well-known Holley and FKG criteria; see appendix. We remark that~\eqref{eq:CBC-|h|} would \textit{not} hold true if we directly considered integer-valued one-Lipschitz functions instead of odd two-Lipschitz functions. An easy counterexample is obtained with $\mathbf{c}_e \equiv 1$ on the linear three-vertex graph $(1, 2, 3)$, by comparing boundary conditions $\xi, \xi'$ on $\Delta = \{ 1, 3\}$ with $\xi(1) = \xi' (1) = 0$ and $\xi(3)=0$, $\xi'(3) = \{ \pm 1\}$. Likewise, it is easily verified that absolute-value boundary conditions do \textit{not} satisfy Spatial Markov property. The following, \textit{inequality} form SMP is however very useful.

\begin{proposition}
\label{prop:SMP ineq for abs val}
Let $D' = (V', E')$ be a finite connected graph, $D=(V,E)$ its connected subgraph generated by $V$. Let $\xi$ be an absolute-value boundary condition on $\Delta \supset \partial D$, with $|\xi|$ single-valued on $\partial D$. Then, for any $F: \heightfcns_D \to \bbR$ increasing,
	\begin{align}
	\label{eq:bdary cut}
	\tag{SMP-|h|}
	\bbE^{\xi}_{D'} \big[F( \vert h_{|D} \vert) \big] \geq \bbE^{\xi_{|D}}_{D} \big[F( \vert h \vert) \big].
	\end{align}
\end{proposition}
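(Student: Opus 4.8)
The plan is to reduce the inequality to the monotonicity result \eqref{eq:CBC-|h|} of Proposition~\ref{prop:|h|-monotonicity} via a conditioning-and-averaging argument, together with the genuine (equality) Spatial Markov property for height functions (Lemma~\ref{lem:SMP}). The point is that although absolute-value boundary conditions do not enjoy an exact SMP, once we condition $h\sim\bbP^\xi_{D'}$ on its restriction to $\partial D$ we land back inside the world of height functions (not absolute heights), where Lemma~\ref{lem:SMP} applies exactly, and the resulting inner boundary conditions on $\partial D$ can be compared in the $\preceq_{abs}$ order to the target condition $|\xi_{|D}|$.

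Concretely, I would first condition on the event $\{h_{|\partial D}=\alpha\}$ for an admissible single-valued $\alpha:\partial D\to\Zodd$. By Lemma~\ref{lem:SMP} (applied with $A=\partial D$), the conditional law of $h_{|D}$ given this event is exactly $\bbP^{\alpha}_D$, so the conditional law of $|h_{|D}|$ is the law of $|h|$ under $\bbP^\alpha_D$. Now observe that since $\xi$ is an absolute-value boundary condition whose absolute value $|\xi|$ is single-valued on $\partial D$, every $\bbP^\xi_{D'}$-possible value $\alpha(x)$ satisfies $|\alpha(x)|=|\xi|(x)$ for $x\in\partial D$, i.e.\ $\alpha$ is obtained from $|\xi_{|D}|$ by a choice of signs. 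The single-valued boundary condition $\alpha$ is then, as an absolute-value boundary condition, $\succeq_{abs}$ the boundary condition $|\xi_{|D}|$ (which is supported entirely on the "$\pm$" part, since $|\xi|(x)\ge 1$; here we use $a(x)=b(x)=|\xi|(x)$ and the defining inequality $b(x)\le |a'(x)|$ reads $|\xi|(x)\le |\alpha(x)|$, an equality). Hence \eqref{eq:CBC-|h|} gives
\begin{align*}
\bbE^{\alpha}_D\big[F(|h|)\big]\ \geq\ \bbE^{|\xi_{|D}|}_D\big[F(|h|)\big]\ =\ \bbE^{\xi_{|D}}_D\big[F(|h|)\big],
\end{align*}
the last equality because $F(|h|)$ only sees $|h|$, so replacing $\xi_{|D}$ by $|\xi_{|D}|$ (same absolute values allowed) does not change the expectation — or, more carefully, one checks $\bbP^{\xi_{|D}}_D$ and $\bbP^{|\xi_{|D}|}_D$ induce the same law on $|h|$ by the sign-flip symmetry of $W$. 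Averaging the displayed inequality over $\alpha$ with respect to the law of $h_{|\partial D}$ under $\bbP^\xi_{D'}$, the left-hand side becomes exactly $\bbE^\xi_{D'}[F(|h_{|D}|)]$ by the tower property, yielding the claim.

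The main thing to get right — and the step I expect to be the only real obstacle — is the bookkeeping around the $\preceq_{abs}$ order: one must verify that $|\xi_{|D}| \preceq_{abs} \alpha$ holds for \emph{every} sign assignment $\alpha$ compatible with $|\xi|$ on $\partial D$, including checking that $|\xi_{|D}|$ indeed qualifies as an absolute-value boundary condition on $\partial D$ (it has empty "$S$"-part since all values have absolute value $\ge 1$) and that the required inequalities in the definition of $\preceq_{abs}$ reduce to the trivial equality $|\xi|(x)=|\alpha(x)|$. A secondary, routine point is justifying that $\bbP^{\xi_{|D}}_D$ and $\bbP^{|\xi_{|D}|}_D$ assign the same distribution to $|h|$; this is immediate from the fact that $W(h)=W(-h)$ and that flipping the global sign is a measure-preserving bijection between height functions with prescribed absolute boundary values. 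Everything else is the tower property plus Lemma~\ref{lem:SMP} and \eqref{eq:CBC-|h|} quoted verbatim.
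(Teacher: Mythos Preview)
Your conditioning-and-averaging strategy has a genuine gap: the pointwise inequality $\bbE^{\alpha}_D[F(|h|)] \ge \bbE^{\xi_{|D}}_D[F(|h|)]$ that you claim for every $\bbP^\xi_{D'}$-possible realization $\alpha$ of $h_{|\partial D}$ is false. The first issue is that a single-valued $\alpha$ with a negative entry is \emph{not} an absolute-value boundary condition in the paper's sense (the $S$-part requires $a\ge -1$ and $a+b\ge 2$; the $\pm$-part is sign-symmetric), so \eqref{eq:CBC-|h|} simply does not apply to such $\alpha$. More importantly, the inequality itself fails: take $D$ the three-vertex path $v_1\!-\!v_2\!-\!v_3$ with $\xi_{|D}=\{\pm 1\}$ on $\partial D=\{v_1,v_3\}$. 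For $\alpha=(1,-1)$ one has $h(v_2)\in\{-1,1\}$ deterministically, so $\bbP^{(1,-1)}_D[|h(v_2)|\ge 3]=0$, whereas $\bbP^{\{\pm 1\}}_D[|h(v_2)|\ge 3]=1/(\mathbf{c}^2+2\mathbf{c}+2)>0$. Since $(1,-1)$ occurs with positive probability under $\bbP^\xi_{D'}$ for any $D'\supsetneq D$ with $\xi_{|\partial D}=\{\pm 1\}$, your averaging step cannot be salvaged by a pointwise bound. (Your side claim that $\bbP^{\xi_{|D}}_D$ and $\bbP^{|\xi_{|D}|}_D$ induce the same law on $|h|$ also breaks once $S\cap\partial D$ contains two or more vertices: on the same path, $\bbP^{1,1}_D[|h(v_2)|\ge 3]=1/(\mathbf{c}^2+2)\neq 1/(\mathbf{c}^2+2\mathbf{c}+2)$; global sign flip only identifies $\alpha$ with $-\alpha$, not with $|\alpha|$.)

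The paper proceeds differently: it verifies Holley's criterion directly between the law of $|h|$ under $\bbP^{\xi_{|D}}_D$ (extended by $1$'s to $D'\setminus D$) and the law of $|h|$ under $\bbP^\xi_{D'}$. The one-site comparison is routine except when the site and all its neighbours have absolute height in $\{1,3\}$; there the transition ratio is rewritten (via an injection) as an Ising sign-agreement probability on the neighbourhood, and Lemma~\ref{lem:FK coupling 2} (monotonicity of the coupled FK percolation in the domain and in the set of forced-open edges) shows that the larger graph $D'$ makes this agreement probability larger. This domain-monotonicity of the FK coupling is exactly the mechanism your pointwise-in-$\alpha$ argument misses, and it does not factor through a comparison of boundary conditions on $D$ alone.
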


\subsection{Heights with percolations}

The (easy) proofs for this subsection are postponed to Appendix~\ref{app:roskis}. We start with a SMP.

\begin{lemma}
\label{lem:SMP with percolation}
Let $D' = (V', E')$ be a finite connected graph and $D=(V,E)$ its connected subgraph; let $E(V)=E$ (resp. $ E(V^c) $, resp. $E(V, V^c)$) denote the edges that connect two vertices in $V$ (resp. two in $V^c$, resp. one in $V$ to one in $V^c$) and let $h \sim \bbP^{\xi}_{D'}$. Given any ($\bbP^{\xi}_{D'}$-possible) configuration of $(h_{|V^c}, \omega_{| E (V^c)}, B_{| E (V^c)})$, \emph{and the event} $\omega_{|E(V, V^c)} = 0$, the conditional law of $(h_{|V}, \omega_{|E}, B_{|E})$ is $\bbP^{\alpha }_D$, where $\alpha$ is the boundary condition obtained by extending $\xi_{|V \setminus \partial D}$ with the set-valued boundary condition $\{ \pm 1 \}$ to the endpoint vertices $\partial D$ of $E(V, V^c)$ in $V$.
\end{lemma}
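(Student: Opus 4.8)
The plan is to compute the conditional law directly, exploiting the product structure of the joint weight of the triplet $(h,\omega,B)$ over the three edge-classes $E=E(V)$, $E(V^c)$ and $E(V,V^c)$; this is the $\bbP^{\xi}$-analogue of the ordinary Spatial Markov property (Lemma~\ref{lem:SMP}) with the percolation variables folded in. Recall that under $\bbP^{\xi}_{D'}$ the triplet $(h,\omega,B)$ has (unnormalized) probability
\[
\mathbb{I}\{h(x)\in\xi(x)\ \forall x\in\Delta\}\;W(h)\prod_{e\in E'}p_e^{B(e)}(1-p_e)^{1-B(e)},\qquad W(h)=\prod_{e=\langle v,w\rangle\in E'}\mathbf{c}_e^{\mathbb{I}\{h(v)=h(w)\}},
\]
on those triplets for which $\omega$ is the edge configuration $\omega(h,B)$ determined from $h$ and $B$ by the three-case rule of the definition, and $0$ otherwise (recall $p_e=1-1/\mathbf{c}_e$). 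First I would fix a target triplet $(g,\mu,\gamma)$ on $D$ and the conditioning data $(\eta,\varpi,\beta)$ on the $V^c$-side, assume the conditioning event has positive probability (otherwise there is nothing to prove), and sum the weight above over all global triplets compatible with $(\eta,\varpi,\beta)$, with $(g,\mu,\gamma)$, and with $\omega_{|E(V,V^c)}=0$, leaving $B_{|E(V,V^c)}$ free since it is not part of the conditioning. Both $W(h)$ and the Bernoulli factor split over $E$, $E(V^c)$ and $E(V,V^c)$, and $\omega_e$ for $e\in E$ (resp.\ $e\in E(V^c)$) depends only on $B_e$ and on $h$ at the two endpoints of $e$, which both lie in $V$ (resp.\ in $V^c$); hence the $E(V^c)$-block and the admissibility of $\xi$ on $\Delta\cap V^c$ contribute a factor independent of $(g,\mu,\gamma)$, and only the $E$-block and the $E(V,V^c)$-block need to be analysed.

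The crucial step is the $E(V,V^c)$-block. For $e=\langle v,w\rangle\in E(V,V^c)$ with $v\in V$, $w\in V^c$, the constraint $\omega_e=0$ rules out the case $e\in E_{\mathsf{fix}}(|h|)$ and so forces $|h(v)|=|h(w)|=1$; since two heights in $\{\pm1\}$ differ by at most $2$, this imposes no further admissibility constraint across $e$. Summing over $B_e\in\{0,1\}$ and multiplying by the edge weight $\mathbf{c}_e^{\mathbb{I}\{h(v)=h(w)\}}$ one finds, in the two remaining sign configurations: if $h(v)=h(w)$ then $\omega_e=0$ demands $B_e=0$, contributing $\mathbf{c}_e(1-p_e)=1$; if $h(v)=-h(w)$ then $\omega_e=0$ holds automatically and $B_e$ is free, contributing $\mathbf{c}_e^{0}\bigl(p_e+(1-p_e)\bigr)=1$. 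So every edge of $E(V,V^c)$ contributes exactly $1$ — this is precisely the FK--Ising coupling acting on the ``cut'' edges — and, crucially, the signs of $h$ on the $V$-endpoints are left completely free. Hence the whole $E(V,V^c)$-block collapses to the single indicator that $|g|\equiv1$ on $\partial D$ (plus an analogous $V^c$-side indicator, which is part of the conditioning constant).

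Collecting terms, the summed weight factors as a function of $(g,\mu,\gamma)$ alone times a positive constant, and the first factor equals
\[
\mathbb{I}\{g(x)\in\xi(x)\ \forall x\in\Delta\cap V\}\;\mathbb{I}\{|g|\equiv1\text{ on }\partial D\}\;\mathbb{I}\{\mu=\omega(g,\gamma)\text{ on }E\}\prod_{e=\langle v,w\rangle\in E}\mathbf{c}_e^{\mathbb{I}\{g(v)=g(w)\}}p_e^{\gamma(e)}(1-p_e)^{1-\gamma(e)};
\]
the first two indicators together express admissibility of $g$ for the boundary condition $\alpha$ obtained by extending $\xi_{|V\setminus\partial D}$ by the set-valued $\{\pm1\}$ on $\partial D$, so the displayed expression is exactly the unnormalized $\bbP^{\alpha}_D$-weight of $(g,\mu,\gamma)$. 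Since the conditional law of $(h_{|V},\omega_{|E},B_{|E})$ and $\bbP^{\alpha}_D$ are thus proportional probability measures on triplets over $D$, they coincide. I expect no genuine difficulty here: the only things demanding care are the bookkeeping of the admissibility and $\omega=\omega(h,B)$ indicators, remembering that $B_{|E(V,V^c)}$ is summed out rather than fixed, and the observation — the real content of the statement — that the boundary value on $\partial D$ comes out as the set-valued $\{\pm1\}$ precisely because the per-edge cancellation on $E(V,V^c)$ leaves those signs unconstrained.
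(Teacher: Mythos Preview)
Your proof is correct and follows essentially the same approach as the paper's: both compute the joint weight of the triplet $(h,\omega,B)$, split it multiplicatively over the three edge classes $E$, $E(V^c)$, $E(V,V^c)$, and observe that on the cut edges $E(V,V^c)$ the constraint $\omega_e=0$ forces $|h|=1$ at both endpoints and makes each edge contribute exactly~$1$ (via the cancellation $\mathbf{c}_e\cdot(1-p_e)=1$ when the signs agree, and trivially when they disagree). Your write-up is in fact slightly more explicit than the paper's --- you spell out that $B_{|E(V,V^c)}$ is summed out rather than fixed, and you separate the two sign cases cleanly --- but the argument is the same.
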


\begin{remark}
\label{rem:SMP with perco}
This lemma directly implies the same conditional law for $(h_{|V}, \omega_{|E}, B_{|E})$ given less information, for instance only given that $\omega_{|E(V, V^c)} = 0$ or given $(|h_{|V^c}|, \omega_{| E (V^c)})$, and the event $\omega_{|E(V, V^c)} = 0$.
\end{remark}

We then move onward to positive association.
For pairs of height functions $h, h' \in \heightfcns_D$ and percolation configurations $\nu, \nu' \subset E$, we denote $(h, \nu) \preceq (h', \nu')$ if  $h \preceq h'$ and  $\nu \subset \nu'$. The concepts of increasing  functions and events of $(h, \nu)$ then readily generalize. The function-percolation pairs, with the percolation $\nu$ being either $\omega$ or $B$, then satisfy the following.

\begin{proposition}
\label{prop:pos assoc of percolated height fcns}
The statements of (1) Proposition~\ref{prop: CBC and FKG}; (2) Propositions~\ref{prop:|h|-monotonicity} and~\ref{prop:SMP ineq for abs val} remain true when $F$ and $G$ are taken to be functions $\heightfcns_D \times \{ 0,1 \}^E \to \bbR$ that are (1) increasing functions in $(h,B)$ or $(h,-B)$; (2) increasing functions in $(|h|, B)$, $(|h|, -B)$, or $(|h|, \omega)$.
\end{proposition}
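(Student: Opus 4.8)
## Proof plan for Proposition~\ref{prop:pos assoc of percolated height fcns}

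The plan is to reduce each assertion about the triplet $(h,\omega,B)$ back to the already-established height-function statements by conditioning on the absolute height $|h|$ (or on the full height $h$) and then invoking Corollary~\ref{cor:FK cond law given abs height} to recognize the conditional law of the percolations as an FK-Ising model, for which FKG, the Spatial Markov property, and comparison of boundary conditions are known classical facts. Throughout I would use the standard "conditioning" device for FKG-type inequalities: if a measure $\mu$ can be written as a mixture $\mu = \int \mu_t \, d\rho(t)$ where each $\mu_t$ satisfies FKG \emph{and} the conditional expectations $t \mapsto \bbE_{\mu_t}[F]$ and $t \mapsto \bbE_{\mu_t}[G]$ are both monotone in $t$ with respect to a partial order on the parameter $t$ under which $\rho$ is itself "FKG", then $\mu$ satisfies FKG for $F,G$. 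I will apply this with $t = |h|$ (for part (2)) or $t=h$ (for part (1)), exploiting that, given $t$, the percolation $B$ is an independent product measure (hence trivially FKG) and $\omega$ is an FK-Ising measure with a conditioning $E_{\mathsf{fix}}(|h|) \subset \omega$ (hence FKG by Lemma~\ref{lem:FK cluster conditioning} and the FKG of the FK-Ising model).

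\smallskip
\noindent\textbf{Part (1).} For the pair $(h,B)$ I would argue as follows. Since $B$ is independent of $h$ and is a product of Bernoulli variables, for fixed $h$ it satisfies FKG in $B$; and the function $h \mapsto \bbE[F(h,B)\mid h]$ is increasing on $\heightfcns_D$ whenever $F$ is increasing in $(h,B)$, because $B$ does not depend on $h$. Then (FKG-$h$) for heights (Proposition~\ref{prop: CBC and FKG}) applied to these conditional expectations, combined with product-measure FKG in the $B$-fibres, yields (FKG) for $(h,B)$; the same argument with the order on $B$ reversed handles $(h,-B)$, using that a product Bernoulli measure is also FKG in $-B$. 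For (CBC): if $\xi \preceq \xi'$ then $\bbP^{\xi}_D \preceq \bbP^{\xi'}_D$ as laws on $h$ (this is the content of (CBC-$h$)), and since the conditional law of $B$ given $h$ does not depend on $\xi$, a monotone coupling of $h$ under the two boundary conditions extends to a monotone coupling of $(h,B)$; integrating the increasing $F$ gives (CBC) for $(h,B)$, and again reversing the $B$-order gives the $(h,-B)$ statement.

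\smallskip
\noindent\textbf{Part (2).} Here I condition on $|h|=H$. By Corollary~\ref{cor:FK cond law given abs height}, the conditional law of $\omega$ given $|h|=H$ is $\sfP^{D}_{\mathrm{FK}}[\,\cdot\mid E_{\mathsf{fix}}(H)\subset\omega]$ (resp. the wired version), which satisfies FKG in $\omega$; and $B$ is still an independent product, FKG in $B$ and in $-B$; moreover $\omega$ and $B$ are conditionally independent given $H$, so the pair $(\omega,B)$ (or $(\omega,-B)$, $(B)$, $(-B)$) is FKG in the relevant order on each $H$-fibre. It remains to check the two monotonicities in $H$: first, $H \mapsto \bbE[F(|h|,\cdot)\mid |h|=H]$ is increasing for $F$ increasing in $(|h|,\omega)$, because enlarging $H$ both increases the first argument directly \emph{and} enlarges $E_{\mathsf{fix}}(H)$, hence stochastically increases $\omega$ by Lemma~\ref{lem:FK cluster conditioning} together with monotonicity in boundary conditions (this is exactly the content of Lemma~\ref{lem:FK coupling 2} in the case $D=D'$); for $F$ increasing in $(|h|,B)$ or $(|h|,-B)$ this is immediate since $B$ is independent of $H$. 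Second, the law of $|h|=H$ under $\bbP^{\xi}_D$ is FKG in $H$ by (FKG-$|h|$) and is monotone in the boundary condition by (CBC-$|h|$). Feeding these into the conditioning device gives (FKG-$|h|$) and (CBC-$|h|$) for $(|h|,\omega)$, $(|h|,B)$, $(|h|,-B)$. Finally, (SMP-$|h|$) for $(|h|,\omega,B)$: I would combine Proposition~\ref{prop:SMP ineq for abs val} with the SMP-with-percolation Lemma~\ref{lem:SMP with percolation} (and Remark~\ref{rem:SMP with perco}) — conditioning on $|h_{|V^c}|$, $\omega_{|E(V^c)}$ and the event $\omega_{|E(V,V^c)}=0$ reveals the conditional law of $(|h_{|V}|,\omega_{|E},B_{|E})$ as $\bbP^{\alpha}_D$-type with $\alpha \succeq$ the $\{\pm1\}$-extension of $\xi_{|D}$ — and then monotonicity in boundary conditions (the $(|h|,\omega)$ version of (CBC) just proved) closes the gap; one still must argue that removing the conditioning on $\omega_{|E(V,V^c)}=0$ only helps, which follows because that event is decreasing in $\omega$ outside $E$ while $F$ depends only on $E$-coordinates, so an FKG argument on the exterior percolation shows the conditional measure on $D$-coordinates dominates the unconditioned one.

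\smallskip
\noindent\textbf{Main obstacle.} The routine part is the conditioning device; the delicate point is the (SMP-$|h|$) statement for the triplet, specifically showing that conditioning on the \emph{absence} of the cut edges $\omega_{|E(V,V^c)}=0$ does not hurt the increasing functional $F$ of the interior. One must be careful that this conditioning, while decreasing for the exterior FK-configuration, interacts with the induced boundary condition on $\partial D$ (which becomes the coarser, hence larger, $\{\pm1\}$-extension); the cleanest route is probably to first prove the identity-in-law of Lemma~\ref{lem:SMP with percolation} verbatim and only then combine with the already-proven monotonicity, rather than trying to prove a one-step inequality. I expect verifying that the various fibrewise FKG statements and fibre-monotonicities are genuinely simultaneous (same partial order on $(h,\omega,B)$, compatible with the mixing measure) to be where the real care, though not real difficulty, lies.
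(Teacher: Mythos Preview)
Your approach to the FKG and CBC statements (both part~(1) for $(h,\pm B)$ and part~(2) for $(|h|,\omega)$ and $(|h|,\pm B)$) is essentially identical to the paper's: condition on $|h|=H$ (resp.\ on $h$), recognise the fibre law of $\omega$ as a conditioned FK-Ising model via Corollary~\ref{cor:FK cond law given abs height} (resp.\ of $B$ as a product Bernoulli), use fibrewise FKG, and then observe via Lemma~\ref{lem:FK coupling 2} that the fibre expectation $\tilde F(H):=\sfE^{D,*}_{\mathrm{FK}}[F(H,\omega)\mid E_{\mathsf{fix}}(H)\subset\omega]$ is increasing in $H$, so the height-only FKG/CBC apply to $\tilde F$.

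For the (SMP-$|h|$) case, however, your proposed route through Lemma~\ref{lem:SMP with percolation} does not close. After conditioning on $\omega_{|E(V,V^c)}=0$, Lemma~\ref{lem:SMP with percolation} gives on $D$ the boundary condition $\alpha$ which is $\{\pm1\}$ on $\partial D$; this satisfies $\alpha\preceq_{abs}\xi_{|D}$ (since $\{\pm1\}$ is the \emph{smallest} absolute-value condition), not $\alpha\succeq_{abs}\xi_{|D}$. Hence your final CBC step yields $\bbE^{\alpha}_D[F]\le\bbE^{\xi_{|D}}_D[F]$, the wrong direction, and the chain $\bbE^{\xi}_{D'}[F]\ge\bbE^{\xi}_{D'}[F\mid\omega_{|E(V,V^c)}=0]=\bbE^{\alpha}_D[F]$ does not reach $\bbE^{\xi_{|D}}_D[F]$. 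The ``obstacle'' you flag is thus a genuine gap in your argument, not merely a point requiring care.

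The paper sidesteps this entirely by observing that the \emph{same} conditioning device already handles (SMP-$|h|$): Lemma~\ref{lem:FK coupling 2} shows that $\tilde F(H)$ is increasing not only in $H$ but also in the domain $D$ (and in $S$). Concretely, for $H'$ a positive height function on $D'$, the restricted FK configuration $\omega'_{|E(D)}$ under $\sfP^{D',*}_{\mathrm{FK}}[\,\cdot\mid E_{\mathsf{fix}}(H')\subset\omega']$ stochastically dominates $\omega$ under $\sfP^{D,*}_{\mathrm{FK}}[\,\cdot\mid E_{\mathsf{fix}}(H'_{|V})\subset\omega]$, so the $D'$-fibre expectation of $F(H'_{|D},\omega'_{|E(D)})$ is at least $\tilde F_D(H'_{|V})$. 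One then applies the height-only (SMP-$|h|$) directly to the increasing function $\tilde F_D$. No cut-edge conditioning or Lemma~\ref{lem:SMP with percolation} is needed.
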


We conclude this subsection with a stronger form of Proposition~\ref{prop:SMP ineq for abs val} for $\{ \pm 1 \}$ boundary conditions:

\begin{corollary}
\label{cor:+-1 SMP ineq for abs val}
Let $D' = (V', E')$ be a finite connected graph, $D=(V,E) \subset D'$ \emph{any} connected subgraph, and $\xi'$ be \emph{any} absolute-value boundary condition for $\heightfcns_{D'}$ on $\Delta \subset (D^c \cup \partial D)$. Then, for any $F$ depending only on, and increasing in $(|h|, B)$, $(|h|, -B)$, or $(|h|, \omega)$ \emph{on $D$}
	\begin{align*}
	\bbE^{\xi'}_{D'} \big[F \big] \geq \bbE^{\pm 1}_{D} \big[F \big].
	\end{align*}
\end{corollary}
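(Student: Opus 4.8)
The plan is to combine the inequality Spatial Markov property for absolute values (Proposition~\ref{prop:SMP ineq for abs val}, in its percolation-augmented form via Proposition~\ref{prop:pos assoc of percolated height fcns}) with a comparison of boundary conditions (Proposition~\ref{prop:|h|-monotonicity}, again augmented). The point is that Proposition~\ref{prop:SMP ineq for abs val} requires the subgraph $D$ to be \emph{generated by} its vertex set $V$ and requires $|\xi|$ to be single-valued on $\partial D$, whereas the corollary asserts the bound for an \emph{arbitrary} connected subgraph $D \subset D'$ and an arbitrary absolute-value boundary condition $\xi'$. So the first move is to reduce the general case to the case covered by the earlier propositions.

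First I would pass to the subgraph $\widetilde D = (V, \widetilde E)$ generated by $V$, noting that $\widetilde E \supseteq E$ and that any $F$ increasing in one of the listed pairs on $D$ is, a fortiori, a function of $(h_{|V}, \omega_{|\widetilde E}, B_{|\widetilde E})$ which is increasing in the corresponding pair on $\widetilde D$ (it just ignores the extra edges). Next, the boundary condition $\xi'$ on $\Delta \subset D^c \cup \partial D$ need not assign single values on $\partial \widetilde D = \partial D$; but since $\xi'$ is an absolute-value boundary condition, there is an associated \emph{maximal} single-valued absolute-value boundary condition $\widehat\xi$ obtained by restricting/extending $\xi'$ to $\partial D$ via the minimal/maximal admissible extensions from~\eqref{eq:max and min height fcns} — concretely, replace each interval $[a(x),b(x)]$ or $\pm[a(x),b(x)]$ at $x \in \partial D$ by its largest single value in the $\preceq_{abs}$ sense. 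Then $\xi' \preceq_{abs} \widehat\xi$ holds on $\partial D$ (more precisely, one compares $\bbP^{\xi'}_{D'}$ restricted to $D$ with $\bbP^{\widehat\xi}_{\widetilde D}$ using the SMP and CBC for absolute values), so by \eqref{eq:CBC-|h|} (augmented form) applied on $\widetilde D$ and then \eqref{eq:bdary cut} applied on $D' \supset \widetilde D$ with boundary condition $\xi'$, we chain
\begin{align*}
\bbE^{\xi'}_{D'}[F] \geq \bbE^{\xi'_{|\widetilde D}}_{\widetilde D}[F] \geq \bbE^{\widehat\xi}_{\widetilde D}[F] \geq \bbE^{\pm 1}_{\widetilde D}[F] = \bbE^{\pm 1}_D[F],
\end{align*}
where the third inequality uses that $\{\pm 1\} \preceq_{abs} \widehat\xi$ (since $\widehat\xi$ is an absolute-value boundary condition, each of its values has absolute value $\geq 1$, and $\{\pm 1\}$ is the $\preceq_{abs}$-minimal such), and the final equality holds because $F$ depends only on data inside $D$ and $\bbP^{\pm 1}_{\widetilde D}$ restricted to $D$ agrees with $\bbP^{\pm 1}_D$ by the SMP (Lemma~\ref{lem:SMP} together with Lemma~\ref{lem:SMP with percolation} for the percolation), noting that the boundary values $\pm 1$ are already single-valued in absolute value.

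The main obstacle I anticipate is bookkeeping around the set-valued nature of $\xi'$ on $\partial D$: one must be careful that replacing $\xi'$ by the single-valued $\widehat\xi$ on $\partial D$ is genuinely a $\preceq_{abs}$-increase and that the inequality SMP \eqref{eq:bdary cut} legitimately applies with $\Delta \supset \partial \widetilde D$ — the hypothesis there demands $|\xi|$ single-valued on $\partial D$, so the order of operations (first cut with \eqref{eq:bdary cut} to get down to $\widetilde D$ with a single-valued-in-absolute-value condition, \emph{then} compare boundary conditions) matters. A secondary subtlety is verifying that all three listed monotonicity types ($(|h|,B)$, $(|h|,-B)$, $(|h|,\omega)$) are preserved under the inclusion $D \subset \widetilde D$ and under restriction of $\bbP^{\pm1}_{\widetilde D}$ to $D$; this is routine given Proposition~\ref{prop:pos assoc of percolated height fcns} and the SMP with percolation (Lemma~\ref{lem:SMP with percolation}, Remark~\ref{rem:SMP with perco}), since conditioning only on exterior data and the event that crossing edges are closed reproduces the $\{\pm1\}$ boundary condition on $D$, and this conditioning is compatible with all three monotonicities. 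No new ideas beyond the already-established FKG/CBC/SMP machinery should be required.
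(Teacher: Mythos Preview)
Your chain has two genuine breaks. First, the step $\bbE^{\xi'_{|\widetilde D}}_{\widetilde D}[F] \geq \bbE^{\widehat\xi}_{\widetilde D}[F]$ points the wrong way: you assert $\xi' \preceq_{abs} \widehat\xi$, and for increasing $F$ \eqref{eq:CBC-|h|} then gives $\bbE^{\xi'}[F] \leq \bbE^{\widehat\xi}[F]$, not $\geq$. Passing to a \emph{larger} single-valued condition cannot help you prove a lower bound; you would need a \emph{smaller} one, but then SMP-$|h|$ still requires $\Delta \supset \partial\widetilde D$, which is not assumed. Second, the final equality $\bbE^{\pm 1}_{\widetilde D}[F] = \bbE^{\pm 1}_{D}[F]$ is false in general. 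The graphs $D$ and $\widetilde D$ share $V$ but $\widetilde E \supsetneq E$ when $D$ is not generated; the extra edges impose additional Lipschitz constraints and contribute additional weight factors $\mathbf{c}_e^{\bbI\{h(v)=h(w)\}}$, so the two measures differ. Neither Lemma~\ref{lem:SMP} nor Lemma~\ref{lem:SMP with percolation} lets you drop edges within a fixed vertex set --- both concern restricting to a smaller \emph{vertex} set, and the latter explicitly has $E(V)=E$.

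The paper sidesteps Proposition~\ref{prop:SMP ineq for abs val} (and its single-valuedness hypothesis) altogether. It first applies \eqref{eq:CBC-|h|} on $D'$ to replace $\xi'$ directly by the $\preceq_{abs}$-minimal condition $\chi=\{\pm 1\}$ on $\Delta$. Then, for $F$ increasing in $(|h|,\omega)$, it conditions on the \emph{decreasing} event $\{\omega_{|E(V,V^c)}=0\}$; by Proposition~\ref{prop:pos assoc of percolated height fcns} this can only lower $\bbE[F]$, and by Lemma~\ref{lem:SMP with percolation} (with Remark~\ref{rem:SMP with perco}) the resulting conditional law on $D$ is exactly $\bbP^{\pm 1}_D$. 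The cases $(|h|,\pm B)$ reduce to functions of $|h|$ alone by conditioning on the independent $B$. The idea you are missing is that conditioning on $\omega=0$ across the boundary \emph{simultaneously} manufactures the $\{\pm 1\}$ boundary on $\partial D$ and is FKG-decreasing, so one never needs the inequality-SMP for absolute values at all.
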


\begin{proof}
Suppose first that $F$ is a function of  $(|h|, \omega)$ on $D$.
For $(|h|, \omega)$ on $D'$, the event $\{ \omega_{|E(V, V^c)} = 0\} $ is decreasing and by Proposition~\ref{prop:pos assoc of percolated height fcns},
\begin{align*}
\bbE^{  \xi'  }_{D'} [F] \geq \bbE^{  \chi  }_{D'} [F] \geq \bbE^{ \chi }_{D'} [F \; | \;  \omega_{|E(V, V^c)} = 0],
\end{align*}
where $\chi$ is constant $\pm 1$ on $\Delta$.
By Lemma~\ref{lem:SMP with percolation} and Remark~\ref{rem:SMP with perco}, the conditional measure on the right coincides on $D$ with $\bbE^{  \pm 1 }_{D}$; this concludes the first case. The case when $F$ is a function of $(|h|, B)$ or $(|h|, -B)$ on $D$ boils down to functions of $|h|$ only by conditioning on $B$ (which is independent of $h$).
\end{proof}

\section{Two equivalent phase dichotomies}
\label{sec:equivalent phase characterizations}

\subsection{Variance dichotomy}

\begin{theorem}
\label{thm:variance dichotomy}
Let $\bbG = (\bbV, \bbE)$ be a countably infinite, locally finite, connected graph with edge weights $e \mapsto \mathbf{c}_e \geq 1$; fix $x \in \bbV$ and let $D=(V,E) \ni x$ be a finite connected subgraphs of $\bbG$.
Then, $\bbE_{D}^{  \pm 1  } [h(x)^2]$ is increasing in $D$ and in particular
either
\begin{align}
\tag{Var-loc}
\label{eq:Var-loc}
\bbE_{D}^{\pm 1  } [h(x)^2] \uparrow r(x) \in \bbR \qquad \text{when } D \uparrow \bbG
\end{align}
or
\begin{align}
\tag{Var-deloc}
\label{eq:Var-deloc}
\bbE_{D}^{ \pm 1 } [h(x)^2] \uparrow + \infty \qquad \text{when } D \uparrow \bbG.
\end{align} 
Furthermore, the same alternative takes place for all $x \in \bbV$.
\end{theorem}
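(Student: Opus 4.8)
The plan is to prove the three claims in order: monotonicity of $\bbE_D^{\pm1}[h(x)^2]$ in $D$, the resulting dichotomy, and finally the transfer of the alternative from one vertex to all vertices.

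\emph{Monotonicity.} First I would fix $D \subset D'$ both finite connected subgraphs of $\bbG$ containing $x$, with $D$ generated by its vertex set $V$. The function $h \mapsto h(x)^2 = |h(x)|^2$ depends only on $|h|$ and is increasing in $|h|$ (since $|h(x)| \geq 1$ always, as $h$ is $\Zodd$-valued). The constant boundary condition $\pm 1$ on $\partial D$ is an absolute-value boundary condition in the sense of Section~\ref{subsec: SMP and FKG for abs val} with $S = \emptyset$, and it is single-valued in absolute value. Therefore Proposition~\ref{prop:SMP ineq for abs val} (applied with $\xi$ the $\pm1$ boundary condition on $\partial D'$ extended appropriately, or more directly Corollary~\ref{cor:+-1 SMP ineq for abs val} with $F(|h|) = |h(x)|^2$) gives
\begin{align*}
\bbE^{\pm1}_{D'}[h(x)^2] \geq \bbE^{\pm1}_{D}[h(x)^2].
\end{align*}
Since $\bbE^{\pm1}_D[h(x)^2]$ is a monotone net indexed by finite connected $D \ni x$ ordered by inclusion (and any two such $D_1, D_2$ are dominated by their union, which is again connected containing $x$), the limit as $D \uparrow \bbG$ exists in $(0, +\infty]$, giving the dichotomy between~\eqref{eq:Var-loc} and~\eqref{eq:Var-deloc}.

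\emph{Transfer to all vertices.} This is the step I expect to be the main obstacle, and the plan is a finite-energy / comparison argument. Suppose~\eqref{eq:Var-loc} holds at $x$, i.e.\ $\bbE^{\pm1}_D[h(x)^2] \leq r(x) < \infty$ uniformly in $D$; I want the same at an arbitrary $y \in \bbV$. Fix any $D \ni x, y$. The idea is that $h(y)$ and $h(x)$ differ by at most $2 d_\bbG(x,y)$ \emph{along any path}, but this pathwise bound is only useful if the path stays inside $D$ and the heights along it are controlled — which is exactly the kind of statement that fails deterministically. Instead I would use the level-set/cluster structure: a more robust route is to compare $\bbE^{\pm1}_D[h(y)^2]$ directly to $\bbE^{\pm1}_{D}[h(x)^2]$ by a boundary-condition manipulation. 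Concretely, let $\gamma$ be a fixed finite path in $\bbG$ from $y$ to $x$ of length $\ell = d_\bbG(x,y)$, and let $D$ be large enough to contain $\gamma$. Consider the subgraph $\widetilde D \subset D$ generated by $V(D) \setminus (\gamma \setminus \{x\})$ — but this disconnects things badly, so a cleaner choice: use Corollary~\ref{cor:+-1 SMP ineq for abs val} in reverse. Since $|h(y)| \leq |h(x)| + 2\ell$ would hold if there were a short path inside the domain on which heights were monotone; lacking that, the honest argument is: by the Spatial Markov property (Lemma~\ref{lem:SMP}) condition on $h$ restricted to a fixed ball $B$ of radius $\ell$ around $y$ contained in $D$; the conditional law of $h(y)$ is then that of a height function on $B$ with the revealed boundary data, and $|h(y)| \leq \max_{v \in \partial B} |h(v)| + 2\ell$. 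Hence $\bbE^{\pm1}_D[h(y)^2] \leq \bbE^{\pm1}_D[(\max_{v\in\partial B}|h(v)| + 2\ell)^2]$, and one reduces to controlling $\max_{v \in \partial B} |h(v)|$. That maximum is controlled by the tail bounds on individual heights: since the point-mass function of $h(v)$ under any $\bbP^{\pm1}_D$ is log-concave and symmetric (around $0$ now, for the $\pm1$ boundary, by the reflection $h \mapsto -h$; more precisely symmetric in the appropriate sense — here I should be careful and instead use that $\bbE_D^{\pm1}[h(v)^2]$ is itself bounded by $r(v)$ via the same monotonicity applied with $v$ in place of $x$), together with a union bound over the finitely many $v \in \partial B$, $\bbE^{\pm1}_D[(\max_{v} |h(v)|)^2]$ is bounded uniformly in $D$ provided each $\bbE^{\pm1}_D[h(v)^2]$ is. But a priori I only know boundedness at $x$, not at the $v \in \partial B$. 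To close the loop, I would run the argument with $B$ a ball around $x$ instead: for $y$ adjacent to $x$, take $\ell = 1$, so $|h(y)| \leq |h(x)| + 2$ pointwise (they are neighbours!), giving $\bbE^{\pm1}_D[h(y)^2] \leq \bbE^{\pm1}_D[(|h(x)|+2)^2] \leq r(x) + 4\sqrt{r(x)} + 4$ by Cauchy–Schwarz, uniformly in $D$. Thus~\eqref{eq:Var-loc} at $x$ implies~\eqref{eq:Var-loc} at every neighbour of $x$, and since $\bbG$ is connected, by induction along a path it holds at every vertex. Contrapositively,~\eqref{eq:Var-deloc} at one vertex forces it at all, completing the proof. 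The only genuinely delicate point is the neighbour step, which is in fact trivial once one uses the Lipschitz bound pointwise rather than trying to run a percolation argument; the finite-energy machinery is not needed.

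\hfill$\square$
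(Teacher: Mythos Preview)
Your proof is correct and follows the same approach as the paper: monotonicity via Corollary~\ref{cor:+-1 SMP ineq for abs val}, and the transfer between vertices via the pointwise Lipschitz bound $|h(x)-h(y)| \leq 2 d_D(x,y)$. The paper does the transfer in one step (noting $d_D(x,y) = d_\bbG(x,y)$ once $D$ is large enough to contain a geodesic) rather than inducting neighbour-by-neighbour, but your route is equivalent --- and the long exploratory detour before you arrive at the Lipschitz bound is unnecessary, since that bound is all that is needed.
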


\begin{proof}
The fact that $\bbE_{D}^{ \pm 1 } [h(x)^2]$ is monotonous in $D$ follows directly from Corollary~\ref{cor:+-1 SMP ineq for abs val}. If~\eqref{eq:Var-loc} holds for some $x \in \bbV$, then it also holds for any $y \in \bbV$ since $| h(x)-h(y) | \leq 2 d_D (x, y)$ and $d_D (x, y) = d_\bbG (x, y)$ for all $D$ large enough.
\end{proof}

The log-concavity on $\Zodd$ of the law of $h(x)$ under $\bbP^{\pm 1}_D$~\cite[Lemma~8.2.4]{She05} implies several interesting equivalent formulations of~\eqref{eq:Var-loc}, see~\cite[Section~7]{CPT18}. Let us explicate the exponential decay in relation to, e.g.,~\cite{GM-loc}; note also that~\eqref{eq:CBC-h} implies similar exponential decay under any bounded boundary condition instead of $\{ \pm 1 \}$.

\begin{proposition}[Exponential decay]
\label{prop:exp decay in loc phase}
In the setup of the previous theorem, suppose that~\eqref{eq:Var-loc} occurs. There exist $C, c >0$ such that for all $D$,
\begin{align*}
\bbP_D^{\pm 1} [h(x) \geq 2m + 1] \leq C e^{-cm}.
\end{align*}
\end{proposition}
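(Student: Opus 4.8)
The plan is to derive the exponential decay from the two ingredients the paper has already assembled: (i) the log-concavity on $\Zodd$ of the law of $h(x)$ under $\bbP^{\pm 1}_D$ (\cite[Lemma~8.2.4]{She05}), and (ii) the uniform bound on $\bbE^{\pm 1}_D[h(x)^2]$ coming from \eqref{eq:Var-loc}. Recall that $h$ under $\bbP^{\pm 1}_D$ is symmetric around $0$ under the sign flip (boundary condition $\{\pm 1\}$ is itself sign-symmetric), so the law of $h(x)$ on $\Zodd$ is symmetric about... well, more precisely $h(x)\in\Zodd$ and the distribution is symmetric under $k\mapsto -k$; in particular writing $q_k := \bbP^{\pm 1}_D[h(x)=2k+1]$ for $k\ge 0$, log-concavity gives that the ratios $q_{k+1}/q_k$ are non-increasing in $k$.

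First I would fix $r = \sup_D \bbE^{\pm 1}_D[h(x)^2] < \infty$, which exists by \eqref{eq:Var-loc} (the expectation is increasing in $D$ and converges to the finite limit $r(x)$). The key step is a "gap lemma": there is a $c_0 = c_0(r) > 0$ and an $M_0 = M_0(r)\in\bbN$, both independent of $D$, such that $q_{M_0+1}/q_{M_0} \le 1 - c_0$. Indeed, if no such bound held at any index up to some large $M_0$, then by log-concavity $q_{k+1}/q_k > 1-c_0$ for all $k \le M_0$, which would force the tail $\sum_{k=0}^{M_0} q_k$ to be spread out enough that $\bbE^{\pm 1}_D[h(x)^2] \ge \sum_k (2k+1)^2 q_k$ exceeds $r$ — a contradiction once $M_0$ is chosen large relative to $r$ and $c_0$ small. (This is exactly the log-concavity/tightness trick used verbatim in the proof of the FK-localization theorem earlier in the paper, with $\epsilon$ there playing the role of $c_0$ here; the only difference is that here the bound on the second moment is given rather than derived from tightness.) Once the gap lemma holds, monotonicity of the ratios gives $q_{k+1}/q_k \le 1-c_0$ for all $k \ge M_0$, hence $q_k \le q_{M_0}(1-c_0)^{k-M_0} \le (1-c_0)^{k-M_0}$ for $k \ge M_0$; and for $k < M_0$ we trivially have $q_k \le 1$. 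Summing, $\bbP^{\pm 1}_D[h(x)\ge 2m+1] = \sum_{k\ge m} q_k \le C e^{-c m}$ with $c = -\log(1-c_0) > 0$ and $C = e^{c M_0}$, both depending only on $r$ (equivalently on $x$ and the lattice data) and not on $D$.

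The main obstacle is making the gap lemma fully quantitative and $D$-uniform, i.e. verifying that a second-moment bound of size $r$ genuinely forces a geometric-decay gap at some index $M_0(r)$ via log-concavity alone. Concretely: suppose $q_{k+1}/q_k \ge 1-c_0$ for all $k = 0,1,\dots,N$. Since the $q_k$ sum to at most $1$ (they are a subset of a probability mass, noting also the mirror masses for negative values), and since log-concavity bounds them below as above on $[0,N]$, one gets that $q_0$ is bounded below by something like $(\sum_{k=0}^N (1-c_0)^{-k})^{-1}$... actually the cleaner route is: from $q_{k+1} \ge (1-c_0) q_k$ we get $q_k \ge (1-c_0)^{k}\, q_0$... no, the inequality goes the convenient way only for an upper bound on the tail. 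Let me instead argue by contradiction on the variance directly: if for every $k \le N$ the ratio exceeds $1-c_0$, then in particular $q_k \ge (1-c_0)^{N}\cdot q_N$-type bounds are not what I want; rather, monotone ratios all $>1-c_0$ up to $N$ means the sequence $q_0,\dots,q_N$ decays no faster than geometrically with rate $1-c_0$, so $q_k \ge q_N (1-c_0)^{-(N-k)} \ge q_N$ wait — the point is simply that with all ratios near $1$ the mass cannot be concentrated near $0$, so $\bbP[h(x)^2 \ge (2k+1)^2]$ stays large up to index $N$, forcing $\bbE[h(x)^2] \gtrsim N^2 q_N \cdot(\text{const})$, and since also $\sum q_k \le 1$ forces $q_0 \le 1$ hence (monotone ratios $\le q_1/q_0$, all $\ge 1-c_0$) the whole profile is essentially flat, giving $q_k \gtrsim$ a fixed fraction of the mass for $\Theta(1/c_0)$ many indices and thus $\bbE[h(x)^2] = +\infty$ in the limit $c_0\to 0$, contradiction for $c_0$ small and $N$ large depending on $r$. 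I would carry this out carefully as a short lemma; it is elementary but needs the inequalities lined up correctly, and it is the only non-formal point. Everything else is bookkeeping.
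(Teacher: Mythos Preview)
Your approach is correct and is exactly the route the paper intends: the proposition is stated without proof, the preceding sentence simply citing log-concavity of the law of $h(x)$ (\cite[Lemma~8.2.4]{She05}) and referring to \cite[Section~7]{CPT18} for the implication ``bounded second moment $\Rightarrow$ exponential tails'' for symmetric log-concave laws on $\Zodd$. So there is no separate ``paper's proof'' to compare against, and your plan --- uniform second-moment bound plus log-concavity gives a uniform geometric gap in the ratios, hence exponential decay --- is the intended one.

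Your gap-lemma sketch is right in spirit but the attempts at the end are muddled because you are missing one clean ingredient: a lower bound on $q_0$ that depends only on $r$. Get it via Chebyshev. Choose $K$ with $(2K+1)^2 > 2r$; then $\bbP^{\pm 1}_D[|h(x)| \geq 2K+1] \leq r/(2K+1)^2 < 1/2$, so $\sum_{k=0}^{K-1} q_k > 1/4$, and since the $q_k$ are non-increasing (symmetry plus log-concavity), $q_0 > 1/(4K)$. Now suppose $\rho_{M_0} := q_{M_0+1}/q_{M_0} > 1-c_0$; by monotonicity of the ratios, $\rho_k > 1-c_0$ for all $k \leq M_0$, so $q_k > q_0(1-c_0)^k$ and hence
\[
\frac{r}{2} \;\geq\; \sum_{k=0}^{M_0}(2k+1)^2 q_k \;>\; \frac{(1-c_0)^{M_0}}{4K}\,\sum_{k=0}^{M_0}(2k+1)^2 .
\]
Taking $c_0 = 1/M_0$ (so $(1-c_0)^{M_0} \geq 1/4$) and then $M_0$ large enough in terms of $K$ and $r$ yields a contradiction. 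This fixes $M_0(r)$ and $c_0(r)$ uniformly in $D$, and the rest of your argument (geometric tail from the gap, then summing) is correct as written.
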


\subsection{Gibbs measure dichotomy}

We remind the reader that we equip the space of integer-valued functions on $\bbV$ with the topology of convergence over finite subsets. Height functions on $D \subset \bbG$ are studied in this space by extending the to the entire $\bbG$ by an arbitrary manner, say for definiteness by the constant function $1$ outside of $D$.

A probability measure $\mu$ supported on odd two-Lipschitz functions on the vertices of $\bbG$ is a \textit{(random Lipschitz function) Gibbs measure} if for $h \sim \mu$ any finite connected subgraph $D=(V, E)$ of $\bbG$ generated by $V$ it holds that the conditional law of $h_{|D}$, given the occurrence of any ($\mu$-possible) configuration $\{ h_{|\partial D} = \xi \}$, is $\bbP_{D}^\xi$.

\begin{theorem}
\label{thm:Gibbs dichotomy}
Let $\bbG = (\bbV, \bbE)$ be a countably infinite, locally finite, connected graph with edge weights $e \mapsto \mathbf{c}_e \geq 1$, let $D_n=(V_n,E_n)$ be finite connected subgraphs of $\bbG$, and $h_n \sim \bbP^{ \pm 1 }_{D_n}$, Then,
either
\begin{itemize}[noitemsep, font=\upshape]
\item[\mylabel{eq:Gibbs-loc}{Gibbs-loc}{}] \emph{(Gibbs-loc):} $h_n$ converge weakly to a Gibbs measure, for any sequence $D_n \uparrow \bbG$; or
\item[\mylabel{eq:Gibbs-deloc}{Gibbs-deloc}{}] \emph{(Gibbs-deloc):} there exists no sequence $D_n \uparrow \bbG$ and $x \in \bbV$ such that $h_n (x)$ would constitute a tight family real random variables.
\end{itemize}
Furthermore~\eqref{eq:Gibbs-loc}~$\Leftrightarrow$~\eqref{eq:Var-loc} and~\eqref{eq:Gibbs-deloc}~$\Leftrightarrow$~\eqref{eq:Var-deloc}.
\end{theorem}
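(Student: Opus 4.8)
The plan is to first establish the two implications \eqref{eq:Gibbs-loc}~$\Rightarrow$~\eqref{eq:Var-loc} and \eqref{eq:Gibbs-deloc}~$\Rightarrow$~\eqref{eq:Var-deloc}, and then observe that since \eqref{eq:Var-loc} and \eqref{eq:Var-deloc} are exhaustive and mutually exclusive (Theorem~\ref{thm:variance dichotomy}), the same must be true of \eqref{eq:Gibbs-loc} and \eqref{eq:Gibbs-deloc}, which simultaneously yields that exactly one of the Gibbs alternatives holds and that the two pairs of conditions are equivalent. So the real content is: (a) if the variances are bounded, then $h_n$ converges weakly to a Gibbs measure for \emph{every} exhausting sequence; and (b) if the variances diverge, then no exhausting sequence gives a tight family at any vertex.

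For (b) I would argue by contradiction: suppose \eqref{eq:Var-deloc} holds but some sequence $D_n \uparrow \bbG$ and vertex $x$ makes $h_n(x)$ tight. Since $|h_n(y) - h_n(x)| \le 2 d_{D_n}(x,y) = 2 d_\bbG(x,y)$ for $n$ large, tightness at $x$ propagates to tightness at every $y$, so by a diagonal extraction $h_n$ has a weakly convergent subsequence with limit $\mu$. Here I need monotonicity: by Corollary~\ref{cor:+-1 SMP ineq for abs val}, $\bbE_{D_n}^{\pm1}[h(y)^2]$ is increasing along an exhausting sequence, so the limit of $\bbE_{D_n}^{\pm1}[h(y)^2]$ equals $r(y) < \infty$ along \emph{this} exhausting sequence — but Theorem~\ref{thm:variance dichotomy} says the dichotomy is a property of $\bbG$ alone (the monotone limit does not depend on the exhausting sequence, since any two exhausting sequences are cofinal in each other), contradicting \eqref{eq:Var-deloc}. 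So tightness fails everywhere; this gives \eqref{eq:Gibbs-deloc}.

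For (a), assume \eqref{eq:Var-loc}. Then the above exponential-decay / tightness argument (Proposition~\ref{prop:exp decay in loc phase}, uniform in $D$) shows that for any exhausting sequence $D_n$, the family $\{h_n\}$ is tight in the product topology, so subsequential weak limits exist; I must show that every subsequential limit is the \emph{same} Gibbs measure. The Gibbs (DLR) property of a limit $\mu$ is checked by fixing a finite $D = (V,E)$ generated by $V$ and passing the Spatial Markov property (Lemma~\ref{lem:SMP}) to the limit: for $n$ large, $D \subset D_n$ with $\partial D$ separating, so conditionally on $h_n|_{\partial D} = \xi$ the law of $h_n|_D$ is $\bbP_D^\xi$, and since the relevant events are local (depending on finitely many coordinates) this identity survives the weak limit — the only subtlety is handling the conditioning, which is routine because $\Zodd$-valued heights on a finite set take finitely many values, so conditional laws are just ratios of point masses that converge. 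Uniqueness of the limit then follows because the dichotomy theorems force all subsequential limits to have the same one-dimensional marginals (by monotone convergence of $\bbE_{D_n}^{\pm1}[\cdot]$ applied to the increasing observable, $\bbE[h(x)^2]$ converges to $r(x)$, and more generally the monotonicity from Corollary~\ref{cor:+-1 SMP ineq for abs val} pins down $\bbP_{D_n}^{\pm 1}[h|_F \in A]$ for increasing/decreasing local events $A$), and these determine the measure; alternatively, and more cleanly, sandwich between $\pm$ boundary conditions and use that the monotone limits from $+1$ and $-1$ boundary data coincide by symmetry, giving a unique limit independent of $D_n$.

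The main obstacle I expect is (a): showing the weak limit is genuinely \emph{the same} measure for every exhausting sequence, not just \emph{a} Gibbs measure along each. The cleanest route is to first prove the limit from the specific monotone exhaustion exists (via monotonicity in Corollary~\ref{cor:+-1 SMP ineq for abs val} applied to $|h|$, plus the $\pm$-symmetry to control signs), identify it as Gibbs via the SMP, and then bound an arbitrary exhausting sequence's subsequential limits between the $+1$ and $-1$ monotone limits — but one has to be a little careful that $\bbP^{\pm 1}$ mixes signs, so the right monotonicity is the one for absolute values in Corollary~\ref{cor:+-1 SMP ineq for abs val}, combined with the fact that the sign is recovered from the coupled percolation $\omega$ whose law is monotone (Lemma~\ref{lem:FK coupling 2}). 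This packaging of ``absolute value monotone + sign via monotone percolation'' is the part that needs the most care.
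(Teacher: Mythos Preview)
Your overall logical framework is fine (modulo a sign slip in the opening paragraph: the ``real content'' (a) and (b) you describe are the implications $\eqref{eq:Var-loc}\Rightarrow\eqref{eq:Gibbs-loc}$ and $\eqref{eq:Var-deloc}\Rightarrow\eqref{eq:Gibbs-deloc}$, not the reverse). Combined with Theorem~\ref{thm:variance dichotomy} and the observation that \eqref{eq:Gibbs-loc} and \eqref{eq:Gibbs-deloc} are manifestly mutually exclusive, this would indeed close the loop.

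There is, however, a genuine gap in your (b). You assume $h_n(x)$ is tight and want to conclude $\bbE_{D_n}^{\pm1}[h(x)^2]$ stays bounded; you write ``so the limit \ldots\ equals $r(y)<\infty$'' but tightness by itself does not control second moments --- an increasing sequence of second moments can diverge while the laws stay tight. The missing ingredient is the log-concavity of the point masses of $h_n(x)$ on $\Zodd$ (from \cite{She05}), which upgrades tightness to a uniform exponential tail and hence bounded variance. The paper invokes exactly this in its $\eqref{eq:Gibbs-loc}\Rightarrow\eqref{eq:Var-loc}$ step; you need it here too.

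For (a), your subsequential-limit-plus-Gibbs-property argument is sound, but the uniqueness sketch via ``sandwich between $+1$ and $-1$ boundary data'' does not apply: the boundary condition is the \emph{set-valued} $\{\pm1\}$, not a single sign, and there is no monotone coupling for $h$ itself. Your final paragraph correctly identifies the fix --- monotone-couple $(|h_n|,\omega_n)$ via Corollary~\ref{cor:+-1 SMP ineq for abs val} and Lemma~\ref{lem:FK coupling 2}, then recover signs by fair coin flips on $\omega_n$-clusters (Corollary~\ref{cor:FK cond law given abs height}) using the \emph{same} coin flips for all $n$. This gives almost-sure convergence of $h_n$, hence a unique weak limit independent of the exhausting sequence. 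This is precisely the paper's route, except the paper packages it as a direct proof of $\neg\eqref{eq:Gibbs-deloc}\Rightarrow\eqref{eq:Gibbs-loc}$: starting only from tightness (no assumption of \eqref{eq:Var-loc}), the monotone coupling of $(|h_n|,\omega_n)$ is both increasing and tight, hence a.s.\ convergent, and the sign reconstruction finishes. That single construction handles your (a) and the nontrivial part of your (b) simultaneously, which is why the paper's organization is slightly more economical.
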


Note that in the case~\eqref{eq:Gibbs-loc}, the limiting Gibbs measure is independent of the sequence $D_n$, which is observed by intertwining two sequences. Consequently, e.g., for is the square lattice $\bbG = \bbZ^2$ and $\mathbf{c}_e \equiv \mathbf{c}$, the limit is also invariant under translations and symmetries of $\bbZ^2$, which is seen by performing these operations to $D_n$ and $h_n$. This observation is crucial in the present paper, due to the following strong result.

\begin{theorem}[{\cite[Theorem~2.5]{Piet-deloc}}, special case]
\label{thm:Piet's non-quantitative deloc thm}
Suppose that $\bbG$ is a planar graph of maximum degree $3$ and invariant under the translations of $\bbZ^2$, with weights $1 \leq \mathbf{c}_e \leq 2$ also invariant under these translations. Then, there exist no $\bbZ^2$-translation invariant random Lipschitz function Gibbs measures on $\bbG$.
\end{theorem}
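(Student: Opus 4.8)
The plan is to argue by contradiction: suppose $\mu$ is a $\bbZ^2$-translation invariant random Lipschitz Gibbs measure on $\bbG$, and produce a configuration-level phenomenon that cannot occur under $\mu$. I would first record symmetries and make soft reductions. The set of Gibbs measures is convex and weak-$*$ closed (the DLR equations being affine), so an ergodic component of $\mu$ under the $\bbZ^2$-action is again a translation invariant Gibbs measure; hence assume $\mu$ ergodic. The weight $W$ and the Lipschitz constraint are invariant under the involution $h \mapsto -h$ and the shift $h \mapsto h+2$, both commuting with translations, so pushing $\mu$ forward by either produces another ergodic translation invariant Gibbs measure; I keep these symmetries at hand, but do \emph{not} assume $\mu$ itself enjoys them (indeed, for large $\mathbf{c}_e$ the $h \mapsto h+2$ symmetry is genuinely broken and $\bbZ^2$-invariant Gibbs measures \emph{do} exist, cf.\ Corollary~\ref{cor:localized regimes}). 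I would not try to put FKG directly on the infinite-volume $\mu$; instead, for finite $D \subset D'$ the Gibbs property combined with~\eqref{eq:CBC-h} and~\eqref{eq:FKG-h} sandwiches $\mu$-probabilities of increasing local events between finite-volume probabilities under extremal (maximal / minimal admissible) boundary conditions, and this is the only monotonicity input the percolation arguments below require.

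The crux is that $\bbG$ is planar of maximum degree $3$, together with the hypothesis $1 \le \mathbf{c}_e \le 2$ — the lower bound being exactly convexity of the Lipschitz interaction on edge increments, the upper bound a self-duality/symmetry condition on the associated height-function specification. For $\ell \in 2\bbZ$ consider the level set $\{v : h(v) \ge \ell+1\}$, separated from $\{v : h(v) \le \ell-1\}$ by a collection of \emph{level lines} which, $\bbG$ being cubic, live on the (triangulated) dual graph with an essentially non-branching structure. The point of $\mathbf{c}_e \le 2$ is that the level-line representation of $h$ stands in a matching/self-dual relation with that of an associated dual height function governed by a model of the same type; via planar duality and a Burton--Keane-type uniqueness statement (applicable thanks to the sandwiching above and a finite-energy-type property of level sets), this forces, at \emph{every} level $\ell$, a coexistence: almost surely $\{h \ge \ell+1\}$ and $\{h \le \ell-1\}$ both contain infinite clusters, or there is a bi-infinite level line at level $\ell$ — exactly what fails for large $\mathbf{c}_e$, where $\mu$ may localise around a single height level. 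Now the contradiction. Coexistence at every level, fed into the nested structure of level loops, forces the origin to be encircled by level loops at infinitely many levels, hence $h$ to be a.s.\ unbounded on $\bbG$ — impossible for a measure supported on $\Zodd$-valued functions. Degenerately, a bi-infinite level line at level $\ell$ is a translation-invariant event, so by ergodicity it occurs at a positive density of edges; by the shift symmetry this density is the same at every level, and summing over the infinitely many levels contradicts the \emph{deterministic} bound that a fixed edge $\langle u,v\rangle$ lies on at most $|h(u)-h(v)|/2 \le 1$ level lines.

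The step I expect to be hardest — and where essentially all the real work sits — is the cubic-planar duality: setting up the level-line/interface combinatorics for an arbitrary $\bbZ^2$-periodic cubic planar graph (the possible branch configurations at degree-$3$ vertices, the definition of the dual height function, the precise matching identity), verifying that $1 \le \mathbf{c}_e \le 2$ is exactly what makes it run, and transferring the whole picture from finite volumes — where~\eqref{eq:FKG-h},~\eqref{eq:CBC-h} and~\eqref{eq:bdary cut} are the available tools — to the infinite-volume measure. The ergodic-theoretic bookkeeping (tilted Gibbs measures, and the non-ergodic mixtures produced if one symmetrises) and the Burton--Keane application are comparatively routine, but must be arranged so that the uniqueness and counting arguments genuinely apply to an ergodic measure.
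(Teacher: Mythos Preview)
The paper does not prove this statement: it is quoted as a special case of~\cite[Theorem~2.5]{Piet-deloc} and used as a black-box input (combined with Theorem~\ref{thm:Gibbs dichotomy} to yield Corollary~\ref{cor:non-quant deloc}). There is thus no proof in the paper to compare your proposal against.

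Assessing your sketch on its own merits: the overall architecture --- level-set percolation, planar duality on cubic graphs, and $1 \le \mathbf{c}_e \le 2$ as a balance/self-duality condition --- is broadly in the spirit of the cited reference. However, your contradiction step has two genuine gaps. First, ``$h$ a.s.\ unbounded on $\bbG$'' is \emph{not} a contradiction: $\bbG$ is infinite, and a $\Zodd$-valued function on it may perfectly well be unbounded; what you would need is $|h(0)| = \infty$ at a fixed vertex, and encircling level loops at arbitrarily many levels does not force this, since those loops may escape to infinity as the level grows. Second, your edge-density argument for bi-infinite level lines needs the density to be \emph{equal} at every level, hence the $h \mapsto h+2$ symmetry of $\mu$; you explicitly (and correctly) declined to assume this, and restoring it by symmetrising over height shifts destroys the ergodicity you also invoke --- the argument silently uses both at once. (There is also a softer issue: a single bi-infinite level line can exist a.s.\ yet occupy zero density of edges, so ``exists with probability one'' does not by itself give positive edge density.) Closing these gaps is where the real content lies; in~\cite{Piet-deloc} it is done via a two-copy/cluster-swapping construction in the style of~\cite{She05}, together with a percolation dichotomy for the resulting sign clusters, rather than by a direct level-line count.
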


Note that by mapping linearly, e.g., the honeycomb lattice can be embedded in the plane so that it is $\bbZ^2$ translation invariant and the above theorem holds. The above observation on the limiting Gibbs measure in Theorem~\ref{thm:Gibbs dichotomy} the directly gives the following.

\begin{corollary}
\label{cor:non-quant deloc}
In the setup of the above theorem,~\eqref{eq:Gibbs-deloc} and~\eqref{eq:Var-deloc} occur.
\end{corollary}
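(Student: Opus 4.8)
The plan is to obtain the conclusion from the dichotomy of Theorem~\ref{thm:Gibbs dichotomy} by ruling out the localized alternative with Theorem~\ref{thm:Piet's non-quantitative deloc thm}. By Theorem~\ref{thm:Gibbs dichotomy}, exactly one of \eqref{eq:Gibbs-loc} and \eqref{eq:Gibbs-deloc} holds, so it suffices to exclude \eqref{eq:Gibbs-loc}; the equivalence \eqref{eq:Gibbs-deloc}~$\Leftrightarrow$~\eqref{eq:Var-deloc} recorded in that theorem then gives both assertions at once.

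So I would argue by contradiction: assume \eqref{eq:Gibbs-loc} holds. Then for every exhausting sequence $D_n \uparrow \bbG$ the laws $\bbP^{\pm 1}_{D_n}$ converge weakly, in the topology of local convergence, to a random Lipschitz function Gibbs measure $\mu$, and by intertwining two such sequences (the observation recorded just after Theorem~\ref{thm:Gibbs dichotomy}) this limit $\mu$ is independent of the chosen sequence. Now let $T$ be an arbitrary translation of $\bbZ^2$. By the hypotheses of Theorem~\ref{thm:Piet's non-quantitative deloc thm}, $T$ is a graph automorphism of $\bbG$ that preserves the weights $\mathbf{c}_e$ and fixes the constant $\pm 1$ boundary condition, so $T(D_n) \uparrow \bbG$ is again exhausting and $T$ pushes $\bbP^{\pm 1}_{D_n}$ forward to $\bbP^{\pm 1}_{T(D_n)}$. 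Since $T$ acts as a homeomorphism of the ambient function space, it commutes with weak convergence, so the weak limit of $\bbP^{\pm 1}_{T(D_n)}$ equals $T_*\mu$ on one hand and, by sequence-independence, $\mu$ on the other; hence $T_*\mu = \mu$. Thus $\mu$ is a $\bbZ^2$-translation invariant random Lipschitz function Gibbs measure on $\bbG$, contradicting Theorem~\ref{thm:Piet's non-quantitative deloc thm}. Therefore \eqref{eq:Gibbs-loc} is impossible, \eqref{eq:Gibbs-deloc} holds, and \eqref{eq:Var-deloc} follows from the equivalence.

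The argument is short given the two quoted inputs; the only steps requiring a little care are the sequence-independence of the limiting measure under \eqref{eq:Gibbs-loc} and the compatibility of the $\bbZ^2$-action with the convention of extending height functions by the constant $1$ outside $D$ when placing them in the function space on $\bbV$. Both are handled by the standard interleaving/translating-domains trick already flagged in the remark following Theorem~\ref{thm:Gibbs dichotomy}, so I do not expect a genuine obstacle here — the substantive content lives entirely in Theorem~\ref{thm:Gibbs dichotomy} and Theorem~\ref{thm:Piet's non-quantitative deloc thm}.
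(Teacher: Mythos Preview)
Your argument is correct and matches the paper's approach exactly: the paper derives the corollary directly from the observation (stated just before the corollary) that under \eqref{eq:Gibbs-loc} the limiting Gibbs measure is sequence-independent and hence $\bbZ^2$-translation invariant, which contradicts Theorem~\ref{thm:Piet's non-quantitative deloc thm}. Your writeup simply makes this reasoning explicit.
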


The main objective of the present paper is thus to quantify this non-quantitative divergence of variance. We now turn to the proof of Theorem~\ref{thm:Gibbs dichotomy}. Similar statements, although apparently not covering the case of random Lipschitz functions, have appeared at least in~\cite{CPT18, PO21}; we give a proof mimicking~\cite{CPT18}.

\begin{proof}[Theorem~\ref{thm:Gibbs dichotomy}]
It is rather immediate that~\eqref{eq:Gibbs-deloc}~$\Rightarrow$~\eqref{eq:Var-deloc}.

It is also fairly easy to show that~\eqref{eq:Gibbs-loc}~$\Rightarrow$~\eqref{eq:Var-loc}. Namely, assuming~\eqref{eq:Gibbs-loc}, in particular the marginal law of $h_n(x)$ converges weakly. For each $\bbE_{D_n}^{ \{ \pm 1 \} } = \bbE_n$, $h_n(x)$ is a log-concave random variable on $\Zodd$ by~\cite{She05}, and thus so is $h(x)$ under the weak limit $\sfP$. In particular, $\sfE [h(x)^2] < \infty$. Since the laws of $|h_n(x)|$ under $\bbE_n$ are stochastically increasing in $n$, it follows that
\begin{align*}
\bbE_n [\min \{ h_n(x)^2, M \}]
\leq \sfE [\min \{ h(x)^2, M \}]
\end{align*}
for all $n$ and all $M \in \bbR$, and letting $M \uparrow \infty$, we obtain $\bbE_n [h_n(x)^2] \leq \sfE [h(x)^2] < \infty$ for all $n$. By the dichotomy in the previous theorem,~\eqref{eq:Var-loc} must thus occur.

Due to Theorem~\ref{thm:variance dichotomy}, it then suffices to show that either~\eqref{eq:Gibbs-loc} or~\eqref{eq:Gibbs-deloc} must occur. 
Suppose that~\eqref{eq:Gibbs-deloc} does \textit{not} occur, i.e., $h_n(x)$ are tight over some sequence $D_n \uparrow \bbG$. Note that $h_n(x)$ are tight if and only if $|h_n(x)|$ are tight. The latter are stochastically increasing in $n$ by Corollary~\ref{cor:+-1 SMP ineq for abs val}, and hence tightness over some sequence $D_n \uparrow \bbG$ implies tightness over any sequence. Since $|h_n(y)- h_n(x) | \leq 2 d_{D_n} (x, y)$, $h_n(y)$ are tight also for any $y$ and any sequence $D_n \uparrow \bbG$. It follows that $|h_n|$ are tight in our topology of $\bbZ^\bbV$.

We now construct a coupling of $(\bbE_n)_{n \in \bbN}$, under which $h_n$ converge almost surely.
First, for any two $D \subset D'$, by Lemmas~\ref{lem:Holley} and~\ref{lem:FK coupling 2}, we may couple $|h|$ and $\omega$ so that $|h| \preceq |h'|$ and $\omega \subset \omega'$. We thus couple $(|h_n|, \omega_n)$ under $(\bbE_n)_{n \in \bbN}$ by chaining couplings of this type, so that $(|h_1|, \omega_1) \preceq (|h_2|, \omega_2) \preceq \ldots$, so $(|h_n|, \omega_n)$ are under this coupling both increasing and tight. It follows that they increase to a limit variable, i.e., they converge almost surely.

Finally, recall from Corollary~\ref{cor:FK cond law given abs height} that given the pair $(|h_n|, \omega_n)$, the sign of $h_n$ can be sampled by i.i.d. fair coin flips on the clusters of $\omega_n$. In the above coupling, sampling the signs onto $|h_n|$ with different $n$ from the same coin flips (e.g., ordering vertices, giving each vertex a coin flip, and sampling the sign on an $\omega_n$-cluster from the lowest-index vertex in it), we obtain a coupling of $h_n$ where $h_n$ converge almost surely, and hence also weakly. It is standard that the limiting measure inherits the Gibbs property.
\end{proof}

\part{Logarithmic variance on cubic planar lattices}

So far, our arguments have been combinatorial and applicable for the random Lipschitz model on very general graphs. In this second, core part of the article, we use plane geometric arguments to improve the result of Theorem~\ref{thm:Piet's non-quantitative deloc thm} by proving a logarithmic rate of divergence for the variance (Theorem~\ref{thm:main thm 2nd}).

\section{Overview}

\subsection{Setup and main result}
\label{subsec:main thm setup}

\begin{figure}
\begin{center}
\includegraphics[width=0.35\textwidth]{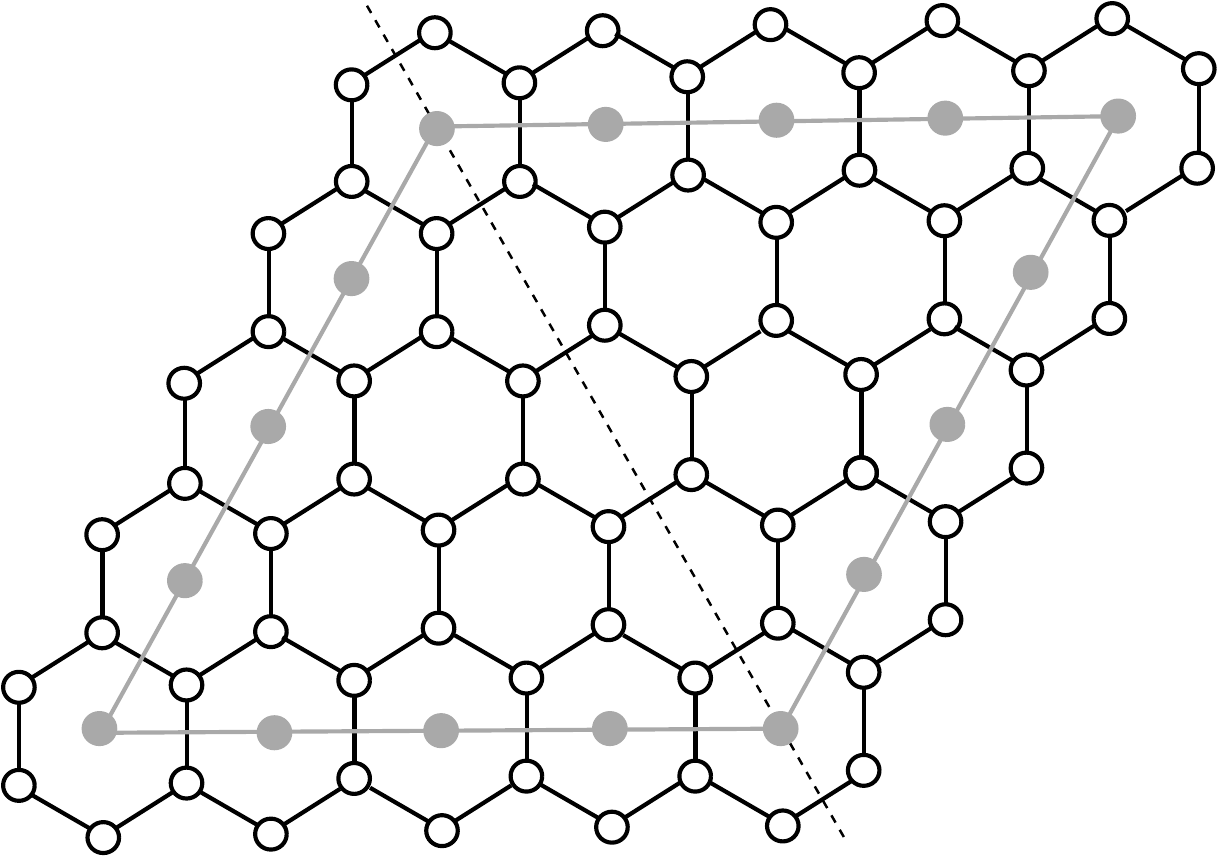} \qquad
\includegraphics[width=0.26\textwidth]{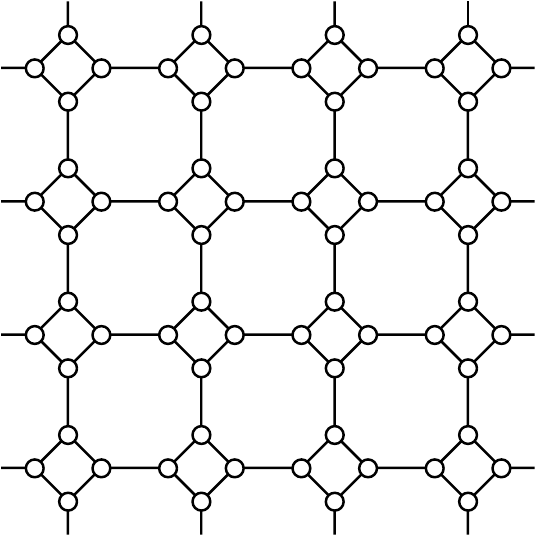} \\
\includegraphics[width=0.35\textwidth]{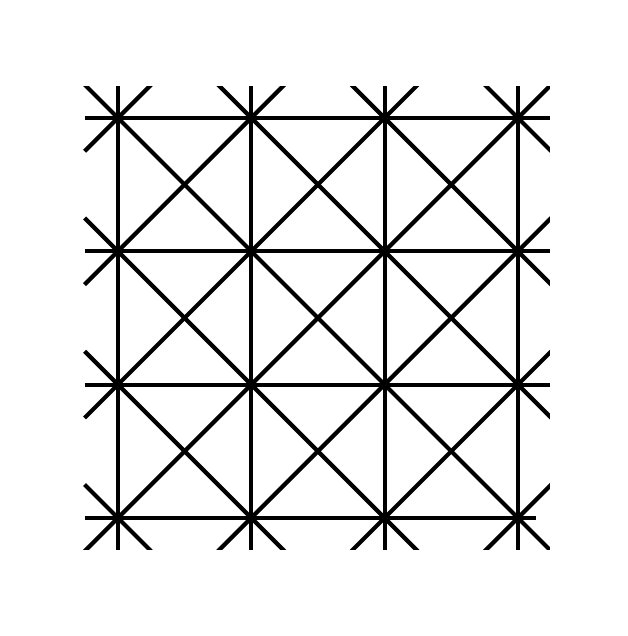}%
\includegraphics[width=0.35\textwidth]{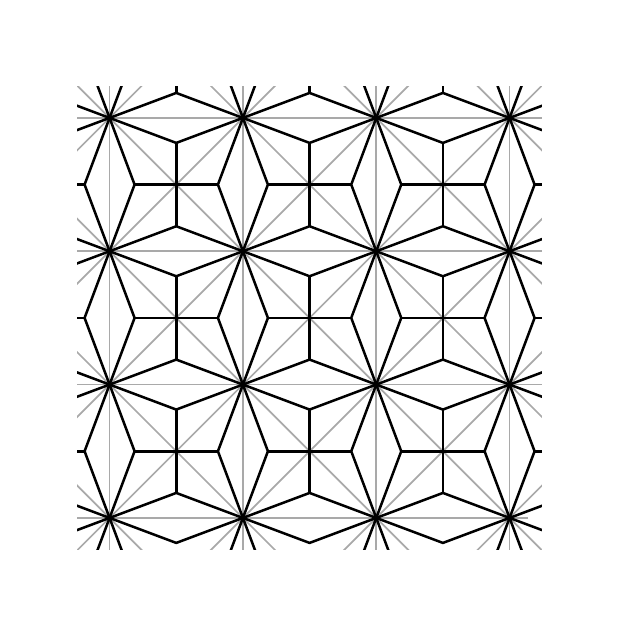}%
\includegraphics[width=0.35\textwidth]{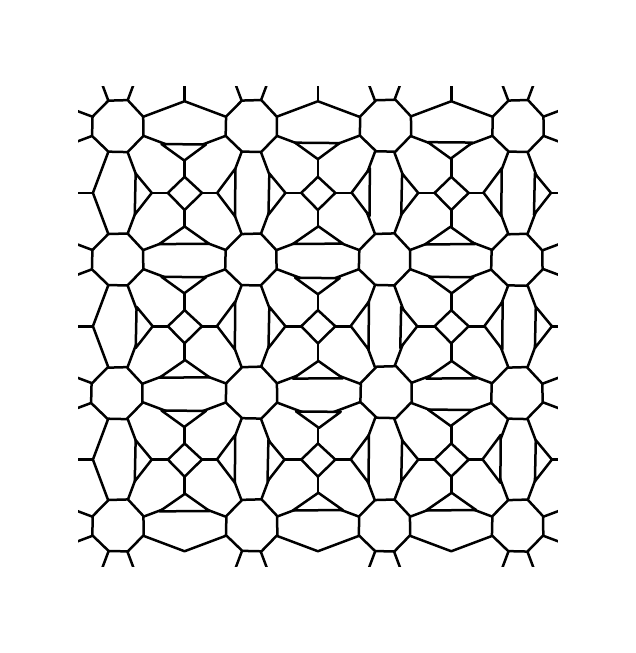}
\end{center}
\caption{
\label{fig:cubic lattices}
Illustrations for the main theorem~\ref{thm:main thm 2nd} and its outline. Top row: the hexagonal and square-octagon lattices are probably the two most important lattices for which the theorem is applicable. Top left: the lozenge $L_2$ on the hexagonal lattice is determined by a 4-by-4 lozenge walk on the dual (in gray); the same notation directly interpreted as a planar domain, dual subgraph (on or inside the loop) as well as a primal subgraph (strictly inside). The reflection symmetry axis of the lozenge is dashed. Bottom row: a general ``recipe'' to produce lattices where Theorem~\ref{thm:main thm 2nd} applies. Start from a lattice with the required symmetries and with maximum degree and maximum dual degree both at most $11$ (left). Draw its corner-to-face adjacency graph (middle). Then replace each vertex of degree $d$ of the latter with a little $d$-gon (right). The resulting graph (right) is cubic with maximal dual degree $11$, and has the same symmetries as the original one (left).
}
\end{figure}

For the rest of this article, we fix the lattice $\bbG$ and the weights $\mathbf{c}_e$ (inexplicit constants may depend on these) and impose following assumptions (see Figure~\ref{fig:cubic lattices} for examples): $\bbG$ is a connected, countably infinite, locally finite graph. It is planar and bi-periodic (and comes with such an embedding), one of the translational symmetries being along the horizontal axis. The maximum degree for $\bbG$ is $3$ and for its dual $\bbG^*$ it is at most $11$. The graph $\bbG$ is invariant under reflection with respect to both horizontal and vertical axes. There is a discretization of either a square of side-length $n$ translational invariance steps (e.g., in the case of the square-octagon lattice), or a  rhombus of side-length $n$ with angles $60^\circ$ and $120^\circ$ (e.g., in honeycomb lattice), such that both $\bbG$ and the discretized square (resp. rhombus) are invariant under reflection with respect to the diagonal line of the square (resp. the diagonal line between the $120^\circ$ angles of the rhombus).
Finally, we assume throughout this section that $1 \leq \mathbf{c}_e \leq 2$ for all $e \in \bbE$ and that $ \mathbf{c}_e$ satisfy the same symmetries as the lattice $\bbG$. The main result of this section can now be stated.

\begin{theorem}
\label{thm:main thm 2nd}
In the above setup, there exist $ C, c > 0$ such that for any finite $D \subset \bbV$ and $x \in D$ with $d_\bbG(x, D^c) \geq 2$,
\begin{align}
\tag{log-Var-deloc}
\label{eq:log var deloc}
c \log d_D (x, \partial D) \leq \bbE^{ \pm 1 }_D [ h(x)^2 ] \leq C \log d_D (x, \partial D),
\end{align}
where the boundary condition $\pm 1$ is imposed on $\partial D$.
\end{theorem}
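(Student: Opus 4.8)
My plan is to prove the upper and lower bounds of \eqref{eq:log var deloc} separately, in both cases by first passing to a model problem on a canonical scaled domain (the lozenge/rhombus $L_n$ on the honeycomb lattice, or its square-octagon analogue $L_n$), and then transferring back to an arbitrary $D$ via the monotonicity and boundary-pushing tools already established, namely Corollary~\ref{cor:+-1 SMP ineq for abs val} and Proposition~\ref{prop:SMP ineq for abs val}. The key quantity throughout is the probability that a level line (level-set loop) of the height function at a fixed height, say height $1$, surrounds a point in a given annulus; these loop probabilities are what the renormalization inequality (Proposition~\ref{thm:renorm}) controls across scales, and the RSW theorem (Theorem~\ref{thm:RSW}) makes them comparable to crossing probabilities of rectangles.

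For the \emph{upper bound} $\bbE^{\pm 1}_D[h(x)^2] \leq C\log d_D(x,\partial D)$: the idea is that $|h(x)| \leq 1 + 2 N(x)$, where $N(x)$ is the number of disjoint level-set loops (alternately at heights $-1$ and $1$, say) separating $x$ from $\partial D$. By Corollary~\ref{cor:+-1 SMP ineq for abs val} I may push the boundary condition $\pm 1$ out to a large canonical domain, so it suffices to bound the tail of $N(x)$ in $L_n$. I would set up a sequence of dyadic annuli $A_k$ around $x$; in each annulus the event ``there is a height-$1$ level loop surrounding $x$ inside $A_k$'' has probability bounded away from $1$ by the \emph{other} alternative of the renormalization dichotomy together with FKG-type independence across scales — more precisely, the complementary event (existence of a crossing of the annulus by a level set staying at, say, height $\geq 3$ in absolute value, i.e. a ``highway'' preventing a low loop) occurs with probability bounded below uniformly in $k$, by RSW applied in the shift-invariant strip geometry and then pushed to the annulus. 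Summing the resulting geometric/Bernoulli-type tail over $\log_2 d_D(x,\partial D)$ scales gives a bound of the form $\bbP[N(x) \geq t] \leq C\rho^t$ with $\rho<1$ once $t \lesssim \log d$, hence $\bbE[h(x)^2] \leq C\log d$. The point to be careful about is the independence-across-scales input: the level sets in disjoint annuli are not literally independent, but the required stochastic domination of the crossing events by independent Bernoulli variables follows from the spatial-Markov/FKG package (Proposition~\ref{prop:pos assoc of percolated height fcns}, Lemma~\ref{lem:SMP with percolation}), applied to the absolute value $|h|$ and the coupled FK configuration $\omega$.

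For the \emph{lower bound} $\bbE^{\pm 1}_D[h(x)^2] \geq c\log d_D(x,\partial D)$: here is where Theorem~\ref{thm:Piet's non-quantitative deloc thm} (via Corollary~\ref{cor:non-quant deloc}) enters decisively — it rules out the ``localized'' branch of the dichotomy, so we are in the regime where the level-loop probabilities are \emph{uniformly positive} across all scales (Theorem~\ref{thm:loop dichotomy}, non-trivial alternative). Concretely, this means there is $\delta>0$ so that in each dyadic annulus around $x$ (well inside $D$), the probability that a level line of $h$ at height $\pm 1$ crosses/surrounds within that annulus is at least $\delta$, uniformly in scale — this is the renormalization inequality run ``in the good direction,'' combined with RSW to upgrade strip crossings to annulus circuits, and CBC-$|h|$/boundary pushing to replace the canonical domain's boundary by $\partial D$. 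Crossing such a level loop forces a height increment of $2$ (two consecutive level values differ by exactly $2$ across a level line of the odd height function), and the increments accumulated in disjoint annuli are, after conditioning appropriately, stochastically bounded below by independent Bernoulli($\delta$) contributions — again using the spatial Markov inequality \eqref{eq:bdary cut} and positive association to decouple annuli in the favourable direction. With $\sim \log_2 d_D(x,\partial D)$ annuli each contributing an independent Bernoulli($\delta$)-sized jump to $|h(x)|$, the variance of the resulting (signed, by an independent coin flip per component of $\omega$ as in Corollary~\ref{cor:FK cond law given abs height}) sum is $\gtrsim \log d_D(x,\partial D)$.

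\textbf{Main obstacle.} I expect the genuine difficulty to be in setting up the two transfer steps cleanly — (i) from an arbitrary finite $D$ to the canonical scaled domain $L_n$, and (ii) from strip-crossing estimates to annulus-circuit estimates — precisely because, as the introduction warns, the absolute-value FKG available here is weak: one only has \eqref{eq:FKG-|h|}, \eqref{eq:CBC-|h|}, and the one-sided \eqref{eq:bdary cut}, not a full spatial Markov property for $|h|$. Consequently the RSW argument cannot be run directly in $D$ but must be run first in the translation-invariant strip (where shift-invariance substitutes for the missing independence), and only afterwards pushed to a bounded domain via Theorem~\ref{prop:push}; threading the boundary conditions through this two-stage argument, and verifying that the renormalization inequality of Proposition~\ref{thm:renorm} applies with the domain-scaled loop events in both directions of the dichotomy, is where the real work lies. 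Everything downstream — summing geometric tails for the upper bound, summing independent Bernoulli jumps for the lower bound — is then routine.
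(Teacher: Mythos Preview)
Your overall architecture matches the paper's: establish the loop dichotomy (Theorem~\ref{thm:loop dichotomy}), invoke Corollary~\ref{cor:non-quant deloc} to force \eqref{eq:loop-deloc}, and then run separate annulus-scale arguments for the two inequalities in \eqref{eq:log var deloc}. The lower-bound plan is essentially the paper's (Section~\ref{subsec:pf 3 equiv dichotomies}), with one correction: the loops one needs are not at height $\pm 1$ but at \emph{high} absolute height --- concretely the event $\calO^*_\calE(L_n)$ with $\calE$ given by $h\geq 5$ and $B=0$ between two $5$'s. Exploring such a loop yields by SMP a boundary condition $\pm\{5,7\}$ on the interior, and a height shift by $\pm 6$ produces the constant-per-scale variance increment; a loop at $h\in\{\pm1\}$ would just reproduce the original $\{\pm1\}$ boundary condition and contribute nothing.

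The upper-bound sketch has a genuine gap. The inequality $|h(x)|\leq 1+2N(x)$ with $N(x)$ counting ``level-set loops alternately at heights $-1$ and $1$'' is not a valid deterministic bound, and the direction of your argument is inverted: you propose to lower-bound the probability of a \emph{high} radial crossing (``a highway at $|h|\geq 3$'') in each annulus, but that only says low loops are rare, which works \emph{against} controlling $|h(x)|$. What the paper does (Section~\ref{subsec:pf 3 equiv dichotomies 2}) is the opposite: under \eqref{eq:loop-deloc} it proves a uniform lower bound on the probability that $\partial D$ is connected to the half-radius box by a path of \emph{bounded} absolute height (Lemma~\ref{lem:wn un log bounds} and the corollary following it). Exploring such a low crossing yields an absolute-value boundary condition on a smaller domain and the recursion $w_{2n}\leq w_n+C$ for the supremum $w_n$ of the variance over domains with boundary within distance $n$; the recursion is run on the torus $\bbT_N$ (not on $L_n$) and transferred back to planar $D$ via~\eqref{eq:torus var lower bound}. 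Extracting that low-crossing bound from \eqref{eq:loop-deloc} is itself the nontrivial step --- it goes through a duality-and-height-shift chain ending with $\bbP^{\pm1}_{S'}[\calO_{h\omega\geq1}(S,S')]\geq c$, which is where the hypothesis $a_n\geq c$ is actually consumed.
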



\subsection{Outline for Theorem~\ref{thm:main thm 2nd}}
\label{subsec:outline of main thm}

The core of the proof of Theorem~\ref{thm:main thm 2nd} is, in jargon, to deduce a dichotomy of level-line loop probabilities via a renormalization inequality. We now explain this jargon and show how it implies Theorem~\ref{thm:main thm 2nd}.  

Let $L_n$ denote the discrete square/rhombus with side length $2n$ translational invariance steps.
Let $\calO_{\omega = 1 } (L_n)$ denote the event that there is a loop path surrounding $L_n$ with $\omega = 1$ on the edges, and let
\begin{align}
\label{eq:def of an}
a_n = \bbP_{L_{Rn}}^{ \pm 1 } [\calO_{\omega = 1 } (L_n)],
\end{align}
where $R$ is a constant that we fix to $R=3$ to facilitate the geometric arguments in the proof and the lozenges $L_{n}$ and $L_{Rn}$ are cocentric. The loop probability dichotomy is now stated as follows.

\begin{theorem}[Loop probability dichotomy]
\label{thm:loop dichotomy}
There exist $C, c, \alpha > 0$ such that either
\begin{align}
\label{eq:loop-deloc}
\tag{loop-deloc}
a_n & \geq c \qquad \text{for all $n$},
\\
\label{eq:loop-loc}
\tag{loop-loc}
\text{or} \qquad
a_n &\leq C \exp(-c n^\alpha) \qquad \text{for all $n$}.
\end{align}
\end{theorem}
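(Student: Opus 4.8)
The plan is to derive the dichotomy from a \emph{renormalization inequality} of the form
\begin{align*}
a_{n} \;\leq\; C\, a_{2n}^{2} \qquad \text{(or more precisely }a_{n}\leq \Phi(a_{2n})\text{ for a suitable }\Phi\text{)},
\end{align*}
which is the content of Proposition~\ref{thm:renorm} cited in the outline. Accepting such an inequality, the argument is the standard ``sub-multiplicativity along a geometric sequence of scales'' bootstrap: if for some $n_{0}$ the quantity $a_{n_{0}}$ is small enough that the iteration $t\mapsto Ct^{2}$ is contracting (say $a_{n_{0}} < 1/(2C)$), then $a_{2^{k}n_{0}}$ is doubly-exponentially small in $k$, i.e. $a_{m}\leq C'\exp(-c\, m^{\alpha})$ with $\alpha=\log 2/\log 2=1$ in the cleanest case, or some $\alpha\in(0,1)$ if the renormalization inequality loses a polynomial factor in the scale. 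If on the other hand no such $n_{0}$ exists, then $a_{n}\geq c>0$ for all $n$ along the dyadic subsequence $n=2^{k}$, and one upgrades this to \emph{all} $n$ by a monotonicity/comparison step: squeezing a surrounding loop of $L_{n}$ between loops at the two nearest dyadic scales using \eqref{eq:CBC-|h|}, \eqref{eq:FKG-|h|} and the FK comparison of boundary conditions (Lemma~\ref{lem:FK coupling 2} and Corollary~\ref{cor:+-1 SMP ineq for abs val}), at the cost of only a multiplicative constant depending on $R$.

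The steps I would carry out, in order, are as follows. \textbf{(1)} State the renormalization inequality precisely (this is Proposition~\ref{thm:renorm}, invoked as a black box here); it compares $\calO_{\omega=1}(L_{n})$ at scale $n$ with the corresponding event at scale $2n$ (or $Rn$), and it is exactly here that the RSW theorem (Theorem~\ref{thm:RSW}) and the boundary-pushing theorem (Theorem~\ref{prop:push}) enter, together with positive association of the percolated height functions (Proposition~\ref{prop:pos assoc of percolated height fcns}). \textbf{(2)} Fix the constant $C$ from that inequality and set the threshold $\tau := 1/(2C)$. \textbf{(3)} Case A: suppose $a_{n}\geq \tau$ for all dyadic $n$; show by a geometric squeeze that then $a_{n}\geq c(\tau,R)>0$ for all $n$, giving \eqref{eq:loop-deloc}. \textbf{(4)} Case B: suppose $a_{n_{0}}<\tau$ for some (dyadic) $n_{0}$; feed this into $a_{m}\leq C a_{2m}^{2}$ downward, or equivalently iterate $a_{2^{j}n_{0}}\le C a_{2^{j+1}n_{0}}^{2}$... — actually one must be careful about the \emph{direction}: the inequality as typically stated bounds the \emph{smaller}-scale probability by the larger-scale one squared, so smallness propagates from large scales to small; hence to get decay one instead reads it as $a_{2m}\ge (a_{m}/C)^{1/2}$, i.e. \emph{largeness} propagates upward. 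I would therefore phrase Case B as the contrapositive of Case A: if \eqref{eq:loop-deloc} fails, there is $n_{0}$ with $a_{n_{0}}<\tau$; then since $a$ does not blow up, iterating $a_{m}\le C a_{2m}^{2}$ with the seed at scale $n_{0}$ downwards would only control small scales, so instead I use the two-alternative form of Proposition~\ref{thm:renorm} (Remark~\ref{rem:equiv renorm ineqs}), which is designed precisely so that one of the two alternatives, when it holds at one scale, forces the rapid-decay alternative at all larger scales; the bootstrap $b_{k}:=Ca_{2^{k}n_{0}}\le b_{k-1}^{2}$ then gives $b_{k}\le b_{0}^{2^{k}}$, hence $a_{2^{k}n_{0}}\le C^{-1}(C\tau)^{2^{k}}=C^{-1}2^{-2^{k}}$, which is $\le C'\exp(-c\,(2^{k}n_{0})^{\alpha})$ with $\alpha=\log2/\log2=1$ up to the polynomial loss in the renormalization step; fill in general $n$ by monotonicity again. \textbf{(5)} Conclude that exactly one of \eqref{eq:loop-deloc}, \eqref{eq:loop-loc} holds, with uniform constants.

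\textbf{Main obstacle.} The genuinely hard input is Proposition~\ref{thm:renorm}, the renormalization inequality itself — everything above is a soft bootstrap around it. Within the present statement, the subtle point is the bookkeeping of \emph{which direction} smallness versus largeness propagates and getting the exponent $\alpha$: a naive reading of a self-improving inequality $a_{n}\le Ca_{2n}^{2}$ propagates information the ``wrong'' way, and the clean resolution is to use the logically-equivalent two-alternative form of the renormalization inequality (as the outline emphasizes), where one alternative at a single scale is self-propagating to all larger scales. The other place requiring care is the passage from the dyadic subsequence $\{2^{k}\}$ (on which the renormalization naturally lives) to all $n$, which relies on comparing loop events around $L_{n}$ with those around $L_{2^{\lfloor\log_{2}n\rfloor}}$ and $L_{2^{\lceil\log_{2}n\rceil}}$ via \eqref{eq:CBC-|h|} and the positive association of Proposition~\ref{prop:pos assoc of percolated height fcns}; this costs a constant factor but not more, so $\alpha$ and the decay/positivity constants survive.
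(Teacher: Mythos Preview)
Your overall strategy is the paper's: take the renormalization inequality (Proposition~\ref{thm:renorm}) as a black box, bootstrap along a geometric subsequence of scales, and then fill in. Two points deserve correction.

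\textbf{Direction of the inequality.} The renormalization inequality in the paper is $b_{R'n}\le C\,b_n^{2}$ (with $R'=14$), i.e.\ the \emph{larger}-scale probability is bounded by the square of the \emph{smaller}-scale one. So if $b_{n_0}<1/(2C)$, then $b_{R'n_0}\le C b_{n_0}^{2}< b_{n_0}/2$: smallness propagates \emph{upward} to larger scales, and iterating gives $b_{(R')^{k_0+\ell}n_0}<\frac1C\,2^{-2^{\ell}}$, hence $\alpha=\log_{R'}2$. Your initial $a_n\le C a_{2n}^{2}$ has the scales swapped, and the paragraph where you ``catch'' this is muddled (you assert smallness propagates from large to small, which would be the wrong direction). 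The two-alternative form you eventually invoke is exactly how the paper phrases it, and your final bootstrap $\beta_k\le\beta_{k-1}^2$ with $\beta_k=C\,b_{(R')^k n_0}$ is correct --- but clean up the exposition. Note also that the paper runs this for $b_n$ (loops on the edge set $\calE$ of Theorem~\ref{thm:RSW}), not $a_n$ directly; the transfer to $a_n$ is via the two-sided polynomial bound $c'a_n^{C'}\le b_n\le a_n$ of~\eqref{eq:two-side bound}, which itself uses the RSW theorem.

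\textbf{Filling in general $n$.} Your proposed ``monotonicity/squeeze'' via \eqref{eq:CBC-|h|} and Corollary~\ref{cor:+-1 SMP ineq for abs val} does not work as stated: $a_n$ (or $b_n$) is not obviously monotone in $n$, because both the domain $L_{Rn}$ and the target $L_n$ scale together, and the two effects pull in opposite directions. The paper instead uses a \emph{necklace argument}: for $N\in[n_{k+2},n_{k+3}]$ and $n\in[n_k,n_{k+1}]$, a loop around $L_N$ can be built from a bounded number $\ell$ (depending only on $R,R'$) of translated/rotated copies of the loop event at scale $n$; FKG then gives $b_N\ge b_n^{\ell}$. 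This single inequality handles both directions of the filling (case~(i) via $b_N\ge b_{n_{k+1}}^{\ell}$, case~(ii) via $b_n\le b_{n_{k+2}}^{1/\ell}$). Replace your squeeze with this.
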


This is indeed equivalent with the earlier dichotomies.

\begin{theorem}[Three equivalent dichotomies]
\label{thm:3 equiv dichotomies}
i) If~\eqref{eq:loop-loc} occurs, then also~\eqref{eq:Gibbs-loc} and~\eqref{eq:Var-loc} occur. ii) If~\eqref{eq:loop-deloc} occurs, then~\eqref{eq:log var deloc} (and hence also ~\eqref{eq:Var-deloc} and~\eqref{eq:Gibbs-deloc}) occurs.
\end{theorem}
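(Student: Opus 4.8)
The plan is to connect the loop-probability quantities $a_n$ to the variance via the standard percolation-height-function dictionary, in both directions. For part (i), suppose \eqref{eq:loop-loc} holds. The key geometric fact is that if there is a loop with $\omega=1$ surrounding $L_n$, then by Corollary~\ref{cor:FK cond law given abs height} the sign of $h$ is constant on that loop's cluster; combined with the $\pm 1$ boundary condition and the Lipschitz constraint, this forces $|h(x)|$ to be small (bounded by $1 + 2\cdot(\text{radius of }L_n)$ in lattice steps) for $x$ in the center, \emph{on the event} that such a loop exists. More precisely, using the monotonicity Corollary~\ref{cor:+-1 SMP ineq for abs val} to pass to the infinite-volume-type comparison and a Borel--Cantelli / summability argument on $\sum_n a_n < \infty$ (which \eqref{eq:loop-loc} gives), one shows that almost surely infinitely many surrounding loops exist at all large scales, pinning $h(x)$; quantitatively this yields a uniform bound $\bbE^{\pm 1}_D[h(x)^2]\le C$, i.e.\ \eqref{eq:Var-loc}. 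Then Theorem~\ref{thm:Gibbs dichotomy} upgrades \eqref{eq:Var-loc} to \eqref{eq:Gibbs-loc}. One must be a little careful that $a_n$ is defined with the specific domain $L_{Rn}$ and wired/$\pm1$ boundary data; the comparison-of-boundary-conditions inequalities \eqref{eq:CBC-h}, \eqref{eq:CBC-|h|} and Corollary~\ref{cor:+-1 SMP ineq for abs val} are what let one transfer the estimate to an arbitrary finite $D$ containing $x$ deep inside.

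For part (ii), suppose \eqref{eq:loop-deloc} holds, so $a_n \ge c$ for all $n$. The strategy is the classical one for logarithmic delocalization: each scale contributes an independent-in-expectation ``unit of fluctuation''. Fix $x$ with $d_D(x,\partial D)\ge 2$, and let $k$ be the largest integer with $L_{R^{k}}\subset D$ (so $k \asymp \log d_D(x,\partial D)$). On each annulus $L_{R^{j+1}}\setminus L_{R^{j}}$, the event $\calO_{\omega=1}(L_{R^j})$ has probability $\ge c$ by \eqref{eq:loop-deloc} together with the monotonicity in boundary conditions (to compare $\bbP^{\pm1}_D$ restricted near the annulus with $\bbP^{\pm 1}_{L_{R\cdot R^j}}$). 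Conditionally on such a loop, its sign is an independent fair coin flip (Corollary~\ref{cor:FK cond law given abs height}), and flipping the sign of that outermost cluster shifts $h(x)$ by (at least) a fixed amount coming from the difference of heights across the loop. Summing over the $\asymp k$ nested annuli and exploiting the conditional independence of the sign coin flips on disjoint clusters, a second-moment / martingale-increment computation gives $\bbE^{\pm1}_D[h(x)^2]\ge c' k \ge c\log d_D(x,\partial D)$. The matching upper bound $\bbE^{\pm1}_D[h(x)^2]\le C\log d_D(x,\partial D)$ should \emph{not} need \eqref{eq:loop-deloc} at all: it follows from a union bound over scales using that $h(x)^2$ is controlled by $(1 + 2\,d_D(x,\text{outermost }\omega\text{-loop around }x))^2$ together with the log-concavity/tightness input and the elementary observation that in each annulus there is \emph{some} cluster structure; alternatively one invokes the RSW-type control (Theorem~\ref{thm:RSW}) to get that a surrounding $\omega$-loop exists in each annulus with probability bounded below \emph{independently of} which dichotomy alternative holds, making the tail of $d_D(x,\cdot)$ geometric per scale and hence the variance $O(k)$.

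\textbf{Main obstacle.} The delicate point is the lower bound in (ii): making rigorous that the nested loops produce \emph{genuinely additive} variance. One needs (a) that conditioning on the configuration outside the $j$-th annulus, the event $\calO_{\omega=1}(L_{R^j})$ still has probability $\ge c$ — this requires the spatial Markov/monotonicity machinery (Lemma~\ref{lem:SMP with percolation}, Corollary~\ref{cor:+-1 SMP ineq for abs val}, Lemma~\ref{lem:FK coupling 2}) applied in the right order so that conditioning only helps; and (b) that the sign flips on the distinct surrounding clusters at different scales are conditionally independent fair coins and each moves $h(x)$ by a definite amount, so that the increments behave like an orthogonal (martingale-difference) sequence whose squares add up. Packaging this as a clean martingale-difference decomposition of $h(x)$ along the nested scales — rather than as a fragile direct computation — is where most of the real work lies; the upper bound, by contrast, I expect to be routine given the tools already assembled (log-concavity from \cite{She05}, monotonicity, and RSW).
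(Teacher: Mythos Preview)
Your part~(i) has a genuine conceptual error: $\omega=1$ loops do \emph{not} pin the height. An $\omega=1$ loop forces constant \emph{sign} of $h$ on its cluster, but $|h|$ there can be arbitrarily large; what pins $h$ is an $\omega=0$ dual-loop, which by SMP (Lemma~\ref{lem:SMP with percolation}) imposes a fresh $\{\pm1\}$ boundary condition on its interior. Condition~\eqref{eq:loop-loc} says $\omega=1$ loops are \emph{rare}, and the essential missing step --- absent from your plan --- is converting this into ``$\omega=0$ dual-loops are \emph{common}.'' This is not automatic; the paper does it via RSW: Lemma~\ref{lem:zero-loops in loop dichotomy} shows that under~\eqref{eq:loop-loc} one has $\bbP^{\pm1}_{L_{4n}}[\calO^*_{\omega=0}(L_n,L_{2n})]\ge 1-Ce^{-cn^\alpha}$, and this stretched-exponential bound is then fed into a dyadic union-bound over nested annuli to control $\bbP[|h(x)|\ge Ck]$ uniformly in the domain. (Your Borel--Cantelli remark also points the wrong way: $\sum a_n<\infty$ would give \emph{finitely} many $\omega=1$ loops, not infinitely many surrounding loops.)

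For part~(ii), your assertion that the upper bound ``should not need~\eqref{eq:loop-deloc}'' is incorrect, and the proposed shortcut --- ``RSW gives a surrounding $\omega$-loop in each annulus with probability bounded below independently of which alternative holds'' --- is precisely the content of~\eqref{eq:loop-deloc}, not a consequence of Theorem~\ref{thm:RSW}, which is a one-way polynomial inequality and yields no unconditional loop lower bound. The paper's upper bound (Section~\ref{subsec:pf 3 equiv dichotomies 2}) genuinely uses~\eqref{eq:loop-deloc} to produce, with uniformly positive probability, a low-height connection from $\partial D_{2n}$ into $[-n,n]^2$ (Lemma~\ref{lem:wn un log bounds}), which drives the recursion $w_{2n}\le w_n+C$; the argument is run on tori and then transferred. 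Your lower-bound plan is in the right spirit; the paper sidesteps the martingale/conditional-independence issue you flag by exploring an auxiliary percolation $\nu$ depending only on $(|h|,B)$, so that the exploration yields an \emph{absolute-value} boundary condition and a direct recursion $v_{n_{k+1}}\ge v_{n_k}+c$ via $|h|$-FKG --- no orthogonality of sign flips needed.
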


Theorem~\ref{thm:3 equiv dichotomies} is a fairly simple consequence of intermediate steps in the proof of Theorem~\ref{thm:loop dichotomy}; see Section~\ref{sec:proof of main results}. Let us now see how the two immediately imply Theorem~\ref{thm:main thm 2nd}:

\begin{proof}[Theorem~\ref{thm:main thm 2nd}]
By Theorem~\ref{thm:loop dichotomy}, either \eqref{eq:loop-loc} or~\eqref{eq:loop-deloc} occurs. In the first case, by Theorem~\ref{thm:3 equiv dichotomies}, the variance is bounded~\eqref{eq:Var-loc}, and in the second it diverges logarithmically~\eqref{eq:log var deloc}. On the other hand, by Theorem~\ref{thm:Piet's non-quantitative deloc thm}, the variance diverges, so~\eqref{eq:log var deloc} holds. 
\end{proof}

The main open question is thus to prove Theorem~\ref{thm:loop dichotomy}. This is done via the so-called renormalization inequality. For technical reasons, the inequality will be more conveniently derived for a quantity $b_n$ closely related to $a_n$.

\begin{proposition}[Renormalization inequality]
\label{thm:renorm}
There exists $C > 0$ such that every $n$, at least one of the following two holds, where $R'=14$:
\begin{align*}
\text{i)}
 \qquad b_n \geq \tfrac{1}{2C}; \qquad \text{or} \qquad \text{ii)}\qquad 
b_{R' n} \leq C b_n^2.
\end{align*}
\end{proposition}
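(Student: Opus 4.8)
The renormalization inequality is an upper bound: $b_{R'n}\le C\,b_n^2$ unless $b_n$ is already bounded below. The intuitive meaning is that a loop of $\omega=1$ edges surrounding the large box $L_{R'n}$ at scale $R'n$ can be cut into (at least) two ``almost independent'' pieces, each of which, when completed, forces a surrounding loop at the smaller scale $n$ in two well-separated translated copies of the relevant domain. Conditioning on the complement of an inner region, one uses the Spatial Markov property for heights-with-percolations (Lemma~\ref{lem:SMP with percolation}, Remark~\ref{rem:SMP with perco}) together with the comparison-of-boundary-conditions inequalities for $(|h|,\omega)$ (Proposition~\ref{prop:pos assoc of percolated height fcns} and especially Corollary~\ref{cor:+-1 SMP ineq for abs val}) to bound the conditional probability of the second piece by $b_n$ times a constant, and to peel off the first piece similarly. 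The ``either i) or ii)'' form lets us run this argument \emph{assuming} $b_n<\tfrac{1}{2C}$; the factor $\tfrac12$ is what absorbs a conditioning/union-bound overhead.

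\textbf{Key steps.} First I would make precise the geometry: the event $\calO_{\omega=1}(L_n)$ of a surrounding $\omega=1$ loop, the quantity $b_n$ (a minor variant of $a_n$ defined earlier in the proof, engineered so the SMP cuts are clean), and fix $R'=14$ so that inside $L_{R'n}$ one can place two disjoint annular regions, each a translate of a domain in which ``a surrounding loop at scale $n$'' is the natural event; the reflection/translation symmetries of $\bbG$ and $\mathbf{c}_e$ make the two translated events have comparable probabilities. Second, I would observe that any $\omega=1$ loop surrounding $L_{R'n}$ must, by planarity, cross both annuli, hence produce in each of them a connected $\omega=1$ arc joining the two ``sides'' of the annulus; by a standard square-root-trick / FKG-type decomposition (using positive association of $(|h|,\omega)$) and the uniform bound on the number of ``arms'' (here the maximum dual degree $\le 11$ keeps combinatorial factors constant), I would argue that the probability of such a crossing in a given annulus is at most a constant times $b_n$ (after closing up the arc into a full loop using a bounded number of extra edges and CBC-$|h|$ to pay for the closing). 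Third, I would chain the two annuli: condition on the configuration of $(|h|,\omega)$ in the outer annulus and its exterior; on the event that the outer crossing occurs, Lemma~\ref{lem:SMP with percolation} (via Remark~\ref{rem:SMP with perco}, using that we may further condition only on $\omega=0$ on the separating edges, which is a \emph{decreasing} event and hence can only help after CBC) shows the inner region sees a boundary condition dominated by $\pm1$, so Corollary~\ref{cor:+-1 SMP ineq for abs val} bounds the conditional probability of the inner crossing by $C'\,b_n$. Multiplying, $b_{R'n}\le (C'b_n)\cdot(C'b_n)=C b_n^2$. Finally, the exceptional case: the above manipulations introduce a multiplicative overhead from the square-root trick and from closing arcs into loops, so one only gets $b_{R'n}\le C b_n^2$ \emph{when} $C b_n\le \tfrac12$, i.e.\ when i) fails; this yields exactly the stated two-alternative conclusion.

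\textbf{Main obstacle.} The delicate point is the decreasing-event conditioning used to separate the two annuli while preserving the comparison with $\pm1$ boundary conditions. We do \emph{not} have full absolute-value FKG that would let us freely multiply conditional probabilities; instead we only have the one-sided inequalities of Proposition~\ref{prop:pos assoc of percolated height fcns}/Corollary~\ref{cor:+-1 SMP ineq for abs val}, and these apply to $(|h|,\omega)$, not to $h$ or to events depending on signs. So the whole argument must be phrased purely in terms of $\omega$-connectivity events (which \emph{are} increasing in $(|h|,\omega)$ and measurable w.r.t.\ it), the separating event must be chosen to be $\{\omega=0 \text{ on a cut}\}$ (decreasing, so that conditioning on it after CBC still gives a valid upper bound), and the target event ``$\omega=1$ surrounding loop at scale $n$'' must be realized \emph{inside} the cut region with a $\pm1$-dominated boundary condition — exactly the setup of Corollary~\ref{cor:+-1 SMP ineq for abs val}. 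Getting the geometry of the two annuli and the cut to line up with the lattice symmetries (so that the translated scale-$n$ events really do have probability $\le$ const$\cdot b_n$, with the constant independent of $n$) is where most of the care goes; the value $R'=14$ is presumably the smallest that makes all these nested regions fit with room to spare for the bounded-size ``closing'' surgery.
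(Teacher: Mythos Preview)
Your proposal has genuine gaps and diverges substantially from the paper's argument.

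First, a misidentification: $b_n$ is \emph{not} the probability of an $\omega=1$ loop; it is $\bbP^{\pm 1}_{L_{3n}}[\calO^*_\calE(L_n)]$, where $\calE$ is the set of edges with $h\geq 5$ at both endpoints (and $B_e=0$ between two heights $5$). This matters structurally: a dual loop of $\calE$ becomes, after the shift $\tilde h=6-h$, a dual loop of $\tilde\omega=0$, and exploring up to it yields a $\tilde h\in\{\pm 1\}$ boundary condition on the inside. The whole proof is organized around this height shift.

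Second, and most importantly, your decoupling step does not work as stated. You write that after conditioning on the outer crossing, ``the inner region sees a boundary condition dominated by $\pm1$''. But a crossing of $\omega=1$ (or of $\calE$) carries \emph{high} absolute heights; the SMP of Lemma~\ref{lem:SMP with percolation} only produces a $\{\pm1\}$ boundary condition when you cut along an $\omega=0$ interface. There is no reason such an interface separates your two annuli, and conditioning on one is not ``free'' via CBC: you must first \emph{produce} it. In the paper this is precisely the role of Proposition~\ref{prop:push} (the Pushing Theorem), which manufactures $h\omega\leq 0$ dual curves separating the two small boxes from each other and from the rest of $L_{42n}$; only then can SMP and the CBC for $(|h|,\omega)$ be invoked to compare with $\bbP^{\pm1}_{L_{3n}}$ in each piece.

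Third, the source of the two alternatives is not a square-root-trick overhead. It is Corollary~\ref{lem:crack alternative} (a consequence of the RSW Theorem~\ref{thm:RSW}): either long $\calE$-crossings are uniformly likely in thin rectangles (whence $b_n\geq c$ by a necklace, giving alternative~i)), or long $\omega=0$ dual crossings are uniformly likely (the ``crack''). Under the crack, inside the big $\tilde\omega=0$ loop one builds, with uniformly positive conditional probability, two disjoint $\tilde\omega=0$ loops around two well-separated translates $L^{\mathsf{left}}_n$, $L^{\mathsf{right}}_n$; these are in particular $\calE$-loops for $h$. Then the Pushing Theorem yields $h\omega\leq 0$ dual loops in the annuli $L^{\mathsf{left}}_{3n}\setminus L^{\mathsf{left}}_{2n}$ and $L^{\mathsf{right}}_{3n}\setminus L^{\mathsf{right}}_{2n}$, and exploring those loops gives two independent subproblems with boundary $\preceq\{\pm1\}$; CBC for $(|h|,-B)$ and sign-flip symmetry finish with $b_{14n}\leq C\,b_n^2$.

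Finally, ``closing an arc into a loop using a bounded number of extra edges'' is not available here: there is no finite-energy shortcut for $\calE$-edges, and the correct arc-to-loop passage is exactly the RSW theory of Section~9. Your sketch omits both the RSW input and the Pushing Theorem, and without them the chain of inequalities does not close.
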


\begin{remark}
\label{rem:equiv renorm ineqs}
With an adjustment of the constant $C$, the two alternatives can both be shown to imply $b_{R' n} \leq C' b_n^2$ (since $b_{R' n} \leq 1$); the two-case formulation is nevertheless how the result is proven and used, and we hence persist to give this ``suboptimal'' formulation. 
\end{remark}

For completeness, we recall how the renormalization inequality implies the dichotomy of Theorem~\ref{thm:loop dichotomy}, even if this argument is very standard by now.

\begin{proof}[the dichotomy of Theorem~\ref{thm:loop dichotomy} for $b_n$]
Let us first study a subsequence of $n$:s that are powers of $R'$, $n_k = (R')^k$. By Proposition~\ref{thm:renorm}, this subsequence has the property that, if for some $k$ it holds true that
$b_{n_{k}} < \tfrac{1}{\alpha C} $ for some $\alpha > 2$, then also $b_{n_{k + 1}} \leq C b_{n_k}^2 < \tfrac{1}{\alpha^2 C}$ and, using this inductively, $b_{n_{k+\ell}} < \tfrac{1}{\alpha^{2^\ell}C}$.

Now, if $b_{n_k} \geq \tfrac{1}{2C}$ for all $k$, then the analogue of~\eqref{eq:loop-deloc} holds for $b_{n_k} $ along the subsequence $n_k$. If not, there exists $k_0$ such that $b_{n_{k_0}} < \tfrac{1}{2C}$, and by the previous paragraph,
\begin{align*}
b_{n_{k_0 + \ell }} < \tfrac{1}{2^{2^\ell}C}, \qquad \text{for all } \ell \in \bbN.
\end{align*}
For $n = n_{k_0 + \ell}$, one has $\ell = \log_{R'} n - k_0$, and 
 denoting $c = 2^{-k_0}$ and $\alpha = \log_{R'} 2$, we have
\begin{align*}
b_n < \tfrac{1}{ C} 2^{-cn^{\alpha}}  \qquad \text{for $n$ of the form } n = n_{k_0 + \ell},
\end{align*}
i.e., the analogue of~\eqref{eq:loop-loc} holds for $b_{n_k} $ along the subsequence $n_k$.

It now remains to ``fill the subsequence'' which can be done, e.g., via a standard ``necklace argument'': for any $N \in [n_{k+2}, n_{k+3}]$ and $n \in [n_k, n_{k+1}]$, a loop as in the definition of $b_N$ can be generated by a fixed number $\ell$ of translated loops of the size in $b_{n}$. By the FKG, then, $b_N \geq (b_{n})^{\ell}$. Case (i) for $n_k$ then implies case (i) for all $N$ via the special case $b_N \geq (b_{n_{k+1}})^{\ell}$. Case (ii) for $n_k$ implies case (ii) for all $n$ via $b_n \leq (b_{n_{k+2}})^{1/\ell}$. This concludes the proof.
\end{proof}

The remaining questions are thus to prove Proposition~\ref{thm:renorm} and --- simpler --- Theorem~\ref{thm:3 equiv dichotomies}. This is done in the following four sections.

\section{Crossings in discrete topological quardilaterals}

\subsection{Quadrilaterals}

A \textit{simply-connected (discrete) domain} $D= (V,E)$ of $\bbG$ consists of the vertices and edges that lie inside a simple closed loop $\ell$ on the dual $\bbG^*$.
Given such $D $ and $\ell$, and let $\mathsf{bl}, \mathsf{br}, \mathsf{tr}, \mathsf{tl}$ (standing for bottom-left, bottom-right etc.) be four distinct marked faces on $\ell$, in counterclockwise order. The tuple $(D; \mathsf{bl}, \mathsf{br}, \mathsf{tr}, \mathsf{tl})$ is called a \textit{discrete topological quadrilateral}, or a \textit{quad}, for short (if the marked faces are clear in the context, we will simply refer to ``the quad $D$''). We denote by $E_\mathsf{left}^*$ the dual-edges on the counter-clockwise arc of $\ell$ between $\mathsf{tl}$ and $\mathsf{bl}$, by $E_\mathsf{left}$ the primal-edges (of $\bbG \setminus D$) crossing them, by $V_\mathsf{left}^*$ the dual-vertices on the counterclockwise arc of $\ell$ between or including $\mathsf{tl}$ and $\mathsf{bl}$, and by $V_\mathsf{left}$ or the vertices of $F$ adjacent to $E_\mathsf{left}$ in $\bbG$. The $\mathsf{bottom}$, $\mathsf{right}$, and $\mathsf{top}$ sides are given similar notations. 

For $A, B \subset \bbV$, denote $A \stackrel{\calE}{\longleftrightarrow} B$ if there is a path connecting $A$ and $B$ on $\bbG$ and using only edges of $\calE \subset \bbE$. Analogous connections on the dual $\bbG^*$ are denoted by $\longleftrightarrow_*$.


\begin{lemma}
Let $D = (V, E)$ be a quad on $\bbG$ and $\calE$ a (random) subset of  $E$; then
\begin{align}
\label{eq:quad cross duality}
\{  V_\mathsf{left} \stackrel{\calE}{\longleftrightarrow} V_\mathsf{right} \}^c 
&=
\{  V_\mathsf{bottom}^* \stackrel{ (\calE^c)^* }{\longleftrightarrow}_* V_\mathsf{top}^* \} \qquad \text{and} \\
\label{eq:quad cross dual vs primal}
\{  V_\mathsf{left} \stackrel{\calE}{\longleftrightarrow} V_\mathsf{right} \}
& \subset
\{  V_\mathsf{left}^* \stackrel{\calE^*}{\longleftrightarrow}_* V_\mathsf{right}^* \}.
\end{align}
\end{lemma}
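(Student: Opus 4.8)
The plan is to establish both statements by the standard primal/dual planar duality for crossings of a quadrilateral, being careful about the roles played by $\calE$ versus $\calE^c$ and by $\bbG$ versus $\bbG^*$. First I would set up the topological picture: the boundary loop $\ell$ on $\bbG^*$ is divided by the four marked faces $\mathsf{bl},\mathsf{br},\mathsf{tr},\mathsf{tl}$ into four arcs, which (together with their crossing primal edges and adjacent primal vertices) give the four sides $\mathsf{left},\mathsf{bottom},\mathsf{right},\mathsf{top}$. The key geometric input is that inside the topological disc bounded by $\ell$, a primal path from $V_\mathsf{left}$ to $V_\mathsf{right}$ and a dual path from $V_\mathsf{bottom}^*$ to $V_\mathsf{top}^*$ must cross, and two such crossing edges are primal/dual duals of each other; conversely, if there is no primal left-right crossing in $\calE$, then the complementary dual edges $(\calE^c)^*$ contain a top-bottom crossing. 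This is the classical Kesten-type ``one of two dual crossings'' dichotomy, adapted to a topological quad rather than a rectangle.

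For \eqref{eq:quad cross duality} I would argue both inclusions. For $\subseteq$: suppose there is no primal path in $\calE$ from $V_\mathsf{left}$ to $V_\mathsf{right}$ inside $D$. Consider the set of primal vertices of $D$ reachable from $V_\mathsf{left}$ using only $\calE$-edges; its outer boundary, traced appropriately, furnishes a dual path using only edges of $(\calE^c)^*$ that separates $V_\mathsf{left}$ from $V_\mathsf{right}$ inside the disc, hence runs from the bottom arc $V_\mathsf{bottom}^*$ to the top arc $V_\mathsf{top}^*$. For $\supseteq$: a dual $(\calE^c)^*$-path from $V_\mathsf{bottom}^*$ to $V_\mathsf{top}^*$ is a topological barrier inside the disc separating the left side from the right side, and any primal path from $V_\mathsf{left}$ to $V_\mathsf{right}$ would have to cross it, using a primal edge dual to an edge of $(\calE^c)^*$, i.e. an edge not in $\calE$ — contradiction; so the primal crossing in $\calE$ fails. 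Here I would lean on the maximum-degree hypotheses ($\le 3$ for $\bbG$, $\le 11$ for $\bbG^*$) only insofar as they guarantee the usual local planarity needed to make ``trace the boundary of a cluster'' well-defined; the topological content is dimension-free.

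For \eqref{eq:quad cross dual vs primal}, the point is simpler: if $P$ is a primal path in $\calE$ from $V_\mathsf{left}$ to $V_\mathsf{right}$ inside $D$, then I would thicken $P$ to a dual path by taking, for each primal edge of $P$, the dual structure around it — more precisely, each primal vertex of $\bbG$ lies in a face of $\bbG^*$, and consecutive vertices along $P$ lie in dual faces sharing the dual edge $e^*$ with $e\in P\cap\calE$; stringing together the dual-boundary walks of these faces produces a dual path in $\calE^*$ connecting $V_\mathsf{left}^*$ to $V_\mathsf{right}^*$. (Equivalently: the primal crossing, viewed as a curve, lies in the union of the closures of the faces dual to its edges, and that union is dual-connected via $\calE^*$.) This gives the claimed inclusion of events.

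The main obstacle I anticipate is purely bookkeeping: making the ``boundary of the $\calE$-cluster of $V_\mathsf{left}$ is a $(\calE^c)^*$-path from the bottom arc to the top arc'' argument fully rigorous requires pinning down exactly which dual vertices/edges belong to which side, and checking that the traced boundary indeed terminates on $V_\mathsf{bottom}^*$ and $V_\mathsf{top}^*$ rather than on the left or right arcs. This is where the counterclockwise ordering of $\mathsf{bl},\mathsf{br},\mathsf{tr},\mathsf{tl}$ and the precise definitions of $V_\mathsf{left}^*, V_\mathsf{right}^*$ etc.\ (including whether endpoints are shared between adjacent sides) must be used consistently; the underlying topology is standard Jordan-curve reasoning, so I expect no conceptual difficulty, only care with conventions.
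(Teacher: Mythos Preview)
Your treatment of~\eqref{eq:quad cross duality} is fine and matches the paper, which simply calls it ``a standard duality property''.

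For~\eqref{eq:quad cross dual vs primal} there is a genuine gap, and your diagnosis of where the degree hypothesis enters is backwards. You write that the maximum-degree-$3$ assumption is needed only to make boundary-tracing in~\eqref{eq:quad cross duality} well-defined, and that ``the topological content is dimension-free''. In fact~\eqref{eq:quad cross duality} holds on any planar graph, while~\eqref{eq:quad cross dual vs primal} \emph{fails} without the degree bound (take $\bbZ^2$ and let $\calE$ be a single horizontal primal row: there is a primal left--right crossing but the dual edges $\calE^*$ are pairwise non-adjacent vertical dual edges). Your ``thickening'' construction hides this: for consecutive edges $e_{i-1}=\langle v_{i-1},v_i\rangle$, $e_i=\langle v_i,v_{i+1}\rangle$ of the primal path, the dual edges $e_{i-1}^*$ and $e_i^*$ share a dual vertex if and only if $e_{i-1},e_i$ are cyclically adjacent around $v_i$, which is automatic precisely when $\deg v_i\le 3$. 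The phrase ``stringing together the dual-boundary walks of these faces'' is also ambiguous: the full boundary walk of the dual face at $v_i$ uses \emph{all} dual edges $e^*$ with $e\ni v_i$, not only those in $\calE^*$, so as written the constructed path need not stay in $\calE^*$.

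The paper takes a different, cleaner route: it applies~\eqref{eq:quad cross duality} once more, now on the dual lattice, to rewrite $\{V_\mathsf{left}^*\stackrel{\calE^*}{\longleftrightarrow}_* V_\mathsf{right}^*\}^c=\{V_\mathsf{bottom}\stackrel{\calE^c}{\longleftrightarrow} V_\mathsf{top}\}$. One is then comparing two \emph{primal} paths, a left--right $\calE$-path $\gamma$ and a bottom--top $\calE^c$-path $\gamma'$. Topologically they must share an interior vertex; but they are edge-disjoint, and on a graph of maximum degree $3$ two edge-disjoint paths cannot meet at a non-endpoint vertex (that vertex would need degree $\ge 4$). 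This makes the role of the cubic hypothesis completely transparent. Your direct construction can be salvaged by inserting the degree-$3$ observation above, but as written it does not go through.
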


\begin{proof}
Equation~\eqref{eq:quad cross duality} is a standard duality property. For~\eqref{eq:quad cross dual vs primal}, on  the occurrence of $\{  V_\mathsf{left} \stackrel{\calE}{\longleftrightarrow} V_\mathsf{right} \}$, let $\gamma$ be a path on $\bbG$ with first edge on $E_\mathsf{left}$, last on $E_\mathsf{right}$, and the remaining edges being in $\calE \subset D$ and generating this event.
 Analogously, on $\{  V_\mathsf{left}^* \stackrel{\calE^*}{\longleftrightarrow}_* V_\mathsf{right}^* \}^c = \{  V_\mathsf{bottom} \stackrel{\calE^c}{\longleftrightarrow} V_\mathsf{top} \}$ (where we used~\eqref{eq:quad cross duality}), let $\gamma'$ on $\bbG$ have its first edge on $E_\mathsf{bottom}$, last on $E_\mathsf{top}$, and the rest in $\calE^c$ and generating the latter event. We will show that such $\gamma$ and $\gamma'$ cannot co-exist, and hence $\{  V_\mathsf{left} \stackrel{\calE}{\longleftrightarrow} V_\mathsf{right} \}
\subset
\{  V_\mathsf{left}^* \stackrel{\calE^*}{\longleftrightarrow}_* V_\mathsf{right}^* \}^{cc}$, and the proof is complete. If $\gamma$ and $\gamma'$ did co-exist, then by topological obstruction, they must share a vertex in $D$. On the other hand, they are manifestly edge-disjoint and hence also vertex-disjoint, possibly apart from endpoint vertices.\footnote{Note that $\bbG$ being of maximum degree $3$ is crucial here.} But all non-endpoint vertices are in $D$, so they cannot share a vertex in $D$, a contradiction. 
\end{proof}


\subsection{Crossings of symmetric quads}

Let $\tau$ be a symmetry of the lattice $\bbG$ (e.g., a reflection with respect to the vertical axis). We say that $(D; \mathsf{bl}, \mathsf{br}, \mathsf{tr}, \mathsf{tl})$ is \textit{symmetric with respect to $\tau$}, if $\tau(D)=D$ and furthermore, $\tau$ fixes two of the corner faces, and exchanges the remaining two.
Then, for a boundary condition $\xi: \partial D \to \Zodd$,
denote by $\tau. \xi $ the boundary condition $ \xi \circ \tau^{-1}$.
In this situation $2-\xi \succeq \tau. \xi$ means, informally speaking, the boundary condition ``tends to be more below $1$ than above''. The following lemma formalizes this intuition.

Below  an in continuation, we denote by $\{ h \omega \leq 0 \}$ (resp. $\{ h \omega \geq 1\}$) the set of edges $e=\langle u, v \rangle$ on which $h(u)\omega(e), h(v)\omega(e) \leq 0$, i.e., $h(u), h(v) \leq 1$ and if $h(u)=h(v)=1$ then also $B_e = 0$ (resp. its complement, on which $h(u)\omega(e), h(v)\omega(e)  \geq 1$, i.e., $h(u), h(v) \geq 1$ and if $h(u)=h(v)=1$ then also $B_e = 1$). With a slight abuse of notation, we also denote, e.g., $\{ h \omega \leq 0 \}^* \subset \bbE^*$ simply by $\{ h \omega \leq 0 \}$. The edges $\{ h \omega \geq 0 \}$ and their complement $\{ h \omega \leq -1\}$ are defined analogously.

\begin{lemma}
\label{lem:sym quad cross}
Let $D$ be symmetric quad with respect to $\tau$ and $\xi$ an interval-valued boundary condition on $\partial D$, such that $2-\xi \succeq \tau. \xi$. Then,
\begin{align*}
\bbP^\xi_D [V_\mathsf{left}^* \stackrel{h \omega \leq 0}{\longleftrightarrow}_* V_\mathsf{right}^*] \geq 1/2.
\end{align*}
\end{lemma}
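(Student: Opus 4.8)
The idea is a symmetrization argument. Consider the measure $\bbP^\xi_D$ together with the pushforward under $\tau$. The symmetry $\tau$ of the quad exchanges the left and right arcs and fixes top and bottom (or vice versa); precompose with $\tau$ to get a coupling relating the event of a dual left--right $\{h\omega \le 0\}$-crossing to the event of a dual crossing in the reflected picture. The subtlety is that $\tau.\xi \ne \xi$ in general, so the naive reflection does not directly give a self-dual statement. This is where the hypothesis $2-\xi \succeq \tau.\xi$ enters: the reflected-and-flipped boundary condition is dominated (in the appropriate absolute-value/ordering sense) by the original.

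\textbf{Key steps.} First I would set up the duality dictionary from the previous lemma: by~\eqref{eq:quad cross duality}, the complement of the primal crossing event $\{V_\mathsf{left} \stackrel{\calE}{\longleftrightarrow} V_\mathsf{right}\}$ is the dual top--bottom crossing in $\calE^c$, and I want to work with $\calE = \{h\omega \ge 1\}$ so that $\calE^c = \{h\omega \le 0\}$; thus a dual left--right $\{h\omega\le 0\}$-crossing and a dual top--bottom $\{h\omega \le 0\}$-crossing are, by the topology of the quad, complementary-type events (one of the two must occur, since $\{h\omega \le 0\}$ and $\{h\omega \ge 1\}$ partition the edges, and exactly one of ``left--right in one color'' / ``top--bottom in the other color'' holds). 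Concretely, in the quad $D$ with these marked corners, for \emph{any} fixed configuration, either there is a dual path of $\{h\omega\le 0\}$ edges from $V^*_\mathsf{left}$ to $V^*_\mathsf{right}$, or there is a primal path of $\{h\omega \ge 1\}$ edges from $V_\mathsf{bottom}$ to $V_\mathsf{top}$ — and these are mutually exclusive. Then I would apply $\tau$: since $\tau$ swaps left$\leftrightarrow$right and fixes top, bottom (or the analogous pairing), the event ``dual left--right $\{h\omega \le 0\}$-crossing'' is invariant under $\tau$, while a primal top--bottom $\{h\omega \ge 1\}$-crossing is sent to a primal top--bottom $\{(h\circ\tau^{-1})(\omega\circ\tau^{-1}) \ge 1\}$-crossing. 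Now use the flip $h \mapsto 2-h$ (which is a symmetry of the model since heights are odd integers centered at $1$, and which sends $\{h\omega\ge 1\}$ to $\{h\omega \le 0\}$ with $\omega$ unchanged): so a primal top--bottom $\{h\omega\ge 1\}$-crossing under $\tau.\xi$ has the same probability as a primal top--bottom $\{h\omega\le 0\}$-crossing... I need to be careful here — the cleanest route is: let $p$ be the probability of the dual left--right $\{h\omega\le 0\}$-crossing under $\bbP^\xi_D$. Its complement is the primal top--bottom $\{h\omega\ge 1\}$-crossing, probability $1-p$. Under the $\tau$-reflected measure $\bbP^{\tau.\xi}_D$ (which is $\tau_*\bbP^\xi_D$), the same dual left--right event has probability $p$ as well (by $\tau$-invariance of that event under the corner labelling swap), so its complement, the primal top--bottom $\{h\omega \ge 1\}$ crossing, also has probability $1-p$ under $\bbP^{\tau.\xi}_D$. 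Apply $h\mapsto 2-h$: under $\bbP^{\tau.\xi}_D$, the primal top--bottom $\{h\omega\le 0\}$-crossing has probability $1-p$. But a primal $\{h\omega\le 0\}$ crossing implies (by~\eqref{eq:quad cross dual vs primal} applied in the dual, and the monotone structure) a \emph{dual} $\{h\omega\le 0\}$ left--right... no: I want to compare top--bottom primal to left--right dual. The containment~\eqref{eq:quad cross dual vs primal} gives: a primal top--bottom $\{h\omega\le 0\}$-crossing is contained in a dual top--bottom $\{h\omega\le 0\}$-crossing, which excludes a dual left--right $\{h\omega\le 0\}$-crossing. So under $\bbP^{\tau.\xi}_D$, with probability $\ge 1-p$ there is \emph{no} dual left--right $\{h\omega\le 0\}$-crossing, i.e., $p_{\tau.\xi} \le p$ where $p_{\tau.\xi}$ is the dual left--right probability under $\bbP^{\tau.\xi}_D$; but $p_{\tau.\xi}=p$ by $\tau$-invariance, giving $p \le 1-p$... that is the wrong direction. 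So the pure reflection argument gives $p \le 1/2$, the opposite of what we want — which is exactly why the hypothesis $2-\xi \succeq \tau.\xi$ is needed: it lets me replace $\tau.\xi$ by $2-\xi$ with a favorable inequality. Precisely, the event ``dual left--right $\{h\omega \le 0\}$-crossing'' is \emph{decreasing} in $(h,\omega)$ (smaller heights and fewer $\omega$-edges help it), hence \eqref{eq:CBC-h}-type monotonicity (via Proposition~\ref{prop:pos assoc of percolated height fcns}, the $(|h|,\omega)$ or rather the signed $(h,\omega)$ comparison of boundary conditions) gives $\bbP^{\tau.\xi}_D[\text{dual L--R } h\omega\le 0] \ge \bbP^{2-\xi}_D[\text{dual L--R }h\omega\le 0]$ when $\tau.\xi \preceq 2-\xi$; and $\bbP^{2-\xi}_D[\text{dual L--R } h\omega\le 0] = \bbP^{\xi}_D[\text{dual L--R } h\omega \ge 1]$ by the flip $h\mapsto 2-h$. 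Assembling: $p = \bbP^\xi[\text{L--R, } h\omega\le 0]$ and by the flip $\bbP^\xi[\text{L--R, }h\omega\ge 1] = \bbP^{2-\xi}[\text{L--R, }h\omega\le 0] \le \bbP^{\tau.\xi}[\text{L--R, }h\omega\le 0] = p$ (last equality $\tau$-invariance). Now ``L--R in $\{h\omega\le 0\}$'' and ``L--R in $\{h\omega\ge 1\}$'' are not disjoint but their union... rather I use: $1 = \bbP^\xi[\text{dual L--R }h\omega\le 0] + \bbP^\xi[\text{primal top--bottom }h\omega\ge 1] \le p + \bbP^\xi[\text{dual top--bottom }h\omega\ge 1]$, and by $\tau$-applied-to-the-dual-picture plus the flip, $\bbP^\xi[\text{dual top--bottom }h\omega\ge 1] \le p$ too; hence $1 \le 2p$, i.e. $p \ge 1/2$, as desired.

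\textbf{Main obstacle.} The delicate point, and the step I would spend the most care on, is the monotonicity input: the crossing event lives in the \emph{signed} height variable $h$ (not just $|h|$), so I must invoke the version of~\eqref{eq:CBC-h} / positive association valid for $(h,\omega)$, checking that the boundary-condition comparison $\tau.\xi \preceq 2-\xi$ is of the right (interval-valued, order-preserving) type for Proposition~\ref{prop: CBC and FKG}(1) extended as in Proposition~\ref{prop:pos assoc of percolated height fcns}, and that the event ``$\exists$ dual $\{h\omega\le 0\}$ left--right crossing'' is genuinely monotone (decreasing) in $(h, \omega)$ — lowering $h$ only enlarges $\{h\omega\le 0\}$ and removing $\omega$-edges only relaxes the $B_e$ constraint, so it is decreasing, good. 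I also need to double-check the topological combinatorial fact that, on $\bbG$ with maximum degree $3$, exactly one of ``dual L--R in color $\{h\omega\le 0\}$'' and ``primal T--B in color $\{h\omega\ge 1\}$'' occurs, together with the containment~\eqref{eq:quad cross dual vs primal} applied to the primal T--B crossing — this is precisely the degree-$3$ self-duality used in the preceding lemma, so it transfers verbatim. Modulo these checks the argument is a two-line symmetrization, and I would write it up as: (a) state the four relevant events and their exclusion/containment relations; (b) flip $h\mapsto 2-h$; (c) reflect by $\tau$; (d) insert the boundary comparison $\tau.\xi\preceq 2-\xi$ via CBC; (e) add up to get $1 \le 2p$.
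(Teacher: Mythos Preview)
Your overall architecture---duality, symmetry $\tau$, height flip $h\mapsto 2-h$, and CBC via $\tau.\xi\preceq 2-\xi$---matches the paper's, but there is a genuine gap at the step where you invoke the flip. You write that $h\mapsto 2-h$ ``sends $\{h\omega\ge 1\}$ to $\{h\omega\le 0\}$ with $\omega$ unchanged.'' Neither half of this is true. First, $\omega$ is \emph{not} invariant under the flip: e.g.\ an edge with $\{h(u),h(v)\}=\{1,-1\}$ has $\omega_e=0$, but after the flip the heights become $\{1,3\}$ and $\tilde\omega_e=1$. Second, and more importantly, on an edge with $h(u)=h(v)=1$ and $B_e=1$ (which is in $\{h\omega\ge 1\}$), the flip leaves both heights at $1$ and, if you keep the same $B$, still gives $\tilde\omega_e=B_e=1$, so the edge \emph{remains} in $\{h\omega\ge 1\}$. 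Thus the naive flip does not produce the containment you need, and your chain of inequalities breaks precisely here.

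The paper repairs this by introducing an \emph{independent} Bernoulli family $\tilde B$ with the same marginals as $B$ but coupled so that $B_e=1\Rightarrow\tilde B_e=0$; then $\tilde\omega$ is defined from $(\tilde h,\tilde B)$, and one obtains the pathwise containment $\{h\omega\ge 1\}\subset\{\tilde h\tilde\omega\le 0\}$ edge by edge. Such an anticorrelated coupling of two $\mathrm{Ber}(p_e)$ variables exists if and only if $p_e\le 1/2$, i.e.\ $\mathbf{c}_e\le 2$. This is where the standing hypothesis $1\le\mathbf{c}_e\le 2$ enters, and your proposal never touches it. A secondary imprecision: the monotonicity you need is in $(h,B)$ (as in Proposition~\ref{prop:pos assoc of percolated height fcns}(1)), not in $(h,\omega)$; $\omega$ is a deterministic function of $(h,B)$ and is not an independent coordinate for CBC.
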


\begin{proof}
Note that $\{ V_\mathsf{left}^* \stackrel{h \omega \leq 0}{\longleftrightarrow}_* V_\mathsf{right}^* \}^c = \{ V_\mathsf{bottom} \stackrel{h \omega \geq 1}{\longleftrightarrow} V_\mathsf{top} \}$, by~\eqref{eq:quad cross duality}. Combining with the symmetry,
\begin{align*}
1- \bbP^{\xi}_D [ V_\mathsf{left}^* \stackrel{h \omega \leq 0}{\longleftrightarrow}_* V_\mathsf{right}^* ]
&=
 \bbP^{\xi}_D [  V_\mathsf{bottom} \stackrel{h \omega \geq 1}{\longleftrightarrow} V_\mathsf{top}  ] 
\\
&=
 \bbP^{\tau.\xi}_D [   V_\mathsf{left} \stackrel{h \omega \geq 1}{\longleftrightarrow} V_\mathsf{right}  ] .
\end{align*}
Next, a path satisfying $h \omega \geq 1$ equivalently means that $h \geq 1$ on all vertices and, additionally, $B_e=1$ on edges between two heights $1$. The existence of such paths is manifestly an increasing event in $(h, B)$, so by the FKG,
\begin{align*}
 \bbP^{\tau.\xi}_D [  V_\mathsf{left} \stackrel{h \omega \geq 1}{\longleftrightarrow} V_\mathsf{right}  ] \leq  \bbP^{ 2- \xi}_D [   V_\mathsf{left} \stackrel{h \omega \geq 1}{\longleftrightarrow} V_\mathsf{right}  ]. 
\end{align*}

Next, given $(h, B, \omega)$ under $ \bbP^{ 2- \xi}_D $, we define $\tilde{h} = 2-h$ (so that $\tilde{h} \sim \bbP^{\xi}_D$).
Then, define a coupling measure $\bbP$ to also include independent $\Ber (1-1/\mathbf{c}_e)$ variables $(\tilde{B}_e)_{e \in E}$ (i.e., with the same law as $B_e$), each $\tilde{B}_e$ coupled to $B_e$ so that $B_e = 1 \Rightarrow \tilde{B}_e = 0$.\footnote{Note that the assumption $\mathbf{c}_e \leq 2$ is crucial here.}
Finally define $\tilde{\omega}$ from $\tilde{h}$ and $\tilde{B}$ in the usual manner (so that $(\tilde{h}, \tilde{B}, \tilde{\omega}) \sim \bbP^{\xi}_D$). In this coupling $\bbP$, a path of $h \geq 1$ with additionally $B_e=1$ on edges between two heights $1$ is also a path of $\tilde{h} \leq 1$ with additionally $\tilde{B}_e=0$ on edges between two heights $1$, i.e.,
\begin{align*}
\{  V_\mathsf{left} \stackrel{h \omega \geq 1}{\longleftrightarrow} V_\mathsf{right} \} \subset \{  V_\mathsf{left} \stackrel{\tilde{h} \tilde{\omega} \leq 0}{\longleftrightarrow} V_\mathsf{right} \}.
\end{align*}
Finally, we observe that by~\eqref{eq:quad cross dual vs primal}
\begin{align*}
\{  V_\mathsf{left} \stackrel{\tilde{h} \tilde{\omega} \leq 0}{\longleftrightarrow} V_\mathsf{right} \}
\subset
\{  V_\mathsf{left}^* \stackrel{\tilde{h} \tilde{\omega} \leq 0}{\longleftrightarrow}_* V_\mathsf{right}^* \}.
\end{align*}

%
With the two previous set inclusions and the observation $(\tilde{h}, \tilde{B}, \tilde{\omega})  \sim \bbP^{\xi}_D$, we have
\begin{align*}
\bbP^{ 2- \xi}_D [  V_\mathsf{left} \stackrel{h \omega \geq 1}{\longleftrightarrow} V_\mathsf{right}  ]
= 
\bbP [   V_\mathsf{left} \stackrel{h \omega \geq 1}{\longleftrightarrow} V_\mathsf{right}  ]
\leq
\bbP [  V_\mathsf{left}^* \stackrel{\tilde{h} \tilde{\omega} \leq 0}{\longleftrightarrow}_* V_\mathsf{right}^*  ]
=
\bbP^\xi_D [  V_\mathsf{left}^* \stackrel{h \omega \leq 0}{\longleftrightarrow}_* V_\mathsf{right}^*  ].
\end{align*}
Combining all the displayed (non-set) inequalities now yields the claim.
\end{proof}

\subsection{Comparing symmetric quads to narrower ones}


Note that the intersection graph $Q \cap D$ of two simply-connected domains $ Q$ and $D$ consists of connected components $C$ that are themselves simply-connected discrete domains, each with a bounding dual-loop $\ell(C)$ consisting of dual-edges on $\ell(Q)$ or $\ell (D)$. Suppose now that $Q$ has a quad structure, that $C$ contains a vertical primal crossing of $Q$, and that $\ell(C) \setminus \ell(D) \subset E^*_{\mathsf{top}}(Q) \cup E^*_{\mathsf{bot}}(Q)$. Then, exactly two segments of $\ell(C) \cap \ell(D)$ cross $Q$ vertically (potentially overlapping with $E^*_{\mathsf{left}}(Q) \cup E^*_{\mathsf{right}}(Q)$), one from bottom to top and one from top to bottom (orienting $\ell (C)$ counter-clockwise). We call these the \textit{left and right walls} of $C$, respectively.
Finally, if also $D$ has a quad structure, we say that $D$ is \textit{narrower} than $Q$, if there exists such a $C$ and furthermore $\ell(C) \setminus \ell(Q) \subset E^*_{\mathsf{left}}(D) \cup E^*_{\mathsf{right}}(D)$\footnote{
Equivalently, every dual-edge on $\ell(C)$ belongs to $E^*_{\mathsf{top}}(Q) \cup E^*_{\mathsf{bottom}}(Q)$ or $E^*_{\mathsf{left}}(D) \cup E^*_{\mathsf{right}}(D)$.
} and the left (resp. right) wall of $C$ is contained in $E^*_{\mathsf{left}}(D)$ (resp. $E^*_{\mathsf{right}}(D)$). 

\begin{lemma}
\label{lem:narrow quad cross}
Let $D$ be a quad that is narrower than a symmetric quad $Q$
and $\xi$ an interval-valued boundary condition on $\partial D$ which is $ \{ \pm 1 \}$ on $V_{\mathsf{left}}(D) \cup V_{\mathsf{right}}(D)$ and $\xi \preceq \{ 1, 3 \}$ on the rest of $\partial D$.
 Then, we have
\begin{align*}
\bbP^\xi_D [  V_\mathsf{left}^*(D) \stackrel{h \omega \leq 0}{\longleftrightarrow}_* V_\mathsf{right}^*(D)  ] \geq \tfrac{1}{2}.
\end{align*}
\end{lemma}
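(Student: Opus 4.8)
The goal is to transfer the crossing estimate of Lemma~\ref{lem:sym quad cross}, which lives on the \emph{symmetric} quad $Q$, down to the \emph{narrower} quad $D$. The natural strategy is to realize a dual crossing of $D$ as (a piece of) a dual crossing of $Q$, using the geometric setup of ``narrower'': the connected component $C$ of $Q \cap D$ containing a vertical crossing of $Q$ has its left and right walls inside $E^*_\mathsf{left}(D)$ and $E^*_\mathsf{right}(D)$, and the rest of $\ell(C) \setminus \ell(D)$ lies in $E^*_\mathsf{top}(Q) \cup E^*_\mathsf{bot}(Q)$. So a dual path of $\{h\omega \le 0\}$ crossing $C$ ``horizontally'' — from its left wall to its right wall — is automatically a dual path from $V^*_\mathsf{left}(D)$ to $V^*_\mathsf{right}(D)$. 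Hence it suffices to produce such a left-wall-to-right-wall crossing of $C$ with probability $\ge \tfrac12$, and by the duality identity~\eqref{eq:quad cross duality} this is the same as excluding a primal $\{h\omega \ge 1\}$ path across $C$ in the perpendicular direction, i.e. from (a segment of $\ell(C)$ inside) $E_\mathsf{top}(Q)$ to one inside $E_\mathsf{bot}(Q)$.

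\textbf{Key steps.} First I would set up the coupling between $\bbP^\xi_D$ and $\bbP^{\xi'}_Q$ for a suitable boundary condition $\xi'$ on $\partial Q$. The point is that $C \subset D$, so on $C$ the configuration of $(h,\omega,B)$ under $\bbP^\xi_D$ should be comparable (via the spatial Markov inequalities of Section~\ref{sec:basics}, notably Corollary~\ref{cor:+-1 SMP ineq for abs val} and Proposition~\ref{prop:pos assoc of percolated height fcns}) to that under $\bbP^{\xi'}_Q$, where $\xi'$ is chosen so that it dominates $\xi$ in the relevant ($2-\xi \succeq \tau.\xi$) sense. Concretely: the part of $\partial D$ other than $V_\mathsf{left}(D) \cup V_\mathsf{right}(D)$ carries $\xi \preceq \{1,3\}$, which is ``low'' in the $h\omega \le 0$ sense; and on $V_\mathsf{left}(D) \cup V_\mathsf{right}(D)$ the boundary condition is $\{\pm1\}$, the minimal absolute-value condition, so monotonicity lets me only \emph{lower} things by passing to $Q$ with, say, the symmetric $\{1\}$-type or $\{1,3\}$-type condition that makes $Q$ satisfy $2-\xi' \succeq \tau.\xi'$. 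Then Lemma~\ref{lem:sym quad cross} gives a $\{h\omega\le0\}$ dual left–right crossing of $Q$ with probability $\ge\tfrac12$; I then restrict that crossing to $C$ and argue topologically that it must traverse from the left wall of $C$ to the right wall (since the left/right sides of $Q$ are separated by $C$ once $C$ carries a vertical crossing of $Q$), and that the walls sit inside $E^*_\mathsf{left}(D)$ and $E^*_\mathsf{right}(D)$ by the definition of ``narrower''. Finally I push the estimate from $Q$ back to $D$: by the spatial-Markov inequality for absolute heights with percolation (Proposition~\ref{prop:SMP ineq for abs val} / Corollary~\ref{cor:+-1 SMP ineq for abs val} in their $(|h|,\omega)$ form), conditioning outside $C$ and using that $\{h\omega\le0\}$-connectivity inside $C$ is a decreasing event in $(h,B)$ (equivalently increasing in $(2-h, \tilde B)$, as in the proof of Lemma~\ref{lem:sym quad cross}), the probability of the crossing of $C$ only increases when we replace the $Q$-environment by the $D$-environment with boundary condition $\xi$. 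One has to be slightly careful: the connectivity event of $C$ is decreasing in $h$ but $C$'s boundary behaviour depends on $\ell(C)\cap\ell(Q)$ vs $\ell(C)\cap\ell(D)$, so the monotone comparison should be run \emph{inside $C$} with the boundary data on $\partial C$ supplied by whichever ambient domain, and the $\preceq_{abs}$ ordering of these two sets of boundary data is exactly what the hypotheses on $\xi$ guarantee.

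\textbf{Main obstacle.} I expect the delicate part to be the bookkeeping of boundary conditions on $\partial C$: $C$'s dual boundary is stitched together from arcs of $\ell(Q)$ and arcs of $\ell(D)$, and I must check that on the $\ell(D)$-arcs (where $\xi$ applies, and is $\preceq\{1,3\}$ or $\{\pm1\}$) versus the $\ell(Q)$-arcs (where the induced absolute-value boundary data from the symmetric $\xi'$ applies) the comparison goes the right way under $\preceq_{abs}$, uniformly, so that one genuine application of \eqref{eq:CBC-|h|}/\eqref{eq:SMP-|h|} (in the with-percolation form) does the job without any sign obstruction. The other place requiring care is the topological claim that the dual left–right crossing of $Q$, once intersected with $C$, really connects the left wall to the right wall of $C$ rather than dodging out through the top/bottom of $C$; here the hypothesis $\ell(C)\setminus\ell(D)\subset E^*_\mathsf{top}(Q)\cup E^*_\mathsf{bot}(Q)$ and the existence of a vertical primal crossing of $Q$ inside $C$ (which blocks the dual path from short-circuiting) are precisely what is needed, but it should be spelled out via a Jordan-curve/planarity argument analogous to the one in the proof of~\eqref{eq:quad cross dual vs primal}. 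Everything else is a routine assembly of the monotonicity and duality lemmas already established.
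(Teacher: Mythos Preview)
Your plan has a genuine gap: the comparison you propose between $\bbP^\xi_D$ and $\bbP^{\xi'}_Q$ cannot be carried out with the available monotonicity tools, because \emph{neither $D\subset Q$ nor $Q\subset D$}. All of the SMP-type inequalities (Proposition~\ref{prop:SMP ineq for abs val}, Corollary~\ref{cor:+-1 SMP ineq for abs val}, and their percolated variants in Proposition~\ref{prop:pos assoc of percolated height fcns}) compare a domain with a \emph{sub}domain. You try to salvage this by ``running the comparison inside $C$'', but look at the induced boundary data on $\partial C$: on the left and right walls (which lie on $\ell(D)$), the $D$-environment imposes $\{\pm 1\}$ while the $Q$-environment gives \emph{random} interior values of $Q\setminus C$; on the top/bottom-of-$Q$ arcs of $\partial C$, the $D$-environment gives \emph{random} interior values from $D_\mathsf{top}\cup D_\mathsf{bottom}$ while the $Q$-environment imposes $\xi'$. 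These two sets of boundary data are not ordered in $\preceq_{abs}$ --- each side is ``random'' exactly where the other is ``fixed low'' --- so no single application of \eqref{eq:CBC-|h|} or \eqref{eq:bdary cut} does the job. The hypotheses on $\xi$ do \emph{not} guarantee the ordering you assert.

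The paper's proof resolves precisely this non-nestedness by constructing an auxiliary \emph{symmetric} domain $D'$ that \emph{contains} $D$: one glues the overhanging pieces $D_\mathsf{top},D_\mathsf{bottom}$ of $D\setminus C$ onto $Q$, together with their $\tau$-reflections, obtaining a ``discrete Riemann surface''
\[
D'=Q\;\bigcupplus\;D_\mathsf{top}\;\bigcupplus\;D_\mathsf{bottom}\;\bigcupplus\;\tau(D_\mathsf{top})\;\bigcupplus\;\tau(D_\mathsf{bottom}).
\]
Then $D\subset D'$ (since $C\subset Q\subset D'$ and $D\setminus C\subset D'$), $D'$ is $\tau$-symmetric with the corners of $Q$, and Lemma~\ref{lem:sym quad cross} applies to $D'$ with a boundary condition $\xi'$ of the same shape as $\xi$. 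Because $D\subset D'$, one can now legitimately invoke CBC/FKG for $(|h|,\omega)$ (after observing that under the maximal boundary condition the crossing event becomes $\{\omega=0\}$-connectivity, hence decreasing in $(|h|,\omega)$) to pass from $D'$ back to $D$. This gluing-and-symmetrizing step is the missing idea in your outline; without it, there is no valid route from the symmetric-quad estimate on $Q$ to an estimate on $D$.
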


\begin{figure}
\begin{center}
\includegraphics[width=0.4\textwidth]{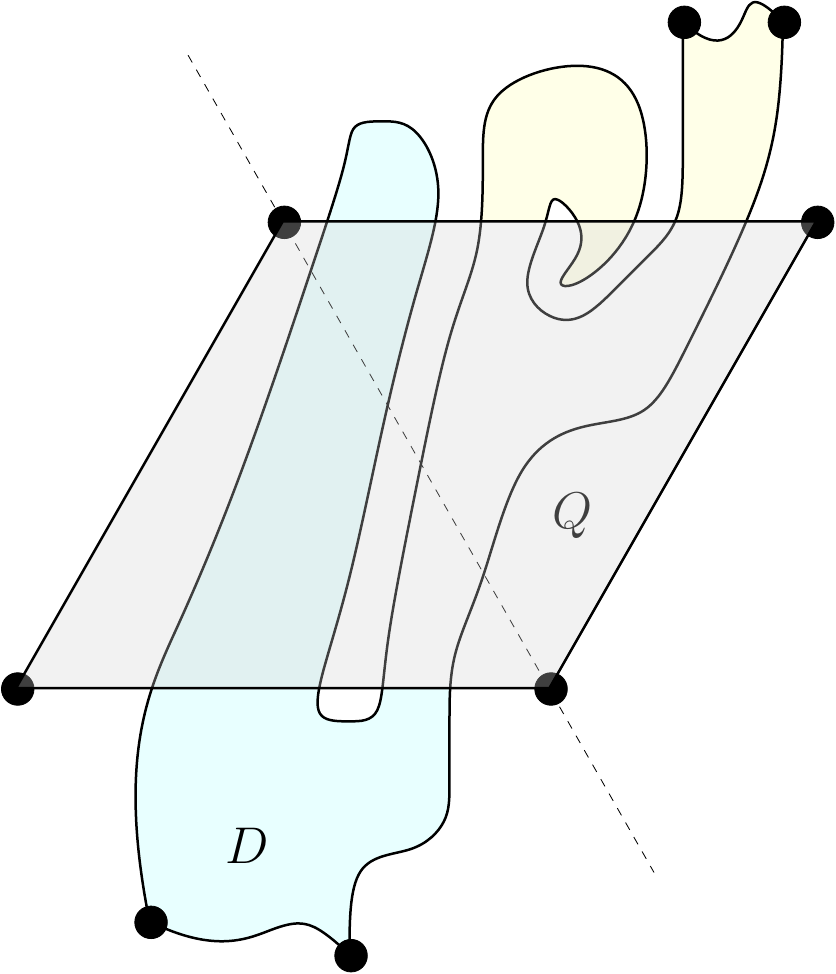}
\end{center}
\caption{
\label{fig:plus domain}
A schematic illustration for the proof of Lemma~\ref{lem:narrow quad cross}. The discretized lozenge $Q$ is a symmetric quad with respect to the reflection $\tau$ around its diagonal line (dashed). The present quad $D$ is narrower than $Q$, and there is a unique component $C$ of $D \cap Q$ inducing this structure (uncoloured). The components of $D\setminus C$ are adjacent to $C$ either via the top of $Q$ ($D_{\mathsf{top}}$, yellow), or via the bottom of $Q$ ($D_{\mathsf{bottom}}$, blue). The glueing $Q \bigcupplus D_{\mathsf{top}} \bigcupplus D_{\mathsf{bottom}} \bigcupplus \tau(D_{\mathsf{top}}) \bigcupplus \tau(D_{\mathsf{bottom}})$ will produce a ``discrete Riemann surface'' which is a symmetric quad (with the same corners as $Q$).
}
\end{figure}

\begin{proof}
 Note first that $\{ V_\mathsf{left}^*(D) \stackrel{h \omega \leq 0}{\longleftrightarrow}_* V_\mathsf{right}^*(D) \}$ is a decreasing event in $(h, B)$. It thus suffices to prove the claim assuming that $\xi = \{ 1, 3 \}$ on $\partial D \setminus V_{\mathsf{left}}(D) \cup V_{\mathsf{right}}(D)$. With this boundary condition, we have $\{ V_\mathsf{left}^* (D) \stackrel{h \omega \leq 0}{\longleftrightarrow}_* V_\mathsf{right}^* (D) \} = \{ V_\mathsf{left}^* (D) \stackrel{\omega = 0}{\longleftrightarrow}_* V_\mathsf{right}^* (D) \}$, which is is manifestly a decreasing event in $(|h|, \omega)$.

We will now aim to use the FKG for $(|h|, \omega)$.
Let $D_{\mathsf{top}}$ (resp. $D_{\mathsf{bottom}}$) be the components of $D \setminus C$ that are adjacent to $C$ via the top (resp. bottom) of $Q$ (so $D\setminus C$ consists of $D_{\mathsf{top}}$ and $D_{\mathsf{bottom}}$); see Figure~\ref{fig:plus domain}.
Note that the connected components of $D \setminus C$ are discrete simply-connected domains, and every component of $D_{\mathsf{top}}$ contains exactly one sub-segment of $\ell(C) \cap E^*_{\mathsf{top}} (Q)$ on its bounding dual loop; we let $Q \bigcupplus D_{\mathsf{top}}$ denote the operation of gluing the graph $D_{\mathsf{top}}$ to the graph $Q$ along these shared boundary segments on $ E^*_{\mathsf{top}} (Q)$. Hence the graph $Q \bigcupplus D_{\mathsf{top}}$ is planar and has locally a ``honeycomb structure'', but it may not be embeddable to the plane so that this honeycomb structure remains embedded as regular hexagons. In what follows, it is however more beneficial to think of $Q \bigcupplus D_{\mathsf{top}}$ as a ``discrete Riemann surface'' and embed it in a ``honeycomb manner''. Define via  analogous gluing
\begin{align*}
D' = Q \bigcupplus D_{\mathsf{top}} \bigcupplus D_{\mathsf{bottom}} \bigcupplus \tau(D_{\mathsf{top}}) \bigcupplus \tau(D_{\mathsf{bottom}}).
\end{align*}
With a slight abuse of notation, $\tau$ can be seen as a graph isomorphism from $D'$ to itself. Give $D'$ a quad structure by marking the corner faces of $Q$, and let $\xi'$ be $\xi' = \{ \pm 1 \}$ on $V_{\mathsf{left}}(D') \cup V_{\mathsf{right}}(D')$ and $\xi' = \{ 1, 3 \}$ on the rest of $\partial D'$. Similarly as above, with this boundary condition, $$\{ V_\mathsf{left}^*(D') \stackrel{h \omega \leq 0}{\longleftrightarrow}_* V_\mathsf{right}^*(D') \}
=
\{ V_\mathsf{left}^* (D') \stackrel{\omega = 0}{\longleftrightarrow}_* V_\mathsf{right}^* (D') \}.$$

Lemma~\ref{lem:sym quad cross} (actually, a slight extension with $\tau$ being a graph isomorphism rather than a symmetry of $\bbG$) yields 
\begin{align*}
\bbP^{\xi'}_{D'} [ V_\mathsf{left}^*(D') \stackrel{h \omega \leq 0}{\longleftrightarrow}_* V_\mathsf{right}^*(D')  ] \geq \tfrac{1}{2},
\end{align*}
and the FKG and CBC for $(|h|, \omega)$ yield
\begin{align*}
\bbP^{\xi}_{D} [ V_\mathsf{left}^* (D) \stackrel{\omega = 0}{\longleftrightarrow}_* V_\mathsf{right}^* (D) ] \geq \bbP^{\xi'}_{D'} [V_\mathsf{left}^* (D') \stackrel{\omega = 0}{\longleftrightarrow}_* V_\mathsf{right}^* (D') ] .
\end{align*}
Combining the three previously displayed equations proves the claim.
\end{proof}

We will need another similar result, however stronger in the sense that the crossings to be found are \textit{inside the small domain} $C$; the price to pay for this extra information is the assumption that the domain $C$ is \textit{on the bottom} of $D$, in the sense that $E^*_{\mathsf{bottom}}(D) \subset \ell(C)$ (or \textit{on the top}, defined analogously). The proof is similar to Lemma~\ref{lem:sym quad cross} but a bit more involved; we postpone it to Appendix~\ref{app:roskis}.

\begin{lemma}
\label{lem:narrow quad cross 2}
Let $D$ be a quad that is narrower than a symmetric quad $Q$, with a component $C$ inducing this narrower structure being on the bottom of $D$; let $\xi$ be an interval-valued boundary condition on $\partial D$ which is $ \{ \pm 1 \}$ on $V_{\mathsf{left}}(D) \cup V_{\mathsf{right}}(D)$ and $\xi \preceq \{ 5, 7 \}$ on the rest of $\partial D$, and denote by $\calE(h, B)$ the set of edges between heights $h \leq 3$ with additionally $B_e=0$ for edges between two heights $3$. We then have
\begin{align*}
\bbP^\xi_D [V_\mathsf{left}^* (D) \stackrel{\calE(h, B)^* \cap C}{\longleftrightarrow}_* V_\mathsf{right}^* (D)] \geq \tfrac{1}{2}.
\end{align*}
\end{lemma}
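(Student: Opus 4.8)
\textbf{Proof strategy for Lemma~\ref{lem:narrow quad cross 2}.}

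The plan is to mimic the proof of Lemma~\ref{lem:sym quad cross}, with two adjustments: first, instead of reflecting a boundary condition of ``size $1$'' we reflect one of ``size $3$'', which costs three units of height rather than one and explains why the threshold jumps from $\{1,3\}$ in Lemma~\ref{lem:narrow quad cross} to $\{5,7\}$ here; second, we must argue that the crossing we produce genuinely lives inside the component $C$, which is where the hypothesis $E^*_{\mathsf{bottom}}(D)\subset\ell(C)$ enters. First I would pass to the extremal boundary condition $\xi=\{5,7\}$ on $\partial D\setminus(V_{\mathsf{left}}(D)\cup V_{\mathsf{right}}(D))$, justified because $\{V_\mathsf{left}^*(D)\stackrel{\calE(h,B)^*\cap C}{\longleftrightarrow}_* V_\mathsf{right}^*(D)\}$ is a decreasing event in $(h,B)$ and $\preceq_{abs}$-monotonicity (Proposition~\ref{prop:pos assoc of percolated height fcns}) applies. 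Then I would perform the same gluing as in Lemma~\ref{lem:narrow quad cross}, forming $D'=Q\bigcupplus D_{\mathsf{top}}\bigcupplus D_{\mathsf{bottom}}\bigcupplus\tau(D_{\mathsf{top}})\bigcupplus\tau(D_{\mathsf{bottom}})$ with $\tau$ now a graph isomorphism of $D'$, and mark the corners of $Q$ to give $D'$ a symmetric quad structure; on $D'$ I would put $\xi'=\{\pm1\}$ on the left/right sides and $\xi'=\{5,7\}$ elsewhere.

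The core step is a dual-crossing estimate for $D'$ of the flavour of Lemma~\ref{lem:sym quad cross}, but applied to the shifted height threshold. Working on $D'$, by~\eqref{eq:quad cross duality} the complement of the dual crossing by $\calE(h,B)$ is a primal crossing from $V_\mathsf{bottom}(D')$ to $V_\mathsf{top}(D')$ using edges between heights $\geq 5$ (with $B_e=1$ between two heights exactly $5$); call this complementary event $\calA$. Using $\tau$-symmetry of $D'$ and the boundary condition, $\bbP^{\xi'}_{D'}[\calA]=\bbP^{\tau.\xi'}_{D'}[V_\mathsf{left}\stackrel{\geq 5}{\longleftrightarrow}V_\mathsf{right}]$, and since $\calA$ is an increasing event in $(h,B)$, CBC (Proposition~\ref{prop:pos assoc of percolated height fcns}) lets me replace $\tau.\xi'$ by its coordinatewise-larger partner; one checks $\tau.\xi'\preceq 6-\xi'$, the number $6$ being the reflection point of heights in $\{5,7\}$ under $h\mapsto 6-h$. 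The shift map $\tilde h=6-h$ sends $\bbP^{6-\xi'}_{D'}$ to $\bbP^{\xi'}_{D'}$, it sends a crossing of heights $\geq 5$ (with $B=1$ between two $5$'s) to a crossing of heights $\leq 1$ (with $\tilde B=0$ between two $1$'s), and here I would introduce the anti-coupled Bernoulli percolation $\tilde B$ with $B_e=1\Rightarrow\tilde B_e=0$ exactly as in Lemma~\ref{lem:sym quad cross} (again using $\mathbf c_e\leq 2$). This yields a left-right primal crossing of $\{\tilde h\tilde\omega\leq 0\}$, hence by~\eqref{eq:quad cross dual vs primal} a left-right dual crossing of $\{\tilde h\tilde\omega\leq 0\}$. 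But $\{\tilde h\tilde\omega\leq 0\}\subset\calE(\tilde h,\tilde B)^*$ since $\tilde h\leq 1<3$ trivially satisfies $\tilde h\leq 3$, so I conclude $\bbP^{\xi'}_{D'}[V_\mathsf{left}^*(D')\stackrel{\calE(h,B)^*}{\longleftrightarrow}_* V_\mathsf{right}^*(D')]\geq 1/2$.

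It remains to transfer this from $D'$ back to $D$ and to localize the crossing inside $C$, and this is the step I expect to be the main obstacle. The event $\{V_\mathsf{left}^*(D')\stackrel{\calE(h,B)^*\cap C}{\longleftrightarrow}_* V_\mathsf{right}^*(D')\}$ — dual-crossing through $C$ only — must be shown to follow from the plain dual crossing, and this is where $C$ being on the bottom of $D$ is used: a dual crossing of $D'$ from the left side to the right side is forced to pass below the ``wall segments'' $\ell(C)\cap\ell(D)$, hence stays within the copy of $C$ glued at the bottom, because any excursion into $D_{\mathsf{top}}$ or $\tau(D_{\mathsf{top}})$ or their bottom counterparts would have to cross a region where $\calE$ is not available (the glued pieces carry heights $\geq 5$ near their outer boundary, forcing $\calE(h,B)$ to be empty there, and the topology of the $+$-shaped surface blocks detours). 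Then, restricting the $C$-crossing event to the sub-domain $C\subset D$ and applying the $(|h|,\omega)$-FKG and CBC comparison between $D'$ and $D$ (as in Lemma~\ref{lem:narrow quad cross}, noting the $C$-crossing event is decreasing in $(|h|,\omega)$ after passing to the extremal boundary condition where $\calE(h,B)$ becomes $\{\omega\text{-absent or }|h|\leq 3\}$-type), gives $\bbP^\xi_D[V_\mathsf{left}^*(D)\stackrel{\calE(h,B)^*\cap C}{\longleftrightarrow}_* V_\mathsf{right}^*(D)]\geq\bbP^{\xi'}_{D'}[\cdots]\geq 1/2$. Care will be needed that the ``$\cap C$'' restriction is compatible with the monotonicity directions — one should phrase the $C$-localized crossing as a decreasing event and verify the comparison of boundary conditions is in the right direction — but the topology, not the algebra, is the genuinely delicate point, and I would give the blocking argument in detail with reference to Figure~\ref{fig:plus domain}.
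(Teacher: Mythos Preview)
Your proposal has two genuine problems.

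\textbf{The complement is miscomputed.} You claim the complement of the dual crossing by $\calE(h,B)$ is a primal crossing on edges with heights $\geq 5$ (and $B_e=1$ between two $5$'s). But $\calE(h,B)$ consists of edges with both endpoints $\leq 3$ (and $B_e=0$ between two $3$'s), so its complement consists of edges with both endpoints $\geq 3$ (and $B_e=1$ between two $3$'s): by $2$-Lipschitzness of odd heights, if one endpoint is $\geq 5$ the other is $\geq 3$, and the only remaining way to fail $\calE$ is both endpoints equal to $3$ with $B_e=1$. Thus the threshold is $3$, not $5$, and the correct height shift is $\tilde h = h-2$ (turning $\calE^c$ into $\{\tilde h\tilde\omega\geq 1\}$), not $\tilde h = 6-h$.

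\textbf{The topological localization to $C$ does not work as you describe.} Your plan is to prove a dual $\calE$-crossing of $D'$ and then argue post hoc that it must stay in $C$. But there is no reason the height function in the glued pieces $D_{\mathsf{top}}$, $\tau(D_{\mathsf{top}})$ cannot dip to $\leq 3$ away from their outer boundary, so a dual $\calE$-crossing can wander freely outside $C$; your ``blocking'' justification only controls the boundary layer. The paper avoids this entirely by reversing the order of operations: it takes the complement \emph{inside $C$ first}. Using the quad structure on $C$ induced by its left and right walls (which lie in $V^*_{\mathsf{left}}(D)$ and $V^*_{\mathsf{right}}(D)$), duality~\eqref{eq:quad cross duality} converts the complement of the $C$-localized dual crossing into the primal event $\{V_{\mathsf{top}}(Q)\stackrel{\calE'(h,B)\cap C}{\longleftrightarrow}V_{\mathsf{bottom}}(Q)\}$; here the hypothesis that $C$ is on the bottom of $D$ is what forces the top/bottom of the quad $C$ to sit inside $V_{\mathsf{top}}(Q)\cup V_{\mathsf{bottom}}(Q)$. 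This primal event is \emph{increasing} in $(|h|,B)$ (after the harmless replacement of $\calE'(h,B)$ by $\calE'(|h|,B)$), so FKG lets one enlarge $D$ to the symmetric surface $D'=Q\bigcupplus D_{\mathsf{top}}\bigcupplus\tau(D_{\mathsf{top}})$ (note $D_{\mathsf{bottom}}=\emptyset$ here). Finally, one applies a mild generalization of Lemma~\ref{lem:sym quad cross} allowing the event to be any increasing sub-event of a $Q$-crossing inside a larger symmetric quad $D'$; this bounds the primal probability by $1/2$.

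In short: take the complement in $C$ first to get an increasing event, then enlarge by FKG, then use symmetry. Your route---enlarge, prove a global crossing, then try to localize---cannot be closed without an argument you do not have.
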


The main result of this section is the following.

\begin{corollary}
\label{cor:narrow quad triple cross}
Let $D$ be a quad that is narrower than three symmetric quads $Q_1$, $Q_2$, $Q_3$, furthermore so that the respective connected components $C_i \subset Q_i \cap D$, $i \in \{ 1, 2, 3\}$, that generate the narrower structure can be chosen disjoint and $C_1$ on the bottom and $C_3$ on the top of $D$.
Let $\xi$ be an interval-valued boundary condition on $\partial D$ which is $ \{ \pm 1 \}$ on $V_{\mathsf{left}}(D) \cup V_{\mathsf{right}}(D)$ and $\xi \preceq \{ 5, 7 \}$ on the rest. Then, we have
\begin{align*}
\bbP^\xi_D [
V_\mathsf{left}^* (D) \stackrel{h \omega \leq 0}{\longleftrightarrow}_* V_\mathsf{right}^* (D) ] \geq \tfrac{1}{8}.
\end{align*}
\end{corollary}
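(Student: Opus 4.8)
The strategy is to obtain the crossing of $D$ in terms of $\{h\omega \le 0\}$ as the intersection of three independent-looking favorable events, each coming from one of the narrower structures $C_i$, and then invoke the FKG inequality for $(|h|,\omega)$ to conclude.

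\textbf{Step 1 (Three crossing events).} Apply Lemma~\ref{lem:narrow quad cross} to the pair $(D, Q_2)$ with component $C_2$ (which is neither forced to the bottom nor the top), and Lemma~\ref{lem:narrow quad cross 2} to the pairs $(D, Q_1)$ with $C_1$ on the bottom and $(D, Q_3)$ with $C_3$ on the top. There is a bookkeeping subtlety: those lemmas are stated with the boundary condition $\xi \preceq \{1,3\}$ (for Lemma~\ref{lem:narrow quad cross}) and $\xi \preceq \{5,7\}$ (for Lemma~\ref{lem:narrow quad cross 2}) off the left/right walls, whereas here we only assume $\xi \preceq \{5,7\}$. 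The event in Lemma~\ref{lem:narrow quad cross 2} is decreasing in $(h,B)$, so CBC lets us upgrade to the weaker (larger) boundary condition $\{5,7\}$ freely. For Lemma~\ref{lem:narrow quad cross}: I do \emph{not} apply it directly; instead I note that the level set $\calE(h,B)$ of Lemma~\ref{lem:narrow quad cross 2} (edges between heights $\le 3$, with $B_e=0$ on edges between two $3$'s) already witnesses an $\{h\omega\le 0\}$-type crossing after one more shift — but cleaner is to use Lemma~\ref{lem:narrow quad cross} as a black box at threshold $\{1,3\}$ after first \emph{lowering} the boundary, which is where the three disjoint $C_i$ come in.

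\textbf{Step 2 (Stacking the level sets).} This is the crux. On the event furnished by Lemma~\ref{lem:narrow quad cross 2} applied to $C_1$ (on the bottom), there is a dual path inside $C_1$ crossing $D$ left-to-right in edges between heights $h\le 3$ (with $B_e=0$ on the $\{3,3\}$ edges); symmetrically for $C_3$ on the top. The point of $C_1$ being on the bottom and $C_3$ on the top is that these two crossings, together with $V^*_{\mathsf{left}}(D)$ and $V^*_{\mathsf{right}}(D)$, bound a sub-quad $D_0 \subset D$ lying between them, which still contains $C_2$ and in which the relevant heights are pinned: on the top crossing and bottom crossing one has $h \le 3$ (in the level-set sense), so that on the corresponding boundary-segment of $D_0$, the heights behave like $\{1,3\}$ (they are $\le 3$, and being dual-path edges for $\calE(h,B)$, the sign structure at height $3$ is compatible with $\omega=0$ there). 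Concretely: condition on the two extreme crossings; by the SMP for heights (Lemma~\ref{lem:SMP}) — or rather its combination with the percolation SMP, Lemma~\ref{lem:SMP with percolation} — the conditional law of $(h,\omega)$ inside $D_0$ is $\bbP^{\xi_0}_{D_0}$ for some boundary condition $\xi_0$ which is $\{\pm 1\}$ on the left/right walls and $\preceq \{1,3\}$ on the top/bottom (the newly exposed boundary). Now apply Lemma~\ref{lem:narrow quad cross} to $(D_0, Q_2)$ with $C_2$: it gives, conditionally, probability $\ge \tfrac12$ of an $\{h\omega\le 0\}$-dual crossing of $D_0$, hence of $D$.

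\textbf{Step 3 (Combining the probabilities).} We have three events $A_1$ (bottom crossing from $C_1$, probability $\ge\tfrac12$ unconditionally by Lemma~\ref{lem:narrow quad cross 2}), $A_3$ (top crossing from $C_3$, probability $\ge\tfrac12$), and $A_2$ (middle $\{h\omega\le0\}$-crossing, conditional probability $\ge\tfrac12$ given $A_1\cap A_3$ via the SMP as above). The naive product $\tfrac12 \cdot \tfrac12 \cdot \tfrac12 = \tfrac18$ gives exactly the claimed bound — but I must be careful that $\bbP^\xi_D[A_1 \cap A_3] \ge \tfrac14$, not just that each has probability $\ge \tfrac12$. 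This is where FKG enters: $A_1$ and $A_3$ are \emph{decreasing} events in $(h,B)$ (a crossing in edges of bounded height with $B$-constraints of the form $B_e = 0$ is decreasing in $(h, B)$ — equivalently increasing in $(h,-B)$... one must check orientation: edges between heights $\le 3$ is a decreasing constraint on $h$, and $B_e=0$ is decreasing in $B$), so by the FKG for $(|h|,B)$/$(h,-B)$ in Proposition~\ref{prop:pos assoc of percolated height fcns}, $\bbP^\xi_D[A_1\cap A_3]\ge \bbP^\xi_D[A_1]\bbP^\xi_D[A_3]\ge\tfrac14$. Then on $A_1 \cap A_3$, the conditional probability of $A_2$ is $\ge \tfrac12$ by Step 2, and since $A_1 \cap A_3 \cap A_2 \subseteq \{V^*_{\mathsf{left}}(D)\stackrel{h\omega\le0}{\longleftrightarrow}_* V^*_{\mathsf{right}}(D)\}$, we get probability $\ge \tfrac18$.

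\textbf{Main obstacle.} The delicate part is Step 2: precisely identifying the sub-quad $D_0$ (it has a genuinely variable shape depending on the two extreme crossings — a "discrete Riemann surface"-flavored domain again) and verifying that the exposed boundary condition is indeed $\preceq \{1,3\}$ off the walls, so that Lemma~\ref{lem:narrow quad cross} applies to $(D_0, Q_2)$ — one needs that $C_2 \cap D_0$ still induces the "narrower than $Q_2$" structure, which uses the disjointness of the $C_i$ and the fact that $C_2$ lies strictly between $C_1$ and $C_3$. One also has to be slightly careful that the SMP is applied to a domain determined by the random crossings (an explored region), which is standard but needs the crossings to be defined via an exploration from, say, the left wall. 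The rest — FKG orientations, CBC upgrades of boundary thresholds, the duality $\{h\omega\le 0\}^c = \{h\omega\ge 1\}$ — is routine given the lemmas already established.
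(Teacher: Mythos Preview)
Your proposal is correct and follows essentially the same route as the paper: apply Lemma~\ref{lem:narrow quad cross 2} to $C_1$ and $C_3$ to get two horizontal $\calE(h,B)$-dual crossings each with probability $\geq \tfrac12$, combine them by FKG for $(h,B)$-decreasing events to get $\geq \tfrac14$, then explore from the bottom and top to expose the subdomain $D_0$ containing $C_2$ with boundary condition $\preceq\{1,3\}$ off the walls, and apply Lemma~\ref{lem:narrow quad cross} there for the conditional $\geq \tfrac12$. Two small remarks: your Step~1 hand-wringing about boundary thresholds is unnecessary since Lemma~\ref{lem:narrow quad cross 2} is already stated for $\xi \preceq \{5,7\}$, and the exploration in Step~2 should be from $V_{\mathsf{bottom}}(D)$ and $V_{\mathsf{top}}(D)$ (not the left wall), revealing the primal cluster of $\calE(h,B)^c$ adjacent to those boundaries---the paper phrases this via $\tilde h = 2-h$ and the cluster of $\tilde h\tilde\omega < 0$, which makes the SMP bookkeeping (Lemma~\ref{lem:SMP with percolation}) cleanest.
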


\begin{proof}
Again, it suffices to consider the maximal $\xi$ allowed in the statement.
Define $\calE(h, B)$ as in Lemma~\ref{lem:sym quad cross 2}; by that lemma,
\begin{align*}
\bbP^\xi_D [V_\mathsf{left}^* (D) \stackrel{\calE(h, B)^* \cap C_1}{\longleftrightarrow}_* V_\mathsf{right}^* (D)], \;
\bbP^\xi_D [V_\mathsf{left}^* (D) \stackrel{\calE(h, B)^* \cap C_3}{\longleftrightarrow}_* V_\mathsf{right}^* (D)] \geq \tfrac{1}{2}.
\end{align*}
Denote the two events on the left by $\calH_1$ and $\calH_3$, respectively. Since both are decreasing in $(h, B)$, FKG gives
\begin{align*}
\bbP^\xi_D [\calH_1 \cap \calH_3] \geq 1/4,
\end{align*}
and it thus suffices to show that 
\begin{align*}
\bbP^\xi_D [ V_\mathsf{left}^* (D) \stackrel{h \omega \leq 0}{\longleftrightarrow}_* V_\mathsf{right}^* (D) \; | \; \calH_1 \cap \calH_3] \geq 1/2.
\end{align*}
For this purpose, set $\tilde{h} = 2- h$ and define $\tilde{\omega}$ via $\tilde{h}$ and $B_e$ in the usual manner; note that $\calE(h, B) = \{ \tilde{h} \tilde{ \omega} \geq 0 \}$. To reveal whether $\calH_1 \cap \calH_3$ occurs, we explore the cluster of $\tilde{h} \tilde{\omega} < 0$ in $C_1$ (resp. $C_3$) adjacent to the bottom (resp. top) boundary of $D$: formally, let $V_1$ consist of $V_\mathsf{bottom}$ and the vertices of $D$ connected to $V_\mathsf{bottom}$ by edges of $\tilde{h} \tilde{\omega} < 0$, and let $E_1$ consist of these primal-edges and the primal-edges crossing the dual-paths bounding the former. Define $V_3$ and $E_3$ analogously via $C_3$. Now, the event $V_1 = A_1, E_1 = B_1$ is determined by $\tilde{h}_{|A_1}, \tilde{\omega}_{|B_1}$; let $\Xi$ be the set of $(A_1, B_1, A_3, B_3, \alpha_1, \nu_1, \alpha_3, \nu_3)$ such that $\{ \tilde{h}_{|A_1} = \alpha_1, \tilde{\omega}_{|B_1} = \nu_1, \tilde{h}_{|A_3} = \alpha_3, \tilde{\omega}_{|B_3} = \nu_3 \}$ induces $\calH_1 \cap \calH_3$ with $V_1 = A_1, E_1 = B_1, V_3 = A_3, E_1 = B_3$. Then,
\begin{align*}
\bbP^\xi_D & [ \calH_1 \cap \calH_3] = \sum_{\Xi} \bbP^\xi_D [ \tilde{h}_{|A_1} = \alpha_1, \tilde{\omega}_{|B_1} = \nu_1, \tilde{h}_{|A_3} = \alpha_3, \tilde{\omega}_{|B_3} = \nu_3 ] \\
\bbP^\xi_D &[ V_\mathsf{left}^* (D) \stackrel{h \omega \leq 0}{\longleftrightarrow}_* V_\mathsf{right}^* (D) \cap \calH_1 \cap \calH_3]\\
&=
 \sum_{\Xi} \bbP^\xi_D [ \tilde{h}_{|A_1} = \alpha_1, \tilde{\omega}_{|B_1} = \nu_1, \tilde{h}_{|A_3} = \alpha_3, \tilde{\omega}_{|B_3} = \nu_3 ]
 \\
 & \qquad \bbP^\xi_D [ V_\mathsf{left}^* (D) \stackrel{h \omega \leq 0}{\longleftrightarrow}_* V_\mathsf{right}^* (D) \;| \; \tilde{h}_{|A_1} = \alpha_1, \tilde{\omega}_{|B_1} = \nu_1, \tilde{h}_{|A_3} = \alpha_3, \tilde{\omega}_{|B_3} = \nu_3 ],
\end{align*}
and it suffices to show that the last conditional probability is at least $1/2$ for all  $(A_1, B_1, A_3,$ $ B_3, \alpha_1, \nu_1, \alpha_3, \nu_3)$. Note that the component of unexplored vertices $V \setminus A_1 \setminus A_3$ that contains $C_2$ is a simply-connected discrete domain $D'$ restricted by $\ell(D)$ and the dual-paths bounding the $\tilde{h} \tilde{\omega} < 0$ clusters. By SMP (Lemma~\ref{lem:SMP with percolation}), the conditional measure $\bbP^\xi_D [ \cdot \;| \; \tilde{h}_{|A_1} = \alpha_1, \tilde{\omega}_{|B_1} = \nu_1, \tilde{h}_{|A_3} = \alpha_3, \tilde{\omega}_{|B_3} = \nu_3 ]$ restricted to this component is a random Lipschitz model, with the boundary condition $\xi'$ given by $h \in \{ \pm 1\}$ on $E^*_{\mathsf{left}}(D)$ and $E^*_{\mathsf{right}}(D)$, $h \in \{ 1, 3\}$ on the unexplored tips of $\tilde{ \omega} = 0$ edges, and given $\alpha_1$ and $\alpha_3$ the top and bottom vertices of $V$ from which the exploration of $\tilde{h} \tilde{\omega} < 0$ never proceeded forward (whence  $\xi' \leq 1 $ on these vertices). This boundary condition is suited for the application of Lemma~\ref{lem:sym quad cross}, giving
\begin{align*}
\bbP^{\xi'}_{D'} [V_\mathsf{left}^* (D) \stackrel{h \omega \leq 0}{\longleftrightarrow}_* V_\mathsf{right}^* (D)] \geq 1/2.
\end{align*}
This concludes the proof.
\end{proof}

\section{The Pushing Theorem}

\subsection{Statement}

\begin{figure}
\begin{center}
\includegraphics[width=0.5\textwidth]{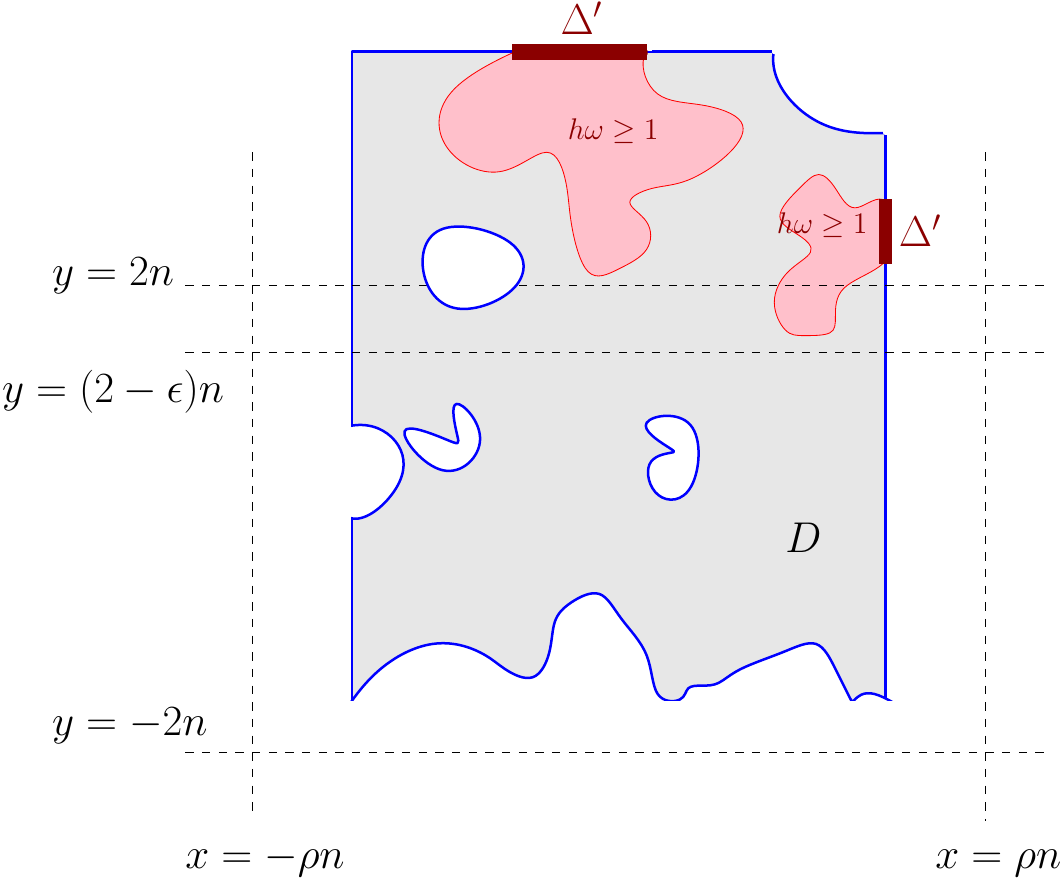}
\end{center}
\caption{
\label{fig:pushing}
Illustration for Proposition~\ref{prop:push}, through which Theorem~\ref{prop:pushing} is typically applied.
}
\end{figure}

We will next use the crossing probabilities in symmetric quads to derive a fairly general uniformly positive crossing probability for discrete rectangles. 

A \textit{(discrete) rectangle} $R_{n,m}$ is a discrete topological quadrilateral, centered at the origin face and extending $n$ horizontal (resp. $m$ vertical) translational periods of $\bbG$ to both left and right (resp. both up and down), and invariant under reflection around the horizontal axis. For an explicit definition the honeycomb lattice case, suppose that $\bbG$ is embedded in the complex plane so that the normal vectors of the sides of a hexagon are the sixth roots of unity and the origin is in a middle of an origin face. Scale the embedding to side length $1/3$ for the hexagons; hence the horizontal lines $y \in \bbZ$ cross the midpoints of every second a row of faces (depict the vertical translational period), and the distance of two neighbouring midpoints (the horizontal translational period) is $1/\sqrt{3}$.
Then let $R_{n,m}$ denote the rectangle consisting of faces with midpoints in $[-1/\sqrt{3} \cdot n, 1/\sqrt{3} \cdot n]\times[-m,m]$, thus of width $2n+1$ hexagons and height $4m+1$ rows. For another example, on the square-octagon lattice with origin in the middle of and octagonal face and a translational symmetry $\bbZ^2$, $R_{n,m}$ would consist of the faces with midpoints in $[- n, n]\times[-m,m]$.

Horizontal and vertical crossings of a rectangle $R$ by edge set $\calE$ are denoted $\calH_\calE (R)$ and $\calV_\calE (R)$, respectively, and dual-crossings by $\calH^*_\calE (R)$ and $\calV^*_\calE (R)$.

\begin{theorem}
\label{prop:pushing}
For any $\rho > 0$, there exists $c= c(\rho)$ such that the following holds for any $n$. Let $D=(V,E)$ be a connected subgraph of $\bbG$, generated by its vertices and invariant under reflection $\tau$ with respect to the horizontal axis.
Let $\xi \preceq \{ 5, 7 \}$ be an interval-valued boundary condition on $\partial D$,
such that 
 $2-\xi \succeq \tau.\xi $ and $\xi \preceq \{ \pm 1\}$ on $\partial D \cap  R_{\rho n, n}$. Then, we have
\begin{align*}
\bbP_D^{\xi} [\calV_{h \omega \geq 1} ( R_{\rho n, n} )] 
\leq 1 - c.
\end{align*}
\end{theorem}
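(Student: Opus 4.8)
The plan is to reduce the vertical-crossing bound on the rectangle $R_{\rho n, n}$ to a dual horizontal-crossing bound and then produce such a dual crossing with probability bounded below by a constant, using the symmetric-quad machinery of the previous section (Lemma~\ref{lem:sym quad cross}, Lemma~\ref{lem:narrow quad cross}, Corollary~\ref{cor:narrow quad triple cross}). By planar duality~\eqref{eq:quad cross duality}, $\{\calV_{h\omega\ge1}(R_{\rho n,n})\}^c = \{\calH^*_{h\omega\le0}(R_{\rho n,n})\}$ (a dual horizontal crossing by edges on which $h\omega\le0$, i.e. heights $\le1$ with $B_e=0$ along height-$1$ edges). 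So it suffices to show $\bbP_D^\xi[\calH^*_{h\omega\le0}(R_{\rho n,n})]\ge c(\rho)$. The event $\{\calH^*_{h\omega\le0}(R_{\rho n,n})\}$ is \emph{decreasing} in $(h,B)$, which will let us apply \eqref{eq:FKG-h}/\eqref{eq:CBC-h} freely and, crucially, compare with domains that are geometrically simpler.

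First I would handle the geometry: tile a horizontal strip of $R_{\rho n,n}$ by $O(\rho)$ overlapping copies $Q_1,\dots,Q_N$ of the symmetric lozenge/square $L_n$ (the canonical symmetric quad of the setup), arranged left-to-right so that consecutive $Q_j, Q_{j+1}$ overlap enough that a dual crossing of each, plus the overlaps, chains into a full dual horizontal crossing of $R_{\rho n,n}$. The number $N$ of such quads is bounded in terms of $\rho$ only. For each fixed $j$, the rectangle $R_{\rho n,n}$ (or a suitable sub-quad of it adapted to the left/right walls of $Q_j$) is \emph{narrower} than the symmetric quad $Q_j$ in the sense defined before Lemma~\ref{lem:narrow quad cross} — here the ``narrower'' domain $D$ of that lemma is played by $R_{\rho n,n}$ and the component $C$ by the intersection with $Q_j$, and one checks the wall/side containment conditions hold by construction of the $Q_j$. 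The hypothesis $\xi\preceq\{\pm1\}$ on $\partial D\cap R_{\rho n,n}$ and $\xi\preceq\{5,7\}$ elsewhere, together with $2-\xi\succeq\tau.\xi$, is exactly what is needed to feed the symmetry and boundary hypotheses of Lemma~\ref{lem:sym quad cross}/Lemma~\ref{lem:narrow quad cross}: after using the reflection symmetry $\tau$ about the horizontal axis (which fixes the rectangle and the $Q_j$) and the assumption $2-\xi\succeq\tau.\xi$, the ``tends to be below $1$'' condition of Lemma~\ref{lem:sym quad cross} is met on the glued-up ``discrete Riemann surface'', and we obtain, for each $j$,
\begin{align*}
\bbP_D^\xi\big[\, \calH^*_{h\omega\le0}(Q_j\cap R_{\rho n,n})\,\big]\ \ge\ \tfrac12
\end{align*}
(for the three end quads one may prefer Corollary~\ref{cor:narrow quad triple cross} to get the crossing realized inside the small component, giving a constant $\ge\tfrac18$, but $\tfrac12$ from Lemma~\ref{lem:narrow quad cross} suffices for the plain version).

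Then I would assemble: each of the $N$ crossing events is decreasing in $(h,B)$, so by \eqref{eq:FKG-h} their intersection has probability at least $(\tfrac12)^N = (\tfrac12)^{N(\rho)} =: 2c(\rho)$; on this intersection the individual dual crossings, together with the chosen overlaps of the $Q_j$, concatenate into a single dual horizontal crossing of all of $R_{\rho n,n}$ by edges of $\{h\omega\le0\}$, i.e. $\calH^*_{h\omega\le0}(R_{\rho n,n})$ occurs. Hence $\bbP_D^\xi[\calV_{h\omega\ge1}(R_{\rho n,n})] = 1-\bbP_D^\xi[\calH^*_{h\omega\le0}(R_{\rho n,n})] \le 1 - 2c(\rho) \le 1-c(\rho)$, as claimed. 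The bound $c=c(\rho)$ depends only on $\rho$ (through $N$) and not on $n$, as required.

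The main obstacle is the geometric bookkeeping in the overlapping tiling: one must verify that the sub-quads $Q_j\cap R_{\rho n,n}$ genuinely satisfy the ``narrower than a symmetric quad'' definition (correct placement of left/right walls inside $E^*_{\mathsf{left}},E^*_{\mathsf{right}}$ of the sub-quad, and the rest of $\ell(C)$ lying on $E^*_{\mathsf{top}}\cup E^*_{\mathsf{bot}}$ of $Q_j$), and that the chaining of the $N$ dual crossings through the overlaps really produces a connected left-to-right dual path — this is where the maximum-degree-$3$ hypothesis (used in~\eqref{eq:quad cross dual vs primal} and the duality lemma) and the compatibility of the lozenge symmetry axis with the lattice (part of the standing assumptions of Section~\ref{subsec:main thm setup}) are essential. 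A secondary technical point is matching the boundary condition on the glued ``Riemann surface'': one must ensure that the reflected/glued copies inherit a boundary condition $\preceq\{\pm1\}$ (resp. $\preceq\{5,7\}$) on the appropriate parts so that Lemma~\ref{lem:sym quad cross} applies verbatim; this is exactly the role of the two-sided hypothesis $2-\xi\succeq\tau.\xi$ combined with $\xi\preceq\{\pm1\}$ on $\partial D\cap R_{\rho n,n}$.
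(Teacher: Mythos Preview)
Your approach has a genuine gap: you are misapplying Lemma~\ref{lem:narrow quad cross} and Corollary~\ref{cor:narrow quad triple cross}. In those statements the probability measure is $\bbP_D^\xi$ where $D$ is \emph{itself} the narrower quad, with the specific boundary condition $\{\pm 1\}$ on $V_{\mathsf{left}}(D)\cup V_{\mathsf{right}}(D)$; they give a left-to-right dual crossing of that very quad. They do \emph{not} bound the probability of a crossing event inside a sub-rectangle under the measure on a larger ambient domain. In the Pushing Theorem the domain $D$ is an arbitrary $\tau$-symmetric connected subgraph (not even simply-connected, let alone a quad), and you need a bound for $\bbP_D^\xi$. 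Restricting to a lozenge $Q_j$ via SMP produces an uncontrolled induced boundary condition on $\partial Q_j$ (potentially far above $\{5,7\}$ if $D$ is large), destroying the hypotheses of the lemmas. Moreover, the ``narrower'' relation requires the left and right sides of the narrower quad to lie inside the symmetric quad $Q$; tiling the long rectangle $R_{\rho n,n}$ by small lozenges $Q_j$ gives the opposite containment, so the structural hypothesis simply fails. Finally, the lozenge symmetry is across its \emph{diagonal}, whereas the hypothesis $2-\xi\succeq\tau.\xi$ concerns the \emph{horizontal} reflection of $D$; these are different symmetries and do not combine in the way your chaining argument would need.

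The paper's proof is fundamentally different: a proof by contradiction. Writing $\bbP_D^\xi[\calV_{h\omega\ge1}(R)]=1-p$, the horizontal symmetry first yields $\bbP_D^\xi[\calV_{h\omega\le0}(R)]\ge1-p$, so both kinds of vertical crossing are nearly certain if $p$ is small. Iterated square-root tricks then localize many geometrically controlled vertical crossings of alternating sign, landing on specific subintervals $I^1,\dots,I^8$ of the bottom (and their reflections on top). From three consecutive such crossings one extracts a vertical dual crossing by an auxiliary percolation $\nu$ (designed to forget the sign of $\tilde h=6-h$), yielding a random quad $\calX$ bounded by a left-most and a right-most $\nu=0$ dual path. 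Only at this final ``bridging'' step is Corollary~\ref{cor:narrow quad triple cross} invoked --- on the random quad $X\in\Xi$, after conditioning on $|\tilde h|$ outside and using the sign-Ising structure to force $s_{\mathsf{left}}=s_{\mathsf{right}}=+1$ with probability $\ge\tfrac14$ --- to show $\bbP[\calV_{h\omega\ge1}(X)\mid\calX=X]\le\tfrac{31}{32}$. This contradicts the near-certainty of $\calE_4^+$ when $p$ is small. The symmetric-quad machinery is thus applied to a quad \emph{manufactured by the argument}, with boundary conditions controlled by the exploration; it is never applied directly to $D$ or to a lozenge tiling of $R_{\rho n,n}$.
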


\subsection{Consequences}

Let us start with an introductory example that we don't formulate as a separate corollary.
Set $D = R_{\rho n, 3n}$, $R = R_{\rho n, n}$,
and let $\xi$ be set-valued $\{ 1, 3 \}$ on the top boundary of $D$ and $\{ \pm 1 \}$ on the side and bottom boundaries. The above theorem then applies, so $ \bbP_D^{\xi} [\calH^*_{h \omega \leq 0} (R)] > c$. Note also that due to the boundary condition,
\begin{align*}
 \calH^*_{h \omega \leq 0} (R) 
\subset
\calH^*_{h \omega \leq 0} (R_+) 
=
 \calH^*_{\omega = 0} (R_+),
\end{align*}
where $R_+$ denotes the two-thirds of $D$ that are in or above $R$. 
Now, to detect the event $\calH^*_{\omega = 0} (R_+)$, explore the primal-clusters of $\omega$ adjacent to below $R_+$, and their top boundary, i.e., the lowest dual-crossing generating $ \calH^*_{\omega = 0} (R_+)$. By SMP (Lemma~\ref{lem:SMP with percolation}) this ``pushes the $\{ \pm 1 \}$ boundary condition above the bottom of $R_+$'', hence the name of the theorem.

Let $\{ y \leq m\}$ denote the set of vertices with vertical coordinate at most $m$ vertical translational periods (generalizing the case of the honeycomb lattice, where the vertical period is one length unit). The idea of the above example holds in a larger generality (see Figure~\ref{fig:pushing}):

\begin{proposition}
\label{prop:push}
For every $\rho, \epsilon > 0$ and $k \in \bbN$, there exists $c=c(\rho, \epsilon, k)$ such that the following holds for all $n > k/\epsilon $.
Let $D$ be a connected subgraph of $\bbG$, generated by its vertices and contained entirely between the vertical lines $x=\pm \rho n$ horizontal periods and entirely above the horizontal line $y = -2n$ vertical periods. Let $\Delta' \subset \partial D$ be a set of boundary vertices entirely above the horizontal line $y=2n$
and let $\xi$ be an interval-valued boundary condition on $\partial D$ taking values  $\preceq \{ 2k \pm 1 \}$ on $\Delta'$ and $\xi \preceq \{ \pm 1 \}$ on $ \partial D \setminus \Delta'$. Then,
\begin{align*}
\bbP_D^\xi [ \Delta' \stackrel{h \omega \geq 1}{\longleftrightarrow} \{ y \leq (2-\epsilon)n\} ]
\leq 1-c.
\end{align*}
\end{proposition}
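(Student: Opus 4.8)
## Plan for the proof of Proposition~\ref{prop:push}

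The plan is to deduce Proposition~\ref{prop:push} from Theorem~\ref{prop:pushing} by a bootstrapping (iteration) argument in which each application of the Pushing Theorem lowers the maximal height of a percolating path by one ``step'' of $2$, so that after $k$ applications we pass from heights $\preceq \{2k\pm 1\}$ down to $\preceq\{\pm1\}$, turning an $h\omega\ge 1$-connection between $\Delta'$ and a low horizontal line into an impossible $\omega$-connection through $\{\pm1\}$ boundary data. I will set the rectangle in Theorem~\ref{prop:pushing} to be $R_{\rho n', n'}$ for an appropriate $n'$ comparable to $\epsilon n$, so that the aspect ratio $\rho$ is preserved and the constant $c$ from that theorem is available uniformly.

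\textbf{Setup and reduction.} First I would note that $\{\Delta' \stackrel{h\omega\ge1}{\longleftrightarrow}\{y\le(2-\epsilon)n\}\}$ is an increasing event in $(h,B)$, so by \eqref{eq:CBC-h} (for function--percolation pairs, Proposition~\ref{prop:pos assoc of percolated height fcns}) it suffices to prove the bound for the maximal allowed boundary condition $\xi$, i.e.\ $\xi=\{2k\pm1\}$ on $\Delta'$ and $\xi=\{\pm1\}$ elsewhere. Next, observe that if such a path from $\Delta'$ (which lies above $y=2n$) reaches $\{y\le(2-\epsilon)n\}$, then in particular it makes $k$ successive ``descents'': there must be, for each $j=0,1,\dots,k-1$, a vertical $h\omega\ge1$-crossing of the horizontal strip between $y=(2-\tfrac{j+1}{k}\epsilon)n$ and $y=(2-\tfrac{j}{k}\epsilon)n$, a strip of height $\tfrac{\epsilon}{k}n$ vertical periods. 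Each such strip, intersected with the vertical lines $x=\pm\rho n$, contains a translate of a discrete rectangle $R_{\rho n', n'}$ with $n' \asymp \tfrac{\epsilon}{2k}n$ (we use here $n>k/\epsilon$ to guarantee $n'\ge 1$ and the strip is nonempty); since $D$ is contained between $x=\pm\rho n$, the full width of these rectangles is available.

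\textbf{Iterating the Pushing Theorem.} I would now explore, from the top of $D$ downward, the cluster of $h\omega\ge 1$ reachable from $\Delta'$ together with the \emph{outermost} (lowest) dual-crossing of $h\omega\le 0$ bounding it from below within the top strip; by SMP for heights-with-percolation (Lemma~\ref{lem:SMP with percolation}, Remark~\ref{rem:SMP with perco}), conditioning on this exploration leaves, in the unexplored region below, a random Lipschitz model whose boundary condition on the revealed tips is $\preceq\{1,3\}=\{2\pm1\}$ (heights drop by exactly $2$ across the explored dual-crossing of $h\omega\le0$), and is still $\{\pm1\}$ on the side boundaries $x=\pm\rho n$. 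This is exactly the hypothesis of Theorem~\ref{prop:pushing} scaled to the rectangle $R_{\rho n', n'}$ sitting in the next strip down (reflection symmetry $\tau$ of the rectangle about its horizontal axis, and $2-\xi\succeq\tau.\xi$ hold because $\xi\preceq\{2\pm1\}$ there; note $2-\{2\pm1\}=\{1\pm(-1)\}\ni\{1,3\}$ makes the comparison work after translating the rectangle so the needed symmetry is about the rectangle's own axis — here I would be slightly careful that the strip, not $D$ itself, supplies the reflection symmetry, which is fine since we only need symmetry of the rectangle and of the $\mathbf c_e$). Applying Theorem~\ref{prop:pushing} gives probability $\ge c(\rho)$ that there is \emph{no} vertical $h\omega\ge1$-crossing of this strip, conditionally on the exploration. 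Repeating this $k$ times — at each stage the conditioned boundary data on the freshly revealed region drops from $\preceq\{2(k-j)\pm1\}$ to $\preceq\{2(k-j-1)\pm1\}$ — and using that at each stage the conditional failure probability is $\ge c(\rho)$, a union bound (or rather a product of conditional success probabilities for the event ``some strip is not crossed'') yields
\[
\bbP_D^\xi\bigl[\Delta' \stackrel{h\omega\ge1}{\longleftrightarrow}\{y\le(2-\epsilon)n\}\bigr]\le 1-c(\rho)^{k},
\]
so we may take $c=c(\rho)^k$, which depends only on $\rho$ and $k$ (and implicitly on $\epsilon$ through $n'\asymp\epsilon n/k$, but Theorem~\ref{prop:pushing}'s constant is uniform in $n$, hence uniform in $n'$).

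\textbf{Main obstacle.} The delicate point is the bookkeeping in the exploration/SMP step: one must verify that after revealing the $h\omega\le 0$ dual-boundary of the upper cluster, the heights on the newly exposed primal tips are genuinely $\le 2k-1$ decreasing to the right value, and in particular that the event ``the lower strip is not $h\omega\ge1$-crossed'' is still measurable with respect to (and decreasing in) the unexplored configuration, so that Theorem~\ref{prop:pushing} applies verbatim to a \emph{rectangle} inside a possibly irregular explored domain $D'$ — this is why Theorem~\ref{prop:pushing} is stated for general $\tau$-symmetric subgraphs $D$ rather than for rectangles, and I would invoke exactly that generality. A secondary technical nuisance is matching lattice periods (the $1/\sqrt3$ horizontal scale on the honeycomb vs.\ the square-octagon case) so that $R_{\rho n',n'}$ really fits between $x=\pm\rho n$ with $n'$ of order $\epsilon n/k$; this is routine once one fixes the normalization from Section~\ref{subsec:main thm setup}.
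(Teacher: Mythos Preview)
Your overall iterative strategy is close to the paper's, but there are two genuine gaps that prevent it from going through as written. First, Theorem~\ref{prop:pushing} requires the boundary condition to satisfy $\xi \preceq \{5,7\}$ on all of $\partial D$; your initial boundary condition $\{2k\pm 1\}$ on $\Delta'$ violates this as soon as $k\geq 4$, so you cannot invoke the theorem on the first strip. The mechanism you are missing is the height shift: the paper sets $\hat h = h-2k$ (when going from $k$ to $k+1$), so that $\hat\xi \preceq \{1,3\}$ on $\Delta'$ and the base case of the Pushing Theorem applies to $\hat h$. Exploring the $\hat h\hat\omega\ge 1$ cluster then yields, via Lemma~\ref{lem:SMP with percolation}, the boundary condition $\hat h\in\{\pm1\}$ on the newly exposed tips, i.e.\ $h\in\{2k\pm1\}$---\emph{this} is the ``drop by $2$'' from $\{2(k+1)\pm1\}$ to $\{2k\pm1\}$. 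Your version explores the unshifted $h\omega\ge1$ cluster, whose bounding $h\omega\le 0$ dual edges have both endpoints with $h\le 1$; the SMP then gives $\{\pm1\}$ (not $\{1,3\}$) on the tips, and more importantly, no step-by-step height reduction: after a single exploration you are already at $\{\pm1\}$, so nothing remains to iterate, and you have not bounded the probability that this cluster stays confined to the top strip in the first place.

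Second, Theorem~\ref{prop:pushing} requires the \emph{domain} $D$, not merely the rectangle $R_{\rho n,n}$, to be invariant under the horizontal-axis reflection $\tau$ (this symmetry is used in its proof via \eqref{eq:sym and FKG} and the comparison $2-\xi\succeq\tau.\xi$). The domain in Proposition~\ref{prop:push} is only assumed to lie above $y=-2n$, with no symmetry whatsoever; your remark that ``the strip, not $D$ itself, supplies the reflection symmetry'' is therefore incorrect. The paper handles this in its first auxiliary lemma by passing to the symmetrized domain $D'=D\cup\tau(D)$ and using the FKG for $(|h|,\omega)$ to compare. This reflection step, together with the height shift above, is what makes the induction run; once both are in place, the paper's proof is indeed the same two-layer iteration you sketch (first push within $k=1$ to achieve arbitrary $\epsilon$, then induct on $k$).
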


We work towards the proof of this result via intermediate steps.

\begin{lemma}
For every $\rho$, there exists $c=c(\rho)$ such that the following holds for all $n$.
Let $D$ and $\Delta'$ be as above, and let
$\xi$ be an interval-valued boundary condition on $\partial D$, being $\preceq \{ 1, 3\}$ on $\Delta'$ and $\xi \preceq \{ \pm 1 \}$ on $ \partial D \setminus \Delta'$. Then,
\begin{align*}
\bbP_D^\xi [ \Delta' \stackrel{h \omega \geq 1}{\longleftrightarrow} \{ y \leq -n\} ]
\leq 1-c.
\end{align*}
\end{lemma}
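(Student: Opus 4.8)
The plan is to deduce this lemma from the Pushing Theorem (Theorem~\ref{prop:pushing}) by a single exploration/gluing argument that turns the ``one-sided'' geometry of $D$ (with boundary data $\preceq\{1,3\}$ on a faraway set $\Delta'$ and $\preceq\{\pm1\}$ elsewhere) into the symmetric geometry required there. The guiding picture is exactly the introductory example preceding Proposition~\ref{prop:push}: a $h\omega\ge1$ connection from $\Delta'$ down to $\{y\le -n\}$ would force, inside the rectangle $R=R_{\rho n,\, \lfloor n/2\rfloor}$ (or a comparable rectangle sitting in the band $-n\le y\le 0$), a vertical $h\omega\ge1$ crossing $\calV_{h\omega\ge1}(R)$. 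So it suffices to bound $\bbP_D^\xi[\calV_{h\omega\ge1}(R)]$ away from $1$, and this is precisely what Theorem~\ref{prop:pushing} gives \emph{once we are on a reflection-symmetric domain with the right boundary condition}.

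\textbf{Key steps.} First I would reduce to the extremal boundary condition $\xi = \{1,3\}$ on $\Delta'$ and $\xi=\{\pm1\}$ on $\partial D\setminus\Delta'$, using that $\{\Delta' \stackrel{h\omega\ge1}{\longleftrightarrow}\{y\le -n\}\}$ is increasing in $(h,B)$ together with \eqref{eq:CBC-h} (monotonicity in the boundary condition for height functions with percolation, Proposition~\ref{prop:pos assoc of percolated height fcns}); increasing $\xi$ only increases the probability. Second, I would symmetrize: let $\tau$ be the reflection around the horizontal axis and form $\widehat D := D \cup \tau(D)$, glued along $\partial D$; since $D$ lies above $y=-2n$ and is contained between $x=\pm\rho n$, $\widehat D$ is a connected, $\tau$-invariant subgraph generated by its vertices. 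Give $\widehat D$ the boundary condition $\widehat\xi$ that equals $\{1,3\}$ on $\Delta' \cup \tau(\Delta')$ and $\{\pm1\}$ on the remaining boundary; this satisfies the hypotheses of Theorem~\ref{prop:pushing} with the relevant rectangle, namely $\widehat\xi\preceq\{5,7\}$, $\widehat\xi\preceq\{\pm1\}$ on $\partial\widehat D\cap R_{\rho n,n}$ (because $\Delta'$ and hence $\tau(\Delta')$ lie above $y=2n$ resp. below $y=-2n$, away from $R_{\rho n,n}$), and the symmetry $2-\widehat\xi\succeq\tau.\widehat\xi$ holds since $\widehat\xi$ is literally $\tau$-invariant and $2-\{1,3\}=\{-1,1\}\succeq\{1,3\}$ fails pointwise but $2-\{1,3\}=\{-1,1\}$ vs $\{1,3\}$... here I should instead only keep the original half: $\widehat\xi$ equals $\{\pm1\}$ on $\tau$'s copy so that $2-\widehat\xi\succeq\tau.\widehat\xi$ reads $2-\{1,3\}=\{-1,1\}\succeq\{\pm1\}$ on $\Delta'$ (true) and symmetrically on $\tau(\Delta')$. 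Third, apply Theorem~\ref{prop:pushing} to get $\bbP_{\widehat D}^{\widehat\xi}[\calV_{h\omega\ge1}(R_{\rho n,n})]\le 1-c(\rho)$, hence also for the sub-rectangle $R_{\rho n,\lfloor n/2\rfloor}$. Fourth, transfer back to $D$: by the Spatial Markov property / boundary comparison, restricting $\bbP_{\widehat D}^{\widehat\xi}$ to the upper half and the relevant rectangle dominates (or equals, up to a CBC comparison) $\bbP_D^{\xi}$ on the event in question, because conditioning on $h\ge1$ on $\partial D$ (the shared axis) is what the $\{\pm1\}$-symmetrization already supplies. Finally, observe that $\{\Delta'\stackrel{h\omega\ge1}{\longleftrightarrow}\{y\le -n\}\}\subset\calV_{h\omega\ge1}(R_{\rho n,\lfloor n/2\rfloor})$ up to a routine geometric check (any such path crosses the band $-n\le y\le 0$ vertically and stays within $|x|\le\rho n$), so its probability is at most $1-c$, as desired.

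\textbf{Main obstacle.} The delicate point is making the symmetrization-and-transfer rigorous: I must be careful that gluing $D$ to $\tau(D)$ along $\partial D$ preserves ``generated by its vertices'' and does not create new edges inside $R_{\rho n,n}$ that would violate $\widehat\xi\preceq\{\pm1\}$ there, and I must correctly reconcile the two-sided boundary data so that $2-\widehat\xi\succeq\tau.\widehat\xi$ genuinely holds (this forces asymmetric data: $\{1,3\}$-type on $\Delta'$, $\{\pm1\}$-type on its mirror image, not a fully symmetric $\widehat\xi$). Equivalently one can avoid gluing and instead use the inequality-form SMP (Proposition~\ref{prop:SMP ineq for abs val} / Corollary~\ref{cor:+-1 SMP ineq for abs val}) to directly compare $\bbP_D^\xi$ restricted to a rectangle sitting in $-2n\le y\le 0$ with the measure on a symmetric rectangle $R_{\rho n, 2n}$ carrying boundary condition $\{1,3\}$ on top and $\{\pm1\}$ elsewhere, then apply Theorem~\ref{prop:pushing} there; I expect the cleanest write-up to follow this second route, mirroring verbatim the introductory example that opens this subsection, with the only real work being the geometric lemma that a $h\omega\ge1$ path from $\Delta'$ (above $y=2n$, or in the one-sided picture, from the designated part of $\partial D$) to $\{y\le -n\}$ must cross the chosen rectangle vertically.
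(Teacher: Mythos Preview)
Your overall plan matches the paper's: reduce to the maximal $\xi$, form $D':=D\cup\tau(D)$, equip it with $\xi'=\{1,3\}$ on $\Delta'$ and $\{\pm1\}$ elsewhere (the asymmetric choice you arrive at after your self-correction), verify $2-\xi'\succeq\tau.\xi'$, and apply Theorem~\ref{prop:pushing} to $R_{\rho n,n}$. The geometric inclusion $\{\Delta'\stackrel{h\omega\ge1}{\longleftrightarrow}\{y\le-n\}\}\subset\calV_{h\omega\ge1}(R_{\rho n,n})$ is also fine.

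The genuine gap is your ``transfer'' step, the inequality $\bbP_D^\xi[\cdot]\le\bbP_{D'}^{\xi'}[\cdot]$. Your justification (``conditioning on $h\ge1$ on $\partial D$ is what the $\{\pm1\}$-symmetrization already supplies'') does not hold: vertices of $\partial D\setminus\Delta'$ that become interior to $D'$ can carry arbitrary (in particular negative) heights, so the $(h,B)$-CBC is unavailable in the direction you need. Nor does the $(|h|,\omega)$-machinery apply directly, because $\{\Delta'\stackrel{h\omega\ge1}{\longleftrightarrow}\cdot\}$ is increasing in $(h,B)$ but \emph{not} in $(|h|,\omega)$. The paper resolves this with a one-line conversion you are missing: since $\xi|_{\Delta'}\subset\{1,3\}$ and $\sign(h)$ is constant on $\omega$-clusters, the $\omega$-cluster of $\Delta'$ has $\sign(h)=+1$, whence
\[
\{\Delta'\stackrel{h\omega\ge1}{\longleftrightarrow}\{y\le-n\}\}
\;=\;
\{\Delta'\stackrel{\omega=1}{\longleftrightarrow}\{y\le-n\}\}
\]
under both $\bbP_D^\xi$ and $\bbP_{D'}^{\xi'}$. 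The right-hand event \emph{is} increasing in $(|h|,\omega)$, so Proposition~\ref{prop:pos assoc of percolated height fcns} (FKG/SMP for $(|h|,\omega)$) now legitimately gives $\bbP_D^\xi[\cdot]\le\bbP_{D'}^{\xi'}[\cdot]$; convert back and finish with Theorem~\ref{prop:pushing}. Your ``second route'' via Proposition~\ref{prop:SMP ineq for abs val} runs into the same obstacle and needs the same fix.
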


\begin{proof}
%
Let us start by easy simplifying observations. First, if $D$ does not contain any vertical crossing of $R$, then the claim is trivial, so we assume it does. Second, the event $\{ \Delta' \stackrel{h \omega \geq 1}{\longleftrightarrow} \{ y \leq -n\} \}$ is increasing in $(h, B)$, so it suffices to prove the claim for the maximal boundary condition $\xi$ allowed by the assumptions, i.e., we may assume $\xi_{|\Delta'} = \{1, 3 \}$ and $\xi_{|\partial D \setminus \Delta'} = \{ \pm 1 \}$.  


Let $\tau$ now denote the reflection with respect to the horizontal axis. By the first simplifying assumption, $D' := D \cup \tau(D)$ is a connected subgraph of $\bbG$. Furthermore, $\tau(D)$ remains below the line $y=2n$, so $\Delta' \subset \partial D'$. Let $\xi'$ be the boundary condition on $\partial D'$ given by $\xi'_{|\Delta'} = \{1, 3 \}$ and $\xi'_{|\partial D' \setminus \Delta'} = \{ \pm 1 \}$. Next, we compute
\begin{align*}
\bbP_D^\xi [ \Delta' \stackrel{h \omega \geq 1}{\longleftrightarrow} \{ y \leq -n\} ] 
&=
\bbP_D^\xi [ \Delta' \stackrel{\omega = 1}{\longleftrightarrow} \{ y \leq -n\} ] \qquad \text{(due to boundary condition)}
\\
& \leq 
\bbP_{D'}^{\xi'} [ \Delta' \stackrel{\omega = 1}{\longleftrightarrow} \{ y \leq -n\} ] \qquad \text{(by FKG for $(|h|, \omega)$)} \\
& = \bbP_{D'}^{\xi'} [ \Delta' \stackrel{h \omega \geq 1}{\longleftrightarrow} \{ y \leq -n\} ]\qquad \text{(due to boundary condition)} \\
& \leq \bbP_{D'}^{\xi'} [ \calV_{h \omega \geq 1} (R_{\rho n, n}) ] \qquad \text{(by geometric setup)}  \\
& \leq 1- c(\rho) \qquad \text{(by Theorem~\ref{prop:pushing})}.
\end{align*}
This concludes the proof.
\end{proof}

\begin{lemma}
For every $\rho, \epsilon > 0$, there exists $c=c(\rho, \epsilon)$ such that the following holds for all $n > 1/\epsilon$.
Let $D$, $\Delta'$, and $\xi$ be as in the previous lemma. Then,
\begin{align*}
\bbP_D^\xi [ \Delta' \stackrel{h \omega \geq 1}{\longleftrightarrow} \{ y \leq (2 - \epsilon )n\} ] 
 \leq 1-c.
\end{align*}
\end{lemma}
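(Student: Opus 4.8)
The plan is to repeat the argument of the previous lemma, but across the thin slab $S=\{(2-\epsilon)n<y<2n\}$ instead of across a roughly square box. First, since the event $\calA:=\{\Delta'\xleftrightarrow{h\omega\ge1}\{y\le(2-\epsilon)n\}\}$ is increasing in $(h,B)$, by \eqref{eq:CBC-|h|} (in the form of Proposition~\ref{prop:pos assoc of percolated height fcns}) it suffices to treat the maximal admissible boundary condition, $\xi_{|\Delta'}=\{1,3\}$ and $\xi_{|\partial D\setminus\Delta'}=\{\pm1\}$; and if $D$ avoids the line $y=(2-\epsilon)n$ then $\calA$ is empty and there is nothing to prove, so we may assume $D$ meets every level of $S$. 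Exactly as before, the value $\{1,3\}$ on $\Delta'$ pins the $\{\omega=1\}$-cluster of $\Delta'$ to heights $\ge1$, so $\calA=\{\Delta'\xleftrightarrow{\omega=1}\{y\le(2-\epsilon)n\}\}$, an event increasing in $(|h|,\omega)$.

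Next, let $\tau$ be the reflection in the midline $\{y=(2-\tfrac\epsilon2)n\}$ of $S$, set $D^\sharp:=D\cap\{y>(2-\epsilon)n\}$ and $D':=D^\sharp\cup\tau(D^\sharp)$ (each generated by its vertices). Since $D^\sharp\subset\{y>(2-\epsilon)n\}$ we have $\tau(D^\sharp)\subset\{y<2n\}$, so $\tau(D^\sharp)$ misses $\Delta'$ and $\Delta'\subset\partial D'$; by the nondegeneracy assumption $D'$ is connected and $\tau$-symmetric. Put $\xi'_{|\Delta'}=\{1,3\}$, $\xi'_{|\partial D'\setminus\Delta'}=\{\pm1\}$; then $\xi'\preceq\{5,7\}$, $\xi'\preceq\{\pm1\}$ inside $S$, and $2-\xi'\succeq\tau.\xi'$, all checked exactly as in the previous lemma. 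Any connection realising $\calA$ restricts, inside $D^\sharp$, to an $\{\omega=1\}$-path from $\Delta'$ down to the bottom of $D^\sharp$; as $\Delta'$ lies above $S$, this is geometrically a vertical $\{\omega=1\}$-crossing of a rectangle $R\subset S$ of aspect ratio of order $\rho/\epsilon$. Passing to $(D',\xi')$ via \eqref{eq:FKG-|h|}--\eqref{eq:CBC-|h|} (comparing the nice $\xi'$ with the condition induced by $\bbP^\xi_D$) and converting $\{\omega=1\}$ back to $\{h\omega\ge1\}$ on $D'$, one obtains
\[
\bbP^\xi_D[\calA]\ \le\ \bbP^{\xi'}_{D'}\bigl[\calV_{h\omega\ge1}(R)\bigr]\ \le\ 1-c(\rho,\epsilon),
\]
the last bound being the Pushing Theorem (Theorem~\ref{prop:pushing}) applied on $D'$ with the roles of $(\rho,n)$ there played by $(\asymp\rho/\epsilon,\asymp\epsilon n)$ — and it is precisely because the rectangle has aspect ratio $\asymp\rho/\epsilon$ that the constant now depends on $\epsilon$, while the hypothesis $n>1/\epsilon$ (up to a fixed constant) is what guarantees that $R$ has positive integer dimensions.

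\paragraph{Where the work is.}
The delicate point, absent in the previous lemma where $D$ sits entirely above $y=-2n$, is that $D$ now extends far below $S$, so its reflection in the midline of $S$ would engulf $\Delta'$; this forces the truncation to $D^\sharp=D\cap\{y>(2-\epsilon)n\}$, which in turn produces a cut along the bottom of $S$ on which the boundary condition induced by $\bbP^\xi_D$ is a priori unbounded, whereas Theorem~\ref{prop:pushing} demands boundary values $\preceq\{\pm1\}$ throughout $R\supset S$. Reconciling these — exploiting that the cut lies inside the target set (so that touching it already produces $\calA$), together with the spatial Markov property (Lemma~\ref{lem:SMP with percolation}) to neutralise the offending boundary condition — is the technical heart of the proof, and is the reason one works with a thin slab (height $\epsilon n$) rather than a box of side $\asymp n$.
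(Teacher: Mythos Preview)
Your one-shot symmetrisation across the thin slab has a genuine gap at the comparison step. In the previous lemma the bound $\bbP^\xi_D[\cdot]\le\bbP^{\xi'}_{D'}[\cdot]$ (for events increasing in $(|h|,\omega)$) works because $D\subset D'=D\cup\tau(D)$: enlarging the domain raises $(|h|,\omega)$ stochastically. In your setup you first truncate to $D^\sharp=D\cap\{y>(2-\epsilon)n\}$, and then $D\not\subset D'=D^\sharp\cup\tau(D^\sharp)$ in general --- the part of $D$ below the cut is discarded and $\tau(D^\sharp)$ need not cover it --- so no such comparison between $\bbP^\xi_D$ and $\bbP^{\xi'}_{D'}$ is available. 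Your suggested fix (``the cut lies inside the target set, so use SMP to neutralise the boundary'') does not help: conditioning on $h$ along the cut under $\bbP^\xi_D$ yields a boundary value $\zeta$ on $\partial_{\mathrm{bot}}D^\sharp$ with $|\zeta|$ possibly large, and the connection event $\calA$ is \emph{more} likely when $|\zeta|$ is large, not less. That touching the cut already realises $\calA$ is irrelevant; you still need to bound the probability of touching it, and high boundary values at the cut push that probability towards~$1$.

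The paper instead \emph{iterates} the previous lemma. One application to the original $D$ (which lies above $y=-2n$) bounds the connection to $\{y\le-n\}$; on the complementary event one explores upward from $\{y\le-n\}$ and obtains, via the SMP for $(h,\omega)$, a new domain containing $\Delta'$ that now lies above $y=-n$, with boundary condition still $\preceq\{\pm1\}$ off $\Delta'$. Applying the previous lemma again (with shifted origin and an enlarged aspect ratio) and repeating, each step shrinks the vertical extent by the fixed factor $3/4$; after $O(\log(1/\epsilon))$ steps the remaining domain lies above $\{y=(2-\epsilon)n\}$, and the product of the per-step complement probabilities gives $c(\rho,\epsilon)$. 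The point is that at each step the reflection is of a domain genuinely above the relevant line, so the containment $D\subset D\cup\tau(D)$ needed for the FKG comparison always holds.
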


\begin{proof}
In the previous lemma, we saw that
\begin{align*}
\bbP_D^\xi [ \Delta' \stackrel{h \omega \geq 1}{\longleftrightarrow} \{ y \leq -n\} ] \leq 1-c'(\rho).
\end{align*}
Let us thus reveal $h$ below the horizontal line $y=-n$ and then explore the primal-component of $h \omega \geq 1$ adjacent to below this line, and its bounding dual-paths of $h \omega \leq 0$. (For the formalization of a similar argument, see proof of Corollary~\ref{cor:narrow quad triple cross}.) With probability at least $c'(\rho)$, this primal-component will be disconnected from $\Delta'$.
Let $D'$ be the component(s) of the unexplored domain that contains $\Delta'$.
By the SMP, this exploration induces a boundary condition $\xi'$ on $\partial D'$ which coincides with $\xi$ on $ \partial D' \cap \partial D$, is $\{ \pm 1 \}$ on the unexplored tips of $\omega = 0$ edges, and negative on the vertices adjacent to the line $y=-n$ from which the exploration never proceeded. Thus $D'$ and $\xi'$ are also adapted to the application of the previous lemma (after shifting the origin suitably), but the entire $D'$ is now above the line $y=-n$, i.e., the ``vertical extent of the domain has been reduced from $4n$ to $3n$''. By a bounded number of iterated applications, we reduce the height by any power of $3/4$, eventually to $\epsilon n$ vertical periods (which makes sense since $n > 1/\epsilon$).
\end{proof}

\begin{proof}[Proposition~\ref{prop:push}]
Note first that by CBC for $(h,B)$, it suffices to prove the claim for the maximal $\xi$ allowed.
The previous lemma gives the desired result directly in the case $k=1$. Assume now inductively that the claim holds for $k$ and prove it for $k+1$, i.e., when, e.g., $\xi \preceq \{ 2k +3, 2k + 1\}$. Let $\hat{h} = h-2k$, $\hat{\xi} = \xi - 2k$ and define $\hat{\omega}$ based on $\hat{h}$ and $B$ in the usual manner. Now, $D$, $\Delta'$, and $\hat{\xi}$ are suited for the application of the previous lemma \textit{with} $c(\rho, \epsilon/k)$, giving
\begin{align*}
\bbP_D^{\hat{\xi}} [ \Delta' \stackrel{\hat{h} \hat{\omega} \geq 1}{\longleftrightarrow} \{ y \leq (2 - \epsilon/k )n\} ] \leq 1-c(\rho, \epsilon/k).
\end{align*}

Let us then explore the components of $\hat{h} \hat{\omega} \geq 1$ adjacent to $\Delta'$ (on which $\hat{\xi} = \{ 1, 3\}$). On the complement of the event $\{ \Delta' \stackrel{\hat{h} \hat{\omega} \geq 1}{\longleftrightarrow} \{ y \leq (2 - \epsilon/k )n\} \}$ above, these components will be restricted above dual-paths of $\hat{\omega}= 0$ that remain above the line $y = (2 - \epsilon/k)n$. By the SMP, the exploration induces a boundary condition $\xi'$ on the boundary of the unexplored domain $D'$, where $\xi'$ coincides with $\xi$ on $ \partial D' \cap \partial D$ and is of the form $\hat{h} \in \{ \pm 1 \}$ (equivalently, $h \in \{ 2k \pm 1\}$) on the dual curves of $\hat{\omega}= 0$. In particular, $\xi' $ is not $\{ \pm 1 \}$ on a set $\Delta'' \subset \partial D'$ consisting of $\Delta' \cap \partial D'$ and the bounding dual curves above. Now, $D'$, $\Delta''$ and $\xi'$ are suited for the application of the present lemma with parameters $\rho, \epsilon, k $ (which was inductively supposed to hold), giving
\begin{align*}
\bbP_{D'}^{\xi'} [ \Delta'' \stackrel{h \omega \geq 1}{\longleftrightarrow} \{ y \leq (2 - \epsilon )n\} ] \leq 1-c(\rho, \epsilon, k).
\end{align*}
Averaging over $D'$ and $\xi'$ concludes the proof.
\end{proof}

\subsection{Proof of Theorem~\ref{prop:push}}

By CBC for $(h, B)$ it suffices to prove the claim when $\xi_{| R_{ \rho n, n}} = \{ \pm 1 \}$.
Also, without loss of generality, we will assume that $\rho$ is an even integer and 
$n $ is divisible by $3$. 
Denote for the rest of this proof $\bbP^\xi_D =: \bbP$, $R=R_{ \rho n, n}$ and
\begin{align*}
\bbP [ \calV_{h \omega \geq 1} (R) ] = 1-p.
\end{align*}
 The strategy will be to show that there exists a $c=c(\rho)$ such that $p \leq c$ will lead to a contradiction; in the proof we thus imagine $p$ as a very small positive number. The proof is divided into steps and lemmas.
 
\subsubsection{Many geometrically controlled crossings of alternating signs}
 
The first lemma shows that also vertical crossings ``of opposite sign'' must be likely.
\begin{lemma}
We have
\begin{align}
\label{eq:sym and FKG}
\bbP [\calV_{ h \omega \leq 0}(R)]  \geq \bbP [\calV_{ h \omega  \geq 1}(R)]. 
\end{align} 
\end{lemma}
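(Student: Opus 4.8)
The plan is to replay the ``symmetrize, compare boundary conditions, flip $h\mapsto 2-h$'' argument behind Lemma~\ref{lem:sym quad cross}, which is in fact a little simpler here since both sides of~\eqref{eq:sym and FKG} concern \emph{vertical} crossings of the rectangle $R=R_{\rho n,n}$, so no primal/dual duality step is needed. Here $\tau$ denotes the reflection about the horizontal axis, which by the standing assumptions is a symmetry of $\bbG$ preserving the weights $\mathbf{c}_e$; it fixes $R$ (by the definition of a discrete rectangle) and maps vertical crossings to vertical crossings.

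First I would symmetrize: pushing $\bbP=\bbP^\xi_D$ forward by $\tau$ (which sends $h\mapsto h\circ\tau^{-1}$, $B\mapsto B\circ\tau^{-1}$, $\omega\mapsto\omega\circ\tau^{-1}$) and using that $\calV_{h\omega\geq 1}(R)$ is a $\tau$-invariant event, one gets $\bbP[\calV_{h\omega\geq 1}(R)]=\bbP^{\tau.\xi}_D[\calV_{h\omega\geq 1}(R)]$, where $\tau.\xi=\xi\circ\tau^{-1}$ is again an admissible interval-valued boundary condition on $\partial D=\tau(\partial D)$. Next, since $\calV_{h\omega\geq 1}(R)$ is increasing in $(h,B)$ and $2-\xi\succeq\tau.\xi$ by hypothesis, the \eqref{eq:CBC-h} comparison for functions of $(h,B)$ (Proposition~\ref{prop:pos assoc of percolated height fcns}) gives $\bbP^{\tau.\xi}_D[\calV_{h\omega\geq 1}(R)]\leq\bbP^{2-\xi}_D[\calV_{h\omega\geq 1}(R)]$.

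Finally I would run the coupling from the proof of Lemma~\ref{lem:sym quad cross}, which is the only place the assumption $\mathbf{c}_e\leq 2$ is used: starting from $(h,B,\omega)\sim\bbP^{2-\xi}_D$, set $\tilde h=2-h$ so that $\tilde h\sim\bbP^\xi_D$, introduce independent $\Ber(1-1/\mathbf{c}_e)$ variables $\tilde B_e$ coupled to $B_e$ so that $B_e=1\Rightarrow\tilde B_e=0$ (possible since $1-p_e\geq p_e$ as $\mathbf{c}_e\leq 2$), and define $\tilde\omega$ from $(\tilde h,\tilde B)$ in the usual way, so that $(\tilde h,\tilde B,\tilde\omega)\sim\bbP^\xi_D$. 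A vertical crossing $\gamma$ of $R$ with $h\omega\geq 1$ has $h\geq 1$ on its vertices and $B_e=1$ on its edges joining two heights $1$; hence $\tilde h\leq 1$ on its vertices and $\tilde B_e=0$ on its edges joining two $\tilde h$-heights $1$, i.e.\ $\gamma$ is a vertical crossing with $\tilde h\tilde\omega\leq 0$. Thus $\{\calV_{h\omega\geq 1}(R)\}\subset\{\calV_{\tilde h\tilde\omega\leq 0}(R)\}$ in this coupling, so $\bbP^{2-\xi}_D[\calV_{h\omega\geq 1}(R)]\leq\bbP^\xi_D[\calV_{h\omega\leq 0}(R)]=\bbP[\calV_{h\omega\leq 0}(R)]$. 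Chaining the three inequalities yields~\eqref{eq:sym and FKG}. The argument is essentially routine given Lemma~\ref{lem:sym quad cross}; the only points requiring care are that the event is genuinely $\tau$-invariant as a subset of configuration space and that the monotone coupling $B_e=1\Rightarrow\tilde B_e=0$ exists --- the latter being exactly where $\mathbf{c}_e\leq 2$ enters --- so no substantive obstacle is expected.
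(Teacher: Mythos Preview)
Your proof is correct and follows essentially the same route as the paper's: symmetrize via $\tau$, apply \eqref{eq:CBC-h} with $\tau.\xi\preceq 2-\xi$, and then run the $\tilde h=2-h$, $B_e=1\Rightarrow\tilde B_e=0$ coupling to get the inclusion $\calV_{h\omega\geq 1}(R)\subset\calV_{\tilde h\tilde\omega\leq 0}(R)$. Your additional remarks on where $\mathbf{c}_e\leq 2$ enters and why no duality step is needed are accurate.
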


\begin{proof}
Compute
\begin{align*}
\bbP^\xi_D [\calV_{ h \omega  \geq 1}(R)] 
&= \bbP^{\tau.\xi}_D [\calV_{ h \omega \geq 1}(R)] \qquad \text{(by symmetry)} \\
& \leq \bbP^{2-\xi}_D [\calV_{ h \omega \geq 1}(R)],
\end{align*}
where the second step used the FKG for and the assumption $\tau.\xi \preceq 2- \xi$, as the event $\calV_{ h \omega \geq 1}(R)$ is increasing in $(h, B)$. Finally, given $(h, B, \omega) \sim \bbP^{2-\xi}_D $, set $\tilde{h} = 2-h$, and let $\tilde{B}$ be distributed same as $B$ but coupled so that $B_e = 1 \Rightarrow \tilde{B}_e = 0$, and define $\tilde{\omega}$ from $\tilde{h}$ and $\tilde{B}$; in this coupling $(\tilde{h}, \tilde{B}, \tilde{\omega}) \sim \bbP^{\xi}_D$ and $\calV_{ h \omega \geq 1}(R) \subset \calV_{ \tilde{h} \tilde{\omega} \leq 0}(R)$, and thus
\begin{align*}
\bbP^{2-\xi}_D [\calV_{ h \omega \geq 1}(R)] \leq \bbP^{\xi}_D [\calV_{  h \omega \leq 0}(R)].
\end{align*}
\end{proof}

Let next $R_{\mathsf{top}}$,  $R_{\mathsf{mid}}$, and  $R_{\mathsf{bot}}$, respectively, denote the top, middle, and bottom thirds of $R$, i.e., rectangles of width $(2\rho n+1)$ hexagons and height $(4n/3 + 1)$ rows, overlapping in their lowest/highest row of faces.\footnote{We explicate the geometry for the hexagonal lattice. Other lattices can be treated so that one horizontal period corresponds to one hexagon, while the vertical one is two rows of hexagons, e.g., width $(2\rho n+1)$ hexagons meaning spanning $2\rho n$ translational periods between left-most and right-most faces.} Subdivide the four distinct top and bottom segments of these sub-rectangles into $3 \rho / 2$ subsegments (dual-paths) of width $2n/3 +1$ hexagons each, overlapping at their extremal faces, and denoted by $I_1, I_2, \ldots, I_{3 \rho / 2}$ on the bottom-most row, then $J_1, \ldots, J_{3 \rho / 2}$, then $K_1,  \ldots, K_{3 \rho / 2}$, and finally $L_1, \ldots, L_{3 \rho / 2}$ on the top-most row. For a vertical primal-crossing $\gamma$ of $R$ in $R$, directed from bottom to top and generating $\calV_{h \omega \leq 0} $ (resp. directed top to bottom and generating $\calV_{h \omega \geq 1}$), let us denote by $\eta$ the sub-path of $\gamma$ that ends at the first hitting time of $V_{\mathsf{top}}(R_{\mathsf{mid}})$ (resp. $V_{\mathsf{bottom}}(R_{\mathsf{mid}})$) and begins at the last vertex of of $V_{\mathsf{bottom}}(R_{\mathsf{mid}})$ (resp. $V_{\mathsf{top}}(R_{\mathsf{mid}})$) before that. Then, let $\calV^{i, j, k, l}(h \omega \leq 0) $ (resp. $\calV^{i, j, k, l}(h \omega \geq 1)$) be the event that such a $\gamma$ lands on $I_i$ and $L_l$ and the related $\eta$ between $J_j$ and $K_k$.\footnote{For several technical details, it is important that when we talk about the \textit{landing} of primal-curves, we interpret, e.g., $I_1, \ldots, I_{3 \rho / 2}$ as disjoint, horizontally ordered, and translationally equivalent collections of primal-edges \textit{in $R$} adjacent to $V_{\mathsf{bottom}}(R)$.}

\begin{lemma}
\label{lem:geom controlled crossings}
Suppose that $p$ is small enough, $p < c(\rho)$. Then, there exist $i_0$ and $j_0$ such that, denoting $N = (3 \rho /2)^4$, we have
\begin{align*}
\bbP [\calV^{i_0, j_0, j_0, i_0}(h \omega \leq 0)], \bbP [\calV^{i_0, j_0, j_0, i_0}(h \omega \geq 1)] \geq 1-p^{1/N}.
\end{align*}
\end{lemma}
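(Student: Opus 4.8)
We sketch the argument. The plan is: make \emph{both} vertical-crossing events of $R$ overwhelmingly likely; split each of them into its $N=(3\rho/2)^4$ monotone ``profile'' sub-events and extract a single dominant profile for each by FKG and a pigeonhole; and finally use the reflection symmetry of $R$ together with the $h\mapsto 2-h$ duality to upgrade these to a common profile of the symmetric shape $(i_0,j_0,j_0,i_0)$. To begin, by the hypothesis of Theorem~\ref{prop:pushing} we have $\bbP[\calV_{h\omega\ge1}(R)]=1-p$, and combining this with~\eqref{eq:sym and FKG} also $\bbP[\calV_{h\omega\le0}(R)]\ge 1-p$.

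Next, every vertical $h\omega\le 0$-crossing of $R$ realises exactly one of the $N=(3\rho/2)^4$ profiles $(i,j,k,l)$, so $\calV_{h\omega\le0}(R)=\bigcup_{(i,j,k,l)}\calV^{i,j,k,l}(h\omega\le0)$, and each event $\calV^{i,j,k,l}(h\omega\le0)$ is decreasing in $(h,B)$ (lowering $(h,B)$ only enlarges the edge set $\{h\omega\le0\}$, and the landing and $\eta$ constraints depend on the crossing path alone). Hence $(\calV_{h\omega\le0}(R))^c=\bigcap_{(i,j,k,l)}(\calV^{i,j,k,l}(h\omega\le0))^c$ is an intersection of at most $N$ increasing events of $(h,B)$, so by Proposition~\ref{prop:pos assoc of percolated height fcns} (FKG for $(h,B)$)
\[
p\ \ge\ \bbP\bigl[(\calV_{h\omega\le0}(R))^c\bigr]\ \ge\ \prod_{(i,j,k,l)}\bbP\bigl[(\calV^{i,j,k,l}(h\omega\le0))^c\bigr];
\]
with at most $N$ factors in $[0,1]$ whose product is $\le p$, one of them is $\le p^{1/N}$, i.e.\ there is a profile $P_1$ with $\bbP[\calV^{P_1}(h\omega\le0)]\ge 1-p^{1/N}$. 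Applied to $\calV_{h\omega\ge1}(R)=\bigcup_{(i,j,k,l)}\calV^{i,j,k,l}(h\omega\ge1)$ --- where the events $\calV^{i,j,k,l}(h\omega\ge1)$ are now \emph{increasing} in $(h,B)$, so their complements are decreasing and FKG applies as well --- the same reasoning gives a profile $P_2$ with $\bbP[\calV^{P_2}(h\omega\ge1)]\ge 1-p^{1/N}$.

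It remains to replace the a priori distinct and a priori non-symmetric profiles $P_1,P_2$ by a single profile of the shape $(i_0,j_0,j_0,i_0)$ with no further loss in the exponent. Here one combines (a) the reflection $\tau$ of Theorem~\ref{prop:pushing}, under which $R$, its thirds and the families $I_\bullet,J_\bullet,K_\bullet,L_\bullet$ are invariant, $\tau$ preserving the horizontal coordinate and swapping the $I$- and $L$-levels (resp.\ the $J$- and $K$-levels), so that $\tau$ carries the profile $(i,j,k,l)$ to $(l,k,j,i)$; with (b) the comparison $\tau.\xi\preceq 2-\xi$ together with the $h\mapsto 2-h$ coupling (with $B_e=1\Rightarrow\tilde B_e=0$, which is where $\mathbf{c}_e\le 2$ is used) from the proof of~\eqref{eq:sym and FKG}, which interchanges $h\omega\le 0$- and $h\omega\ge 1$-crossings. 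Tracking a crossing through these exact operations lets one take $P_1=P_2=(i_0,j_0,j_0,i_0)$, which is the assertion (no genuine smallness of $p$ is needed, so $c(\rho)$ may be taken equal to $1$, or smaller if convenient downstream).

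The probabilistic part --- crossing abundance, FKG, pigeonhole --- is routine; the real difficulty is the last step. The subtle point is that the excursion $\eta$ is defined via the \emph{first} clean traversal of $R_{\mathsf{mid}}$ for an $h\omega\le 0$-crossing, whereas after the direction reversal forced by the $h\mapsto 2-h$ duality this traversal need not be the one extracted from an $h\omega\ge 1$-crossing; reconciling the two --- which is precisely why the target profile must be invariant under $\tau$ --- is the technical heart of the lemma and is where the hypotheses $\tau.\xi\preceq 2-\xi$ and $\mathbf{c}_e\le 2$ are indispensable.
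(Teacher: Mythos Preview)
Your probabilistic setup (square-root trick via FKG, applied to the monotone sub-events) is fine and matches the paper. The genuine gap is in your final step, where you claim that the reflection $\tau$ together with the $h\mapsto 2-h$ coupling ``lets one take $P_1=P_2=(i_0,j_0,j_0,i_0)$''. This is not what the symmetry gives you. Arguing as in~\eqref{eq:sym and FKG} (reflection, then CBC for $\tau.\xi\preceq 2-\xi$, then the $\tilde B$ coupling) shows only that
\[
\bbP[\calV^{l_0,k_0,j_0,i_0}(h\omega\le 0)]\ \ge\ \bbP[\calV^{i_0,j_0,k_0,l_0}(h\omega\ge 1)],
\]
i.e.\ a likely profile $(i_0,j_0,k_0,l_0)$ for $h\omega\ge 1$ yields the \emph{reflected} profile $(l_0,k_0,j_0,i_0)$ for $h\omega\le 0$. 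Nothing in the symmetry argument forces $i_0=l_0$ or $j_0=k_0$; your profiles $P_1,P_2$ remain a priori unrelated and a priori non-symmetric.

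The missing idea is a \emph{topological obstruction}, and this is exactly where the smallness assumption $p<c(\rho)$ enters (contrary to your claim that it is unnecessary). The paper's argument is: by the union bound, $\calV^{i_0,j_0,k_0,l_0}(h\omega\ge 1)$ and $\calV^{l_0,k_0,j_0,i_0}(h\omega\le 0)$ occur simultaneously with probability at least $1-2p^{1/N}>0$ once $p$ is small. On this event the two inducing primal crossings are edge-disjoint (one lives on $\{h\omega\ge 1\}$, the other on its complement) and hence, since $\bbG$ has maximum degree $3$, vertex-disjoint away from endpoints. But if $j_0\ne k_0$, the two sub-crossings of $R_{\mathsf{mid}}$ (one between $J_{j_0}$ and $K_{k_0}$, the other between $J_{k_0}$ and $K_{j_0}$) are topologically forced to intersect inside $R_{\mathsf{mid}}$, a contradiction; likewise $i_0=l_0$. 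This coexistence-plus-planarity argument is the heart of the lemma, and your proposal does not contain it. Your final paragraph misidentifies the difficulty: the $\eta$ definitions for the two signs are in fact exchanged by $\tau$ (the ``first hitting'' convention is stated with opposite orientations precisely so that this works), so there is no reconciliation problem there --- the actual work is proving that the selected profile is $\tau$-invariant, which requires the topological step above.
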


\begin{proof}
It is clear that also $\calV^{i, j, k, l} (h \omega \geq 1)$ are increasing in $(h,B)$, and that
\begin{align*}
\calV_{h \omega \geq 1} (R) = \bigcup_{i, j, k , l= 1}^{3 \rho / 2}  \calV^{i, j, k, l} (h \omega \geq 1) .
\end{align*}
By FKG and the square-root trick, there exist indices $(i_0, j_0, k_0, l_0)$ such that
\begin{align*}
\bbP [\calV^{i_0, j_0, k_0, l_0}(h \omega \geq 1)] \geq 1-p^{1/N},
\end{align*}
where $N = (3 \rho /2)^4$ is the total number of index tuples $(i, j, k, l)$.  By reflection symmetry and FKG, also
\begin{align*}
\bbP [\calV^{l_0, k_0, j_0, i_0}(h \omega \leq 0)] \geq \bbP [\calV^{i_0, j_0, k_0, l_0}(h \omega \geq 1)];
\end{align*}
the proof is identical to that of~\eqref{eq:sym and FKG}.

Next, we will prove that $i_0=l_0$ and $j_0 = k_0$ necessarily.
Indeed, by the Union bound, 
\begin{align*}
\bbP [\calV^{i_0, j_0, k_0, l_0}(h \omega \geq 1) \cap \calV^{l_0, k_0, j_0, i_0}(h \omega \leq 0)] \geq 1 - 2 p^{1/N},
\end{align*}
and if $p$ is small enough, $p < c(\rho)$, the two occur simultaneously with a positive probability. But the paths inducing such events are by definition edge-disjoint and hence also vertex-disjoint, possibly apart from endpoint vertices, since $\bbG$ is of degree $3$. 
On the other hand, studying the two sub-curves crossing $R_{\mathsf{mid}}$ between $J_{j_0} \leftrightarrow K_{k_0}$ and $J_{k_0} \leftrightarrow K_{j_0}$ (and recalling that these are interpreted as horizontally ordered collections of primal-edges), we observe that $j_0 \neq k_0$ would topologically force the sub-curves to vertex-intersect inside $R_{\mathsf{mid}}$, a contradiction, so we must have $j_0 = k_0$. Similarly from the crossings of the entire $R$, i.e., $I_{i_0} \leftrightarrow L_{l_0}$ and $I_{l_0} \leftrightarrow L_{i_0}$, we observe that $i_0 = l_0$. This concludes the proof.
\end{proof}

Next, we will need even more geometric control of the crossings. Let $\calV^{i, j, k, l}_{\alpha, \beta, \gamma}(h \omega \geq 1)$ (resp. $\calV^{i, j, k, l}_{\alpha, \beta, \gamma}(h \omega \leq 0)$) be the event that there exists a vertical primal-crossing $\gamma$ inducing $\calV^{i, j, k, l}(h \omega \geq 1)$ (resp. $\calV^{i, j, k, l}(h \omega \leq 0)$) and furthermore
\begin{itemize}
\item the unique subpath of $\gamma$ crossing $R_{\mathsf{bot}}$ vertically (which lands on $I_i$ on the bottom) passes strictly farther right than the right extreme of $I_i$ if $\alpha = +1$ (resp. passes strictly farther left that the left extreme if $\alpha = -1$, resp. remains right above $I_i$ if $\alpha = 0$); and
\item
the unique subpath of $\gamma$ crossing $R_{\mathsf{top}}$ vertically (which lands on $L_l$ on the top) passes strictly farther right than the right extreme of $L_l$ if $\gamma = +1$ (resp. passes strictly farther left that the left extreme if $\gamma = -1$ , resp. remains right below $L_l$ if $\gamma = 0$ ); and
\item the related subcurve $\eta \subset \gamma$ (which lands on $J_j$ and $K_k$) passes strictly farther right than the right extreme of $J_j$ if $\beta = +1$ (resp. passes strictly farther left that the left extreme if $\beta = -1$, resp. remains right above $J_j$ if $\beta = 0$).
\end{itemize}
Another application of the square-root trick 
 guarantees that there exist $\alpha_0, \beta_0, \gamma_0$ such that
\begin{align}
\label{eq:one geom controlled crossing}
\bbP [\calV^{i_0, j_0, j_0, i_0}_{\alpha_0, \beta_0, \gamma_0}(h \omega \geq 1) ] \geq 1- p^{1/(27N)},
\end{align}
and arguing once again similarly to~\eqref{eq:sym and FKG} shows that
\begin{align*}
\bbP [\calV^{i_0, j_0, j_0, i_0}_{\gamma_0, \beta_0, \alpha_0}(h \omega \leq 0) ]
\geq
\bbP [\calV^{i_0, j_0, j_0, i_0}_{\alpha_0, \beta_0, \gamma_0}(h \omega \geq 1) ].
\end{align*}
The last lemma in this first part of the proof is to create many crossings of the above type, with ``alternating signs''.

\begin{lemma}
Suppose that $p$ is small enough, $p < c(\rho)$.
There exist horizontally ordered collections of primal-edges (``subintervals'') $I^1, \ldots, I^8 \subset I_{i_0}$, intersecting at their extremal edges, so that the following holds. Let $\calE_q^+$ (resp. $\calE_q^-$) denote the existence of a crossing inducing $\calV^{i_0, j_0, j_0, i_0}_{\alpha_0, \beta_0, \gamma_0} (h \omega \geq 1 )$ (resp. $ \calV^{i_0, j_0, j_0, i_0}_{\gamma_0, \beta_0, \alpha_0} (h \omega \leq 0 )$) and furthermore landing on $I^q $ on $I_{i_0}$ and its reflection $L^q := \tau(I^q)$ on $L_{i_0}$. We have
\begin{align*}
\bbP[\calE^\pm_q] \geq 1 - p^{1/(4^3 \cdot 27N)}, \qquad \text{for all } 1 \leq q \leq 8.
\end{align*}
\end{lemma}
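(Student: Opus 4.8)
The plan is to bootstrap the single overwhelmingly likely decorated crossing furnished by~\eqref{eq:one geom controlled crossing} into a whole family of eight such crossings, one landing in each of eight consecutive subintervals of $I_{i_0}$, and simultaneously to produce the matching opposite-sign crossings. The ingredients are exactly the three already used in this proof: subdivision together with the square-root trick, the reflection $\tau$ combined with the affine flip $h\mapsto 2-h$, and a planar non-crossing argument exploiting that $\bbG$ has maximum degree $3$.

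\emph{Refining the landing.} First subdivide $I_{i_0}$ into eight consecutive dual-segments overlapping only at their extremal edges; since $R$ is invariant under the reflection $\tau$ with respect to the horizontal axis, which sends $I_{i_0}$ to $L_{i_0}$ preserving the horizontal order, transport this subdivision to $L_{i_0}$ by $\tau$, writing $L^q:=\tau(I^q)$. The event $\calV^{i_0,j_0,j_0,i_0}_{\alpha_0,\beta_0,\gamma_0}(h\omega\geq1)$ is the finite union, over the subinterval on which the crossing lands at the bottom and the one on which it lands at the top, of sub-events, each still increasing in $(h,B)$; there are $8\times 8=4^3$ of them. By FKG and the square-root trick, one such sub-event -- a decorated $h\omega\geq 1$-crossing landing on a fixed $I^{a}$ at the bottom and on a fixed $L^{b}$ at the top -- has probability at least $1-p^{1/(4^3\cdot 27N)}$.

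\emph{The opposite sign.} Arguing exactly as in the proof of~\eqref{eq:sym and FKG}, apply $\tau$ and then the coupling $\tilde h=2-h$, $\tilde B$ with $B_e=1\Rightarrow\tilde B_e=0$ (this is where $\mathbf{c}_e\leq 2$ is used): under it a $\calV^{i_0,j_0,j_0,i_0}_{\alpha_0,\beta_0,\gamma_0}(h\omega\geq1)$-crossing landing on $I^{a}$ and $L^{b}$ becomes a $\calV^{i_0,j_0,j_0,i_0}_{\gamma_0,\beta_0,\alpha_0}(h\omega\leq0)$-crossing landing on $I^{b}$ and $L^{a}$ -- the decorations $\alpha_0,\gamma_0$ get swapped because $\tau$ exchanges the top and bottom thirds of $R$ -- so the latter event also has probability at least $1-p^{1/(4^3\cdot 27N)}$. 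For $p$ below a threshold depending only on $\rho$ the two events occur simultaneously with positive probability; the two witnessing crossings, being of opposite type, are edge-disjoint, hence (maximum degree $3$) vertex-disjoint inside the disc $R$, so by a topological obstruction a path joining $I^a$ to $L^b$ and a path joining $I^b$ to $L^a$ must intersect unless $a=b$; hence $a=b=:q_0$, which already gives $\calE^{\pm}_{q_0}$ with the stated bound. To reach all eight indices one re-runs the argument after revealing and conditioning away the located crossings and their bounding dual curves via the SMP for $(h,\omega,B)$ (Lemma~\ref{lem:SMP with percolation}), as in the proof of Corollary~\ref{cor:narrow quad triple cross}: the residual domain carries a boundary condition again suited to these estimates (heights pinned near $\pm1$ on the exposed tips, $\preceq\{1,3\}$ on the $\omega=0$ tips), and since the square-root-trick multiplicities and thresholds are uniform, each of the eight consecutive subintervals can be taken as the location of a likely decorated crossing, and likewise of its mirror.

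\emph{Main obstacle.} The essential difficulty is the bookkeeping rather than any single new idea: one must track the three geometric decorations $\alpha_0,\beta_0,\gamma_0$ faithfully through both the reflection and the refinement, interpret ``landing on $I^q$'' consistently (as disjoint, horizontally ordered collections of primal edges in $R$, as in the earlier footnote), and verify that all eight subintervals can be realized at once while keeping the loss in the square-root trick at a single factor $4^3$. The planar topology that forces the alignment $a=b$, and which later forbids the alternating-sign crossings over the $I^q$ from coexisting, is precisely where $\deg\bbG\leq 3$ is used, and it must be applied with care to the decorated subpaths crossing $R_{\mathsf{bot}}$, $R_{\mathsf{mid}}$, $R_{\mathsf{top}}$ separately.
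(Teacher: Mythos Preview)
Your approach has a genuine gap at the step where you pass from one good subinterval to all eight. Fixing a subdivision of $I_{i_0}$ into eight equal pieces and applying the square-root trick over the $8\times 8$ landing pairs locates \emph{one} pair $(I^a,L^b)$ (and then forces $a=b$) with the required probability --- but it says nothing about the other seven subintervals. Your proposed remedy, to ``re-run the argument after revealing and conditioning away the located crossings via the SMP'', does not recover them: once you condition on a crossing and its bounding dual curves, the reflection symmetry $\tau$ of the domain is destroyed, the boundary condition hypothesis $2-\xi\succeq\tau.\xi$ no longer holds, and there is no mechanism forcing the next good crossing to land in a \emph{different} subinterval from the first. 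So the eight subintervals cannot be produced this way, and the probability bound with a single loss of $4^3$ in the exponent is unsubstantiated.

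The paper's argument is essentially different: the subdivision is not fixed a priori but constructed adaptively. For a sliding edge $x$ inside $I_{i_0}$ one considers the four events $\calV^x_{a,b}(h\omega\geq 1)$ with $a,b\in\{\leq,\geq\}$ (landing weakly left/right of $x$ on bottom, and of $\tau(x)$ on top). Starting from the left-most edge, where $(\geq,\geq)$ is trivially best, one slides $x$ rightwards to the last position $x_0$ at which $\bbP[\calV^{x_0}_{\geq,\geq}]\geq 1-p^{1/(4\cdot 27N)}$; the square-root trick over the four $(a,b)$ then forces some $(a_0,b_0)$ with at least one ``$\leq$'' to also have this probability, and the same reflection/non-crossing argument you invoke rules out the mixed cases, yielding $a_0=b_0={\leq}$. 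Thus \emph{both} halves of $I_{i_0}$ determined by $x_0$ are good simultaneously. Iterating this cut three times produces $2^3=8$ subintervals, each good, and explains the factor $4^3$ (three applications of a four-way square-root trick). Your fixed-subdivision route cannot reproduce this ``both halves good'' conclusion, which is the crux of the lemma.
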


\begin{proof}
Let $x$ and its reflection $\tau(x)$ be primal-edges landing on the slits $I_{i_0}$ and $L_{i_0}$, respectively. Let $\calV_{a, b}^x (h \omega \geq 1)$ (resp. $\calV_{a, b}^x (h \omega \leq 0)$), where $a$ and $b$ are order relations ``$\leq$'', ``$<$'', ``$\geq$'', or ``$>$'', be the event that there is a vertical crossing with $h \omega \geq 1$ of $R$, inducing $\calV^{i_0, j_0, j_0, i_0}_{\alpha_0, \beta_0, \gamma_0} (h \omega \geq 1)$ (resp. $\calV^{i_0, j_0, j_0, i_0}_{\gamma_0, \beta_0, \alpha_0} (h \omega \leq 0)$), and furthermore so that it lands on $I_{i_0}$ through or left of (resp. strictly left of, resp. through or right of, resp. strictly right of) $ x$ if $a=$``$\leq$'' (resp. ``$<$'', ``$\geq$'', or ``$>$''), and $b$ determined similarly by comparing landing on $L_{i_0}$ to $\tau(x)$.
Clearly, fixing $x$ to be left-most edge in $I_{i_0}$, $(a, b) = (\geq, \geq)$ maximizes $\bbP [\calV_{a, b}^{x} (h \omega \geq 1)]$ among the order relations. On the other hand, by yet another square-root trick and~\eqref{eq:one geom controlled crossing}, for any $x$
\begin{align*}
\max_{a,b \in \{ \geq, < \}} \bbP [ \calV_{a, b}^x (h \omega \geq 1) ] \geq 1 - p^{1/( 4 \cdot 27 N)}.
\end{align*}
In particular, starting with $x$ being the left-most possible location on $I_{i_0}$ and sliding $x$ to the right one edge at a time, there will appear a right-most edge $x_0 \in I_{i_0}$, where still
\begin{align*}
\bbP [ \calV_{\geq, \geq}^{x_0} (h \omega \geq 1) ] \geq 1 - p^{1/( 4 \cdot 27 N)}.
\end{align*}
For this $x_0$, by definition, there exist $a_0, b_0 \in \{ \leq, \geq \}$ of which at least one is ``$\leq$'', so that
\begin{align*}
\bbP [ \calV_{a_0, b_0}^{x_0} (h \omega \geq 1) ] \geq 1 - p^{1/( 4 \cdot 27 N)}.
\end{align*}
We claim that it necessarily occurs that $a_0 = b_0 =$``$\leq$''. Indeed, repeating the proof of~\eqref{eq:sym and FKG},
\begin{align*}
\bbP [ \calV_{b_0, a_0}^{x_0} (h \omega \leq 0) ] \geq \bbP [ \calV_{a_0, b_0}^{x_0} (h \omega \geq 1) ],
\end{align*}
but if only one out of  $a_0, b_0 $ is $\leq$, then the crossings inducing $\calV_{a_0, b_0}^{x_0} (h \omega \geq 1)$ and $\calV_{b_0, a_0}^{x_0} (h \omega \leq 0) $ cannot coexist by the same argument as in the proof of Lemma~\ref{lem:geom controlled crossings}, a contradiction (for small enough $p < c(\rho)$). This provides a subdivision of $I_{i_0}$ and $L_{i_0}$ into two subintervals; the claim follows by repeating the subdivision procedure.
\end{proof}

\subsubsection{Contradiction via bridging}
 
By the previous lemma and the union bound,
\begin{align*}
\bbP [ \calE^+_1 \cap \calE^-_2 \cap \calE^+_3 \cap \calE^+_5 \cap \calE^-_6 \cap \calE^+_7] &\geq 1 - 6 p^{1/( 4^3 \cdot 27 N)} \\
\bbP [ \calE^+_4 ] &\geq 1 -  p^{1/( 4^3 \cdot 27 N)}.
\end{align*}
The rest of the proof will be to use the former probability to upper-bound the probability of $\calE^+_4 $ so that, if $p$ were small enough, the two formulas above would be contradictory.

Let now $\tilde{h} = 6-h$, and define a percolation $\nu$ on $E$ by letting
\begin{align}
\label{eq:def of the perco nu}
\nu_{\langle u, v \rangle} =
\begin{cases}
0, \qquad \{ |\tilde{h}(u)|, |\tilde{h}(v)| \} = \{ 5, 7 \} \\
B_{\langle u, v \rangle}, \qquad |\tilde{h}(u)|= |\tilde{h}(v)| \in \{ 5, 7 \} \\
1, \qquad \text{otherwise}.
\end{cases}
\end{align}
so $\omega_e =0 \Rightarrow \nu_e = 0$, but $\nu$ also has ``false zeroes'' appearing at heights $\tilde{h} \in \{ -5, -7 \}$; these false zeroes will later on play a crucial role in that they remove from $\nu$ any information about the sign of $\tilde{h}$ that $\omega$ would have contained.

Define the event $\calV^{i, j, k, l \; *}_{\alpha, \beta, \gamma} (\nu = 0)$ analogously to $\calV^{i, j, k, l }_{\alpha, \beta, \gamma} (h \omega \geq 1)$, except in terms of dual-paths with $\nu = 0$ on the crossed primal-edges.

\begin{lemma}
\label{lem:find a crossing}
Suppose that $ \calE^+_1 \cap \calE^-_2 \cap \calE^+_3$ occurs. Then, there exists a vertical dual-crossing of $R$ with $\nu = 0$, that lands on $ I^2 \cup I^3$ and $L^2 \cup L^3$ on the bottom and top, respectively. Furthermore the left-most such dual-crossing induces $\calV^{i_0, j_0, j_0, i_0 \; *}_{\alpha_0, \beta_0, \gamma_0} (\nu = 0)$. 
\end{lemma}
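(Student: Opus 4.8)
The plan is to realize the dual $\{\nu=0\}$-crossing topologically out of the three primal crossings supplied by $\calE^+_1,\calE^-_2,\calE^+_3$, converting ``a dual crossing exists'' into ``a complementary primal crossing cannot exist'' through the quad duality~\eqref{eq:quad cross duality}.

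On $\calE^+_1\cap\calE^-_2\cap\calE^+_3$ I would first fix realizing primal crossings of $R$: curves $\gamma^+_1,\gamma^+_3$ using only $\{h \omega\geq1\}$-edges, landing on $I^1,L^1$ resp.\ on $I^3,L^3$ and inducing $\calV^{i_0,j_0,j_0,i_0}_{\alpha_0,\beta_0,\gamma_0}(h \omega\geq1)$, and a curve $\gamma^-_2$ using only $\{h \omega\leq0\}$-edges, landing on $I^2,L^2$ and inducing $\calV^{i_0,j_0,j_0,i_0}_{\gamma_0,\beta_0,\alpha_0}(h \omega\leq0)$. Since $\{h \omega\geq1\}$ and $\{h \omega\leq0\}$ partition $E$ by definition, $\gamma^-_2$ is edge-disjoint from $\gamma^+_1$ and from $\gamma^+_3$; as $\bbG$ has maximum degree $3$ these pairs are then vertex-disjoint, and, landing on distinct slits, disjoint. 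Hence $\gamma^+_1$ lies entirely on the ($I^1$-)side of $\gamma^-_2$ and $\gamma^+_3$ entirely on the other side, so $\gamma^+_1$ and $\gamma^+_3$ are themselves disjoint, and in $R$ the three curves appear left-to-right as $\gamma^+_1,\gamma^-_2,\gamma^+_3$. Let $\Omega\subset R$ be the discrete quad with left wall $\gamma^+_1$, right wall $\gamma^+_3$ and top/bottom walls the intervening arcs of $V^*_{\mathsf{top}}(R),V^*_{\mathsf{bottom}}(R)$; then $\gamma^-_2$ is a vertical crossing of $\Omega$.

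By~\eqref{eq:quad cross duality} for $\Omega$, a vertical dual $\{\nu=0\}$-crossing of $\Omega$ exists unless there is a horizontal primal crossing $\delta$ of $\Omega$ (from $\gamma^+_1$ to $\gamma^+_3$) with $\nu=1$ on every edge, and I would exclude such a $\delta$. Its endpoints lie on $\gamma^+_1,\gamma^+_3$, where $h\geq1$. Every edge whose endpoint heights straddle the level $0$ is a $\{-1,+1\}$-edge and every edge straddling the level $12$ is an $\{11,13\}$-edge, and both kinds carry $\nu=0$; hence $\delta$ crosses neither level and, being connected, is contained in one of the bands $\{h\leq-1\}$, $\{1\leq h\leq11\}$, $\{h\geq13\}$, the first being excluded by the endpoint heights. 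In either remaining case $\delta$ is edge-disjoint from $\gamma^-_2$: a common edge would have both endpoints in $\{1\leq h\leq11\}\cap\{h\leq1\}=\{1\}$ or in $\{h\geq13\}\cap\{h\leq1\}=\emptyset$; in the first case it is a $\{1,1\}$-edge of $\gamma^-_2$, where $h \omega\leq0$ forces $B=0$, hence $\nu=0$, contradicting $\delta\subset\{\nu=1\}$. By maximum degree $3$, $\delta$ and $\gamma^-_2$ are then vertex-disjoint --- impossible, since a horizontal and a vertical crossing of the quad $\Omega$ must meet. So $\Omega$ carries a vertical dual $\{\nu=0\}$-crossing $\zeta$.

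It remains to place $\zeta$ so that it lands on $I^2\cup I^3$ and $L^2\cup L^3$ and to identify its geometric type, and this is where I expect the real work. The structural input is that a connected dual $\{\nu=0\}$-path has one of two types: all its crossed primal edges are $\{-1,+1\}$-edges, which are $\{\omega=0\}$- and hence $\{h \omega\leq0\}$-edges; or all are $\{11,13\}$-edges, which lie in $E_{\mathsf{fix}}$ and hence are $\{h \omega\geq1\}$-edges. Consequently a $\{-1,+1\}$-type path cannot transversally cross $\gamma^+_3$ (whose edges are $\{h \omega\geq1\}$) and an $\{11,13\}$-type path cannot transversally cross $\gamma^-_2$ (whose edges are $\{h \omega\leq0\}$); I would combine this with the position of $\gamma^-_2$'s landings within $I^2,L^2$ to force a dual $\{\nu=0\}$-crossing into the corridor bounded by $\gamma^-_2$ and $\gamma^+_3$, so that its landings fall in $I^2\cup I^3$ and $L^2\cup L^3$. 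Taking $\zeta_0$ to be the left-most dual $\{\nu=0\}$-crossing of $R$ with those landings, $\zeta_0$ is pinched in $R$ between $\gamma^-_2$ (which induces $\calV^{i_0,j_0,j_0,i_0}_{\gamma_0,\beta_0,\alpha_0}(h \omega\leq0)$) and $\gamma^+_3$ (which induces $\calV^{i_0,j_0,j_0,i_0}_{\alpha_0,\beta_0,\gamma_0}(h \omega\geq1)$), and reading off the passing behaviour of $\zeta_0$ relative to $I_{i_0},L_{i_0},J_{j_0}$ from that of these two curves gives that $\zeta_0$ induces $\calV^{i_0,j_0,j_0,i_0 \; *}_{\alpha_0,\beta_0,\gamma_0}(\nu=0)$. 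The main obstacle is precisely this last step: the fiddly but purely topological bookkeeping needed to pin down the landing slits and the three geometric controls $\alpha_0,\beta_0,\gamma_0$ from ``$\zeta$ threads between $\gamma^-_2$ and $\gamma^+_3$''; everything before it is soft.
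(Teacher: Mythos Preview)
Your existence argument in $\Omega$ via duality with $\gamma^-_2$ as a blocker is sound, but the second half has a real gap, not just bookkeeping. The sandwich you propose, between $\gamma^-_2$ and $\gamma^+_3$, is the wrong one for the geometric type: $\gamma^-_2$ induces $\calV^{i_0,j_0,j_0,i_0}_{\gamma_0,\beta_0,\alpha_0}$ with the first and third subscripts \emph{swapped}. When $\alpha_0\neq\gamma_0$, the bottom sub-crossings of $\gamma^-_2$ and $\gamma^+_3$ have different passing behaviours, and a dual curve threaded between them need not inherit bottom type $\alpha_0$. Nor does your type-dichotomy force any crossing into that corridor: a high-type dual path is only barred from crossing $\gamma^-_2$, so it could perfectly well sit between $\gamma^+_1$ and $\gamma^-_2$ and land on $I^1\cup I^2$; a low-type one may land anywhere in $I^1\cup I^2\cup I^3$. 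So neither the landing slits nor the type $(\alpha_0,\beta_0,\gamma_0)$ are delivered.

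The paper handles both issues at once by using a different pair for each task. For the landing slit it modifies $\gamma^+_3$ into a nearby $\nu=1$ path $\gamma_3'$ (winding around the $h\in\{11,13\}$ segments via the dual-degree bound), then takes the left boundary of the $\nu=1$ cluster of $\gamma_3'$; that cluster cannot contain an edge of $\gamma^-_2$ (the only candidates would be $\{1,1\}$-edges, where $h\omega\leq0$ forces $B=0$ hence $\nu=0$), and by maximum degree~$3$ it cannot jump across $\gamma^-_2$, so this boundary lies between $\gamma^-_2$ and $\gamma_3'$ and lands on $I^2\cup I^3$ and $L^2\cup L^3$. An analogous modification $\gamma^+_1\to\gamma_1'$ shows that no such dual crossing can lie left of $\gamma^+_1$. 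The geometric type is then read off from the sandwich between $\gamma^+_1$ and $\gamma^+_3$ --- the two curves that \emph{share} the type $(\alpha_0,\beta_0,\gamma_0)$ --- and that is the correct pair to use.
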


The obvious analogue of the above lemma, with ``left'' and ``right'' reversed, holds for $ \calE^+_5 \cap \calE^-_6 \cap \calE^+_7$. Let $\gamma^*_{\mathsf{left}}$ (resp. $\gamma^*_{\mathsf{right}}$) be the left-most (resp. right-most) vertical dual-crossing of $R$ with $\nu_e= 0$ that traverses $I_{i_0} \setminus I^1 \leftrightarrow L_{i_0} \setminus L^1$ (resp. $I_{i_0} \setminus (I^7 \cup I^8) \leftrightarrow L_{i_0} \setminus (L^7 \cup L^8)$), such exist on $\calE^+_1 \cap \calE^-_2 \cap \calE^+_3 \cap \calE^+_5 \cap \calE^-_6 \cap \calE^+_7$, and are dual-vertex-disjoint by the proof of the previous lemma (separation by the curve $\gamma_3$ in that proof). In that case, let $\calX $ be the quad bounded by $\gamma^*_{\mathsf{left}}$, $\gamma^*_{\mathsf{right}}$, and the top and bottom boundary segments of $R$ between them. Let $\Xi $ be the collection of all sub-quads of $R$, whose top and bottom are contained in the top and bottom of $R$, respectively, and the left (resp. right) side is of the type $\calV^{i_0, j_0, j_0, i_0 \; *}_{\gamma_0, \beta_0, \alpha_0}$ and furthermore lands on $ I^2 \cup I^3$ and $ L^2 \cup L^3$ (resp. on $I^5 \cup I^6 $ and $L^5 \cup L^6 $) on the bottom and top, respectively.  In this notation, the conclusion of the previous lemma reads
\begin{align*}
\{ \calE^+_1 \cap \calE^-_2 \cap \calE^+_3 \cap \calE^+_5 \cap \calE^-_6 \cap \calE^+_7 \}
\subset \{ \calX \in \Xi \} ,
\qquad \text{so} \qquad
\bbP [\calX \in \Xi] \geq 1 - 6 p^{1/( 4^3 \cdot 27 N)}
\end{align*}
and by the Union bound,
\begin{align}
\label{eq:contradiction1}
\bbP [\calE_4^+ \cap \{ \calX \in \Xi \}] \geq 1 - 7 p^{1/( 4^3 \cdot 27 N)}.
\end{align}

The proof of the entire theorem is now readily finished with the following ``bridging lemma''.
\begin{lemma}
\label{lem:bridging for the push lemma}
For any $X \in \Xi$, we have
\begin{align}
\label{eq:contradiction2}
\bbP [\calV_{h \omega \geq 1}(X) \; | \; \calX = X] \leq \tfrac{31}{32}.
\end{align}
\end{lemma}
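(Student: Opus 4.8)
By the duality~\eqref{eq:quad cross duality}, the event $\{V_{\mathsf{left}}^*(X) \stackrel{h\omega\leq 0}{\longleftrightarrow}_* V_{\mathsf{right}}^*(X)\}$ is exactly the complement of $\calV_{h\omega\geq 1}(X)$, so the plan is to show that, conditionally on $\{\calX = X\}$, this dual crossing has probability at least $\tfrac1{32}$. I will produce the bound as $\tfrac14\cdot\tfrac18$: a factor $\tfrac14$ from forcing both ``walls'' of $X$ to be low, and a factor $\tfrac18$ from Corollary~\ref{cor:narrow quad triple cross} applied inside $X$ once the walls are low.

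First I would analyse what $\{\calX = X\}$ reveals. The walls $\gamma^*_{\mathsf{left}}, \gamma^*_{\mathsf{right}}$ are the extremal dual-crossings of $R$ with $\nu = 0$ of the type prescribed in the definition of $\Xi$, and are located by an exploration of the percolation $\nu$ inward from the left and right sides of $R$. By~\eqref{eq:def of the perco nu}, $\nu$ is a deterministic function of $(|\tilde h|, B)$ alone (with $\tilde h = 6-h$); in particular this exploration is measurable with respect to $(|\tilde h|, B)$ and carries \emph{no} information about $\sign(\tilde h)$ — this is precisely the purpose of the ``false zeroes'' built into $\nu$. I would also record the elementary fact that along a connected $\nu = 0$ dual-path every crossed primal vertex has $|\tilde h|\in\{5,7\}$, and, since consecutive crossed primal edges share a primal vertex, $2$-Lipschitzness forces the sign of $\tilde h$ to be constant along the whole path; hence each wall is either a \emph{low wall} ($h\in\{-1,1\}$ all along it, so it is an impenetrable barrier for $\{h\omega\geq1\}$ and a legitimate endpoint for a $\{h\omega\leq0\}$ dual path) or a \emph{high wall} ($h\in\{11,13\}$ all along it), and $\{\calX = X\}$ does not determine which.

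Next, by the Spatial Markov property (Lemma~\ref{lem:SMP with percolation}) the conditional law of the triplet in the unexplored component, which is $X$, is $\bbP^{\xi'}_X$ for a boundary condition $\xi'$ that is $\preceq\{5,7\}$ on the top and bottom of $X$ (inherited from $\xi\preceq\{5,7\}$) and, on the walls, is $\preceq\{\pm1\}$ on low stretches and valued in $\{11,13\}$ on high stretches, the low/high pattern being a function of the still-unrevealed sign of $\tilde h$. Resampling this information through the unbiased coin-flip/cluster description underlying Corollary~\ref{cor:FK cond law given abs height} (the point being that $\nu$ contains no sign information, while the favourable sign of each wall cluster is either forced or an independent fair coin flip), I would conclude that the event $\calL$ that both walls are low satisfies $\bbP[\calL\mid\calX=X]\geq\tfrac14$. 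On $\calL$ the boundary condition $\xi'$ is $\preceq\{\pm1\}$ on $V_{\mathsf{left}}(X)\cup V_{\mathsf{right}}(X)$ and $\preceq\{5,7\}$ on the remaining sides, and, since $X$ is a narrow sub-quad of $R$ spanning it vertically, the lattice symmetries of Section~\ref{subsec:main thm setup} let one place three pairwise-disjoint cocentric symmetric quads $Q_1,Q_2,Q_3$ straddling $X$ with the inducing components $C_1$ on the bottom and $C_3$ on the top of $X$; Corollary~\ref{cor:narrow quad triple cross} then gives $\bbP^{\xi'}_X[V_{\mathsf{left}}^*(X)\stackrel{h\omega\leq0}{\longleftrightarrow}_* V_{\mathsf{right}}^*(X)]\geq\tfrac18$ (monotonicity in the boundary condition, i.e.\ CBC for $(|h|,\omega)$ applied to this decreasing event, lets us replace $\xi'$ by the largest boundary condition of the form covered by that corollary). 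Averaging over the revealed data yields $\bbP[V_{\mathsf{left}}^*(X)\stackrel{h\omega\leq0}{\longleftrightarrow}_* V_{\mathsf{right}}^*(X)\mid\calX=X]\geq\tfrac14\cdot\tfrac18=\tfrac1{32}$, which is the claim.

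The step I expect to be the main obstacle is the one giving the factor $\tfrac14$: one must describe the $\nu$-exploration precisely enough to see which edges and signs it inspects, verify that the clusters responsible for the signs of the two walls are left untouched, and then invoke the fair-coin-flip description of Corollary~\ref{cor:FK cond law given abs height} on those clusters — this is precisely where the $\sign(\tilde h)$-insensitivity of $\nu$ is used, and it is the reason the ``false zeroes'' were introduced in~\eqref{eq:def of the perco nu}. A secondary, purely bookkeeping obstacle is checking that every $X\in\Xi$ is narrower than a suitable triple of symmetric quads in the exact sense required by Corollary~\ref{cor:narrow quad triple cross}; this is routine given the lattice symmetries and the fact that the walls of $X$ land on the designated sub-slits $I^2\cup I^3$, $I^5\cup I^6$ and their reflections.
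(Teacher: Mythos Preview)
Your outline matches the paper's proof: the $\tfrac1{32}$ arises as $\tfrac14\cdot\tfrac18$, the first factor from both walls being low (via the sign-insensitivity of $\nu$) and the second from Corollary~\ref{cor:narrow quad triple cross}. A few of your justifications are imprecise but fixable. Your reason for the sign being constant along each wall (``consecutive crossed primal edges share a primal vertex'') fails on, e.g., the honeycomb lattice; the paper instead uses the dual-degree bound $\leq 11$, which forbids $\tilde h$ from traversing from $\geq 5$ to $\leq -5$ and back around a single face. For the $\tfrac14$, the paper conditions further on $|\tilde h|$ over all of $X^c\cup V_{\mathsf{top}}(X)\cup V_{\mathsf{bottom}}(X)$ and then uses the Ising representation of Lemma~\ref{cor:signs are Ising} with Ising FKG: the signs are correlated, not independent coin flips, but the bound $\geq\tfrac14$ holds because the boundary condition can only reveal $+$ signs. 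The subsequent CBC reduction is in $(|\tilde h|,-B)$ and $(|\tilde h|,\omega)$, not $(|h|,\omega)$: with the low-wall boundary, $\calH^*_{h\omega\leq0}(X)$ reads ``$|\tilde h|\geq5$ with $B=0$ between two absolute-heights $5$'', and the paper passes through the intermediate domains $\tilde D$ (edges crossing the walls removed) and $D\cap X$ before landing on $X$.

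The one place you genuinely underestimate is the construction of $Q_1,Q_2,Q_3$: it is not routine, and it is precisely the reason the indices $\alpha_0,\beta_0,\gamma_0$ were recorded. When $\alpha_0=0$ a fixed lozenge suffices for $Q_1$, but when $\alpha_0=\pm1$ both walls of $X$ wander past the extremes of $I_{i_0}$ inside $R_{\mathsf{bot}}$, and no fixed symmetric quad makes $X$ narrower than it. The paper instead truncates $\gamma^*_{\mathsf{left}}$ at its first hit of the relevant extremal vertical line and takes $Q_1$ to be the quad bounded by this truncation and its reflection; $Q_3$ is built analogously from $\gamma_0$, and $Q_2$ requires a transitivity-of-narrowness argument through an auxiliary quad between $\eta^*_{\mathsf{left}}$ and $\eta^*_{\mathsf{right}}$. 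So the ``bookkeeping'' here is the actual content of the construction.
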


\begin{proof}[Theorem~\ref{prop:pushing}]
Compute
\begin{align*}
\bbP [\calE_4^+ \cap \{ \calX \in \Xi \} ] 
&= \sum_{X \in \Xi} \bbP [ \calE_4^+ \; | \; \calX = X ] \bbP [ \calX = X ] \\
& \leq \sum_{X \in \Xi} \bbP [\calV_{h \omega \geq 1}(X) \; | \; \calX = X ] \bbP [ \calX = X ] \qquad \text{(by inclusion of events)} \\
& \leq \sum_{X \in \Xi} \tfrac{31}{32} \bbP [ \calX = X ]  \qquad \text{(by ~\eqref{eq:contradiction2})}\\
& = \tfrac{31}{32} \bbP [ \calX \in \Xi ]  \leq \tfrac{31}{32}.
\end{align*}
This upper bound contradicts~\eqref{eq:contradiction1} if $p<c(\rho)$ is small enough.
\end{proof}

\begin{proof}[Lemma~\ref{lem:bridging for the push lemma}]
Fix $X \in \Xi$ throughout the proof.
Note first that the event $\{ \calX = X \}$ can be determined based on $\nu_{|X^c}$ only.\footnote{We interpret here and below the subgraph $X\subset D$ as a vertex/edge set in the obvious manner.} Denote $(X, \chi, \beta) \in \Xi$ if $  \nu_{|X^c} = \beta$ generates $\calX = X$, and the configuration $ \chi$ for $ | \tilde{h} |$ on $X^c \cup V_{\mathsf{bottom}}(X) \cup V_{\mathsf{top}}(X) \setminus \partial D = Y$ can occur simultaneously with $  \nu_{|X} = \beta$; hence
\begin{align*}
 \sum_{(\chi, \beta): \; (X, \chi, \beta) \in \Xi} \bbP [ | \tilde{h}_{|Y } | = \chi, \nu_{| X^c } = \beta \; | \; \calX = X] = 1
\end{align*}
and
\begin{align*}
\bbP
& [  \calV_{h \omega \geq 1}(X) \; | \; \calX = X ] \\
& = \sum_{(\chi, \beta): \; (X, \chi, \beta) \in \Xi} \bbP [ \calV_{h \omega \geq 1}(X) \; | \; | \tilde{h}_{|Y } | = \chi, \nu_{| X^c } = \beta ] \bbP [ | \tilde{h}_{|X } | = \chi, \nu_{| X } = \beta \; | \; \calX = X].
\end{align*}
It thus suffices to show that
\begin{align*}
\bbP[ \calV_{h \omega \geq 1}(X) \; | \; | \tilde{h}_{|Y} | = \chi, \nu_{| X^c } = \beta] \leq \tfrac{31}{32} \qquad \text{for any } (X, \chi, \beta) \in \Xi.
\end{align*}

By the bound on dual degree, the sign of $\tilde{h}$ is fixed on $\gamma^*_{\mathsf{left}}$ and $\gamma^*_{\mathsf{right}}$. Let $s_{\mathsf{left}}, s_{\mathsf{right}} \in \{ \pm 1 \}$ denote these two random signs. Note that for any $(X, \chi, \beta) \in \Xi$,
\begin{align*}
\bbP[ s_{\mathsf{left}} = s_{\mathsf{right}} = +1 \; | \; | \tilde{h}_{|Y } | = \chi, \nu_{| X^c } = \beta] \geq 1/4.
\end{align*}
This stems from the fact that, given whatever extension of $|\tilde{h}|$ to the entire domain $D$, the conditional law of the signs of $\tilde{h}$ obey a certain ferromagnetic Ising model (Lemma~\ref{cor:signs are Ising}). (The boundary condition $\xi$ for $h$ reveals the sign of $\tilde{h}$ to be positive in some places, but this will only increase the probability of plus signs in the Ising model; recall also that given $|\tilde{h}|$, $\nu$ is conditionally independent of the sign of $\tilde{h}$, so the additional information $\nu_{| X^c } = \beta$ will not change the conditional law of the signs of $\tilde{h}$.) It thus suffices to show that
\begin{align*}
\bbP[ \calV_{h \omega \geq 1} (X) \; | \; | \tilde{h}_{|Y } | = \chi, \nu_{| X^c } = \beta, s_{\mathsf{left}} = s_{\mathsf{right}} = +1] \leq \tfrac{7}{8} \qquad \text{for any } (X, \chi, \beta) \in \Xi.
\end{align*}

Next, by (repeating the proof of) the SMP (Lemma~\ref{lem:SMP with percolation}), we note that
 \begin{align*}
 \bbP [\cdot \; | \; | \tilde{h}_{|Y } | = \xi, \nu_{| X^c} = \beta, s_{\mathsf{left}} = s_{\mathsf{right}} = +1 ]
 = \bbP_{\tilde{D}}^{\chi'} [\cdot ],
 \end{align*}
where $\tilde{D}$ is the graph $D$ with the primal-edges crossing $\gamma^*_{\mathsf{left}} \cup \gamma^*_{\mathsf{right}}$ removed and $\chi'$ is the following boundary condition given on $Y \cup V_{\mathsf{left}}(X) \cup  V_{\mathsf{right}}(X) \cup \partial D$: it is given by $|\tilde{h}| = \chi$ on $Y$,  $6-\tilde{h} = \xi$ on $\partial D$, and $\tilde{h} \in \{ 5, 7 \}$ on $V_{\mathsf{left}}(X) \cup  V_{\mathsf{right}}(X)$. This is an absolute value boundary condition for height function $\tilde{h}$.

Let us consistently interpret $B = 0$ and $h=1$ on $X \setminus D$ below; whence $\calV_{h \omega \geq 1} (X)^c = \calH_{h \omega \leq 0 }^* (X)$; in terms of $\tilde{h}$, this means there is a horizontal dual-crossing of $X$ such that $\tilde{h} \geq 5$ (with the boundary condition of $\bbP_D^{\chi'}$ equivalently, $|\tilde{h}| \geq 5$) with additionally $B_e = 0$ on edges $e$ between two heights $\tilde{h} = 5$ (equivalently, absolute-heights $|\tilde{h}| = 5$). As such, $\calH_{h \omega \leq 0}^*(X)$ is increasing in $(|\tilde{h}|, -B)$. Hence, by Proposition~\ref{prop:pos assoc of percolated height fcns}\footnote{Observe that the assumptions of this proposition hold for $\tilde{D}$ while they wouldn't for $D$.} and then~\eqref{eq:CBC-|h|} we have
\begin{align*}
\bbP_{\tilde{D}}^{\chi'} [\calH_{h \omega \leq 0}^*(X) ] \geq \bbP_{D \cap X}^{\chi'_{|D \cap X}} [\calH_{h \omega \leq 0}^*(X) ] \geq \bbP_{D \cap X}^{\psi} [\calH_{h \omega \leq 0}^*(X) ] ,
\end{align*} 
where $\psi$ is the boundary condition for $\heightfcns_{D \cap X}$ on $\partial (D \cap X)$ which is $\tilde{h} \in \{ 5, 7 \}$ (i.e., equal to $\chi'$) on $V_{\mathsf{left}}(X) \cup  V_{\mathsf{right}}(X)$ and $\partial D \cap X$, and its smallest extension with $\tilde{h} \succeq \{ \pm 1 \}$ on the rest of $\partial (D \cap X)$. With this boundary condition, $\calH_{h \omega \leq 0}^*(X) = \calH_{\omega = 0}^*(X)$, a decreasing event in $(|h|, \omega)$ and by the FKG and CBC for $(|h|, \omega)$,
\begin{align*}
\bbP_{D \cap X}^{\psi} [\calH_{h \omega \leq 0}^*(X) ] \geq \bbP_{ X}^{\psi'} [\calH_{h \omega \leq 0}^*(X) ],
\end{align*}
where $\psi'$ is  $\tilde{h} \in \{ 5, 7 \}$ on $V_{\mathsf{left}}(X) \cup  V_{\mathsf{right}}(X)$ and its smallest extension with $\tilde{h} \succeq \{ \pm 1 \}$ on the rest of $\partial X $. In terms of $h$, this boundary condition is exactly matched for the application of Corollary~\ref{cor:narrow quad triple cross}. The proof of Lemma~\ref{lem:bridging for the push lemma} is thus concluded by constructing below the symmetric quads $Q_1, Q_2, Q_3$ required for that corollary, which then yields
\begin{align*}
\bbP_X^{\psi} [\calH_{h \omega \leq 0}^*(X) ] \geq 1/8.
\end{align*}

\paragraph{Construction of $Q_1$ and $C_1$}
If $\alpha_0 = 0$, we let $Q_1$ be a suitable lozenge (or square, for some other lattices), containing $I_{i_0}$ in its bottom and $J_{i_0}$ in its top boundary (the dimensions have been matched so that such exists).

If $\alpha_0 = +1$, $\gamma^*_{\mathsf{left}}$ passes strictly right of the vertical line $S$ marking the right extremity of $I_{i_0}$.
Let $\gamma^*_{\mathsf{left, tr}}$ be the truncation of $\gamma^*_{\mathsf{left}}$ (started from the bottom) at the first hitting of $S$. Let $\tau'$ denote reflection around $S$, and let $Q_1$ be the symmetric quad constrained by $\gamma^*_{\mathsf{left, tr}}$, $\tau' (\gamma^*_{\mathsf{left, tr}})$, and a suitable bottom boundary segment of $R$, with marked faces at the end points of $\gamma^*_{\mathsf{left, tr}}$, $\tau' (\gamma^*_{\mathsf{left, tr}})$ and the intersection of $S$ and the bottom of $R$. The case $\alpha_0 = -1$ is similar, with left and right reversed.

In all three cases, we then let $C_1$ be the component of $Q_1$ that contains the bottom boundary segment of $R$ between $\gamma^*_{\mathsf{left}}$ to $\gamma^*_{\mathsf{right}}$. Clearly, $C_1$ is on the bottom of $X$ and induces $X$ being narrower than $Q_1$. The construction of $Q_3$ and $C_3$ is analogous.

\paragraph{Construction of $Q_2$ and $C_2$}
Analogously to earlier notations, let $\eta^*_{\mathsf{left}}$ and $\eta^*_{\mathsf{right}}$ be the first crossings of $R_{\mathsf{mid}}$ of $\gamma^*_{\mathsf{left}}$ and $\gamma^*_{\mathsf{right}}$, respectively. In the three cases $\beta_0 \in \{ 0, \pm 1\}$, construct the symmetric quad $Q_2$ similarly to $Q_1$ above, but replacing $\gamma^*_{\mathsf{left, tr}}$ and $\gamma^*_{\mathsf{right, tr}}$ by $\eta^*_{\mathsf{left}}$ and $\eta^*_{\mathsf{right}}$, respectively. Clearly, the discrete quad $Q_2'$ between $\gamma^*_{\mathsf{left}}$ and $\gamma^*_{\mathsf{right}}$ in $R_{\mathsf{mid}}$ is then narrower than $Q_2$. To show that $X$ is narrower than $Q_2$, it hence suffices to show that it is narrower than $Q_2'$ (the reader may verify that being narrower indeed is a transitive relation).

For this purpose, recall that each connected component of $X \cap Q_2'$ is a discrete simply-connected domain with exactly two segments of $\partial X$ on its boundary that cross $Q_2'$ vertically: the left and right \textit{walls} of that component. On the other hand, from the first crossing $\eta^*_{\mathsf{left}}$ onwards, $\gamma^*_{\mathsf{left}}$ crosses $Q_2'$ vertically an even number of times: equally many upward (i.e., left walls of some component) and downward (i.e., right walls). In particular, counting in $\eta^*_{\mathsf{left}}$, $\gamma^*_{\mathsf{left}}$ makes one more upward than downward crossing of $Q_2'$, and hence, out of the components of $X \cap Q_2'$, at least one must have an upward crossing of $\gamma^*_{\mathsf{left}}$ as its left wall and an upward crossing of $\gamma^*_{\mathsf{right}}$ as its right wall. Any such component induces $X$ being narrower than $Q_2'$

This verifies the assumptions of Corollary~\ref{cor:narrow quad triple cross}, concluding the proof of Lemma~\ref{lem:bridging for the push lemma}.
\end{proof}

\section{Russo--Seymour--Welsh type results}

Russo--Seymour--Welsh (RSW) type results mean, very loosely speaking, results for percolation models or level sets of random fields, that allow one to ``create long crossings from short ones'' in a scale-invariant manner. The purpose of the present subsection is to derive such results for the random Lipschitz model (with percolation); see Figure~\ref{fig:RSW}.

\begin{figure}
\begin{center}
\includegraphics[width=0.65\textwidth]{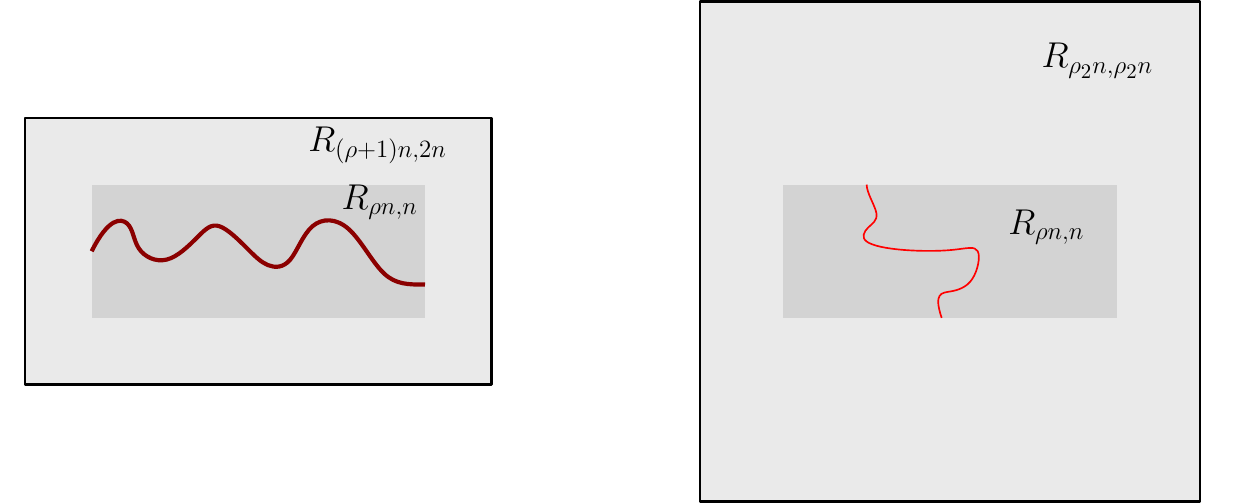}
\end{center}
\caption{
\label{fig:RSW}
Theorem~\ref{thm:RSW} (illustrated here in the case $\rho'=\rho$, $\epsilon=1$) lower-bounds the probability of a high crossing in the difficult direction of a rectangle in a difficult domain, in terms of a polynomial of the probability for a lower crossing in the easy direction in an easy domain.
}
\end{figure}

\begin{theorem}
\label{thm:RSW}
For every $\rho, \epsilon > 0$ and $\rho_2 \geq \rho$, there exist $C= C(\rho, \rho_2, \epsilon ) , c= c(\rho, \rho_2, \epsilon ) > 0$ such that for all $n \geq  4/\epsilon$ and $\rho' \leq \rho$, we have
\begin{align*}
c \bbP^{ \pm 1 }_{R_{\rho_2 n, \rho_2 n}} [\calV_{\omega = 1}(R_{\rho n, n})]^{C} 
\leq
\bbP^{ \pm 1 }_{ R_{(\rho'+ \epsilon )n, 2\epsilon  n}} [\calH_{\calE}^* ( R_{\rho' n, \epsilon n})],
\end{align*}
where $\calE$ denotes the set of edges with heights $h \geq 5$ at the endpoints, with additionally $B_e = 0$ on edges $e$ between two heights $h=5$.
\end{theorem}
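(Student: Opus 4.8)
The plan is to prove Theorem~\ref{thm:RSW} by the renormalization-style RSW strategy of~\cite{DST} and~\cite{Hom}, adapted to the weak absolute-value FKG of the present model. Because \eqref{eq:FKG-|h|} is too weak to run the usual RSW directly in a finite domain with $\{\pm 1\}$ boundary, the argument is first carried out in a translation-invariant horizontal strip and only at the end transferred to the finite domain $R_{(\rho'+\epsilon)n,2\epsilon n}$ by means of the Pushing Theorem. Throughout, the reflections $h\mapsto 2-h$ and $h\mapsto 6-h$, together with the coupled Bernoulli percolations $B\mapsto\tilde B$ with $B_e=1\Rightarrow\tilde B_e=0$ (legitimate since $\mathbf c_e\le 2$), are used to pass between ``$h$ low'' and ``$h$ high'' crossings and to reconcile boundary conditions, exactly as in the proofs of Lemma~\ref{lem:sym quad cross} and Theorem~\ref{prop:pushing}.

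First I would convert the hypothesis into a crossing estimate in a long horizontal strip $\Strip$ of height $\Theta(n)$. Using the domain-and-boundary monotonicity of Corollary~\ref{cor:+-1 SMP ineq for abs val} together with the Spatial Markov property (Lemma~\ref{lem:SMP with percolation}) and a standard square-root-trick localization, one gets $\bbP^{\pm1}_{\Strip}[\calV_{\omega=1}(B)]\ge c_0\,\bbP^{\pm1}_{R_{\rho_2 n,\rho_2 n}}[\calV_{\omega=1}(R_{\rho n,n})]^{C_0}$ for a box $B$ of width $\Theta(n)$ spanning $\Strip$; the gain in passing to $\Strip$ is that $B$ may now be translated horizontally at will. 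Next, in $\Strip$ I would combine, by the FKG for $(|h|,\omega)$ and for $(h,B)$, these vertical $\omega=1$ crossings with the uniformly positive \emph{hard-direction} dual crossings of symmetric lozenges supplied by Lemma~\ref{lem:sym quad cross} and Corollary~\ref{cor:narrow quad triple cross} (the latter producing, under the required $\{\pm1\}$/$\{5,7\}$ boundary data, crossings living \emph{inside} a prescribed sub-domain at level $\{h\ge 5\}$, i.e.\ of $\calE$-type). Using the square-root trick to force crossings to land in prescribed halves and gluing along overlapping symmetric lozenges in the usual ``cross-and-bridge'' manner, after $O(\rho/\epsilon)$ such combinations one obtains a horizontal $\calE$-dual crossing of a rectangle of the desired aspect ratio inside $\Strip$ with probability at least $c_1\,\bbP^{\pm1}_{R_{\rho_2 n,\rho_2 n}}[\calV_{\omega=1}(R_{\rho n,n})]^{C_1}$; monotonicity in boundary conditions \eqref{eq:CBC-h}, \eqref{eq:CBC-|h|} is invoked to absorb the boundary data produced at each step.

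Finally, to land the crossing in the finite domain $R_{(\rho'+\epsilon)n,2\epsilon n}$ with $\pm1$ boundary conditions, I would invoke the Pushing Theorem in the form of Proposition~\ref{prop:push}: reveal the height function above and below the target rectangle $R_{\rho'n,\epsilon n}$ and explore the clusters of $\{|h|\ge 5\}$ adjacent to the two short sides; Proposition~\ref{prop:push} ensures that with probability bounded below these clusters do not reach across, so the exploration ``pushes the $\pm1$ boundary condition to within $\epsilon n$ of the rectangle'', and the SMP (Lemma~\ref{lem:SMP with percolation}) then identifies the conditional law in the remaining region with the strip law to which the estimate of the previous paragraph applies (after a harmless shift of origin and bounded rescaling of proportions). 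Composing the three bounds yields the claimed inequality with $C,c$ depending on $\rho,\rho_2,\epsilon$ only through the number $O(\rho/\epsilon)$ of gluing steps.

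The main obstacle is not any single deep estimate but the bookkeeping: organizing the combining steps so that at each stage the propagated event is monotone in one of the admissible orders ($(h,B)$, $(h,-B)$, $(|h|,\omega)$, $(|h|,-B)$), and so that the boundary condition revealed by each exploration lies in the narrow window ($\preceq\{5,7\}$, with $\{\pm1\}$ on the relevant sides, and $2-\xi\succeq\tau.\xi$ where symmetry is used) to which the symmetric-quad lemmas and Proposition~\ref{prop:push} apply — in particular keeping the level shifts $\omega=1\rightsquigarrow|h|\ge 3\rightsquigarrow|h|\ge 5$ consistent under the reflections and the coupled Bernoulli percolations. Once this is set up, every individual step reduces to an instance of FKG/CBC/SMP, the square-root trick, or one of the already proven quad and pushing lemmas.
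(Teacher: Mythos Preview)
Your high-level architecture --- pass to a horizontal strip, run an RSW there, then use the Pushing Theorem to return to the small finite domain --- matches the paper's. But two substantive mechanisms are missing from your middle step, and they are not ``bookkeeping''.

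First, the \emph{height increase} from $\omega=1$ to $\calE=\{h\geq 5,\ B=0\text{ between two }5\text{'s}\}$ cannot be achieved by directly gluing vertical $\omega=1$ crossings with the crossings of Corollary~\ref{cor:narrow quad triple cross} via FKG: the former are increasing in $(|h|,\omega)$, the latter are $\{h\omega\leq 0\}$ crossings, decreasing in $(h,B)$ --- incompatible orders. Nor does Corollary~\ref{cor:narrow quad triple cross} produce $\calE$-type crossings under $\{\pm1\}/\{5,7\}$ data; it produces $h\omega\leq 0$ crossings, and only becomes $\calE$-type after a height shift $\tilde h=6-h$ and the right boundary setup. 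The paper's mechanism (Proposition~\ref{prop:RSW height increase}) is instead inductive: from $\calV_{h\geq_B s}$ crossings in a strip, first build a dual \emph{loop} of $\calE_s$ (Lemma~\ref{lem:from vert crossing to loop}), then explore to that loop, obtain by SMP a $\{s,s+2\}$ boundary, shift heights down by $s+1$, and re-apply the \emph{original} $\calV_{\omega=1}$ hypothesis inside to get $\calV_{h\geq_B s+2}$. Each such step costs a power of the input probability and enlarges the strip height; three steps take you from $s=1$ to $s=5$.

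Second, after the height increase you are in a strip of height $\rho_3 n$ with $\rho_3$ depending on $\rho,\rho_2$, not $\epsilon$. Getting the domain height down to $2\epsilon n$ is an \emph{iterated} procedure (Corollary~\ref{cor:RSW strip to strip}): alternately push the horizontal $\calE$-crossing down by a fixed fraction (Lemma~\ref{lem:RSW push crossings}, a simple trichotomy argument plus Lemma~\ref{lem:from vert to hor crossing}) and then push the strip boundary down (Proposition~\ref{lem:RSW push boundaries}, which is where Proposition~\ref{prop:push} actually enters). A single application of Proposition~\ref{prop:push} at the end does not suffice; the number of iterations depends on $\rho,\rho_2,\epsilon$ and this is what produces the constants $c,C$.

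Your final step --- transferring from strip to $R_{(\rho'+\epsilon)n,2\epsilon n}$ --- is essentially Proposition~\ref{lem:RSW push boundaries}, and your description of it is close, though the paper does it by exploring the $\omega=1$ cluster of the far boundary (event $\calU$) rather than the $|h|\geq 5$ cluster.
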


\begin{remark}
Note that, due to the bound on dual degree, $\calH_{\calE}^* ( R_{\rho' n, \epsilon n}) \subset \calH_{\omega = 1} ( R_{\rho' n, \epsilon n})$.
\end{remark}

We immediately record the following simple but important corollary.

\begin{corollary}
\label{lem:crack alternative}
For every $\rho, \epsilon > 0$ and $\rho_2 \geq \rho$, there exists $c= c(\rho, \rho_2, \epsilon )$ such that for all $n$, at least one of the following two holds: (i)
\begin{align*}
\bbP^{ \pm 1 }_{R_{\rho_2 n, \rho_2 n}} [\calH^*_{\omega = 0}(R_{\rho n, n})] > c;
\end{align*}
or (ii) for all $\rho' \leq \rho$, we have
\begin{align*}
\bbP^{ \pm 1 }_{ R_{(\rho'+ \epsilon )n, 2\epsilon  n}} [\calH_{\calE}^* ( R_{\rho' n, \epsilon n})] > c.
\end{align*}
\end{corollary}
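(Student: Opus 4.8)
The plan is to read the statement off from Theorem~\ref{thm:RSW} by combining it with planar duality and the trivial dichotomy ``a number in $[0,1]$ is either bounded away from $1$ or bounded away from $0$''; all the substance is already contained in Theorem~\ref{thm:RSW}, so what remains is only bookkeeping.

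First I would apply the duality relation~\eqref{eq:quad cross duality} to the rectangle $R_{\rho n, n}$ with $\calE = \{ \omega = 1 \}$, so that $\calE^c = \{ \omega = 0 \}$: this identifies the vertical primal crossing event $\calV_{\omega = 1}(R_{\rho n, n})$ with the complement of the horizontal dual crossing event $\calH^*_{\omega = 0}(R_{\rho n, n})$, both evaluated under $\bbP^{\pm 1}_{R_{\rho_2 n, \rho_2 n}}$. Writing $q_n := \bbP^{\pm 1}_{R_{\rho_2 n, \rho_2 n}}[\calV_{\omega = 1}(R_{\rho n, n})]$, this gives $\bbP^{\pm 1}_{R_{\rho_2 n, \rho_2 n}}[\calH^*_{\omega = 0}(R_{\rho n, n})] = 1 - q_n$.

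Next, for $n \ge 4/\epsilon$ --- the regime in which Theorem~\ref{thm:RSW} is available --- I would distinguish two cases with threshold $\tfrac12$. If $q_n \le \tfrac12$, then $1 - q_n \ge \tfrac12$ and alternative~(i) holds as soon as $c < \tfrac12$. If $q_n > \tfrac12$, then $q_n^C > 2^{-C}$ with $C = C(\rho,\rho_2,\epsilon)$ as in Theorem~\ref{thm:RSW}, and that theorem yields, simultaneously for every $\rho' \le \rho$ (its constants being uniform in $\rho'$), $\bbP^{\pm 1}_{R_{(\rho' + \epsilon)n, 2\epsilon n}}[\calH^*_{\calE}(R_{\rho' n, \epsilon n})] \ge c(\rho,\rho_2,\epsilon)\, q_n^C > c(\rho,\rho_2,\epsilon)\, 2^{-C}$, i.e.\ alternative~(ii). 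Thus for $n \ge 4/\epsilon$ any $c \le \tfrac12 \min\{ \tfrac12,\, c(\rho,\rho_2,\epsilon) 2^{-C} \}$ works.

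For the finitely many remaining $n < 4/\epsilon$, I would simply note that $\bbP^{\pm 1}_{R_{\rho_2 n, \rho_2 n}}[\calH^*_{\omega = 0}(R_{\rho n, n})] > 0$: indeed $q_n < 1$ since the positive-probability configuration $h \equiv 1$, $B \equiv 0$ has $\omega \equiv 0$ and hence admits no vertical $\omega = 1$ crossing. Shrinking $c$ below the minimum of these finitely many positive probabilities then makes~(i) hold trivially for small $n$. The only mild subtlety in the whole argument is this bookkeeping of constants together with the small-$n$ case; no genuine obstacle arises beyond Theorem~\ref{thm:RSW} itself.
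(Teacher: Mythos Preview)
Your proof is correct and follows essentially the same approach as the paper: both use duality~\eqref{eq:quad cross duality} to rewrite $\bbP^{\pm 1}_{R_{\rho_2 n, \rho_2 n}}[\calH^*_{\omega = 0}(R_{\rho n, n})] = 1 - q_n$ and then apply Theorem~\ref{thm:RSW}. The paper packages the dichotomy as $c = \min_{x \in [0,1]}\{x, c'(1-x)^C\}$ rather than a case split at $\tfrac12$, but this is cosmetic; your explicit treatment of the finitely many $n < 4/\epsilon$ is in fact slightly more careful than the paper's own proof, which silently ignores the range restriction in Theorem~\ref{thm:RSW}.
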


\begin{proof}
Denote $\bbP^{ \pm 1 }_{R_{\rho_2 n, \rho_2 n}} [\calH^*_{\omega = 0}(R_{\rho n, n})] := p_n$. By~\eqref{eq:quad cross duality} and the above theorem,
\begin{align*}
\bbP^{ \pm 1 }_{ R_{(\rho'+ \epsilon )n, 2\epsilon  n}} [\calH_{\calE}^* ( R_{\rho' n, \epsilon n})] \geq c'(1-p_n)^{C}.
\end{align*}
The claim thus holds with $c=\min_{x \in [0, 1]}\{ x, c'(1-x)^{C} \}$.
\end{proof}

The rest of the present section is dedicated to the proof of Theorem~\ref{thm:RSW}.

\subsection{From vertical to horizontal crossings}

We start with the key step, which creates a horizontal crossing from a vertical crossing. This step will crucially rely on treating the model on the infinite strip $\Strip(m) = \Lambda_{\infty, m}$; the horizontal shift invariance is instrumental, while the Pushing Theorem and FKG will in later subsections allow us go back to finite domains.

A boundary condition $\xi$ on a subset of $\Strip(m)$ is admissible if $\xi_{|R_{n, m}}$ is admissible on $\heightfcns_{R_{n, m}}$ for all $n$ (large enough). It is readily shown that there then exists a unique weak limit of $\bbP^{\xi_{|D}}_{D}$ when $D \uparrow \Strip(m)$, by the finite energy of the model, and this limit is taken as the definition of the strip measure $\bbP^{\xi}_{\Strip(m)}$. Note that the FKG and SMP properties for the strip measure (including the independent Bernoulli percolation $B$ and $\omega$ defined via $(h,B)$ as usual) directly follow from those of the finite-domain measures.

\begin{lemma}
\label{lem:from vert to hor crossing}
For every $\rho > 0$, there exist $C= C(\rho), c= c(\rho) > 0$ such that the following holds. Let $m>n$ be integers, and let $R$ be any shifted version of the rectangle $R_{ \rho n, n}$ contained in $\Strip(m)$. Then, for any $s \in \{ 1,3, 5\}$ we have 
\begin{align*}
c \bbP^{\{ s, s+2 \}}_{\Strip(m)} [\calV^{*}_{h \omega \leq 0}(R)]^{C} 
\leq
\bbP^{\{ s, s+2 \}}_{\Strip(m)} [\calH^*_{h \omega \leq 0} (R)]  
.
\end{align*}
\end{lemma}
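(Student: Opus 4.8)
The plan is to run a Russo--Seymour--Welsh gluing argument for the low level set $\{h\omega\le 0\}$, in which the full lattice symmetries or exact solvability exploited in other models are replaced by the horizontal translation and reflection invariance of the strip measure $\bbP:=\bbP^{\{s,s+2\}}_{\Strip(m)}$. Two tools are used throughout. First, FKG for $(h,B)$: the events $\calV^*_{h\omega\le 0}(R)$, $\calH^*_{h\omega\le 0}(R)$, and more generally any event ``there is a low dual path between two prescribed boundary arcs'', are \emph{decreasing} in $(h,B)$, so Proposition~\ref{prop:pos assoc of percolated height fcns}(1) applies to them and to their complements, and in particular the square-root trick is available. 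Second, the spatial Markov property of Lemma~\ref{lem:SMP with percolation}, used to control conditional laws after revealing extremal crossings. By horizontal translation invariance I may assume $R=R_{\rho n,n}$ is centred on the axis of $\Strip(m)$; writing $p=\bbP[\calV^*_{h\omega\le 0}(R)]$ and $q=\bbP[\calH^*_{h\omega\le 0}(R)]$, the goal is to produce $c,C>0$ with $q\ge c\,p^{C}$.

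The first step is a \emph{corner estimate}. Since $R$ is centred, the pair $(\Strip(m),\bbP)$ together with $R$ is invariant under the reflection in the vertical line through the centre of $R$ and under the reflection $\tau$ in the axis of the strip. A vertical low dual crossing of $R$ lands on the left or on the right half of the top side of $R$; splitting $\calV^*_{h\omega\le 0}(R)$ into the two resulting events, which have equal probability by the vertical reflection, and applying the square-root trick, one gets a low dual path from the bottom side of $R$ to the left half of its top side with probability at least $1-\sqrt{1-p}\ge p/2$. Combining this with its reflected images (and using also the exploration technique of the gluing step below to expose fresh randomness beyond an extremal low path), one obtains by the usual RSW combinatorics a constant $k_1$ such that, for any rectangle of aspect ratio comparable to that of $R$ contained in $\Strip(m)$, there is a low dual path joining one of its sides to an adjacent side with probability at least a constant times $p^{k_1}$.

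The heart of the argument is the gluing step: combine a vertical low crossing of $R$ (probability $p$, the hypothesis) with corner crossings to produce a horizontal low dual crossing of a rectangle of comparable proportions, and then reach rectangles of the exact aspect ratio of $R$ by the standard chaining argument, linking boundedly many such crossings (the number depending on $\rho$) by short vertical low crossings of the overlaps and combining everything by FKG for decreasing events --- this is what incurs the polynomial loss, and is self-contained once one has crossing estimates for aspect ratios in a bounded window, obtained simultaneously from the previous step. Each gluing reveals an extremal low dual crossing $\gamma$ by exploring from one side without crossing a low dual path; the key point is that, conditionally on the value of $\gamma$, Lemma~\ref{lem:SMP with percolation} identifies the conditional law of $(h,B)$ on the unexplored subgraph as a random Lipschitz model whose boundary condition has heights $\le 1$ along $\gamma$, hence is $\preceq\{s,s+2\}$ there, and coincides with $\{s,s+2\}$ on $\partial\Strip(m)$; by CBC this conditional measure is then stochastically dominated by one to which the corner estimates still apply, so the (decreasing) events completing the gluing keep a comparable probability. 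That the crossings so produced are forced to meet where required is a topological obstruction relying crucially on $\bbG$ having maximum degree $3$, exactly as in~\eqref{eq:quad cross dual vs primal}.

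I expect this gluing step to be the main obstacle. On the one hand, the exploration must be set up so that the revealed information is precisely ``$\gamma$ is the relevant extremal low crossing'', so that the unexplored region is a genuine subgraph to which Lemma~\ref{lem:SMP with percolation} applies, and so that the hard-direction crossings are genuinely created --- here the translation invariance of the strip has to compensate for the lack of a $90^\circ$ lattice symmetry. On the other hand, every monotone comparison must point in the direction needed, and this is exactly what forces the two-valued boundary condition $\{s,s+2\}$: the heights imposed along an explored low path are $\le 1$, hence $\preceq\{s,s+2\}$, so CBC applies, whereas a single-valued condition would be too rigid. The non-standard features of the Lipschitz model (only $2f+1$ has FKG, etc.) enter solely through the already-established Proposition~\ref{prop:pos assoc of percolated height fcns}.
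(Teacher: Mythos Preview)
Your outline follows a classical RSW template, but the gluing step does not go through as written. After revealing an extremal low dual crossing $\gamma$ and applying Lemma~\ref{lem:SMP with percolation}, the conditional law in the unexplored region $D'$ is indeed a random Lipschitz model with $\{\pm 1\}$ on $\gamma$, and CBC lets you raise this to $\{s,s+2\}$ on the \emph{same} domain $D'$. But the crossing and corner estimates you want to reuse were derived on the full strip, using its horizontal translation and reflection symmetries; they say nothing about $\bbP^{\{s,s+2\}}_{D'}$ for the random, non-symmetric subdomain $D'$. There is no domain monotonicity here that lets you pass from $D'$ back to $\Strip(m)$: the FK analogue (``wired on the revealed cluster dominates the unconditional measure'') would require $\{\pm 1\}\preceq\chi$ for the actual strip values $\chi$ on $\gamma$, i.e.\ $\chi\geq 1$ pointwise, which is false in general. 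The same circularity already infects your corner estimate, which itself invokes ``the exploration technique of the gluing step below''.

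The paper's proof is structurally different and does not attempt a classical RSW gluing. A union bound (not the square-root trick) selects indices $(i_0,j_0,k_0,l_0,\alpha_0,\beta_0,\gamma_0)$ pinning down where a low vertical dual crossing lands and how it wiggles; horizontal translation and FKG then furnish four such crossings at consecutive shifts. One then asks whether the ``bridge'' events $\calB^*_{h\omega\le 0}(\ell)$ are conditionally likely: if so, chaining bridges by FKG and translation gives the horizontal crossing directly. Otherwise, the failure of two bridges forces a primal $h\omega\ge 1$ path between two of the low crossings, and the argument switches to the auxiliary percolation $\nu$ of~\eqref{eq:def of the perco nu}, whose ``false zeroes'' at heights $|\tilde h|\in\{5,7\}$ erase the sign of $\tilde h$. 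This isolates a random quad $\calX$ bounded by $\nu=0$ dual curves; one re-samples the Ising signs of $\tilde h$ (Lemma~\ref{cor:signs are Ising}) with probability at least $1/4$ to make both walls positive, and then invokes Corollary~\ref{cor:narrow quad triple cross} to force a low horizontal crossing of $\calX$ with the absolute bound $1/8$, exactly as in Lemma~\ref{lem:bridging for the push lemma}. It is precisely this narrow-quad estimate---valid uniformly over all quads narrower than a symmetric one, with no dependence on the ambient domain---that substitutes for the missing domain monotonicity and makes the bridging work.
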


\begin{proof}
Denote $\bbP^{\{ s, s+2 \}}_{\Strip(m)} =: \bbP$ throughout the proof and,
without loss of generality, assume that $\rho$ is an even integer and $n \geq 24$ and divisible by $3$. Subdivide $R$ into three translates of $R_{\rho n, n/3}$,
denoted $R_{\mathsf{top}}$,  $R_{\mathsf{mid}}$, and  $R_{\mathsf{bot}}$, and overlapping in their lowest/highest row of faces, and then subdivide these top/bottom boundaries (dual-paths) into $M$ dual-vertex disjoint subsegments of width $\lfloor 2n/45 \rfloor$ horizontal translational periods between endpoint faces, and overlapping at their extremal faces.\footnote{
It will later be crucial that these subsegments are $15$ times shorter than in the proof of the Pushing theorem.
} Denote these subsegments by $I_1, \ldots, I_{M}$ on the bottom-most row, then $J_1, \ldots, J_M$, then $K_1, \ldots, K_M$, and finally $L_1, \ldots, L_M$ on the top-most row, where $M \sim 45 \rho$. Define the subevents $\calV^{i,j,k,l}(h \omega \leq 0)$, $1 \leq i, j, k, l \leq M$, of $\calV^{*}_{h \omega \leq 0}(R)$ analogously as in the Pushing theorem. Then, we define $\calV^{i,j,k,l}_{\alpha, \beta, \gamma}(h \omega \leq 0)$ to be the event that there exists a vertical dual-crossing $\gamma^*$ of $R$, inducing $\calV^{i, j, k, l}(h \omega \leq 0)$ and furthermore such that
\begin{itemize}
\item the unique subpath of $\gamma^*$ crossing $R_{\mathsf{bot}}$ passes strictly farther right than the right extreme  of $I_{i-6} \cup \ldots \cup I_{i+6}$ if $\alpha = +1$ (resp. passes strictly farther left than the left extreme if $\alpha = -1$, resp. remains right above them if $\alpha = 0$); and
\item
the unique subpath of $\gamma^*$ crossing $R_{\mathsf{top}}$ passes strictly farther right than the right extreme  of $L_{l-6} \cup \ldots \cup L_{l+6}$ if $\gamma = +1$ (resp. passes strictly farther left that the left extreme if $\gamma = -1$, resp. remains right above them if $\gamma = 0$); and
\item the related subcurve $\eta^*$ of $\gamma^*$ passes strictly farther right than the right extreme of $J_{i-6} \cup \ldots \cup J_{i+6}$ if $\beta = +1$ (resp. passes strictly  farther left that the left extreme if $\beta= -1$, resp. remains right above them if $\beta = 0$).
\end{itemize}

By the Union bound,
there exists $(i_0, j_0, k_0, l_0, \alpha_0, \beta_0, \gamma_0)$ such that
\begin{align}
\label{eq:RSWa1}
\bbP [\calV^{i_0,j_0,k_0,l_0}_{\alpha_0, \beta_0, \gamma_0}( h \omega \leq 0 )] \geq \tfrac{1}{N} \bbP [ \calV^*_{h \omega \leq 0} (R) ] ,
\end{align}
where $N = M^4 \cdot 3^3$ is the total number of tuples $(i,j,k,l, \alpha, \beta, \gamma)$. Let $\calE^*_{h \omega \leq 0} (\ell)$ denote the horizontal shift of the event $\calV^{i_0,j_0,k_0,l_0}_{\alpha_0, \beta_0, \gamma_0}( h \omega \leq 0 )$, so that the related crossing lands on $I_{\ell}$ on the bottom, instead of $I_{i_0}$.
Note that the events $\calE^*_{h \omega \leq 0} (\ell)$ are decreasing in $(h, B)$. By the shift symmetry and FKG,
\begin{align}
\label{eq:RSWa2}
\bbP [\bigcap_{\substack{ \ell=2 \\ \ell \text{ even} }}^{8} \calE^*_{h \omega \leq 0} (\ell) ] 
\geq 
\left( \bbP [\calV^{i_0,j_0,k_0,l_0}_{\alpha_0, \beta_0, \gamma_0}( h \omega \leq 0 )]  \right)^{4}.
\end{align}
Let now $\calB^*_{h \omega \leq 0} (\ell)$ be the ``bridge event'' that there is a dual crossing of $h \omega \leq 0$ in the horizontal strip of $R$ (i.e., the union of all horizontal shifts of $R$), from the semi-infinite line on left of $I_{\ell}$ to the semi-infinite line on the right. If for any $\ell_0 \in \{3, 5, 7 \}$,
\begin{align}
\label{eq:RSWa3}
\bbP [ \calB^*_{h \omega \leq 0} (\ell_0 ) \; | \; \bigcap_{\substack{ \ell=2 \\ \ell \text{ even} }}^{8} \calE^*_{h \omega \leq 0} (\ell)  ] \geq 1/3,
\end{align}
then the proof is trivially finished by computing
\begin{align}
\label{eq:RSWa4}
\bbP [ \calH^*_{h \omega \leq 0}(R)  ]
\geq
 \bbP [ \bigcap_{\ell = 1}^M \calB^*_{h \omega \leq 0} (\ell )  ]
\geq
 \bbP  [ \calB_{h \omega \leq 0}^* (\ell_0)  ]^M,
\end{align}
where the first step was an inclusion of events and the second one relied on shift invariance and FKG. Now, combining~\eqref{eq:RSWa1}--\eqref{eq:RSWa4}, we obtain the claim.

For the rest of the proof, we thus assume that~\eqref{eq:RSWa3} does \textit{not} hold true; hence
\begin{align}
\label{eq:RSWassn2}
\bbP [ \calB^*_{h \omega \leq 0} (3)^c \cap \calB^*_{h \omega \leq 0} (7)^c  \; | \; \bigcap_{\substack{ \ell=2 \\ \ell \text{ even} }}^{8} \calE^*_{h \omega \leq 0} (\ell)  ] \geq 1/3.
\end{align}
Let now $\tilde{h} = 6 - h$, and define $\nu$ on $E$ by~\eqref{eq:def of the perco nu}.
%
%
%
Recall from the Pushing theorem the idea that $\omega = 0 \Rightarrow \nu = 0$, but $\nu$ has been modified from $\omega$ to remove information about $\sign(\tilde{h})$.
Define the event $\calW^{i,j,k,l}_{\alpha, \beta, \gamma}(\nu = 0)$ that there exists a vertical dual-crossing $\gamma^*$ of the horizontal strip of $R$, with $\nu = 0$ and furthermore such that
\begin{itemize}
\item the unique subpath of $\gamma^*$ crossing the strip of $R_{\mathsf{bot}}$ lands on $I_{i-1} \cup I_i \cup I_{i+1}$ on the bottom\footnote{Negative indices for $I_i$ are allowed here since we are on the infinite strip, and interpreted in the obvious manner.} and, if $\alpha = +1$ (resp. $\alpha = -1$, resp. $\alpha = 0$), it passes strictly farther right than the right extreme of $I_{i+5}$ (resp. passes strictly farther left that the left extreme of $I_{i-5}$, resp. remains right above $I_{i-7} \cup \ldots \cup I_{i+7}$); and
\item
the unique subpath of $\gamma^*$ crossing the strip of $R_{\mathsf{top}}$ vertically lands on $L_{l-1} \cup L_l \cup L_{l+1}$ on the top and, if $\gamma = +1$ (resp. $\gamma = -1$, resp. $\gamma = 0$), passes strictly farther right than the right extreme of $L_{l+5}$ (resp. passes strictly farther left that the left extreme of $L_{l-5}$, resp. remains right below $L_{l-7} \cup \ldots \cup L_{l+7}$); and
\item the related $\eta^*$ (the first crossing of the strip of $R_{\mathsf{mid}}$) traverses between $J_{j-1} \cup J_j \cup J_{j+1}$ and $K_{k-1} \cup K_k \cup K_{k+1}$ and, if $\beta = +1$ (resp. $\beta = -1$, resp. $\beta = 0$), passes strictly farther right than the right extreme $J_{j+5}$ (resp. passes strictly  farther left that the left extreme of $J_{j-5}$, resp. remains right above $J_{i-7} \cup \ldots \cup J_{i+7}$).
\end{itemize}

\begin{lemma}
Suppose that $\calE^*_{h \omega \leq 0} (2) \cap \calB^*_{h \omega \leq 0} (3)^c  \cap \calE^*_{h \omega \leq 0} (4)$ occurs. Then, there exists a vertical dual-crossing of the strip of $R$ with $\nu = 0$ that lands on $I_{i_0 + x} \cup I_{i_0 + x  + 1}$ on the bottom and on $L_{l_0 + x} \cup L_{l_0 + x+ x}$ on the top, where $x=3-i_0$. Furthermore, the left-most such dual-crossing induces the event $\calW^{i_0+x, j_0+x, k_0+x, l_0+x \; *}_{\alpha_0, \beta_0, \gamma_0}( \nu = 0)$.
\end{lemma}

The proof is essentially identical to that of Lemma~\ref{lem:find a crossing} (indeed $\calE^*_{h \omega \leq 0} (2) \cap \calB^*_{h \omega \leq 0} (3)^c  \cap \calE^*_{h \omega \leq 0} (4)$ means that there is a primal crossing of $h \omega \geq 1$ from $I_{3}$ to the top of the rectangle of $R$, and this primal crossing cannot cross any dual-crossing generating $\calE^*_{h \omega \leq 0} (2)$ or $\calE^*_{h \omega \leq 0} (4)$.). The obvious analogue of this lemma holds on the event $\calE^*_{h \omega \leq 0} (6) \cap \calB^*_{h \omega \leq 0} (7)^c  \cap \calE^*_{h \omega \leq 0} (8)$, with ``left'' and ``right'' reversed and $x= 7-i_0$.
Let $\gamma^*_{\mathsf{left}}$ (resp. $\gamma^*_{\mathsf{right}}$) be such left-most (resp. right-most) vertical dual-crossings; on $\calB^*_{h \omega \leq 0} (3)^c \cap \calB^*_{h \omega \leq 0} (7)^c  \bigcap_{\substack{ \ell=2 \\ \ell \text{ even} }}^{8} \calE^*_{h \omega \leq 0} (\ell)$ they exist. In that case, let $\calX $ be the quad bounded by $\gamma^*_{\mathsf{left}}$, $\gamma^*_{\mathsf{right}}$, and the top and bottom boundary segments of the strip of $R$ between them.\footnote{We will actually have to allow ``degenerate quads'', where $\gamma^*_{\mathsf{left}}$ and $\gamma^*_{\mathsf{right}}$ may intersect but not cross transversally. We will omit the related (trivial) special considerations.} Let $\Xi $ be the collection of all such sub-quads of this strip, whose
left (resp. right) side both is of the type $\calW^{i_0+x, j_0+x, k_0+x, l_0+x \; *}_{\alpha_0, \beta_0, \gamma_0}( \nu = 0)$ with $x = 3-i_0$ (resp. $x = 7-i_0 $).  The previous lemma yields
\begin{align*}
\bigg\{ \calB^*_{h \omega \leq 0} (3)^c \cap \calB^*_{h \omega \leq 0} (7)^c  \bigcap_{\substack{ \ell=2 \\ \ell \text{ even} }}^{8} \calE^*_{h \omega \leq 0} (\ell) \bigg\}
\subset \{ \calX \in \Xi \},
\end{align*}
so combined with~\eqref{eq:RSWassn2}, we obtain
\begin{align}
\label{eq:RSWb1}
\bbP [\calX \in \Xi] \geq \tfrac{1}{3} \bbP [ \bigcap_{\substack{ \ell=2 \\ \ell \text{ even} }}^{8} \calE^*_{h \omega \leq 0} (\ell) ].
\end{align}

To conclude the proof,one then shows identically to Lemma~\ref{lem:bridging for the push lemma} that
\begin{align*}
\bbP [\calV_{h \omega \geq 1} (X) \; | \; \calX = X] \leq \tfrac{31}{32}, \qquad \text{for any } X \in \Xi,
\end{align*}
and noticing that $ \calV_{h \omega \geq 1} (X)^c = \calH_{h \omega \leq 0}^*(X) \subset \calB^*_{h \omega \leq 0} (5)$ (on the event $\calX = X$), one deduces
\begin{align}
\label{eq:RSWb2}
\bbP [ \calB^*_{h \omega \leq 0} (5) ] \geq \tfrac{1}{32} \bbP [\calX \in \Xi].
\end{align}
Finally, combining inequalities~\eqref{eq:RSWa1}--\eqref{eq:RSWa2}, \eqref{eq:RSWb1}--\eqref{eq:RSWb2} and in the end \eqref{eq:RSWa4}, we obtain the claimed inequality under the assumption~\eqref{eq:RSWassn2}. This completes the proof.
\end{proof}

\subsection{Increasing the height on the crossings}

The next step is to increase the height on the horizontal crossing. The price to pay for this increase is an increase in the size of the domain, which will be handled in later subsections.

\begin{proposition}
\label{prop:RSW height increase}
For every $\rho > 0$ and $\rho_2 \geq 3$, there exist $C = C(\rho, \rho_2), c = c (\rho, \rho_2) > 0$ and $\rho_3 = \rho_3 (\rho, \rho_2) \in \bbZ_+$ such that for all $n \in \bbN$, we have
\begin{align*}
c \bbP_{R_{\rho_2 n, \rho_2 n}}^{\pm 1} [\calV_{ \omega = 1} (R_{\rho n, n})]^C
\leq
\bbP_{\Strip(\rho_3 n)}^{\pm 1} [\calH^*_\calE (R_{\rho n, n})],
\end{align*}
where the random edge set $\calE$ is defined as in Theorem~\ref{thm:RSW}.
\end{proposition}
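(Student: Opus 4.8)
The plan is to deduce the proposition from the strip crossing estimate of Lemma~\ref{lem:from vert to hor crossing}, by first transporting the finite–domain $\omega=1$ vertical crossing to a low level–set dual crossing on a tall strip, and then promoting the resulting horizontal crossing to one through a much higher level set; it is the latter step that forces the enlargement of the strip appearing in the statement.

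\emph{Step 1: from the box to a horizontal dual crossing on the strip.} Since $\calV_{\omega=1}(R_{\rho n,n})$ depends only on $\omega$ and is increasing in it, and $\omega$ is monotonically coupled with $|h|$ (Lemmas~\ref{lem:FK coupling}--\ref{lem:FK coupling 2}), it is increasing in $(|h|,\omega)$, so Corollary~\ref{cor:+-1 SMP ineq for abs val}, applied with $R_{\rho_2 n,\rho_2 n}$ sitting inside $\Strip(\rho_3 n)$ and passed to the weak limit defining the strip measure, gives for every $\rho_3\ge\rho_2$
\[
\bbP^{\pm1}_{R_{\rho_2 n,\rho_2 n}}[\calV_{\omega=1}(R_{\rho n,n})]\ \le\ \bbP^{\pm1}_{\Strip(\rho_3 n)}[\calV_{\omega=1}(R_{\rho n,n})].
\]
Any $\omega=1$ path stays inside a single sign cluster, so this event is contained in $\calV_{h\omega\ge1}(R_{\rho n,n})$ together with its image under $h\mapsto -h$, whence the $h\mapsto -h$ invariance of $\bbP^{\pm1}_{\Strip(\rho_3 n)}$ costs only a factor $2$. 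Then, exactly as in the proof of Lemma~\ref{lem:sym quad cross} (with the auxiliary Bernoulli coupling that uses $\mathbf c_e\le2$), the substitution $\tilde h=2-h$ turns $\bbP^{\pm1}$ into $\bbP^{\{1,3\}}$ and $\calV_{h\omega\ge1}(R_{\rho n,n})$ into $\calV^*_{\tilde h\tilde\omega\le0}(R_{\rho n,n})$ (the passage to the dual crossing being~\eqref{eq:quad cross dual vs primal}), and Lemma~\ref{lem:from vert to hor crossing} with $s=1$ converts the vertical dual crossing into a horizontal one,
\[
\bbP^{\{1,3\}}_{\Strip(\rho_3 n)}[\calH^*_{h\omega\le0}(R_{\rho n,n})]\ \ge\ c\,\bbP^{\{1,3\}}_{\Strip(\rho_3 n)}[\calV^*_{h\omega\le0}(R_{\rho n,n})]^{C}\ \ge\ c'\,\bbP^{\pm1}_{R_{\rho_2 n,\rho_2 n}}[\calV_{\omega=1}(R_{\rho n,n})]^{C}.
\]
Undoing $\tilde h=2-h$, this is a horizontal dual crossing of the lowest–level ``$h\ge1$'' set of $R_{\rho n,n}$ under $\bbP^{\pm1}_{\Strip(\rho_3 n)}$, with probability at least $c'\,\bbP^{\pm1}_{R_{\rho_2 n,\rho_2 n}}[\calV_{\omega=1}(R_{\rho n,n})]^{C}$.

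\emph{Step 2: raising the level (the main obstacle).} The target is the corresponding dual crossing of the level–$5$ set $\calE$, which by the affine substitution $h\mapsto 6-h$ equals $\bbP^{\{5,7\}}_{\Strip(\rho_3 n)}[\calH^*_{h\omega\le0}(R_{\rho n,n})]$, and for which (by Lemma~\ref{lem:from vert to hor crossing} with $s=5$) it is enough to produce the corresponding \emph{vertical} dual crossing. So it remains to show that, replacing $\Strip(\rho_3 n)$ by a strip a bounded factor taller, a horizontal (or vertical) dual crossing of the level–$(2j-1)$ set of $R_{\rho n,n}$ available with probability $\ge p$ yields such a crossing of the level–$(2j+1)$ set with probability $\ge c\,p^{O(1)}$; iterating this twice (level $1\to 3\to 5$) and tracking the bounded growth of the strip fixes $\rho_3=\rho_3(\rho,\rho_2)$. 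I would carry out each level–raising step by first promoting, via shift invariance of the strip measure, FKG and the complementary–direction crossing supplied by Lemma~\ref{lem:from vert to hor crossing}, the given crossing to a closed dual loop of the level–$(2j-1)$ set surrounding a concentric rectangle $R_{K\rho n,Kn}$ ($K=K(\rho,\rho_2)$ fixed, large enough for the Pushing Theorem to be applicable well inside); then revealing the outermost such loop, so that by the spatial Markov property (Lemma~\ref{lem:SMP with percolation}) the configuration inside it is a random Lipschitz model whose boundary condition is pointwise $\ge 2j-1$ and hence, by~\eqref{eq:CBC-|h|}, stochastically dominates $\bbP^{2j-1}$ on that domain; and finally invoking the Pushing Theorem (Theorem~\ref{prop:pushing}, after reducing to a symmetric reference domain by comparison of boundary conditions, and combined with the reflection symmetry $h\mapsto(4j-2)-h$ of the constant–$(2j-1)$ boundary measure) to produce, with uniformly positive conditional probability, a dual crossing of the next level set of $R_{\rho n,n}$ inside the loop. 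The exponent $C$ in the statement absorbs the bounded number of FKG gluings.

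Chaining the inequalities of Step~1 with the two level–raising steps yields $c\,\bbP^{\pm1}_{R_{\rho_2 n,\rho_2 n}}[\calV_{\omega=1}(R_{\rho n,n})]^{C}\le\bbP^{\pm1}_{\Strip(\rho_3 n)}[\calH^*_\calE(R_{\rho n,n})]$, which is the claim. The heart of the proof is the level–raising step: one must have the right monotonicity of the level–set crossing events (so that FKG, the comparison of boundary conditions, and the Markov reveal of the loop all apply) and control the (non‑)symmetry of the revealed domain; everything else is routine bookkeeping with the symmetries $h\mapsto -h$, $h\mapsto 2-h$, $h\mapsto 6-h$ and the crossing/duality dictionary developed above.
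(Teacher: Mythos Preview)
Your overall skeleton --- pass to the strip, build a loop at the current level via Lemma~\ref{lem:from vert to hor crossing}, reveal its interior by SMP, and inside it lift the level --- matches the paper's inductive proof closely. The gap is the last move. Inside the revealed loop the boundary condition is $\succeq\{s,s+2\}$ (equivalently $\{\pm 1\}$ after a height shift), and you propose to obtain a crossing at the next level ``with uniformly positive conditional probability'' via the Pushing Theorem. This cannot work: Theorem~\ref{prop:pushing} produces $h\omega\le 0$ dual crossings, i.e.\ crossings \emph{below} the boundary level, while you need a primal crossing \emph{above} it; the symmetry $h\mapsto(4j-2)-h$ you invoke only equates the probabilities of a level-$(2j+1)$ and a level-$(2j-3)$ crossing, and neither is uniformly positive in general (if it were, \eqref{eq:loop-deloc} would always hold and there would be no dichotomy).

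The paper's replacement for this step is simple and decisive: after the height shift brings the boundary in the random domain $X$ (which contains $R'=R_{\rho_2 n,\rho_2 n}$) to $\{\pm 1\}$, the desired level-$(s+2)$ primal crossing becomes $\calV_{h\omega\ge1}(R)$, and by sign-flip symmetry and FKG for $(|h|,\omega)$,
\[
\bbP_X^{\pm1}[\calV_{h\omega\ge1}(R)]\ \ge\ \tfrac12\,\bbP_X^{\pm1}[\calV_{\omega=1}(R)]\ \ge\ \tfrac12\,\bbP_{R'}^{\pm1}[\calV_{\omega=1}(R)].
\]
Thus each level-raising step re-injects one factor of the \emph{same} input probability $\bbP_{R_{\rho_2 n,\rho_2 n}}^{\pm1}[\calV_{\omega=1}(R_{\rho n,n})]$ rather than a universal constant, and the exponent $C$ collects these together with the loop-building powers. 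Your Step~1 and the loop construction are essentially fine (though the paper stays with primal crossings $\calV_{h\ge_B s}$ throughout the induction and only converts to a horizontal dual $\calE$-crossing at the very end, which makes the $\tilde h=2-h$ detour unnecessary); only the mechanism inside the loop needs to be replaced.
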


We work towards this proposition in several steps. In the following, for $0<s \in \Zodd$, let $\{ h \geq_B s \}$ denote the random set of edges on which $h \geq s$ and additionally for an edge $e$ between  two heights $s$, $B_e = 1$; and let $\calE_s = \calE_s(h, B)$ denote the set of edges with heights $h \geq s$ at the endpoints, with additionally $B_e = 0$ on edges $e$ between two heights $h=s$. (Hence $\{ h \geq_B 1 \} = \{ h \omega \geq 1 \}$ and the set $\calE$ in Theorem~\ref{thm:RSW} coincides with $\calE_5$.) For two rectangles $R \subset R'$, denote by $\calO^*_{\calE_s}(R, R')$ the event that there is a dual-loop in $R'$, on the dual-edges $\calE_s^*$, that surrounds $R$. We now have the following (see Figure~\ref{fig:RSW2} (left)):

\begin{figure}
\begin{center}
\raisebox{0.02\textheight}{\includegraphics[height=0.108\textheight]{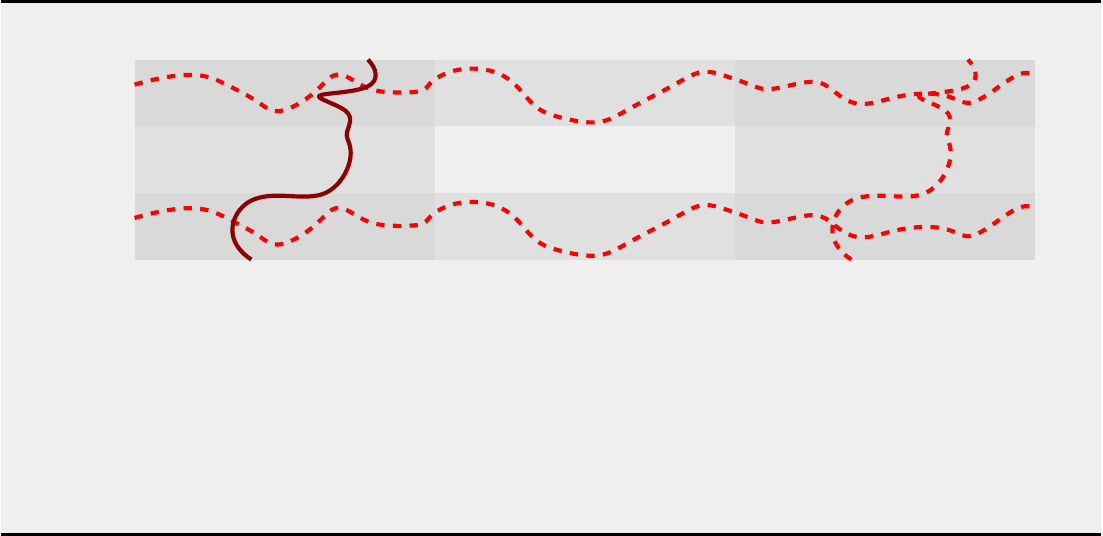}} \qquad \qquad
\includegraphics[height=0.13\textheight]{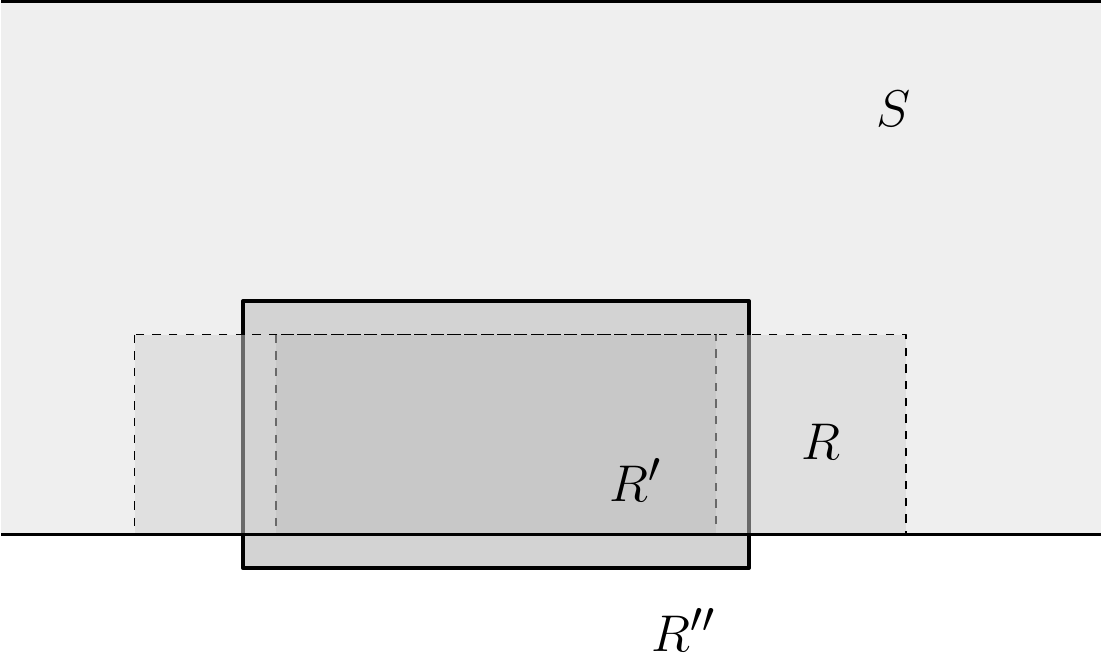}
\end{center}
\caption{
\label{fig:RSW2}
\label{fig:RSW bdary push}
Two simple illustrations on RSW results. Left:
Lemma~\ref{lem:from vert to hor crossing} tells one how to ``construct horizontal crossings from vertical ones''. A step in the proof of Lemma~\ref{lem:from vert crossing to loop} is to similarly ``construct loops''. Right: The geometric setup in the proof of Proposition~\ref{lem:RSW push boundaries}.
}
\end{figure}

%

\begin{lemma}
\label{lem:from vert crossing to loop}
For every $\rho > 0$ and $\rho_2 \geq 3$, there exist $C = C(\rho, \rho_2), c = c (\rho, \rho_2) > 0$ such that the following holds. Let $n \in \bbN$ and $(R, R', S)$ be a translate of the triplet $(R_{\rho n, n}, R_{ 3 \rho n, 3n}, \Strip(\rho_2 n) )$, and let $m $ be large enough so that $S \subset \Strip(m)$. Then, for any $s \in \{ 1, 3, 5\}$, we have
\begin{align*}
c \bbP_{\Strip(\rho_2 n)}^{\pm 1} [\calV_{h \geq_B s} (R_{\rho n, 3 n})]^C
\leq
\bbP_{\Strip(m)}^{\pm 1} [\calO^*_{\calE_s} (R, R')].
\end{align*}
\end{lemma}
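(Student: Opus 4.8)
The plan is the familiar RSW ``glue crossings into a loop'' scheme, but it must be preceded by an affine change of variables turning the super-level-set events in the statement into low-level-set events, where Lemma~\ref{lem:from vert to hor crossing} is available. So first I would set up that reduction, then build the loop, and finally reconcile the two strip widths.

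\emph{Reduction to the low level set.} Given $(h,B)$ put $\tilde h := (s+1)-h$; since $s+1$ is even, $\tilde h$ is again an odd $2$-Lipschitz function, $W(\tilde h)=W(h)$, and $\tilde h\sim\bbP^{\{s,s+2\}}_D$ whenever $h\sim\bbP^{\pm1}_D$. Building $\tilde\omega$ from $\tilde h$ and the same $B$, one checks from the definitions that $\calE_s(h,B)=\{\tilde h\tilde\omega\le0\}$, so $\bbP^{\pm1}_D[\calO^*_{\calE_s}(R,R')]=\bbP^{\{s,s+2\}}_D[\calO^*_{h\omega\le0}(R,R')]$ for every $D$. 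Likewise $\{h\ge_B s\}$ becomes, in the $\tilde h$ variable, the set of edges with $\tilde h\le1$ at both ends and $B_e=1$ on the flat-at-$1$ ones, and coupling an independent copy $\tilde B$ of $B$ to $B$ with $B_e=1\Rightarrow\tilde B_e=0$ (possible because $\mathbf c_e\le2$, as in the proof of Lemma~\ref{lem:sym quad cross}) shows that any $\{h\ge_B s\}$-path is a $\{\tilde h\tilde\omega\le0\}$-path, whence
\[
\bbP^{\pm1}_{\Strip(\rho_2 n)}[\calV_{h\ge_B s}(R_{\rho n,3n})]\le\bbP^{\{s,s+2\}}_{\Strip(\rho_2 n)}[\calV_{h\omega\le0}(R_{\rho n,3n})]\le\bbP^{\{s,s+2\}}_{\Strip(\rho_2 n)}[\calV^*_{h\omega\le0}(R_{\rho n,3n})],
\]
the last step by the degree-$3$ fact~\eqref{eq:quad cross dual vs primal}. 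Thus it suffices to bound $\bbP^{\{s,s+2\}}_{\Strip(m)}[\calO^*_{h\omega\le0}(R,R')]$ below by a power of $\bbP^{\{s,s+2\}}_{\Strip(\rho_2 n)}[\calV^*_{h\omega\le0}(R_{\rho n,3n})]$, a purely low-level-set statement.

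\emph{Gluing the loop.} Working inside the strip translate $S\supset R'$, place translates $A_L,A_R$ of $R_{\rho n,3n}$ immediately left and right of $R$ and inside $R'$, each spanning the full height of $R'$, and wide flat rectangles $A_T,A_B$ of the width of $R'$ immediately above and below $R$ inside $R'$, so that $A_L,A_T,A_R,A_B$ overlap pairwise in four ``corner'' boxes. A vertical dual $\{h\omega\le0\}^*$-crossing of $A_L$ or $A_R$ is supplied by the left-hand probability above, and restricting a crossing of $R_{\rho n,3n}$ to its middle third also furnishes the vertical dual crossing of an $R_{\rho n,n}$-translate needed to feed Lemma~\ref{lem:from vert to hor crossing}; concatenating a bounded number of the resulting horizontal dual crossings by FKG and horizontal shift invariance yields horizontal dual $\{h\omega\le0\}^*$-crossings of $A_T$ and $A_B$ with probability $\ge c\,\bbP^{\{s,s+2\}}_{\cdot}[\calV^*_{h\omega\le0}(R_{\rho n,3n})]^C$. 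Since $\bbG$ has maximum degree $3$, a dual crossing in the hard direction of each of two adjacent pieces must cross topologically inside their common corner box, so the union of the four dual crossings contains a dual $\{h\omega\le0\}^*$-loop around $R$ inside $R'$; as all four crossing events are decreasing in $(h,B)$, FKG for $(h,B)$ gives $\bbP^{\{s,s+2\}}_{\cdot}[\calO^*_{h\omega\le0}(R,R')]\ge c\,\bbP^{\{s,s+2\}}_{\cdot}[\calV^*_{h\omega\le0}(R_{\rho n,3n})]^C$ in whichever strip the construction is carried out.

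\emph{Reconciling the strip widths; the main obstacle.} The loop lives in $R'\subset S$, while the target measure is $\bbP^{\pm1}_{\Strip(m)}$ with $m$ possibly much larger. Undoing the substitution identifies $\calO^*_{h\omega\le0}(R,R')$ under $\bbP^{\{s,s+2\}}_{\cdot}$ with $\calO^*_{\calE_s}(R,R')$ under $\bbP^{\pm1}_{\cdot}$; along such a dual loop the heights are $\ge s$, so conditionally on $(|h|,\omega,B)$ (using Corollary~\ref{cor:FK cond law given abs height}) the existence of such a loop is, up to the signs of finitely many $\omega$-clusters which are $+$ with probability bounded below, an event monotone in $(|h|,\omega)$, and Corollary~\ref{cor:+-1 SMP ineq for abs val} then gives $\bbP^{\pm1}_{\Strip(m)}[\calO^*_{\calE_s}(R,R')]\ge c'\,\bbP^{\pm1}_S[\calO^*_{\calE_s}(R,R')]$; translation invariance of the strip measure turns the right-hand side into the quantity produced by the construction in $\Strip(\rho_2 n)$, and chaining the three steps proves the lemma. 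The genuinely delicate part is precisely this reconciliation of $\Strip(\rho_2 n)$ with $\Strip(m)$ across the affine substitution — keeping straight at each inequality whether one invokes $(|h|,\omega)$-monotonicity, $(h,B)$-FKG, or the conditional independence of the sign — rather than the topological loop construction, which is entirely standard.
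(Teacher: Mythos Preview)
Your affine substitution $\tilde h=(s+1)-h$ identifying $\calE_s(h,B)$ with $\{\tilde h\tilde\omega\le0\}$, and the four-piece gluing of dual crossings into a loop via Lemma~\ref{lem:from vert to hor crossing} and FKG, are correct and match the paper's argument. The gap is exactly where you locate it: the strip reconciliation.

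Your proposed inequality $\bbP^{\pm1}_{\Strip(m)}[\calO^*_{\calE_s}(R,R')]\ge c'\,\bbP^{\pm1}_S[\calO^*_{\calE_s}(R,R')]$ does not follow from Corollary~\ref{cor:+-1 SMP ineq for abs val}, because $\calO^*_{\calE_s}$ is not increasing in $(|h|,B)$, $(|h|,-B)$ or $(|h|,\omega)$ --- it depends on the sign of $h$. Your repair (condition on $(|h|,\omega,B)$ and pay for the signs of the relevant $\omega$-clusters) does not give a uniform bound: for $s=1$, a dual $\calE_1$-loop may cross primal edges with $h(u)=h(v)=1$ and $B_e=0$, on which $\omega_e=0$, so the primal vertices adjacent to the loop can lie in arbitrarily many distinct $\omega$-clusters, and the probability that all their signs are $+$ is not bounded away from zero. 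Even for $s\ge3$ you would need a careful dual-degree argument to force a single cluster, which you have not supplied.

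The paper avoids this by performing the domain comparison at the level of the \emph{input} vertical crossings, not the output loop. The entire loop construction is run inside $\bbP^{\pm1}_{\Strip(m)}$; the vertical crossings needed there are obtained from those in $\Strip(\rho_2 n)$ via the absolute-value event $\calV_{|h|\ge_B s}$, which \emph{is} increasing in $(|h|,B)$. Thus Corollary~\ref{cor:+-1 SMP ineq for abs val} gives
\[
\bbP^{\pm1}_{\Strip(m)}\big[\calV_{|h|\ge_B s}(R_i)\big]\ \ge\ \bbP^{\pm1}_{\Strip(\rho_2 n)}\big[\calV_{|h|\ge_B s}(R_{\rho n,3n})\big],
\]
and sandwiching $\calV_{h\ge_B s}$ with $\calV_{|h|\ge_B s}$ by inclusion and sign-flip symmetry costs only a factor~$\tfrac12$. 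From that point the passage to $\calV^*_{\calE_s}$ (your $\tilde B$-coupling plus~\eqref{eq:quad cross dual vs primal}), the application of Lemma~\ref{lem:from vert to hor crossing}, and the FKG gluing all take place inside the single measure $\bbP^{\pm1}_{\Strip(m)}$, so no further domain comparison is needed.
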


\begin{proof}
Denote $\bbP_{\Strip(\rho_2 n)}^{\pm 1} := \bbP_1$ and $\bbP_{\Strip(m)}^{\pm 1} =: \bbP_2$.
Denote by $\{ | h | \geq_B s \}$  the edges on which $|h| \geq s$ and additionally $B=1$ between two absolute-heights $|h|=s$. Note that this edge set is increasing in $(|h|, B)$, and that the sign of $h$ is fixed on any cluster of $\{ | h | \geq_B s \}$.
Subdivide $R'$ into three translates $R_1, R_2, R_3$ of $R_{\rho n, 3 n}$, indexed from left to right. By sign flip symmetry, then FKG and finally inclusion, we have for all $1 \leq i \leq 3$
\begin{align*}
\bbP_2 [\calV_{h \geq_B s} (R_i)] 
\geq \tfrac{1}{2} \bbP_2 [\calV_{|h| \geq_B s} (R_i)] 
\geq \tfrac{1}{2} \bbP_1 [\calV_{|h| \geq_B s} ( R_{\rho n, 3 n} )] 
\geq \tfrac{1}{2} \bbP_1 [\calV_{h \geq_B s} ( R_{\rho n, 3 n} )] .
\end{align*}
Next, let $\tilde{B}$ be equally distributed with $B$ and coupled so that $B_e=1 \Rightarrow \tilde{B}_e = 0$. In this coupling, $\{ h \geq_B s \} \subset \calE_s (h, \tilde{B})$. Combining this observation with~\eqref{eq:quad cross dual vs primal},
\begin{align*}
\bbP_2 [\calV_{h \geq_B s} (R_i)] \leq 
\bbP_2 [\calV^*_{\calE_s (h, \tilde{B})} (R_i)]  = \bbP_2 [\calV^*_{\calE_s } (R_i)]  .
\end{align*}

Then subdivide $R'$ into three translates $R_4, R_5, R_6$ of $R_{3 \rho n, n}$, indexed from bottom to top. By Lemma~\ref{lem:from vert to hor crossing}, we have for all $4 \leq i \leq 6$
\begin{align*}
\bbP_2 [\calH^*_{\calE_s} (R_i)] \geq c'  \bbP_2 [\calV^*_{\calE_s } (R_i)]^{C'}
\end{align*}
for suitable $C' = C'(\rho), c' = c' (\rho) > 0$, and by inclusion
\begin{align*}
\bbP_2 [\calV^*_{\calE_s } (R_i)] \geq \bbP_2 [\calV^*_{\calE_s } (R_2)].
\end{align*}

Finally, by inclusion and FKG for $(h, -B)$,
\begin{align*}
\bbP_{2} [\calO^*_{\calE_s} (R, R')]
&
\geq
\bbP_{2} [\calV^*_{\calE_s} (R_1) \cap \calV^*_{\calE_s} (R_3) \cap \calH^*_{\calE_s} (R_4) \cap \calH^*_{\calE_s} (R_6) ]
\\
&
\geq \bbP_2 [ \calV^*_{\calE_s} (R_1)  ] \bbP_2 [  \calV^*_{\calE_s} (R_3)] \bbP_{2} [\calH^*_{\calE_s} (R_4) ] \bbP_2 [  \calH^*_{\calE_s} (R_6) ] .
\end{align*}
The claim now follows by combining all the displayed inequalities.
\end{proof}

\begin{lemma}
For every $\rho \leq \rho_2$ and $s\in \{1, 3, 5\}$, there exist $C = C(\rho, \rho_2, s), c = c (\rho, \rho_2, s) > 0$ and $\rho_3 = \rho_3 (\rho, \rho_2, s) \in \bbZ_+$ such that, for all $n \in \bbN$, we have
\begin{align*}
c \bbP_{R_{\rho_2 n, \rho_2 n}}^{\pm 1} [\calV_{ \omega = 1} (R_{\rho n, n})]^C
\leq
\bbP_{\Strip(\rho_3 n)}^{\pm 1} [\calV_{h \geq_B s} (R_{\rho n, n})].
\end{align*}
\end{lemma}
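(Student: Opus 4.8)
\emph{Plan.} I would argue by induction on $s\in\{1,3,5\}$: the case $s=1$ is soft, and the step $s\to s+2$ is where the genuine height increase takes place.

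\emph{Base case $s=1$.} Here $\{h\geq_B 1\}=\{h\omega\geq1\}$, and two observations suffice. First, the event $\calV_{\omega=1}(R_{\rho n,n})$ is increasing in $(|h|,B)$ — raising $|h|$ can only enlarge $E_{\mathsf{fix}}$, and raising $B$ can only create more $\omega=1$ edges — so Corollary~\ref{cor:+-1 SMP ineq for abs val}, applied with $R_{\rho_2 n,\rho_2 n}\subset\Strip(\rho_2 n)$, gives $\bbP^{\pm1}_{\Strip(\rho_2 n)}[\calV_{\omega=1}(R_{\rho n,n})]\geq\bbP^{\pm1}_{R_{\rho_2 n,\rho_2 n}}[\calV_{\omega=1}(R_{\rho n,n})]$. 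Second, the strip measure is invariant under the global sign flip $h\mapsto -h$ (it is a weak limit of the sign-symmetric measures $\bbP^{\pm1}_D$), and any $\omega=1$ crossing of $R_{\rho n,n}$ is contained in a single $\omega$-cluster whose sign, conditionally on $(|h|,\omega)$, equals $+1$ with probability at least $\tfrac12$ (forced if the cluster reaches $\partial D$, a fair coin flip otherwise), in which case the crossing is a $\{h\omega\geq1\}$ crossing. Hence $\bbP^{\pm1}_{\Strip(\rho_2 n)}[\calV_{h\omega\geq1}(R_{\rho n,n})]\geq\tfrac12\bbP^{\pm1}_{\Strip(\rho_2 n)}[\calV_{\omega=1}(R_{\rho n,n})]$, and the claim holds with $C=1$, $c=\tfrac12$, $\rho_3=\rho_2$.

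\emph{Inductive step $s\to s+2$ (for $s\in\{1,3\}$).} Assume the statement for $s$ at all admissible aspect ratios. The plan has three stages. (i) Using the inductive hypothesis together with FKG and the RSW comparison already proven in this section (Lemma~\ref{lem:from vert crossing to loop} with parameter $s$, after upgrading the $n$-tall crossings of the inductive hypothesis to the taller ones it requires), produce, in a sufficiently tall strip with $\pm1$ boundary condition, a dual loop of $\calE_s$-edges surrounding $R_{\rho n,n}$ inside $R_{3\rho n,3n}$, with probability at least a power of $\bbP^{\pm1}_{R_{\rho_2 n,\rho_2 n}}[\calV_{\omega=1}(R_{\rho n,n})]$. (ii) Condition on the outermost such loop $\Gamma$ around $R_{\rho n,n}$ inside $R_{3\rho n,3n}$, revealing all heights and percolation data on and outside $\Gamma$; by the Spatial Markov property (Lemma~\ref{lem:SMP with percolation}), inside $\Gamma$ one has a fresh random Lipschitz model on a simply connected domain $D'\supset R_{\rho n,n}$ with a boundary condition that nowhere drops below level $s$ — a copy of the (still delocalized) model living ``one level up''. (iii) Inside this favorable environment, re-run the crossing/loop constructions of stage (i) for the shifted height field and invoke the Pushing Theorem (Theorem~\ref{prop:pushing}/Proposition~\ref{prop:push}) — whose hypotheses hold for boundary conditions bounded by $\{5,7\}$ up to a shift — together with the reflection symmetry $h\mapsto 2(s+1)-h$, implemented via the anti-coupled Bernoulli percolations $B,\tilde B$ as in Lemma~\ref{lem:sym quad cross} (this is where $\mathbf{c}_e\leq 2$ is used), and the primal/dual crossing duality~\eqref{eq:quad cross dual vs primal} (valid since $\bbG$ has maximum degree $3$), to extract, with conditional probability at least a further power of $\bbP^{\pm1}_{R_{\rho_2 n,\rho_2 n}}[\calV_{\omega=1}(R_{\rho n,n})]$, a primal crossing $\calV_{h\geq_B(s+2)}(R_{\rho n,n})$ inside $D'$. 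Averaging over $\Gamma$ by the tower property and taking $\rho_3$ to be the half-height of the strip from stage (i) yields the claim for $s+2$.

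\emph{Main obstacle.} The crux is stage (iii): converting the qualitative fact ``the environment sits at level $\geq s$'' into a polynomially lower-bounded probability of an honest crossing at level $\geq s+2$. Note in particular that merely conditioning on the outermost level-$s$ loop and then lowering the boundary condition by CBC is \emph{not} enough, since this can pin the inner boundary to level exactly $s$ and destroy the fluctuations one needs. Making the gain-one-level step precise requires combining the RSW machinery of this section with the sign-flipping coupling afforded by $\mathbf{c}_e\leq 2$ and the planar duality afforded by the degree bounds on $\bbG$ and $\bbG^*$, exactly in the spirit of the proofs of the Pushing Theorem and of Lemma~\ref{lem:from vert to hor crossing}. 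Once that step is in place, the remaining work — adjusting aspect ratios, strip heights and boundary conditions via FKG, CBC and Corollary~\ref{cor:+-1 SMP ineq for abs val} — is routine bookkeeping.
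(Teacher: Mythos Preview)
Your overall skeleton --- induction on $s$, the base case via Corollary~\ref{cor:+-1 SMP ineq for abs val} and sign-flip symmetry, and creating an $\calE_s^*$ loop via Lemma~\ref{lem:from vert crossing to loop} --- matches the paper. But your stage (iii) and your ``main obstacle'' paragraph miss the actual mechanism, and in fact explicitly rule it out.

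The paper does \emph{precisely} what you dismiss: condition on the loop, apply CBC, and be done. The point you overlook is a height shift. After exploring to the outermost $\calE_s^*$ loop around $R':=R_{\rho_2 n,\rho_2 n}$ in $R'':=2R_{\rho_2 n,\rho_2 n}$, the SMP boundary condition $\xi$ on the unexplored domain $X\supset R'$ satisfies $\xi\succeq\{s,s+2\}$ (not merely $\geq s$). Then
\[
\bbP^\xi_X[\calV_{h\geq_B s+2}(R_{\rho n,n})]
\;\stackrel{\text{CBC}}{\geq}\;
\bbP^{\{s,s+2\}}_X[\calV_{h\geq_B s+2}(R_{\rho n,n})]
\;\stackrel{\text{shift}}{=}\;
\bbP^{\pm1}_X[\calV_{h\omega\geq 1}(R_{\rho n,n})]
\;\stackrel{\text{sign flip}}{\geq}\;
\tfrac12\,\bbP^{\pm1}_X[\calV_{\omega=1}(R_{\rho n,n})]
\;\stackrel{\text{FKG for }(|h|,\omega)}{\geq}\;
\tfrac12\,\bbP^{\pm1}_{R'}[\calV_{\omega=1}(R_{\rho n,n})].
\]
The right-hand side is exactly the quantity on the left of the lemma, so averaging over $X$ and combining with the loop probability (which is a power of that same quantity, by the inductive hypothesis and Lemma~\ref{lem:from vert crossing to loop}) closes the induction in one line. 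No Pushing Theorem, no anti-coupling of Bernoulli variables, no re-running of RSW inside the loop is needed here --- those tools are used elsewhere in the section, but not in this lemma. Your worry that CBC ``pins the inner boundary to level exactly $s$'' is unfounded: CBC goes to the \emph{interval} $\{s,s+2\}$, and the shift $h\mapsto h-(s+1)$ turns this into the symmetric $\{\pm1\}$ condition, which has all the fluctuations one needs.
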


\begin{proof}
The proof is an induction on $s$. In the base case $s=1$, by FKG and sign flip symmetry,
\begin{align*}
\bbP_{R_{\rho_2 n, \rho_2 n}}^{\pm 1} [\calV_{ \omega = 1} (R_{\rho n, n})]
\leq
\bbP_{\Strip(\rho_2 n)}^{\pm 1} [\calV_{\omega = 1} (R_{\rho n, n})]
\leq
2 \bbP_{\Strip(\rho_2 n)}^{\pm 1} [\calV_{h \omega \geq 1} (R_{\rho n, n})].
\end{align*}
Recalling that $\{ h \omega \geq 1\} = \{ h \geq_B 1 \}$, this proves the base case, with $\rho_3 = \rho_2$.

Assume that the claim holds for $s \in \{ 1, 3 \}$, i.e., there exist $C( s), c ( s),\rho_3 ( s) $ (we omit $\rho, \rho_2$ for lighter notation) such that
\begin{align}
\label{eq:ind1}
c(s) \bbP_{R_{\rho_2 n, \rho_2 n}}^{\pm 1} [\calV_{ \omega = 1} (R_{\rho n, n})]^{C(s)}
\leq
\bbP_{\Strip(\rho_3(s) n)}^{\pm 1} [\calV_{h \geq_B s} (R_{\rho n, n})],
\end{align}
and aim to prove it for $s+2$.
Denote $(R, R', R'')=:(R_{\rho n, n}, R_{\rho_2 n, \rho_2 n}, 2 R_{\rho_2 n, \rho_2 n}) $ for short; by Lemma~\ref{lem:from vert crossing to loop} and a standard necklace argument, for any $\rho_3(s+2)$ large enough (fix any for definiteness),
\begin{align}
\label{eq:ind2}
\bbP_{\Strip(\rho_3(s+2) n)}^{\pm 1} [\calO_{\calE_s}^* (R', R'')] \geq 
c' \bbP_{\Strip(\rho_3(s) n)}^{\pm 1} [\calV_{h \geq_B s} (R_{\rho n, n})]^{C'}.
\end{align}
for suitable $c', C' > 0$ depending on $\rho$ and $\rho_3(s)$.

Next, let $h \sim \bbP_{\Strip(\rho_3(s+2) n)}^{\pm 1}$, and define $\tilde{h} = s + 1 - h$ and $\tilde{\omega}$ based on $(\tilde{h}, B)$ as usual. Note that $\calE_s(h, B) = \{ \tilde{h} \tilde{\omega } \leq 0 \}$. On the event $\calO_{\calE_s}^* (R', R'')$, explore the primal-component of $ \tilde{h} \tilde{\omega } \geq 1 $ starting from $\partial R''$ and its bounding dual curves of $\tilde{h} \tilde{\omega } \leq 0$. Let $X$ be the unique unexplored component that contains $R'$ (a simply-connected domain bounded by a dual curve); by the SMP, the exploration generates, in terms of $\tilde{h}$, a boundary condition $\tilde{\xi} \preceq \{ \pm 1 \}$ on $\partial X$ (in terms of $h$, $\xi = 2- \tilde{\xi} \succeq \{ s, s+2 \}$). Now, compute
\begin{align*}
\bbP^\xi_X [\calV_{h \geq_B s+2}(R)] 
&
\geq
\bbP^{ \{ s, s+2 \} }_X [\calV_{h \geq_B s+2}(R)] \qquad \text{(by CBC for $(h, B)$)}
\\
&
=
\bbP^{ \pm 1 }_X [\calV_{h \omega \geq 1}(R)] \qquad \text{(by height shift)}
\\
&
\geq
\tfrac{1}{2} \bbP^{ \pm 1 }_X [\calV_{ \omega = 1}(R)] \qquad \text{(by sign flip symmetry)}
\\
&
\geq
\tfrac{1}{2} \bbP^{ \pm 1 }_{R'} [\calV_{ \omega = 1}(R)] \qquad \text{(by FKG for $(|h|, \omega)$).}
\end{align*}
Averaging over $X$, one obtains
\begin{align}
\label{eq:ind3}
\bbP_{\Strip(\rho_3(s+2) n)}^{\pm 1} [\calV_{h \geq_B s+2}(R) \; | \; \calO_{\calE_s}^* (R', R'')]
\geq \tfrac{1}{2} \bbP^{ \pm 1 }_{R'} [\calV_{ \omega = 1}(R)].
\end{align}
Combining~\eqref{eq:ind1}--\eqref{eq:ind3}, the claim also holds for $s+2$. This concludes the proof.
\end{proof}

\begin{proof}[Proposition~\ref{prop:RSW height increase}]
By the previous lemma (case $s=5$), there exist $C' = C'(\rho, \rho_2), c' = c' (\rho, \rho_2) > 0$ and $\rho_3 = \rho_3 (\rho, \rho_2) \in \bbZ_+$ such that 
\begin{align*}
c' \bbP_{R_{\rho_2 n, \rho_2 n}}^{\pm 1} [\calV_{ \omega = 1} (R_{\rho n, n})]^{C'}
\leq
\bbP_{\Strip(\rho_3 n)}^{\pm 1} [\calV_{h \geq_B 5} (R_{\rho n, n})].
\end{align*}
Letting again $\tilde{B}$ be equally distributed with $B$ and coupled so that $B_e=1 \Rightarrow \tilde{B}_e = 0$; note that $\{ h \geq_B 5 \} \subset \calE_5 (h, \tilde{B}) = \calE (h, \tilde{B})$. Combining with~\eqref{eq:quad cross dual vs primal},
\begin{align*}
\bbP_{\Strip(\rho_3 n)}^{\pm 1} [\calV_{h \geq_B 5} (R_{\rho n, n})] \leq
\bbP_{\Strip(\rho_3 n)}^{\pm 1} [\calV_{\calE}^* (R_{\rho n, n})].
\end{align*}
Finally, by Lemma~\ref{lem:from vert to hor crossing}, there exist $C'' = C''(\rho), c'' = c'' (\rho) > 0$ such that
\begin{align*}
c'' \bbP_{\Strip(\rho_3 n)}^{\pm 1} [\calV_{\calE}^* (R_{\rho n, n})]^{C''}
\leq
\bbP_{\Strip(\rho_3 n)}^{\pm 1} [\calH_{\calE}^* (R_{\rho n, n})].
\end{align*}
Combining the three above displayed inequalities concludes the proof.
\end{proof}

\subsection{Pushing the crossings}

In the next two subsections, we give two lemmas: one to ``push a horizontal crossing downwards'', and another to ``push the top boundary downwards''. An iterated use of these lemmas will compensate the increase in domain size that appeared in Proposition~\ref{prop:RSW height increase}.

\begin{lemma}
\label{lem:RSW push crossings}
For every $\rho> 0$, there exist constants $C= C(\rho), c= c(\rho) > 0$ such that the following holds for any $n$. Let $R = R_{ \rho n, 9n}$ and let $R'$ be a translate of the rectangle $R_{ \rho n, 7n}$ so that it lies on the bottom of $\Strip(11n)$. Then, we have
\begin{align*}
\bbP^{\pm 1 }_{\Strip(11n)} [\calH_{\calE}^* (R')] 
\geq 
c \bbP^{ \pm 1 }_{\Strip(11n)} [\calH_{\calE}^* (R)]^{C} .
\end{align*}
\end{lemma}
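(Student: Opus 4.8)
The plan is to avoid any exploration altogether and instead reduce the statement, by a purely topological decomposition of a horizontal crossing of $R$ together with the reflection symmetry of the strip, to the already-established ``vertical-to-horizontal'' RSW bound of Lemma~\ref{lem:from vert to hor crossing}. The boundary condition is fixed, so there is nothing to reduce there; write $q:=\bbP^{\pm 1}_{\Strip(11n)}[\calH_\calE^*(R)]$.

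First I would introduce, inside $\Strip(11n)$, the sub-rectangle $R_\downarrow$ of $R$ lying weakly below the horizontal line $y=3n$, its reflection $\tau(R_\downarrow)$ across the horizontal axis $\tau$ (so $\tau(R_\downarrow)$ is the part of $R$ lying weakly above $y=-3n$), and the centered rectangle $R_{\mathsf{mid}}=R\cap\{-3n\le y\le 3n\}$, which is exactly a copy of $R_{\rho n,3n}$. Since $R=R_{\rho n,9n}$ has half-height $9n\ge 3n$, the key inclusion of events is
\begin{align*}
\calH_\calE^*(R)\ \subseteq\ \calH_\calE^*(R_\downarrow)\ \cup\ \calH_\calE^*(\tau(R_\downarrow))\ \cup\ \calV_\calE^*(R_{\mathsf{mid}}).
\end{align*}
Indeed a horizontal dual $\calE$-crossing $\gamma^*$ of $R$ either stays in $\{y\le 3n\}$, and then crosses $R_\downarrow$ horizontally; or stays in $\{y\ge -3n\}$, and then crosses $\tau(R_\downarrow)$ horizontally; or it reaches both $\{y>3n\}$ and $\{y<-3n\}$, in which case a standard topological argument (an excursion that exits the band $\{-3n\le y\le 3n\}$ both above and below must cross it) extracts from $\gamma^*$ a vertical dual $\calE$-crossing of $R_{\mathsf{mid}}$, using that $\gamma^*$ stays within the horizontal extent of $R$, which equals that of $R_{\mathsf{mid}}$. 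Because $\tau$ is a symmetry of the strip, of the $\{\pm 1\}$ boundary condition and of the weights, and $\calE$ is equivariant under $\tau$, we have $\bbP^{\pm 1}_{\Strip(11n)}[\calH_\calE^*(\tau(R_\downarrow))]=\bbP^{\pm 1}_{\Strip(11n)}[\calH_\calE^*(R_\downarrow)]$, so the union bound gives
\begin{align*}
q\ \le\ 2\,\bbP^{\pm 1}_{\Strip(11n)}[\calH_\calE^*(R_\downarrow)]\ +\ \bbP^{\pm 1}_{\Strip(11n)}[\calV_\calE^*(R_{\mathsf{mid}})].
\end{align*}

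It then remains only to feed each surviving term into a lower bound for $\bbP^{\pm 1}_{\Strip(11n)}[\calH_\calE^*(R')]$. Both $R_\downarrow$ and $R_{\mathsf{mid}}$ are vertical sub-rectangles of $R'$ sharing its left and right sides (recall $R'$ spans $y\in[-11n,3n]$ and is horizontally centered like $R$), hence $\calH_\calE^*(R_\downarrow)\subseteq\calH_\calE^*(R')$ and $\calH_\calE^*(R_{\mathsf{mid}})\subseteq\calH_\calE^*(R')$. So if $\bbP^{\pm 1}_{\Strip(11n)}[\calH_\calE^*(R_\downarrow)]\ge q/3$ we are done directly. Otherwise $\bbP^{\pm 1}_{\Strip(11n)}[\calV_\calE^*(R_{\mathsf{mid}})]\ge q/3$, and here I would pass to $\tilde h:=6-h$ with $\tilde\omega$ built from $(\tilde h,B)$ as usual: then $\calE=\{\tilde h\tilde\omega\le 0\}$ and the $\{\pm 1\}$ boundary condition becomes $\tilde h\in\{5,7\}$, so that in the variables $(\tilde h,\tilde\omega,B)$ the measure $\bbP^{\pm 1}_{\Strip(11n)}$ is exactly the strip measure $\bbP^{\{5,7\}}_{\Strip(11n)}$. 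Lemma~\ref{lem:from vert to hor crossing} applied with $s=5$ to the rectangle $R_{\mathsf{mid}}$ (which in the parametrization of that lemma is a copy of $R_{\rho' n',n'}$ with $n'=3n$ and aspect parameter $\rho'=\rho/3$) then yields
\begin{align*}
\bbP^{\pm 1}_{\Strip(11n)}[\calH_\calE^*(R_{\mathsf{mid}})]\ \ge\ c(\rho)\,\bbP^{\pm 1}_{\Strip(11n)}[\calV_\calE^*(R_{\mathsf{mid}})]^{C(\rho)}\ \ge\ c(\rho)\,(q/3)^{C(\rho)},
\end{align*}
and the inclusion $\calH_\calE^*(R_{\mathsf{mid}})\subseteq\calH_\calE^*(R')$ closes this case. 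Combining the two cases, and absorbing the factor $3$ and the exponent into fresh constants (using $q\le 1$), gives the claim.

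The argument is thus short once the decomposition is seen; the only place demanding a touch of care is the topological extraction of the vertical crossing of $R_{\mathsf{mid}}$ from a crossing of $R$ that wanders both above $y=3n$ and below $y=-3n$, i.e.\ checking that such a dual path necessarily contains a sub-path joining the bottom and top dual-sides of $R_{\mathsf{mid}}$ inside $R_{\mathsf{mid}}$. This is the standard planar ``excursion exiting on both sides must cross'' fact, handled exactly as in the analogous places in the RSW proofs above, and I expect it --- rather than anything probabilistic --- to be the main, and only mild, obstacle.
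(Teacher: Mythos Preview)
Your proof is correct and is essentially the paper's own argument: the paper also splits $R$ into thirds, so your $R_\downarrow$ and $\tau(R_\downarrow)$ are exactly its $R_{\mathsf{mid}}\cup R_{\mathsf{bot}}$ and $R_{\mathsf{top}}\cup R_{\mathsf{mid}}$, your $R_{\mathsf{mid}}$ is its $R^{\mathsf{mid}}$, and the dichotomy ``either a crossing stays in two thirds, or it induces a vertical crossing of the middle third handled by Lemma~\ref{lem:from vert to hor crossing}'' is identical. The only cosmetic difference is that you make the height shift $\tilde h=6-h$ explicit when invoking Lemma~\ref{lem:from vert to hor crossing}, whereas the paper cites it directly; and you should note (as the paper implicitly does) that by horizontal shift-invariance of the strip measure one may take $R'$ horizontally centered, so that the inclusions $R_\downarrow,\,R_{\mathsf{mid}}\subset R'$ hold.
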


\begin{proof}
Let us denote $\bbP^{ \pm 1 }_{\Strip(11n)} =: \bbP$.
Let $R_{\mathsf{top}}$, $R_{\mathsf{mid}}$, and $R_{\mathsf{bot}}$ denote the top, middle and bottom thirds of the rectangle $R$. Let $\calH^{\mathsf{tm}} $ (resp. $\calH^{\mathsf{mb}} $, resp. $\calH^{\mathsf{all}}$) be the subevent of $\calH_{\calE}^* (R)$ that there is a horizontal $\calE^*$ dual-crossing of $R$ that is contained in $R_{\mathsf{top}} \cup R_{\mathsf{mid}}$ (resp. contained in $R_{\mathsf{mid }} \cup R_{\mathsf{bot}}$, resp. visits all three sub-rectangles). By the Union Bound,
\begin{align*}
\bbP [\calH_{\calE}^* (R)] \leq \bbP [\calH^{\mathsf{tm}}] + \bbP [\calH^{\mathsf{mb}}] + \bbP [\calH^{\mathsf{all}}],
\end{align*}
so at least one of the events on the right-hand side has a probability larger or equal to $\tfrac{1}{3} \bbP [\calH_{\calE}^* (R)]$. If $\calH^{\mathsf{all}}$ is not such an event, then by symmetry, $\calH^{\mathsf{mb}}$ is one, and with an inclusion of events, we deduce
\begin{align*}
\bbP [\calH_{\calE}^* (R')] \geq \bbP [\calH^{\mathsf{mb}}] \geq \tfrac{1}{3}\bbP [\calH_{\calE}^* (R)],
\end{align*}
which proves the claim.
 If, $\calH^{\mathsf{all}}$ is such an event
then we infer by inclusion of events that
\begin{align*}
\bbP [ \calV^*_{\calE}(R^{\mathsf{mid}}) ] \geq \bbP [\calH^{\mathsf{all}}] \geq \tfrac{1}{3} \bbP [\calH_{\calE}^* (R)],
\end{align*}
and by another inclusion of events and Lemma~\ref{lem:from vert to hor crossing} that
\begin{align*}
\bbP [\calH_{\calE}^* (R')] \geq \bbP [\calH_{\calE}^* ( R^{\mathsf{mid}} )] \geq c' \bbP [ \calV^*_{\calE}(R^{\mathsf{mid}}) ]^{C'},
\end{align*}
for suitable $C'= C'(\rho), c'= c'(\rho) > 0$. This concludes the proof. 
\end{proof}

\subsection{Pushing the boundaries}

\begin{proposition}
\label{lem:RSW push boundaries}
For every $\rho, \epsilon> 0$ and $\rho_2 \geq 1$, there exists
$c= c(\rho, \rho_2, \epsilon) > 0$ such that the following holds for any $n \geq 4/\epsilon$. Let $R$ be a translate of the rectangle $R_{\rho n, n}$ on the bottom of $\Strip(\rho_2 n)$, let $w \leq \rho n$ and  $R' := R_{w, n}$ and $R'' = R_{ w+\epsilon n, (1+ \epsilon )n}$. Then, we have 
\begin{align*}
c \bbP^{ \pm 1 }_{\Strip( \rho_2 n )} [\calH_{\calE}^*(R)] 
\leq
\bbP^{ \pm 1 }_{R''} [\calH_{\calE}^* (R')] 
.
\end{align*}
\end{proposition}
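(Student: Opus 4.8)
The plan has three moves: shrink the rectangle, pass to an absolute‑value event, and then \emph{cap} the infinite strip by a low‑level dual curve so that the Spatial Markov property turns the strip into the finite domain $R''$. Since $w\le\rho n$ and $\bbG$ (with its weights) is horizontally periodic, $\bbP^{\pm 1}_{\Strip(\rho_2n)}$ is horizontally shift invariant, so we may take $R$ horizontally centred and fix inside it a translate $\widetilde R'$ of $R'$ with the \emph{same} vertical extent as $R$, resting on the lower boundary of the strip; any horizontal $\calE^*$ dual crossing of $R$ restricts to one of $\widetilde R'$, whence $\bbP^{\pm 1}_{\Strip(\rho_2n)}[\calH^*_\calE(R)]\le\bbP^{\pm 1}_{\Strip(\rho_2n)}[\calH^*_\calE(\widetilde R')]$. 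Let $\widetilde R''$ be the matching translate of $R''$ with $\widetilde R'\subseteq\widetilde R''$, so that by translation invariance $\bbP^{\pm 1}_{\widetilde R''}[\calH^*_\calE(\widetilde R')]=\bbP^{\pm 1}_{R''}[\calH^*_\calE(R')]$. Writing $\{\,|h|\ge_B5\,\}$ for the edge set with $|h|\ge 5$ at both ends and $B_e=0$ between two absolute‑heights $5$, one has $\calE\subseteq\{\,|h|\ge_B5\,\}$, and since a $\calE$‑crossing has positive sign everywhere while $\bbP^{\pm 1}_D$ is invariant under $h\mapsto-h$, a routine sign‑flip argument gives $\bbP^{\pm 1}_{\widetilde R''}[\calH^*_{|h|\ge_B5}(\widetilde R')]\le 2\,\bbP^{\pm 1}_{\widetilde R''}[\calH^*_\calE(\widetilde R')]$. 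It therefore suffices to bound $\bbP^{\pm 1}_{\Strip(\rho_2n)}[F]\le C_0\,\bbP^{\pm 1}_{\widetilde R''}[F]$ for $F:=\mathbbm{1}[\calH^*_{|h|\ge_B5}(\widetilde R')]$, which is increasing in $(|h|,-B)$ and measurable with respect to $\widetilde R'\subseteq\widetilde R''$.

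\emph{Capping and the Spatial Markov reduction.} Let $\calC$ be the event that, inside $\widetilde R''$, the set $\widetilde R'$ is \emph{not} $\{\omega=1\}$‑connected to the portion of $\partial\widetilde R''$ not shared with the lower boundary of the strip (in the general case where $\widetilde R'$ is interior, to all of $\partial\widetilde R''$). On $\calC$, exploring the $\omega$‑clusters touching that portion reveals a set $W$ disjoint from $\widetilde R'$ whose inner boundary is a dual path of $\{\omega=0\}$ edges that, together with a boundary segment of the strip, encloses a finite simply‑connected domain $U$ with $\widetilde R'\subseteq U\subseteq\widetilde R''$; since the cut edges carry $\omega=0$, the Spatial Markov property with percolations (Lemma~\ref{lem:SMP with percolation}, Remark~\ref{rem:SMP with perco}) makes the conditional law on $U$ equal to $\bbP^{\pm 1}_U$. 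As $F$ is increasing in $(|h|,-B)$ and supported on $\widetilde R'\subseteq U\subseteq\widetilde R''$, Corollary~\ref{cor:+-1 SMP ineq for abs val} gives $\bbP^{\pm 1}_U[F]\le\bbP^{\pm 1}_{\widetilde R''}[F]$ for every realisation of $U$; averaging,
\begin{align*}
\bbP^{\pm 1}_{\Strip(\rho_2n)}[F\cap\calC]
=\bbE^{\pm 1}_{\Strip(\rho_2n)}\big[\mathbbm{1}_\calC\,\bbP^{\pm 1}_U[F]\big]
\le\bbP^{\pm 1}_{\widetilde R''}[F].
\end{align*}
It thus remains to produce $c_0>0$ with $\bbP^{\pm 1}_{\Strip(\rho_2n)}[\calC\mid F]\ge c_0$, for then $\bbP^{\pm 1}_{\Strip(\rho_2n)}[F]\le c_0^{-1}\bbP^{\pm 1}_{\Strip(\rho_2n)}[F\cap\calC]\le c_0^{-1}\bbP^{\pm 1}_{\widetilde R''}[F]$, which is the required bound (with $C_0=c_0^{-1}$).

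\emph{The uniform capping bound, and the main obstacle.} This last estimate is where the Pushing Theorem enters, and is the only point that is not bookkeeping. Passing to $\widetilde h=6-h$ (so that $\calE=\calE_5=\{\widetilde h\widetilde\omega\le 0\}$ and the $\pm1$ boundary becomes the $\{5,7\}$ boundary of Theorem~\ref{prop:pushing} and Corollary~\ref{cor:narrow quad triple cross}), one reveals, given $F$, the crossing together with the primal cluster of $\{\,|h|\ge_B5\,\}$ carrying it and the bounding dual curve of $\{\,|h|\le 3\,\}$‑edges; by the Spatial Markov property the conditional law above that curve is a random height model whose boundary condition is $\preceq\{3,5\}$ along it and $\{\pm 1\}$ elsewhere. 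Restricting horizontally to a window of width a bounded multiple of that of $\widetilde R''$ — which costs only a constant, by \eqref{eq:FKG-|h|} and a standard necklace/square‑root argument — places one in the hypotheses of Proposition~\ref{prop:push}, applied a bounded number of times (to the ``attic'' above $\widetilde R'$ and to each side ``wing'' of $\widetilde R''$, the pieces glued by \eqref{eq:FKG-|h|}); each application yields, with probability bounded below uniformly in $n$ and in the revealed data, a dual path of low edges, which after the $\widetilde h$‑shift is exactly the $\{\omega=0\}$ separation defining $\calC$. The bounded geometry of $\bbG$ (bi‑periodicity, dual degree $\le 11$) and $n\ge 4/\epsilon$ keep all rectangles of $O(1)$ aspect ratio, and the case in which $R,R''$ are not anchored to the bottom of the strip (e.g.\ $R''$ centred with $1\le\rho_2<1+\epsilon$) is identical after also capping the bottom. \textbf{Main obstacle:} the boundary condition generated when the high crossing is revealed is a priori unbounded and adversarial, and it is precisely the robustness of the Pushing Theorem — a uniformly positive chance of a low dual crossing in a wide rectangle under any bounded, symmetric boundary data — that overcomes this; a secondary difficulty is the infinite horizontal extent of the strip, forbidden by Proposition~\ref{prop:push}, which must be absorbed via \eqref{eq:FKG-|h|} before the Pushing Theorem can be invoked.
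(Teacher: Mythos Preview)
Your high-level strategy is right and matches the paper's: introduce the joint event that the crossing exists and is $\omega$-separated from the far boundary of $R''$, bound it above by an outside-in exploration and SMP (the paper's Lemma~\ref{lem:P[U] upper bd}), and below via the Pushing machinery (the paper's Lemma~\ref{lem:P[U] lower bd}). The reduction from $R$ to $\widetilde R'$ and the sign-flip to $\calE'$ are correct and also appear in the paper.

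The execution of the capping step has a real gap. Your event $\calC$ asks that \emph{all of} $\widetilde R'$ be $\{\omega=0\}$-separated from the top and sides of $\widetilde R''$, but Proposition~\ref{prop:push} produces only $\{h\omega\le 0\}$ dual curves, and these are strictly weaker: an edge with heights $(-3,-1)$ has $h\omega\le 0$ yet $\omega=1$. So your assertion that the Pushing output ``is exactly the $\{\omega=0\}$ separation defining~$\calC$'' is not correct. The paper weakens the separation to the event $\calU$ that only the \emph{crossing's} $\omega$-cluster is disconnected from $\partial R''$. Since the (WLOG positive-sign) crossing sits at $h\ge 5$, its $\omega$-cluster never leaves $\{h\ge 1\}$ (the only edge between heights $1$ and $-1$ has $\omega=0$), hence is contained in a single $\{h\omega\ge 1\}$-cluster, which an $\{h\omega\le 0\}$ bridge \emph{does} block. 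With this weaker event, however, the unexplored region $U$ need not contain all of $\widetilde R'$, so your one-line upper bound via Corollary~\ref{cor:+-1 SMP ineq for abs val} breaks down; the paper's Lemma~\ref{lem:P[U] upper bd} instead restricts the crossing event to the unexplored edges $X^c$ and closes by inclusion $\calH^*_{\calE'\cap X^c}(R')\subset\calH^*_{\calE'}(R')$.

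A second issue is the horizontal restriction. Proposition~\ref{prop:push} requires a horizontally bounded domain; once you have conditioned on the (increasing) crossing event and are seeking a decreasing one, \eqref{eq:FKG-|h|} does not let you ``absorb'' the infinite extent of the strip at a constant cost. The paper instead explores the crossing \emph{from below} via a tailored percolation $\nu$ (which keeps the revealed heights in $[-5,5]$ and yields a clean $\{5,7\}$-type boundary on the unexplored tips), and then applies Corollary~\ref{cor:narrow quad triple cross} in the resulting conditional measure to manufacture, with probability $\ge 1/64$, vertical $\{h\omega\le 0\}$ dual crossings of the strip immediately to the left and right of the revealed crossing; only the bounded quad between those two vertical crossings is fed into Proposition~\ref{prop:push}.
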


The rest of this subsection constitutes the proof of this result.
Throughout the subsection, denote $\Strip( \rho_2 n )=: S$ and assume that $R'$ and $R''$ are shifted so that $R'$ is still in the middle of $R''$ and that $R'$ lies inside $R$ (Figure~\ref{fig:RSW bdary push} (right)). Let $\calE'$ denote the set of edges between absolute-heights $|h|\geq 5$, with additionally $B = 0$ on edges between two absolute-heights $|h| = 5$. 
Let $\calU$ be the event that there is a horizontal dual-crossing of $R'$ on the dual-edges crossing $\calE'$, i.e., generating $\calH_{\calE'}^* (R')$, whose component of $\omega =1 $ (indeed, by the bound on the dual degree, the primal-edges crossed by such a dual-path are contained in a single component of $\omega =1 $) is furthermore disconnected from the top and side boundaries of $R''$ (hence by some dual-path of $\omega = 0$).

\begin{lemma}
\label{lem:P[U] upper bd}
In the setup and notation given above, we have
\begin{align}
\label{eq:push1}
\bbP^{  \pm 1 }_S [\calU ] 
 \leq \bbP^{ \pm 1 }_{R''} [ \calH_{\calE'}^* (R') ].
\end{align}
\end{lemma}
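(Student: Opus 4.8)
The statement compares a probability on the (tall) strip $S = \Strip(\rho_2 n)$ to a probability on the small rectangle $R''$, and the event $\calU$ on the strip side is engineered precisely so that the crossing it produces is ``sealed off'' from the top and sides of $R''$ by a dual-path of $\omega = 0$. So the natural strategy is a \emph{revealing/exploration + Spatial Markov + comparison of boundary conditions} argument, of exactly the flavour already used repeatedly in the paper (e.g.\ in the proof of Corollary~\ref{cor:narrow quad triple cross} and Proposition~\ref{prop:push}).

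First I would note that $\calU$ is a decreasing event in $(|h|, \omega)$ (it asserts existence of certain $\omega = 0$ dual-connections together with $\calE'$-dual-connections, the latter being decreasing in $(|h|, -B)$ hence, after conditioning on $B$, in $|h|$; the ``disconnected from the top and sides'' part is a decreasing event in $\omega$). On the event $\calU$, reveal the primal-clusters of $\omega$ that touch the top and side boundaries of $R''$, together with their bounding dual-paths of $\omega = 0$; equivalently, explore from $\partial_{\mathrm{top}} R'' \cup \partial_{\mathrm{side}} R''$ inward along $\omega = 1$ edges. By the Spatial Markov property for the triplet $(h,\omega,B)$ (Lemma~\ref{lem:SMP with percolation} and Remark~\ref{rem:SMP with perco}), conditionally on this exploration the law of $(h,\omega,B)$ inside the unexplored component $X$ that contains $R'$ is a random Lipschitz model $\bbP^{\alpha}_X$, where $\alpha$ is $\{\pm 1\}$ on the unexplored tips of the $\omega = 0$ dual-paths and agrees with the ambient boundary condition elsewhere — in particular $\alpha \preceq \{\pm 1\}$ on all of $\partial X$ since the ambient boundary condition on $S$ is $\{\pm 1\}$. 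On this event the crossing $\calH_{\calE'}^*(R')$ still occurs inside $X$, and since $R' \subset X \subset R''$ with $\alpha \preceq \{\pm 1\}$, the $\{\pm 1\}$ boundary condition on $R''$ is more ``spread out'' than $\alpha$.

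The comparison step is then an application of the inequality-form SMP and CBC for absolute heights: by Corollary~\ref{cor:+-1 SMP ineq for abs val} (applied to the decreasing-in-$(|h|,\omega)$ event $\calH_{\calE'}^*(R')$, which depends only on $R' \subset X$), one has $\bbP^{\alpha}_X[\calH_{\calE'}^*(R')] \le \bbP^{\pm 1}_{R''}[\calH_{\calE'}^*(R')]$, and finally $\calH_{\calE'}^*(R') \subset \calH_{\calE}^*(R')$ under the $\{\pm 1\}$ boundary condition on $R''$ (there $|h|\geq 5$ is the same as $h \geq 5$ away from the boundary, matching the definition of $\calE$ in Theorem~\ref{thm:RSW}), or alternatively one keeps $\calE'$ throughout and notes $\calE' \supseteq \calE$ so the right-hand-side event only grows. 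Averaging the conditional bound over the exploration gives $\bbP^{\pm 1}_S[\calU] = \bbP^{\pm 1}_S[\calU \cap \{\calH_{\calE'}^*(R') \text{ in } X\}] \le \bbP^{\pm 1}_{R''}[\calH_{\calE'}^*(R')] \le \bbP^{\pm 1}_{R''}[\calH_{\calE}^*(R')]$, which is the claim.

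\textbf{Main obstacle.} The one genuinely delicate point is bookkeeping the exploration: one must check that the ``unexplored'' region containing $R'$ is a single simply-connected discrete domain whose boundary data, as produced by the exploration of $\omega$-clusters touching $\partial_{\mathrm{top}}R'' \cup \partial_{\mathrm{side}}R''$, is exactly an \emph{absolute-value} boundary condition $\preceq \{\pm 1\}$ as required by Corollary~\ref{cor:+-1 SMP ineq for abs val}; this is where the ``disconnected from the top and side boundaries of $R''$ by a dual-path of $\omega = 0$'' hypothesis in the definition of $\calU$ does its work (it guarantees the revealed dual-path truly encloses $R'$ inside $R''$, so $X \subseteq R''$ and the tips carry the $\{\pm 1\}$-type data rather than a larger set). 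This is entirely analogous to the exploration arguments already carried out in the proofs of Corollary~\ref{cor:narrow quad triple cross} and Proposition~\ref{prop:push}, so I would present it at the same (brisk) level of detail and invoke those as templates.
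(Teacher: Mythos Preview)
Your strategy is the paper's: explore the $\omega=1$ clusters adjacent to the top and side boundaries of $R''$, use the SMP (Lemma~\ref{lem:SMP with percolation}) to obtain a $\{\pm 1\}$ boundary condition on the unexplored component containing the $\calE'$-crossing, and then compare to $\bbP^{\pm 1}_{R''}$. The paper's implementation of the last step differs only cosmetically: rather than invoking Corollary~\ref{cor:+-1 SMP ineq for abs val}, it first observes that conditioning on $\omega_{|X}=\alpha$ gives the same law on the unexplored part as conditioning on $\omega_{|X}=0$, passes by SMP to $\bbP^{\pm 1}_{R''}[\,\cdot\mid\omega_{|X}=0\,]$, and finally removes the conditioning via FKG for $(|h|,-B)$ and an inclusion $\calE'\cap X^c\subset\calE'$.

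Two slips to correct. First, $\calH_{\calE'}^*(R')$ is \emph{increasing} in $(|h|,-B)$, not decreasing in $(|h|,\omega)$ (the set $\calE'$ depends on $|h|$ and $B$, not on $\omega$); had it really been decreasing, Corollary~\ref{cor:+-1 SMP ineq for abs val} with $X\subset R''$ would give the \emph{opposite} inequality. With the correct monotonicity, the corollary yields exactly $\bbP^{\{\pm 1\}}_X[\calH_{\calE'}^*(R')]\le\bbP^{\{\pm 1\}}_{R''}[\calH_{\calE'}^*(R')]$ as you wrote. Second, $R'\subset X$ need not hold --- only the crossing itself lies in $X$ --- but this is harmless: one works with $\calH^*_{\calE'\cap E(X)}(R')$ (which is what the conditional event actually is) and enlarges by inclusion at the end, just as the paper does.
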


\begin{proof}
Explore the primal clusters of $\omega=1$ in $R'' \cap S$ that are adjacent to the top and side boundaries of $R''$, and the dual curves of $\omega= 0$ bounding them. 
Let $\calX \subset R'' \cap S$ be the random set edges where $\omega$ is explored; the event $\calX = X$ can thus be inferred from $\omega_{|X}$. 
Let $\Xi(X)$ be the set of such $ \alpha$ that $ \omega_{|X} = \alpha $ generates the event $\calX = X$, so
\begin{align*}
\bbP^{  \pm 1 }_S [\calU \cap \{ \calX = X \} ]
=
\sum_{\alpha \in \Xi (X)} \bbP^{ \pm 1 }_S [ \omega_{|X}= \alpha  ] \bbP^{ \pm 1 }_S [ \calH^*_{\calE' \cap X^c}(R') \; | \; \omega_{|X}= \alpha  ].
\end{align*}
Now, by the SMP, the event $ \omega_{|X}= \alpha $ generates a boundary condition $\{ \pm 1 \}$ for the unexplored components, the height configurations in the single individual components being independent, and this law for heights on $X^c$ is also obtained by setting $\omega_{|X} = 0$. Compute
\begin{align*}
\bbP^{ \pm 1 }_S [ \calH^*_{\calE' \cap X^c}(R')  \; | \; \omega_{|X}= \alpha  ] 
&= 
\bbP^{ \pm 1 }_S [ \calH^*_{\calE' \cap X^c}(R')  \; | \; \omega_{|X}= 0 ]  \qquad \text{(by SMP)}\\
&
= 
\bbP^{ \pm 1 }_{R''} [ \calH^*_{\calE' \cap X^c}(R')  \; | \; \omega_{|X}= 0] \qquad \text{(by SMP)} \\
& \leq 
\bbP^{ \pm 1 }_{R''} [ \calH^*_{\calE' \cap X^c}(R')]  \qquad \text{(by FKG for $(|h|, -B)$)} \\
& \leq \bbP^{ \pm 1 }_{R''} [ \calH^*_{\calE' }(R') ]  \qquad \text{(by inclusion).}
\end{align*}
Combining the two previously displayed equations yields
\begin{align*}
\bbP^{ \pm 1 }_S [\calU \cap \{ \calX = X \} ] \leq 
 \bbP^{ \pm 1 }_S [ \calX = X  ] \bbP^{ \pm 1 }_{R''} [ \calH_{\calE'}^* (R') ],
\end{align*}
and summing this over all possible values of $X$ gives~\eqref{eq:push1}.
\end{proof}

\begin{lemma}
\label{lem:P[U] lower bd}
In the setup and notation given in Proposition~\ref{lem:RSW push boundaries} and right below it, there exists $c'= c'(\rho, \rho_2, \epsilon) > 0$ such that
\begin{align}
\label{eq:push2}
\bbP^{ \pm 1 }_S [ \calU  ] 
\geq c' \bbP^{ \pm 1 }_S [ \calH_{\calE'}^* (R) ].
\end{align}
\end{lemma}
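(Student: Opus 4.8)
lower bd}]
The plan is a conditioning argument. First, since $R'\subset R$ and the two rectangles have the same height, every horizontal $\calE'$-dual-crossing of $R$ restricts to one of $R'$, so $\calH^*_{\calE'}(R)\subset\calH^*_{\calE'}(R')$; it therefore suffices to show that, conditionally on $\calH^*_{\calE'}(R)$, the extra disconnection demanded by $\calU$ holds with probability bounded below by a constant $c'=c'(\rho,\rho_2,\epsilon)$. Using the $h\mapsto-h$ symmetry of $\bbP^{\pm1}_S$ we may, at the cost of a factor $2$, also assume the crossing to be ``positive'', so that the primal-edges it crosses lie in a single $\omega=1$-cluster $\calC$ (a single one by the bound on the dual degree) with $\sign(h)=+1$ on $\calC$. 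The crossing of $R'$ witnessing $\calU$ will be the restriction of this crossing, so what remains is to disconnect $\calC$ from the top and side boundaries of $R''$ by dual $\omega=0$-paths.

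To that end I would strip $\calC$ down to bounded heights and then invoke the Pushing Theorem. Concretely, reveal the cluster $\calC_5$ of the edges carrying $h\geq5$ (with $B_e=0$ on edges between two heights $h=5$) that contains the crossing, together with the ring of vertices immediately outside it; by $2$-Lipschitzness those vertices are pinned at height exactly $3$. Hence by the Spatial Markov property (Lemma~\ref{lem:SMP with percolation}, in the variant where the revealed boundary datum is a height level rather than an $\omega=0$-curve) the complementary region $D^\circ$ carries a random Lipschitz model whose boundary condition is $\{\pm1\}$ along $\partial D^\circ\cap\partial S$ and $\{3\}\preceq\{3,5\}$ along the dual-boundary $\partial^*\calC_5$. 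In this picture $\calC$ fails to reach the top or a side of $R''$ precisely when $\partial^*\calC_5$ carries no $\{h\omega\geq1\}$-path to that part of $\partial R''$ lying in $D^\circ$, so I would apply the Pushing Theorem in the form of Proposition~\ref{prop:push} (with $k=2$ and a suitable choice of the shape parameters, using $n\geq4/\epsilon$), once for the vertical direction and, after using the reflection symmetries of $\bbG$ (including the diagonal one), once for each of the two horizontal directions, to bound each such connection probability by $1-c$. On the intersection of the three complementary events $\calC$ is cut off from the top and side boundaries of $R''$ by dual $\omega=0$-paths, i.e.\ $\calU$ occurs; averaging over the exploration and over the sign and restoring the factor $2$ then gives $\bbP^{\pm1}_S[\calU]\geq c'\bbP^{\pm1}_S[\calH^*_{\calE'}(R)]$.

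The main obstacle is the geometric bookkeeping that makes Proposition~\ref{prop:push} literally applicable in $D^\circ$. One has to arrange the exploration of $\calC_5$ (for instance, carrying it out only within a slightly enlarged copy of $R''$, and routing it so as to leave untouched the band of $R''\setminus R'$ in which the separating $\omega=0$-paths are to appear) in such a way that the induced ``high'' boundary datum along $\partial^*\calC_5$ is always confined to a region at distance $\gtrsim\epsilon n$ from the top and side boundaries of $R''$, uniformly over $w\leq\rho n$ and over the random outcome of the exploration, and so that the resulting domain meets the metric hypotheses of Proposition~\ref{prop:push} after the appropriate shift and reflection; this, and the handling of the ``degenerate'' configurations (e.g.\ $w$ close to $\rho n$, or $\calC_5$ already missing $\partial R''$, which are either trivial or reduce to the main case), is where the work lies. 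By contrast, the fact that $\calC$ may carry arbitrarily large heights is harmless, since only $\calC_5$ is revealed and its outer ring is pinned at height $3$.
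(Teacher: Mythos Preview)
Your high-level plan (condition on the crossing, apply SMP, then use a Pushing-type input to produce the separating $\omega=0$ curve) matches the paper's, but the exploration you propose goes in the wrong direction, and this creates a genuine gap.

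\textbf{The key gap.} You explore outward from the crossing (the cluster $\calC_5$ of edges with $h\geq 5$ and $B=0$ on height-$5$ plateaus). But nothing controls the extent of $\calC_5$: it may well reach the top or side boundaries of $R''$, and on that event $\calU$ fails deterministically (since $\calC_5\subset\calC$). Your suggested fix of ``truncating the exploration inside a slightly enlarged $R''$'' does not help: wherever you cut $\calC_5$, the boundary datum there is still high ($h\in\{5,7,\ldots\}$), so Proposition~\ref{prop:push} cannot be invoked to push that boundary away from the very side where you truncated. Relatedly, ``the crossing'' is not a well-defined object on the event $\calH^*_{\calE'}(R')$, so $\calC_5$ is not well-defined either until you fix a canonical crossing. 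The paper avoids both issues by exploring \emph{from below}: it reveals the $\nu=1$ primal-cluster adjacent to $\partial S$ (together with $h$ on it, which stays in $[-5,5]$), thereby uncovering the \emph{lowest} $\nu=0$ dual-crossing of $R'$. The unexplored region $S'$ then automatically contains the entire band $R''\setminus R'$ and carries a clean absolute-value boundary condition $\{\pm 1\}$ on $\partial S$ and $\{5,7\}$ or $\{-5,-7\}$ along the revealed crossing; the bridge $\calB^*_{h\omega\leq 0}$ is then built inside $S'$.

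\textbf{A secondary issue.} Even granting your setup, you cannot simply ``rotate'' Proposition~\ref{prop:push} via the diagonal symmetry of $\bbG$ to handle the side disconnections: the measure lives on the infinite horizontal strip $S$, which has no such symmetry, and the unexplored domain is horizontally unbounded, violating the narrow-domain hypothesis of Proposition~\ref{prop:push}. The paper handles this by first invoking Corollary~\ref{cor:narrow quad triple cross} in two triples of lozenges placed immediately to the left and right of $R'$ (inside the strip-like domain $S'$) to produce vertical $h\omega\leq 0$ dual-crossings there with probability $\geq 1/64$, and only \emph{then} applying Proposition~\ref{prop:push} to push the resulting boundary into the $\epsilon n$-band. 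This two-step (symmetric-quad crossing, then pushing) is essential and has no analogue in your plan.

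Finally, a minor point: the outer ring of $\calC_5$ is not pinned at height $3$; an edge with $h(u)=h(v)=5$ and $B_e=1$ lies outside $\calE_5$, so the outer vertex can sit at height $5$. This is easy to patch (take $k=3$), but it signals that the SMP you would need mixes heights and $B$-values and is not the clean ``height-level'' variant you invoke.
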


\begin{proof}
Set
\begin{align*}
\nu_{\langle u, v \rangle} =
\begin{cases}
0, \qquad \{ |{h}(u)|, |{h}(v)| \} = \{ 5, 7 \} \\
B_{\langle u, v \rangle} , \qquad |{h}(u)|= |{h}(v)| \in \{ 5, 7 \} \\
1, \qquad \text{otherwise}
\end{cases}
\end{align*}
and note that under the boundary condition $\bbP^{\{ \pm 1 \}}_{S} =: \bbP$, $\calH_{\calE'}^*(R') = \calH_{\nu=0}^*(R')$.
Thus, explore the primal-cluster of $\nu=1$ in $R'$ that is adjacent to the bottom of $R'$, and the primal curves of $\nu=0$ bounding it; this reveals the lowest crossing inducing $\calH_{\nu=0}^*(R')$ (if such exists). Let us also explore $h$ on the primal-clusters, hence in $[-5, 5]$. Let $\calY = (V(\calY), E(\calY))$ be the set where $h$ and $\nu$ are thus explored; the event $\calY = Y$ can be inferred from $ \nu_{|Y}$; let $\Psi$ be the set of $(Y, \chi, \beta)$ such that $\{ h_{|Y} = \chi, \nu_{|Y}= \beta \}$ induces $\calY = Y$ and $\calH_{\nu=0}^*(R')$, so
\begin{align*}
\bbP [ \calH_{\nu=0}^*(R') ] 
=
\sum_{(Y, \chi, \beta) \in \Psi} \bbP [ h_{|Y} = \chi, \nu_{|Y}= \beta  ]
\end{align*}
and
\begin{align*}
\bbP [\calU ]
=
\bbP [ \calU \cap \calH_{\nu=0}^*(R') ]
=
\sum_{(Y, \chi, \beta) \in \Psi} \bbP [  h_{|Y}  = \chi, \nu_{|Y}= \beta  ] \bbP [ \calU \; | \; h_{|Y} = \chi, \nu_{|Y}= \beta ]
\end{align*}
Note also that on the unique $\calH_{\nu=0}^*(R')$ crossing found in $\beta$, the sign of $\chi$ is constant (due to the bound on dual degree); let us assume it is $+1$ for definiteness (without loss of generality, due to sign flip symmetry). Then, given $\{ h_{|Y} = \chi, \nu_{|Y}= \beta \}$, $\calU $ happens at least if there is a dual path (``bridge'') of $h \omega \leq 0$ in $(R'' \cap S) \setminus Y$, that disconnects the the bottom of the smaller rectangle $R'$ from the top, left, and right sides of the bigger rectangle $R'' \cap S$. We denote this event by $\calB_{h \omega \leq 0}^*(Y)$; then, in equations
\begin{align*}
\bbP [ \calU \; | \; h_{|Y} = \chi, \nu_{|Y}= \beta ] \geq
\bbP [ \calB_{h \omega \leq 0}^*(Y) \; | \; h_{|Y} = \chi, \nu_{|Y}= \beta ] 
\end{align*}

We will show below that there exists $c' = c'(\rho, \rho_2, \epsilon)$ such that, for any $(Y, \chi, \beta) \in \Psi$,
\begin{align}
\label{eq:push for cross}
\bbP [ \calB_{h \omega \leq 0}^*(Y) \; | \;  h_{|Y} = \chi, \nu_{|Y}= \beta ] \geq c'.
\end{align}
Combining the four previously displayed inequalities then yields~\eqref{eq:push2}.

It thus remains to prove Equation~\eqref{eq:push for cross}. Let $S'$ be the discrete domain obtained by ``cutting out of $S$'' the explored primal-clusters of $\nu=1$ along their dual-path boundaries.
Let $\chi'$ be the unique admissible boundary condition for $\heightfcns_{S'}$ on $\partial S'$ which is $\{ \pm 1 \}$ on $\partial S \cap \partial S'$ and either $\{ 5, 7 \}$ or $\{ -5, -7 \}$ (depending on the neighbouring values of $\chi$) on the unexplored tips of $E(Y)$ edges, and coincides with $\chi$ on $V(Y) \cap \partial S'$. By the SMP,
\begin{align*}
\bbP [  \calB_{h \omega \leq 0}^*(Y)  \; | \; h_{|Y}  = \chi, \nu_{|Y}=\beta ]
= \bbP^{ \chi' }_{S'} [  \calB_{h \omega \leq 0}^*(Y)   ] .
\end{align*}
Let then $L_{\mathsf{left}}$, $L_{\mathsf{mid}}$ and $L_{\mathsf{right}}$ now be three disjoint lozenges/squares of the height of $S$, and immediately to the right of $R'$, and let $L = L_{\mathsf{left}} \cup L_{\mathsf{mid}} \cup L_{\mathsf{right}}$. By approximating $\bbP^{\chi'}_{S'}$ (in the unique infinite unexplored component) by quads, Corollary~\ref{cor:narrow quad triple cross} gives
\begin{align}
\label{eq:loz cross}
\bbP^{ \chi' }_{S'} [\calV^*_{h \omega \leq 0 }(L)] \geq \tfrac{1}{8}.
\end{align}
Let $L'$ similarly consist of three big lozenges immediately to the right of $Y$. 
By symmetry and the FKG (the events below are decreasing in $(h,B)$)
\begin{align*}
\bbP^{ \chi' }_{S'} [\calV^*_{h \omega \leq 0 }(L) \cap \calV^*_{h \omega \leq 0 }(L')]  \geq \tfrac{1}{64}
\end{align*}
We then wish to show
\begin{align}
\label{eq:punchline}
\bbP^{ \chi' }_{S'} [  \calB^*_{h \omega \leq 0}(Y) \; | \; \calV^*_{h \omega \leq 0 }(L) \cap \calV^*_{h \omega \leq 0 }(L') ] \geq c''(\rho, \rho_2, \epsilon),
\end{align}
To do this, explore the primal clusters of $h\omega \geq 1$ adjacent to the right (resp. left) side of $L$ (resp. $L'$), and $\omega$ on the dual paths of $h \omega \leq 0$ bounding them. On  $\calV^*_{h \omega \leq 0 }(L) \cap \calV^*_{h \omega \leq 0 }(L')$, this exploration will find the right-most (resp. left-most) dual path generating the event $\calV^*_{h \omega \leq 0 }(L)$ (resp. $\calV^*_{h \omega \leq 0 }(L')$). Let $D$ denote the unexplored domain between these dual-paths. By the SMP, the exploration generates a boundary condition $\chi''$ for $D$ which extends $\chi'$ with something smaller than $\{ \pm 1\}$ on the vertical crossings. 
Equation~\eqref{eq:punchline} is then a direct consequence of Proposition~\ref{prop:push}.
This concludes the proof.
\end{proof}

\begin{proof}[Proposition~\ref{lem:RSW push boundaries}]
By lemmas~\ref{lem:P[U] upper bd}--\ref{lem:P[U] lower bd}, there exists $c'(\rho, \rho_2, \epsilon) > 0$ such that
\begin{align*}
c' \bbP^{ \pm 1 }_S [ \calH_{\calE'}^* (R) ]
 \leq \bbP^{ \pm 1 }_{R''} [ \calH_{\calE'}^* (R') ].
\end{align*}
The claim then follows since, by inclusion and sign flip symmetry, respectively,
\begin{align}
\label{eq:calE vs calE'}
\bbP^{ \pm 1 }_S [ \calH_{\calE}^* (R) ]
\leq
\bbP^{ \pm 1 }_S [ \calH_{\calE'}^* (R) ]
\qquad
\text{and}
\qquad
\bbP^{ \pm 1 }_{R''} [ \calH_{\calE'}^* (R') ]
\leq
2 \bbP^{\pm 1 }_{R''} [ \calH_{\calE}^* (R') ].
\end{align}
\end{proof}

\subsection{Concluding the proof of Theorem~\ref{thm:RSW}}

We start by combining the two previous subsections to iteratively push the crossings and boundaries downwards. 

\begin{corollary}
\label{cor:RSW strip to strip}
For any $\rho>0$, there exist constants $c = c(\rho), C=C(\rho) > 0$ such that for any $n$ and $m = \lceil \tfrac{7n}{9} \rceil$
\begin{align*}
c \bbP^{ \pm 1 }_{\Strip(11n)} [\calH_{\calE}^* (R_{\rho n, 9n})]^C \leq
\bbP^{\pm 1 }_{\Strip(11m)} [\calH_{\calE}^* (R_{ \rho n, 9m})].
\end{align*}
\end{corollary}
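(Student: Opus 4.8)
The plan is to carry out one renormalization step by chaining the two pushing results of this section. First I would use Lemma~\ref{lem:RSW push crossings} with the given $\rho$ and $n$, taking its rectangle to be $R_{\rho n,9n}$; this produces constants $c_1,C_1>0$ depending only on $\rho$ and a translate $\widetilde R$ of $R_{\rho n,7n}$ lying on the bottom of $\Strip(11n)$ with
\[
c_1\,\bbP^{\pm1}_{\Strip(11n)}\!\left[\calH_\calE^*(R_{\rho n,9n})\right]^{C_1}\le\bbP^{\pm1}_{\Strip(11n)}\!\left[\calH_\calE^*(\widetilde R)\right].
\]
Next I would apply Proposition~\ref{lem:RSW push boundaries} to $\widetilde R$, reading that proposition with its internal length scale equal to $7n$, its width ratio equal to $\rho/7$, its strip ratio $\rho_2=11/7\ (\ge1)$, its slack $\epsilon=1/9$, and $w=\rho n$, so that its ``$R'$'' is $R_{\rho n,7n}$ and its ``$R''$'' is $R'':=R_{\rho n+7\epsilon n,\,7(1+\epsilon)n}$. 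This requires $7n\ge4/\epsilon=36$, i.e.\ $n\ge6$; the finitely many cases $n\le5$ are handled at the end by shrinking the final constant, using that for each fixed $n$ the right-hand side of the asserted inequality is a strictly positive number (the strip measure is a weak limit of fully supported finite-volume measures, so any crossing of a fixed finite rectangle has positive probability). For $n\ge6$ the proposition yields a constant $c_2=c_2(\rho)>0$ with
\[
c_2\,\bbP^{\pm1}_{\Strip(11n)}\!\left[\calH_\calE^*(\widetilde R)\right]\le\bbP^{\pm1}_{R''}\!\left[\calH_\calE^*(R_{\rho n,7n})\right].
\]

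It then remains to transfer this finite-volume crossing back into a strip. Here I would use the arithmetic $9m=9\lceil 7n/9\rceil\ge7n$ and $11m\ge77n/9>70n/9=7(1+\epsilon)n$ (valid because $\epsilon=1/9\le2/9$), giving two containments: $R_{\rho n,7n}\subseteq R_{\rho n,9m}$ — and since these two rectangles share the \emph{same} width their left and right sides are aligned, so $\calH_\calE^*(R_{\rho n,7n})\subseteq\calH_\calE^*(R_{\rho n,9m})$ — and $R''$ sitting strictly inside $\Strip(11m)$ vertically. Since $\calH_\calE^*$ is not a function of $|h|$ alone, I would route the domain comparison through its absolute-value analogue $\calH_{\calE'}^*$, exactly as in~\eqref{eq:calE vs calE'}: using $\calE\subseteq\calE'$; then Corollary~\ref{cor:+-1 SMP ineq for abs val} with $D=R''$ and $D'$ running over a horizontal finite exhaustion of $\Strip(11m)$ carrying the $\pm1$ (an absolute-value) boundary condition located in $D^c$, and passing to the limit; and finally the factor-$2$ sign-flip bound $\bbP[\calH_{\calE'}^*]\le2\,\bbP[\calH_\calE^*]$. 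The resulting chain reads
\[
\bbP^{\pm1}_{R''}\!\left[\calH_\calE^*(R_{\rho n,7n})\right]\le\bbP^{\pm1}_{R''}\!\left[\calH_{\calE'}^*(R_{\rho n,7n})\right]\le\bbP^{\pm1}_{\Strip(11m)}\!\left[\calH_{\calE'}^*(R_{\rho n,7n})\right]\le2\,\bbP^{\pm1}_{\Strip(11m)}\!\left[\calH_\calE^*(R_{\rho n,9m})\right].
\]
Concatenating the three displayed estimates gives the claim with $C:=C_1$ and $c:=c_1c_2/2$ (further decreased to absorb the cases $n\le5$).

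The real obstacle is bookkeeping rather than a new idea: one must choose the parameters fed into Proposition~\ref{lem:RSW push boundaries} so that they meet all the geometric constraints at once — the size inflation it produces ($R_{\rho n,7n}$ inside $R''$, of vertical half-extent $\tfrac{70n}{9}$) must still fit vertically inside $\Strip(11m)$ with $m=\lceil7n/9\rceil$, which forces $\epsilon\le2/9$, while the target height $9m$ must be at least $7n$ so that $R_{\rho n,9m}$ contains $R_{\rho n,7n}$; the choice $\epsilon=1/9$ leaves slack on both counts. The only other point needing care, the passage between $\calE$- and $|h|$-events and between nested domains, is handled verbatim by the $\mathrm{FKG}\text{-}|h|$ and sign-flip arguments already used in this section.
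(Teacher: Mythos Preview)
Your proof is correct and follows essentially the same route as the paper's: apply Lemma~\ref{lem:RSW push crossings} to shrink the crossed rectangle from height $9n$ to $7n$, then Proposition~\ref{lem:RSW push boundaries} to shrink the ambient strip, and finish with the $\calE\to\calE'\to\calE$ domain comparison of~\eqref{eq:calE vs calE'}. The only cosmetic difference is that the paper performs the inclusion $R_{\rho n,7n}\subset R_{\rho n,9m}$ immediately after Lemma~\ref{lem:RSW push crossings} (so that Proposition~\ref{lem:RSW push boundaries} is applied at scale $9m$ rather than $7n$), whereas you postpone it to the last step; your more explicit parameter bookkeeping and the separate handling of small $n$ are fine.
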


\begin{proof}
Note that $9m \geq 7n$, and let $R$ be a translate of $R_{\rho n, 9m}$ on the bottom of $\Strip(11n)$. By Lemma~\ref{lem:RSW push crossings},
\begin{align*}
c'(\rho) \bbP^{ \pm 1 }_{\Strip(11n)} [\calH_{\calE}^* (R_{ \rho n, 9n})]^{C'(\rho)}
\leq
\bbP^{ \pm 1 }_{\Strip(11n)} [\calH_{\calE}^*  (R)]. 
\end{align*}
Note also that $\rho n = \rho' m$, where $\rho' \leq \tfrac{9}{7} \rho$. Thus, by Proposition~\ref{lem:RSW push boundaries}
\begin{align*}
c''(\rho) \bbP^{ \pm 1 }_{\Strip(11n)} [\calH_{\calE}^* (R)] 
\leq
\bbP^{ \pm 1 }_{R_{(\rho'+1)m,11m}} [\calH_{\calE}^*  (R_{\rho' m, 9m})].
\end{align*}
Finally, by the FKG for $(|h|, \omega)$,
\begin{align*}
\bbP^{ \pm 1 }_{R_{(\rho'+1)m,11m}} [\calH_{\calE'}^*  (R_{\rho' m, 9m})] \leq \bbP^{ \pm 1 }_{\Strip(11m)} [\calH_{\calE'}^* (R_{ \rho' m, 9m})].
\end{align*}
Combining the  displayed inequalities and arguing as in~\eqref{eq:calE vs calE'} proves the claim.
\end{proof}

\begin{proof}[Theorem~\ref{thm:RSW}]
First, by Proposition~\ref{prop:RSW height increase} (and arguing as in~\eqref{eq:calE vs calE'}, which will be done repeatedly below), there exist $c', C', \rho_3$ depending on $\rho, \rho_2$ and such that
\begin{align*}
c' \bbP^{ \pm 1 }_{R{\rho_2 n, \rho_2 n  }} [\calV_{\omega = 1}(R_{\rho n, n})]^{C'}
\leq
\bbP^{ \pm 1 }_{\Strip(\rho_3 n)} [\calH^*_{\calE'} (R_{\rho n, n})] .
\end{align*}
Next, if the ratio $\rho_3$ of the heights of the strip and rectangle on the right-hand side satisfies $\rho_3 \geq 11/9$, let $n' = \lceil \rho_3 n/11 \rceil$; if $\rho_3 < 11/9$ let $n' = \lceil n/9 \rceil$. In both cases, let $\rho'$ then be such that $\rho' n' = \rho n$. By FKG and inclusion of events,
\begin{align*}
\bbP^{ \pm 1 }_{\Strip(\rho_3 n)} [\calH_{\calE'}^*  (R_{\rho n, n})] 
\leq
\bbP^{ \pm 1 }_{\Strip(11 n')} [\calH_{\calE'}^*  (R_{\rho' n', 9n'})].
\end{align*}
We next wish to apply Corollary~\ref{cor:RSW strip to strip} iteratively to make the height of the strip and rectangle on the right-hand side smaller and smaller. For this purpose, note that these dimensions originally depend on $\rho$ and $\rho_2$.
Thus, after a number of iterations depending additionally on $\epsilon$, we obtain
\begin{align*}
c''(\rho, \rho_2, \epsilon) \bbP^{ \pm 1 }_{\Strip(11 n')} [\calH_{\calE'}^*  (R_{\rho' n, 9n'})]^{ C'' (\rho, \rho_2, \epsilon) }
\leq
\bbP^{ \pm 1 }_{\Strip(11 \epsilon n / 9)} [\calH_{\calE'}^* (R_{\rho n, \epsilon n})].
\end{align*}
Finally by Lemma~\ref{lem:RSW push boundaries} and the FKG,
\begin{align*}
c '''(\rho, \rho_2, \epsilon) \bbP^{ \pm 1 }_{\Strip(11 \epsilon n / 9)} [\calH_{\calE'}^* (R_{\rho n, \epsilon n})] \leq \bbP^{ \pm 1 }_{R_{(\rho' + \epsilon ) n, 2\epsilon n}} [\calH_{\calE}^* ( R_{\rho'n, \epsilon n} )].
\end{align*}
Combining the four displayed inequalities concludes the proof.
\end{proof}

\section{Proofs of the main results}
\label{sec:proof of main results}

\subsection{Proof of the renormalization inequality}

Similarly to~\eqref{eq:def of an} let us define
\begin{align*}
b_n := \bbP^{\pm 1}_{L_{Rn}} [\calO^*_{\calE}(L_n)],
\end{align*}
where $\calE$ is defined as in Theorem~\ref{thm:RSW} and
where $\calO^*_{\calE}(L_n)$ denotes the existence of a dual loop around $L_n$, crossing only $\calE$  primal edges.
The aim of this subsection is to prove Proposition~\ref{thm:renorm} about the quantities $b_n$.
%
%
Recall that this was shown in Section~\ref{subsec:outline of main thm} to imply the analogue of the dichotomy theorem~\ref{thm:loop dichotomy} for $b_n$; let us quickly finish Theorem~\ref{thm:loop dichotomy} in its stated form.

\begin{proof}[Theorem~\ref{thm:loop dichotomy}]
It suffices to show that there exist $c', C' > 0$ such that
\begin{align}
\label{eq:two-side bound}
c' a_{n }^{C'} \leq b_n \leq a_n 
\qquad \text{for all } n \in \bbN.
\end{align}
The former inequality follows readily from Theorem~\ref{thm:RSW} and FKG. The latter follows since, due to the bound on dual degree, $\calO^*_{\calE}(L_n) \subset \calO_{\omega = 1}(L_n)$.
\end{proof}

\begin{proof}[Proposition~\ref{thm:renorm}]
Fix $n$ and denote by $c_i$ positive constants (independent of $n$). By Corollary~\ref{lem:crack alternative}, there exists $c_0$ such that at least one of the following two holds:
\begin{align}
\label{eq:horiz frozen cross}
\bbP^{ \pm 1 }_{R_{100 n, 100 n}} [\calH^*_{\omega = 0}(R_{10 n, n/2 })] & > c_0; \quad \text{or}\\
\label{eq:horiz high cross}
\bbP^{ \pm 1 }_{ R_{5n/2, n}} [\calH_{\calE}^* ( R_{2 n,  n/2 } )] & > c_0.
\end{align}
If~\eqref{eq:horiz high cross} holds true, then, constructing $\calO^*_{\calE}(L_n)$ from four suitable translated and rotated versions of $\calH_{\calE}^* ( R_{2n, n/2} )$, one obtains
\begin{align*}
b_n = \bbP^{ \pm 1 }_{L_{3n}} [\calO^*_{\calE}(L_n)] > c_1,
\end{align*}
which is the first alternative of Proposition~\ref{thm:renorm}.

The task is now to prove the second alternative, assuming~\eqref{eq:horiz frozen cross}. Note that by the FKG for $(|h|, \omega)$, for any $D \subset L_{42n}$ (note that $L_{42n}$ appears in the definition of $b_{14n}$), and any translate $R$ of $R_{w, n/2 }$ where $w \leq 10 n$, we then have
\begin{align}
\label{eq:crack 2}
\bbP^{ \pm 1 }_{D} [\calV_{\omega = 1} (R) ] < 1-c_0.
\end{align}
We will consider the following geometric setup. Let $L_{3n}^{\mathsf{left}} = L_{3n}^{0}, L_{3n}^1, \ldots, L_{3n}^4 = L_{3n}^{\mathsf{right}}$ be five horizontal translates of $L_{3n}$, so that each $L_{3n}^{i+1}$ is immediately to the right of $L^i_{3n}$. Let $L_{kn}^{\mathsf{left}}$ for $k \in \bbN$ (resp. $L_{kn}^{\mathsf{right}}$) denote translates of $L_{kn}$ cocentric with $L_{3n}^{\mathsf{left}}$ (resp. $L_{3n}^{\mathsf{right}}$); hence the horizontal span from the left side of $L_{2n}^{\mathsf{left}}$ to the right side of $L_{2n}^{\mathsf{right}}$ is $28n$ hexagons (generally: horizontal translational periods). Finally, let all the above mentioned lozenges/squares be translated so that $L_{2n}^{\mathsf{left}}$ and $L_{2n}^{\mathsf{right}}$ both just fit inside $L_{14n}$, which is cocentric with $L_{42n}$; recall the definition
\begin{align*}
b_{14 n} = \bbP_{L_{42n}}^{\pm 1} [\calO^*_{\calE} (L_{14n})].
\end{align*}
We will denote below, e.g., $\calO_{\tilde{\omega } = 0 }^* (L_{n}^{\mathsf{left}}, L_{2n}^{\mathsf{left}})$ for the event that $\calO_{\tilde{\omega } = 0 }^* (L_{n}^{\mathsf{left}})$ occurs, and furthermore a dual-loop generating this event can be found in $L_{2n}^{\mathsf{left}}$.

\begin{figure}
\begin{center}
\includegraphics[width=0.386\textwidth]{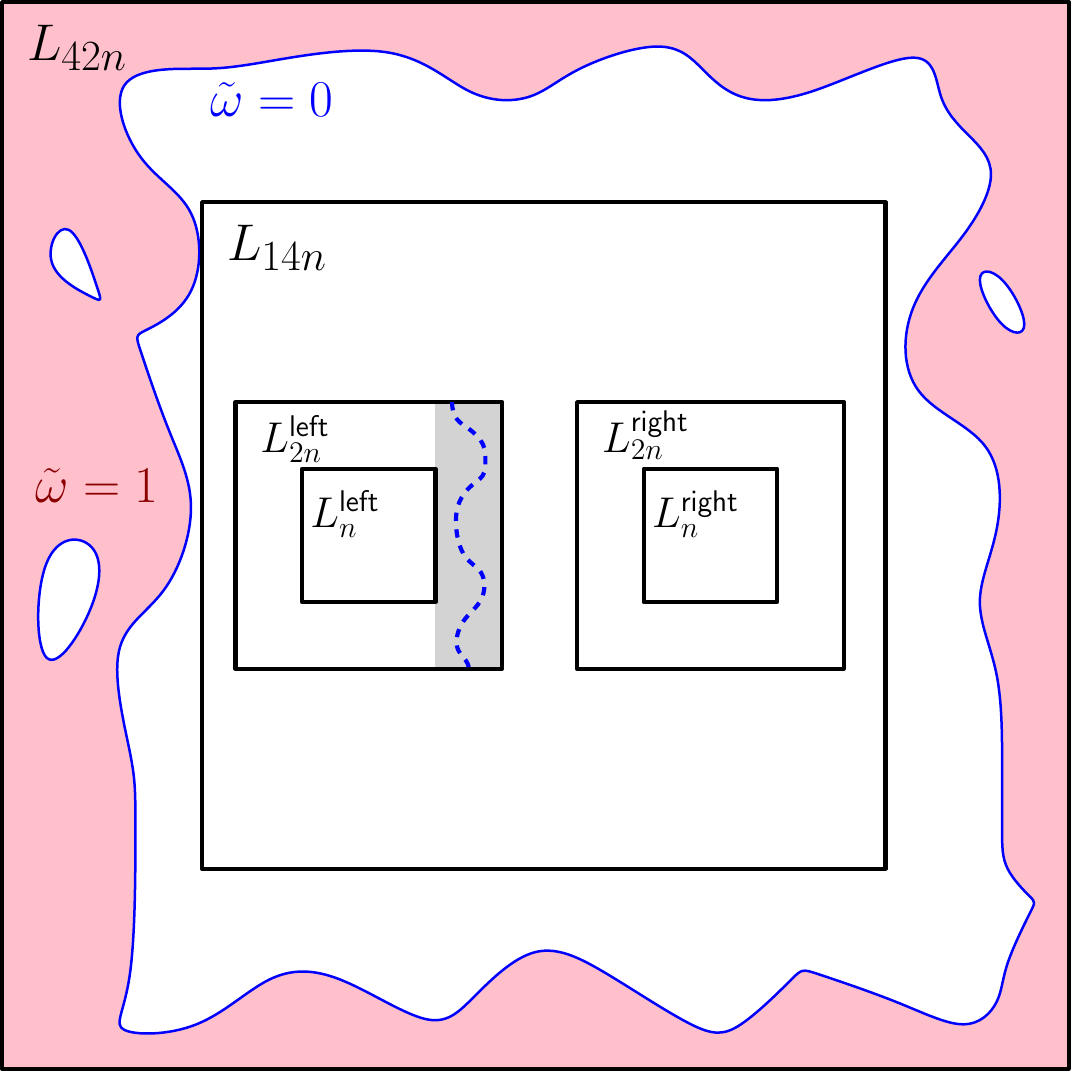} \qquad
\includegraphics[width=0.4\textwidth]{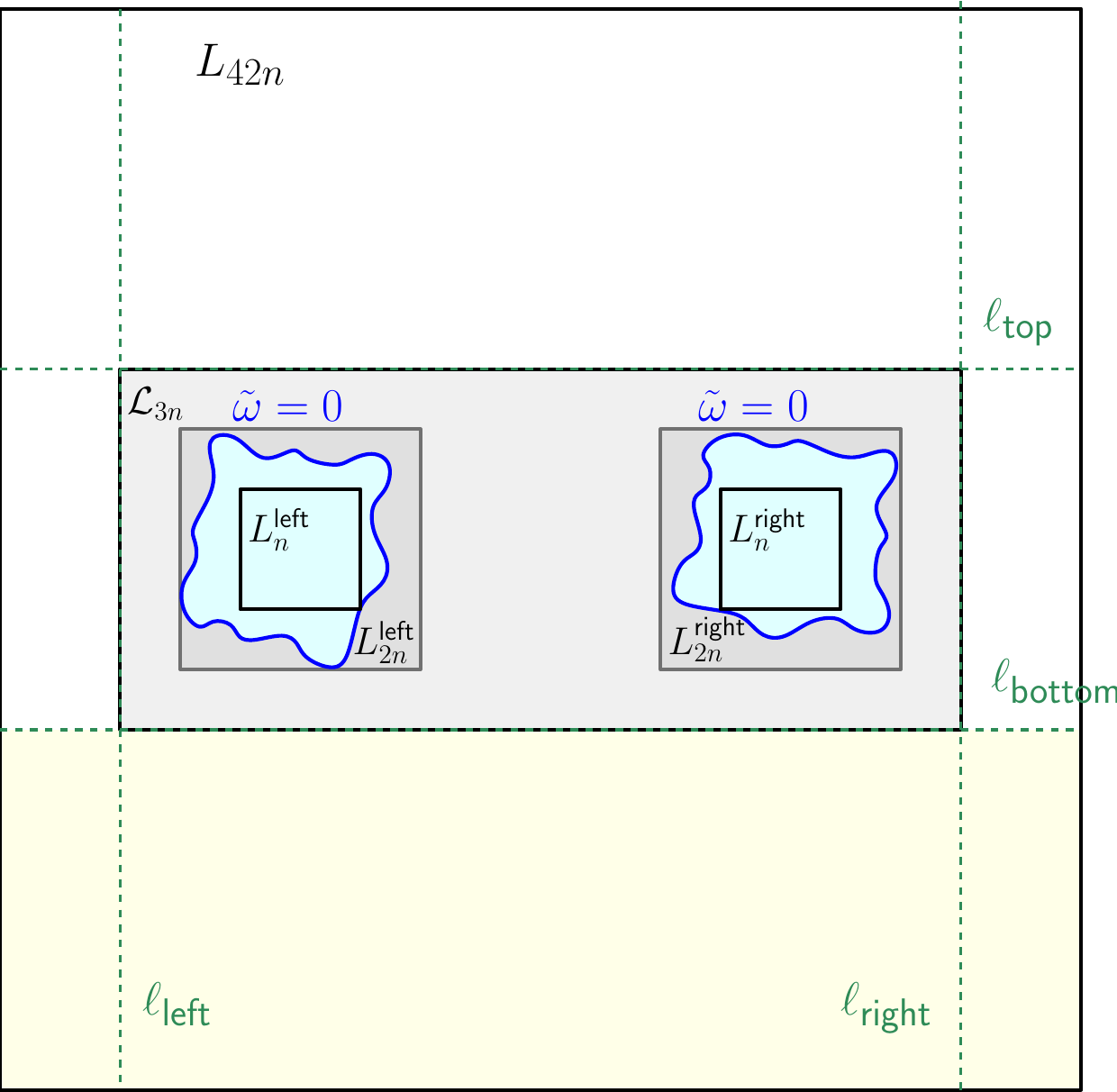}
\end{center}
\caption{
\label{fig:renorm small loops}
\label{fig:renorm boundary push}
Two geometric argument in the proof of the renormalization inequality. \textbf{Left:} inside the domain bounded by an $\tilde{\omega} =0$ dual curve, by SMP and~\eqref{eq:horiz frozen cross}, there is a uniformly positive probability for an $\tilde{\omega}=0$ dual crossing in the long direction of each of the eight rectangles forming the sides of the annuli $L_{2n}^{\mathsf{left}} \setminus L_{n}^{\mathsf{left}}$ and $L_{2n}^{\mathsf{right}} \setminus L_{n}^{\mathsf{right}}$, as illustrated on the right side of $L_{2n}^{\mathsf{left}} \setminus L_{n}^{\mathsf{left}}$. For a simpler illustration, we have drawn the case when $L_m$ are squares, and the squares and the distances between them are not in scale. \textbf{Right:} by Proposition~\ref{prop:push}, there is a uniformly positive probability that the explored $\tilde{\omega } = 0 $ dual-loops (in blue) are separated by dual-curve(s) of $h \omega \leq 0$ from the area below the bottom line $\ell_{\mathsf{bottom}}$ of $\calL_{3n}$ (in light yellow). Arguing similarly for the lines marking the top, left, and right sides of $\calL_{3n}$, one obtains~\eqref{eq:unif pos separation property}.
}
\end{figure}

Next, let $(h, B, \omega) \sim \bbP_{L_{42n}}^{\pm 1} =: \bbP$ and set $\tilde{h} = 6-h$ and define $\tilde{\omega}$ via $(\tilde{h}, B)$. Then, with the above boundary condition, $\calO^*_{\calE} (L_{14n}) = \calO^*_{\tilde{\omega} = 0} (L_{14n})$.
On this event, by Lemma~\ref{lem:SMP with percolation}, exploring the boundary component of $\tilde{\omega} = 1$ and its bounding $\tilde{\omega} =0$ dual curves generates a $\tilde{h} \in \{ \pm 1 \}$ boundary condition in the unexplored domains. Then, by~\eqref{eq:crack 2} and the FKG for $\omega$, (see Figure~\ref{fig:renorm small loops} (left))
\begin{align*}
\bbP [\calO_{\tilde{\omega } = 0 }^* (L_{n}^{\mathsf{left}}, L_{2n}^{\mathsf{left}})\ \cap \calO_{\tilde{\omega } = 0 }^* (L_{n}^{\mathsf{right}}, L_{2n}^{\mathsf{right}}) \; | \; \calO^*_{\tilde{\omega} =0 } (L_{14n})] \geq c_1^8,
\end{align*}
so
\begin{align}
\bbP [\calO_{\tilde{\omega } = 0 }^* (L_{n}^{\mathsf{left}}, L_{2n}^{\mathsf{left}})\ \cap \calO_{\tilde{\omega } = 0 }^* (L_{n}^{\mathsf{right}}, L_{2n}^{\mathsf{right}}) ] \geq 
c_1^8 b_{14 n}.
\end{align}

Next, let us detect the occurrence of the intersection $\calI := \calO_{\tilde{\omega } = 0 }^* (L_{n}^{\mathsf{left}}, L_{2n}^{\mathsf{left}})\ \cap \calO_{\tilde{\omega } = 0 }^* (L_{n}^{\mathsf{right}}, L_{2n}^{\mathsf{right}})$ by exploring  $\tilde{\omega}$ inside $L_{n}^{\mathsf{left}}$ and $L_{n}^{\mathsf{right}}$, and then outward up to the innermost dual-loops of $\tilde{\omega} = 0$ surrounding them. For events that only depend on the exterior of these dual-loops, by the SMP, this exploration generates a boundary condition $\tilde{h} \in {\pm 1}$ (i.e., $h \in \{ 5, 7 \}$) on the dual-loops. Let $\calL_{3n}$ be the ``long lozenge''/long rectangle, obtained as the union of all horizontal translates of $L_{3n}$, when it is sled from $L^{\mathsf{left}}_{3n}$ to $L^{\mathsf{right}}_{3n}$, and let $\calS_{h \omega \leq 0}^*$ be the event that $\calI $ occurs and furthermore there is a dual path or dual-paths of $h \omega \leq 0$ that separate the above-mentioned two innermost loops from $\partial \calL_{3n}$. Using Proposition~\ref{prop:push} and the FKG for $(h, B)$ (in which crossings of $h \omega \leq 0$ are decreasing) and arguing as illustrated in Figure~\ref{fig:renorm boundary push} (right),
\begin{align}
\label{eq:unif pos separation property}
\bbP [\calS_{h \omega \leq 0}^* \; | \; \calI ] \geq c_2.
\end{align}

We next claim that
\begin{align}
\label{eq:split domains}
\bbP [ \calO_{h \omega \leq 0}^* (L^{\mathsf{left}}_{2n}, L^{\mathsf{left}}_{3n}) \cap \calO_{h \omega \leq 0}^* (L^{\mathsf{right}}_{2n}, L^{\mathsf{right}}_{3n}) \; | \;  \calI \cap \calS_{h \omega \leq 0}^* ] \geq c_3.
\end{align}
Assuming this for a moment, the proof is readily finished; namely, observing that
$\calI \subset \calO_{\calE}^* (L_{n}^{\mathsf{left}}, L_{2n}^{\mathsf{left}}) \cap \calO_{\calE}^*  (L_{n}^{\mathsf{right}}, L_{2n}^{\mathsf{right}})$
we then have
\begin{align*}
\bbP & [ \calO_{h \omega \leq 0}^* (L^{\mathsf{left}}_{2n}, L^{\mathsf{left}}_{3n}) \cap \calO_{h \omega \leq 0}^* (L^{\mathsf{right}}_{2n}, L^{\mathsf{right}}_{3n}) \cap \calO_{\calE}^* (L_{n}^{\mathsf{left}}, L_{2n}^{\mathsf{left}}) \cap \calO_{\calE}^*  (L_{n}^{\mathsf{right}}, L_{2n}^{\mathsf{right}}) ] 
& \geq c_3 c_2 c_1^8 b_{14 n}
\end{align*}
and in particular
\begin{align}
\label{eq:almost there}
\bbP  [  \calO_{\calE}^* (L_{n}^{\mathsf{left}}, L_{2n}^{\mathsf{left}}) \cap \calO_{\calE}^*  (L_{n}^{\mathsf{right}}, L_{2n}^{\mathsf{right}})
\; | \; 
\calO_{h \omega \leq 0}^* (L^{\mathsf{left}}_{2n}, L^{\mathsf{left}}_{3n}) \cap \calO_{h \omega \leq 0}^* (L^{\mathsf{right}}_{2n}, L^{\mathsf{right}}_{3n})  ]
\geq c_3 c_2 c_1^8 b_{14 n}.
\end{align}
Now, to detect the conditioning event above, explore $h$ and $\omega$ outside of $ L^{\mathsf{left}}_{3n}$ and $ L^{\mathsf{right}}_{3n}$, and inside them explore from the boundary until finding the exterior-most $h \omega \leq 0$ dual-loops surrounding $ L^{\mathsf{left}}_{2n}$ and $ L^{\mathsf{right}}_{2n}$, respectively. This generates a boundary condition $\preceq \{ \pm 1 \}$ for the unexplored (random) simply-connected domains $\calD_L$ and $\calD_R$ containing $ L^{\mathsf{left}}_{2n}$ and $ L^{\mathsf{right}}_{2n}$, respectively. By SMP, CBC and inclusion,
\begin{align*}
\bbP & [   \calO_{\calE}^* (L_{n}^{\mathsf{left}}, L_{2n}^{\mathsf{left}}) \cap \calO_{\calE}^*  (L_{n}^{\mathsf{right}}, L_{2n}^{\mathsf{right}})
\; | \; \calD_L  = D_L, \calD_R = D_R ] \\
& \leq \bbP_{D_L}^{\pm 1} [\calO_{\calE }^* (L_{n}^{\mathsf{left}}, L_{2n}^{\mathsf{left}})] \bbP_{D_R}^{\pm 1} [\calO_{\calE }^* (L_{n}^{\mathsf{right}}, L_{2n}^{\mathsf{right}})] \\
& \leq \bbP_{D_L}^{\pm 1} [\calO_{\calE'}^* (L_{n}^{\mathsf{left}}, L_{2n}^{\mathsf{left}})] \bbP_{D_R}^{\pm 1} [\calO_{\calE' }^* (L_{n}^{\mathsf{right}}, L_{2n}^{\mathsf{right}})],
\end{align*}
where $\calE'$ is the set of edges on which $|h|\geq 5$ at both endpoints and additionally $B=0$ on edges between two absolute-heights $|h|=5$. By FKG for $(|h|, -B)$ and inclusion, and then sign flip symmetry
\begin{align*}
\bbP_{D_L}^{\pm 1} [\calO_{\calE'}^* (L_{n}^{\mathsf{left}}, L_{2n}^{\mathsf{left}})]
\leq 
\bbP_{L_{3n}^{\mathsf{left}}}^{\pm 1} [\calO_{\calE' }^* (L_{n}^{\mathsf{left}}, L_{3n}^{\mathsf{left}})]
\leq
2 \bbP_{L_{3n}^{\mathsf{left}}}^{\pm 1} [\calO_{\calE }^* (L_{n}^{\mathsf{left}}, L_{3n}^{\mathsf{left}})]= 2b_n.
\end{align*}
The same bound holds for $D_R$, and is independent of the shape of $D_L$ or $D_R$. Averaging over all $D_L$ and $D_R$, we thus have
\begin{align*}
\bbP  [  \calO_{ \calE }^* (L_{n}^{\mathsf{left}}, L_{2n}^{\mathsf{left}}) \cap \calO_{ \calE }^*  (L_{n}^{\mathsf{right}}, L_{2n}^{\mathsf{right}})
\; | \; 
\calO_{h \omega \leq 0}^* (L^{\mathsf{left}}_{2n}, L^{\mathsf{left}}_{3n}) \cap \calO_{h \omega \leq 0}^* (L^{\mathsf{right}}_{2n}, L^{\mathsf{right}}_{3n})  ] \leq 4 b_n^2,
\end{align*}
which combined with~\eqref{eq:almost there} yields
\begin{align*}
b_{14 n} \leq \tfrac{4}{c_1^8 c_2 c_3 } b_n^2.
\end{align*}
This concludes the proof.
\end{proof}

\begin{proof}[Equation~\eqref{eq:split domains}]
In order to detect whether $\calI \cap \calS_{h \omega \leq 0}^* $ occurs,  explore (i) the components of $\tilde{\omega} = 1$ of $L^{\mathsf{left}}_{n}$ and $L^{\mathsf{right}}_{n}$, and their bounding dual-loops of $\tilde{\omega}=0$; and (ii) the component of $h \omega \geq 1$ of $\partial \calL_{3n}$ in $\calL_{3n}$ and its bounding dual-loops of $h \omega \leq 0$. The former finds the interior-most dual-loops of $\tilde{\omega}=0$ surrounding $L^{\mathsf{left}}_{n}$ and $L^{\mathsf{right}}_{n}$, and the latter the exterior-most loop(s) of $h \omega \leq 0$ separating them from $\partial \calL_{3n}$.

Denote by $\calY$ the random unexplored area(s) between there explorations; it thus either consists of ``two topological annuli'' or a ``simply-connected domain with two holes''. By SMP, on $\calY  = Y$, the exploration generates a boundary condition $\chi$ taking values $\preceq \{ \pm 1 \}$ on the ``outer boundary components'' and the set-value $\{ 5, 7\}$ on the ``hole boundary components''. Let $R_{\mathsf{left}}$ be the ``right rectangle of the annulus $ L^{\mathsf{left}}_{3n} \setminus  L^{\mathsf{left}}_{2n}$'' and let  $R_{\mathsf{right}}$ be the ``left rectangle of the annulus $ L^{\mathsf{right}}_{3n} \setminus  L^{\mathsf{right}}_{2n}$''. We claim that
\begin{align}
\label{eq:pust pust}
\bbP_Y^\chi [ \calV^*_{h \omega \leq 0} (R_{\mathsf{left}} ) \cup \calV^*_{h \omega \leq 0} (R_{\mathsf{right}} )] \geq c_3
\end{align}
for all $Y$, where for $(h, B, \omega) \sim \bbP_Y^\chi$ we interpret that $h \omega \leq 0$ on $Y^c$. Averaging over $Y$ then gives~\eqref{eq:split domains}. 

\begin{figure}
\begin{center}
\includegraphics[width=0.8\textwidth]{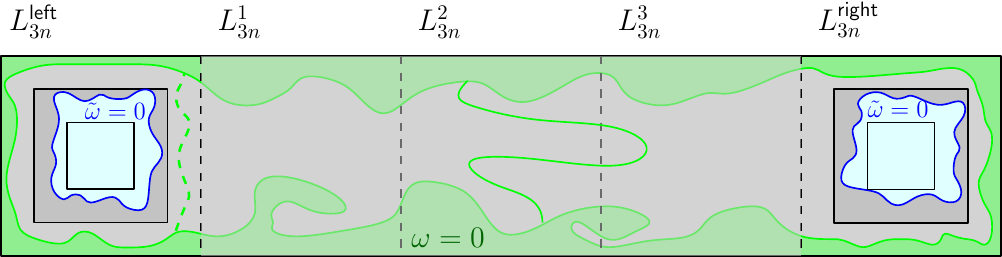}
\end{center}
\caption{
\label{fig:renorm domain split}
A geometric argument in the proof of Equation~\eqref{eq:split domains} (informal illustration): first, the random unexplored domain $\calY$ (in different shades of gray) can be compared to $\calL = L_{3n}^1 \cup L_{3n}^2 \cup L_{3n}^3$, in which the probability of a vertical $h \omega \leq 0$ dual-corssing (in solid green) is lower-bounded by Corollary~\ref{cor:narrow quad triple cross}. Then, by Proposition~\ref{prop:pushing}, such a dual-crossing may be pushed into $L_{3n}^{\mathsf{left}}$ (dashed green).
}
\end{figure}

It thus remains to prove~\eqref{eq:pust pust}. Denote $\calL = L_{3n}^1 \cup L_{3n}^2 \cup L_{3n}^3$. We start by claiming that (see Figure~\ref{fig:renorm domain split})
\begin{align}
\label{eq:pust pust pust}
\bbP_Y^\chi [ \calV^*_{h \omega \leq 0} (\calL ) ] \geq 1/8.
\end{align}
Indeed, letting $\chi'$ be exactly $\{ \pm 1 \}$ on the outer boundary of $Y$, CBC gives
\begin{align*}
\bbP_Y^\chi [ \calV^*_{h \omega \leq 0} (\calL ) ] 
\geq \bbP_Y^{\chi'} [ \calV^*_{h \omega \leq 0} (\calL ) ], 
\end{align*}
and with the boundary condition $\chi'$, $\calV^*_{h \omega \leq 0} (\calL ) = \calV^*_{\calE'(\tilde{h}, B)} (\calL )$, and thus by the FKG for $(|\tilde{h}|, -B)$,
\begin{align*}
\bbP_Y^{\chi'} [ \calV^*_{h \omega \leq 0} (\calL ) ]
=
\bbP_Y^{\chi'} [ \calV^*_{\calE'(\tilde{h}, B)} (\calL ) ]
 \geq 
 \bbP_{\calL \cap Y }^{\chi''} [ \calV^*_{\calE'(\tilde{h}, B)} (\calL ) ]
 =
\bbP_{\calL \cap Y }^{\chi''} [ \calV^*_{h \omega \leq 0} (\calL ) ],
\end{align*}
where $\chi''$ is the boundary condition for $\heightfcns_{\calL \cap Y}$ on $\partial (\calL \cap Y )$ which is, in terms of $h$, on the outer boundary of $Y$ equal to $\chi'$, i.e., set-valued $h \in \{ \pm 1 \}$, and on the rest of $\partial (\calL \cap Y )$ it is the largest admissible extension to $\{ 5, 7\}$. Corollary~\ref{cor:narrow quad triple cross} the gives~\eqref{eq:pust pust pust}.

The rest of proving~\eqref{eq:pust pust} is easy; by exploring the left-most crossing generating $\calV^*_{h \omega \leq 0} (\calL ) $ and then applying Proposition~\ref{prop:pushing}, one obtains
\begin{align*}
\bbP_Y^\chi [ \calV^*_{h \omega \leq 0} (R_{\mathsf{left}} ) \; | \; \calV^*_{h \omega \leq 0} (\calL ) ] \geq c_4,
\end{align*}
and thus
\begin{align*}
\bbP_Y^\chi [ \calV^*_{h \omega \leq 0} (R_{\mathsf{left}} ) ] \geq c_4/8
\end{align*}
and similarly for $\calV^*_{h \omega \leq 0} (R_{\mathsf{right}} ) $. Finally, by the FKG for $(h, B)$
\begin{align*}
\bbP_Y^\chi [ \calV^*_{h \omega \leq 0} (R_{\mathsf{left}} ) \cup \calV^*_{h \omega \leq 0} (R_{\mathsf{right}} )] \geq
\bbP_Y^\chi [ \calV^*_{h \omega \leq 0} (R_{\mathsf{left}} ) ]
\bbP_Y^\chi [ \calV^*_{h \omega \leq 0} (R_{\mathsf{right}} )] 
\geq c_4^2/64.
\end{align*}
This concludes the proof of~\eqref{eq:pust pust}, and hence also~\eqref{eq:split domains}.
\end{proof}

\subsection{Part (i) of Theorem~\ref{thm:3 equiv dichotomies}}

We start with an interesting extension of Theorem~\ref{thm:loop dichotomy} which crucially uses the RSW theorem~\ref{thm:RSW}.

\begin{lemma}
\label{lem:zero-loops in loop dichotomy}
Suppose that alternative~(ii) of Theorem~\ref{thm:loop dichotomy} occurs. Then, there exist $C, c, \alpha > 0$ such that
\begin{align*}
\bbP_{L_{4n}}^{ \pm 1 } [\calO^*_{\omega = 0}(L_n, L_{2n})] \geq 1-Ce^{-cn^\alpha} \qquad \text{for all }n.
\end{align*}
\end{lemma}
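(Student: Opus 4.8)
The plan is to bound the complementary event and show it is stretched-exponentially small. Write $q_n := 1 - \bbP^{\pm 1}_{L_{4n}}[\calO^*_{\omega = 0}(L_n, L_{2n})]$; the claim is vacuous for bounded $n$, so assume $n$ large. By planar duality in the annulus $L_{2n}\setminus L_n$ --- the annular counterpart of Equation~\eqref{eq:quad cross duality} --- the absence of an $\omega = 0$ dual circuit around $L_n$ inside $L_{2n}$ is exactly the event that there is an $\omega = 1$ primal path from $\partial L_n$ to $\partial L_{2n}$ inside the annulus $L_{2n}\setminus L_n$, so $q_n$ equals the probability of such a crossing under $\bbP^{\pm 1}_{L_{4n}}$. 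The strategy is to show, via the RSW Theorem~\ref{thm:RSW}, that an annular crossing at scale $n$ is, up to a polynomial loss, no more likely than an $\omega = 1$ loop around $L_{2n}$, and then to quote that the latter has probability $a_{2n}$, which under the assumed alternative~\eqref{eq:loop-loc} decays like $C e^{-c(2n)^\alpha}$.

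First I would localise the crossing. An $\omega = 1$ path from $\partial L_n$ to $\partial L_{2n}$ inside $L_{2n}\setminus L_n$ induces an $\omega = 1$ crossing, in one of its two directions, of at least one of a bounded number $k=O(1)$ of rectangles $R^{(1)},\dots,R^{(k)}\subset L_{3n}$, each congruent under a symmetry of $\bbG$ to a fixed $R_{\rho n, n}$ --- the elementary covering of an annulus of bounded modulus by scale-$n$ blocks that is used, e.g., when four copies of $\calH^*_{\calE}$ are assembled into a loop in the proof of Proposition~\ref{thm:renorm}. Each event $\calV_{\omega = 1}(R^{(j)})$ is determined by (and increasing in) $(|h|,\omega)$ restricted to $L_{3n}\subset L_{4n}$, so by Corollary~\ref{cor:+-1 SMP ineq for abs val} (enlarging the domain to a large square containing $L_{4n}$) together with the translation and reflection invariance of $\bbG$ one obtains $\bbP^{\pm 1}_{R_{\rho_2 n, \rho_2 n}}[\calV_{\omega = 1}(R_{\rho n, n})] \geq q_n / k$ for a suitable fixed $\rho_2$.

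Next I would run RSW and glue a loop. Theorem~\ref{thm:RSW} upgrades this to $\bbP^{\pm 1}_{R_{(\rho' + \epsilon)n, 2\epsilon n}}[\calH^*_{\calE}(R_{\rho' n, \epsilon n})]\geq c\,q_n^{C}$ for appropriate parameters; since $\calH^*_{\calE}\subset \calH_{\omega = 1}$ (the Remark after Theorem~\ref{thm:RSW}) and $\calH_{\omega = 1}$ is again increasing in $(|h|,\omega)$, Corollary~\ref{cor:+-1 SMP ineq for abs val} gives $\bbP^{\pm 1}_{L_{6n}}[\calH_{\omega = 1}(R)]\geq c\,q_n^{C}$ for every translate or rotate $R\subset L_{3n}$ of that thin rectangle, the fixed parameters $\rho,\rho_2,\rho',\epsilon$ (depending only on $\bbG$) being chosen so that all rectangles involved lie inside $L_{6n}$. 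Arranging four such rectangles into a frame just outside $L_{2n}$ --- two crossed in one direction, two in the perpendicular one, overlapping pairwise near the corners, exactly as four copies of $\calH^*_{\calE}$ build $\calO^*_{\calE}(L_n)$ in the proof of Proposition~\ref{thm:renorm} --- and applying the FKG inequality for $(|h|,\omega)$ under $\bbP^{\pm 1}_{L_{6n}}$ then yields an $\omega = 1$ circuit around $L_{2n}$ with all edges in $L_{3n}$, of probability $\geq c'\,q_n^{C'}$.

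Finally, such a circuit is in particular an $\omega = 1$ loop around $L_{2n}$ inside $L_{6n}$, so $a_{2n} = \bbP^{\pm 1}_{L_{6n}}[\calO_{\omega = 1}(L_{2n})]\geq c'\,q_n^{C'}$. Invoking alternative~\eqref{eq:loop-loc}, $a_{2n}\leq C e^{-c(2n)^\alpha}$, whence $q_n\leq (C/c')^{1/C'} e^{-c(2n)^\alpha/C'}\leq C_2 e^{-c_2 n^\alpha}$, i.e. $\bbP^{\pm 1}_{L_{4n}}[\calO^*_{\omega = 0}(L_n, L_{2n})] = 1 - q_n\geq 1 - C_2 e^{-c_2 n^\alpha}$, as claimed (with the same exponent $\alpha$, up to constants). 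The step I expect to be most delicate is the bookkeeping of the domain comparisons: one must move between $L_{4n}$, the large square of Theorem~\ref{thm:RSW}, the domain carrying the RSW output crossing, and $L_{6n}$ only through events increasing in $(|h|,\omega)$ --- which is precisely why every crossing and circuit above is kept in the $\omega = 1$ form rather than the $\calE$-form in which Theorem~\ref{thm:RSW} naturally produces them --- so that the minimality of the $\{\pm 1\}$ boundary condition makes Corollary~\ref{cor:+-1 SMP ineq for abs val} point in the favourable direction each time; the annular duality, the covering of the annulus by $O(1)$ blocks, and the RSW-and-necklace gluing are otherwise routine given the tools already developed.
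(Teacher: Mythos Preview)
Your proposal is correct and follows essentially the same route as the paper: annular duality gives a primal $\omega=1$ crossing of the annulus, a bounded covering localises it to a $\calV_{\omega=1}$-crossing of a fixed-aspect-ratio rectangle, Theorem~\ref{thm:RSW} upgrades this to a long thin crossing, and FKG gluing produces a loop whose probability is controlled by~\eqref{eq:loop-loc}. The only cosmetic difference is the target quantity: after RSW, the paper keeps the crossings in the dual $\calE^*$-form and glues them into $b_n=\bbP^{\pm1}_{L_{3n}}[\calO^*_{\calE}(L_n)]$ (then invokes $b_n\le a_n$ via~\eqref{eq:two-side bound}), whereas you first pass to primal $\omega=1$ crossings via the Remark after Theorem~\ref{thm:RSW} and glue them into $a_{2n}=\bbP^{\pm1}_{L_{6n}}[\calO_{\omega=1}(L_{2n})]$ directly; either choice yields the same stretched-exponential bound on $q_n$.
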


\begin{proof}
Start by observing that $ \calO_{\omega = 0}^* (L_n, L_{2n}) ^c$ equivalently means that there is a primal-crossing of $\omega = 1$ from $ \partial L_{2n}$ to $\partial L_n$. Let $R$ denote a translate of $R_{5n/2, n/2}$ that covers the ``bottom rectangle'' of $ L_{2n} \setminus L_n$. If such a primal-crossing lands on the bottom of $ \partial L_{2n}$, it necessarily induces $\calV_{\omega = 1} (R)$. Combining these observation with the symmetric roles of the sides of $L_{2n}$, one obtains
\begin{align*}
1 - \bbP_{L_{4n}}^{ \pm 1  } [\calO^*_{\omega = 0}(L_n, L_{2n})] 
\leq 4 \bbP_{L_{4n}}^{ \pm 1 } [\calV_{\omega = 1} (R)].
\end{align*}
By Theorem~\ref{thm:RSW} and simple FKG arguments, there exist $c', C' > 0$ such that
\begin{align*}
c' \bbP_{L_{4n}}^{ \pm 1 } [\calV_{\omega = 1} (R)]^{C'}
\leq
b_n.
\end{align*}
The claim now follows from alternative~(ii) of Theorem~\ref{thm:loop dichotomy} (as $b_n \leq a_n$ by~\eqref{eq:two-side bound}).
\end{proof}

\begin{proof}[of Part~(i) in Theorem~\ref{thm:3 equiv dichotomies}]
Let $L^j $, $j = 1, 2,\ldots$, be $x$-centered lozenges or squares, with side length $k 2^j$ horizontal periods. Fix $J$ and study $(h, B, \omega) \sim \bbP^{ \pm 1 }_{L^J}=:\bbP_J$; note that there exists $C > 0$, depending only on $\bbG$ and its bi-periodic structure, such that if $|h(x)| \geq Ck$, then $\omega = 1$ on the entire $L^2$. In particular, if $|h(x)| \geq Ck$, there exist a largest value of $j \in \{ 3, 4, \ldots, J -1 \}$ for which $\calO^*_{\omega = 0}(L^{j-2},L^{j-1})$ does not occur (and $\calO^*_{\omega = 0}(L^{j-1},L^{j})$ occurs, where we count $\partial L^J$ as a dual-loop of $\omega = 0$ if $j=J$); let $\calE_\ell$ be the event that this largest value of $j$ is $\ell$. We thus have
\begin{align*}
\bbP_J [|h(x)| \geq Ck]
& \leq
\bbP_J [\bigcup_{\ell=3}^{J-1} \calE_\ell ] \\
& \leq
\sum_{ \ell =3}^{J-1}
\bbP_J [\calO^*_{\omega = 0} (L^{\ell-2},L^{\ell-1})^c \cap \calO^*_{\omega = 0} (L^{\ell-1},L^{\ell})].
\end{align*}
An exploration argument to detect $\calO^*_{\omega = 0} (L^{\ell-1},L^{\ell})$ and FKG yield
\begin{align*}
\bbP_J [\calO^*_{\omega = 0} (L^{\ell-2},L^{\ell-1})^c \; | \; \calO^*_{\omega = 0} (L^{\ell-1},L^{\ell})] \leq 
\bbP^{ \pm 1 }_{L^\ell} [\calO^*_{\omega = 0} (L^{\ell-2},L^{\ell-1})^c]
\leq
C' e^{-c(k2^\ell)^\alpha},
\end{align*}
where the latter inequality follows due to Lemma~\ref{lem:zero-loops in loop dichotomy}. Combining the two previous displayed equations,
\begin{align*}
\bbP_J [|h(x)| \geq Ck]
\leq
\sum_{ \ell =3}^\infty C' e^{-c(k2^\ell)^\alpha}.
\end{align*}
Note that the right-hand side is independent of $J$. It readily implies (e.g., by computing the obvious integral upper-bound of the series above) that $\bbE_J [h(x)^2]$ is uniformly bounded in $J$, i.e.,~\eqref{eq:Var-loc} occurs. This concludes the proof.
\end{proof}

\subsection{Part~(ii) of Theorem~\ref{thm:3 equiv dichotomies}: lower bound}
\label{subsec:pf 3 equiv dichotomies}

\begin{proof}[the lower bound in part~(ii) of Theorem~\ref{thm:3 equiv dichotomies}]
By~\eqref{eq:FKG-|h|}, it suffices to consider the case $D=L_n$, $x=0$. Denote now
\begin{align*}
v_n := \Var^{0}_{L_n} h(0) = \bbE^{ \{ \pm 1 \} }_{L_n} [h(0)^2].
\end{align*}
Note that by the FKG for $|h|$, $v_n$ is thus increasing in $n$. It thus suffices to prove the lower bound of (ii) for powers of $R$ (i.e., the parameter $R=3$ from Theorem~\ref{thm:loop dichotomy}), $n_k = R^k$, one has
\begin{align*}
v_{n_{k+1}} \geq v_{n_k} + c,
\end{align*}
for an absolute constant $c > 0$. Denote below $\bbP^{ \pm 1 }_{L_{n_k}} = : \bbP_k$ for short.

Let thus $(h, B, \omega) \sim \bbP_{k+1}$ and define a percolation process $\nu$ on the edges by
\begin{align*}
\nu_{\langle u, v \rangle} 
 =
\begin{cases}
0, \qquad \{ |h(u)|, |h(v)| \} = \{ 5, 7 \} \\
B_{\langle u, v \rangle}, \qquad |h(u)|= |h(v)| \in \{ 5, 7 \} \\
1, \qquad \text{otherwise}.
\end{cases}
\end{align*}
Trivially,
\begin{align*}
v_{n_{k+1}} = \bbP_{k+1}[\calO^*_{\nu=0} (L_{n_k})] \bbE_{k+1} [ h(0)^2 \; | \; \calO^*_{\nu = 0} (L_{n_k}) ] + \bbP_{k+1}[\calO^*_{\nu=0} (L_{n_k})^c] \bbE_{k+1} [ h(0)^2 \; | \; \calO^*_{\nu = 0} (L_{n_k})^c ].
\end{align*}
where, by the assumed~\eqref{eq:loop-deloc}
\begin{align*}
\bbP_{k+1} [ \calO^*_{\nu = 0} (L_{n_k})] \geq
\bbP_{k+1} [ \calO^*_{\calE} (L_{n_k})] 
\geq \tfrac{1}{2 C}.
\end{align*}

Now, detect the event $\calO^*_{\nu = 0} (L_{n_k})$ by exploring the cluster of $\nu > 0$ of the boundary $\partial L_{n_{k+1}}$ in $L_{n_{k+1}} \setminus L_{n_k}$, and its bounding $\nu = 0$ dual-loops. If $\calO^*_{\nu = 0} (L_{n_k})$ does not occur, continue by revealing $\nu$ and $|h|$ on the entire $L_{n_{k+1}} \setminus L_{n_k}$ and $\partial L_{n_k}$. Note that now that when $|h|$ is known, $\nu$ equivalently provides information only about $B_{ \langle u,v \rangle} $ on the edges where $|h(u)|= |h(v)| \in \{ 5, 7 \} $. In particular, since $B$ and $h$ are independent, this exploration provides an absolute-value boundary condition. Using then~\eqref{eq:bdary cut} and~\eqref{eq:CBC-|h|}, one obtains
\begin{align*}
\bbE_{k+1} [ h(0)^2 \; | \; \calO^*_{\nu = 0} (L_{n_k})^c ] \geq \bbE_{k} [ h(0)^2 ] = v_k.
\end{align*}

In the complementary case where $\calO^*_{\nu = 0} (L_{n_k})$ does occur, the exploration above generates by SMP a boundary condition $h \in \{ 5, 7\}$ or $h \in \{ -5, -7\}$ for the unexplored simply-connected domain $\calD \supset L_{n_k}$. Then, compute
\begin{align*}
\bbE_\calD^{ \pm \{ 5, 7\} } [h(0)^2] 
= \bbE_\calD^{ \pm 1  } [(h(0) \pm 6)^2] 
= \bbE_\calD^{ \pm 1 } [h(0)^2] + 36
\geq v_k + 36,
\end{align*}
where in the second step we used the sign flip symmetry to deduce $\bbE_\calD^{ \pm 1 } [h(0)] = 0$, and in the third the FKG for $|h|$. Combining the four previously displayed inequalities, we obtain
\begin{align*}
v_{n_{k+1}}  \geq v_k + \tfrac{18}{ C}.
\end{align*}
This proves the claim.
\end{proof}

\subsection{Part~(ii) of Theorem~\ref{thm:3 equiv dichotomies}: upper bound}
\label{subsec:pf 3 equiv dichotomies 2}

Throughout this subsection, by mapping the bi-periodic embedding of $\bbG$ linearly if needed, we will assume that $\bbG$ is $\bbZ^2$ translation invariant. (Only the translational symmetries of $\bbG$ will be needed, and this will hence simplify the presentation.) Denote by $\bbT_N$ denote the ``locally $\bbG$'' graph on the torus, obtained by identifying the top and bottom (resp. left and right) sides of $\bbG \cap [-N, N]^2$. Denote by $\bbE_{\bbT_N}$ the random Lipschitz model on $\bbT_N$, with the boundary condition $h(r) \in \{ \pm 1\}$ imposed only at an auxiliary root node $r$ (which often, such as in the theorem below, need not be specified). The main result of this subsection is the following.

\begin{proposition}
\label{prop:var upper bd on torus}
Suppose that~\eqref{eq:loop-deloc} occurs in Theorem~\ref{thm:loop dichotomy}. Then, there exists $C > 0$ such that for all $N$ and all $x, y \in V( \bbT_N)$ with $d_{\bbT_N} (x, y) \geq 2$, we have
\begin{align*}
\bbE_{\bbT_N} [(h(x) - h(y))^2] \leq \log C d_{\bbT_N} (x, y),
\end{align*}
where $d_{\bbT_N}$ denotes the graph distance.
\end{proposition}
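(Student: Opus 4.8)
The plan is to prove the bound on the torus by a renormalization/scale-induction argument on $\log$-distance, using the same loop-based tools that proved the renormalization inequality and its consequences. The crucial structural input is that on the torus we may exploit the translation invariance (and the FKG/SMP apparatus, which carry over to $\bbT_N$ since the random Lipschitz model has finite energy and the torus is finite).

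\textbf{Step 1: Uniformly positive loop probabilities on the torus.} First I would transfer alternative~(i) of Theorem~\ref{thm:loop dichotomy}, i.e.~\eqref{eq:loop-deloc}, to the torus. Concretely, for $x \in V(\bbT_N)$ and $n$ with $L_n$ (centered at $x$) fitting well inside $\bbT_N$, the torus measure $\bbE_{\bbT_N}$ dominates (via CBC and SMP, after conditioning on the configuration outside $L_{Rn}$ and using that $\{\pm1\}$-type boundary conditions are the smallest admissible absolute-value ones up to comparison) the finite-domain measure $\bbP^{\pm1}_{L_{Rn}}$ restricted to the relevant decreasing/loop events. Hence $\bbE_{\bbT_N}[\calO^*_{\calE}(L_n)] \geq c \, b_n \geq c' > 0$ uniformly in $n$ and $N$ (using $b_n \geq c$ from~\eqref{eq:loop-deloc}), for all lozenges $L_n$ of any center that fit inside $\bbT_N$ with room to spare. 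By RSW (Theorem~\ref{thm:RSW}) and FKG this also gives uniformly positive probability of $\calE$-loops in arbitrary bounded-aspect-ratio annuli around any point, at any scale up to $\sim N$.

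\textbf{Step 2: From loop probabilities to height-difference control at a single scale.} Using the sign-flip symmetry trick (set $\tilde h = 6-h$ near an $\calE$-loop, exactly as in the proof of the lower bound of part~(ii) of Theorem~\ref{thm:3 equiv dichotomies} and in the renormalization proof), an $\calE$-loop $\calO^*_\calE$ around a region is equivalently a loop on which $|h|\geq 5$, so the sign of $h$ is frozen along it, and the SMP then ``renews'' the boundary condition to $\{\pm1\}$ (up to a height shift) inside. The standard consequence — paralleling Lemma~\ref{lem:zero-loops in loop dichotomy} but now in the delocalized direction — is that for $x,y$ with $d_{\bbT_N}(x,y) = d$, one can find with uniformly positive probability a sequence of $\sim \log d$ nested $\calE$-loops around $x$ (respectively $y$) at dyadic scales $1, 2, 4, \dots, d$, each contributing an independent (by SMP, conditionally) $\pm$ sign choice; the variance of $h(x)$ relative to the outermost loop grows additively in the number of successfully-nested loops. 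The upper bound we want is the ``easy'' direction: we must show the variance does \emph{not} exceed $C\log d$. For this I would instead argue that conditioning on the innermost $\calE$-loop around $x$ at each dyadic scale, the conditional law of the height difference across that annulus has uniformly bounded variance, because between two consecutive frozen loops the model is a bounded-size (in log scale) random Lipschitz model with bounded boundary oscillation, whose height fluctuation across one dyadic annulus has bounded variance by the exponential-tail estimate available once one conditions on a frozen loop (cf.~Proposition~\ref{prop:exp decay in loc phase}-type reasoning, applied in a single annulus).

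\textbf{Step 3: Summation / telescoping.} Write $h(x) - h(y)$ as a telescoping sum over the $\sim \log_2 d$ dyadic annuli separating $x$ from $y$ (plus a bounded number of ``meeting'' annuli in the middle). Conditioning successively on the innermost frozen $\calE$-loop in each annulus, the SMP makes the successive increments a martingale-type sequence; each increment has conditional variance $\leq C_0$ by Step~2, and by the tower property $\bbE_{\bbT_N}[(h(x)-h(y))^2] \leq C_0 \cdot (\#\text{annuli}) + O(1) \leq C \log d_{\bbT_N}(x,y)$. Here one must be slightly careful that frozen loops at each scale exist with probability bounded below, not $1$; the clean way is to sum over the last scale at which no frozen loop was found (exactly the decomposition used in the proof of part~(i) of Theorem~\ref{thm:3 equiv dichotomies}), which gives $\bbP[\text{no frozen loop separating } x,y \text{ at scale} \geq 2^j] \leq C' e^{-c 2^{j\alpha}}$ only in the \emph{localized} case, so in the delocalized case one instead uses the uniformly-positive-probability version: partition the $\log_2 d$ dyadic annuli into blocks of constant size $L$, so that within each block a frozen $\calE$-loop appears with probability $\geq 1-\epsilon(L) $ close to $1$, and run the telescoping over blocks, absorbing the bounded failure probability into the constant $C$ via the crude bound $|h(x)-h(y)|\leq 2 d$ on the (exponentially rare in block count, hence summable) bad event.

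\textbf{Main obstacle.} The delicate point is Step~2: establishing that a \emph{single} dyadic annulus, conditioned on carrying a frozen $\calE$-loop on its outer side, contributes only \emph{bounded} variance to the height difference — i.e.\ the upper bound, not the lower bound. The lower bound was the content of part~(ii) of Theorem~\ref{thm:3 equiv dichotomies} and is comparatively routine; the matching upper bound per scale needs a uniform (in $N$ and in the conditioning) moment estimate for the height oscillation across one annulus of bounded aspect ratio, under $\{\pm 1\}$-type boundary conditions on its outer boundary. I expect this to follow by combining the crossing estimates of Corollary~\ref{cor:RSW strip to strip}/Theorem~\ref{thm:RSW} (to show that intermediate frozen loops appear \emph{inside} a single dyadic annulus are \emph{not} needed) with the log-concavity of the one-point marginal~\cite{She05} and a union bound over scales \emph{within} the annulus, but making the constant genuinely uniform in the conditioning coming from the surrounding exploration is where the care is required; the SMP (Lemma~\ref{lem:SMP with percolation}) and CBC-$|h|$~\eqref{eq:CBC-|h|} are the tools that should let one dominate the conditional law by $\bbP^{\pm 1}$ on a fixed annulus and thereby get a universal constant.
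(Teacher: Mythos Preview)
Your approach has a genuine gap precisely at the point you flag as the obstacle, and the tools you propose there go in the wrong direction. Telescoping over $\calE$-loops works on the scales where a loop is found: each such loop renews the inside to a shifted $\{\pm 1\}$ boundary and contributes a $\pm 6$ shift --- this is the mechanism of the \emph{lower} bound in Section~\ref{subsec:pf 3 equiv dichotomies}. The difficulty is the scales (or blocks) where no loop is found. Your crude-bound patch fails: the probability that some block among the $K/L$ lacks a loop is by a union bound at most $(K/L)(1-c)^L$, and that times the crude bound $4d^2$ is not $O(\log d)$ unless $L$ is of order $\log d$, which collapses the telescope to a single term and returns you to the original estimate. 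And CBC-$|h|$ cannot rescue a bad block: $\{\pm 1\}$ is the \emph{minimal} absolute-value boundary, so any revealed conditioning dominates it and yields $\bbE[h(0)^2\mid\cdot]\geq \bbE^{\pm 1}[\cdot]$, the wrong inequality for an upper bound.

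The paper inverts the exploration. It introduces $w_n=\sup_D \bbE^{\pm 1}_D[h(0)^2]$ over domains $D$ with a boundary point in $[-n,n]^2$, and proves the scale step $w_{2n}\leq w_n+C$ (Lemma~\ref{lem:var bound explo}) by seeking not a \emph{high} separating loop but a \emph{low} primal connection: with probability $\geq c'$, $\partial D$ is connected to $[-n,n]^2$ by a path on which $|h|$ stays below a fixed level $6\ell-1$ (Lemma~\ref{lem:wn un log bounds} and its Corollary). On that event the unexplored component containing $0$ has a boundary point in $[-n,n]^2$ and bounded boundary heights, so its contribution is $\leq w_n+36\ell^2$; on the complement one records only $\leq w_{2n}+36\ell^2$. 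This gives $w_{2n}\leq (1-c')(w_{2n}+36\ell^2)+c'(w_n+36\ell^2)$, hence $w_{2n}\leq w_n+C$. The idea you are missing is that \eqref{eq:loop-deloc} also lower-bounds the low-connection probability, via a height shift and duality: the failure of the low connection is an $\calE$-crossing of a bounded-ratio annulus, which after $h\mapsto 6-h$ becomes an $\omega=0$ crossing, whose complement is an $\omega=1$ loop --- and that loop has probability $\geq c$ by~\eqref{eq:loop-deloc}.
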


Let us see how this concludes the proof of Theorem~\ref{thm:3 equiv dichotomies}.

\begin{proof}[the upper bound in part~(ii) of Theorem~\ref{thm:3 equiv dichotomies}]
Embed $D$ on a torus $\bbT_N$ (with $N$ large enough to fit $D$ on $\bbT_N$); let $y$ be the closest boundary point to $x$ on $\partial D$, and set $r=y$ for the torus measure. By FKG for $(|h|, \omega)$ and then SMP 
\begin{align}
\label{eq:torus var lower bound}
\bbE_{\bbT_N} [h(x)^2] \geq \bbE_{\bbT_N} [h(x)^2 \; | \; \omega_{D^c} = 0]
= \bbE_{D}^{\pm 1} [h(x)^2].
\end{align}
It is then readily observed that for $r=y$, we have\footnote{
Indeed, by sign flip symmetry (all measures are on $\heightfcns_{\bbT_N}$)
\begin{align*}
\bbE_{\bbT_N} [h(x)^2] = \bbE^{h(y) \in \{ \pm 1\}} [h(x)^2]  = \bbE^{h(y) = 1 } [h(x)^2] = \bbE^{h(y) = -1 } [h(x)^2].
\end{align*}
On the other hand, by another symmetry argument, $ \bbE^{h(y) = \pm 1 } [h(x)] = \pm 1$ so that
\begin{align*}
\bbE^{ h(y) = \pm 1 } [(h(x)-h(y))^2] = \bbE^{ h(y) = \pm 1 } [h(x)^2] - 1.
\end{align*}
This is independent of the sign $h(y) = \pm 1$; hence by conditioning $\bbE_{\bbT_N}$ on the value $\pm 1$ of $h(y)$, finally
\begin{align*}
\bbE_{\bbT_N} [(h(x)-h(y))^2]  = \bbE^{ h(y) = \pm 1 } [(h(x)-h(y))^2] = \bbE_{\bbT_N} [h(x)^2] - 1.
\end{align*}
}
\begin{align}
\label{eq:boring}
 \bbE_{\bbT_N} [h(x)^2] =  \bbE_{\bbT_N} [(h(x)-h(y))^2] + 1,
\end{align}
which, by Proposition~\ref{prop:var upper bd on torus}, concludes the proof.
\end{proof}

The rest of this subsection constitutes the proof of Proposition~\ref{prop:var upper bd on torus}. For simplicity, shift $x$ to the origin, $x=0$, and define
\begin{align*}
w_n = \sup_{D: D^c \cap [-n, n]^2 \neq \emptyset} \bbE^{ \pm 1 }_D [h(0)^2],
\end{align*}
where the supremum is over finite subgraphs $D=(V, E) \ni x$ of $\bbT_N$ such that $y \in D^c \cup \partial D$  and that are (i) bounded inside a contractible loop on the toric dual $\bbT_N^*$ that surrounds $x$ but not $y$; or (ii) bounded outside a contractible loop on $\bbT_N^*$ that surrounds $y$ but not $x$; or (iii) bounded between two disjoint non-contractible loops on $\bbT_N^*$ that together separate $x$ from $y$. In all cases, the boundary condition $\{ \pm 1 \}$ is imposed on the primal-vertices adjacent to an edge crossing these bounding dual loops. Note that $w_n$ also depend on $y$ and $N$; we however keep this implicit in the notation; also, $w_n$ is manifestly increasing in $n$, and it suffices to study $w_n$ for $n \leq d_\infty (0, y) = \sup \{ r: \bar{y} \not \in [-r, r]^2\}$, where $\bar{y}$ is the lift of $y \in \bbT_N$ to $[-N, N]^2$. Also, denote
\begin{align*}
u_n = w_{d_\infty (0, y) - n}.
\end{align*}
The proof of Proposition~\ref{prop:var upper bd on torus} now boils down to the following lemma in two parts.

\begin{lemma}
\label{lem:var bound explo}
Suppose that~\eqref{eq:loop-deloc} occurs in Theorem~\ref{thm:loop dichotomy}. Then, there exists $C > 0$ \emph{independent of $N$, $y$ and $x$} such that
\begin{align}
\label{eq:1st upper bound var}
& \text{i)} \qquad w_{2n} \leq w_n + C, \qquad \text{for all $n \leq d_\infty (0, y)/3$; and}
\\
&
\label{eq:2nd upper bound var}
\text{ii)} \qquad u_{n} \leq u_{2n} + C, \qquad \text{for all $n \leq d_\infty (0, y)/3$.}
\end{align}
\end{lemma}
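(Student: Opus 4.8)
The plan is to treat both inequalities as ``doubling estimates'' for the supremum variance $w_n$, and to prove them by a multi-scale exploration that peels off successive dyadic shells around $x$ (for part~(i)), resp.\ around $y$ (for part~(ii)), until it uncovers a surrounding \emph{level loop}; the spatial Markov property then cuts out a genuinely smaller admissible domain whose boundary condition is only a bounded translate of $\{\pm1\}$, which is exactly what an additive constant $C$ can absorb.

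Let me spell out the engine for part~(i). Fix $D$ of one of the three admissible shapes with $D^c\cap[-2n,2n]^2\neq\emptyset$. If already $D^c\cap[-n,n]^2\neq\emptyset$ then $D$ is admissible in the supremum defining $w_n$, so $\bbE^{\pm1}_D[h(0)^2]\le w_n$ and we are done; hence assume $[-n,n]^2\subset D$. Work with the sign-forgetting percolation $\nu$ (``false zeroes'' at absolute heights $\{5,7\}$, as in~\eqref{eq:def of the perco nu} and the lower-bound argument of Section~\ref{subsec:pf 3 equiv dichotomies}), and let $\calG$ be the event that some dual circuit of $\{\nu=0\}$ surrounds $x$ inside the annulus $[-n,n]^2\setminus[-\tfrac n2,\tfrac n2]^2$. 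Exploring inward from $\partial D$ and revealing the innermost such circuit $\gamma$ together with everything outside it, Lemma~\ref{lem:SMP with percolation} shows that the conditional law of $h$ inside $\gamma$ is $\bbP^{\xi}_{D'}$ where $D'\subset[-n,n]^2$ is the interior of $\gamma$ and $\xi$ is constantly $\{5,7\}$ or $\{-5,-7\}$ on the inner boundary (the crossed edges lie in one $\omega$-cluster, so the sign is frozen). By the affine shift and sign-flip symmetry, $\bbE^{\xi}_{D'}[h(0)^2]=\bbE^{\pm1}_{D'}[h(0)^2]+36\le w_n+36$ since $D'$ is admissible for $w_n$. Therefore $\bbE^{\pm1}_D[h(0)^2\ind_{\calG}]\le(w_n+36)\,\bbP[\calG]$.

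Two things remain: a uniform lower bound $\bbP[\calG]\ge p_0>0$, and control of $\bbE^{\pm1}_D[h(0)^2\ind_{\calG^c}]$. The first is the RSW input: loop-deloc~\eqref{eq:loop-deloc} with Theorem~\ref{thm:RSW} (equivalently Corollary~\ref{lem:crack alternative}), sign-flip symmetry and the FKG inequality force high-height crossings of \emph{both} signs in fixed-modulus rectangles, which one stitches — through the interface they must create — into a $\{\nu=0\}$ circuit around $x$ in a fixed-modulus annulus with probability $\ge p_0$; uniformity over the possibly small or irregular admissible $D$ follows from~\eqref{eq:CBC-|h|} and the spatial Markov property by comparing $D$ upward to a fixed large reference domain, the circuit event being decreasing in $(|h|,\omega)$. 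For $\calG^c$ one iterates the same construction one dyadic scale inward: explore the $\{\nu=1\}$ cluster of $\partial([-n,n]^2)$ and its bounding $\{\nu=0\}$ dual curves and repeat. Because the exploration only ever reports a boundary condition that is a bounded translate of $\{\pm1\}$ (this is precisely why one uses the sign-forgetting $\nu$ and not $\omega$), and because each fresh shell carries, in the conditional sense of Lemma~\ref{lem:SMP with percolation}, probability $\ge p_0$ of containing a surrounding level loop, the number of unsuccessful scales is stochastically dominated by a geometric variable; after $O(1)$ scales the surviving domain lies inside $[-n,n]^2$ and reduces to $w_n$ as above, while the height translate accumulated over the unsuccessful scales has exponential tails and so contributes only $O(1)$ to the second moment. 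Collecting terms gives $\bbE^{\pm1}_D[h(0)^2]\le w_n+C$. Part~(ii), $u_n\le u_{2n}+C$, is the mirror statement: here the admissible domains nearly exhaust $\bbT_N\setminus\{y\}$, so one runs the identical exploration in shells around $y$, and the level loop produced is either a small loop encircling $y$ (yielding an admissible domain of type~(ii)) or a pair of non-contractible loops separating $x$ from $y$ (type~(iii)); the RSW and pushing inputs (Theorem~\ref{thm:RSW}, Proposition~\ref{prop:push}, Proposition~\ref{prop:pushing}) are applied near $y$ rather than near $x$.

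The main obstacle is exactly this last bookkeeping: making rigorous the ``geometrically many unsuccessful scales, each of bounded cost'' heuristic while keeping every constant — the RSW lower bound $p_0$, the per-scale bound on the height translate, and the exponential tail — uniform in $N$, $x$ and $y$. The delicate point is that an unsuccessful scale must still leave behind a random-Lipschitz measure whose boundary condition is within a bounded shift of $\{\pm1\}$; this is not automatic (a large height excursion across a shell would ruin it) and is what forces one to explore with the sign-forgetting percolation $\nu$ and to track absolute heights and signs separately through the Ising structure of Lemma~\ref{cor:signs are Ising}, exactly as in the Pushing Theorem.
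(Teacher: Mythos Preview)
Your plan is a spatial multi-scale exploration at the fixed level $\{5,7\}$, with a geometric tail on the number of failed dyadic shells. The paper does something quite different and, in the end, simpler. It first iterates in \emph{height} rather than in space: the preceding Lemma~\ref{lem:wn un log bounds} shows that with probability $\ge c$ there is a primal path from $\partial D_{2n}$ into $[-n,n]^2$ on which $h\le_B 5$, and the Corollary right after it repeats this $\ell$ times (for a fixed $\ell$ depending only on $c$) to get a path of $|h|\le_B 6\ell-1$ with probability $\ge c'$. One then performs a \emph{single} spatial decomposition with the percolation $\nu$ defined at level $\{6\ell\pm 1\}$: on the good event $E_n^c$ (probability $\ge c'$), the domain cuts to one admissible for $w_n$ with boundary shifted by at most $6\ell$, giving $\le w_n+36\ell^2$; on the bad event $E_n$, one bounds the conditional second moment by $w_{2n}+36\ell^2$ \emph{using the very supremum $w_{2n}$ one is estimating}. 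This bootstrapping inequality
\[
w_{2n}\;\le\;(1-c')(w_{2n}+36\ell^2)+c'(w_n+36\ell^2)
\]
rearranges to $w_{2n}\le w_n+36\ell^2/c'$. No dyadic iteration, no geometric tail, no tracking of accumulated shifts.

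Your route has two genuine difficulties that this trick avoids. First, your uniform lower bound $\bbP^{\pm1}_D[\calG]\ge p_0$ is argued by ``comparing $D$ upward to a fixed large reference domain, the circuit event being decreasing in $(|h|,\omega)$''. But a $\{\nu=0\}$ circuit is a \emph{level-set} event at $|h|\in\{5,7\}$; it is not monotone in $|h|$ (it fails both when $|h|$ is uniformly small and when $|h|$ is uniformly large), so neither \eqref{eq:CBC-|h|} nor the $(|h|,\omega)$-FKG applies to it. The paper sidesteps this by working instead with the \emph{connection} event $\{\partial D_{2n}\stackrel{h\le_B 5}{\longleftrightarrow}[-n,n]^2\}$, which it lower-bounds via the specific geometry of the admissible $D_{2n}$ (Lemma~\ref{lem:wn un log bounds}) rather than by monotone comparison. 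Second, after a failed shell your remaining domain is irregular: the $\{\nu=1\}$ cluster from outside has penetrated past $[-n/2,n/2]^2$, so the next annulus need not lie inside the surviving domain, and different boundary components of that domain may carry different signs. Making ``each fresh shell carries, conditionally, probability $\ge p_0$'' precise then requires exactly the kind of geometric/pushing work you flag as the main obstacle. The paper's single-step bootstrap removes both issues at once.
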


\begin{remark}
Readers only interested in simply-connected discrete domains may observe that any simply-connected domain can be embedded on a large enough torus. Then,~\eqref{eq:1st upper bound var} alone will be sufficient to conclude the upper bound of part~(ii) of Theorem~\ref{thm:3 equiv dichotomies} for simply-connected domains. Consequently, such readers may restrict to ``cases (i)'' in several two-case formulations in the rest of this subsection.
\end{remark}

Let us now see how Lemma~\ref{lem:var bound explo} proves Proposition~\ref{prop:var upper bd on torus}.

\begin{proof}[Proposition~\ref{prop:var upper bd on torus}]
Note first that $w_1$ is bounded by a constant that only depends on $\bbG$, due to the one-Lipschitzness of $h$. Equation~\eqref{eq:1st upper bound var} (and the fact that $w_{n}$ is increasing in $n$) then shows that
\begin{align*}
w_n \leq C' \log n, \qquad \text{for all } n \leq 2 d_\infty (0, y)/3
\end{align*}
and similarly,
\begin{align*}
u_1 \leq u_n + C' \log n, \qquad \text{for all } n \leq 2 d_\infty (0, y)/3;
\end{align*}
here $C'$ is independent of $N$, $x$ and $y$. Then note that by definition
$$
w_{d_\infty (0, y)}-n
=
u_n ,$$
which we use for any fixed $d_\infty (0, y)/3 \leq n \leq 2 d_\infty (0, y)/3$. Also by definition and Lipschitzness, there exists $C''$ depending only on the bi-periodic lattice such that
\begin{align*}
\bbE_{ \bbT_N } [(h(x)-h(y))^2] = \bbE^{ \pm 1 }_{\partial D=\{y \}} [h(0)^2] \leq u_1 + C''.
\end{align*}
The claim follows by combining the displayed inequalities.
\end{proof}

It thus remains to prove Lemma~\ref{lem:var bound explo}. We start with the following.

\begin{lemma}
\label{lem:wn un log bounds}
Suppose that~\eqref{eq:loop-deloc} occurs in Theorem~\ref{thm:loop dichotomy}. Then, there exists $c > 0$ such that the following holds for all $N, n, x, y$. 
Let $D_m \subset \bbT_N$ (resp. $D_m'$) denote subgraphs as in the supremum defining $w_m$ (resp. $u_m$). Then,
\begin{align}
\label{eq:1st upper bound loop prob}
& \text{i)} \quad \bbP^{ \pm 1 }_{D_{2n}} [\partial D_{2n} \stackrel{h \leq_B 5 }{\longleftrightarrow} [-n,n]^2] \geq c , \quad \text{for all $n \leq d(x, y)/3$; and}
\\
&
\label{eq:2nd upper bound loop prob}
\text{ii)} \quad \bbP^{ \pm 1 }_{D'_{n}} [\partial D'_{n} \stackrel{ h \leq_B 5 }{\longleftrightarrow} [-d_\infty (0, y) - 2n,d_\infty (0, y) -2n]^2] \geq c, \quad \text{for all $n \leq d(x, y)/3$.}
\end{align}
\end{lemma}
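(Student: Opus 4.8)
My plan is to derive both parts of Lemma~\ref{lem:wn un log bounds} from the delocalized alternative \eqref{eq:loop-deloc} together with the RSW theorem (Theorem~\ref{thm:RSW}), the Pushing theorem (Theorem~\ref{prop:pushing}/Proposition~\ref{prop:push}), and the FKG/SMP machinery already in place, the crucial feature being that the $\{\pm1\}$ boundary condition on $\partial D_{2n}$ (resp.\ $\partial D'_n$) forces the heights to be \emph{low} near that boundary: a $\{h\leq_B 5\}$-cluster emanates from it and only needs to be propagated one dyadic scale inward to reach the target box. Throughout I will use the elementary identification $\{h\leq_B 5\}=\calE^c$ (complement of the high-edge set $\calE$ of Theorem~\ref{thm:RSW}), which makes the target event a purely topological ``no separating high dual loop'' statement.

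\textbf{Reduction to a uniform crossing bound.} First, \eqref{eq:loop-deloc} and the comparison \eqref{eq:two-side bound} give $b_n\geq c_0>0$ for all $n$, hence by Theorem~\ref{thm:RSW} and FKG a uniform lower bound $\geq c_1$ for $\bbP^{\pm1}_D[\calH^*_{\calE}(R)]$ for every fixed-aspect-ratio rectangle $R$ inside a suitably larger $\{\pm1\}$-domain $D$. Using the sign-flip symmetry of $\bbP^{\pm1}$ (under $h\mapsto-h$ the event $\calO^*_{\calE}(L_n)$ becomes $\calO^*_{-\calE}(L_n)$ with $-\calE\subset\{h\leq_B 5\}$, so $\bbP^{\pm1}_{L_{Rn}}[\calO^*_{\{h\leq_B5\}}(L_n)]\geq b_n\geq c_0$) together with \eqref{eq:quad cross duality} and \eqref{eq:quad cross dual vs primal}, this converts into a uniform lower bound on the probability of a $\{h\leq_B5\}$ primal crossing in the \emph{easy} direction of a comparable rectangle. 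Combining this with the Pushing theorem exactly as in the proof of Proposition~\ref{prop:push} (explore the low cluster near the $\{\pm1\}$ arc, apply the Pushing theorem in the induced $\preceq\{\pm1\}$ boundary condition, iterate) upgrades it to: there is $c_2>0$ such that, conditionally on the $\{\pm1\}$ boundary condition along any one side of the dyadic annulus $[-2n,2n]^2\setminus[-n,n]^2$, a \emph{radial} $\{h\leq_B5\}$-crossing of that side occurs with probability $\geq c_2$.

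\textbf{Exploration, gluing, and the topological cases.} Given a competitor $D_{2n}$ from the supremum defining $w_{2n}$, I would first dispose of the trivial cases ($\partial D_{2n}$ meets or is adjacent to $[-n,n]^2$, so any edge out of a height-$\pm1$ vertex is in $\{h\leq_B5\}$ and the event is forced); otherwise $[-n,n]^2\subset D_{2n}$ and $\partial D_{2n}$ enters the annulus $[-2n,2n]^2\setminus[-n,n]^2$ at some (unknown) vertex $p$. Reveal $\partial D_{2n}$ and explore the $\{h\leq_B5\}$ primal cluster of $\partial D_{2n}$ inward; by Lemma~\ref{lem:SMP with percolation} the frontier of the explored region carries a boundary condition $\preceq\{\pm1\}$ along the unexplored radial directions, so the previous paragraph applies at the next smaller dyadic scale, and a bounded number of iterations carries the cluster from $\partial D_{2n}$ to $[-n,n]^2$; a square-root trick over the side of $[-2n,2n]^2$ through which $p$ enters makes the bound uniform over all admissible $D_{2n}$. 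Part~(ii) is the same argument with $D'_n$ in place of $D_{2n}$ and the larger box in place of $[-n,n]^2$ (pushing outward rather than inward), and the three topological cases — a contractible dual loop around $x$, a contractible loop around $y$, a pair of non-contractible loops separating $x$ from $y$ — differ only in which annular rectangles one glues, with no new idea beyond Corollary~\ref{cor:narrow quad triple cross} and Proposition~\ref{prop:push}.

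\textbf{Main obstacle.} The genuine difficulty is that the required connectivity is \emph{not} a standard bulk RSW event: in the delocalized phase high-edge loops abound at every scale, so a bulk $\{h\leq_B5\}$-crossing of an annulus of modulus $O(1)$ cannot have probability bounded away from $0$ on its own, and one must use the $\{\pm1\}$ boundary condition essentially. The whole argument therefore hinges on making the inductive exploration/Pushing step rigorous, so that the induced boundary conditions stay $\preceq\{\pm1\}$ and the crossing probability does not degrade over the bounded number of dyadic scales between $\partial D_{2n}$ and $[-n,n]^2$; the only other delicate point is the torus bookkeeping for the non-contractible case, which is routine but tedious.
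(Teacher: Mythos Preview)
Your exploration step has the boundary conditions going the wrong way, and this is a genuine gap rather than a routine detail. When you explore the $\{h\leq_B 5\}$ primal cluster of $\partial D_{2n}$ inward, the frontier edges lie by definition in $\calE$ (the high set), so the unexplored side carries a boundary condition of order $\{5,7\}$, not $\preceq\{\pm1\}$; Lemma~\ref{lem:SMP with percolation} delivers a $\{\pm1\}$ boundary only across $\omega=0$ cut edges, which is not what your frontier consists of. Hence neither the Pushing theorem (which needs $\preceq\{\pm1\}$ on the bulk of the boundary) nor your first-paragraph crossing bound can be re-applied at the next dyadic scale, and the iteration stalls immediately. (The first paragraph is also muddled: sign-flip sends $\calE$ to the set of edges with $h\leq -5$, which is indeed contained in $\calE^c$, but a bulk $\calE^c$ dual loop around $L_n$ says nothing about a \emph{primal} connection from $\partial D_{2n}$ to $[-n,n]^2$.) You correctly flag this as the main obstacle, but as written the scheme cannot be repaired along these lines: every exploration step raises rather than preserves the boundary height.

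The paper sidesteps the obstacle by arguing on the complement. If the low connection fails, duality produces a separating $\calE^*$-curve between $\partial D_{2n}$ and $[-n,n]^2$. The geometric key---absent from your plan---is to pick one translate $S$ of $[0,n]^2$ in the annulus $[-2n,2n]^2\setminus[-n,n]^2$ that meets $\partial D_{2n}$, together with its cocentric triple $S'$: the separator must visit $S$ (it must cross some primal path in $S\cap D$ joining $\partial D$ to $\partial[-n,n]^2$) and must also exit $S'$ (both $x$ and $y$ lie outside $S'$). This localises the obstruction to a single crossing event $\calC^*_\calE(S,S')$ in a fixed-modulus annulus. A height shift and FKG for $(|h|,\omega)$ restrict the domain to $S'\cap D$; duality converts the crossing into a loop; FKG for $(|h|,-B)$ and a further shift reduce everything to $\bbP^{\pm1}_{S'}[\calO_{h\omega\geq1}(S,S')]$, which is $\geq c$ directly by \eqref{eq:loop-deloc}. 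Case~(ii) is identical with the roles of $x$ and $y$ swapped, and the three torus-topological sub-cases disappear because the entire computation is local to $S'$.
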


\begin{figure}
\begin{center}
\includegraphics[width=0.43\textwidth]{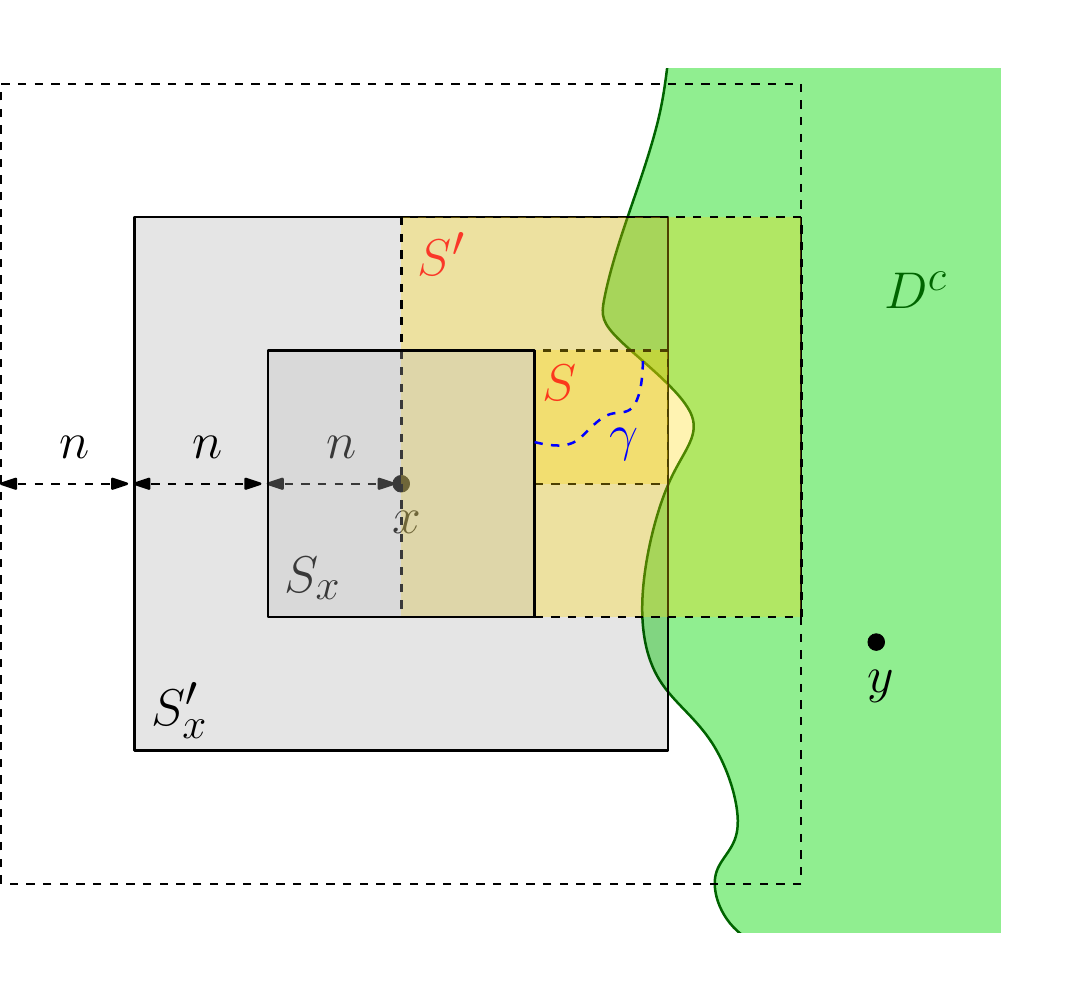} \qquad
\includegraphics[width=0.43\textwidth]{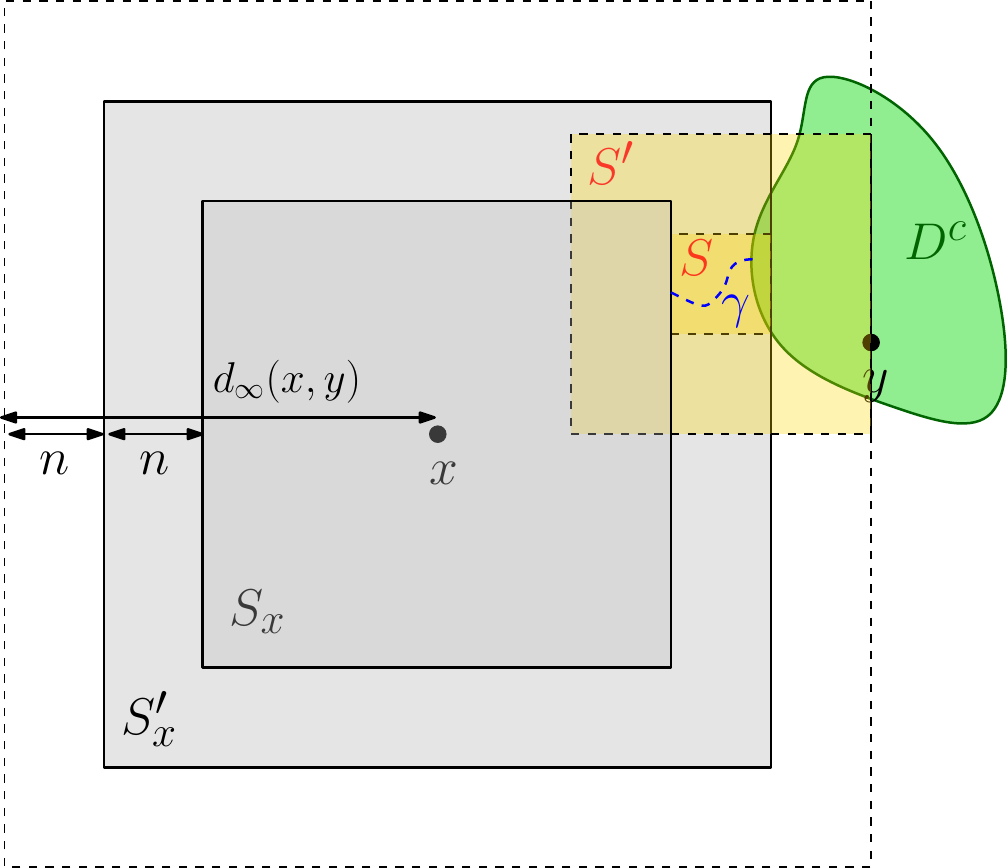}
\end{center}
\caption{
\label{fig:un and wn upper bounds}
The geometric setups in cases (i) (left) and (ii) (right) in the proof of Lemma~\ref{lem:wn un log bounds}.
}
\end{figure}

\begin{proof}
The proof starts with a geometric setup that is different in the two cases (see Figure~\ref{fig:un and wn upper bounds}).

\textbf{Case (i):} Denote $D := D_{2n}$, $S_x := [-n, n]$ and $S_x' := [-2n,  -2n]^2$. The claim is immediate if $\partial D$ intersects $S_x$, so assume not. Divide $S_x' \setminus S_x$ into $12$ translates of the square $[0,n]^2$; let $S$ be one such translate which additionally intersects $\partial D$. Let $S'$ be a translate of $[0,3n]^2$, cocentric with $S$. Note that (i) $x$ lies on the (planar-domain) boundary of $S'$; and (ii) $y$ lies outside of $S'$ (since $n \leq d_\infty (x, y)/3$); and (iii) there exists a primal-path $\gamma$ in $S \cap D$ from $\partial S_x$ to $\partial D$.

\textbf{Case (ii):} Denote  $D := D'_{n}$, $S_x := [-d_\infty (0, y) - 2n,d_\infty (0, y) -2n]^2$ and $S_x' := [-d_\infty (0, y) - n,d_\infty (0, y) -n]^2$. The claim is immediate if $\partial D$ intersects $S_x$, so assume not.  Let $S$ be a translate of $[0,n]^2$ that lies in $S_x' \setminus S_x$ and intersects $\partial D$; let $S'$ be a translate of $[0,3n]^2$, cocentric with $S$. Now, both $x$ and $y$ lie outside of $S'$ and there exists a primal-path $\gamma$ in $S \cap D$ from $\partial S_x$ to $\partial D$

The rest of the proof is identical in the two cases. Denote by $E$ the event in Equation~\eqref{eq:1st upper bound loop prob} or~\eqref{eq:2nd upper bound loop prob}, and suppose that $E$ does \textit{not} occur; hence, there exists a dual-curve of $\calE$ that disconnects $y$ and $\partial D$ from $x$ and $S_x$ (or possibly two dual-curves, if $\partial D$ does not surround $x$). This dual-curve must on the one hand cross the primal-path $\gamma$ to separate $\partial S_x$ and $\partial D$, and hence visit $S$, and on the other hand, it must exit $S'$ (since both $x$ and $y$ are outside of it). I.e.,
\begin{align*}
E^c \subset \calC_\calE^* (S, S'),
\end{align*}
where $\calC_\calE^* (S, S')$ denotes the existence of a dual-crossing on $\calE^*$ from $S$ to $S'$ in $S'$. By a shift of boundary conditions and then FKG for $(|h|, \omega)$, we hence have
\begin{align*}
1 - \bbP^{\pm 1}_D [E]
\leq
\bbP^{\pm 1}_D [\calC_\calE^* (S, S')] 
= 
\bbP^{ \{ 5, 7 \} }_D [\calC_{\omega = 0}^* (S, S')] 
\leq 
\bbP^{ \chi }_{ S' \cap D} [\calC_{\omega = 0}^* (S, S')],
\end{align*}
where $\chi$ is the boundary condition for $\heightfcns_{S' \cap D}$ given on $\partial (S' \cap D)$ which is $\{ \pm 1\}$ on $\partial S'$ and its largest extension to $ \{ 5, 7 \}$ on the rest.

Next, for $\heightfcns_{S' \cap D}$, interpret $h \geq 1$, $B=1$ and $\omega = 1$ on $S' \cap D^c$; hence by duality and the present boundary condition,
$\calC_{\omega = 0}^* (S, S')^c = \calO_{h \omega \geq 1} (S, S')$. Using this complement event, a height shift, and then the boundary condition and bound on dual-degree
\begin{align*}
\bbP^{ \chi }_{  S' \cap D} [\calC_{\omega = 0}^* (S, S')]
= 
1-\bbP^{ 6 - \chi }_{ S' \cap D } [\calO_{ h \leq_B 5} (S, S')]
=
1-\bbP^{ 6 - \chi }_{ S' \cap D } [\calO_{ |h| \leq_B 5} (S, S')].
\end{align*}
Next, by FKG for $(|h|, -B)$ and then height shift
\begin{align*}
\bbP^{ 6 - \chi }_{ S' \cap D } [\calO_{ |h| \leq_B 5} (S, S')]
 \geq \bbP^{ \{ 5, 7 \} }_{ S'} [\calO_{ |h| \leq_B 5} (S, S')]
= \bbP^{ \pm 1 }_{ S'} [\calO_{ h \omega \geq 1 } (S, S')] \geq c,
\end{align*}
where the boundary conditions $\{ 5, 7 \}$ and $\{ \pm 1 \}$ are imposed on $\partial S'$, and the last step used~\eqref{eq:loop-deloc}. This concludes the proof.
\end{proof}

\begin{corollary}
Suppose that~\eqref{eq:loop-deloc} occurs in Theorem~\ref{thm:loop dichotomy}. Then, there exists $\ell \in \bbN$ and $c' > 0$ such that the following holds for all $N, n, x, y$. 
Let $D_m \subset \bbT_N$ (resp. $D_m'$) denote subgraphs as in the supremum defining $w_m$ (resp. $u_m$). Then,
\begin{align}
\label{eq:1st upper bound loop prob abs val}
& \text{i)} \quad \bbP^{ \pm 1 }_{D_{2n}} [\partial D_{2n} \stackrel{|h| \leq_B 6 \ell -1 }{\longleftrightarrow} [-n,n]^2] \geq c' , \quad \text{for all $n \leq d(x, y)/3$; and}
\\
&
\label{eq:2nd upper bound loop prob abs val}
\text{ii)} \quad \bbP^{ \pm 1 }_{D'_{n}} [\partial D'_{n} \stackrel{ |h| \leq_B 6 \ell - 1 }{\longleftrightarrow} [-d_\infty (0, y) - 2n,d_\infty (0, y) -2n]^2] \geq c', \quad \text{for all $n \leq d(x, y)/3$.}
\end{align}
\end{corollary}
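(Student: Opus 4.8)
The plan is to revisit the proof of Lemma~\ref{lem:wn un log bounds}, replacing the one-sided level set $\{h \leq_B 5\}$ by the two-sided one $\{|h| \leq_B 6\ell-1\}$. We first fix $\ell$ to be a large absolute integer; the relevant constraint is that $6\ell-1$ exceed twice the maximum dual degree, e.g. $6\ell - 1 > 22$. The point of this choice is the following sign-rigidity statement: \emph{any dual curve all of whose crossed primal edges have $|h| \geq 6\ell-1$ at both endpoints carries a constant sign of $h$}. Indeed, two consecutively crossed primal edges lie on a common primal face, which has at most $11$ sides, so one can pass between their endpoints along that face using at most $10$ edges; by $2$-Lipschitzness the heights at these vertices then differ by at most $20 < 6\ell-1$, so they all share the same sign. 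Consequently, any such dual curve is either a $\calE_5^*$-type curve (all heights $\geq 6\ell-1 \geq 5$, sign $+$) or the image of such a curve under the sign flip $h \mapsto -h$ (sign $-$).

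Granting this, I would run the exploration/duality argument of Lemma~\ref{lem:wn un log bounds} verbatim, with $\calE_5$ replaced by $\{|h| \geq_B 6\ell-1\}$. In case (i), with $S \subset S'$ and the primal path $\gamma$ as in that proof, the failure of $\{\partial D_{2n} \stackrel{|h| \leq_B 6\ell-1}{\longleftrightarrow} [-n,n]^2\}$ produces a separating dual curve of $\{|h| \geq_B 6\ell-1\}$-edges which must cross $\gamma$ inside $S$ and leave $S'$; by the sign-rigidity above this curve is either all-positive or all-negative, and by the sign-flip symmetry of $\bbP^{\pm 1}_{D_{2n}}$ the two events have the same probability, so it suffices to control the all-positive case. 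An all-positive separating curve through $S$ leaving $S'$, at heights $\geq 6\ell-1$, is in particular a $\calC^*$-type crossing of $\{h \geq_B 6\ell-1\}$-edges from $S$ to $\partial S'$; after the height shift $h \mapsto h - (6\ell-6)$ (which sends the $\{5,7\}$/$\pm 1$ boundary values of the proof of Lemma~\ref{lem:wn un log bounds} to the corresponding shifted values), an exploration of the high cluster together with the spatial Markov property, CBC, and the FKG inequalities for absolute heights reduce its probability to $1 - c'$ with $c'$ coming from $\eqref{eq:loop-deloc}$ applied to a box with $\pm 1$ boundary condition, exactly as in Lemma~\ref{lem:wn un log bounds}; taking $\ell$ large makes the shifted boundary condition sufficiently negative that the corresponding loop event has probability bounded below by a universal constant, which closes the estimate. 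Case (ii) is identical, using the geometric setup of case (ii) of Lemma~\ref{lem:wn un log bounds}.

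The main obstacle is exactly the sign-rigidity step: a two-sided level line of absolute heights is \emph{a priori} quite different from a one-sided one — a separating curve could in principle use positive and negative high heights in different places — and it is only because $\ell$ is taken large (so that the $2$-Lipschitz constraint across a single dual vertex cannot accommodate a sign change) that the two-sided event decomposes cleanly into two one-sided ones, each of which is then handled by Lemma~\ref{lem:wn un log bounds} and the sign-flip symmetry. Once this is in place, no genuinely new probabilistic input beyond $\eqref{eq:loop-deloc}$ is required.
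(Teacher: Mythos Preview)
Your sign-rigidity observation (that a dual curve crossing only $|h|\geq 6\ell-1$ edges carries a constant sign once $6\ell-1$ exceeds the maximal face diameter) is correct and is in fact what justifies the paper's own ``by sign flip symmetry'' line. The gap is quantitative and lies elsewhere. After sign-rigidity and symmetry, the union bound gives
\[
\bbP^{\pm 1}_{D_{2n}}\Big[\big\{\partial D_{2n}\stackrel{|h|\leq_B 6\ell-1}{\longleftrightarrow}[-n,n]^2\big\}^c\Big]
\;\leq\; 2\,\bbP^{\pm 1}_{D_{2n}}\big[\exists\text{ all-}+\text{ separating curve at level }\geq 6\ell-1\big].
\]
You then bound the right-hand side by re-running the proof of Lemma~\ref{lem:wn un log bounds}, which only yields $\leq 1-c$ with the constant $c$ of that lemma; the factor $2$ kills the conclusion unless $c>\tfrac12$, which you do not have. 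Your attempted fix---``taking $\ell$ large makes the shifted boundary condition sufficiently negative''---does not help: the shift $h\mapsto h-(6\ell-6)$ followed by CBC only recovers the \emph{same} bound $\leq 1-c$ (lowering the boundary makes the high-crossing event less likely, but CBC gives no rate), and nothing in your argument tends to $0$ with $\ell$.

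What you are missing is the \emph{iteration via SMP} that the paper performs before symmetrizing. If the one-sided $h\leq_B 5$ connection fails, explore the outermost separating dual curve at heights $\{5,7\}$; by SMP the interior is again an admissible domain with boundary $\{5,7\}$, and after the shift $h\mapsto h-6$ Lemma~\ref{lem:wn un log bounds} applies again. Iterating $\ell$ times yields
\[
\bbP^{\pm 1}_{D_{2n}}\Big[\big\{\partial D_{2n}\stackrel{h\leq_B 6\ell-1}{\longleftrightarrow}[-n,n]^2\big\}^c\Big]\leq (1-c)^\ell,
\]
and only then does the sign-flip union bound close: $2(1-c)^\ell<1$ for $\ell$ large. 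So the role of $\ell$ is probability amplification through repeated application of the Lemma, not merely the sign-rigidity you emphasize; the latter comes for free once $\ell$ is that large.
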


\begin{proof}
By SMP and the previous lemma, one readily deduces that
\begin{align*}
\bbP^{ \pm 1 }_{D_{2n}} [\{ \partial D_{2n} \stackrel{h \leq_B 6\ell -1 }{\longleftrightarrow} [-n,n]^2 \}^c] \leq (1-c)^\ell,
\end{align*}
where $c$ is the constant in the previous lemma. By choosing $\ell$  large enough, there exists $c' > 0$ such that $(1-c)^\ell \leq \tfrac{1-c'}{2}$. Then, by sign flip symmetry
\begin{align*}
\bbP^{ \pm 1 }_{D_{2n}} [\{ \partial D_{2n} \stackrel{|h| \leq_B 6\ell -1 }{\longleftrightarrow} [-n,n]^2 \}^c] \leq 2 \bbP^{ \pm 1 }_{D_{2n}} [\{ \partial D_{2n} \stackrel{h \leq_B 6\ell -1 }{\longleftrightarrow} [-n,n]^2 \}^c] \leq 1- c'.
\end{align*}
Equation~\eqref{eq:2nd upper bound loop prob abs val} is proven identically.
\end{proof}

\begin{proof}[Lemma~\ref{lem:var bound explo}]
We will prove~\eqref{eq:1st upper bound var};~\eqref{eq:2nd upper bound var} is identical. Let $\ell$ and $c'$ be the constants of the above corollary. Let $(h, B, \omega) \sim \bbP^{\pm 1}_{D_{2n}} := \bbP$ and define a percolation process $\nu$ on the edges by
\begin{align*}
\nu_{\langle u, v \rangle} 
 =
\begin{cases}
0, \qquad \{ |h(u)|, |h(v)| \} = \{ 6 \ell \pm 1 \} \\
\tilde{B}_{\langle u, v \rangle}, \qquad |h(u)|= |h(v)| \in \{ 6 \ell \pm 1 \} \\
1, \qquad \text{otherwise};
\end{cases}
\end{align*}
in other words, the event $\{ \partial D_{2n} \stackrel{|h| \leq_B 6 \ell -1 }{\longleftrightarrow} [-n,n]^2 \}^c$ means that there exist $\nu=0$ dual-paths that separate $\partial D_{2n}$ from $[-n,n]^2$; denote this event by $E_n$.
Trivially,
\begin{align*}
\bbE [h(0)^2] = \bbP [ E_n ] \bbE [ h(0)^2 \; | \; E_n ] + \bbP [E_n^c] \bbE [ h(0)^2 \; | \; E_n^c ].
\end{align*}
where, by the assumed~\eqref{eq:loop-deloc} and the previous corollary
\begin{align*}
\bbP [ E_n ] \leq 1-c'.
\end{align*}
Quite similarly to the proof of the lower bound in Theorem~\ref{thm:main thm 2nd}, one obtains
\begin{align*}
\bbE [ h(0)^2 \; | \; E_n ] \leq w_{2n} + 36 \ell^2 
\qquad \text{and} \qquad
\bbE [ h(0)^2 \; | \; E_n^c ] \leq w_n  + 36 \ell^2 .
\end{align*}
Recalling that $w_{2n} \geq w_n$, the three previously displayed equalities give an upper bound for $\bbE [h(0)^2] = \bbE^{\pm 1}_{D_{2n}} [h(0)^2]$ uniformly in $D_{2n}$, and hence
\begin{align*}
w_{2n} \leq (1-c')(w_{2n} + 36 \ell^2) + c'(w_{n} + 36 \ell^2 )
\Leftrightarrow 
w_{2n} \leq   w_{n} + 36 \ell^2 / c'.
\end{align*}
Recalling the $\ell$ and $c'$ were independent of $x, y, n, N$, the proof is complete.
\end{proof}

\section{On the infinite-volume limit}

We now formalize and prove Corollary~\ref{thm:conseq of main thm intro statement} from the Introduction. This section mimics an unpublished corollary of the logarithmic delocalization of the six-vertex model~\cite{DCKMO}, produced by the author and Piet Lammers in 2020.

\subsection{Gradient Gibbs measures}
\label{subsec:grad Gibbs meas}

Let $D = (V, E)$ be any locally finite connected graph. A \textit{vector field} $g$ on $D$ assigns to every directed version $(u, v)$ of an edge $\langle u, v \rangle \in E$ a value $g_{(u, v)} = -g_{(v, u)}$. If additionally the values of $g$ sum to zero over any (oriented) loop on $D$, we say that $g$ is a \textit{gradient (vector) field}; indeed, such a vector field $g$ is obtained as the discrete gradient  $g_{(u, v)} = h(v)-h(u)$ of a function $h: V \to \bbR$ which is unique up to additive constant.

For the rest of this section, we study the space of $\calS$ vector fields on a planar cubic lattice $\bbG$, with the detailed assumptions given in Section~\ref{subsec:main thm setup}, that only take values in $\{ 0, \pm 2 \}$ on the directed edges. We equip the space $\calS$ with the metric of local convergence and note that it is (sequentially) compact. Gradient fields of height functions on finite $D \subset \bbG$ are naturally embedded in $\calS$ by continuing with the zero field outside of $D$.

A \textit{(random Lipschitz) gradient Gibbs measure} $\sfP$ is supported on the (infinite-volume) gradient fields of $\calS$ and has the property that given the values of $g \sim \sfP$ on the edges of a simple closed primal-loop $\ell$ on $\bbG$, which defines up to additive constant an integer-valued function $\xi$ on the vertices of $\ell$, the conditional law of $g$ inside $\ell$ is the gradient of a random Lipschitz function $h \sim \bbP^\xi$ (where $\xi$ is $\sfP$-a.s. admissible by the gradient-field support of $\sfP$).

By the results of~\cite{She05} (also~\cite{Piet-deloc, Piet-personal}), under the setup of Theorem~\ref{thm:main thm 2nd}, there exists a unique translationally invariant, slope-zero gradient Gibbs measure $\sfP$ on $\bbG$. \textit{Translational invariance} refers here to the translations that generate the bi-periodicity of $\bbG$, and \textit{slope-zero} means that $\sfE[h(x)-h(y)]=0$ for any $x, y \in \bbV$, where $h$ is a field whose gradient is $g \sim \sfP$. A direct consequence (which should not be regarded as a new result;~\cite{Piet-deloc}) is:

\begin{corollary}
\label{cor:inf vol lim on torii}
Let $\bbG$ and $\mathbf{c}_e$ be as in Theorem~\ref{thm:main thm 2nd}.
Then, the gradients of the toric random Lipschitz functions $h \sim \bbP_{\bbT_N}$ converge weakly as $N \to \infty$ to the unique translationally invariant slope-zero gradient Gibbs measure $\sfP$ on $\bbG$.
\end{corollary}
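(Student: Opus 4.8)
\textbf{Proof proposal for Corollary~\ref{cor:inf vol lim on torii}.}
The plan is to combine the known existence and uniqueness of the translationally invariant slope-zero gradient Gibbs measure $\sfP$ (from~\cite{She05, Piet-deloc}) with the compactness of $\calS$ and the translational symmetry of the torus. First I would note that the family of laws of $\nabla h_N$ under $\bbP_{\bbT_N}$, viewed as elements of the compact metric space of probability measures on $\calS$, is automatically tight; hence it suffices to show that every subsequential weak limit equals $\sfP$. So fix a subsequence $N_k$ along which $\nabla h_{N_k}$ converges weakly to some measure $\sfQ$ on $\calS$.

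The key properties to establish for $\sfQ$ are: (a) $\sfQ$ is supported on bona fide gradient fields (values in $\{0,\pm2\}$, zero sum around loops); (b) $\sfQ$ is a gradient Gibbs measure for the random Lipschitz model on $\bbG$; (c) $\sfQ$ is translationally invariant; and (d) $\sfQ$ has slope zero. Property (a) is immediate since each $\nabla h_N$ lies in $\calS$, which is closed, and the loop-sum condition is a closed condition preserved in the limit. Property (c) follows from the fact that, on the torus $\bbT_N$, the auxiliary root $r$ in $\bbP_{\bbT_N}$ can be chosen arbitrarily and the law of the gradient field is insensitive to this choice (adding a constant to $h$ does not change $\nabla h$), so $\bbP_{\bbT_N}$ is invariant under all torus translations; in the local-convergence limit $N_k\to\infty$ this yields invariance of $\sfQ$ under the bi-periodic translations of $\bbG$. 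For property (b), I would use the spatial Markov property (Lemma~\ref{lem:SMP}) on the torus: for any finite simply-connected $D\subset\bbG$ with bounding primal-loop $\ell$, conditionally on $\nabla h_N$ on $\ell$ (equivalently on $h_N$ restricted to $\ell$ up to an additive constant), the law of $h_N$ inside $D$ is $\bbP_D^{\xi}$ for the induced boundary condition $\xi$; this holds for all $N$ large enough that $D$ embeds in $\bbT_N$ and $\ell$ separates $D$ from $r$, and passes to the weak limit by a standard argument (the finite-domain conditional laws are continuous in the boundary data, and $D$ is fixed while $N\to\infty$). Hence $\sfQ$ satisfies the defining Gibbs property.

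For property (d), slope zero, the point is that on the torus there is no net tilt available: for any two vertices $x,y$, the quantity $h_N(y)-h_N(x)$ (sum of gradients along a path) has a symmetric law under $\bbP_{\bbT_N}$, by the sign-flip symmetry $h\mapsto 2-h$ of the random Lipschitz measure together with the symmetry of the torus graph, so $\sfE_{\bbT_N}[h_N(y)-h_N(x)]=0$ for every $N$; moreover Theorem~\ref{thm:main thm 2nd} (or, on the torus, Proposition~\ref{prop:var upper bd on torus}) gives that $\sfE_{\bbT_N}[(h_N(y)-h_N(x))^2]$ stays bounded for fixed $x,y$ as $N\to\infty$, so the increments are uniformly integrable and the zero-mean property passes to $\sfQ$. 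Thus $\sfQ$ is a translationally invariant, slope-zero gradient Gibbs measure on $\bbG$, and by the cited uniqueness it must equal $\sfP$. Since the subsequence was arbitrary, $\nabla h_{\bbT_N}\Rightarrow\sfP$. The main obstacle I anticipate is the careful bookkeeping in property (b): making sure that conditioning on the gradient field along $\ell$ on the torus genuinely reproduces a finite-domain random Lipschitz measure (the gradient determines $\xi$ only up to an additive constant, but this constant is irrelevant for $\nabla h$ inside $D$), and that this identity is robust under the weak limit — a point where one uses that $D$ and $\ell$ are fixed and only the exterior grows, so the relevant $\sigma$-algebras and conditional kernels are continuous in the appropriate sense.
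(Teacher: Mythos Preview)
Your approach is essentially identical to the paper's: extract subsequential limits by compactness of $\calS$, check that any such limit is a translationally invariant slope-zero gradient Gibbs measure, and invoke uniqueness. The paper's proof is three sentences and simply asserts that translational invariance, zero slope, and the Gibbs property are ``inherited directly'' from $\bbP_{\bbT_{N_j}}$; you are supplying the details the paper omits.

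Two small points. First, the symmetry you want for slope zero is $h\mapsto -h$ (which preserves the $\{\pm 1\}$ root condition and negates gradients), not $h\mapsto 2-h$ (which sends $-1$ to $3$). Second, your appeal to Proposition~\ref{prop:var upper bd on torus} for uniform integrability is unnecessary: for fixed $x,y$ the increment $h(y)-h(x)$ is deterministically bounded by $2d_\bbG(x,y)$ (Lipschitzness), so zero mean passes to the weak limit for free. With these tweaks your argument is correct and matches the paper's route.
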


\begin{proof}
Since $\calS$ is compact, extract a subsequence $N_j$ so that the gradients converge weakly to some limit. Any such limit inherits directly from $\bbP_{\bbT_{N_j}}$ translational invariance, zero slope and the Gibbs property. The uniqueness result above thus identifies any subsequential limit, and hence also the weak limit along the entire sequence $N \to \infty$.
\end{proof}

In combination with the main results of this paper, one obtains the logarithmic variance structure of the limiting gradient field.

\begin{corollary}
\label{cor:log var for inf vol lim}
Let $\bbG$, $\mathbf{c}_e$ and $\sfP$ be as in the previous corollary. There exist $c, C > 0$ such that for any $x, y \in \bbV$,
\begin{align*}
c \log d_{\bbG} (x, y) \leq \sfE [(h(x)-h(y))^2] \leq C \log d_{\bbG} (x, y).
\end{align*}
\end{corollary}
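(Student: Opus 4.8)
The plan is to transfer the finite-domain variance bounds of Theorem~\ref{thm:main thm 2nd} (equivalently, the torus bounds of Proposition~\ref{prop:var upper bd on torus}) to the limiting gradient Gibbs measure $\sfP$ via the weak convergence established in Corollary~\ref{cor:inf vol lim on torii}. Fix $x, y \in \bbV$ and set $d = d_\bbG(x,y)$; we may assume $d \geq 2$, the small cases being trivial by one-Lipschitzness. For $h_N \sim \bbP_{\bbT_N}$ (with any choice of root $r$; by sign-flip symmetry the law of the gradient field does not depend on $r$), the random variable $g_N := h_N(y) - h_N(x)$ is the sum of the gradients of $g \sim \sfP$ along a fixed $\bbG$-path from $x$ to $y$, hence a continuous functional of the gradient field restricted to that finite path. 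Since the gradients of $h_N$ converge weakly to $\sfP$ by Corollary~\ref{cor:inf vol lim on torii}, $g_N$ converges weakly (as a $\bbZ$-valued random variable) to $(h(y) - h(x)) \sim \sfP$, and in particular the point masses converge.

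For the \emph{upper bound}, I would argue that the second moments pass to the limit. For all $N$ large enough that $\bbT_N$ comfortably contains a ball of radius $\sim d$ around $x$, Proposition~\ref{prop:var upper bd on torus} (combined with~\eqref{eq:boring}, which relates $\bbE_{\bbT_N}[(h(x)-h(y))^2]$ to $\bbE_{\bbT_N}[h(x)^2]$ with root $y$) gives
\begin{align*}
\bbE_{\bbT_N}[(h_N(x) - h_N(y))^2] \leq \log\big(C\, d_{\bbT_N}(x,y)\big) = \log(C d),
\end{align*}
using that $d_{\bbT_N}(x,y) = d_\bbG(x,y)$ for $N$ large. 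This is a uniform-in-$N$ second-moment bound; since $g_N$ converges in distribution, by the portmanteau theorem applied to the truncated functionals $\min\{g_N^2, M\}$ and then letting $M \to \infty$ (using the uniform bound to rule out escape of mass), one gets $\sfE[(h(y)-h(x))^2] \leq \log(C d)$. For the \emph{lower bound}, I would instead use the finite-domain bound of Theorem~\ref{thm:main thm 2nd}: for a box $D \subset \bbG$ of radius $\asymp d$ centered at $x$ with $\pm 1$ boundary condition, $\bbE^{\pm 1}_D[h(x)^2] \geq c \log d$, hence $\bbE^{\pm 1}_D[(h(x)-y\text{-value})^2]$ of order $\log d$; but more directly, one compares $\sfP$ restricted to $D$ with $\bbP^{\pm 1}_D$ via the Gibbs property and monotonicity. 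Concretely, conditioning $\sfP$ on its boundary values on $\partial D$ gives inside $D$ a measure $\bbP^\xi$ with a random (but $\sfP$-a.s.\ admissible) boundary condition $\xi$; by~\eqref{eq:FKG-|h|} and~\eqref{eq:CBC-|h|}-type monotonicity of second moments of $|h(x) - h(y)|$ one should be able to dominate from below by the $\{\pm 1\}$-boundary case, or alternatively observe that since $\sfP$ is the weak limit of the torus measures, $\sfE[(h(x)-h(y))^2] = \lim_N \bbE_{\bbT_N}[(h_N(x)-h_N(y))^2] \geq \lim_N \bbE^{\pm 1}_{D}[(h(x)-h(y))^2]$ using~\eqref{eq:torus var lower bound} and~\eqref{eq:boring}, which is $\geq c \log d$ by Theorem~\ref{thm:main thm 2nd}.

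The main technical obstacle is the interchange of limit and second moment in the upper bound: weak convergence alone does not give convergence of second moments, so one genuinely needs the \emph{uniform} bound from Proposition~\ref{prop:var upper bd on torus} to justify uniform integrability of $\{g_N^2\}$ along the subsequence realizing $\sfP$. This is exactly the same log-concavity/tightness mechanism used repeatedly in the paper (e.g.\ in the proof of Theorem~\ref{thm:Gibbs dichotomy} and in Theorem~\ref{thm:3 equiv dichotomies}(i)): $h_N(x)$ is log-concave on $\Zodd$ by~\cite[Lemma~8.2.4]{She05}, and a uniform bound on $\bbE[h_N(x)^2]$ forces a uniform exponential tail, hence uniform integrability of the squares; the same applies to the increments $g_N$. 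A second, more bookkeeping-level point is ensuring that the root $r$ and the embedding of $x,y$ into $\bbT_N$ can be chosen so that $d_{\bbT_N}(x,y) = d_\bbG(x,y)$ and so that the relevant boxes fit; this is routine for $N \gg d$. Once uniform integrability is in hand, both bounds follow immediately from the already-proven finite-volume and toric estimates, so I expect the proof to be short.
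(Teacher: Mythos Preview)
Your overall strategy coincides with the paper's: prove uniform-in-$N$ two-sided logarithmic bounds for $\bbE_{\bbT_N}[(h(x)-h(y))^2]$ (via Theorem~\ref{thm:main thm 2nd}, Equation~\eqref{eq:torus var lower bound}, and Proposition~\ref{prop:var upper bd on torus}), and then pass to the weak limit $\sfP$ using Corollary~\ref{cor:inf vol lim on torii}. The ingredients and the order in which you invoke them are exactly right.

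Where you diverge is in the ``main technical obstacle'' you flag. You propose to justify the interchange of limit and second moment via uniform integrability, appealing to log-concavity of $h_N(x)$ and the uniform variance bound to get exponential tails. This works, but it is unnecessary: for fixed $x,y$, Lipschitzness gives the deterministic bound $|h(x)-h(y)| \leq 2 d_\bbG(x,y)$, so $(h(x)-h(y))^2$ is a \emph{bounded} functional of the gradient field on finitely many edges. Weak convergence of bounded continuous (here: finitely-supported) functionals is immediate, and both inequalities pass to the limit in one line. The paper's proof is accordingly two sentences long. Your detour through log-concavity and truncation is a valid alternative, but it obscures the simplicity of the step and imports machinery (tail estimates, uniform integrability) that the problem does not require.
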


\begin{proof}
By Theorem~\ref{thm:main thm 2nd}, Equation~\eqref{eq:torus var lower bound} and Proposition~\ref{prop:var upper bd on torus}, there exist  $c, C > 0$ such that for any $N$ and
$x, y \in V( \bbT_N)$, we have
\begin{align*}
c \log d_{\bbT_N} (x, y) \leq \bbE_{\bbT_N} [(h(x) - h(y))^2] \leq C \log d_{\bbT_N} (x, y).
\end{align*}
The weak limit inherits these bounds (note that $(h(x)-h(y))^2$ is for fixed $x, y$ a bounded random variable by Lipschitzness).
\end{proof}

The rest of this section establishes $\sfP$ as the infinite volume limit of random Lipschitz functions on planar domains.

\begin{proposition}
\label{prop:infinite vol limit}
Let $\bbG$, $\mathbf{c}_e$ and $\sfP$ be as in the previous corollaries, let $M \in \bbN$, and let $D_n \subset \bbG$ be finite connected subgraphs generated by their vertices and $\xi_n$ (not even necessarily interval-valued) admissible boundary conditions on $\partial D_n$ with $\xi_n(x) \subset \{ -M, \ldots, M\} $ for all $x \in \partial D_n$. Then, the gradients of $h \sim \bbP^{\xi_n}_{D_n}$ converge weakly to $\sfP$ as $D_n \to \bbG$.
\end{proposition}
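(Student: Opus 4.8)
The plan is to show that every weak subsequential limit of the gradient measures of $\bbP^{\xi_n}_{D_n}$ equals the unique translation-invariant, slope-zero gradient Gibbs measure $\sfP$ from Corollary~\ref{cor:inf vol lim on torii}; since the space $\calS$ of $\{0,\pm2\}$-valued gradient fields is compact, this gives weak convergence of the full sequence. There are two things to verify about any subsequential limit $\mu$: first, that $\mu$ has the gradient Gibbs property, and second---the crucial point---that $\mu$ is translation invariant and slope-zero, so that the uniqueness result applies and forces $\mu=\sfP$. Translation invariance of $\mu$ cannot come directly from $D_n$ (the domains need not be translation invariant), so this is the heart of the argument; I expect it to be the main obstacle.

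The Gibbs property of $\mu$ is routine: for a fixed simple primal-loop $\ell$, the values of the gradient field on $\ell$ together with the SMP (Lemma~\ref{lem:SMP}) determine the conditional law inside $\ell$ as $\bbP^{\zeta}$ for the induced boundary condition $\zeta$; since this is a local statement (finitely many edges), it passes to the weak limit. For translation invariance and slope-zero, I would use the logarithmic a priori bounds established in the main theorem. Concretely: by Theorem~\ref{thm:main thm 2nd} and monotonicity (CBC, FKG for $|h|$), the boundary conditions being bounded by $M$ means $\bbE^{\xi_n}_{D_n}[(h(x)-h(y))^2]$ is controlled---dominated, up to constants and an additive $O(M^2)$, by the $\{\pm1\}$ variance in a domain comparable to $D_n$, hence by $C\log d_{D_n}(x,y) + C'M^2$. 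The point of this is twofold. First, along the subsequence, for $x,y$ fixed and $D_n\to\bbG$, one gets that $h(x)-h(y)$ under $\bbP^{\xi_n}_{D_n}$ is a tight family; but more is true---I want a \emph{mixing} statement saying that the influence of the boundary $\partial D_n$ on gradients in a fixed finite window decays, uniformly in $\xi_n$ (this is exactly the kind of ``mixing lemma'' the introduction attributes to Corollary~\ref{cor:mixing}). Such a mixing/finite-energy argument, combined with the logarithmic bound, shows that the law of the gradient restricted to a fixed window converges to something independent of the choice of boundary data $(D_n,\xi_n)$. Applying this with $(D_n,\xi_n)$ and with a translate of it then yields translation invariance of $\mu$; applying it once more and comparing with the toric measures $\bbP_{\bbT_N}$ (which by Corollary~\ref{cor:inf vol lim on torii} converge to $\sfP$) identifies the window-marginals of $\mu$ with those of $\sfP$, hence $\mu=\sfP$.

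In more detail, the mixing step I would run as follows. Given a finite window $\Lambda$ and a configuration of the gradient field on $\Lambda$, I want to compare $\bbP^{\xi_n}_{D_n}$ restricted to $\Lambda$ with $\bbP^{\pm1}_{D}$ restricted to $\Lambda$ for some large intermediate domain $D$ with $\Lambda \subset D \subset D_n$. Surround $\Lambda$ by a large lozenge $L_k$ with $k$ chosen large but fixed; on the event (of probability close to $1$ by the logarithmic bound and the loop-delocalization input, cf.\ Lemma~\ref{lem:zero-loops in loop dichotomy} / Lemma~\ref{lem:var bound explo}) that there is an $\omega=0$ dual-loop in an annulus around $L_k$, the SMP with percolation (Lemma~\ref{lem:SMP with percolation}) decouples the inside from the outside up to a $\{\pm1\}$ boundary on the loop. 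The coupling-of-boundary-conditions estimates then show that the conditional law on $\Lambda$ is, with high probability, within a small total variation distance of $\bbP^{\pm1}_{L_k}|_\Lambda$, uniformly over $(D_n,\xi_n)$ with boundary bounded by $M$ and over the annulus radius; letting $k\to\infty$ pins down the limiting window-marginal. The remaining bookkeeping---deducing slope-zero from $\sfE[h(x)-h(y)]=0$ (which follows by the symmetry $h\mapsto 2-h$ together with boundedness of the limiting gradient increments, so the expectation passes to the limit and is its own negative)---is short. The genuine obstacle is making the mixing uniform in the boundary condition $\xi_n$ despite the fact that $\xi_n$ can take values as large as $M$; this is handled precisely by the $O(M^2)$ additive term in the variance bound together with the fact that an $\omega=0$ loop at scale $k\gg M$ still occurs with probability $\to1$ in $k$.
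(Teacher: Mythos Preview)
Your overall architecture is right and matches the paper: compactness of $\calS$ gives subsequential limits, the Gibbs property passes to the limit, and the substantive work is to prove that any subsequential limit is translation invariant and slope-zero, whence uniqueness forces it to be $\sfP$. The paper does exactly this, first reducing to $\xi_n=\{\pm1\}$ via the mixing Corollary~\ref{cor:mixing}, then invoking Corollary~\ref{cor:transl inv} for translation invariance.

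The gap is in your mixing step. You propose to decouple by finding, under $\bbP^{\xi_n}_{D_n}$, an $\omega=0$ dual-loop in a fixed annulus around the window, and you cite Lemma~\ref{lem:zero-loops in loop dichotomy}. That lemma is stated and proved only under alternative \eqref{eq:loop-loc}; in the delocalized regime relevant here it is false in the form you need. Concretely, $\omega_e=0$ forces $|h|=1$ at both endpoints, so $\calO^*_{\omega=0}(L_k,L_{\rho k})$ is a \emph{decreasing} event in $|h|$. By Corollary~\ref{cor:+-1 SMP ineq for abs val} the law of $|h|$ increases with the domain; combined with Theorem~\ref{thm:main thm 2nd}, the heights in any fixed annulus delocalize as $D_n\to\bbG$, so the probability of an $\omega=0$ loop there does \emph{not} tend to $1$ (heuristically, by a random-walk-over-scales picture it tends to $0$). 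Hence you cannot compare $\bbP^{\xi_n}_{D_n}|_\Lambda$ to $\bbP^{\pm1}_{L_k}|_\Lambda$ for a fixed $k$ this way, and the argument becomes circular if you let the loop sit at a random, domain-dependent scale.

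The paper's fix is the opposite monotonicity: the ``loops at any height'' lemma produces, under $\bbP_D^{\pm1}$, dual-loops on $\calE_m^*$ (hence at large $|h|$), an event \emph{increasing} in $(|h|,-B)$ and therefore uniformly likely over all $D\supset L_{\rho n}$. Via a height shift this yields $\omega=0$ loops under $\bbP_D^{\{m\pm1\}}$ with probability $\geq 1-\epsilon$, uniformly in $D$. The mixing (Corollary~\ref{cor:mixing}) is then a coupling of $\bbP_D^{\xi}$ and $\bbP_D^{\{m\pm1\}}$ on the \emph{same} domain $D$, built by a monotone exploration in $(|h|,\omega)$ (Lemma~\ref{lem:general CBC}): one samples $\omega,\omega'$ edge by edge with $\omega\leq\omega'$, stops when the $\omega'$-cluster of $\partial D$ is exhausted, and then the two height functions agree inside the common $\omega=0$ boundary. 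This is the missing idea; once you have it, your reduction to $\{\pm1\}$ boundary and the ensuing translation-invariance argument go through as in Corollary~\ref{cor:transl inv}.
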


\subsection{Proof of Proposition~\ref{prop:infinite vol limit}}

The proof consist in three steps which may all be of independent interest. First, we show that~\eqref{eq:loop-deloc} actually implies that loops at \textit{any height} can be guaranteed with \textit{any probability}; the price to pay is increasing the aspect ratio of the annulus for the loops. Secondly, we show a mixing of boundary conditions lemma, and thirdly a translational invariance estimate.

Recall that $\calE_m = \calE_m(h,B)$ denotes the edges $e= \langle u, v \rangle$ for which $h(u), h(v) \geq m$ and if $h(u) = h(v) = m$ then additionally $B_{e}=0$, and similarly, $\calE'_m = \calE'_m(h,B)$ denotes the edges $e= \langle u, v \rangle$ for which $|h(u) |, |h(v)| \geq m$ and if $|h(u)| = |h(v)| = m$ then additionally $B_{e}=0$

\begin{lemma}[Loops at any height with any probability]
For all $m \in \bbN$ and $\epsilon > 0$ there exists $\rho > 0$ such that
\begin{align*}
\bbP^{\pm 1}_D [\calO^*_{\calE_m} (L_n)] \geq 1 - \epsilon \qquad \text{whenever } L_{\rho n} \subset D.
\end{align*}
\end{lemma}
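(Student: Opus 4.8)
The goal is to upgrade the "one-scale, constant-probability" statement \eqref{eq:loop-deloc} (which gives $\bbP^{\pm 1}_{L_{3n}}[\calO^*_{\calE}(L_n)] \geq c$) to loops at arbitrary height $m$ with probability arbitrarily close to $1$, at the cost of a large but fixed aspect ratio $\rho = \rho(m,\epsilon)$. The plan is an induction on $m$, combining (a) a boosting step that turns a single constant-probability crossing into a high-probability one by stacking many disjoint copies, and (b) a height-increment step that uses a loop at height $m$ to "push up" the boundary condition and invoke the already-known statement at the next height.

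\textbf{Step 1: From constant probability to high probability at a fixed height.} First I would observe that \eqref{eq:loop-deloc} together with Theorem~\ref{thm:RSW} and standard FKG/necklace arguments (exactly as in Lemma~\ref{lem:zero-loops in loop dichotomy} and the proof of Theorem~\ref{thm:loop dichotomy}) gives, for each $m \in \{1,3,5\}$ appearing in our toolkit, a uniformly positive lower bound $\bbP^{\pm 1}_{L_{\rho_0 n}}[\calO^*_{\calE_m}(L_n, L_{2n})] \geq c_0$ for some fixed $\rho_0$. To boost this to $1-\epsilon$: build $K$ cocentric annuli $L_{2^j n}\setminus L_{2^{j-1}n}$, $j=1,\dots,K$, each of which contains a loop event of the above type (after rescaling); by the SMP (Lemma~\ref{lem:SMP with percolation}, exploring the $\calE_m$-cluster of $\partial$ from outside inward and revealing the bounding dual-loop) and the conditional law being again a random Lipschitz model with a $\{\pm 1\}$-type boundary condition, each annulus independently-enough contributes a success with probability $\geq c_0$, so the probability that all $K$ fail is at most $(1-c_0)^K$. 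Choosing $K$ large gives loops at height $m$ with probability $1-\epsilon$ inside $L_{\rho n}$ with $\rho = \rho_0 2^K$. This handles the base case $m=1$ (and also $m=3,5$ directly).

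\textbf{Step 2: The height-increment induction.} Assume the lemma holds at height $m$, i.e. $\bbP^{\pm 1}_D[\calO^*_{\calE_m}(L_{n'})] \geq 1-\epsilon'$ whenever $L_{\rho' n'}\subset D$. To get height $m+2$: work in a large domain $D \supset L_{\rho n}$ with $\rho$ to be chosen. First produce, with probability $1-\epsilon'$, an $\calE_m$-loop around an intermediate lozenge $L_{Kn}$ (for suitable $K$); explore the $\omega$-cluster of $\partial D$ and its bounding dual-loop of $\calE_m$-edges — by SMP this yields, in the unexplored inner domain $\calD \supset L_{Kn}$, a boundary condition $\succeq \{m,m+2\}$ on the loop. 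Now set $\hat h = h - (m+1)$ (so the loop becomes a $\{\pm 1\}$ boundary for $\hat h$, and $\calE_{m+2}(h) = \calE_2(\hat h) \supset$ the height-$1$ structure for $\hat h$); apply the induction hypothesis \emph{at height $1$} — more precisely apply Step 1's $m=1$ conclusion — inside $\calD$ with its $\{\pm 1\}$-type boundary, using CBC for $(h,B)$ to drop down to exactly $\{m,m+2\}$, to get an $\calE_{m+2}$-loop around $L_n$ with probability $1-\epsilon''$. Union-bounding over the two failure events and choosing $K, \rho', \epsilon', \epsilon''$ appropriately in terms of $m$ and $\epsilon$ closes the induction. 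Since $\calE_m(h,B)$ for even-shifted heights and $\calE'_m$ differ only by sign-flip symmetry (costing a factor $2$ in probabilities, absorbed by shrinking $\epsilon$), the same bound holds for $\calE'_m$; and any $m\in\bbN$ is reached by finitely many increments of $2$ starting from $m\in\{1,2\}$ (the even case handled by the trivial inclusion $\calE_{m+1}\subset\calE_m$, or equally by one Lipschitz step).

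\textbf{Main obstacle.} The delicate point is bookkeeping the constants: each height-increment step multiplies the required aspect ratio by a fixed factor and each boosting step multiplies it exponentially in the number of annuli needed to beat the current target error. One must run the induction in the right order — fix the \emph{final} target $(m,\epsilon)$, work backwards choosing the intermediate error budgets $\epsilon' = \epsilon'(m,\epsilon)$ so that after $m/2$ union bounds the total error is $\leq \epsilon$, and only then fix the aspect ratios — so that the final $\rho = \rho(m,\epsilon)$ is finite. The geometric/topological content is routine given Lemma~\ref{lem:SMP with percolation}, the FKG for $(|h|,\omega)$ and $(h,B)$, CBC, sign-flip symmetry, and the necklace argument, all of which are already established; no new percolation input beyond \eqref{eq:loop-deloc} and Theorem~\ref{thm:RSW} is needed.
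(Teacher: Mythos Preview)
The gap is in Step 1. Your concentric-annuli boosting does \emph{not} give the $(1-c_0)^K$ bound for \emph{signed} $\calE_m$-loops. When an outer annulus fails, the exploration (of the $\{\tilde h\tilde\omega\geq 1\}$-cluster from $\partial D$, with $\tilde h=(m+1)-h$) reveals heights that may be very negative, and the resulting conditional boundary condition does not dominate $\{\pm 1\}$ in the order $\preceq$; so neither CBC for $(h,B)$ nor the SMP of Lemma~\ref{lem:SMP with percolation} yields a lower bound on the next annulus's success probability. (FKG also goes the wrong way: the failure events are all decreasing in $(h,-B)$, hence positively correlated.) The iteration \emph{does} work for the absolute-value event $\calO^*_{\calE'_5}$, because after a failed annulus one reveals $|h|$ on the outside, obtaining an absolute-value boundary condition $\succeq_{abs}\{\pm 1\}$, and then applies CBC-$|h|$ (Proposition~\ref{prop:|h|-monotonicity}, Corollary~\ref{cor:+-1 SMP ineq for abs val}). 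This is precisely the paper's opening move.

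But once you boost $\calE'_5$-loops, each one carries a \emph{random sign}: by sign-flip symmetry of the conditional law given $|h|$, the boundary inside is $\{5,7\}$ or $\{-5,-7\}$ with probability $\tfrac12$ each. Your deterministic $+2$ increment is therefore unavailable, and the best you get from one such loop is a signed $\calE_5$-loop with probability $\approx\tfrac12$, not $1-\epsilon$. The paper's resolution is to iterate the $\calE'_5$-exploration for successively shifted heights $\tilde h=h-6k$: this produces a nested sequence of loops $\ell_1,\ldots,\ell_L$ whose boundary heights $\{6k_j\pm 1\}$ perform a simple $\pm 6$ random walk, with $L$ stochastically bounded below by a $\mathrm{Bin}(r,c)$ variable when $L_{3^r n}\subset D$. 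A long simple random walk hits level $\lceil m/6\rceil$ with probability $\to 1$, and the corresponding loop $\ell_j$ is then a signed $\calE_m$-loop around $L_n$. This random-walk idea is exactly what your argument is missing; without it your signed base case $\bbP^{\pm 1}_D[\calO^*_{\calE_1}(L_n)]\geq 1-\epsilon$ is unproven, and the Step~2 induction never gets off the ground. (Step~2 itself is essentially fine, modulo the terminological slip that one explores the $\{\tilde h\tilde\omega\geq 1\}$-cluster rather than ``the $\omega$-cluster''.)
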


\begin{proof}
By FKG for $(|h|, -B)$ and~\eqref{eq:loop-deloc}, whenever $L_{3 n} \subset D$, one has
\begin{align*}
\bbP^{\pm 1}_D [\calO^*_{\calE'_5} (L_n)^c] \leq \bbP^{\pm 1}_{L_{3n}} [\calO^*_{\calE'_5} (L_n)^c]  \leq 1-c.
\end{align*}
Now, suppose that $L_{3^r n} \subset D$. Explore the primal-cluster of $(\calE'_5)^c$ from $\partial D$ inside $L_{3^{r-1} n}$; this exploration either reveals $\calO^*_{\calE'_5} (L_{3^{r-1} n})$ or if not, reveal $|h|$ on entire $L_{3^{r-1} n}^c$ and generate an absolute-value boundary condition which for events of $(|h|, -B)_{|L_{3^{r-1} n}}$ dominates $\bbP^{\pm 1}_{L_{3^{r-1} n}}$. Reasoning as above, one obtains
\begin{align*}
\bbP^{\pm 1}_D [\calO^*_{\calE'_5} (L_{3^{r-2} n})^c] \leq (1-c)^2,
\end{align*}
and similarly for any $k \leq r$
\begin{align*}
\bbP^{\pm 1}_D [\calO^*_{\calE'_5} (L_{3^{r-k} n})^c] \leq (1-c)^k.
\end{align*}

We now define iteratively the following (possibly empty) finite collection of dual-loops $\ell_1, \ell_2, \ldots \ell_L$,  surrounding $L_n$. The first one, $\ell_1$ is the exterior-most loop generating $\calO^*_{\calE'_5} (L_{n})$; by SMP, exploring outside $\ell_1$ generates a boundary condition $\{ 5, 7\}$ or $\{ -5, -7 \}$ on the simply-connected domain inside, both conditions appearing with equal probability. Now, if $\ell_j$ generated a boundary condition $\{ 6k \pm 1 \}$ for some $k \in \bbZ$, set $\tilde{h} = h-6k$ and explore inside $\ell_j$ until the exterior-most loop generating $\calO^*_{\calE'_5 (\tilde{h}, B)} (L_{n})$, if such exists.

Using the observations of the first paragraph, it is readily shown that, if $L_{3^r n} \subset D$, the number $L$ of loops is (or can be coupled to be) larger than the number of successes in $r$ independent coin tosses with success probability $c$. Also, conditional on the number $L$, the heights on the loops perform an $L$-step simple random walk, stepping up or down by $6$ units with equal probability. By properties of independent coin tosses and simple random walks, by taking $r$ large enough, one can ensure that the heights on the loops $\ell_1, \ldots \ell_L$ reach any $m$ with any probability $1-\epsilon$.
\end{proof}

For the mixing lemma, we need a slightly more general version of Proposition~\ref{prop:pos assoc of percolated height fcns} which also allows boundary conditions in terms of $\omega$. Hence, for a finite connected $D=(V, E)$, and given boundary condition $\xi$, $E' \subset E$ and a percolation $\beta: E' \to \{ 0, 1\}$, denote $\bbP_D^{\xi}[\; \cdot \; | \omega_{E'} = \beta] =: \bbP_D^{\xi, \beta} [\; \cdot \; ]$ (we always assume $\bbP_D^{\xi}[ \omega_{E'} = \beta] > 0$ below).

\begin{lemma}
\label{lem:general CBC}
	Let $D$ be a finite connected graph, $\xi \preceq_{abs} \xi'$ two absolute-value boundary conditions on $\Delta$, $\beta \leq \beta'$ two percolations on $E'$, and $F,G: \heightfcns_D \times \{ 0, 1 \}^E \to \bbR$ increasing functions; then we have
	\begin{align} 
		\bbE^{\xi, \beta}_D \big[F( \vert h \vert, \omega ) \big]
		\leq
		\bbE^{\xi', \beta'}_D \big[F( \vert h \vert, \omega) \big].
	\end{align}
\end{lemma}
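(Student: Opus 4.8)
The plan is to reduce this to the already-established comparison result of Proposition~\ref{prop:pos assoc of percolated height fcns}(2) — which handles increasing functions of $(|h|,\omega)$ and comparison of absolute-value boundary conditions via $\preceq_{abs}$ — by absorbing the extra conditioning on $\{\omega_{E'}=\beta\}$ into the geometry of the graph. The key structural fact to exploit is the contraction/expansion picture behind the coupling of $|h|$ and $\omega$ (Corollary~\ref{cor:FK cond law given abs height}, Lemma~\ref{lem:FK cluster conditioning}): conditioning $\omega$ to be $1$ on a set of edges is, as far as the law of $(|h|,\omega)$ on the rest is concerned, the same as contracting those edges; and conditioning $\omega$ to be $0$ on an edge $\langle u,v\rangle$ together with the height-function constraint forces $\{h(u),h(v)\}=\{\pm 1\}$ there (recall $\omega_{\langle u,v\rangle}=1$ automatically on $E_{\mathsf{fix}}$), i.e.\ it imposes a local constraint on $|h|$.

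First I would dispose of the edges of $\beta$ (and $\beta'$) where the value is $1$: write $E'=E'_0\sqcup E'_1$ where $\beta=1$ on $E'_1$ and $\beta=0$ on $E'_0$, and similarly split $E'$ for $\beta'$. Since $\beta\le\beta'$, the set where $\beta=1$ is contained in the set where $\beta'=1$. For the $1$-edges I would contract them, passing to a quotient graph $\mathcal D$ as in Lemma~\ref{lem:FK cluster conditioning}; this preserves the height-function and FK structure, and because contracting \emph{more} edges only further identifies vertices, the $\preceq_{abs}$ relation between the induced boundary conditions is maintained (one must check the monotonicity of the induced boundary conditions under this identification — this is where a small amount of care with the $S$-versus-$\Delta\setminus S$ bookkeeping in the definition of $\preceq_{abs}$ is needed, but it is routine since identifying vertices can only shrink the set of admissible heights). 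After contraction the conditioning on the remaining $E'_0$ edges is conditioning $\omega=0$ there; by the definition of the triplet $(h,\omega,B)$, on any such edge $\langle u,v\rangle$ we must have $\{h(u),h(v)\}=\{\pm 1\}$, hence $|h(u)|=|h(v)|=1$, which is an absolute-value-type constraint on $|h|$ — it can be encoded by simply \emph{fixing} $|h|$ to be $1$ on those vertices, i.e.\ enlarging $\Delta$ and updating the boundary condition to the interval-valued (indeed single-valued $|h|$) value there. The same enlargement is applied on both sides; since $\beta=0$ on $E'_0$ is contained in (edges where) $\beta'=0$, the new boundary set on the $\xi$-side is contained in that on the $\xi'$-side, and the $\preceq_{abs}$ comparison is again preserved.

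Once both conditionings have been converted into modified graphs $\mathcal D\subseteq\mathcal D'$ and modified absolute-value boundary conditions $\tilde\xi\preceq_{abs}\tilde\xi'$, the conditional measures $\bbP^{\xi,\beta}_D$ and $\bbP^{\xi',\beta'}_D$ become (up to the identifications) ordinary random Lipschitz measures $\bbP^{\tilde\xi}_{\mathcal D}$ and $\bbP^{\tilde\xi'}_{\mathcal D'}$, and $F(|h|,\omega)$ remains an increasing function of $(|h|,\omega)$ on the quotient. Then I would invoke Proposition~\ref{prop:pos assoc of percolated height fcns}(2) (the $(|h|,\omega)$ case of \eqref{eq:CBC-|h|}, and the \eqref{eq:bdary cut}-type comparison when passing from $\mathcal D$ to $\mathcal D'$ to handle the enlargement of the graph/boundary set) to conclude $\bbE^{\tilde\xi}_{\mathcal D}[F]\le\bbE^{\tilde\xi'}_{\mathcal D'}[F]$, which is exactly the claim.

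The main obstacle I anticipate is not any single deep step but the bookkeeping: verifying that the two operations (contract the $\beta=1$ edges; freeze $|h|\equiv 1$ at endpoints of $\beta=0$ edges) genuinely transform $\preceq_{abs}$-comparable boundary conditions on $D$ into $\preceq_{abs}$-comparable boundary conditions on the (possibly different) graphs $\mathcal D\subseteq\mathcal D'$, and that the function $F$ stays increasing and the conditioning-to-contraction dictionary (Lemma~\ref{lem:FK cluster conditioning}, Corollary~\ref{cor:FK cond law given abs height}) applies verbatim when $E'$ is an arbitrary edge set rather than $E_{\mathsf{fix}}$. A clean way to organize this, and the one I would follow, is to prove it first for $\beta=\beta'$ (only the graph changes, not the comparison type), then separately for $\mathcal D=\mathcal D'$ with $\xi=\xi'$ but $\beta\le\beta'$ (a pure FKG/monotonicity-in-$\omega$-conditioning statement, which also follows from the contraction picture plus \eqref{eq:CBC-|h|}), and finally chain the two; transitivity of the resulting inequality gives the general case.
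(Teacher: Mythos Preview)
Your reduction strategy has a genuine gap: the central claim --- that the conditional measures $\bbP^{\xi,\beta}_D$ and $\bbP^{\xi',\beta'}_D$ become ordinary random Lipschitz measures on modified graphs with modified absolute-value boundary conditions --- is false, for both the contraction and the freezing operations.

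For the $\beta=1$ edges: conditioning $\omega_e=1$ on $e=\langle u,v\rangle$ does \emph{not} force $|h(u)|=|h(v)|$; for instance $|h(u)|=1$, $|h(v)|=3$ gives $e\in E_{\mathsf{fix}}$ and hence $\omega_e=1$ automatically. So height functions do not descend to the quotient $D/\!\sim$. Lemma~\ref{lem:FK cluster conditioning} describes the contraction only at the level of the FK/sign model, and there is no natural Lipschitz model on the contracted graph reproducing the $(|h|,\omega)$-law of $\bbP^{\xi,\beta}_D$.

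For the $\beta=0$ edges: you are right that $\omega_e=0$ forces $|h(u)|=|h(v)|=1$, but the conditional law of $|h|$ is \emph{not} merely the restriction of $\bbP^\xi$ to that event. By Corollary~\ref{cor:FK cond law given abs height}, $\bbP^\xi[|h|=H,\,\omega_e=0]=\bbP^\xi[|h|=H]\cdot\sfP^{D,*}_{\mathrm{FK}}[\omega_e=0\mid E_{\mathsf{fix}}(H)\subset\omega]$, and the second factor genuinely depends on $H$ through $E_{\mathsf{fix}}(H)$ whenever the graph contains a cycle through $e$ (a short calculation on the $4$-cycle confirms this). Freezing $|h|=1$ at the endpoints therefore produces a \emph{different} $|h|$-law, and invoking Proposition~\ref{prop:pos assoc of percolated height fcns}(2) on the frozen measures does not compare the actual conditional measures. (The tilting factor is decreasing in $H$, so each conditional measure is stochastically below its frozen counterpart; but these one-sided inequalities point the wrong way to combine into the desired comparison, even in the special cases $\beta=\beta'$ or $\xi=\xi'$ of your proposed chaining.)

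The paper's route bypasses all of this by running the Holley criterion directly on the \emph{pair} $(|h|,\omega)$: single-vertex updates of $|h|$ are handled essentially as in Appendix~\ref{app:basics} (the $\omega$-information at the incident edges enters only via the independent Bernoulli $B$, which is held fixed during a vertex update), and single-edge updates of $\omega$ reduce via Corollary~\ref{cor:FK cond law given abs height} to the positive association of the FK-Ising model. The constraint $\omega_{E'}=\beta$ then appears merely as fixed values on some coordinates, which the Holley machinery handles automatically.
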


\paragraph{Proof sketch}
Perhaps the most direct proof is to verify the Holley criterion for the pairs $\vert h \vert, \omega$. The irreducibility and initial states are almost identical to the case of $\vert h \vert$ alone (Appendix ~\ref{app:basics}). For the stochastic domination criterion, there are two parts: updating $|h|$ at a vertex and updating $\omega$ at an edge. The former is almost identical the case of $\vert h \vert$ alone (whose proof is based on comparing the weights $W(h)$, and changing them to $W(h,B)$, where $(h, B)$ gives the correct $(h, \omega)$, has essentially no effect since $h$ and $B$ are independent and $B$ is fixed in the comparisons). The updating of $\omega$ is directly handled by the positive association of the FK-Ising model (by Corollary~\ref{cor:FK cond law given abs height}). \hfill $\square$

\begin{corollary}[Mixing]
\label{cor:mixing}
For any even $m \in \bbN$ and any $\epsilon > 0$ there exists $\rho$ such that the following holds. For any $D \supset L_{\rho n}$ and any (not even necessarily interval-valued) boundary condition $\xi$ on $\partial D$ with $\xi(x) \subset [1, m-1]$ for all $x \in \partial D$, there is a coupling $\bbP$ of $h \sim \bbP_D^\xi$ and $h' \sim \bbP_D^{m \pm 1}$ such that
\begin{align*}
\bbP [ h = h' \text{ on } L_{n}] \geq 1- \epsilon.
\end{align*}
\end{corollary}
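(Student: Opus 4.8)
The plan is to derive the mixing corollary from the ``loops at any height with any probability'' lemma via a sandwiching argument, using the spatial Markov property and the absolute-value FKG/CBC machinery that has been built up. The key idea is that once one has guaranteed, with high probability, a dual-loop around $L_n$ on which the height configuration of $h$ (resp.\ $h'$) is \emph{constant} and equal to the same value, the two fields can be coupled to agree inside that loop by the SMP.

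\medskip

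\noindent\textbf{Step 1: producing a common constant loop.} Fix $\epsilon > 0$ and the even $m$. Since $\xi(x) \subset [1, m-1]$ for all $x \in \partial D$, the CBC for $(h, B)$ (Proposition~\ref{prop: CBC and FKG} extended as in Proposition~\ref{prop:pos assoc of percolated height fcns}) shows that $\bbP_D^\xi$ is sandwiched, in the sense of stochastic domination of $(h, B)$, between $\bbP_D^{1}$ and $\bbP_D^{m-1}$; likewise $\bbP_D^{m \pm 1} = \bbP_D^{m-1 \text{ or } m+1}$ is sandwiched between $\bbP_D^{m-1}$ and $\bbP_D^{m+1}$. (Here I interpret the set-valued $m\pm 1$ boundary condition as dominating $m-1$ and being dominated by $m+1$.) Thus it suffices to compare the two extremal single-valued conditions. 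By the ``loops at any height with any probability'' lemma applied with the shifted height function $h - m$ and height parameter, say, $2$ below and above, there is $\rho = \rho(m, \epsilon)$ such that under \emph{both} $\bbP_D^{1}$ and $\bbP_D^{m+1}$, with probability at least $1 - \epsilon/4$ there is a dual-loop surrounding $L_{2n}$ on which the height of $h$ equals exactly the constant value $m$ (this uses that, starting from the $\pm 1$ or the $m+1$ boundary condition and exploring inward, the sequence of ``frozen'' loop heights performs a symmetric $\pm 6$ random walk, which one can run long enough to hit $m$; the parity works because $m$ is even so $m \in \Zodd + (\text{odd})$ — more precisely $m \pm 1 \in \Zodd$ and $1 \in \Zodd$, and the increments are multiples of $6$, landing on a frozen value $m \pm 1$, from which the complementary flavour is reached after one more step — so a loop at constant height $m\pm 1$, equivalently with the \emph{same} $\calE_m$-loop/$\calE'_m$-structure, can be guaranteed, and then refined to match signs by a final sign-flip argument). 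The careful bookkeeping here is to phrase everything in terms of the event $\calO^*_{\calE_m}(L_{2n})$ together with a \emph{sign constraint}, and to observe that the random walk of frozen heights under the two extreme boundary conditions can be coupled to their own copies; this is where most of the technical care goes.

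\medskip

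\noindent\textbf{Step 2: coupling inside the loop.} Work on a single probability space carrying $(h, B) \sim \bbP_D^{1}$ and $(h', B') \sim \bbP_D^{m+1}$. On the intersection of the two events from Step~1, let $\ell$ (resp.\ $\ell'$) be the exterior-most dual-loop surrounding $L_n$ on which $h$ (resp.\ $h'$) is frozen at constant height exactly $m$; both lie inside $D$ and surround $L_n$. Reveal $h, B$ outside $\ell$ and $h', B'$ outside $\ell'$. By the SMP for $(h, \omega, B)$ (Lemma~\ref{lem:SMP with percolation}) — more precisely the version for heights, Lemma~\ref{lem:SMP}, since the frozen loop pins $h$ to the single value $m$ there — the conditional law of $h$ inside $\ell$ is $\bbP_{D(\ell)}^{m}$, and similarly the conditional law of $h'$ inside $\ell'$ is $\bbP_{D(\ell')}^{m}$, for the respective enclosed domains $D(\ell), D(\ell')$. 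Since $\ell$ and $\ell'$ need not coincide, one cannot directly identify these two conditional laws; instead, take the \emph{inner} of the two loops (say $\ell$, after relabelling) and note $D(\ell) \subset D(\ell')$ with boundary value $m$ on $\ell$; by one more application of the SMP — reveal $h'$ between $\ell'$ and $\ell$ — the conditional law of $h'$ inside $\ell$ is $\bbP_{D(\ell)}^{m}$ as well. Now both $h_{|D(\ell)}$ and $h'_{|D(\ell)}$ have the identical conditional law $\bbP_{D(\ell)}^{m}$, so we may couple them to be \emph{equal} on $D(\ell) \supset L_n$. This gives $\bbP[h = h' \text{ on } L_n] \geq 1 - \epsilon/2 - \epsilon/2 = 1 - \epsilon$ after choosing the error budget in Step~1 appropriately (a union bound over the two ``good loop'' events, plus the sandwiching reduction which at worst doubles the loss, handled by starting from $\epsilon/4$).

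\medskip

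\noindent\textbf{Main obstacle.} The delicate point is Step~1: guaranteeing a loop on which $h$ is frozen at the \emph{specific} value $m$ (and matching signs), uniformly over the unknown boundary condition $\xi$ and the domain $D$, and doing so simultaneously for the two extremal comparison measures so that the \emph{same} target height $m$ is reached. The ``loops at any height with any probability'' lemma gives loops at any height but its proof produces the height via a symmetric random walk on frozen loop values; one must verify the parity (handled by $m$ even, so that $m \mp 1 \in \Zodd$ is reachable and then $m$ itself is a legitimate frozen-edge height for the \emph{shifted} field), and that the required aspect ratio $\rho$ depends only on $(m, \epsilon)$ and not on $\xi$ or $D$ — which it does, because after the first frozen loop the SMP replaces $\xi$ by a constant boundary condition and the subsequent estimates are the domain-uniform ones from the lemma's proof. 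Once this is in place, the rest is a routine SMP-and-coupling exercise.
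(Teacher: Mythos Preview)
Your proposal has genuine gaps in both steps.

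\textbf{Step 1 (sandwiching).} The event ``there is a dual-loop around $L_n$ on which $h$ equals a specific constant'' is neither increasing nor decreasing in $(h,B)$, so the stochastic ordering $\bbP_D^1 \leq_{st} \bbP_D^\xi \leq_{st} \bbP_D^{m-1}$ does not transfer it. For a sandwich to work you would need the \emph{monotone} coupling of $\bbP_D^1$ and $\bbP_D^{m+1}$ to satisfy $h_1=h_3$ on $L_n$ with high probability, and then insert $h,h'$ via a four-way monotone coupling; but your Step~2 construction is not the monotone coupling and you never establish this. Your own ``main obstacle'' paragraph flags Step~1 but underestimates the issue: the difficulty is not parity bookkeeping but the non-monotonicity of the target event.

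\textbf{Step 2 (nesting loops).} The loops $\ell$ and $\ell'$, found in $h$ and $h'$ \emph{separately}, need not be nested; they may cross. Even when nested, revealing $h'$ between $\ell'$ and $\ell$ does not force a constant boundary value on $\ell$: the conditional law of $h'$ inside $\ell$ is $\bbP_{D(\ell)}^{h'|_{\partial D(\ell)}}$, not $\bbP_{D(\ell)}^{m}$. And since $h$ is $\Zodd$-valued while $m$ is even, ``$h=m$ on a loop'' is impossible; your proposed replacement by $m\pm 1$ does not solve the matching problem either, as the two fields may land on loops with opposite signs or at different levels.

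The paper's proof avoids all of this by working with the percolation $\omega$ rather than with $h$-level loops. The key is that $\xi$ (single-valued in $[1,m-1]$) and $\{m\pm 1\}$ are comparable \emph{absolute-value} boundary conditions, $\xi \preceq_{abs} \{m\pm 1\}$, so the strengthened CBC (Lemma~\ref{lem:general CBC}) yields an edge-by-edge monotone coupling $\omega \leq \omega'$. One explores the $\omega'$-cluster of $\partial D$; when it is exhausted, $\omega'=0$ on the bounding dual-curves, hence $\omega=0$ there too, and the percolation SMP (Lemma~\ref{lem:SMP with percolation}) gives the \emph{same} boundary condition $\{\pm 1\}$ for both fields inside, so they may be sampled identically. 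The ``loops at any height'' lemma is invoked only once, for $\bbP_D^{m\pm 1}$, to ensure the $\omega'$-cluster does not reach $L_n$. The point you are missing is that $\omega$ is monotone in $(|h|,\omega)$ while simultaneously, via SMP, forgetting the height level once it equals zero on a separating curve; this is what lets one compare an arbitrary $\xi$ directly to $\{m\pm 1\}$ without any sandwich.
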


\begin{proof}
Pick $\rho$ by the previous lemma so that
\begin{align}
\label{eq:we have a loop}
\bbP_D^{m \pm 1} [\calO^*_{\omega = 0} (L_n)] \geq 1- \epsilon.
\end{align}
Suppose first that $\xi$ is single-valued. Next we would, informally speaking, like to use ``SMP for two coupled height functions''; we have however formulated no such result, but the following is a direct consequence of the definition of $\bbP^{\xi, \beta}_D$ as conditional laws:

\begin{lemma}
Let $\xi, \xi'$ be an absolute-value boundary conditions, $E_i \subset E$ and $\beta_i, \beta_i' : E_i \to \{ 0, 1\}$ such that $\bbP^{\xi, \beta_i}_D$ and $\bbP^{\xi', \beta_i'}_D$ exist. Let $E_{i+1} = E_i \cup \{ e_{i+1}\} $ where $e_{i+1}$ is given by a deterministic function of the setup $(E_i, \xi, \beta_i, \xi', \beta_i')$. Then, for $\beta_{i+1}^+$ (resp. $\beta_{i+1}^-$) the extension of $\beta_i$ to $e_{i+1}$ by $1$ (resp. by $0$) we have
\begin{align*}
\bbP^{\xi, \beta_i}_D [\omega (e_{i+1}) = 1] \bbP^{\xi, \beta_{i+1}^+ }_D [\; \cdot \;] 
+ \bbP^{\xi, \beta_i}_D [\omega (e_{i+1}) = 0] \bbP^{\xi, \beta_{i+1}^- }_D [\; \cdot \;] = \bbP^{\xi, \beta_i}_D [\; \cdot \;],
\end{align*}
and similarly for the ``primed case''.
\end{lemma}
We then construct the coupling $\bbP$ via the following exploration algorithm: start from the absolute-value boundary condition $\xi$ and $\xi' = \{ m \pm 1 \}$ on $\partial D$ and $E_0 = \emptyset$; at this point there are no $\beta_0, \beta_0'$. At any step, given $E_i$ and $ \beta_i \leq \beta_i'$ let $e_{i+1}$ be $\beta_i'$-connected to $\partial D$ (choose one by an arbitrary deterministic function on $\beta_i'$:s), sample $\omega (e_{i+1})$ from its law under $\bbP^{\xi, \beta_i}_D$ and $\omega' (e_{i+1})$ from its law under $\bbP^{\xi', \beta_i'}_D$, the two coupled so that $\omega (e_{i+1}) \leq \omega' (e_{i+1})$ (this is possible by Lemma~\ref{lem:general CBC}) and independently of the previous randomness in the algorithm. Then define $\beta_{i+1}$ and $\beta_{i+1}'$ by extending by these values to $e_{i+1}$. By construction, $ \beta_i \leq \beta_i'$ at all steps, and by the above lemma, the law of $(h, B, \omega)$ (resp. $(h', B', \omega')$) obtained from the \textit{random} measure $\bbP^{\xi, \beta_i}_D [ \; \cdot \;]$ (resp. $\bbP^{\xi', \beta_i'}_D [ \; \cdot \;]$) is $\bbP^\xi_D$ (resp. $\bbP^{\xi'}_D$).

The algorithm stops exploring at the time $T$ when it has exhausted the (random) $\beta'_{T}$-cluster of $\partial D$. Note that also $\beta_{T}=0$ on the dual-curves bounding this cluster. Then, by Lemma~\ref{lem:SMP with percolation} ,$(h, B, \omega)$ and $(h', B', \omega')$ under $\bbP^{\xi, \beta_T}_D [ \; \cdot \;]$ and $\bbP^{\xi', \beta_T'}_D [ \; \cdot \;]$ have the same law inside these dual-curves. We sample them to be the same inside. By~\eqref{eq:we have a loop}, this algorithmic procedure yields the desired coupling.

For set-valued $\xi$, the measure $\bbP_D^\xi$ is a convex combination of the compatible single-valued boundary conditions (with the appropriate convex coefficients). The claim follows then by coupling each single-valued condition to $\bbP_D^{m \pm 1}$ as above.
\end{proof}

\begin{corollary}[Translational invariance]
\label{cor:transl inv}
For any $\epsilon > 0$ there exists $\rho$ such that for any $D \supset L_{\rho n}$ and its translate $D'$ by one bi-periodicity unit, there is a coupling $\bbP$ of $h \sim \bbP_D^{\pm 1}$ and $h' \sim \bbP_{D'}^{\pm 1}$ such that
\begin{align*}
\bbP [ h = h' \text{ on } L_{n}] \geq 1- \epsilon.
\end{align*}
\end{corollary}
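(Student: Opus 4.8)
\textbf{Plan for Corollary~\ref{cor:transl inv}.} The plan is to route both $\bbP^{\pm 1}_D$ and $\bbP^{\pm 1}_{D'}$ through one and the same reference measure living on the overlap $D\cap D'$, and then to quote the mixing lemma. Write $D' = D+v$ with $v$ the fixed bi-periodicity step, and let $\tilde D$ be the connected component, containing the origin-centred lozenge $L_{\rho_0 n}$, of the subgraph with vertex set $V(D)\cap V(D')$ and all $\bbG$-edges between these vertices; here $\rho_0=\rho_0(\epsilon)$ is the constant that Corollary~\ref{cor:mixing} will produce, and $\rho$ is taken to be $\rho_0$ plus a lattice constant so that $L_{\rho_0 n}\subseteq \tilde D$ holds for every $D\supseteq L_{\rho n}$ and every $n$. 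By construction $\tilde D$ is connected and generated by its vertices. The geometric point I would use is that, because $D$ and $D'$ differ by a single lattice step, every vertex of $\partial\tilde D$ lies within a bounded graph distance $K=K(\bbG)$ of $\partial D$, and also within distance $K$ of $\partial D'$: a vertex $x\in\partial\tilde D$ is either in $\partial D$ (resp.\ $\partial D'$), or it has a neighbour in $D$ lying outside $D'=D+v$, hence within $|v|$ steps of $\partial D$ (resp.\ symmetrically for $D'$).

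Next I would reduce $\bbP^{\pm 1}_D$ to a reference measure on $\tilde D$. Revealing $h\sim\bbP^{\pm1}_D$ on $V(D)\setminus(V(\tilde D)\setminus\partial\tilde D)$, the spatial Markov property (Lemma~\ref{lem:SMP}) makes the conditional law of $h|_{\tilde D}$ equal to $\bbP^{\xi}_{\tilde D}$ with $\xi=h|_{\partial\tilde D}$; since $\partial\tilde D$ is within distance $K$ of $\partial D$, where $h\equiv\pm 1$, the $2$-Lipschitz property forces $|\xi(x)|\le C_0:=2K+3$ for all $x\in\partial\tilde D$. Let $C_0^{\mathrm{ev}}$ be the least even integer larger than $C_0$, put $m:=2C_0+4$ and $k:=m-C_0^{\mathrm{ev}}$ (both even); then the single-valued boundary condition $\xi+C_0^{\mathrm{ev}}$ takes values in $[1,m-1]$, so Corollary~\ref{cor:mixing} (with this even $m$, error $\epsilon/3$, and $\rho_0=\rho_0(\epsilon/3,m)$, which depends on $\epsilon$ only, $m$ being a lattice constant) couples $\bbP^{\xi+C_0^{\mathrm{ev}}}_{\tilde D}$ with $\bbP^{m\pm1}_{\tilde D}$ so that they agree on $L_n$ with probability at least $1-\epsilon/3$. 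Subtracting $C_0^{\mathrm{ev}}$ and averaging over the revealed exterior, this yields a coupling of $h\sim\bbP^{\pm1}_D$ with a sample $g\sim\bbP^{k\pm1}_{\tilde D}$ such that $h=g$ on $L_n$ with probability at least $1-\epsilon/3$. The identical argument applied to $h'\sim\bbP^{\pm1}_{D'}$ --- now using that $\partial\tilde D$ hugs $\partial D'$, with the same constants $C_0,C_0^{\mathrm{ev}},m,k$ and the same graph $\tilde D$ --- couples $h'$ with $g'\sim\bbP^{k\pm1}_{\tilde D}$, \emph{the same measure}, agreeing on $L_n$ with probability at least $1-\epsilon/3$.

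Finally I would glue the two couplings along their common marginal: sample $g\sim\bbP^{k\pm1}_{\tilde D}$, then sample $h$ and $h'$ independently from the respective conditional laws given $g$. This is a coupling of $\bbP^{\pm1}_D$ and $\bbP^{\pm1}_{D'}$, and a union bound gives $h=g=h'$ on $L_n$ except with probability at most $2\epsilon/3<\epsilon$. All the constants involved ($K$, $C_0$, $m$, $k$, hence $\rho=\rho(\epsilon)$) are uniform in $D$ and in $n$, as required.

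The one step that is not pure bookkeeping is the passage to the overlap $\tilde D$: one has to observe that stripping away $V(D)\setminus V(\tilde D)$ leaves a boundary condition confined to a bounded window around height $0$, so that the mixing lemma becomes applicable, and that this confinement holds precisely because $D$ and $D'$ differ by one bi-periodicity step (which keeps $\partial\tilde D$ a bounded distance from both $\partial D$ and $\partial D'$). The height shifts, the invocation of Corollary~\ref{cor:mixing}, and the coupling over the common marginal are all routine.
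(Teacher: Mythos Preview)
Your proposal is correct and follows essentially the same route as the paper: both arguments observe that $\partial(D\cap D')$ (or the relevant component $\tilde D$) lies within a bounded lattice distance of $\partial D$ and of $\partial D'$, so the revealed boundary values are uniformly bounded; both then height-shift into the range required by the mixing corollary, couple each of $\bbP_D^{\pm 1}$ and $\bbP_{D'}^{\pm 1}$ to the same reference measure $\bbP_{\tilde D}^{k\pm 1}$, and glue along the common marginal. Your formulation is slightly cleaner in two respects---you explicitly take the connected component of the overlap, and you phrase the gluing abstractly via conditional laws given the reference sample---whereas the paper unfolds the coupling by sampling $h_*$ first and then all the $h_{**}^\xi$ conditionally independently; but these are cosmetic differences.
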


\begin{proof}
%
Observe first that there exists an even $k \in \bbN$, depending only on $\bbG$ and its biperiodic structure, such that the absolute-value of a $\bbP_D^{\pm 1}$ or $ \bbP_{D'}^{\pm 1}$ height function is at most $( k-1)$ on $\partial (D \cap D')$.
By the previous Corollary, we then choose $\rho$ so that for any single-valued boundary condition $\xi$ on $\partial (D \cap D')$ with $|\xi| \leq k-1$ there is a coupling $\bbP^{(\xi)}$ of $h_* \sim \bbP_{D \cap D'}^{2k \pm 1}$ and $h_{**} \sim \bbP_{D \cap D'}^{\xi + k}$ such that 
\begin{align*}
\bbP^{(\xi)} [ h_* = h_{**} \text{ on } L_{n}] \geq 1- \epsilon/2;
\end{align*}
this coupling induces a conditional law $\bbP^{(\xi)}[ \; \cdot \; | h_*]$ of $h_{**}$ given $h_*$.

A coupling $\bbP$ required in the present statement can now be constructed as follows. First, let $h_* \sim \bbP_{D \cap D'}^{2k \pm 1}$ be a variable in the coupling space. Then, for any $\xi$ as above, define $h_{**}^\xi$ on the coupling space by having the $\bbP$-conditional law $\bbP^{(\xi)}[ \; \cdot \; | h_*]$ given $h_*$, and so that $h_{**}^\xi$ for different $\xi$ are $\bbP$-conditionally independent given $h_*$. 

Let the coupling space $\bbP$ furthermore contain $\tilde{h} \sim \bbP_D^{\pm 1}$ and $\tilde{h}' \sim \bbP_{D'}^{\pm 1}$, independent of each other and the above random variables $h_*, (h_{**}^\xi)_\xi$. Finally, define $h$ (resp. $h'$) to coincide with $\tilde{h}$ (resp. $\tilde{h}'$) on $(D \cap D')^c$ and $\partial (D \cap D')$, and to coincide inside $D \cap D'$ with that $(h_{**}^\xi-k)$ where $\xi=\tilde{h}_{| \partial (D \cap D')}$ (resp. $\xi=\tilde{h}'_{| \partial (D \cap D')}$). By the SMP, the marginal laws are still $h \sim \bbP_D^{\pm 1}$ and $h' \sim \bbP_{D'}^{\pm 1}$, and by the Union bound,
\begin{align*}
\bbP [ h = h_{*}=h' \text{ on } L_{n}] \geq 1- \epsilon.
\end{align*}
This concludes the proof.
%
\end{proof}

\begin{proof}[Proposition~\ref{prop:infinite vol limit}]
By Corollary~\ref{cor:mixing}, it suffices to prove the claim when $\xi_n = \{ \pm 1 \}$. The proof then follows that of Corollary~\ref{cor:inf vol lim on torii}: extract a subsequence $n_j$ so that the gradients of $h \sim \bbP^{\pm 1}_{D_{n_j}}$ converge weakly to some limit. Any such limit inherits from $\bbP^{\pm 1}_{D_{n_j}}$ a zero slope and the Gibbs property and, by Corollary~\ref{cor:transl inv}, such a limit is also translationally invariant. This identifies the subsequential limit as $\sfP$, which is hence also the weak limit along the entire sequence $\bbP^{\pm 1}_{D_{n}}$.
\end{proof}

\appendix

\section{Phases for coupled models}
\label{app:Other models}

The purpose of this appendix is to briefly review two couplings of random Lipschitz functions to the random homomorphism model and phase characterizations that follow from those for the Lipschitz functions. There exist also other couplings from the Lipschitz functions, e.g. the Yadin bijection~\cite{Peled-highD-Lip} or most famously to the $n=2$ loop $O(n)$ model on the honeycomb lattice (or other $3$-regular lattices), but we omit these discussions, the former being non-planar and the latter well-studied.

\subsection{The couplings}

\paragraph{The random homomorphism model}

Let $G=(V, E)$ be a finite, connected, bipartite graph; we declare the vertices of $G$ as \textit{even} or \textit{odd} according to the bipartition. A \textit{(graph) homomorphism} $g: V \to \bbZ$ is a function with $|g(u)-g(v)| = 1$ for all adjacent vertices $u$ and $v$, and such that $g$ is odd on the odd vertices and even on the even vertices.

Given a non-empty $\Delta \subset V$, and $\xi: \Delta \to \bbZ$ such that $g_{| \Delta} = \xi$ for some homomorphism $g$, the \textit{random homomorphism model} $\bbQ^\xi_{G}$ on $G$ with boundary condition $\xi$ is the uniform measure on homomorphisms $g: V \to \bbZ$ with $g_{| \Delta} = \xi$. Set-valued boundary conditions can be defined analogously.

\paragraph{$\mathbf{c}=2$ and homomorphisms on dotted graphs}

The first coupling is for bipartite graphs obtained by ``dotting'': let $G=(V, E)$ be a finite connected graph and let $\dot{G}=(\dot{V}, \dot{E})$ be the bipartite \textit{dotted graph} of $G$ obtained, pictorially, by adding a vertex in the middle of each edge of $G$; we declare the vertices of $\dot{G}$ that were already present in $G$ as odd, and the vertices of $\dot{G}$ in the middle of the edges of $G$ as even. Restriction to odd vertices now clearly provides a surjection from homomorphisms on $\dot{G}$ to height functions on $G$, and a boundary condition $\xi$ on $\Delta \subset V$ is admissible for height functions on $G$ if and only if it is admissible for homomorphisms on $\dot{G}$. Furthermore, this surjection yields the following coupling, whose proof we leave for the reader.

\begin{proposition}
\label{prop:dotting coupling}
Let $G=(V, E)$ be a finite connected graph, $\dot{G}=(\dot{V}, \dot{E})$ its dotted bipartite graph, $\Delta \subset V$  and $\xi: \Delta \to \Zodd$ a (possibly set-valued) boundary condition admissible for random Lipschitz functions on $G$ (equivalently, for homomorphisms on $\dot{G}$). If $g \sim \bbQ^\xi_{\dot{G}}$, then its restriction $g_{|G}$ has the law $ \bbP^\xi_{G}$ of random Lipschitz functions on $G$ with edge weights $\mathbf{c}_e = 2$ for all $e \in E$. 
\end{proposition}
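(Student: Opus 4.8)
The plan is to compute the law of $g_{|V}$ under $\bbQ^\xi_{\dot{G}}$ directly, via a fiber-counting argument for the restriction map $g \mapsto g_{|V}$ sending homomorphisms on $\dot{G}$ to functions on $V$. First I would check that this map lands in $\heightfcns_G$: if $g$ is a homomorphism on $\dot{G}$, then each $g(v)$ for $v \in V$ is odd (as $V$ is declared the odd part of $\dot{G}$), and for every $e = \langle v, w\rangle \in E$ the vertices $v$ and $w$ are joined in $\dot{G}$ by the length-two path through the midpoint vertex $m_e$, whence $|g(v) - g(w)| \le 2$; so $h := g_{|V} \in \heightfcns_G$. The equivalence of admissibility of $\xi$ for $\bbQ^\xi_{\dot{G}}$ and for $\bbP^\xi_G$ is already recorded in the text preceding the proposition (it comes down to $d_{\dot{G}}(v,w) = 2\,d_G(v,w)$ for $v, w \in V$).

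The heart of the proof is the fiber count. Fix $h \in \heightfcns_G$; I claim the number of homomorphisms $g$ on $\dot{G}$ with $g_{|V} = h$ equals $W(h)$ computed with $\mathbf{c}_e = 2$ for all $e$. Once $h$ is fixed, the midpoint vertex $m_e$ is adjacent in $\dot{G}$ only to the two endpoints $v, w$ of its edge, so the values $g(m_e)$ at distinct midpoints are chosen independently, and $g$ extends $h$ if and only if $g(m_e) \in \{h(v)-1,\,h(v)+1\} \cap \{h(w)-1,\,h(w)+1\}$ for every $e = \langle v, w\rangle$. Since $h \in \heightfcns_G$ the difference $h(v) - h(w)$ is even and lies in $\{-2,0,2\}$, and an elementary case check gives: this intersection has exactly two elements (namely $h(v) \pm 1$) when $h(v) = h(w)$, and exactly one element (the even integer strictly between them) when $h(v) \ne h(w)$. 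Multiplying over edges,
\[
\#\{\, g \text{ homomorphism on } \dot{G} : g_{|V} = h \,\} \;=\; \prod_{e = \langle v, w\rangle \in E} 2^{\mathbb{I}\{h(v) = h(w)\}} \;=\; W(h).
\]

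Finally I would assemble these. Because $\Delta \subset V$, the event $\{ g_{|\Delta} = \xi\}$ — or $\{ g(x) \in \xi(x)\text{ for all }x \in \Delta\}$ in the set-valued case — constrains $g$ only on odd vertices, hence decomposes as the disjoint union over admissible height functions $h$ with $h_{|\Delta} = \xi$ (resp. $h(x) \in \xi(x)$) of the fibers counted above. As $\bbQ^\xi_{\dot{G}}$ is the uniform measure on this event, $\bbQ^\xi_{\dot{G}}[g_{|V} = h] = W(h)/Z$ with $Z = \sum_{h'} W(h')$ summed over admissible height functions matching $\xi$, which is exactly $\bbP^\xi_G[\{h\}]$ at $\mathbf{c}_e \equiv 2$; the set-valued case needs no extra work. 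I do not expect a genuine obstacle here: the only points requiring any care are the independence of the midpoint choices and the two-versus-one case analysis of the count, both of which are routine, which is presumably why the paper leaves the proof to the reader.
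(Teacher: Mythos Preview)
Your proof is correct and is precisely the intended argument: the paper leaves this proof to the reader, and your fiber-counting computation is the natural one (indeed, it parallels the paper's own proof of the companion Proposition~\ref{prop:star-triangle coupling}, which counts homomorphism extensions triangle-by-triangle in the same spirit).
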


\paragraph{$\mathbf{c}=\sqrt{2}$ and homomorphisms on triangle--star transformed graphs}

We say that a finite connected graph $G=(V, E)$ is a \textit{triangulation} if there is a collection of mutually edge-disjoint triangles $t_1, \ldots, t_m$ that covers $E$. By a \textit{triangle}, we mean here a cyclic path three \textit{distinct} oriented edges $e_1, e_2, e_3$ (the endpoints of these edges need not be three distinct vertices). The triangle--star (or $\nabla$--$Y$) transform $G_Y = (V_Y, E_Y)$ of $G$ is the bipartite connected graph obtained by adding $m$ new vertices $\varpi_1, \ldots, \varpi_m$ and replacing the edges of each triangle $t_i = ( e^i_1 = ( u, v ),  e^i_2 = ( v, w ) ,  e^i_3 = ( w, u )  )$ by the three edges $\langle  u, \varpi_i \rangle$, $\langle  v, \varpi_i \rangle$, $\langle  w, \varpi_i \rangle$. The vertices of $G_Y$ that were already present in $G$ are again declared as odd, and $\varpi_1, \ldots, \varpi_m$ as even. Restriction again provides a surjection from homomorphisms on $G_Y$ to height functions on $G$, and a boundary condition $\xi$ on $\Delta \subset V$ is admissible for height functions if and only if it is admissible for homomorphisms.

\begin{proposition}
\label{prop:star-triangle coupling}
Let $G=(V, E)$ be a finite connected triangulation graph, $G_Y = (V_Y, E_Y)$ its $\nabla$--$Y$ transform, let $\Delta \subset V$  and $\xi: \Delta \to \Zodd$ a (possibly set-valued) boundary condition admissible for random Lipschitz functions on $G$ (equivalently, for homomorphisms on $\dot{G}$). If $g \sim \bbQ^\xi_{G_Y}$, then its restriction $g_{|G}$ has the law $ \bbP^\xi_{G}$ of random Lipschitz functions on $G$ with edge weights $\mathbf{c}_e = \sqrt{2}$ for all $e \in E$. 
\end{proposition}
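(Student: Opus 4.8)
The plan is to exhibit a weight-preserving, fiber-uniform correspondence between homomorphisms $g$ on $G_Y$ and height functions $h$ on $G$, analogous to Proposition~\ref{prop:dotting coupling}. As observed just above the statement, restriction $g \mapsto g_{|G}$ is a surjection from $\heightfcns$-valued-lifts (i.e.\ homomorphisms on $G_Y$) onto $\heightfcns_G$, and $\xi$ is admissible on one side iff on the other; so $\bbQ^\xi_{G_Y}$ pushes forward to \emph{some} measure on $\{ h \in \heightfcns_G : h_{|\Delta} \in \xi \}$, and the only thing to check is that this push-forward assigns to each admissible $h$ a probability proportional to $W(h) = \prod_{e} \mathbf{c}_e^{\bbI\{h(v)=h(w)\}}$ with $\mathbf{c}_e \equiv \sqrt 2$.

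The key computation is purely local, triangle by triangle. Fix a height function $h$ on $G$ and count the homomorphisms $g$ on $G_Y$ with $g_{|G} = h$. Since the triangles $t_1,\dots,t_m$ are mutually edge-disjoint and the added vertices $\varpi_1,\dots,\varpi_m$ are each adjacent only to the three vertices of their own triangle, the value $g(\varpi_i)$ can be chosen independently for each $i$, and the number of admissible choices for $g(\varpi_i)$ depends only on the multiset of heights $\{h(u),h(v),h(w)\}$ on the triangle $t_i=(u,v,w)$. The constraint is $|g(\varpi_i) - h(z)| = 1$ for $z\in\{u,v,w\}$, with $g(\varpi_i)$ of the correct parity (even, since $\varpi_i$ is even and the $h$-values are odd). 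I would now enumerate the cases for a $1$-Lipschitz $h$: if $h(u)=h(v)=h(w)=a$, there are exactly two valid values $a\pm 1$; if exactly two of the three heights agree, say the triple is $\{a,a,a+2\}$ (the only other possibility under $1$-Lipschitzness being $\{a,a,a-2\}$), then the unique valid value is $a+1$, i.e.\ one choice; if all three heights differ, say $\{a-2,a,a+2\}$ — wait, that is impossible since the pairwise differences on a triangle must each be $0$ or $2$ and three distinct values with pairwise gap $2$ cannot occur; the remaining genuinely-distinct case is when the three endpoints are not all distinct vertices but the heights take values $\{a,a+2\}$ in some pattern, which reduces to the two-agree case. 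Hence: the number of lifts contributed by triangle $t_i$ is $2$ if $h$ is constant on $t_i$'s three vertices and $1$ otherwise. Equivalently, writing $f_i$ for the number of pairs of adjacent vertices of $t_i$ carrying equal $h$-values (so $f_i \in \{0,3\}$: either all three edges of the triangle are ``flat'' or none is, since two flat edges force the third), the contribution is $2^{\bbI\{f_i = 3\}} = 2^{f_i/3} = (\sqrt 2)^{f_i} = \prod_{e \in t_i} (\sqrt 2)^{\bbI\{h(v)=h(w)\}}$.

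Multiplying over $i=1,\dots,m$ and using that $\{t_i\}$ partitions $E$, the total number of homomorphisms $g$ on $G_Y$ restricting to a given admissible $h$ equals $\prod_{e\in E}(\sqrt 2)^{\bbI\{h(v)=h(w)\}} = W(h)$ with $\mathbf{c}_e\equiv\sqrt2$. Since $\bbQ^\xi_{G_Y}$ is uniform on its support, the push-forward of $\bbQ^\xi_{G_Y}$ under restriction assigns to $h$ a probability proportional to this count, i.e.\ proportional to $W(h)$, which is exactly $\bbP^\xi_G$ with $\mathbf{c}_e\equiv\sqrt2$; the set-valued case is immediate since both measures handle $\xi$ through the same admissibility condition on $\Delta$. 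The only real obstacle is the case analysis in the local count — one must be careful that ``triangle'' allows repeated vertices and that one has genuinely exhausted the possible height patterns on a $1$-Lipschitz configuration over a $3$-cycle (the crucial combinatorial fact being that on a triangle the number of flat edges is forced to be $0$ or $3$, never $1$ or $2$); once that is nailed down, the rest is bookkeeping, and this is precisely the kind of verification the paper leaves to the reader, so I would state the local count as the one displayed lemma-style line and cite edge-disjointness of the triangles for the global product.
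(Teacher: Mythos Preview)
Your overall approach is the same as the paper's: count the lifts $g$ over a fixed $h$ locally at each added vertex $\varpi_i$, observe this factors over triangles, and compare with $W(h)$. Your local lift count (``$2$ if constant on $t_i$, $1$ otherwise'') is correct. However, your subsequent algebra contains a genuine error.

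You claim $f_i \in \{0,3\}$, but in fact $f_i \in \{1,3\}$: on a non-flat triangle with heights $\{a,a,a\pm 2\}$ there is exactly \emph{one} flat edge (the $a$--$a$ one), not zero. Your own earlier case analysis even exhibits this pattern. Consequently the chain $2^{\bbI\{f_i=3\}} = 2^{f_i/3} = (\sqrt 2)^{f_i}$ fails at both steps: for $f_i=1$ the three expressions are $1$, $2^{1/3}$, $\sqrt 2$; and for $f_i=3$ the last two are $2$ and $2\sqrt 2$. So the total number of lifts is \emph{not} equal to $W(h)$.

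The correct bookkeeping, which is what the paper does, is that per triangle $W$ contributes $(\sqrt 2)^{f_i}$, which is $\sqrt 2$ for $f_i=1$ and $2\sqrt 2$ for $f_i=3$; hence $W(h) = (\sqrt 2)^m \cdot 2^{\#\{i:\, t_i \text{ flat}\}}$, i.e.\ $W(h)$ and the lift count are \emph{proportional} with constant $(\sqrt 2)^m$ depending only on $G$. Proportionality is all that is needed for the push-forward statement, so your argument is easily repaired once you correct the value set of $f_i$ and drop the claim of exact equality.
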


\begin{proof}
We say that a given height function $h \in \heightfcns_G$ is \textit{flat} on a triangle $t_i = \{ ( u, v ), ( v, w ) , ( w, u )  \}$ if $h(u)=h(v)=h(w)$ (in the complementary case, one of the vertices has a different height that the two others). Now, we have
\begin{align*}
\# \{ \text{ homomorphisms $g$ on $G_Y$ with } g_{|G} = h \} =2^{ \# \{t_i: \; h \text{ flat on } t_i \} }.
\end{align*}
Sampling a random height function on $G$ as $ g_{|G}$, where $g \sim \bbQ^\xi_{G_Y}$, the probability of each height configuration $h$ with $h(x) \in \xi(x)$ for all $x \in \Delta$ is thus proportional to $2^{ \# \{t_i: \; h \text{ flat on } t_i \} }$.
On the other hand, sampling a random height function on $G$ from the law $ \bbP^\xi_{G}$ (with $\mathbf{c}_e \equiv \sqrt{2}$) the probability of a configuration $h$ with $h(x) \in \xi(x)$ for all $x \in \Delta$ is proportional to
\begin{align*}
W(h)
& = \prod_{\substack{ t_i = \{ ( u, v ), ( v, w ) , ( w, u ) \} \\ 1\leq i \leq m} }\sqrt{2}^{\bbI\{ h(u)=h(v) \} + \bbI\{ h(v)=h(w) \} + \bbI\{ h(w)=h(u) \}} \\
& = \sqrt{2}^m \cdot 2^{ \# \{t_i: \; h \text{ flat on } t_i \} }.
\end{align*}
It follows that the two random height functions are equal in distribution.
\end{proof}

\subsection{Phase characterizations}

We now state some phase characterizations following from the main results of this paper by the above couplings. For the sake of brevity, we will not explicate how a ``discrete domain'' should be defined for each different lattice (to be coherent with the above couplings), thus leaving the statements just slightly informal.
We also note that the Gibbs measure and variance dichotomies and their equivalence, similar to Section~\ref{sec:equivalent phase characterizations}, hold for the random homomorphism model~\cite{CPT18}; we thus trust that the reader understands the terms \textit{localized} or \textit{logarithmically delocalized}.

\begin{figure}
\begin{center}
\includegraphics[width=0.45\textwidth]{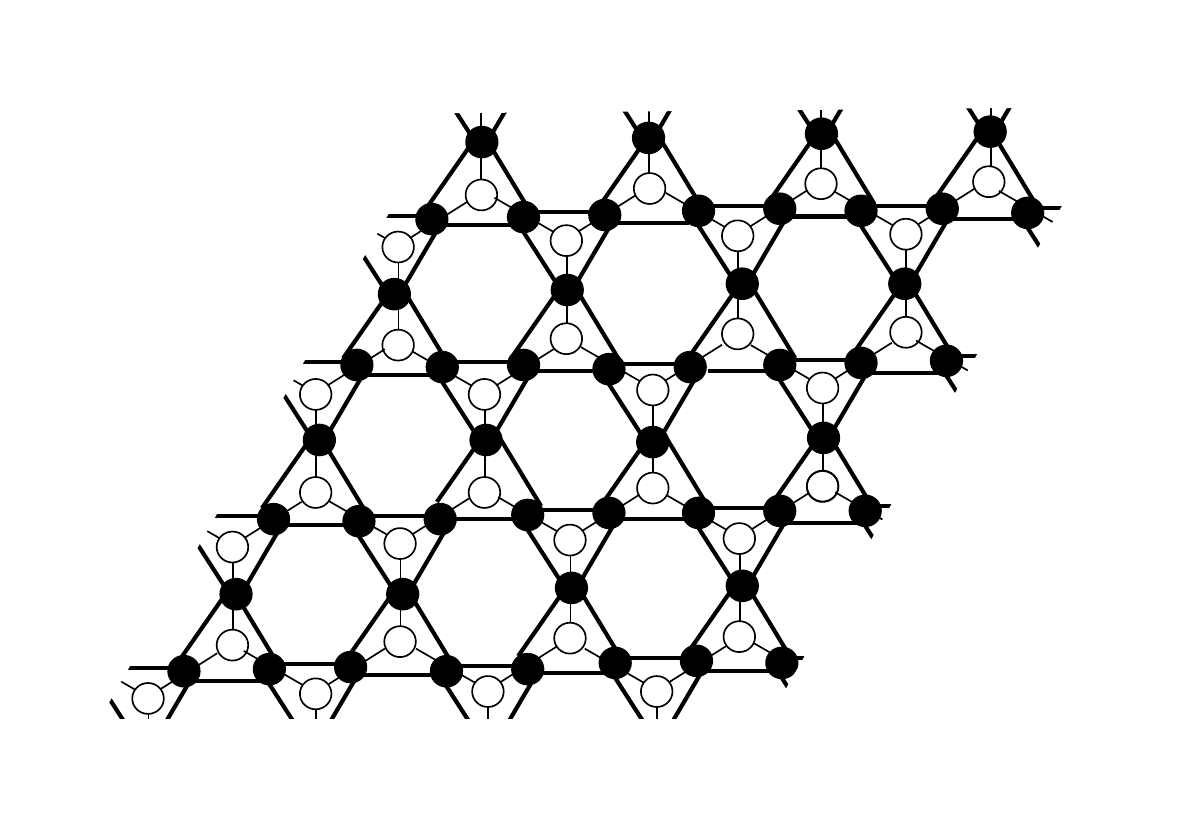}
\includegraphics[width=0.3\textwidth]{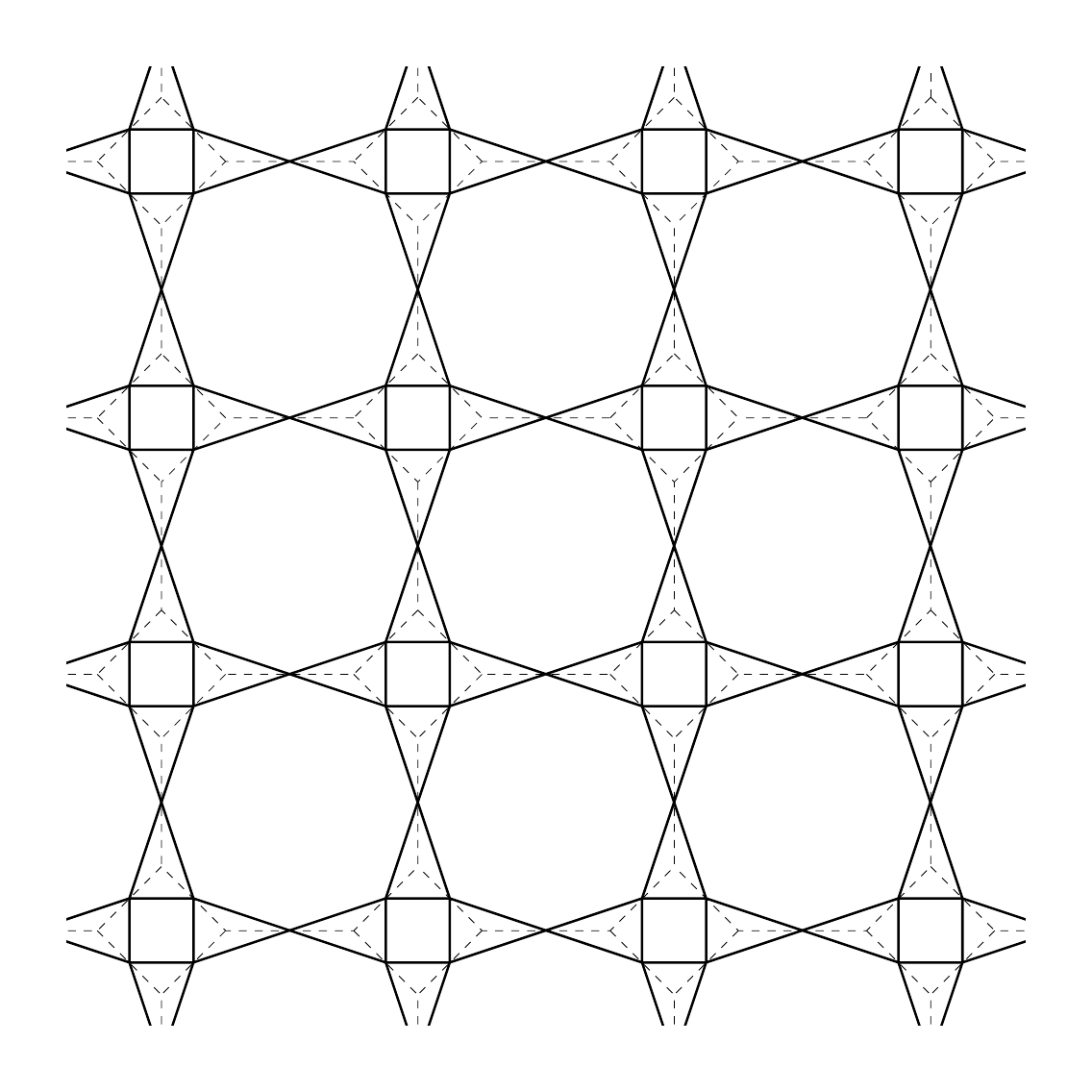} \\
\includegraphics[width=0.5\textwidth]{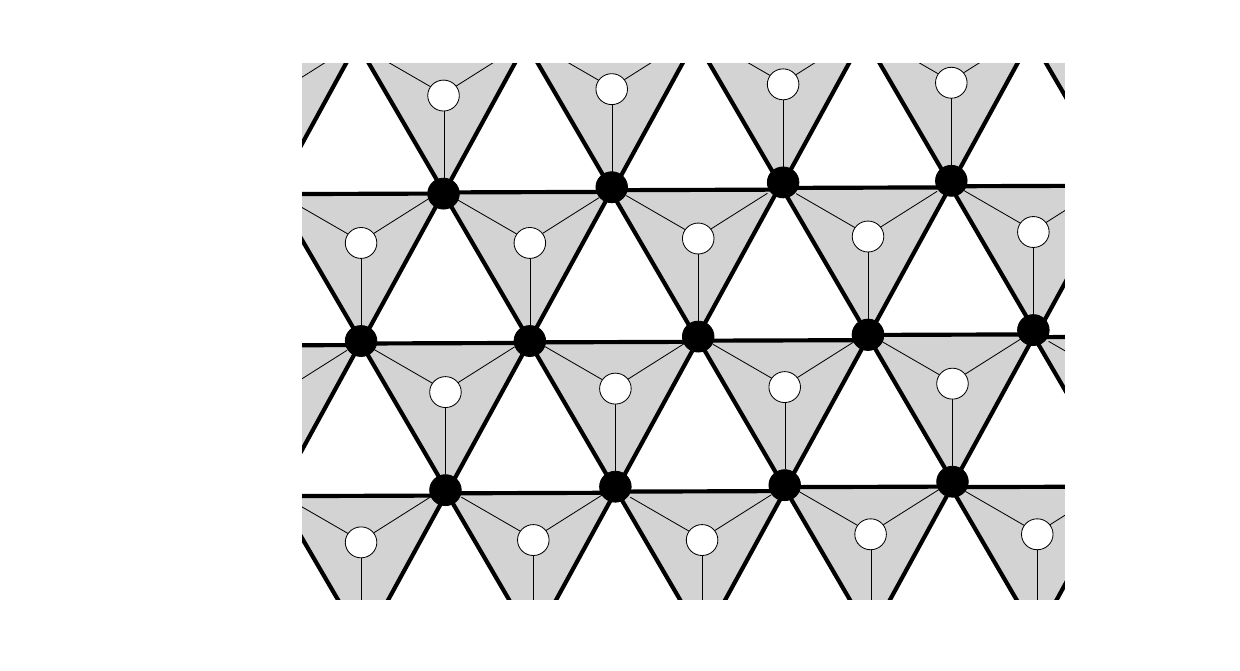}%
\includegraphics[width=0.43\textwidth]{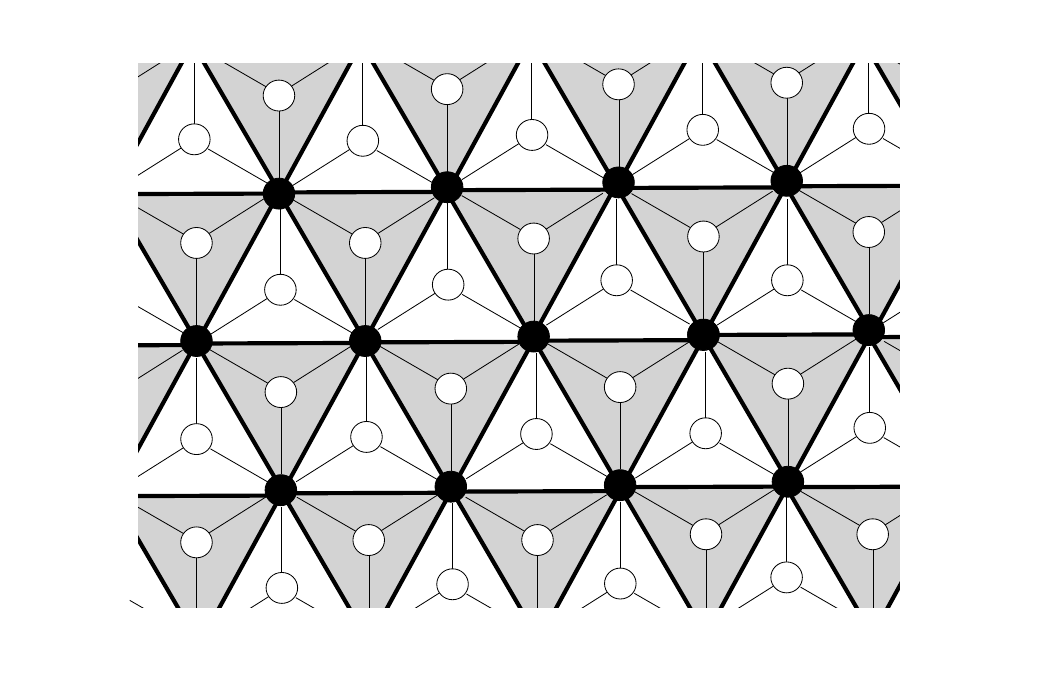}
\end{center}
\caption{
\label{fig:many diff lattices}
Top-left: The kagome lattice (black vertices, thick edges) is a triangulation and its $\nabla$--$Y$ transform is the dotted hexagonal lattice (both black and white vertices, thin edges). Top-right: an illustration of the same general idea of $Y$--$\nabla$ transforming of the dotted version of a $3$-regular lattice, in this case the square-octagon lattice. Bottom-left: The triangular lattice (black vertices, thick edges) is a triangulation (triangles filled with gray) and its $\nabla$--$Y$ transform is the hexagonal lattice (both black and white vertices, thin edges).  Bottom-left: The triangular lattice with doubled edges (black vertices, thick edges, each edge doubled) is a triangulation (both white and gray triangles) and its $\nabla$--$Y$ transform is the rhombille lattice (both black and white vertices, thin edges).
}
\end{figure}

1) \textit{The random homomorphism model on the dotted triangular lattice is localized.} This holds due to its coupling to the $\mathbf{c} \equiv 2$ random Lipschitz model on the triangular lattice (Proposition~\ref{prop:dotting coupling}), which is localized by Corollary~\ref{cor:localized regimes}.

2) \textit{The random homomorphism model on the dotted honeycomb (resp. square-octagon) lattice (see Figure~\ref{fig:many diff lattices} (top-left)) is logarithmically delocalized}, by its coupling to the $\mathbf{c} \equiv 2$ random Lipschitz model on the hexagonal lattice (Proposition~\ref{prop:dotting coupling}) and Theorem~\ref{thm:main thm 2nd}.

3) \textit{The random Lipschitz model with $\mathbf{c} \equiv \sqrt{2}$ on the kagome lattice (see Figure~\ref{fig:many diff lattices} (top-left)) is logarithmically delocalized}, by its coupling to the random homomorphism model on the dotted hexagonal lattice (Proposition~\ref{prop:star-triangle coupling}).

3') A non-quantitative delocalization can be similarly deduced for more general lattices. Let $\Gamma$ be any bi-periodic $3$-regular planar lattice. By~\cite[Theorem~2.7]{Piet-deloc} and~\cite{CPT18}, the random homomorphism model is delocalized (but not necessarily logarithmically) on its bipartite dotted lattice $\dot{\Gamma}$. The odd vertices of this dotted lattice are all of degree $3$, and hence it is the star-triangle transform is a certain triangulation lattice, on which the $\mathbf{c} \equiv \sqrt{2}$ random Lipschitz model is delocalized. An example of this procedure with $\Gamma$ being the square-octagon lattice is illustrated in Figure~\ref{fig:many diff lattices}(top-right). Theorem~\ref{thm:main thm 2nd} shows that this delocalization is logarithmic for suitable lattices $\Gamma$, such as the illustrated case of the square-octagon lattice.

4) \textit{The random homomorphism model on the honeycomb lattice is logarithmically delocalized}, by its coupling to the $\mathbf{c} \equiv \sqrt{2}$ random Lipschitz model on the triangular lattice (Proposition~\ref{prop:star-triangle coupling}; Figure~\ref{fig:many diff lattices} (bottom-left)) and the logarithmic delocalization of the latter~\cite{DGPS}. It should be emphasized that (although a the heavy lifting for this result is~\cite{DGPS}), the author is not aware of a prior explicit statement in the literature. The non-quantitative delocalization of this model follows by~\cite[Theorem~2.7]{Piet-deloc} and~\cite{CPT18}.

5) \textit{The random homomorphism model on the rhombille lattice (Figure~\ref{fig:many diff lattices} (bottom-right)) is localized}, by its coupling to the $\mathbf{c} \equiv \sqrt{2}$ random Lipschitz model on the triangular lattice with doubled edges (Proposition~\ref{prop:star-triangle coupling}). The latter is equivalent to $\mathbf{c} \equiv 2$ and non-doubled edges, which is localized by Corollary~\ref{cor:localized regimes}. This is arguably the simplest bipartite planar lattice on which the random homomorphism model is localized, highlighting the non-triviality of delocalization results such as the main result of the present paper.

\section{Positive association of the absolute-height}
\label{app:basics}


\subsection{The Holley and FKG criteria}

Let $D=(V, E)$ be a finite connected graph, and let $\mu$ and $\mu'$ be two probability measures on $\heightfcns_D$.
We say that $\mu'$ \emph{stochastically dominates} $\mu$, denoted $\mu \leq_{st} \mu'$, 
if there exists a probability measure $\nu$ on pairs $(h,h')\in \heightfcns_D \times \heightfcns_D$ such that the first and second marginal distributions are $\mu$ and $\mu'$, respectively, and $\nu[h \preceq h']=1$. Note that if $\mu \leq_{st} \mu'$, then, for all increasing $F: \heightfcns_D \to \bbR$,
\begin{align*}
	\mu [F(h)]  \leq \mu' [F(h) ].
\end{align*}
We say that $\mu$ is \emph{irreducible} if for any two $h, h' \in \heightfcns_D$ with $\mu[h], \mu[h'] > 0 $, there exists a finite sequence $h=h_0, h_1, \ldots, h_m = h'$ on $\heightfcns_D$, such that for every $1 \leq i \leq m$, $\mu[h_i] > 0$ and $h_i$ differs from $h_{i-1}$ at one vertex only. 
We now recall the classical Holley and FKG criteria. For details see the extensive discussion of these criteria in~\cite{Gri06}.

\begin{lemma}[Holley's criterion]\label{lem:Holley}
Let $D=(V, E)$ be a finite connected graph, and $\mu, \mu'$ probability measures on $\heightfcns_D$. The following criteria are sufficient to guarantee that $\mu \leq_{st} \mu' $: 
    \begin{itemize}[noitemsep]
        \item $\mu$ and $\mu'$ are irreducible; and
        \item there exist $h \preceq h' \in \heightfcns_D$ such that $\mu[h]>0$ and $\mu'[h'] > 0$; and
        \item for every vertex $x \in V$, every $k \in \Zodd$, 
        $\mu$-almost every $\chi \in \heightfcns_{D \setminus \{x\} }$, 
        and $\mu'$-almost every $\chi' \in \heightfcns_{D \setminus \{x\} }$ with $\chi \preceq \chi'$, we have
        \begin{align}\label{eq:Holley}
            \mu[ h(x) \geq k \; \vert \; h_{\vert G \setminus \{x\}} = \chi ]
            \leq \mu'[ h(x) \geq k \; \vert \; h_{\vert G \setminus \{x\}} = \chi' ].
        \end{align}
    \end{itemize}
\end{lemma}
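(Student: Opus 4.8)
\textbf{Plan for proving Lemma~\ref{lem:Holley} (Holley's criterion).}
This is a classical result, so the plan is essentially to recall the standard coupling argument, adapted to the present state space $\heightfcns_D$ of odd two-Lipschitz functions. The approach is to construct the coupling measure $\nu$ on $\heightfcns_D \times \heightfcns_D$ via a continuous-time Markov chain (a monotone ``Glauber dynamics'' on pairs) whose unique stationary distribution has first marginal $\mu$, second marginal $\mu'$, and is supported on $\{(h,h') : h \preceq h'\}$. The irreducibility hypothesis guarantees ergodicity of each coordinate chain, so stationarity pins down the marginals; the ordering of the transition rates (guaranteed by~\eqref{eq:Holley}) guarantees that the order $h \preceq h'$, once established, is preserved.

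\textbf{Key steps, in order.}
First, I would set up the single-site update dynamics: for each vertex $x\in V$ and each admissible target value $k\in\Zodd$ (admissible meaning the resulting function is still in $\heightfcns_D$, i.e.\ still odd and two-Lipschitz given the values at neighbours of $x$), prescribe a rate at which the first coordinate sets $h(x):=k$ and a rate at which the second coordinate sets $h'(x):=k$; the two are coupled using the same Poisson clocks so that the updates are maximally correlated. The standard choice makes the marginal dynamics reversible with respect to $\mu$ (resp.\ $\mu'$): one uses the conditional laws appearing in~\eqref{eq:Holley}. Second, I would verify that the diagonal-ordered set $\{h\preceq h'\}$ is preserved: by the second hypothesis (existence of an ordered pair in the supports) the chain can be started in this set, and the monotonicity inequality~\eqref{eq:Holley} ensures that whenever $h\preceq h'$ before an update at $x$, the coupled update cannot produce $h(x) > h'(x)$ — here one must also check that the two-Lipschitz admissibility constraint at $x$ is compatible, i.e.\ the set of admissible values at $x$ for $h$ is ``below'' that for $h'$ in the appropriate sense, which follows from $\chi \preceq \chi'$ and the structure of the Lipschitz condition. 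Third, I would invoke irreducibility: each marginal chain is irreducible on the (finite) support of $\mu$ (resp.\ $\mu'$), hence has $\mu$ (resp.\ $\mu'$) as unique stationary law; the joint chain on the finite set $\{(h,h'):h\preceq h',\ \mu[h]>0,\ \mu'[h']>0\}$ therefore has a stationary law $\nu$ whose marginals must be $\mu$ and $\mu'$ and which is concentrated on ordered pairs. This $\nu$ is the desired coupling, giving $\mu\leq_{st}\mu'$.

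\textbf{Main obstacle.}
The only genuinely non-routine point — and the place where the present setting differs from textbook Ising-type statements — is the interaction between the \emph{order-preservation} of the coupled single-site updates and the \emph{hard constraint} defining $\heightfcns_D$ (oddness plus the two-Lipschitz condition). One must be careful that when resampling $h(x)$ and $h'(x)$ jointly, the admissible ranges of the two coordinates are nested correctly, so that the monotone coupling of the two conditional distributions in~\eqref{eq:Holley} actually lands in $\{h(x)\le h'(x)\}$; this is where the hypothesis $\chi\preceq\chi'$ is used, together with the observation that the admissible values of $h$ at $x$ form an interval in $\Zodd$ determined monotonically by the neighbouring heights. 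Once this compatibility is checked, the rest is the standard Holley machinery, and I would present it briskly, referring to~\cite{Gri06} for the routine ergodic-theory details.
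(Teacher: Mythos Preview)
Your sketch is the standard Holley coupling argument and is correct; the paper, however, does not prove this lemma at all --- it simply recalls it as a classical criterion and refers to~\cite{Gri06} for details. So there is nothing to compare: you have supplied a proof where the paper gives only a citation.

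One minor remark: the ``main obstacle'' you flag is less of an issue than you suggest. The statement of~\eqref{eq:Holley} is already phrased in terms of conditional laws given $\mu$-almost-every $\chi$ and $\mu'$-almost-every $\chi'$, so the hard two-Lipschitz constraint is absorbed into those conditional distributions (values outside the admissible interval simply have conditional probability zero). The nesting of admissible intervals under $\chi\preceq\chi'$ is then automatic from~\eqref{eq:Holley} itself, and no separate compatibility check is needed.
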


\begin{lemma}[FKG criterion]\label{lem: FKG}
Let $D=(V, E)$ be a finite connected graph, and $\mu$ an irreducible probability measure on $\heightfcns_D$. If for every vertex $x \in V$, every $k \in \Zodd$, and $\mu$-almost every $\chi \in \heightfcns_{G \setminus \{x\} }$ and $\chi' \in \heightfcns_{G \setminus \{x\}}$ with $\chi \preceq \chi'$, we have \begin{align}
\label{eq:FKG criterion}
 \mu[ h(x) \geq k \; \vert \; h_{\vert G \setminus \{x\}} = \chi ]
 \leq 
  \mu[ h(x) \geq k \; \vert \; h_{\vert G \setminus \{x\}} = \chi' ],
\end{align}
then for all increasing functions $F_1, F_2: \heightfcns_G \to \bbR$,
\begin{align*}
\mu [F_1 (h) F_2 (h)] \geq \mu [F_1 (h) ] \mu [ F_2 (h)].
\end{align*}
\end{lemma}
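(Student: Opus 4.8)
The two lemmas at the end (the Holley and FKG criteria) are the tools for all the positive-association statements in the excerpt, so the ``final statement'' to prove is the FKG criterion itself (Lemma~\ref{lem: FKG}); I treat that as the target. My plan is the classical route: deduce the FKG inequality for $\mu$ from Holley's domination criterion applied to a one-parameter family of conditioned measures. Concretely, fix a bounded increasing $F_2$ with $\mu[F_2]>0$ (after adding a constant and rescaling we may assume $0 < F_2 \le 1$), and define the tilted measure $\mu_{F_2}[h] := F_2(h)\mu[h]/\mu[F_2]$. The inequality $\mu[F_1 F_2]\ge \mu[F_1]\mu[F_2]$ is exactly $\mu_{F_2}[F_1]\ge \mu[F_1]$ for all increasing $F_1$, i.e. $\mu \le_{st} \mu_{F_2}$. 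So the whole thing reduces to verifying the three bullet points of Holley's criterion (Lemma~\ref{lem:Holley}) for the pair $(\mu,\mu_{F_2})$.

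\textbf{Key steps, in order.} First, irreducibility of $\mu_{F_2}$: since $F_2>0$ wherever $F_2>0$ is not the issue — $F_2$ is strictly positive as a function after the shift, so $\mu_{F_2}$ has exactly the same support as $\mu$, hence inherits irreducibility from $\mu$. Second, the comparable-pair condition: take $\underline h$ and $\overline h$ the pointwise minimum and maximum over the (finite, nonempty) support of $\mu$; these lie in the support, satisfy $\underline h \preceq \overline h$, and both have positive $\mu$- and $\mu_{F_2}$-mass. (One should check $\underline h,\overline h \in \heightfcns_D$, which is where the lattice-of-height-functions structure is used; for height functions this closure under pointwise min/max is standard and can be cited or checked directly from the $2$-Lipschitz condition.) Third, and this is the computational heart: verify the single-site inequality~\eqref{eq:Holley} for $(\mu,\mu_{F_2})$. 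Fix $x$, $k$, and $\chi \preceq \chi'$ in $\heightfcns_{D\setminus\{x\}}$ with positive mass under the respective measures. For $\mu$ the conditional distribution of $h(x)$ depends only on $\chi$; for $\mu_{F_2}$ it is the same conditional distribution reweighted by $y \mapsto F_2(\chi \text{ extended by } h(x)=y)$. Using that $F_2$ is increasing, this reweighting is by an increasing function of $h(x)$, which can only push the conditional law up in stochastic order; combined with the hypothesis~\eqref{eq:FKG criterion} comparing the $\mu$-conditionals at $\chi$ and at $\chi'$, one gets
\begin{align*}
\mu[\,h(x)\ge k \mid h_{|D\setminus\{x\}}=\chi\,] \;\le\; \mu[\,h(x)\ge k \mid h_{|D\setminus\{x\}}=\chi'\,] \;\le\; \mu_{F_2}[\,h(x)\ge k \mid h_{|D\setminus\{x\}}=\chi'\,],
\end{align*}
which is exactly~\eqref{eq:Holley}. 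Holley's criterion then yields $\mu \le_{st} \mu_{F_2}$, hence $\mu[F_1 F_2]\ge \mu[F_1]\mu[F_2]$ for all increasing $F_1$ and all increasing $F_2$ (the general $F_2$ following from the shift/rescale reduction, and the degenerate case $\mu[F_2]=0$ being trivial since then $F_2$ is $\mu$-a.s.\ constant).

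\textbf{Main obstacle.} The only genuinely delicate point is the second step of the third bullet: that multiplying a conditional single-site law by an increasing function of the site value produces a stochastically larger law. This is a one-dimensional fact (monotone likelihood-ratio ordering) but one has to phrase it correctly — in particular the conditioning on $\{h_{|D\setminus\{x\}}=\chi'\}$ restricts $h(x)$ to the finite admissible interval determined by $\chi'$ and the $2$-Lipschitz constraint at $x$, and $F_2$ must be evaluated at the corresponding full height functions, so one should be careful that the extension $\chi' \mapsto (\chi', h(x)=y)$ is monotone in $y$ as an element of $\heightfcns_D$ (it is). Everything else — irreducibility, the min/max seed pair, the tilting reduction — is routine and can be dispatched quickly or by citing~\cite{Gri06}.
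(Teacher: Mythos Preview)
Your proof is correct; the paper, however, does not prove this lemma at all --- it simply states it as a classical result and refers the reader to~\cite{Gri06} for details. So there is no ``paper's own proof'' to compare against. Your route (tilt by $F_2$, then apply Holley to $(\mu,\mu_{F_2})$) is exactly the standard one.

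One small simplification: for the comparable-pair bullet of Holley's criterion you work harder than necessary. Since $\mu$ and $\mu_{F_2}$ have \emph{identical} support (after your shift, $F_2>0$ everywhere), you may simply take $h=h'$ to be any single element of the common support; the condition $h\preceq h'$ is then trivial. There is no need to invoke closure under pointwise min/max, nor to worry about whether $\underline h$ and $\overline h$ lie in the support of $\mu$ (which, incidentally, is not obvious from irreducibility alone).
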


\subsection{Proof of Propositions~\ref{prop:|h|-monotonicity} and~\ref{prop:SMP ineq for abs val}}

The inequality~\eqref{eq:CBC-|h|} follows by checking Holley's criterion for $\mu$ the law of $|h|$ when $ h \sim \bbP^{\xi}$ and $\mu'$ the law of $|h'|$ under $h' \sim \bbP^{\xi'}$. Then, by taking $\mu'=\mu$ in the Holley criterion, we observe that $\mu$ satisfies the FKG criterion, which yields~\eqref{eq:FKG-|h|}, so it is ``a special case of''~\eqref{eq:CBC-|h|}. Finally,~\eqref{eq:bdary cut} is proven via the Holley's criterion for $\mu'$ the law of $|h'|$ when $h' \sim \bbP^{\xi'}_{D'}$ and $\mu$ the law of $|h|$ when $h \sim \bbP^{\xi}_D$.\footnote{Formally, we extend $h$ by all-ones to $D' \setminus D$ to precisely match the setup of the criterion.} The two verifications of Holley's criterion are largely identical. Their proof consists of several routine computations and a nontrivial case of~\eqref{eq:Holley}, where, roughly speaking, the sign of $h$ is not fixed around $x$ by $\chi$.

\subsubsection{Routine computations}

\textbf{Irreducibility:} We check the irreducibility of the law of $|h|$ with any absolute-value boundary condition $h \sim \bbP^{\xi} $. Let $H, H'$ be absolute values of height functions $h, h' \in \heightfcns_D$ allowed by $ \xi $. It suffices to find a finite sequence of functions from $H$ to $H \vee H'$; a similar path can be reversed from $H \vee H'$ to $H'$. For a lighter notation we assume $H \preceq H'$. We'll find a path $H = H_0 \preceq H_1 \preceq \ldots$ so that $H_i \preceq H'$ and so that at each step, unless $H_i = H'$, $H_i$ will be increased by $2$ at one vertex $x_i$ to obtain $H_{i+1}$. In finitely many steps, such a path will achieve $H'$.

Now, if $H_i = H'$ we're done. Otherwise, we want to choose $x_i$. Note that $H_i (x_i)$ can be lifted to yield another height function if and only if $x_i$ has no neighbours with a strictly lower height in $H_i$.
Let $x_i \in V$ be the vertex where $H'-H_i$ attains its maximum value; if there are several such vertices, pick one amongst them with minimal height $H_i(x_i)$. It follows that $x_i$ cannot have neighbours with strictly lower height $H_i$, so its value can be increased, and $H_{i+1}$ thus obtained satisfies $H_{i+1} \preceq H'$.

\textbf{Initial states:} We only need to consider the case of~\eqref{eq:CBC-|h|} here. Note that a function $\zeta: \Delta \to \Zodd$ is such that $\bbP^\xi [|h_{|\Delta}| = \zeta] > 0$ if and only if it is an admissible single-valued boundary condition on $\Delta$ with $|a(x)| \leq \zeta(x) \leq b(x)$ for all $x \in \Delta$; we then say that $\zeta$ is \textit{$\mu$ possible}. Let $\zeta$ and $\zeta'$ be any $\mu$ and $\mu'$ possible conditions, respectively. Since $\xi \preceq_{abs} \xi'$, it follows that also $\zeta \wedge \zeta'$ (resp. $\zeta \vee \zeta'$) is $\mu$ possible (resp. $\mu'$ possible). Let then $\overline{h}$ (resp. $\overline{h}'$) be the maximal height function~\eqref{eq:max and min height fcns} with the boundary condition $\zeta \wedge \zeta'$ (resp. $\zeta \vee \zeta'$); hence $0 < \overline{h} \preceq \overline{h}'$. We also directly have $\mu[\overline{h}] = \bbP^{\xi}[ |h| = \overline{h}] \geq \bbP^{\xi}[ h = \overline{h}] > 0$, and similarly for $\mu' [\overline{h}']$.

\textbf{Property~\eqref{eq:Holley}:} We start with trivialities. First, for~\eqref{eq:CBC-|h|} (resp.~\eqref{eq:bdary cut}), there is nothing to check if $x \in S' \setminus S$ (resp. $x \in D' \setminus D$). Second, we are actually proving a stochastic domination of real random variables. If $x \in \Delta$, the boundary conditions may restrict $H(x)$ (resp. $H'(x)$) below (resp. above) some value (we denote $|h|=H$ and $|h'|=H'$ throughout the proof); stochastic domination for such conditional laws follows from that of the unconditional ones; thus little attention will be paid to the boundary conditions.

Let us denote by $M$ the maximum of $\chi'$ on the neighbours $N_x$ of $x$, and by $m$ the minimum of $\chi$; thus $H(x) \leq m+2$ and $H'(x) \geq M-2$ . If $M-m \geq 4$, we thus automatically have $H'(x) \geq H(x)$. We hence assume $M-m \leq 2$ below; note that then the values of both $\chi$ and $\chi'$ vary by at most $2$ on $N_x$.

\textbf{Case 1:} Suppose first that $m \geq 3$. This means that the sign of $h$ (and $h'$ similarly) is constant on $N_x \cup \{ x \}$ and, in particular, already the absolute value $H$ reveals, for any $y \in N_x$, whether $h(x)=h(y)$ or not (and for $H'$ and $h'$ similarly). 

 \textbf{Case 1a:} Suppose first that $\chi'-\chi$ is a constant $0$ or $2$ on $N_x$. Then $h(x)$ behaves identically to $h'(x)$ or $h'(x)- 2$, respectively, and~\eqref{eq:Holley} follows.
 
 \textbf{Case 1b:} Suppose then that $\chi-\chi'$ is not constant on $N_x$, and that both $\chi$ and $\chi'$ take both values $m$ and $M=m+2$ on $N_x$. Then it necessarily occurs that $H(x), H'(x) \in \{m, M\}$, and it suffices to show that
\begin{align*}
\frac{\mu[ H(x) = M \; \vert \; H_{\vert D \setminus \{x\}} = \chi ]}{ \mu[ H(x) = m \; \vert \; H_{\vert D \setminus \{x\}} = \chi ]}
            \leq \frac{\mu'[ H'(x) = M \; \vert \; H'_{\vert D \setminus \{x\}} = \chi' ]}{ \mu'[ H'(x) = m  \; \vert \; H'_{\vert D \setminus \{x\}} = \chi' ]}.
\end{align*}
We compute
\begin{align*}
\frac{\mu[ H(x) = M \; \vert \; H_{\vert D \setminus \{x\}} = \chi ]}{ \mu[ H(x) = m \; \vert \; H_{\vert D \setminus \{x\}} = \chi ]} = 
\frac{ \prod_{y \in N_x: \chi(y)=M} \mathbf{c}_{\langle x, y \rangle }}{ \prod_{y \in N_x: \chi(y)=m} \mathbf{c}_{\langle x, y \rangle }},
\end{align*}
and using a similar formula for the ``primed case'',
\begin{align*}
\frac{\mu'[ H'(x) = M \; \vert \; H'_{\vert D \setminus \{x\}} = \chi' ]}{ \mu'[ H'(x) = m  \; \vert \; H'_{\vert D \setminus \{x\}} = \chi' ] }
\Bigg/
\frac{\mu[ H(x) = M \; \vert \; H_{\vert D \setminus \{x\}} = \chi ]}{ \mu[ H(x) = m \; \vert \; H_{\vert D \setminus \{x\}} = \chi ]}
= \prod_{y \in N_x: \chi'(y)=M \; \& \; \chi(y)=m} \mathbf{c}_{\langle x, y \rangle }^2 \geq 1.
\end{align*}
Thus, \eqref{eq:Holley} follows. 

\textbf{Case 1c:} The remaining subcase is when $\chi-\chi'$ is not constant on $N_x$, and exactly one out of $\chi$ and $\chi'$ is a constant function on $N_x$. For definiteness, let us study the case $\chi'_{| N_x} = M$, $\chi_{| N_x} \in \{ M, M-2 \}$ here (the other one is similar). $H'(x)$ may thus take any of the values $\{ M-2, M, M+2 \}$, while $H(x)$ is either $ M-2$ or $ M $. It suffices to show that
\begin{align*}
\mu[ H(x) = M \; \vert \; H_{\vert D \setminus \{x\}} = \chi ]
\leq \mu'[ H'(x) \geq M \; \vert \; h_{\vert D \setminus \{x\}} = \chi' ].
\end{align*}
The LHS becomes
\begin{align*}
\mu[ H(x) = M \; \vert \; H_{\vert D \setminus \{x\}} = \chi ] 
&= \frac{
\prod_{y \in N_x: \chi(y)=M} \mathbf{c}_{\langle x, y \rangle }
}{ 
\prod_{y \in N_x: \chi(y)=M} \mathbf{c}_{\langle x, y \rangle } + \prod_{y \in N_x: \chi(y)=m} \mathbf{c}_{\langle x, y \rangle }
}
\\
&=
 \frac{
\prod_{y \in N_x: \chi(y)=M} \mathbf{c}_{\langle x, y \rangle }/\prod_{y \in N_x: \chi(y)=m} \mathbf{c}_{\langle x, y \rangle }
}{ 
 1+ \prod_{y \in N_x: \chi(y)=M} \mathbf{c}_{\langle x, y \rangle } /\prod_{y \in N_x: \chi(y)=m} \mathbf{c}_{\langle x, y \rangle }
}
\end{align*}
while the RHS becomes 
\begin{align*}
\mu'[ H'(x) \geq M \; \vert \; h_{\vert D \setminus \{x\}} = \chi' ] = 
\frac{
1 + \prod_{y \in N_x} \mathbf{c}_{\langle x, y \rangle }
}{ 
1+1 + \prod_{y \in N_x} \mathbf{c}_{\langle x, y \rangle }
}.
\end{align*}
Both are of the form $u/(1+u)$ which is increasing in $u \in \bbR_+ $. We then deduce that the RHS is larger as $\mathbf{c}_e \geq 1$.

\textbf{Case 2:} Suppose now that $m = 1$; thus, $H(x)$ is either $1$ or $ 3$.

\textbf{Case 2a:} Suppose also that $\chi'_{| N_x} $ is a constant $3$. Thus, $H'(x)$ may thus take any of the values $\{ 1, 3, 5 \}$.
It suffices to show that
\begin{align*}
\mu[ H(x) = 3 \; \vert \; H_{\vert D \setminus \{x\}} = \chi ]
\leq \mu'[ H'(x) \geq 3 \; \vert \; h_{\vert D \setminus \{x\}} = \chi' ].
\end{align*}
To compute the left-hand side we observe that, $H(x)=3$ can only occur if $\sign (h)$ is constant on $N_x$.\footnote{If $x \in S$, then this sign must be a constant $+1$; the induced changes are immaterial.} Thus,
\begin{align*}
\mu[ H(x) = 3 \; \vert \; H_{\vert D \setminus \{x\}} = \chi ]
&=
\bbP^{\xi} \big[ \sign (h) \text{ cst. on } N_x \; \vert \; | h_{\vert D \setminus \{x\}} | = \chi \big]
\\ & \qquad \times
\bbP^{\xi} \big[ |h(x)| = 3 \; \vert \; \sign (h) \text{ cst. on } N_x  \; \& \; | h_{\vert D \setminus \{x\}} | = \chi \big] \\
& \leq \bbP^{\xi} \big[ |h(x)| = 3 \; \vert \; \sign (h) \text{ cst. on } N_x  \; \& \; | h_{\vert D \setminus \{x\}} | = \chi \big] \\
& = \begin{cases}
\frac{
1 
}{ 
1+1 + \prod_{y \in N_x} \mathbf{c}_{\langle x, y \rangle }
}, \qquad  \text{ if } h_{|N_x} \text{ constant } \pm 1  \\
\frac{
\prod_{y \in N_x: \chi(y)=3} \mathbf{c}_{\langle x, y \rangle } 
}{ 
\prod_{y \in N_x: \chi(y)=3} \mathbf{c}_{\langle x, y \rangle } + \prod_{y \in N_x: \chi(y)=1} \mathbf{c}_{\langle x, y \rangle }
}, \qquad \text{ if } h_{|N_x} \text{ takes two values } \pm \{ 1, 3\}
\end{cases} \\
& \leq
\frac{ \prod_{y \in N_x: \chi(y)=3} \mathbf{c}_{\langle x, y \rangle } / \prod_{y \in N_x: \chi(y)=1} \mathbf{c}_{\langle x, y \rangle }  }{1 + \prod_{y \in N_x: \chi(y)=3} \mathbf{c}_{\langle x, y \rangle } / \prod_{y \in N_x: \chi(y)=1} \mathbf{c}_{\langle x, y \rangle } }.
\end{align*}
To compute the right-hand side, observe that the sign of $h'$ on $N_x \cup \{ x \}$ is constant; thus, the RHS can be computed identically the right-hand side in case 1c,
\begin{align*}
\mu'[ H'(x) \geq 3 \; \vert \; h_{\vert D \setminus \{x\}} = \chi' ] = 
\frac{
1 + \prod_{y \in N_x} \mathbf{c}_{\langle x, y \rangle }
}{ 
1+1 + \prod_{y \in N_x} \mathbf{c}_{\langle x, y \rangle }
}.
\end{align*}
Our estimates for the left-hand side and right-hand side are both of the form $u/(1+u)$, the right-hand side corresponding to a larger $u$. 

\subsubsection{The nontrivial case}
\label{subsubsec:case 2b}

\textbf{Case 2b:} Suppose now that $m=1$ and $\chi' $ takes the value $1$ somewhere on $N_x$. Thus, $H(x)$ and $H'(x)$ must thus both take values in $\{ 1, 3 \}$,
 and it suffices to show that
\begin{align*}
\frac{\mu[ H(x) = 3 \; \vert \; H_{\vert D \setminus \{x\}} = \chi ]}{\mu[ H(x) = 1 \; \vert \; H_{\vert D \setminus \{x\}} = \chi ]}
\leq
\frac{\mu'[ H'(x) =  3 \; \vert \; H'_{\vert D \setminus \{x\}} = \chi' ]}{
\mu'[ H'(x) =  1 \; \vert \; H'_{\vert D \setminus \{x\}} = \chi' ]
}
.
\end{align*}

Let $H_1 \in \heightfcns_D$ (resp. $H_3 \in \heightfcns_D$) be the height function equal to $1$ (resp. $3$) at $x$ and coinciding with $\chi$ on $D \setminus\{x\}$. Define
\begin{align}\label{eq:def of Z}
    Z_1 = \sum_{\substack{ h \in \heightfcns_D \\ | h | =H_1 \\ h > 0 \text{ on } \calS_1 } } W (h)
    \text{\qquad and \qquad}
    Z_3 = \sum_{\substack{ h \in \heightfcns_D \\ | h | = H_3 \\ h > 0 \text{ on } \calS_3 } } W (h),
\end{align}
where $\calS_1 \subset S$ (resp. $\calS_3 \subset S$ ) is the set of vertices where the boundary condition $\xi$ reveals the sign of $h$ with $|h| = H_1$ (resp. $|h|=H_3$) to be $+1$.
A similar formula is obtained for the ``primed'' configurations. 
We thus wish to show that 
\begin{align}\label{eq:to show for Holley 2}
	Z_3 \big/ Z_1 \leq  Z'_3 \big/ Z'_1.
\end{align}

Suppose first that $x \in D \setminus S'$, and hence $\calS_1 = \calS_3 =: \calS$ and $\calS_1' = \calS_3' =: \calS'$. There is an injection $\mathbf T$ from the height functions $h$ contributing to $Z_3$ to the height functions $h$ contributing to $Z_1$: simply change the value $\pm 3$ of $h(x)$ to $ \pm 1$. The image of this injection is exactly those $h$ contributing to $Z_1$ for which in addition $h$ has a constant sign on $N_x \cup \{ x\}  $. The weights transform under this injection as
\begin{align*}
W (\mathbf T(h)) = \frac{
\prod_{y \in N_x \; : \; \chi(y)=1 } \mathbf{c}_{\langle x, y \rangle }
}{
\prod_{y \in N_x  \; : \; \chi(y)=3 } \mathbf{c}_{\langle x, y \rangle }
} W (h).
\end{align*}
We can thus express $Z_3$ using this up-to-constant weight-preserving injection as
\begin{align*}
Z_3 = \frac{
\prod_{y \in N_x \; : \; \chi(y)=3 } \mathbf{c}_{\langle x, y \rangle }
}{
\prod_{y \in N_x  \; : \; \chi(y)=1 } \mathbf{c}_{\langle x, y \rangle }
}
 \sum_{\substack{ h \in \heightfcns_D \\ | h | = H_1 \\ h \geq 0 \text{ on } \calS  \\
\mathrm{sign}(h) \text{ cst. on } N_x \cup \{ x\} } } W (h),
\end{align*}
and finally, using~\eqref{eq:def of Z},
\begin{align}
\label{eq:Z2/Z0 using sign probas}
Z_3 \big/ Z_1 = 
\frac{
\prod_{y \in N_x \; : \; \chi(y)=3 } \mathbf{c}_{\langle x, y \rangle }
}{
\prod_{y \in N_x  \; : \; \chi(y)=1 } \mathbf{c}_{\langle x, y \rangle }
}
\bbP^{\xi } [ \mathrm{sign}(h) \text{ cst. on } N_x \cup \{ x\} \; | \; | h | = H_1 ].
\end{align}
A similar formula holds for the ``primed'' case. Observing the order of the prefactors
\begin{align*}
\frac{
\prod_{y \in N_x \; : \; \chi(y)=3 } \mathbf{c}_{\langle x, y \rangle }
}{
\prod_{y \in N_x  \; : \; \chi(y)=1 } \mathbf{c}_{\langle x, y \rangle }
}
\leq
\frac{
\prod_{y \in N_x \; : \; \chi'(y)=3} \mathbf{c}_{\langle x, y \rangle }
}{
\prod_{y \in N_x  \; : \; \chi'(y)=1 } \mathbf{c}_{\langle x, y \rangle }
}
\end{align*}
it then suffices to show that
\begin{align}
\label{eq:last objective 1}
\bbP^{\xi } [ \mathrm{sign}(h) \text{ cst. on }N_x \cup \{ x\} \; | \; | h | = H_1 ] 
\leq
\bbP^{\xi'} [ \mathrm{sign}(h') \text{ cst. on } N_x \cup \{ x\} \; | \; | h' | = H'_1 ].
\end{align}

If $x \in S$,\footnote{Recall that the remaining case $x \in S' \setminus S$ was handled in the very beginning.} an analogous injection changes the value $h(x)=3$ to $1$, and the image are those $h$ contributing to $Z_1$ for which $h$ has a constant sign $+1$ on $N_x \cup \{ x\}  $, and after analogous computation,
\begin{align*}
Z_3 \big/ Z_1 = 
\frac{
\prod_{y \in N_x \; : \; \chi(y)=3 } \mathbf{c}_{\langle x, y \rangle }
}{
\prod_{y \in N_x  \; : \; \chi(y)=1 } \mathbf{c}_{\langle x, y \rangle }
}
\bbP^{\xi } [ \mathrm{sign}(h)=+1 \text{ on } N_x \cup \{ x\} \; | \; | h | = H_1 ],
\end{align*}
and it suffices to show that
\begin{align}
\label{eq:last objective 2}
\bbP^{\xi } [ \mathrm{sign}(h)=+1 \text{ on }N_x \cup \{ x\} \; | \; | h | = H_1 ] 
\leq
\bbP^{\xi'} [ \mathrm{sign}(h')=+1 \text{ on } N_x \cup \{ x\} \; | \; | h' | = H'_1 ].
\end{align}

As for this the objectives~\eqref{eq:last objective 1} and~\eqref{eq:last objective 2}, recall from Corollary~\ref{cor:FK cond law given abs height} that the signs of $h$ (resp. $h'$) under the above conditional law are constant on the clusters of the percolation $\omega$ (resp. $\omega'$), being $+1$ when so determined by the boundary condition and otherwise given by i.i.d. fair coin flips. The objective then follows since by Lemma~\ref{lem:FK coupling 2}, the percolations can be coupled so that $\omega \subset \omega'$. This concludes case 2b and the entire verification of the Holley criterion.

\section{Some postponed details}
\label{app:roskis}

We explicate here for the sake of completeness some easy proofs that were omitted in the main text.

\begin{proof}[Lemma~\ref{lem:FK cluster conditioning}]
We will show that property (1) gives the correct marginal law for $\varpi$; property (2) then follows immediately.
Under the conditional FK-Ising model $ \sfP^{G}_{\mathrm{FK}} [\cdot \; | \; E_{\mathsf{fix}} \subset \omega]$, each $\omega \supset E_{\mathsf{fix}}$ appears with a probability proportional to 
\begin{align*}
2^{c_G(\omega ) } \prod_{e \in E \setminus E_{\mathsf{fix}}}  \left( (p_e)^{\mathbbm{1}\{ e \in \omega \}} (1-p_e)^{\mathbbm{1}\{ e \not \in \omega \}} \right).
\end{align*}
Since $E_{\mathsf{fix}} \subset \omega$, it follows that $c_G(\omega ) = c_\calG(\omega )$. Re-sampling on $E_{\mathsf{fix}} $ to obtain $\varpi$ will only change the state of loop edges on $\calG$ so $c_\calG(\omega ) = c_\calG(\varpi )$, and the probability of each $\varpi$ is thus proportional to
\begin{align*}
2^{c_\calG (\varpi ) } \prod_{e \in E }  \left( (p_e)^{\mathbbm{1}\{ e \in \varpi \}} (1-p_e)^{\mathbbm{1}\{ e \not \in \varpi \}} \right),
\end{align*}
which is the definition of $\sfP^{\calG}_{\mathrm{FK}} $. The case of wired measures is identical.
\end{proof}

\begin{proof}[of Lemma~\ref{lem:SMP with percolation}]
Observe first that the above-mentioned conditional law  of $(h_{|V}, \omega_{|E}, B_{|E})$ and $\bbP^{\alpha }_D$ have the same support.
The rest is a simple computation with weights:
\begin{align*}
\bbP^{\alpha }_D [(h_{|V}, \omega_{|E}, B_{|E})] 
= \bbP^{\alpha }_D [(h_{|V},  B_{|E})]
= \frac{1}{Z^\alpha_D}
\prod_{e = \langle v, w \rangle \in E} \mathbf{c}_e^{ \mathbb{I} \{ h(v) = h(w) \} }  (1-1/\mathbf{c}_e)^{\mathbbm{1}\{B_e=1\}} (1/\mathbf{c}_e)^{\mathbbm{1}\{B_e=0\}},
\end{align*}
where $Z^\alpha_D$ is the random Lipschitz partition function,
and
\begin{align*}
\bbP^{\xi }_{D'} & 
[(h_{|V^c}, \omega_{| E (V^c)}, B_{| E (V^c)} ) \text{ and } \omega_{|E(V, V^c)} = 0  \text{ and } (h_{|V}, \omega_{|E}, B_{|E})] 
\\
& =
\bbP^{\xi } [(h_{|V^c}, B_{| E (V^c)} )  \text{ and } \omega_{|E(V, V^c)} = 0  \text{ and } (h_{|V}, B_{|E})]
 \\
& =  \frac{1}{Z^\xi_{D'}}
\underbrace{\Big( \prod_{e = \langle v, w \rangle \in E(V^c)} \mathbf{c}_e^{ \mathbb{I} \{ h(v) = h(w) \} }  (1-1/\mathbf{c}_e)^{\mathbbm{1}\{B_e=1\}} (1/\mathbf{c}_e)^{\mathbbm{1}\{B_e=0\}} \Big) }
_{=:f( h_{|V^c}, B_{| E (V^c)} )>0} 
\\
& \qquad \times
\Big( \prod_{e = \langle v, w \rangle \in E(V, V^c)} \mathbf{c}_e^{ \mathbb{I} \{ h(v) = h(w) = \pm 1 \} } (1/\mathbf{c}_e)^{ \mathbb{I} \{ h(v) = h(w) = \pm 1 \} } \Big)
\\
& \qquad \times 
\Big( \prod_{e = \langle v, w \rangle \in E} \mathbf{c}_e^{ \mathbb{I} \{ h(v) = h(w) \} }  (1-1/\mathbf{c}_e)^{\mathbbm{1}\{B_e=1\}} (1/\mathbf{c}_e)^{\mathbbm{1}\{B_e=0\}} \Big)
\\
& = \frac{Z^\alpha_D }{Z^\xi_{D'}}
f(h_{|V^c},  B_{| E (V^c)})
\bbP^{\alpha }_D [(h_{|A}, \omega_{|E(A)}, B_{|E(A)})].
\end{align*}
\end{proof}

\begin{proof}[of Proposition~\ref{prop:pos assoc of percolated height fcns}]
We only explicate the case (2) when $F$ and $G$ are increasing functions in $(|h|, \omega)$. We start with the analogue of~\eqref{eq:CBC-|h|} in Proposition~\ref{prop:|h|-monotonicity}. Recall from Corollary~\ref{cor:FK cond law given abs height} that the conditional law of $\omega$ given $\{ |h|=H \}$ is the FK model $\sfP^{D}_{\mathrm{FK}} [\cdot \; | \; E_{\mathsf{fix}}(H) \subset \omega]$ if $S=\emptyset$, or $\sfP^{D,+}_{\mathrm{FK}} [\cdot \; | \; E_{\mathsf{fix}}(H) \subset \omega]$ if $S \neq \emptyset$. Let us denote either one by $\sfP^{D,*}_{\mathrm{FK}} [\cdot \; | \; E_{\mathsf{fix}}(H) \subset \omega]$; thus
\begin{align*}
\bbE^\xi [F( |h|, \omega )]
=
\sum_H \bbP^\xi [|h|=H] \sfE^{D,*}_{\mathrm{FK}} [F(H, \omega) \; | \; E_{\mathsf{fix}}(H) \subset \omega].
\end{align*}
Then setting
\begin{align*}
\sfE^{D,*}_{\mathrm{FK}} [F(H, \omega) \; | \; E_{\mathsf{fix}}(H) \subset \omega] =: \tilde{F}(H)
\end{align*}
one observes, by Lemma~\ref{lem:FK coupling 2}, that $\tilde{F}$ is increasing in $H$ (as well as in $D$) and $S$. Thus, using~\eqref{eq:CBC-|h|},
\begin{align*}
\bbE^\xi [F( |h|, \omega )]
= \bbE^\xi [\tilde{F}( |h|)]
 \leq \bbE^{\xi'} [\tilde{F}( |h|)] = \bbE^{\xi'} [F( |h|, \omega)].
\end{align*}
This proves the analogue of~\eqref{eq:CBC-|h|}. The analogue of~\eqref{eq:bdary cut} is derived identically.

It thus remains to prove the analogue of~\eqref{eq:FKG-|h|}. Denoting $\bbP^\xi = \bbP$ and compressing conditional law arguments into conditional expectation notations, we compute
\begin{align*}
\bbE [F( |h|, \omega )G(|h|, \omega)] &=  
 \bbE [\bbE [ F(|h|, \omega)G(|h|, \omega) | \sigma(|h|)] ]  
\end{align*}
where, by the FKG inequality for the conditional FK law of $\omega$ given $|h|$,
\begin{align*}
\bbE [ F(|h|, \omega)G(|h|, \omega) | \sigma(|h|)]
\geq \bbE [ F(|h|, \omega) | \sigma(|h|)] \bbE [ G(|h|, \omega) | \sigma( |h|)].
\end{align*}
Then, as motivated above, $\tilde{F}(|h|) := \bbE [ F(|h|, \omega) | \sigma(|h|)]$ and $\tilde{G}(|h|) := \bbE [ G(|h|, \omega) | \sigma(|h|)]$, are increasing in $|h|$. Hence, by~\eqref{eq:FKG-|h|}
\begin{align*}
\bbE [F( |h|, \omega )G(|h|, \omega)]
\geq \bbE [\tilde{F}(|h|) \tilde{G}(|h|) ]  \geq \bbE [\tilde{F}(|h|) ] \bbE [ \tilde{G}(|h|) ] = \bbE^{\xi} [F( |h|, \omega )] \bbE [G(|h|, \omega)].
\end{align*}
This proves the analogue of~\eqref{eq:FKG-|h|}.

The cases when the percolation process is the Bernoulli one $B$ can be proven with a similar computation which however is even simpler since the percolation process is in that case independent of the height function.
\end{proof}

\paragraph{Proof of Lemma~\ref{lem:narrow quad cross 2}} 

Before proving Lemma~\ref{lem:narrow quad cross 2}, we need the following generalization of Lemma~\ref{lem:sym quad cross}. (Letting $Q=D$ and $\calA = \{V_\mathsf{top}(Q) \stackrel{h \omega \geq 1}{\longleftrightarrow} V_\mathsf{bottom}(Q) \}$ below indeed returns Lemma~\ref{lem:sym quad cross}.)

\begin{lemma}
\label{lem:sym quad cross 2}
Let $Q \subset D$ be two quads, both symmetric with respect to $\tau$, and let $\xi$ be an interval-valued boundary condition on $\partial D$, such that $2-\xi \succeq \tau. \xi$. Let $\calA \subset \{V_\mathsf{top}(Q) \stackrel{h \omega \geq 1 \; \mathrm{in} \; Q}{\longleftrightarrow} V_\mathsf{bottom}(Q) \} $ be increasing in $(h, B)$. Then,
\begin{align*}
\bbP^\xi_D [\calA] \leq 1/2.
\end{align*}
\end{lemma}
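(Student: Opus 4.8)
The plan is to mimic the proof of Lemma~\ref{lem:sym quad cross}, the only new ingredient being that crossings are confined to the sub-quad $Q$ rather than living in all of $D$. First I would record the duality and symmetry reductions. Since $\calA \subset \{V_\mathsf{top}(Q) \stackrel{h \omega \geq 1 \; \mathrm{in} \; Q}{\longleftrightarrow} V_\mathsf{bottom}(Q) \}$ and this latter event is increasing in $(h,B)$, it suffices to bound the probability of $\{V_\mathsf{top}(Q) \stackrel{h \omega \geq 1 \; \mathrm{in} \; Q}{\longleftrightarrow} V_\mathsf{bottom}(Q) \}$ itself by $1/2$; any increasing subevent then inherits the bound by monotonicity. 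Then, using that $Q$ is symmetric with respect to $\tau$ (which exchanges top and bottom of $Q$ and fixes left and right) and that $\tau(D)=D$, I would write, as in Lemma~\ref{lem:sym quad cross},
\begin{align*}
\bbP^{\xi}_D [ V_\mathsf{top}(Q) \stackrel{h \omega \geq 1 \; \mathrm{in} \; Q}{\longleftrightarrow} V_\mathsf{bottom}(Q) ]
=
\bbP^{\tau.\xi}_D [ V_\mathsf{bottom}(Q) \stackrel{h \omega \geq 1 \; \mathrm{in} \; Q}{\longleftrightarrow} V_\mathsf{top}(Q) ]
=
\bbP^{\tau.\xi}_D [ V_\mathsf{top}(Q) \stackrel{h \omega \geq 1 \; \mathrm{in} \; Q}{\longleftrightarrow} V_\mathsf{bottom}(Q) ],
\end{align*}
where the first step applies $\tau$ to the configuration (turning the boundary condition $\xi$ into $\tau.\xi$), and the second is just a relabelling.

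Next I would compare $\tau.\xi$ to $2-\xi$: the event $\{V_\mathsf{top}(Q) \stackrel{h \omega \geq 1 \; \mathrm{in} \; Q}{\longleftrightarrow} V_\mathsf{bottom}(Q) \}$ is increasing in $(h,B)$, so by \eqref{eq:FKG-h} (in the percolation form of Proposition~\ref{prop:pos assoc of percolated height fcns}) together with the hypothesis $\tau.\xi \preceq 2-\xi$,
\begin{align*}
\bbP^{\tau.\xi}_D [ V_\mathsf{top}(Q) \stackrel{h \omega \geq 1 \; \mathrm{in} \; Q}{\longleftrightarrow} V_\mathsf{bottom}(Q) ]
\leq
\bbP^{2-\xi}_D [ V_\mathsf{top}(Q) \stackrel{h \omega \geq 1 \; \mathrm{in} \; Q}{\longleftrightarrow} V_\mathsf{bottom}(Q) ].
\end{align*}
Then I would run the sign-flip coupling exactly as in Lemma~\ref{lem:sym quad cross}: given $(h,B,\omega)\sim\bbP^{2-\xi}_D$, set $\tilde h = 2-h$, introduce $\tilde B$ with the same Bernoulli law coupled so that $B_e=1\Rightarrow \tilde B_e=0$ (this uses $\mathbf{c}_e\le 2$), and define $\tilde\omega$ from $(\tilde h,\tilde B)$ in the usual way, so that $(\tilde h,\tilde B,\tilde\omega)\sim\bbP^{\xi}_D$. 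A primal path in $Q$ with $h\omega\ge 1$ is a path with $\tilde h\le 1$ and $\tilde B_e=0$ on edges between two heights $1$, i.e. a path of $\tilde h\tilde\omega\le 0$ in $Q$; hence $\{V_\mathsf{top}(Q) \stackrel{h \omega \geq 1 \; \mathrm{in} \; Q}{\longleftrightarrow} V_\mathsf{bottom}(Q)\} \subset \{V_\mathsf{top}(Q) \stackrel{\tilde h \tilde\omega \leq 0 \; \mathrm{in} \; Q}{\longleftrightarrow} V_\mathsf{bottom}(Q)\}$, and by the planar duality of \eqref{eq:quad cross dual vs primal} applied inside $Q$ (with the roles of top/bottom and left/right interchanged) this is contained in the dual connection $\{V^*_\mathsf{left}(Q)\stackrel{(\tilde h\tilde\omega\le 0)^*}{\longleftrightarrow}_* V^*_\mathsf{right}(Q)\}$, whose complement is $\{V_\mathsf{top}(Q)\stackrel{\tilde h\tilde\omega\ge 1}{\longleftrightarrow} V_\mathsf{bottom}(Q)\}$.

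Putting the three displayed inequalities together with the observation that $(\tilde h,\tilde B,\tilde\omega)\sim\bbP^\xi_D$, I get
\begin{align*}
\bbP^\xi_D[\calA] \le \bbP^{\xi}_D [ V_\mathsf{top}(Q) \stackrel{h \omega \geq 1 \; \mathrm{in} \; Q}{\longleftrightarrow} V_\mathsf{bottom}(Q) ]
\le \bbP^{\xi}_D [ V_\mathsf{top}(Q) \stackrel{h \omega \geq 1 \; \mathrm{in} \; Q}{\longleftrightarrow} V_\mathsf{bottom}(Q) ]^{\text{(reflected, via }\tilde h\text{)}},
\end{align*}
more precisely $p := \bbP^\xi_D[V_\mathsf{top}(Q) \stackrel{h \omega \geq 1 \; \mathrm{in} \; Q}{\longleftrightarrow} V_\mathsf{bottom}(Q)]$ satisfies $p \le 1-p$, hence $p\le 1/2$ and $\bbP^\xi_D[\calA]\le 1/2$. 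The one point requiring care — and the main (though mild) obstacle — is checking that the duality step \eqref{eq:quad cross dual vs primal} is legitimately applied \emph{inside the sub-quad $Q$}: the path $\gamma$ realizing the primal crossing of $Q$ and the dual path $\gamma'$ realizing the complementary dual crossing of $Q$ are edge-disjoint, hence (by maximum degree $3$) vertex-disjoint apart from endpoints, so they cannot topologically coexist within $Q$; this is exactly the argument in the proof of \eqref{eq:quad cross dual vs primal}, and it goes through verbatim because $Q$ is itself a quad. The rest is just the bookkeeping of which reflection exchanges which pair of sides, identical to Lemma~\ref{lem:sym quad cross}.
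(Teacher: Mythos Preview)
Your proof has a genuine gap stemming from a misreading of the symmetry $\tau$. By the definition of a symmetric quad, $\tau$ \emph{fixes two corner faces and exchanges the remaining two}; this forces $\tau$ to be a diagonal reflection, which swaps top/bottom sides with left/right sides (cf.\ the proof of Lemma~\ref{lem:sym quad cross}, where $\bbP^{\xi}_D[V_\mathsf{bottom}\leftrightarrow V_\mathsf{top}]=\bbP^{\tau.\xi}_D[V_\mathsf{left}\leftrightarrow V_\mathsf{right}]$). Your assertion that $\tau$ ``exchanges top and bottom of $Q$ and fixes left and right'' is inconsistent with this definition (such a reflection would swap all four corners in pairs), and it renders your symmetry step vacuous: the event $\{V_\mathsf{top}(Q)\leftrightarrow V_\mathsf{bottom}(Q)\}$ is not mapped to itself but to $\{V_\mathsf{left}(Q)\leftrightarrow V_\mathsf{right}(Q)\}$.

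This error then forces an incorrect duality claim. You assert that $\{V_\mathsf{top}(Q)\stackrel{\tilde h\tilde\omega\le 0}{\longleftrightarrow}V_\mathsf{bottom}(Q)\}\subset\{V^*_\mathsf{left}(Q)\stackrel{(\tilde h\tilde\omega\le 0)^*}{\longleftrightarrow}_* V^*_\mathsf{right}(Q)\}$, but \eqref{eq:quad cross dual vs primal} only gives $\{V_\mathsf{top}\stackrel{\calE}{\longleftrightarrow}V_\mathsf{bottom}\}\subset\{V^*_\mathsf{top}\stackrel{\calE^*}{\longleftrightarrow}_* V^*_\mathsf{bottom}\}$; a primal vertical crossing by $\calE$ says nothing about a dual \emph{horizontal} crossing by $\calE^*$, and the inclusion you claim is false (take $\calE$ a single vertical column). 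The paper's proof uses $\tau$ precisely to supply the missing horizontal/vertical swap: starting from $\calA^c\supset\{V_\mathsf{left}(Q)\stackrel{h\omega\le 0}{\longleftrightarrow}V_\mathsf{right}(Q)\}$, the symmetry turns this into $\bbP^{\tau.\xi}_D[V_\mathsf{top}(Q)\stackrel{h\omega\le 0}{\longleftrightarrow}V_\mathsf{bottom}(Q)]$, and only then do FKG and the sign-flip coupling close the inequality $1-\bbP^\xi_D[\calA]\ge\bbP^\xi_D[\calA]$. Without the diagonal action of $\tau$, the argument cannot reach the complementary event and the bound $p\le 1-p$ does not follow.
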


\begin{proof}
For streamlined notation, denote throughout the proof $V_\mathsf{[side]}(Q) =: V_\mathsf{[side]}$ and $\stackrel{\calE \; \mathrm{in} \; Q}{\longleftrightarrow} =: \stackrel{\calE }{\longleftrightarrow}$ for $\calE \subset E$, and similarly for dual vertices and crossings.
Note that $\calA^c \supset \{ V_\mathsf{left} \stackrel{h \omega \leq 0 }{\longleftrightarrow}_* V_\mathsf{right}^* \} \supset \{ V_\mathsf{left} \stackrel{h \omega \leq 0 }{\longleftrightarrow} V_\mathsf{right} \} $ by~\eqref{eq:quad cross duality}--\eqref{eq:quad cross dual vs primal}. Combining with the symmetry,
\begin{align*}
1- \bbP^{\xi}_D [ \calA ]
 \geq
 \bbP^{\xi}_D [  V_\mathsf{left} \stackrel{h \omega \leq 0 }{\longleftrightarrow} V_\mathsf{right}  ] 
=
 \bbP^{\tau.\xi}_D [   V_\mathsf{top} \stackrel{h \omega \leq 0 }{\longleftrightarrow} V_\mathsf{bottom}  ] .
\end{align*}
The event in the last step is decreasing in $(h, B)$, so by the FKG,
\begin{align*}
 \bbP^{\tau.\xi}_D [   V_\mathsf{top} \stackrel{h \omega \leq 0 }{\longleftrightarrow} V_\mathsf{bottom}  ] \geq  \bbP^{ 2- \xi}_D [   V_\mathsf{top} \stackrel{h \omega \leq 0}{\longleftrightarrow} V_\mathsf{bottom}  ].
\end{align*}

Next, define a coupling $\bbP$ of $({h}, {B}, {\omega}) \sim \bbP^{2-\xi}_D$ to $(\tilde{h}, \tilde{B}, \tilde{\omega}) \sim \bbP^{\xi}_D$ as in the proof of Lemma~\ref{lem:sym quad cross}; in this coupling
$
\{   V_\mathsf{top} \stackrel{h \omega \leq 0 }{\longleftrightarrow} V_\mathsf{bottom} \} \supset \{  V_\mathsf{top} \stackrel{\tilde{h} \tilde{\omega} \geq 1 }{\longleftrightarrow} V_\mathsf{bottom} \},
$
and the marginal distribution of the coupling thus yield
\begin{align*}
\bbP^{ 2- \xi}_D [   V_\mathsf{top} \stackrel{h \omega \leq 0 }{\longleftrightarrow} V_\mathsf{bottom}  ]
= 
\bbP [  V_\mathsf{top} \stackrel{{h} {\omega} \leq 0 }{\longleftrightarrow} V_\mathsf{bottom}  ]
\geq 
\bbP [   V_\mathsf{top} \stackrel{\tilde{h} \tilde{\omega} \geq 1}{\longleftrightarrow} V_\mathsf{bottom}  ]
=
\bbP_D^\xi [ \calA ].
\end{align*}
Combining the displayed inequalities now proves the claim.
\end{proof}

\begin{proof}[Lemma~\ref{lem:narrow quad cross 2}]
The event $\{ V_\mathsf{left}^* (D) \stackrel{\calE(h, B)^* \cap C}{\longleftrightarrow}_* V_\mathsf{right}^* (D) \}$ is decreasing in $(h, B)$ so it suffices to consider the maximal $\xi$ allowed by the statement, i.e., the maximal extension of $\{ \pm 1 \}$ on $V_{\mathsf{left}}(D) \cup V_{\mathsf{right}}(D)$ to entire $\partial D$ with $\xi \preceq \{ 5, 7 \}$. Also, denote by $\calE'(h, B) = \calE(h, B)^c$ the set of edges between two heights $h \geq 3$ with additionally $B_e=1$ on edges $e$ between two heights $h=3$.

Recall that the left and right walls $\gamma^*_{\mathsf{left}}$ and $\gamma^*_{\mathsf{right}}$  of $C$ are contained in $V_\mathsf{left}^* (D) $ and $ V_\mathsf{right}^* (D)$, respectively; hence
$$
\{ \gamma_\mathsf{left}^* \stackrel{\calE(h, B)^* \cap C}{\longleftrightarrow}_* \gamma_\mathsf{right}^* \}
 \subset
\{ V_\mathsf{left}^* (D) \stackrel{\calE(h, B)^* \cap C}{\longleftrightarrow}_* V_\mathsf{right}^* (D) \}.
$$
Recall also that any vertex on $\partial C$ belongs to $V_{\mathsf{left}}(D)$, $V_{\mathsf{right}}(D)$, $V_{\mathsf{top}}(Q)$ or $V_{\mathsf{bottom}}(Q)$, and $\xi = \{ \pm 1\}$ on the two former ones. In particular, using~\eqref{eq:quad cross duality} (with quad structure for $C$ induced by the left and right walls) and the boundary condition, we obtain
$$
\{ \gamma_\mathsf{left}^* \stackrel{\calE(h, B)^* \cap C}{\longleftrightarrow}_* \gamma_\mathsf{right}^* \}^c 
\subset \{ V_\mathsf{top} (Q) \stackrel{\calE'(h, B) \cap C}{\longleftrightarrow} V_\mathsf{bottom} (Q) \},
$$  
and we thus have
\begin{align*}
\bbP^\xi_D [V_\mathsf{left}^* (D) \stackrel{\calE(h, B)^* \cap C}{\longleftrightarrow}_* V_\mathsf{right}^* (D)]
& 
\geq
1-
\bbP^\xi_D [V_\mathsf{top} (Q) \stackrel{\calE'(h, B) \cap C}{\longleftrightarrow} V_\mathsf{bottom} (Q) ].
\end{align*}
Then, denoting by $\calE'(|h|, B)$ the edges between absolute-heights $|h| \geq 3$ with additionally $B_e=1$ for edges between two absolute-heights $3$ (this is an increasing set in $(|h|, B)$), inclusion gives
\begin{align*}
\bbP^\xi_D [V_\mathsf{top} (Q) \stackrel{\calE'(h, B) \cap C}{\longleftrightarrow} V_\mathsf{bottom} (Q) ]
\leq
\bbP^\xi_D [V_\mathsf{top} (Q) \stackrel{\calE'( |h|, B) \cap C}{\longleftrightarrow} V_\mathsf{bottom} (Q) ].
\end{align*}

We now aim to use the FKG for $(|h|, B)$, with a domain-gluing argument analogous to the proof of Lemma~\ref{lem:narrow quad cross}. Since $C$ is on the bottom of $D$, we have $D_{\mathsf{top}}= D \setminus C$. Set
\begin{align*}
D' = Q \bigcupplus D_{\mathsf{top}} \bigcupplus \tau ( D_{\mathsf{top}} )
\end{align*} 
so that $D \subset D'$. Again, $D'$ is a ``discrete Riemann surface'', and a $\tau$-symmetric quad, with same corner faces as $Q$. Note that $V_{\mathsf{top}}(D) \subset D_{\mathsf{top}}$, that $\tau ( D_{\mathsf{top}} )$ is glued on either left or right side of $Q$. In particular, $V_{\mathsf{top}}(D)$ and $V_{\mathsf{bottom}}(Q)$ (as well as $V_{\mathsf{bottom}}(D) \subset V_{\mathsf{bottom}}(Q)$) remain boundary vertices of $D'$. Let $\xi'$ be the boundary condition on $\partial D'$ which is $\{ \pm 1 \}$ outside of $V_{\mathsf{top}}(D)$ and  $V_{\mathsf{bottom}}(D)$ and on them the largest extension $\preceq \{ 5, 7 \}$. By the FKG,
\begin{align*}
\bbP^\xi_D [V_\mathsf{top} (Q) \stackrel{\calE'( |h|, B) \cap C}{\longleftrightarrow} V_\mathsf{bottom} (Q) ] 
&
\leq 
\bbP^{\xi'}_{D'} [V_\mathsf{top} (Q) \stackrel{\calE'( |h|, B) \cap C}{\longleftrightarrow} V_\mathsf{bottom} (Q) ]
\\
& = 
\bbP^{\xi'}_{D'} [V_\mathsf{top} (Q) \stackrel{\calE'( h, B) \cap C}{\longleftrightarrow} V_\mathsf{bottom} (Q) ],
\end{align*}
where the last step used the fact that $C$ lies on the bottom of $D$, and hence the boundary condition $\xi$ determines the sign of $h$ on the crossings. Setting $\tilde{h}= h-2$ and defining $\tilde{\omega}$ via $\tilde{h}$ and $B$ in the usual manner, one obtains $\calE'( h, B) = \{ \tilde{h} \tilde{\omega}\geq 1 \}$, and an application of Lemma~\ref{lem:sym quad cross 2} yields
\begin{align*}
\bbP^{\xi'}_{D'} [V_\mathsf{top} (Q) \stackrel{\calE'( h, B) \cap C}{\longleftrightarrow} V_\mathsf{bottom} (Q) ] \leq 1/2.
\end{align*}
Combining all the displayed non-set inequalities concludes the proof.
\end{proof}

\begin{proof}[Lemma~\ref{lem:find a crossing}]
Let $\gamma_1$, $\gamma_2$, and $\gamma_3$ be any vertical primal-crossings of $R$ inducing respective events $ \calE^+_1 \cap \calE^-_2 \cap \calE^+_3$ (recall that these paths are vertex-disjoint). We first claim that there is a vertical dual-crossing $\gamma_2^*$ of $R$ with $\nu = 0$ between $\gamma_1$ and $\gamma_3$, and landing on $I^2 \cup I^3$ and $L^2 \cup L^3$ on the bottom and top, respectively. To see this, note first that on $\gamma_1$ and $\gamma_3$, $\tilde{h} \leq 5$ with additionally $\nu_e= 1$ on edges between two heights $5$. Let $\gamma_3'$ be the vertical crossing of $R$ on which $ | \tilde{h} | \leq 5$ and additionally $\nu_e= 1$ on edges between two absolute-heights $| \tilde{h} |  = 5$, and which lies between $\gamma_2$ and $\gamma_3$, obtained by modifying $\gamma_3$ to wind around any primal-edges between two heights $\tilde{h} \leq -5$ in $R$ along the closest such path on the side of $\gamma_2$ (such exists as the dual-degree is at most $11$). Note that $\nu = 1$ on $\gamma_3'$. Let $\gamma_2^*$ be the vertical dual-crossing of $R$ on which $\nu_e= 0$ and which lies between $\gamma_2$ and $\gamma_3'$, obtained as the left boundary of the primal-cluster of $\nu =1$ containing $\gamma_3'$. (Indeed, that cluster cannot contain any edge of $\gamma_2$, on which $\tilde{h} \geq 5$ with additionally $\nu_e= 0$ on edges between two heights $5$.) This shows $\gamma_2^*$ satisfies the claimed properties.

We note at this point that modifying analogously $\gamma_1$ to a $\nu = 1$ path $\gamma_1'$ shows that no $\nu = 0$ dual-path between $ I^2 \cup I^3$ and $L^2 \cup L^3$ can pass left of $\gamma_1'$, i.e., the concept of \textit{left-most path} in the statement makes sense, and such left-most path is between $\gamma_1$ and $\gamma_3$.

Next, we claim that \textit{any} vertical dual-crossing $\gamma^*$ of $R$ with $\nu = 0$ lying between $\gamma_1$ and $\gamma_3$ induces $\calV^{i_0, j_0, j_0, i_0 \; *}_{\alpha_0, \beta_0, \gamma_0} (\nu = 0)$. Indeed, such $\gamma^*$ clearly traverses between $ I_{i_0}$ and $\subset L_{i_0}$. Also, the unique sub-crossing of $R_{\mathsf{bot}}$ (resp. $R_{\mathsf{top}}$) is topologically forced to be of the type $\alpha_0$ (resp. $\gamma_0$) by the two curves $\gamma_1$ and $\gamma_3$. It remains to motivate the appearance of $j_0$, $j_0$ and $\beta_0$ in $\calV^{i_0, j_0, j_0, i_0 }_{\alpha_0, \beta_0, \gamma_0} (h \omega \leq 0)$. Let $\gamma^{\mathsf{tr}}_1$ and $\gamma^{\mathsf{tr}}_3$ be the truncations of $\gamma_1$ and $\gamma_3$, respectively, from $I_{i_0}$ on the bottom of $R$ up to the first hitting of $R_{\mathsf{top}}$, hence on $K_{j_0}$. Note that $\gamma^*$ is topologically forced to exit the domain constrained by $\gamma^{\mathsf{tr}}_1$ and $\gamma^{\mathsf{tr}}_3$, $I_{i_0}$ and $K_{j_0}$, and this can only happen via $K_{j_0}$. This is also when $\gamma^*$ reaches $R_{\mathsf{top}}$ for the first time, motivating one $j_0$. This reaching of $R_{\mathsf{top}}$ occurs in the domain constrained by $\eta_1$, $\eta_3$ (related to $\gamma_1$, $\gamma_3$), $J_{j_0}$ and $K_{j_0}$; in particular, the last hitting of $R_{\mathsf{bot}}$ by $\gamma^*$ before that is topologically forced to occur on $J_{j_0}$; the type $\beta_0$ is then topologically forced by $\eta_1$ and $\eta_3$. This proves that $\gamma^*$ induces $\calV^{i_0, j_0, j_0, i_0 \; *}_{\alpha_0, \beta_0, \gamma_0} (\nu = 0)$.

We list the conclusions of the above three paragraphs: (i) there is a vertical dual-crossing of $R$ with $\nu = 0$, landing on $I^2 \cup I^3$ and $L^2 \cup L^3$ on the bottom and top, respectively; (ii) there exists a left-most such path, lying between $\gamma_1$ and $\gamma_3$; (iii) any vertical dual-crossing of $R$ with $\nu = 0$ between $\gamma_1$ and $\gamma_3$  induces $\calV^{i_0, j_0, j_0, i_0 \; *}_{\alpha_0, \beta_0, \gamma_0} (\nu = 0)$. The claim follows.
\end{proof}

\bibliographystyle{abbrv}
\def\cprime{$'$}

\end{document}